\newcommand{\comment}[1]{}
\newcommand{\cfbox}[2]{
    \colorlet{currentcolor}{.}
    {\color{#1}
    \fbox{\color{currentcolor}#2}}
}
\def\tn{\textnormal}
\def\mc{\mathcal}
\newcommand{\qt}[1]{\tn{``}#1\tn{"}}
\def\ZZ{{\mathbb Z}}
\def\RR{{\mathbb R}}
\def\CC{{\mathbb C}}
\def\PP{{\mathbb P}}
\def\NN{{\mathbb N}}
\def\Hom{\tn{Hom}}
\def\Path{\tn{Path}}
\def\Paths{\tn{Paths}}
\def\List{\tn{List}}
\def\Aut{\tn{Aut}}
\def\im{\tn{im}}
\def\Fun{\tn{Fun}}
\def\Ob{\tn{Ob}}
\def\Skel{\tn{Skel}}
\def\Op{\tn{Open}}
\def\PK{\tn{PK}}
\def\FK{\tn{FK}}
\def\SEL*{\tn{SEL*}}
\def\hsp{\hspace{.3in}}
\newcommand{\tin}[1]{\text{\tiny #1}}
\def\singleton{\{\smiley\}}
\newcommand{\boxtitle}[1]{\begin{center}#1\end{center}\vspace{-.1in}}
\def\lcone{^\triangleleft}
\def\rcone{^\triangleright}
\def\to{\rightarrow}
\def\from{\leftarrow}
\def\down{\downarrrow}
\def\Down{\Downarrow}
\def\taking{\colon}
\def\inj{\hookrightarrow}
\def\too{\longrightarrow}
\def\tto{\rightrightarrows}
\def\ss{\subseteq}
\def\iso{\cong}
\def\down{\downarrow}
\def\|{{\;|\;}}
\def\m1{{-1}}
\def\op{^\tn{op}}
\def\la{\langle}
\def\ra{\rangle}
\def\ol{\overline}
\def\ul{\underline}
\def\plpl{+\!\!+\hspace{1pt}}
\def\acts{\lefttorightarrow}
\def\rr{\raggedright}
\newcommand{\LMO}[1]{\stackrel{#1}{\bullet}}
\newcommand{\LTO}[1]{\stackrel{\tt{#1}}{\bullet}}
\newcommand{\LA}[2]{\ar[#1]^-{\tn {#2}}}
\newcommand{\LAL}[2]{\ar[#1]_-{\tn {#2}}}
\newcommand{\obox}[3]{\stackrel{#1}{\fbox{\parbox{#2}{#3}}}}
\newcommand{\labox}[2]{\obox{#1}{1.6in}{#2}}
\newcommand{\mebox}[2]{\obox{#1}{1in}{#2}}
\newcommand{\smbox}[2]{\stackrel{#1}{\fbox{#2}}}
\newcommand{\fakebox}[1]{\tn{$\ulcorner$#1$\urcorner$}}
\def\monOb{\blacktriangle}
\def\ullimit{\ar@{}[rd]|(.3)*+{\lrcorner}}
\def\urlimit{\ar@{}[ld]|(.3)*+{\llcorner}}
\def\lllimit{\ar@{}[ru]|(.3)*+{\urcorner}}
\def\lrlimit{\ar@{}[lu]|(.3)*+{\ulcorner}}
\def\ulhlimit{\ar@{}[rd]|(.3)*+{\diamond}}
\def\urhlimit{\ar@{}[ld]|(.3)*+{\diamond}}
\def\llhlimit{\ar@{}[ru]|(.3)*+{\diamond}}
\def\lrhlimit{\ar@{}[lu]|(.3)*+{\diamond}}
\newcommand{\clabel}[1]{\ar@{}[rd]|(.5)*+{#1}}
\newcommand{\TriRight}[7]{\xymatrix{#1\ar[dr]_{#2}\ar[rr]^{#3}&&#4\ar[dl]^{#5}\\&#6\ar@{}[u] |{\Longrightarrow}\ar@{}[u]|>>>>{#7}}}
\newcommand{\TriLeft}[7]{\xymatrix{#1\ar[dr]_{#2}\ar[rr]^{#3}&&#4\ar[dl]^{#5}\\&#6\ar@{}[u] |{\Longleftarrow}\ar@{}[u]|>>>>{#7}}}
\newcommand{\TriIso}[7]{\xymatrix{#1\ar[dr]_{#2}\ar[rr]^{#3}&&#4\ar[dl]^{#5}\\&#6\ar@{}[u] |{\Longleftrightarrow}\ar@{}[u]|>>>>{#7}}}
\newcommand{\arr}[1]{\ar@<.5ex>[#1]\ar@<-.5ex>[#1]}
\newcommand{\arrr}[1]{\ar@<.7ex>[#1]\ar@<0ex>[#1]\ar@<-.7ex>[#1]}
\newcommand{\arrrr}[1]{\ar@<.9ex>[#1]\ar@<.3ex>[#1]\ar@<-.3ex>[#1]\ar@<-.9ex>[#1]}
\newcommand{\arrrrr}[1]{\ar@<1ex>[#1]\ar@<.5ex>[#1]\ar[#1]\ar@<-.5ex>[#1]\ar@<-1ex>[#1]}
\newcommand{\To}[1]{\xrightarrow{#1}}
\newcommand{\Too}[1]{\xrightarrow{\ \ #1\ \ }}
\newcommand{\From}[1]{\xleftarrow{#1}}
\newcommand{\Fromm}[1]{\xleftarrow{\ \ #1\ \ }}
\newcommand{\Adjoint}[4]{\xymatrix@1{#2 \ar@<.5ex>[r]^-{#1} & #3 \ar@<.5ex>[l]^-{#4}}}
\newcommand{\adjoint}[4]{\xymatrix{#1\taking #2\ar@<.5ex>[r]& #3\hspace{1pt}:\hspace{-2pt} #4\ar@<.5ex>[l]}}
\newcommand{\prodmap}[2]{\la#1,#2\ra}
\newcommand{\pb}[3]{\prodmap{#1}{#1}_{#3}}
\newcommand{\coprodmap}[2]{{\left\{\parbox{.1in}{#1\\#2}\right.}}
\newcommand{\po}[3]{\coprodmap{#1}{#2}}
\def\id{\tn{id}}
\def\ids{\tn{ids}}
\def\Top{{\bf Top}}
\def\Cat{{\bf Cat}}
\def\Oprd{{\bf Oprd}}
\def\Mon{{\bf Mon}}
\def\Grp{{\bf Grp}}
\def\Grph{{\bf Grph}}
\def\Supp{{\bf Supp}}
\def\Dist{{\bf Dist}}
\def\Vect{{\bf Vect}}
\def\Kls{{\bf Kls}}
\def\Prop{{\bf Prop}}
\def\FLin{{\bf FLin}}
\def\Set{{\bf Set}}
\def\Sets{{\bf Sets}}
\def\PrO{{\bf PrO}}
\def\Star{{\bf Star}}
\def\Cob{{\bf Cob}}
\def\set{{\text \textendash}{\bf Set}}
\def\sSet{{\bf sSet}}
\def\Grpd{{\bf Grpd}}
\def\bD{{\bf \Delta}}
\def\bhline{\Xhline{2\arrayrulewidth}}
\def\bbhline{\Xhline{2.5\arrayrulewidth}}
\def\colim{\mathop{\tn{colim}}}
\def\mcA{\mc{A}}
\def\mcB{\mc{B}}
\def\mcC{\mc{C}}
\def\mcD{\mc{D}}
\def\mcE{\mc{E}}
\def\mcF{\mc{F}}
\def\mcG{\mc{G}}
\def\mcK{\mc{K}}
\def\mcL{\mc{L}}
\def\mcM{\mc{M}}
\def\mcN{\mc{N}}
\def\mcO{\mc{O}}
\def\mcP{\mc{P}}
\def\mcR{\mc{R}}
\def\mcS{\mc{S}}
\def\mcT{\mc{T}}
\def\mcW{\mc{W}}
\def\mcX{\mc{X}}
\def\mcY{\mc{Y}}
\def\Loop{{\mcL oop}}
\def\LoopSchema{{\parbox{.5in}{\fbox{\xymatrix{\LMO{s}\ar@(l,u)[]^f}}}}}
\newtheorem{theorem}[subsubsection]{Theorem}
\newtheorem{lemma}[subsubsection]{Lemma}
\newtheorem{proposition}[subsubsection]{Proposition}
\newtheorem{corollary}[subsubsection]{Corollary}
\theoremstyle{remark}
\newtheorem{remark}[subsubsection]{Remark}
\newtheorem{example}[subsubsection]{Example}
\newtheorem{warning}[subsubsection]{Warning}
\newtheorem{question}[subsubsection]{Question}
\newtheorem{guess}[subsubsection]{Guess}
\newtheorem{construction}[subsubsection]{Construction}
\newtheorem{rules}[subsubsection]{Rules of good practice}
\newtheorem{exc}[subsubsection]{Exercise}
\newenvironment{exercise}{\begin{exc}}{\hspace*{\fill}$\lozenge$\end{exc}}
\newtheorem{app}[subsubsection]{Application}
\newenvironment{application}{\begin{app}}{\hspace*{\fill}$\lozenge\lozenge$\end{app}}
\newenvironment{slogan}{\addtocounter{subsubsection}{1}\vspace{.1in}\begin{sloppypar}\noindent{\em Slogan}\;\arabic{chapter}.\arabic{section}.\arabic{subsection}.\arabic{subsubsection}. \begin{quote}``\slshape}{"\end{quote}\end{sloppypar}\vspace{.1in}}
\let\c@figure\c@equation\makeatother
\theoremstyle{definition}
\newtheorem{definition}[subsubsection]{Definition}
\newtheorem{notation}[subsubsection]{Notation}
\def\Sch{{\bf Sch}}
\def\Fin{{\bf Fin}}
\newcommand{\MainCatLarge}[1]{ 
	\stackrel{#1}{
		\parbox{4.5in}{\fbox{\parbox{4.4in}{\begin{center}\underline{{\tt Employee} manager worksIn $\simeq$ {\tt Employee} worksIn}\hsp  \underline{{\tt Department} secretary worksIn $\simeq$ {\tt Department}}\end{center}~\\\\\\
			\xymatrix@=8pt{&\LTO{Employee}\ar@<.5ex>[rrrrr]^{\tn{worksIn}}\ar@(l,u)[]+<5pt,10pt>^{\tn{manager}}\ar[dddl]_{\tn{first}}\ar[dddr]^{\tn{last}}&&&&&\LTO{Department}\ar@<.5ex>[lllll]^{\tn{secretary}}\ar[ddd]^{\tn{name}}\\\\\\\LTO{FirstNameString}&&\LTO{LastNameString}&~&~&~&\LTO{DepartmentNameString}
			}
		}}}
	}
}
\def\GrIn{{\bf GrIn}}
\def\GrInSchema{\xymatrix{\LMO{Ar}\ar@<.5ex>[r]^{src}\ar@<-.5ex>[r]_{tgt}&\LMO{V\!e}}}
\def\sub{\begin{itemize}\item}
\def\sexc{\begin{enumerate}[a.)]\setlength{\itemsep}{.1cm}\setlength{\parskip}{.1cm}\item}
\def\next{\item}
\def\endsub{\end{itemize}}
\def\endsexc{\end{enumerate}}
\begin{document}

\title{~\\~\\Category Theory for Scientists\\(Old Version)}
\author{David I. Spivak}
\titlepic{\vspace{1.3in}\dashbox{\includegraphics[width=.8\textwidth]{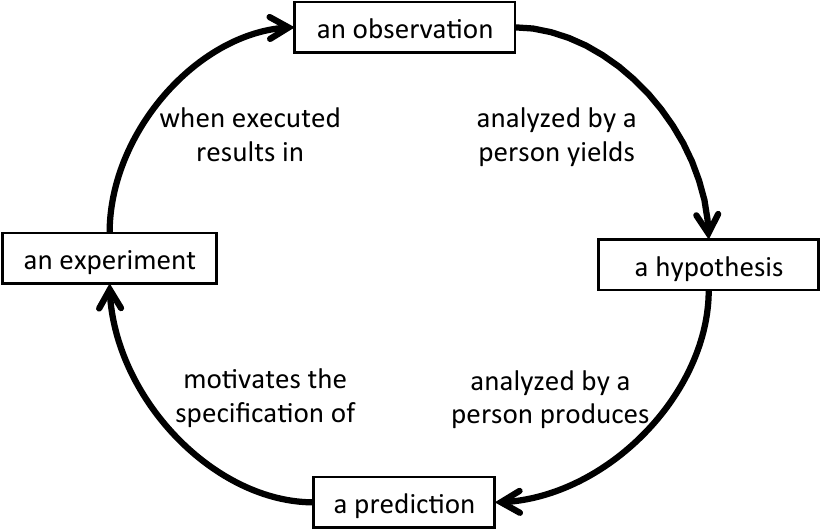}}\\\vspace{.3in}\Large How can mathematics make this diagram meaningful?
\normalsize}
\maketitle

\chapter*{Preface}

An early version of this book was put on line in February 2013 to serve as the textbook for my course \href{http://math.mit.edu/~dspivak/teaching/sp13/}{\text Category Theory for Scientists} taught in the spring semester of 2013 at MIT. During that semester, students provided me with hundreds of comments and questions, which led to a substantial improvement (and the addition of 50 pages) to the original document. 

In the summer of 2013 I signed a contract with the MIT Press to publish a new version of this work under the title {\em Category Theory for the Sciences}. Because I am committed to the open source development model I insisted that a version of this book, namely the one you are reading, remain freely available online. The MIT Press version will of course not be free.

Other than the title, there are two main differences between the present version and the MIT Press version. The first difference is that I will do a full edit with the help of professional editors from the Press. The second difference is that I will write up solutions to the book's (approximately 280) exercises; some of these will be included in the published version, whereas the rest will be available by way of a password-protected page, accessible only to professors who teach the subject.

\tableofcontents


\chapter{Introduction}

The title page of this book contains a graphic that we reproduce here. 
\begin{align}\label{dia:scientific method}
\dashbox{\includegraphics[width=.7\textwidth]{ScientificMethod}}
\end{align}
It is intended to evoke thoughts of the scientific method. 
\begin{quote}
A hypothesis analyzed by a person produces a prediction, which motivates the specification of an experiment, which when executed results in an observation, which analyzed by a person yields a hypothesis.
\end{quote}
This sounds valid, and a good graphic can be exceptionally useful for leading a reader through the story that the author wishes to tell. 

Interestingly, a graphic has the power to evoke feelings of understanding, without really meaning much. The same is true for text: it is possible to use a language such as English to express ideas that are never made rigorous or clear. When someone says ``I believe in free will," what does she believe in? We may all have some concept of what she's saying---something we can conceptually work with and discuss or argue about. But to what extent are we all discussing the same thing, the thing she intended to convey?

Science is about agreement. When we supply a convincing argument, the result of this convincing is agreement. When, in an experiment, the observation matches the hypothesis---success!---that is agreement. When my methods make sense to you, that is agreement. When practice does not agree with theory, that is disagreement. Agreement is the good stuff in science; it's the high fives.

But it is easy to think we're in agreement, when really we're not. Modeling our thoughts on heuristics and pictures may be convenient for quick travel down the road, but we're liable to miss our turnoff at the first mile. The danger is in mistaking our convenient conceptualizations for what's actually there. It is imperative that we have the ability at any time to ground out in reality. What does that mean?

Data. Hard evidence. The physical world. It is here that science touches down and heuristics evaporate. So let's look again at the diagram on the cover. It is intended to evoke an idea of how science is performed. Is there hard evidence and data to back this theory up? Can we set up an experiment to find out whether science is actually performed according to such a protocol? To do so we have to shake off the stupor evoked by the diagram and ask the question: ``what does this diagram intend to communicate?"

In this course I will use a mathematical tool called {\em ologs}, or ontology logs, to give some structure to the kinds of ideas that are often communicated in pictures like the one on the cover. Each olog inherently offers a framework in which to record data about the subject. More precisely it encompasses a {\em database schema}, which means a system of interconnected tables that are initially empty but into which data can be entered. For example consider the olog below
$$\xymatrix{
\obox{}{.5in}{a mass}&&\obox{}{1.1in}{an object of mass $m$ held at height $h$ above the ground}\LAL{ll}{\footnotesize has as mass}\LA{rrdd}{\hspace{.4in}\parbox{1in}{\singlespace \footnotesize when dropped has as number of seconds till hitting the ground}}\LAL{dd}{\parbox{.7in}{\singlespace\footnotesize has as height in meters}}&&\\\\
&&\obox{}{1in}{a real number $h$}\ar@{}[uurr]|(.35){?}\ar[rr]_-{\sqrt{2h\div 9.8}}&\hspace{.3in}&\obox{}{.9in}{a real number}
}
$$
This olog represents a framework in which to record data about objects held above the ground, their mass, their height, and a comparison (the ?-mark in the middle) between the number of seconds till they hit the ground and a certain real-valued function of their height. We will discuss ologs in detail throughout this course. 

The picture in (\ref{dia:scientific method}) looks like an olog, but it does not conform to the rules that we lay out for ologs in Section \ref{sec:ologs}. In an olog, every arrow is intended to represent a mathematical function. It is difficult to imagine a function that takes in predictions and outputs experiments, but such a function is necessary in order for the arrow
$$\fbox{a prediction}\To{\tn{motivates the specification of}}\fbox{an experiment}
$$
in (\ref{dia:scientific method}) to make sense. To produce an experiment design from a prediction probably requires an expert, and even then the expert may be motivated to specify a different experiment on Tuesday than he is on Monday. But perhaps our criticism has led to a way forward: if we say that every arrow represents a function {\em when in the context of a specific expert who is actually doing the science at a specific time}, then Figure (\ref{dia:scientific method}) begins to make sense. In fact, we will return to the figure in Section \ref{sec:monads} (specifically Example \ref{ex:scientific method}), where background methodological context is discussed in earnest.

This course is an attempt to extol the virtues of a new branch of mathematics, called {\em category theory}, which was invented for powerful communication of ideas between different fields and subfields within mathematics. By powerful communication of ideas I actually mean something precise. Different branches of mathematics can be formalized into categories. These categories can then be connected together by functors. And the sense in which these functors provide powerful communication of ideas is that facts and theorems proven in one category can be transferred through a connecting functor to yield proofs of analogous theorems in another category. A functor is like a conductor of mathematical truth.

I believe that the language and toolset of category theory can be useful throughout science. We build scientific understanding by developing models, and category theory is the study of basic conceptual building blocks and how they cleanly fit together to make such models. Certain structures and conceptual frameworks show up again and again in our understanding of reality. No one would dispute that vector spaces are ubiquitous. But so are hierarchies, symmetries, actions of agents on objects, data models, global behavior emerging as the aggregate of local behavior, self-similarity, and the effect of methodological context. 

Some ideas are so common that our use of them goes virtually undetected, such as set-theoretic intersections. For example, when we speak of a material that is both lightweight and ductile, we are intersecting two sets. But what is the use of even mentioning this set-theoretic fact? The answer is that when we formalize our ideas, our understanding is almost always clarified. Our ability to communicate with others is enhanced, and the possibility for developing new insights expands. And if we are ever to get to the point that we can input our ideas into computers, we will need to be able to formalize these ideas first.

It is my hope that this course will offer scientists a new vocabulary in which to think and communicate, and a new pipeline to the vast array of theorems that exist and are considered immensely powerful within mathematics. These theorems have not made their way out into the world of science, but they are directly applicable there. Hierarchies are partial orders, symmetries are group elements, data models are categories, agent actions are monoid actions, local-to-global\index{local-to-global} principles are sheaves, self-similarity is modeled by operads, context can be modeled by monads.


\section{A brief history of category theory}

The paradigm shift brought on by Einstein's theory of relativity brought on the realization that there is no single perspective from which to view the world. There is no background framework that we need to find; there are infinitely many different frameworks and perspectives, and the real power lies in being able to translate between them. It is in this historical context that category theory got its start.
\footnote{The following history of category theory is far too brief, and perhaps reflects more of the author's aesthetic than any kind of objective truth, whatever that may mean. Here are some much better references: \cite{Kro}, \cite{Mar1}, \cite{LM}.}

Category theory was invented in the early 1940s by Samuel Eilenberg\index{Eilenberg, Samuel} and Saunders Mac Lane.\index{Mac Lane, Saunders} It was specifically designed to bridge what may appear to be two quite different fields: topology and algebra. Topology is the study of abstract shapes such as 7-dimensional spheres; algebra is the study of abstract equations such as $y^2z=x^3-xz^2$. People had already created important and useful links (e.g. cohomology theory) between these fields, but Eilenberg and Mac Lane needed to precisely compare different links with one another. To do so they first needed to boil down and extract the fundamental nature of these two fields. But the ideas they worked out amounted to a framework that fit not only topology and algebra, but many other mathematical disciplines as well.

At first category theory was little more than a deeply clarifying language for existing difficult mathematical ideas. However, in 1957 Alexander Grothendieck\index{Grothendieck!in history} used category theory to build new mathematical machinery (new cohomology theories) that granted unprecedented insight into the behavior of algebraic equations. Since that time, categories have been built specifically to zoom in on particular features of mathematical subjects and study them with a level of acuity that is simply unavailable elsewhere.

Bill Lawvere\index{Lawvere, William} saw category theory as a new foundation for all mathematical thought. Mathematicians had been searching for foundations in the 19th century and were reasonably satisfied with set theory as {\em the foundation}. But Lawvere showed that the category of sets is simply a category with certain nice properties, not necessarily the center of the mathematical universe. He explained how whole algebraic theories can be viewed as examples of a single system. He and others went on to show that higher order logic was beautifully captured in the setting of category theory (more specifically toposes). It is here also that Grothendieck and his school worked out major results in algebraic geometry. 

In 1980 Joachim Lambek\index{Lambek, Joachim} showed that the types and programs used in computer science form a specific kind of category. This provided a new semantics for talking about programs, allowing people to investigate how programs combine and compose to create other programs, without caring about the specifics of implementation. Eugenio Moggi\index{Moggi, Eugenio} brought the category theoretic notion of monads into computer science to encapsulate ideas that up to that point were considered outside the realm of such theory.

It is difficult to explain the clarity and beauty brought to category theory by people like Daniel Kan\index{Kan, Daniel} and Andr\'{e} Joyal\index{Joyal, Andr\'{e}}. They have each repeatedly extracted the essence of a whole mathematical subject to reveal and formalize a stunningly simple yet extremely powerful pattern of thinking, revolutionizing how mathematics is done. 

All this time, however, category theory was consistently seen by much of the mathematical community as ridiculously abstract. But in the 21st century it has finally come to find healthy respect within the larger community of pure mathematics. It is the language of choice for graduate-level algebra and topology courses, and in my opinion will continue to establish itself as the basic framework in which mathematics is done.

As mentioned above category theory has branched out into certain areas of science as well. Baez\index{Baez, John} and Dolan\index{Dolan, James} have shown its value in making sense of quantum physics, it is well established in computer science, and it has found proponents in several other fields as well. But to my mind, we are the very beginning of its venture into scientific methodology. Category theory was invented as a bridge and it will continue to serve in that role. 


\section{Intention of this book}

The world of {\em applied mathematics} is much smaller than the world of {\em applicable mathematics}. As alluded to above, this course is intended to create a bridge between the vast array of mathematical concepts that are used daily by mathematicians to describe all manner of phenomena that arise in our studies, and the models and frameworks of scientific disciplines such as physics, computation, and neuroscience. 

To the pure mathematician I'll try to prove that concepts such as categories, functors, natural transformations, limits, colimits, functor categories, sheaves, monads, and operads---concepts that are often considered too abstract for even math majors---can be communicated to scientists with no math background beyond linear algebra. If this material is as teachable as I think, it means that category theory is not esoteric but somehow well-aligned with ideas that already make sense to the scientific mind. Note, however, that this book is example-based rather than proof-based, so it may not be suitable as a reference for students of pure mathematics.

To the scientist I'll try to prove the claim that category theory includes a formal treatment of conceptual structures that the scientist sees often, perhaps without realizing that there is well-oiled mathematical machinery to be employed. We will work on the structure of information; how data is made meaningful by its connections, both internal and outreaching, to other data. Note, however, that this book should most certainly not be taken as a reference on scientific matters themselves. One should assume that any account of physics, materials science, chemistry, etc. has been oversimplified.\index{a warning!oversimplified science} The intention is to give a flavor of how category theory may help us model scientific ideas, not to explain these ideas in a serious way. 

Data gathering is ubiquitous in science. Giant databases are currently being mined for unknown patterns, but in fact there are many (many) known patterns that simply have not been catalogued. Consider the well-known case of medical records. A patient's medical history is often known by various individual doctor-offices but quite inadequately shared between them. Sharing medical records often means faxing a hand-written note or a filled-in house-created form between offices. 

Similarly, in science there exists substantial expertise making brilliant connections between concepts, but it is being conveyed in silos of English prose known as journal articles. Every scientific journal article has a methods section, but it is almost impossible to read a methods section and subsequently repeat the experiment---the English language is inadequate to precisely and concisely convey what is being done.

The first thing to understand in this course is that reusable methodologies can be formalized, and that doing so is inherently valuable. Consider the following analogy. Suppose you want to add up the area of a region in space (or the area under a curve). You break the region down into small squares, each of which you know has area $A$; then you count the number of squares, say $n$, and the result is that the region has an area of about $nA$. If you want a more precise and accurate result you repeat the process with half-size squares. This methodology can be used for any area-finding problem (of which there are more than a first-year calculus student generally realizes) and thus it deserves to be formalized. But once we have formalized this methodology, it can be taken to its limit and out comes integration by Riemann sums. 

I intend to show that category theory is incredibly efficient as a language for experimental design patterns, introducing formality while remaining flexible. It forms a rich and tightly woven conceptual fabric that will allow the scientist to maneuver between different perspectives whenever the need arises. Once one builds that fabric for oneself, he or she has an ability to think about models in a way that simply would not occur without it.  Moreover, putting ideas into the language of category theory forces a person to clarify their assumptions. This is highly valuable both for the researcher and for his or her audience.

What must be recognized in order to find value in this course is that conceptual chaos is a major problem. Creativity demands clarity of thinking, and to think clearly about a subject requires an organized understanding of how its pieces fit together. Organization and clarity also lead to better communication with others. Academics often say they are paid to think and understand, but that is not true. They are paid to think, understand, and {\em communicate their findings}. Universal languages for science---languages such as calculus and differential equations, matrices, or simply graphs and pie-charts---already exist, and they grant us a cohesiveness that makes scientific research worthwhile. In this book I will attempt to show that category theory can be similarly useful in describing complex scientific understandings.


\section{What is requested from the student}

I will do my best to make clear the value of category theory in science, but I am not a scientist. To that end I am asking for your help in exploring how category theory may be useful in your specific field. 

I also want you to recognize that the value of mathematics is not generally obvious at first. A good student learning a good subject with a good teacher will see something compelling almost immediately, but may not see how it will be useful in real life. This will come later. I hope you will work hard to understand even without yet knowing what its actual value in your life and research will be. Like a student of soccer is encouraged to spend hours juggling the ball when he or she could be practicing penalty shots, it is important to gain facility with the materials you will be using. Doing exercises is imperative for learning mathematics.


\section{Category theory references}

I wrote this book because the available books on category theory are almost all written for mathematicians (the rest are written for computer scientists). There is one book by Lawvere and Schanuel, called {\em Conceptual Mathematics} \cite{LS}, that offers category theory to a wider audience, but its style is not appropriate for this course. Still, it is very well written and clear. 

The ``bible" of category theory is {\em Categories for the working mathematician} by Mac Lane \cite{Mac}. But as the title suggests, it was written for working mathematicians and will be quite opaque to my target audience. However, once a person has read my book, Mac Lane's book may become a valuable reference. 

Other good books include Steve Awodey's book {\em Category theory} \cite{Awo} and Barr and Wells book {\em Category theory for computing science}, \cite{BW}.  A paper by Brown and Porter called  \href{http://pages.bangor.ac.uk/\%7Emas010/pdffiles/Analogy-and-Comparison.pdf}{\text Category Theory: an abstract setting for analogy and comparison} \cite{BP1} is more in line with the style of this book, only much shorter. Online, I find \href{http://www.wikipedia.org}{\text wikipedia} and a site called \href{http://ncatlab.org/nlab/show/HomePage}{\em the $n$lab} to be quite useful. 

This book attempts to explain category theory by examples and exercises rather than by theorems and proofs. I hope this approach will be valuable to the working scientist.

\section{Acknowledgments}

I would like to express my deep appreciation for the many scientists who I have worked with over the past five years. It all started with Paea LePendu who first taught me about databases when I was naively knocking on doors in the University of Oregon computer science department. This book would never have been written if Tristan Nguyen and Dave Balaban had not noticed my work and encouraged me to continue. Dave Balaban and Peter Gates have been my scientific partners since the beginning, working hard to understand what I'm offering and working just as hard to help me understand all that I'm missing. Peter Gates has deepened my understanding of data in profound ways. 

I have also been tremendously lucky to know Haynes Miller, who made it possible for me set down at MIT, with the help of Clark Barwick and Jacob Lurie. I knew that MIT would be the best place in the world for me to pursue this type of research, and it has really come through. Researchers like Markus Buehler and his graduate students Tristan Giesa and Dieter Brommer have been a pleasure to work with, and the many materials science examples scattered throughout this book is a testament to how much our work together has influenced my thinking. 

I'd also like to thank my collaborators and conversation partners with whom I have discussed subjects written about in this book. Other than people mentioned above, these include Steve Awodey, Allen Brown, Adam Chlipala, Carlo Curino, Dan Dugger, Henrik Forssell, David Gepner, Jason Gross, Bob Harper, Ralph Hutchison, Robert Kent, Jack Morava, Scott Morrison, David Platt, Joey Perricone, Dylan Rupel, Guarav Singh, Sam Shames, Nat Stapleton, Patrick Schultz, Ka Yu Tam, Ryan Wisnesky, Jesse Wolfson, and Elizabeth Wood.

I would like to thank Peter Kleinhenz and Peter Gates for reading this book and providing invaluable feedback before I began teaching the 18-S996 class at MIT in Spring 2013. In particular the cover image is a mild alteration of something Gates sent me to help motivate the book to scientists. I would also like to greatly thank the 18-S996 course grader Darij Grinberg, who was not only the best grader I've had in my 14 years of teaching, but gave me more comments than anyone else on the book itself. I'd also like to thank the students from the 18-S996 class at MIT who helped me find typos, pointed me to unclear explanations, and generally helped me improve the book in many ways. Other than the people listed above, these include Aaron Brookner, Leon Dimas, Dylan Erb, Deokhwan Kim, Taesoo Kim, Owen Lewis, Yair Shenfeld, and Adam Strandberg.

I would like to thank my teacher, Peter Ralston, who taught me to repeatedly question the obvious. My ability to commit to a project like this one and to see it to fruition has certainly been enhanced since studying with him.

Finally, I acknowledge my appreciation for support from the Office of Naval Research
\footnote{Grant numbers: N000140910466, N000141010841, N000141310260} 
without which this book would not have been remotely possible. I believe that their funding of basic research is an excellent way of ensuring that the US remains a global leader in the years to come.


\chapter{The category of sets}\label{chap:sets}

The theory of sets was invented as a foundation for all of mathematics. The notion of sets and functions serves as a basis on which to build our intuition about categories in general. In this chapter we will give examples of sets and functions and then move on to discuss commutative diagrams. At this point we can introduce ologs which will allow us to use the language of category theory to speak about real world concepts. Then we will introduce limits and colimits, and their universal properties. All of this material is basic set theory, but it can also be taken as an investigation of our first category, the {\em category of sets}, which we call $\Set$. We will end this chapter with some other interesting constructions in $\Set$ that do not fit into the previous sections.


\section{Sets and functions}\index{set} 


\subsection{Sets}

In this course I'll assume you know what a set is. We can think of a set $X$ as a collection of things $x\in X$, each of which is recognizable as being in $X$ and such that for each pair of named elements $x,x'\in X$ we can tell if $x=x'$ or not.
\footnote{Note that the symbol $x'$, read ``x-prime", has nothing to do with calculus or derivatives. It is simply notation that we use to name a symbol that is suggested as being somehow like $x$. This suggestion of kinship between $x$ and $x'$ is meant only as an aid for human cognition, and not as part of the mathematics.}
The set of pendulums is the collection of things we agree to call pendulums, each of which is recognizable as being a pendulum, and for any two people pointing at pendulums we can tell if they're pointing at the same pendulum or not. 

\begin{figure}
\begin{center}
\includegraphics[height=2in]{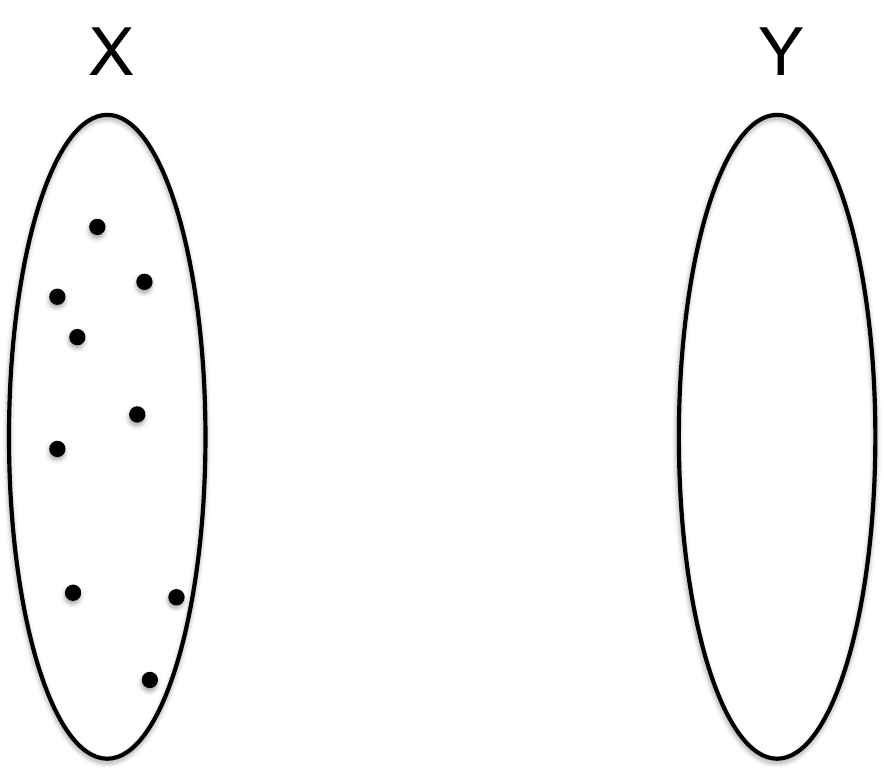}
\end{center}
\caption{A set $X$ with $9$ elements and a set $Y$ with no elements, $Y=\emptyset$.}
\end{figure}

\begin{notation}\label{not:basic math notation}

The symbol $\emptyset$\index{a symbol!$\emptyset$} denotes the set with no elements. The symbol $\NN$\index{a symbol!$\NN$} denotes the set of natural numbers, which we can write as 
$$\NN:=\{0,1,2,3,4,\ldots,877,\ldots\}.$$
The symbol $\ZZ$\index{a symbol!$\ZZ$} denotes the set of integers, which contains both the natural numbers and their negatives, 
$$\ZZ:=\{\ldots,-551,\ldots,-2,-1,0,1,2,\ldots\}.$$ 

If $A$ and $B$ are sets, we say that $A$ is a {\em subset}\index{subset} of $B$, and write $A\ss B$, if every element of $A$ is an element of $B$. So we have $\NN\ss\ZZ$. Checking the definition, one sees that for any set $A$, we have (perhaps uninteresting) subsets $\emptyset\ss A$ and $A\ss A$. We can use {\em set-builder notation}\index{set!set builder notation} to denote subsets. For example the set of even integers can be written $\{n\in\ZZ\|n\tn{ is even}\}$. The set of integers greater than $2$ can be written in many ways, such as $$\{n\in\ZZ\|n>2\} \hsp\tn{or}\hsp\{n\in\NN\|n>2\}\hsp\tn{or}\hsp\{n\in\NN\|n\geq 3\}.$$

The symbol $\exists$ means ``there exists".\index{a symbol!$\exists$} So we could write the set of even integers as $$\{n\in\ZZ\|n\tn{ is even}\}\hsp=\hsp\{n\in\ZZ\|\exists m\in\ZZ\tn{ such that } 2m=n\}.$$ The symbol $\exists!$\index{a symbol!$\exists$"!} means ``there exists a unique". So the statement ``$\exists! x\in\RR\tn{ such that } x^2=0$" means that there is one and only one number whose square is 0. Finally, the symbol $\forall$ means ``for all".\index{a symbol!$\forall$} So the statement ``$\forall m\in\NN\;\exists n\in\NN\tn{ such that } m<n$" means that for every number there is a bigger one.

As you may have noticed, we use the colon-equals notation `` $A:=XYZ$ " to mean something like ``define $A$ to be $XYZ$".\index{a symbol!:=} That is, a colon-equals declaration is not denoting a fact of nature (like $2+2=4$), but a choice of the speaker. It just so happens that the notation above, such as $\NN:=\{0,1,2,\ldots\}$, is a widely-held choice.

\end{notation}

\begin{exercise}
Let $A=\{1,2,3\}$. What are all the subsets of $A$? Hint: there are 8.
\end{exercise}


\subsection{Functions}\label{sec:functions}

If $X$ and $Y$ are sets, then a {\em function $f$ from $X$ to $Y$},\index{function} denoted $f\taking X\to Y$, is a mapping that sends each element $x\in X$ to an element of $Y$, denoted $f(x)\in Y$. We call $X$ the {\em domain}\index{function!domain} of the function $f$ and we call $Y$ the {\em codomain}\index{function!codomain} of $f$. 

\begin{align}\label{dia:setmap}
\parbox{2.3in}{\includegraphics[height=2in]{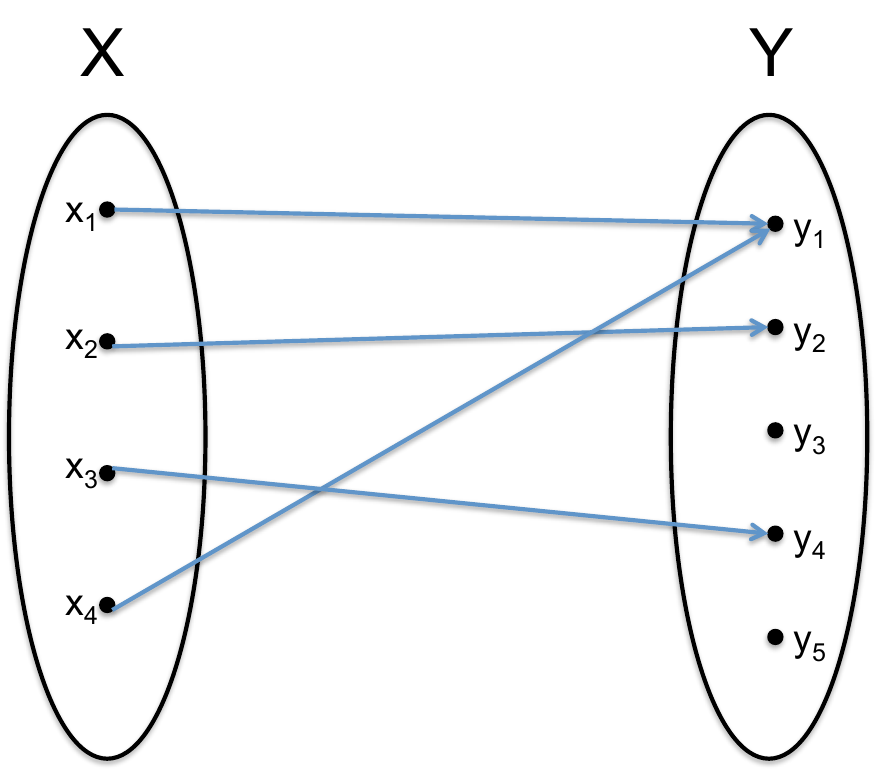}}
\end{align}

Note that for every element $x\in X$, there is exactly one arrow emanating from $x$, but for an element $y\in Y$, there can be several arrows pointing to $y$, or there can be no arrows pointing to $y$. 

\begin{application}\label{app:force-extension}\index{materials!force-extension curves}

In studying the mechanics of materials, one wishes to know how a material responds to tension. For example a rubber band responds to tension differently than a spring does. To each material we can associate a \href{http://en.wikipedia.org/wiki/StressÐstrain_curve}{\text force-extension curve}, recording how much force the material carries when extended to various lengths. Once we fix a methodology for performing experiments, finding a material's force-extension curve would ideally constitute a function from the set of materials to the set of curves.
\footnote{In reality, different samples of the same material, say samples of different sizes or at different temperatures, may have different force-extension curves. If we want to see this as a true function whose codomain is curves it should have as domain something like the set of material samples.}

\end{application}

\begin{exercise}

Here is a simplified account of how the \href{http://en.wikipedia.org/wiki/Retina}{\text brain receives light}. The eye contains about 100 million photoreceptor (PR) cells. Each connects to a retinal ganglion (RG) cell. No PR cell connects to two different RG cells, but usually many PR cells can attach to a single RG cell. 

Let $PR$ denote the set of photoreceptor cells and let $RG$ denote the set of retinal ganglion cells. 
\sexc According to the above account, does the connection pattern constitute a function $RG\to PR$, a function $PR\to RG$ or neither one? 
\next Would you guess that the connection pattern that exists between other areas of the brain are ``function-like"?
\endsexc
\end{exercise}

\begin{example}\label{ex:subset as function}\index{subset!as function}

Suppose that $X$ is a set and $X'\ss X$ is a subset. Then we can consider the function $X'\to X$ given by sending every element of $X'$ to ``itself" as an element of $X$. For example if $X=\{a,b,c,d,e,f\}$ and $X'=\{b,d,e\}$ then $X'\ss X$ and we turn that into the function $X'\to X$ given by $b\mapsto b, d\mapsto d, e\mapsto e$.
\footnote{This kind of arrow,\;\;$\mapsto$\;\;, is read aloud as ``maps to". A function $f\taking X\to Y$ means a rule for assigning to each element $x\in X$ an element $f(x)\in Y$. We say that ``$x$ maps to $f(x)$" and write $x\mapsto f(x)$.}\index{a symbol!$\mapsto$}

As a matter of notation, we may sometimes say something like the following: Let $X$ be a set and let $i\taking X'\ss X$ be a subset. Here we are making clear that $X'$ is a subset of $X$, but that $i$ is the name of the associated function.

\end{example}

\begin{exercise}
Let $f\taking\NN\to\NN$ be the function that sends every natural number to its square, e.g. $f(6)=36$. First fill in the blanks below, then answer a question.
\sexc $2\mapsto\ul{\hspace{.5in}}$
\next $0\mapsto\ul{\hspace{.5in}}$
\next $-2\mapsto\ul{\hspace{.5in}}$
\next $5\mapsto\ul{\hspace{.5in}}$
\next Consider the symbol $\to$ and the symbol $\mapsto$. What is the difference between how these two symbols are used in this book?
\endsexc
\end{exercise}

Given a function $f\taking X\to Y$, the elements of $Y$ that have at least one arrow pointing to them are said to be {\em in the image} of $f$; that is we have \index{image}
\begin{align}\label{dia:image}
\im(f):=\{y\in Y\| \exists x\in X \tn{ such that } f(x)=y\}.
\end{align} 

\begin{exercise}
If $f\taking X\to Y$ is depicted by (\ref{dia:setmap}) above, write its image, $\im(f)$ as a set.
\end{exercise}

Given a function $f\taking X\to Y$ and a function $g\taking Y\to Z$, where the codomain of $f$ is the same set as the domain of $g$ (namely $Y$), we say that $f$ and $g$ are composable 
$$X\Too{f}Y\Too{g}Z.$$ The {\em composition of $f$ and $g$}\label{function composition}\index{function!composition}\index{composition!of functions}\index{a symbol!$\circ$} is denoted by $g\circ f\taking X\to Z$. 

\begin{figure}[h]
\begin{center}
\includegraphics[height=2in]{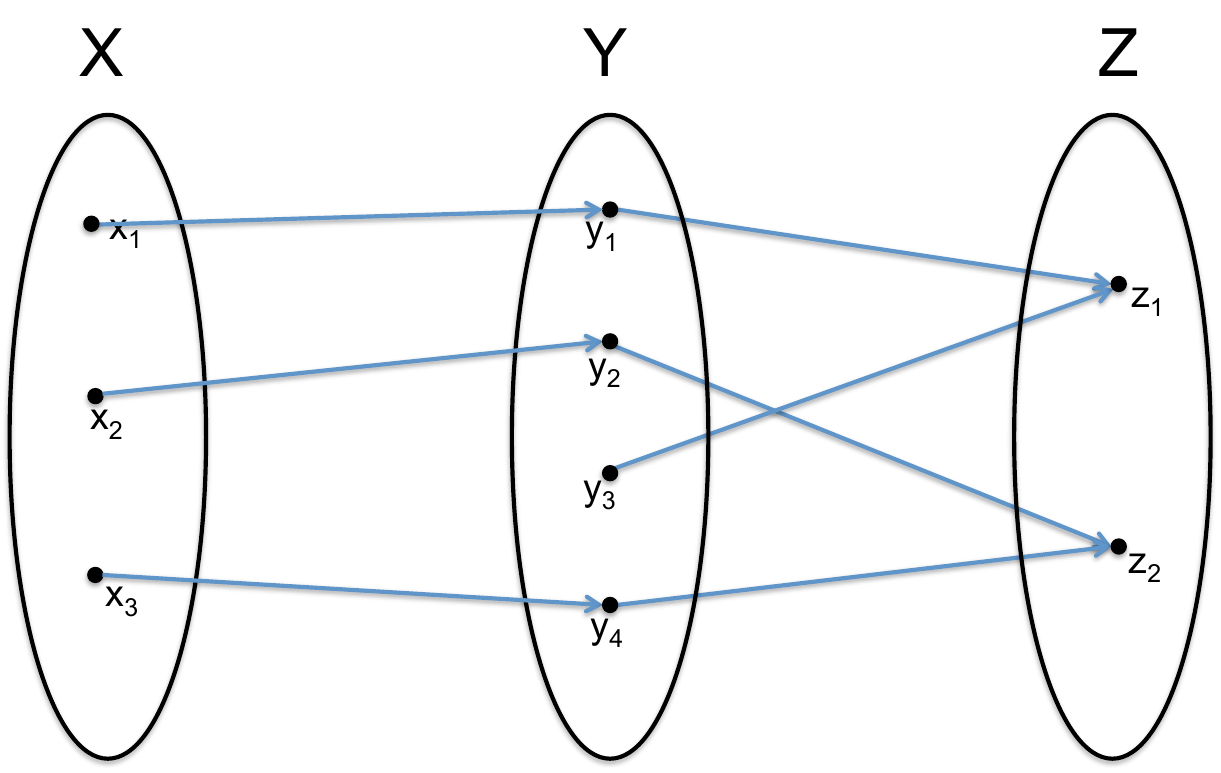}
\end{center}
\caption{Functions $f\taking X\to Y$ and $g\taking Y\to Z$ compose to a function $g\circ f\taking X\to Z$; just follow the arrows.}
\end{figure}

Let $X$ and $Y$ be sets. We write $\Hom_\Set(X,Y)$\index{a symbol!$\Hom_\Set$} to denote the set of functions $X\to Y$.
\footnote{The strange notation $\Hom_\Set(-,-)$ will make more sense later, when it is seen as part of a bigger story.} 
Note that two functions $f,g\taking X\to Y$ are equal\index{function!equality of} if and only if for every element $x\in X$ we have $f(x)=g(x)$. 

\begin{exercise}
Let $A=\{1,2,3,4,5\}$ and $B=\{x,y\}.$ 
\sexc How many elements does $\Hom_\Set(A,B)$ have? 
\next How many elements does $\Hom_\Set(B,A)$ have?
\endsexc
\end{exercise}

\begin{exercise}~
\sexc Find a set $A$ such that for all sets $X$ there is exactly one element in $\Hom_\Set(X,A)$. Hint: draw a picture of proposed $A$'s and $X$'s.
\next Find a set $B$ such that for all sets $X$ there is exactly one element in $\Hom_\Set(B,X)$.
\endsexc 
\end{exercise}

For any set $X$, we define the {\em identity function on $X$}\index{function!identity}, denoted $\id_X\taking X\to X$, to be the function such that for all $x\in X$ we have $\id_X(x)=x$.\index{a symbol!$\id_X$}

\begin{definition}[Isomorphism]\label{def:iso in set}

Let $X$ and $Y$ be sets. A function $f\taking X\to Y$ is called an {\em isomorphism}\index{function!isomorphism}\index{isomorphism!of sets}, denoted $f\taking X\To{\iso}Y$, if there exists a function $g\taking Y\to X$ such that $g\circ f=\id_X$ and $f\circ g=\id_Y$. We also say that $f$ is {\em invertible} and we say that $g$ is {\em the inverse}\index{function!inverse} of $f$. If there exists an isomorphism $X\To\iso Y$ we say that $X$ and $Y$ are {\em isomorphic} sets and may write $X\iso Y$. \index{a symbol!$\iso$}

\end{definition}

\begin{example}

If $X$ and $Y$ are sets and $f\taking X\to Y$ is an isomorphism then the analogue of Diagram \ref{dia:setmap} will look like a perfect matching, more often called a {\em one-to-one correspondence}\index{one-to-one correspondence}\index{correspondence!one-to-one}. That means that no two arrows will hit the same element of $Y$, and every element of $Y$ will be in the image. For example, the following depicts an isomorphism $X\To{\iso}Y$.

\begin{align}\label{dia:setmapiso}
\parbox{2.3in}{\includegraphics[height=2in]{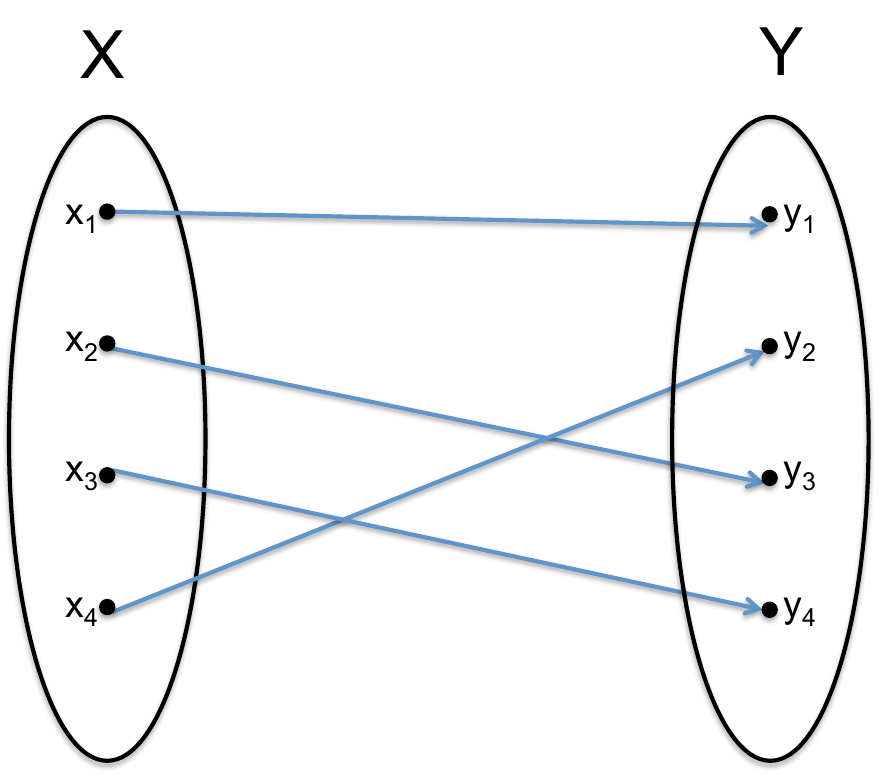}}
\end{align}

\end{example}

\begin{application}\label{app:DNA RNA}\index{RNA transcription}

There is an isomorphism between the set $\tn{Nuc}_\tn{DNA}$ of \href{http://en.wikipedia.org/wiki/Nucleotides}{\text nucleotides} found in DNA and the set $\tn{Nuc}_\tn{RNA}$ of nucleotides found in RNA. Indeed both sets have four elements, so there are 24 different isomorphisms. But only one is useful. Before we say which one it is, let us say there is also an isomorphism $\tn{Nuc}_\tn{DNA}\iso\{A,C,G,T\}$ and an isomorphism $\tn{Nuc}_\tn{RNA}\iso\{A,C,G,U\}$, and we will use the letters as abbreviations for the nucleotides. 

The convenient isomorphism $\tn{Nuc}_\tn{DNA}\To{\iso}\tn{Nuc}_\tn{RNA}$ is that given by RNA transcription; it sends 
$$A\mapsto U, C\mapsto G, G\mapsto C, T\mapsto A.$$ 
(See also Application \ref{app:polymerase}.) There is also an isomorphism $\tn{Nuc}_\tn{DNA}\To{\iso}\tn{Nuc}_\tn{DNA}$ (the matching in the double-helix) given by 
$$A\mapsto T, C\mapsto G, G\mapsto C, T\mapsto A.$$

Protein production can be modeled as a function from the set of 3-nucleotide sequences to the set of eukaryotic amino acids. However, it cannot be an isomorphism because there are $4^3=64$ triplets of RNA nucleotides, but only 21 eukaryotic amino acids. 

\end{application}

\begin{exercise}
Let $n\in\NN$ be a natural number and let $X$ be a set with exactly $n$ elements. 
\sexc How many isomorphisms are there from $X$ to itself? 
\next Does your formula from part a.) hold when $n=0$?
\endsexc
\end{exercise}

\begin{lemma}\label{lemma:isomorphic ER in Set}

The following facts hold about isomorphism.
\begin{enumerate}
\item Any set $A$ is isomorphic to itself; i.e. there exists an isomorphism $A\To{\iso} A$.
\item For any sets $A$ and $B$, if $A$ is isomorphic to $B$ then $B$ is isomorphic to $A$.
\item For any sets $A, B,$ and $C$, if $A$ is isomorphic to $B$ and $B$ is isomorphic to $C$ then $A$ is isomorphic to $C$.
\end{enumerate}

\end{lemma}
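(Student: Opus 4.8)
The plan is to recognize that the three facts are exactly the statements that ``being isomorphic'' is reflexive, symmetric, and transitive — in other words, that $\iso$ is an equivalence relation on sets — and to prove each one by exhibiting an explicit inverse function built directly from Definition \ref{def:iso in set}. The only ingredients needed are the identity function, function composition, and two elementary properties: that composing a function with an identity returns the function unchanged, and that composition is associative. Both follow at once from the element-wise description of composition together with the criterion that two functions agree iff they agree on every input.

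For part 1, I would take the witnessing isomorphism $A\To{\iso}A$ to be the identity $\id_A$. Since $\id_A\circ\id_A=\id_A$, the function $\id_A$ serves as its own inverse, so it satisfies the definition with $g=\id_A$. For part 2, suppose $f\taking A\to B$ is an isomorphism with inverse $g\taking B\to A$, so that $g\circ f=\id_A$ and $f\circ g=\id_B$. I would simply observe that these same two equations, read in the opposite order, assert precisely that $g\taking B\to A$ is an isomorphism with inverse $f$; hence $B\iso A$. No computation beyond renaming is required.

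For part 3, suppose $f\taking A\to B$ has inverse $g\taking B\to A$ and $h\taking B\to C$ has inverse $k\taking C\to B$. I would propose the composite $h\circ f\taking A\to C$ as the isomorphism, with $g\circ k\taking C\to A$ as its candidate inverse, and then verify the two required equations by regrouping with associativity and cancelling the middle identity: $(g\circ k)\circ(h\circ f)=g\circ(k\circ h)\circ f=g\circ\id_B\circ f=g\circ f=\id_A$, and symmetrically $(h\circ f)\circ(g\circ k)=h\circ(f\circ g)\circ k=h\circ\id_B\circ k=h\circ k=\id_C$. The main thing to watch is the order of composition and the correct pairing of each map with its own inverse; there is no genuine obstacle, only the bookkeeping of associativity and the identity laws, which I would either cite as immediate from the definitions or check in one line for functions.
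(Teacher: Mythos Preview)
Your proposal is correct and essentially identical to the paper's proof: both use $\id_A$ as its own inverse for reflexivity, swap the roles of $f$ and $g$ for symmetry, and for transitivity take the composite $h\circ f$ with inverse $g\circ k$, verifying the two equations by associativity and the cancellation of $\id_B$ in the middle. The only differences are notational (the paper writes $f',g'$ where you write $h,k$).
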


\begin{proof}

\begin{enumerate}
\item The identity function $\id_A\taking A\to A$ is invertible; its inverse is $\id_A$ because $\id_A\circ\id_A=\id_A$.
\item If $f\taking A\to B$ is invertible with inverse $g\taking B\to A$ then $g$ is an isomorphism with inverse $f$.
\item If $f\taking A\to B$ and $f'\taking B\to C$ are each invertible with inverses $g\taking B\to A$ and $g'\taking C\to B$ then the following calculations show that $f'\circ f$ is invertible with inverse $g\circ g'$: 
\begin{align*}
(f'\circ f)\circ(g\circ g')=f'\circ(f\circ g)\circ g'=f'\circ\id_B\circ g'=f'\circ g'=\id_C\\
(g\circ g')\circ(f'\circ f)=g\circ(g'\circ f')\circ f=g\circ\id_B\circ f=g\circ f=\id_A
\end{align*}
\end{enumerate}

\end{proof}

\begin{exercise}\label{exc:functions are not iso invariant}
Let $A$ and $B$ be the sets drawn below:
$$
\parbox{1.1in}{\boxtitle{A:=}\fbox{\xymatrix@=1pt{\\&\LMO{\;a\;}&&&\LMO{\;\;\;7\;\;}&\\\\\\&&&\LMO{Q}\\&}}}
\hspace{.8in}
\parbox{1.2in}{\boxtitle{B:=}\fbox{\xymatrix@=1pt{&&&\LMO{r8}&&\\\\\\\\&\LMO{``Bob"}\\&&\LMO{\clubsuit}}}}
$$
Note that the sets $A$ and $B$ are isomorphic. Supposing that $f\taking B\to\{1,2,3,4,5\}$ sends ``Bob" to $1$, sends $\clubsuit$ to $3$, and sends $r8$ to $4$, is there a canonical function $A\to\{1,2,3,4,5\}$ corresponding to $f$?
\footnote{Canonical means something like ``best choice", a choice that stands out as the only reasonable one.}\index{canonical}
\end{exercise}

\begin{exercise}\label{exc:generator for set}
Find a set $A$ such that for any set $X$ there is a isomorphism of sets $$X\iso\Hom_\Set(A,X).$$ Hint: draw a picture of proposed $A$'s and $X$'s.
\end{exercise}

For any natural number $n\in\NN$, define a set 
\begin{align}\label{dia:underline n}\index{a symbol!$\ul{n}$}
\ul{n}:=\{1,2,3,\ldots,n\}.
\end{align}
So, in particular, $\ul{2}=\{1,2\}, \ul{1}=\{1\}$, and $\ul{0}=\emptyset$. 

Let $A$ be any set. A function $f\taking\ul{n}\to A$ can be written as a sequence $$f=(f(1),f(2),\ldots,f(n)).$$

\begin{exercise}\label{exc:sequence}~
\sexc Let $A=\{a,b,c,d\}$. If $f\taking\ul{10}\to A$ is given by $(a,b,c,c,b,a,d,d,a,b)$, what is $f(4)$?
\next Let $s\taking\ul{7}\to\NN$ be given by $s(i)=i^2$. Write $s$ out as a sequence.
\endsexc
\end{exercise}

\begin{definition}Cardinality of finite sets]\label{def:cardinality}[

Let $A$ be a set and $n\in\NN$ a natural number. We say that $A$ is {\em has cardinality $n$}\index{cardinality}, denoted $$|A|=n,$$ if there exists an isomorphism of sets $A\iso\ul{n}$. If there exists some $n\in\NN$ such that $A$ has cardinality $n$ then we say that $A$ is {\em finite}. Otherwise, we say that $A$ is {\em infinite} and write $|A|\geq\infty$.

\end{definition}

\begin{exercise}~
\sexc Let $A=\{5,6,7\}$. What is $|A|$? 
\next What is $|\NN|$? 
\next What is $|\{n\in\NN\|n\leq 5\}|$?
\endsexc
\end{exercise}

\begin{lemma}

Let $A$ and $B$ be finite sets. If there is an isomorphism of sets $f\taking A\to B$ then the two sets have the same cardinality, $|A|=|B|$.

\end{lemma}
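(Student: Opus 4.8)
The plan is to strip the problem down to the standard sets $\ul{n}$ and then prove the one nontrivial combinatorial fact by induction. Since $A$ and $B$ are finite, Definition \ref{def:cardinality} supplies natural numbers $m,n$ and isomorphisms $A\To{\iso}\ul{m}$ and $B\To{\iso}\ul{n}$, and by definition $|A|=m$ and $|B|=n$. So the entire goal is to show $m=n$.

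First I would assemble a chain of isomorphisms. Using parts (2) and (3) of Lemma \ref{lemma:isomorphic ER in Set} (symmetry and transitivity of isomorphism), the isomorphism $A\To{\iso}\ul{m}$ can be inverted to $\ul{m}\To{\iso}A$ and then composed with $f\taking A\To{\iso}B$ and with $B\To{\iso}\ul{n}$ to produce an isomorphism $\ul{m}\To{\iso}\ul{n}$. This reduces the lemma to the following claim, which is where all the real content lies: \emph{if $\ul{m}\iso\ul{n}$ then $m=n$.}

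The hard part is this claim, which is essentially the pigeonhole principle; everything else is bookkeeping. I would prove it by induction on $m$. For the base case $m=0$, the set $\ul{0}=\emptyset$ is empty, and since an isomorphism is in particular surjective, $\ul{n}$ must also be empty, forcing $n=0$. For the inductive step, suppose the claim holds for $m-1$ and let $g\taking\ul{m}\To{\iso}\ul{n}$ with $m\geq 1$ (so $n\geq 1$ as well, by the base case applied in reverse). Composing $g$ with the transposition of $\ul{n}$ that swaps $g(m)$ and $n$ (which is the identity in the case $g(m)=n$) yields an isomorphism $g'\taking\ul{m}\To{\iso}\ul{n}$ with $g'(m)=n$. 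Since $g'$ is a bijection sending $m$ to $n$, its values on $\{1,\ldots,m-1\}$ form exactly $\{1,\ldots,n-1\}$, so $g'$ restricts to an isomorphism $\ul{m-1}\To{\iso}\ul{n-1}$. By the inductive hypothesis $m-1=n-1$, hence $m=n$.

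Combining the chain of isomorphisms with this claim gives $m=n$, that is $|A|=|B|$. I expect the inductive claim to be the only genuine obstacle: it is the step that encodes the pigeonhole principle and requires an honest argument, whereas the passage from $f$ to an isomorphism $\ul{m}\To{\iso}\ul{n}$ is a routine application of the equivalence-relation properties already recorded in Lemma \ref{lemma:isomorphic ER in Set}.
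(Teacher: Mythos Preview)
Your proof is correct and follows essentially the same approach as the paper: reduce to an isomorphism $\ul{m}\To{\iso}\ul{n}$ by composing through $f$, then invoke induction to conclude $m=n$. The paper's proof is actually terser than yours---it simply asserts ``One can prove by induction that the sets $\ul{m}$ and $\ul{n}$ are isomorphic if and only if $m=n$'' without carrying out the argument, whereas you supply the full inductive proof with the transposition trick.
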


\begin{proof}

Suppose $f\taking A\to B$ is an isomorphism. If there exists natural numbers $m,n\in\NN$ and isomorphisms $a\taking\ul{m}\To\iso A$ and $b\taking\ul{n}\To\iso B$ then $\ul{m}\To{a^\m1}A\To{f}B\To{b}\ul{n}$ is an isomorphism. One can prove by induction that the sets $\ul{m}$ and $\ul{n}$ are isomorphic if and only if $m=n$. 

\end{proof}


\section{Commutative diagrams}\label{sec:comm diag}
\addtocounter{subsection}{1}\setcounter{subsubsection}{0}

At this point it is difficult to precisely define diagrams or commutative diagrams in general, but we can give the heuristic idea.
\footnote{We will define commutative diagrams precisely in Section \ref{sec:diagrams in a category}.}
Consider the following picture: 
\begin{align}\label{dia:triangle}
\xymatrix{A\ar[r]^f\ar[rd]_h&B\ar[d]^g\\&C}
\end{align}
We say this is a {\em diagram of sets}\index{diagram!in $\Set$} if each of $A,B,C$ is a set and each of $f,g,h$ is a function. We say this diagram {\em commutes}\index{commuting diagram}\index{diagam!commutes} if $g\circ f = h$. In this case we refer to it as a commutative triangle of sets.

\begin{application}

\href{http://en.wikipedia.org/wiki/Central_dogma_of_molecular_biology}{\text The central dogma of molecular biology} is that ``DNA codes for RNA codes for protein". That is, there is a function from DNA triplets to RNA triplets and a function from RNA triplets to amino acids. But sometimes we just want to discuss the translation from DNA to amino acids, and this is the composite of the other two. The commutative diagram is a picture of this fact.

\end{application}

Consider the following picture:
$$\xymatrix{A\ar[r]^f\ar[d]_h&B\ar[d]^g\\C\ar[r]_i&D}$$
We say this is a {\em diagram of sets} if each of $A,B,C,D$ is a set and each of $f,g,h,i$ is a function. We say this diagram {\em commutes} if $g\circ f=i\circ h$. In this case we refer to it as a commutative square of sets.

\begin{application}

Given a physical system $S$, there may be two mathematical approaches $f\taking S\to A$ and $g\taking S\to B$ that can be applied to it. Either of those results in a prediction of the same sort, $f'\taking A\to P$ and $g'\taking B\to P$. For example, in \href{http://en.wikipedia.org/wiki/Hamiltonian_mechanics#As_a_reformulation_of_Lagrangian_mechanics}{\text mechanics} we can use either Lagrangian approach or the Hamiltonian approach to predict future states. To say that the diagram 
$$
\xymatrix{S\ar[r]\ar[d]&A\ar[d]\\B\ar[r]&P}
$$
commutes would say that these approaches give the same result.

\end{application}

And so on. Note that diagram (\ref{dia:triangle}) is considered to be the same diagram as each of the following:
$$
\xymatrix{A\ar[r]^f\ar[d]_h&B\ar[dl]^g\\C}\hspace{.8in}
\xymatrix{A\ar[r]^f\ar@/_1pc/[rr]_h&B\ar[r]^g&C}\hspace{.8in}
\xymatrix{B\ar[rd]^g\\&C\\A\ar[ru]_h\ar[uu]^f}$$


\section{Ologs}\label{sec:ologs}\index{olog}

In this course we will ground the mathematical ideas in applications whenever possible. To that end we introduce ologs, which will serve as a bridge between mathematics and various conceptual landscapes. The following material is taken from \cite{SK}, an introduction to ologs.
\begin{align}\label{dia:arginine}\fbox{\xymatrixnocompile{\obox{D}{1in}{\rr an amino acid found in dairy}\LAL{dr}{is}&\obox{A}{.5in}{arginine}\ar@{}[dl]|(.3){\checkmark}\ar@{}[dr]|(.3){\checkmark}\LA{r}{has}\LAL{l}{is}\LA{d}{is}&\obox{E}{.9in}{\rr an electrically-charged side chain}\LA{d}{is}\\&\obox{X}{.9in}{an amino acid}\LAL{dl}{has}\LA{dr}{has}\LA{r}{has}&\smbox{R}{a side chain}\\\mebox{N}{an amine group}&&\mebox{C}{a carboxylic acid}}}\end{align}  



\subsection{Types}\index{olog!types}

A type is an abstract concept, a distinction the author has made.  We represent each type as a box containing a {\em singular indefinite noun phrase.}   Each of the following four boxes is a type: \begin{align}\label{dia:types}\xymatrixnocompile{\fbox{a man}&\fbox{an automobile}\\\obox{}{1.5in}{a pair $(a,w)$, where $w$ is a woman and $a$ is an automobile}&\obox{}{1.5in}{a pair $(a,w)$ where $w$ is a woman and $a$ is a blue automobile owned by $w$}}\end{align}

Each of the four boxes in (\ref{dia:types}) represents a type of thing, a whole class of things, and the label on that box is what one should call {\em each example} of that class.  Thus \fakebox{a man} does not represent a single man, but the set of men, each example of which is called ``a man".  Similarly, the bottom right box represents an abstract type of thing, which probably has more than a million examples, but the label on the box indicates the common name for each such example.  

Typographical problems emerge when writing a text-box in a line of text, e.g. the text-box \fbox{a man} seems out of place here, and the more in-line text-boxes there are, the worse it gets.  To remedy this, I will denote types which occur in a line of text with corner-symbols; e.g. I will write \fakebox{a man} instead of \fbox{a man}.


\subsubsection{Types with compound structures}

Many types have compound structures; i.e. they are composed of smaller units.  Examples include \begin{align}\label{dia:compound}\xymatrixnocompile{\obox{}{.7in}{\rr a man and a woman}&\obox{}{1.3in}{\rr a food portion $f$ and a child $c$ such that $c$ ate all of $f$}&\labox{}{a triple $(p,a,j)$ where $p$ is a paper, $a$ is an author of $p$, and $j$ is a journal in which $p$ was published}}\end{align}  It is good practice to declare the variables in a ``compound type", as I did in the last two cases of (\ref{dia:compound}).  In other words, it is preferable to replace the first box above with something like $$\obox{}{.8in}{a man $m$ and a woman $w$}\hsp\tn{or}\hsp\obox{}{1.1in}{\rr a pair $(m,w)$ where $m$ is a man and $w$ is a woman}$$ so that the variables $(m,w)$ are clear.

\begin{rules}\label{rules:types}\index{olog!rules}

A type is presented as a text box.  The text in that box should 
\begin{enumerate}[(i)]
\item begin with the word ``a" or ``an";
\item refer to a distinction made and recognizable by the olog's author;
\item refer to a distinction for which instances can be documented;
\item declare all variables in a compound structure. 
\end{enumerate}

\end{rules}

The first, second, and third rules ensure that the class of things represented by each box appears to the author as a well-defined set.  The fourth rule encourages good ``readability" of arrows, as will be discussed next in Section \ref{sec:aspects}.  

I will not always follow the rules of good practice throughout this document.  I think of these rules being followed ``in the background" but that I have ``nicknamed" various boxes.  So \fakebox{Steve} may stand as a nickname for \fakebox{a thing classified as Steve} and \fakebox{arginine} as a nickname for \fakebox{a molecule of arginine}. However, when pressed, one should always be able to rename each type according to the rules of good practice.


\subsection{Aspects}\label{sec:aspects}\index{olog!aspects}

An aspect of a thing $x$ is a way of viewing it, a particular way in which $x$ can be regarded or measured.  For example, a woman can be regarded as a person; hence ``being a person" is an aspect of a woman.  A molecule has a molecular mass (say in daltons), so ``having a molecular mass" is an aspect of a molecule.  In other words, by {\em aspect} we simply mean a function. The domain $A$ of the function $f\taking A\to B$ is the thing we are measuring, and the codomain is the set of possible ``answers" or results of the measurement. 
\begin{align}\label{dia:aspect 1}\xymatrixnocompile{\fbox{a woman}\LA{r}{is}&\fbox{a person}}\end{align}\begin{align}\label{dia:aspect 2}\xymatrixnocompile{\fbox{a molecule}\LA{rr}{has as molecular mass (Da)}&\hspace{.7in}&\fbox{a positive real number}}\end{align}

So for the arrow in (\ref{dia:aspect 1}), the domain is the set of women (a set with perhaps 3 billion elements); the codomain is the set of persons (a set with perhaps 6 billion elements).   We can imagine drawing an arrow from each dot in the ``woman" set to a unique dot in the ``person" set, just as in (\ref{dia:setmap}).  No woman points to two different people, nor to zero people --- each woman is exactly one person --- so the rules for a function are satisfied.  Let us now concentrate briefly on the arrow in (\ref{dia:aspect 2}).  The domain is the set of molecules, the codomain is the set $\RR_{>0}$ of positive real numbers.  We can imagine drawing an arrow from each dot in the ``molecule" set to a single dot in the ``positive real number" set.  No molecule points to two different masses, nor can a molecule have no mass: each molecule has exactly one mass.  Note however that two different molecules can point to the same mass.


\subsubsection{Invalid aspects}\label{sec:invalid aspect}\index{olog!invalid aspects}

I tried above to clarify what it is that makes an aspect ``valid", namely that it must be a ``functional relationship."  In this subsection I will show two arrows which on their face may appear to be aspects, but which on closer inspection are not functional (and hence are not valid as aspects).  
 
Consider the following two arrows:
\begin{align}\tag{\arabic{subsection}.\arabic{equation}*}\addtocounter{equation}{1}\label{dia:invalid 1}
\xymatrixnocompile{\fbox{a person}\LA{r}{has}&\fbox{a child}}
\end{align}
\vspace{-.13in}
\begin{align}\tag{\arabic{subsection}.\arabic{equation}*}\addtocounter{equation}{1}\label{dia:invalid 2}
\xymatrixnocompile{\fbox{a mechanical pencil}\LA{r}{uses}&\fbox{a piece of lead}}
\end{align}  
A person may have no children or may have more than one child, so the first arrow is invalid: it is not a function.  Similarly, if we drew an arrow from each mechanical pencil to each piece of lead it uses, it would not be a function.

\begin{warning}\label{warn:worldview}\index{a warning!different worldviews}

The author of an olog has a world-view, some fragment of which is captured in the olog.  When person A examines the olog of person B, person A may or may not ``agree with it."  For example, person B may have the following olog $$\fbox{\xymatrix{&\fbox{a marriage}\LA{dr}{ includes}\LAL{dl}{includes }\\\fbox{a man}&&\fbox{a woman}}}$$ which associates to each marriage a man and a woman.  Person A may take the position that some marriages involve two men or two women, and thus see B's olog as ``wrong."  Such disputes are not ``problems" with either A's olog or B's olog, they are discrepancies between world-views.  Hence, throughout this paper, a reader R may see a displayed olog and notice a discrepancy between R's world-view and my own, but R should not worry that this is a problem.  This is not to say that ologs need not follow rules, but instead that the rules are enforced to ensure that an olog is structurally sound, rather than that it ``correctly reflects reality," whatever that may mean.

Consider the aspect $\fakebox{an object}\Too{\tn{has}}\fakebox{a weight}$. At some point in history, this would have been considered a valid function. Now we know that the same object would have a different weight on the moon than it has on earth. Thus as world-views change, we often need to add more information to our olog. Even the validity of $\fakebox{an object on earth}\Too{\tn{has}}\fakebox{a weight}$ is questionable. However to build a model we need to choose a level of granularity and try to stay within it, or the whole model evaporates into the nothingness of truth!

\end{warning}

\begin{remark}

In keeping with Warning \ref{warn:worldview}, the arrows (\ref{dia:invalid 1}) and (\ref{dia:invalid 2}) may not be wrong but simply reflect that the author has a strange world-view or a strange vocabulary.  Maybe the author believes that every mechanical pencil uses exactly one piece of lead.  If this is so, then $\fakebox{a mechanical pencil}\To{\tn{uses}}\fakebox{a piece of lead}$ is indeed a valid aspect!   Similarly, suppose the author meant to say that each person {\em was once} a child, or that a person has an inner child.  Since every person has one and only one inner child (according to the author), the map $\fakebox{a person}\To{\tn{has as inner child}}\fakebox{a child}$ is a valid aspect.  We cannot fault the olog if the author has a view, but note that we have changed the name of the label to make his or her intention more explicit.

\end{remark}


\subsubsection{Reading aspects and paths as English phrases}

Each arrow (aspect) $X\To{f} Y$ can be read by first reading the label on its source box (domain of definition) $X$, then the label on the arrow $f$, and finally the label on its target box (set of values) $Y$.  For example, the arrow \begin{align}\label{dia:first author}\fbox{\xymatrixnocompile{\smbox{}{a book}\LA{rrr}{has as first author}&&&\smbox{}{a person}}}\end{align} is read ``a book has as first author a person".  

\begin{remark}

Note that the map in (\ref{dia:first author}) is a valid aspect, but that a similarly benign-looking map $\fakebox{a book}\To{\tn{has as author}}\fakebox{a person}$ would not be valid, because it is not functional.  The authors of an olog must be vigilant about this type of mistake because it is easy to miss and it can corrupt the olog.

\end{remark}

Sometimes the label on an arrow can be shortened or dropped altogether if it is obvious from context.  We will discuss this more in Section \ref{sec:facts} but here is a common example from the way I write ologs. \begin{align}\label{dia:pair of integers}\fbox{\xymatrixnocompile{&\obox{A}{1.2in}{\rr a pair $(x,y)$ where $x$ and $y$ are integers}\ar[dl]_x\ar[dr]^y\\\smbox{B}{an integer}&&\smbox{B}{an integer}}}\end{align}  Neither arrow is readable by the protocol given above (e.g. ``a pair $(x,y)$ where $x$ and $y$ are integers $x$ an integer" is not an English sentence), and yet it is obvious what each map means.  For example, given $(8,11)$ in $A$, arrow $x$ would yield $8$ and arrow $y$ would yield $11$.  The label $x$ can be thought of as a nickname for the full name ``yields, via the value of $x$," and similarly for $y$.  I do not generally use the full name for fear that the olog would become cluttered with text.

One can also read paths through an olog by inserting the word ``which" after each intermediate box.
\footnote{If the intended elements of an intermediate box are humans, it is polite to use ``who" rather than ``which", and other such conventions may be upheld if one so desires.}
For example the following olog has two paths of length 3 (counting arrows in a chain): \small\begin{align}\label{olog:paths}\fbox{\xymatrixnocompile{\fbox{a child}\LA{r}{is}&\fbox{a person}\LA{rr}{has as parents}\LAL{dr}{has, as birthday}&&\obox{}{.8in}{\rr a pair $(w,m)$ where $w$ is a woman and $m$ is a man}\LA{r}{$w$}&\fbox{a woman}\\&&\fbox{a date}\LA{r}{includes}&\fbox{a year}}}\end{align}  \normalsize The top path is read ``a child is a person, who has as parents a pair $(w,m)$ where $w$ is a woman and $m$ is a man, which yields, via the value of $w$, a woman."  The reader should read and understand the content of the bottom path, which associates to every child a year.


\subsubsection{Converting non-functional relationships to aspects}\label{sec:relations}

There are many relationships that are not functional, and these cannot be considered aspects.  Often the word ``has" indicates a relationship --- sometimes it is functional as in $\fakebox{a person}\To{\tn{ has }}\fakebox{a stomach}$, and sometimes it is not, as in $\fakebox{a father}\To{\tn{has}}\fakebox{a child}$. Obviously, a father may have more than one child. This one is easily fixed by realizing that the arrow should go the other way: there is a function $\fakebox{a child}\To{\tn{has}}\fakebox{a father}$. 

What about $\fakebox{a person}\To{\tn{owns}}\fakebox{a car}$. Again, a person may own no cars or more than one car, but this time a car can be owned by more than one person too. A quick fix would be to replace it by $\fakebox{a person}\To{\tn{owns}}\fakebox{a set of cars}$.   This is ok, but the relationship between \fakebox{a car} and \fakebox{a set of cars} then becomes an issue to deal with later.  There is another way to indicate such ``non-functional" relationships. In this case it would look like this:
$$
\fbox{\xymatrix{&\obox{}{1.15in}{a pair $(p,c)$ where $p$ is a person, $c$ is a car, and $p$ owns $c$.}\ar[ddl]_p\ar[ddr]^c\\\\
\obox{}{.5in}{a person}&&\obox{}{.3in}{a car}}}
$$
This setup will ensure that everything is properly organized. In general, relationships can involve more than two types, and the general situation looks like this $$\fbox{\xymatrixnocompile{&&\fbox{$R$}\ar[ddll]\ar[ddl]\ar[ddr]\\\\\fbox{$A_1$}&\fbox{$A_2$}&\cdots&\fbox{$A_n$}}}$$  For example, $$\fbox{\xymatrixnocompile{&\labox{R}{a sequence $(p,a,j)$ where $p$ is a paper, $a$ is an author of $p$, and $j$ is a journal in which $p$ was published}\ar[ddl]_p\ar[dd]_a\ar[ddr]^j\\\\\smbox{A_1}{a paper}&\smbox{A_2}{an author}&\smbox{A_3}{a journal}}}$$ 

\begin{exercise}
On page \pageref{dia:invalid 1} we indicate a so-called invalid aspect, namely 
\begin{align}\tag{\ref{dia:invalid 1}}\xymatrixnocompile{\fbox{a person}\LA{r}{has}&\fbox{a child}}
\end{align}
Create a (valid) olog that captures the parent-child relationship; your olog should still have boxes \fakebox{a person} and \fakebox{a child} but may have an additional box.
\end{exercise}

\begin{rules}\label{rules:aspects}\index{olog!rules}

An aspect is presented as a labeled arrow, pointing from a source box to a target box.  The arrow text should

\begin{enumerate}[(i)]
\item begin with a verb;
\item yield an English sentence, when the source-box text followed by the arrow text followed by the target-box text is read; and
\item refer to a functional relationship: each instance of the source type should give rise to a specific instance of the target type.
\end{enumerate}

\end{rules}


\subsection{Facts}\label{sec:facts}\index{olog!facts}

In this section I will discuss facts, which are simply ``path equivalences" in an olog. It is the notion of path equivalences that make category theory so powerful. 

A {\em path}\index{olog!path in} in an olog is a head-to-tail sequence of arrows. That is, any path starts at some box $B_0$, then follows an arrow emanating from $B_0$ (moving in the appropriate direction), at which point it lands at another box $B_1$, then follows any arrow emanating from $B_1$, etc, eventually landing at a box $B_n$ and stopping there. The number of arrows is the {\em length} of the path. So a path of length 1 is just an arrow, and a path of length 0 is just a box. We call $B_0$ the {\em source} and $B_n$ the {\em target} of the path.

Given an olog, the author may want to declare that two paths are equivalent.  For example consider the two paths from $A$ to $C$ in the olog 
\begin{align}\label{olog:commute}\fbox{\xymatrixnocompile{\smbox{A}{a person}\LA{rr}{has as parents}\LAL{drr}{\parbox{.8in}{has as mother}}&&\obox{B}{.8in}{\rr a pair $(w,m)$ where $w$ is a woman and $m$ is a man}\ar@{}[dll]|(.4){\checkmark}\LA{d}{yields as $w$}\\&&\smbox{C}{a woman}}}\end{align}  We know as English speakers that a woman parent is called a mother, so these two paths $A\to C$ should be equivalent.  A more mathematical way to say this is that the triangle in Olog (\ref{olog:commute}) {\em commutes}. That is, path equivalences are simply commutative diagrams as in Section \ref{sec:comm diag}. In the example above we concisely say ``a woman parent is equivalent to a mother."  We declare this by defining the diagonal map in (\ref{olog:commute}) to be {\em the composition} of the horizontal map and the vertical map. 

I generally prefer to indicate a commutative diagram by drawing a check-mark, $\checkmark$, in the region bounded by the two paths, as in Olog (\ref{olog:commute}).  Sometimes, however, one cannot do this unambiguously on the 2-dimensional page.  In such a case I will indicate the commutative diagrams (fact) by writing an equation.  For example to say that the diagram $$\xymatrix{A\ar[r]^f\ar[d]_h&B\ar[d]^g\\C\ar[r]_i&D}$$ commutes, we could either draw a checkmark inside the square or write the equation $A\;f\;g\simeq A\;h\;i$ above it\index{a symbol!$\simeq$}.
\footnote{We defined function composition on page \ref{function composition}, but here we're using a different notation.\index{a warning!notation for composition} There we would have said $g\circ f = i\circ h$, which is in the backwards-seeming {\em classical order}.\index{composition!classical order} Category theorists and others often prefer the {\em diagrammatic order}\index{composition!diagrammatic order} for writing compositions, which is $f;g = h;i$. For ologs, we follow the latter because it makes for better English sentences, and for the same reason we add the source object to the equation, writing $A f g \simeq A h i$.}
  Either way, it means that ``$f$ then $g$" is equivalent to ``$h$ then $i$".  

Here is another, more scientific example:
\begin{align*}
\fbox{\xymatrix{
\obox{}{1in}{a DNA sequence}\LA{rr}{is transcribed to}\LAL{drr}{codes for}&\hspace{.1in}&\obox{}{1.1in}{an RNA sequence}\ar@{}[dll]|(.35){\checkmark}\LA{d}{is translated to}\\
&&\obox{}{.6in}{a protein}}}
\end{align*}
Note how this diagram gives us the established terminology for the various ways in which DNA, RNA, and protein are related in this context.

\begin{exercise}\label{exc:family olog}

Create an olog for human nuclear biological families that includes the concept of person, man, woman, parent, father, mother, and child. Make sure to label all the arrows, and make sure each arrow indicates a valid aspect in the sense of Section \ref{sec:invalid aspect}. Indicate with check-marks ($\checkmark$) the diagrams that are intended to commute. If the 2-dimensionality of the page prevents a check-mark from being unambiguous, indicate the intended commutativity with an equation.
\end{exercise}

\begin{example}[Non-commuting diagram]

In my conception of the world, the following diagram does not commute:
\begin{align}\label{dia:non-commuting}
\xymatrixnocompile@=50pt{\obox{}{.5in}{a person}\LA{r}{has as father}\LAL{dr}{lives in}&\obox{}{.4in}{a man}\LA{d}{lives in}\\&\obox{}{.4in}{a city}}
\end{align}
The non-commutativity of Diagram (\ref{dia:non-commuting}) does not imply that, in my conception, no person lives in the same city as his or her father. Rather it implies that, in my conception, it is not the case that {\em every} person lives in the same city as his or her father.

\end{example}

\begin{exercise}
Create an olog about a scientific subject, preferably one you think about often. The olog should have at least five boxes, five arrows, and one commutative diagram. 
\end{exercise}


\subsubsection{A formula for writing facts as English}\index{olog!facts in English}

Every fact consists of two paths, say $P$ and $Q$, that are to be declared equivalent. The paths $P$ and $Q$ will necessarily have the same source, say $s$, and target, say $t$, but their lengths may be different, say $m$ and $n$ respectively.
\footnote{If the source equals the target, $s=t$, then it is possible  to have $m=0$ or $n=0$, and the ideas below still make sense.} 
We draw these paths as 
\begin{align}\label{dia:two paths for equivalence}
P:&\hsp\xymatrix@=22pt{\LMO{a_0=s}\ar[r]^{f_1}&\LMO{a_1}\ar[r]^{f_2}&\LMO{a_2}\ar[r]^{f_3}&\cdots\ar[r]^{f_{m-1}}&\LMO{a_{m-1}}\ar[r]^{f_m}&\LMO{a_m=t}}\\\nonumber
Q:&\hsp\xymatrix@=23pt{\LMO{b_0=s}\ar[r]^{g_1}&\LMO{b_1}\ar[r]^{g_2}&\LMO{b_2}\ar[r]^{g_3}&\cdots\ar[r]^{g_{n-1}}&\LMO{b_{n-1}}\ar[r]^{g_n}&\LMO{b_n=t}}
\end{align}
Every part $\ell$ of an olog (i.e. every box and every arrow) has an associated English phrase, which we write as $\qt{\ell}$. Using a dummy variable $x$ we can convert a fact into English too. The following general formula is a bit difficult to understand, see Example \ref{ex:English fact}, but here goes. The fact $P\simeq Q$ from (\ref{dia:two paths for equivalence}) can be Englishified as follows:

\begin{align}\label{dia:Englishification}\index{Englishification}
&\tn{Given }x,\qt{s},\tn{ consider the following. We know that }x\tn{ is }\qt{s}, \\
\nonumber&\tn {which } \qt{f_1}\;\qt{a_1}, \tn{ which } \qt{f_2}\;\qt{a_2}, \tn { which }\ldots \; \qt{f_{m-1}}\;\qt{a_{m-1}}, \tn { which } \qt{f_m}\;\qt{t}\\
\nonumber&\tn{that we'll call } P(x).\\
\nonumber&\tn{We also know that }x\tn{ is } \qt{s},\\
\nonumber&\tn {which } \qt{g_1}\;\qt{b_1}, \tn{ which }\qt{g_2}\;\qt{b_2}, \tn { which }\ldots\;\qt{g_{n-1}}\;\qt{b_{n-1}}, \tn { which } \qt{g_n}\;\qt{t}\\
\nonumber&\tn{that we'll call } Q(x).\\
\nonumber&\tn{Fact: whenever }x\tn{ is }``s",\tn{ we will have }P(x)=Q(x).
\end{align}

\begin{example}\label{ex:English fact}

Consider the olog
\begin{align}\label{olog:commute2}\fbox{\xymatrixnocompile{\smbox{A}{a person}\LA{rr}{has}\LAL{drr}{\parbox{.8in}{lives in}}&&\obox{B}{.7in}{\rr an address}\ar@{}[dll]|(.4){\checkmark}\LA{d}{is in}\\&&\smbox{C}{a city}}}
\end{align}
To put the fact that Diagram \ref{olog:commute2} commutes into English, we first Englishify the two paths: $F$=``a person has an address which is in a city" and $G$=``a person lives in a city". The source of both is $s$=``a person" and the target of both is $t$=``a city".
write:
\begin{align*}
&\tn{Given }x,\tn{a person, consider the following. We know that } x\tn{ is a person,}\\
&\tn{which has an address, which is in a city}\\
&\tn{that we'll call } P(x).\\
&\tn{We also know that }x\tn{ is a person,}\\
&\tn{which lives in a city}\\
&\tn{that we'll call } Q(x).\\
&\tn{Fact: whenever }x\tn{ is a person, we will have }P(x)=Q(x).
\end{align*}

\end{example}

\begin{exercise}
This olog was taken from \cite{Sp1}.
\begin{align}\label{dia:phone paths}\xymatrix{&\obox{N}{1in}{a phone number}\LA{rr}{has}&&\obox{C}{.8in}{an area code}\ar@{}[dll]|{\checkmark}\LA{d}{corresponds to}\\\obox{OLP}{1.2in}{an operational landline phone}\LA{ru}{is assigned}\LAL{r}{is}&\obox{P}{1in}{a physical phone}\LAL{rr}{\parbox{.55in}{\scriptsize is currently located in}}&&\obox{R}{.5in}{a region}}
\end{align} 
It says that a landline phone is physically located in the region that its phone number is assigned. Translate this fact into English using the formula from \ref{dia:Englishification}.
\end{exercise}

\begin{exercise}
In the above olog (\ref{dia:phone paths}), suppose that the box \fakebox{an operational landline phone} is replaced with the box \fakebox{an operational mobile phone}. Would the diagram still commute?
\end{exercise}


\subsubsection{Images}\label{sec:images}\index{olog!images}\index{image!in olog}

In this section we discuss a specific kind of fact, generated by any aspect. Recall that every function has an image, meaning the subset of elements in the codomain that are ``hit" by the function. For example the function $f(x)=2*x\taking \ZZ\to\ZZ$ has as image the set of all even numbers.

Similarly the set of mothers arises as is the image of the ``has as mother" function, as shown below 
$$
\xymatrix{\obox{P}{.5in}{a person}\LAL{rd}{has}\LA{rr}{$\stackrel{f\taking P\to P}{\tn{has as mother}}$}&&\obox{P}{.5in}{a person}\\
&\obox{M=\im(f)}{.6in}{a mother}\LAL{ur}{is}\ar@{}[u]|(.6){\checkmark}
}$$

\begin{exercise}
For each of the following types, write down a function for which it is the image, or say ``not clearly an image type" 
\sexc \fakebox{a book}
\next \fakebox{a material that has been fabricated by a process of type $T$}
\next \fakebox{a bicycle owner}
\next \fakebox{a child}
\next \fakebox{a used book}
\next \fakebox{an inhabited residence}
\endsexc
\end{exercise}


\section{Products and coproducts}\label{sec:prods and coprods in set}

In this section we introduce two concepts that are likely to be familiar, although perhaps not by their category-theoretic names, product and coproduct. Each is an example of a large class of ideas that exist far beyond the realm of sets.


\subsection{Products}\label{sec:products}\index{products!of sets}

\begin{definition}

Let $X$ and $Y$ be sets. The {\em product of $X$ and $Y$}, denoted $X\times Y$,\index{a symbol!$\times$} is defined as the set of ordered pairs $(x,y)$ where $x\in X$ and $y\in Y$. Symbolically, $$X\times Y=\{(x,y)\|x\in X,\;\; y\in Y\}.$$ There are two natural {\em projection functions} $\pi_1\taking X\times Y\to X$ and $\pi_2\taking X\times Y\to Y$.\index{projection functions}\index{product!projection functions}
$$\xymatrix@=15pt{&X\times Y\ar[ddr]^{\pi_2}\ar[ddl]_{\pi_1}\\\\X&&Y}$$

\end{definition}

\begin{example}\label{ex:grid1}[Grid of dots]\index{product!as grid}

Let $X=\{1,2,3,4,5,6\}$ and $Y=\{\clubsuit,\diamondsuit,\heartsuit,\spadesuit\}$. Then we can draw $X\times Y$ as a 6-by-4 grid of dots, and the projections as projections
\begin{align}
\parbox{2.9in}{\begin{center}\small $X\times Y$\vspace{-.1in}\end{center}\fbox{
\xymatrix@=10pt{
\LMO{(1,\clubsuit)}&\LMO{(2,\clubsuit)}&\LMO{(3,\clubsuit)}&\LMO{(4,\clubsuit)}&\LMO{(5,\clubsuit)}&\LMO{(6,\clubsuit)}\\
\LMO{(1,\diamondsuit)}&\LMO{(2,\diamondsuit)}&\LMO{(3,\diamondsuit)}&\LMO{(4,\diamondsuit)}&\LMO{(5,\diamondsuit)}&\LMO{(6,\diamondsuit)}\\
\LMO{(1,\heartsuit)}&\LMO{(2,\heartsuit)}&\LMO{(3,\heartsuit)}&\LMO{(4,\heartsuit)}&\LMO{(5,\heartsuit)}&\LMO{(6,\heartsuit)}\\
\LMO{(1,\spadesuit)}&\LMO{(2,\spadesuit)}&\LMO{(3,\spadesuit)}&\LMO{(4,\spadesuit)}&\LMO{(5,\spadesuit)}&\LMO{(6,\spadesuit)}\\
}}}
\parbox{.9in}{
\xymatrix{~\ar[rr]^{\pi_2}&&~}
}
\parbox{.3in}{\begin{center}\small $Y$\vspace{-.1in}\end{center}\fbox{
\xymatrix@=10pt{
\LMO{\clubsuit}\\\LMO{\diamondsuit}\\\LMO{\heartsuit}\\\LMO{\spadesuit}
}}}
\\\nonumber
\parbox{1in}{\hspace{-1.95in}\xymatrix{~\ar[dd]_{\pi_1}\\\\~}}
\\\nonumber
\parbox{2.9in}{\hspace{-1.2in}\fbox{
\xymatrix@=24pt{
\LMO{1}&\LMO{2}&\LMO{3}&\LMO{4}&\LMO{5}&\LMO{6}
}}\begin{center}\hspace{-2.6in}\small$X$\end{center}}
\end{align}

\end{example}

\begin{application}
A traditional (Mendelian) way to predict the genotype of offspring based on the genotype of its parents is by the use of \href{http://en.wikipedia.org/wiki/Punnett_square}{Punnett squares}. If $F$ is the set of possible genotypes for the female parent and $M$ is the set of possible genotypes of the male parent, then $F\times M$ is drawn as a square, called a Punnett square, in which every combination is drawn. 
\end{application}

\begin{exercise}
How many elements does the set $\{a,b,c,d\}\times\{1,2,3\}$ have?
\end{exercise}

\begin{application}

Suppose we are conducting experiments about the mechanical properties of materials, as in Application \ref{app:force-extension}. For each material sample we will produce multiple data points in the set $\fakebox{extension}\times\fakebox{force}\iso\RR\times\RR$.

\end{application}

\begin{remark}

It is possible to take the product of more than two sets as well. For example, if $A,B,$ and $C$ are sets then $A\times B\times C$ is the set of triples, 
$$A\times B\times C:=\{(a,b,c)\|a\in A, b\in B, c\in C\}.$$

This kind of generality is useful in understanding multiple dimensions, e.g. what physicists mean by 10-dimensional space. It comes under the heading of {\em limits}, which we will see in Section \ref{sec:lims and colims in a cat}.

\end{remark}

\begin{example}\label{ex:R2}

Let $\RR$\index{a symbol!$\RR$} be the set of real numbers. By $\RR^2$ we mean $\RR\times\RR$ (though see Exercise \ref{exc:two R2s}). Similarly, for any $n\in\NN$, we define $\RR^n$ to be the product of $n$ copies of $\RR$. 

According to \cite{Pen}, Aristotle seems to have conceived of space as something like $S:=\RR^3$ and of time as something like $T:=\RR$. Spacetime, had he conceived of it, would probably have been $S\times T\iso\RR^4$. He of course did not have access to this kind of abstraction, which was probably due to Descartes. 

\end{example}

\begin{exercise}
Let $\ZZ$ denote the set of integers, and let $+\taking\ZZ\times\ZZ\to\ZZ$ denote the addition function and $\cdot\taking\ZZ\times\ZZ\to\ZZ$ denote the multiplication function. Which of the following diagrams commute?
\sexc $$\xymatrix{
\ZZ\times\ZZ\times\ZZ\ar[rr]^-{(a,b,c)\mapsto(a\cdot b,a\cdot c)}\ar[d]_{(a,b,c)\mapsto(a+b,c)}&\hsp&\ZZ\times\ZZ\ar[d]^{(x,y)\mapsto x+y}\\
\ZZ\times\ZZ\ar[rr]_{(x,y)\mapsto xy}&&\ZZ}
$$
\next $$
\xymatrix{
\ZZ\ar[rr]^{x\mapsto (x,0)}\ar[drr]_{\id_\ZZ}&&\ZZ\times\ZZ\ar[d]^{(a,b)\mapsto a\cdot b}\\&&\ZZ}
$$
\next$$
\xymatrix{
\ZZ\ar[rr]^{x\mapsto (x,1)}\ar[drr]_{\id_\ZZ}&&\ZZ\times\ZZ\ar[d]^{(a,b)\mapsto a\cdot b}\\&&\ZZ}
$$
\endsexc
\end{exercise}


\subsubsection{Universal property for products}\index{products!universal property of}\index{universal property!products}

\begin{lemma}[Universal property for product]\label{lemma:up for prod}

Let $X$ and $Y$ be sets. For any set $A$ and functions $f\taking A\to X$ and $g\taking A\to Y$, there exists a unique function $A\to X\times Y$ such that the following diagram commutes \footnote{The symbol $\forall$ is read ``for all"; the symbol $\exists$ is read ``there exists", and the symbol $\exists!$ is read ``there exists a unique". So this diagram is intended to express the idea that for any functions $f\taking A\to X$ and $g\taking A\to Y$, there exists a unique function $A\to X\times Y$ for which the two triangles commute.}
\begin{align}\label{dia:univ prop for products}
\xymatrix@=15pt{&X\times Y\ar[ldd]_{\pi_1}\ar[rdd]^{\pi_2}\\\\X\ar@{}[r]|{\checkmark}&&Y\ar@{}[l]|{\checkmark}\\\\&A\ar[luu]^{\forall f}\ar[ruu]_{\forall g}\ar@{-->}[uuuu]^{\exists !}}
\end{align}
We might write the unique function as $$\prodmap{f}{g}\taking A\to X\times Y.$$

\end{lemma}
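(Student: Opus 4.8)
The plan is to prove both existence and uniqueness by simply exhibiting the function and then checking it is forced. The product $X\cross Y$ comes with an obvious candidate: since its elements are ordered pairs, I would define $h\taking A\to X\cross Y$ on each element $a\in A$ by the pairing $h(a):=(f(a),g(a))$. This is a well-defined function because $f(a)\in X$ and $g(a)\in Y$, so $(f(a),g(a))$ is a legitimate element of $X\cross Y$, and it is assigned unambiguously to each $a$.

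Next I would verify that $h$ makes Diagram \ref{dia:univ prop for products} commute, which by the definition of commuting triangles means checking the two equations $\pi_1\circ h=f$ and $\pi_2\circ h=g$. Since two functions agree exactly when they agree on every element, I would compute for arbitrary $a\in A$: by the definition of $\pi_1$ we get $\pi_1(h(a))=\pi_1(f(a),g(a))=f(a)$, and likewise $\pi_2(h(a))=g(a)$. This establishes existence.

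For uniqueness, suppose $h'\taking A\to X\cross Y$ is any function satisfying $\pi_1\circ h'=f$ and $\pi_2\circ h'=g$. The idea is that any element of $X\cross Y$ is completely determined by its two coordinates, so a function into $X\cross Y$ is completely determined by its two ``component'' functions. Concretely, for each $a\in A$ the value $h'(a)$ is some pair in $X\cross Y$, and a pair equals $(\pi_1(h'(a)),\pi_2(h'(a)))$. Using the two commutativity hypotheses, this pair is exactly $(f(a),g(a))=h(a)$. Since $h'(a)=h(a)$ for all $a\in A$, we conclude $h'=h$.

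The only genuinely delicate point — and the one I would state explicitly rather than gloss over — is the observation that every element $p\in X\cross Y$ satisfies $p=(\pi_1(p),\pi_2(p))$; this is what lets the coordinate functions pin down a function into the product uniquely, and it is the real content behind uniqueness. Everything else is routine element-chasing, so I do not expect a substantive obstacle.
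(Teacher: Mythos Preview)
Your proof is correct and takes essentially the same approach as the paper: define the function by $a\mapsto(f(a),g(a))$ and observe that the projection equations force both coordinates. The paper compresses existence and uniqueness into a single ``necessary and sufficient'' sentence, while you spell them out separately, but the underlying argument is identical.
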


\begin{proof}

Suppose given $f,g$ as above. To provide a function $\ell\taking A\to X\times Y$ is equivalent to providing an element $\ell(a)\in X\times Y$ for each $a\in A$. We need such a function for which $\pi_1\circ \ell=f$ and $\pi_2\circ \ell=g$. An element of $X\times Y$ is an ordered pair $(x,y)$, and we can use $\ell(a)=(x,y)$ if and only if $x=\pi_1(x,y)=f(a)$ and $y=\pi_2(x,y)=g(a)$. So it is necessary and sufficient to define $$\prodmap{f}{g}(a):=(f(a),g(a))$$ for all $a\in A$.

\end{proof}

\begin{example}[Grid of dots, continued]\label{ex:grid2}

We need to see the universal property of products as completely intuitive. Recall that if $X$ and $Y$ are sets, say of cardinalities $|X|=m$ and $|Y|=n$ respectively, then $X\times Y$ is an $m\times n$ grid of dots, and it comes with two canonical projections $X\From{\pi_1}X\times Y\To{\pi_2}Y$. These allow us to extract from every grid element $z\in X\times Y$ its column $\pi_1(z)\in X$ and its row $\pi_2(z)\in Y$.

Suppose that each person in a classroom picks an element of $X$ and an element of $Y$. Thus we have functions $f\taking C\to X$ and $g\taking C\to Y$. But isn't picking a column and a row the same thing as picking an element in the grid? The two functions $f$ and $g$ induce a unique function $C\to X\times Y$. And how does this function $C\to X\times Y$ compare with the original functions $f$ and $g$? The commutative diagram (\ref{dia:univ prop for products}) sums up the obvious connection. 

\end{example}

\begin{example}

Let $\RR$ be the set of real numbers. The origin in $\RR$ is an element of $\RR$. As you showed in Exercise \ref{exc:generator for set}, we can view this (or any) element of $\RR$ as a function $z\taking\singleton\to\RR$, where $\singleton$ is any set with one element. Our function $z$ ``picks out the origin". Thus we can draw functions 
$$\xymatrix@=15pt{&\singleton\ar[ddr]^z\ar[ddl]_z\\\\\RR&&\RR}
$$
The universal property for products guarantees a function $\singleton\to\RR\times\RR$, which will be the origin in $\RR^2.$

\end{example}

\begin{remark}

Given sets $X, Y,$ and $A$, and functions $f\taking A\to X$ and $g\taking A\to Y$, there is a unique function $A\to X\times Y$ that commutes with $f$ and $g$. We call it {\em the induced function $A\to X\times Y$},\index{induced function} meaning the one that arises in light of $f$ and $g$.

\end{remark}

\begin{exercise}
For every set $A$ there is some nice relationship between the following three sets: $$\Hom_{\Set}(A,X), \hsp \Hom_\Set(A,Y), \hsp \text{and} \hsp\Hom_\Set(A,X\times Y).$$ What is it?

Hint: Do not be alarmed: this problem is a bit ``recursive" in that you'll use products in your formula.
\end{exercise}

\begin{exercise}~
\sexc Let $X$ and $Y$ be sets. Construct the ``swap map" $s\taking X\times Y\to Y\times X$ using only the universal property for products. If $\pi_1\taking X\times Y\to X$ and $\pi_2\taking X\times Y\to Y$ are the projection functions, write $s$ in terms of the symbols $``\pi_1",``\pi_2", ``(\ ,\ )",$ and $``\circ"$. 
\next Can you prove that $s$ is a isomorphism using only the universal property for product?
\endsexc
\end{exercise}

\begin{example}\label{ex:product to product}
Suppose given sets $X,X', Y, Y'$ and functions $m\taking X\to X'$ and $n\taking Y\to Y'$. We can use the universal property of products to construct a function $s\taking X\times Y\to X'\times Y'$.  Here's how.

The universal property (Lemma \ref{lemma:up for prod}) says that to get a function from any set $A$ to $X'\times Y'$, we need two functions, namely some $f\taking A\to X'$ and some $g\taking A\to Y'$. Here $A=X\times Y$. 

What we have readily available are the two projections $\pi_1\taking X\times Y\to X$ and $\pi_2\taking X\times Y\to Y$. But we also have $m\taking X\to X'$ and $n\taking Y\to Y'$. Composing, we set $f:=m\circ \pi_1$ and $g:=n\circ\pi_2$.
$$\xymatrix{
&X'\times Y'\ar[dl]_{\pi_1'}\ar[dr]^{\pi_2'}\\
X'&&Y'\\
X\ar[u]^m&&Y\ar[u]_n\\
&X\times Y\ar[ul]^{\pi_1}\ar[ur]_{\pi_2}\ar@{-->}[uuu]
}
$$
The dotted arrow is often called the {\em product} of $m\taking X\to X'$ and $n\taking Y\to Y'$ and is denoted simply by 
$$m\times n\taking X\times Y\to X'\times Y'.$$

\end{example}


\subsubsection{Ologging products}\label{sec:ologging products}

Given two objects $c,d$ in an olog, there is a canonical label $\qt{c\times d}$ for their product $c\times d$, written in terms of the labels $\qt{c}$ and $\qt{d}$. Namely, $$\qt{c\times d}:=\tn{a pair }(x,y)\tn{ where }x\tn{ is }\qt{c}\tn{ and }y\tn{ is }\qt{d}.$$ The projections $c\from c\times d\to d$ can be labeled ``yields, as $x$," and ``yields, as $y$," respectively.

Suppose that $e$ is another object and $p\taking e\to c$ and $q\taking e\to d$ are two arrows. By the universal property of products (Lemma \ref{lemma:up for prod}), $p$ and $q$ induce a unique arrow $e\to c\times d$ making the evident diagrams commute. This arrow can be labeled
\begin{center}
yields, insofar as it $\qt{p}\;\qt{c}$ and $\qt{q}\;\qt{d}$, 
\end{center}

\begin{example}

Every car owner owns at least one car, but there is no obvious function $\fakebox{a car owner}\to\fakebox{a car}$ because he or she may own more than one. One good choice would be the car that the person drives most often, which we'll call his or her primary car. Also, given a person and a car, an economist could ask how much utility the person would get out of the car. From all this we can put together the following olog involving products:
$$
\xymatrixnocompile{\obox{O}{.7in}{a car owner}\LAL{dd}{is}\ar[ddrr]_(.35){\parbox{.45in}{\scriptsize owns, as primary,}}\LA{rr}{\parbox{.8in}{\rr\scriptsize yields, insofar as it is a person and owns, as primary, a car,}}&\ar@{}[d]^(.4){\checkmark}&
\obox{P\times C}{1in}{a pair $(x,y)$ where $x$ is a person and $y$ is a car}\ar@/^1pc/[ddll]^(.7){\tn{yields, as }x,}\LA{dd}{\tn{yields, as }y,}\LA{rr}{\parbox{.7in}{\scriptsize has as associated utility}}&&\obox{V}{.8in}{a dollar value}\\&&\\\obox{P}{.5in}{a person}&&\obox{C}{.4in}{a car}
}
$$

\end{example}


\subsection{Coproducts}\label{sec:coproducts}\index{coproducts!of sets}

\begin{definition}\label{def:coproduct}

Let $X$ and $Y$ be sets. The {\em coproduct of $X$ and $Y$}, denoted $X\sqcup Y$,\index{a symbol!$\sqcup$} is defined as the ``disjoint union" of $X$ and $Y$, i.e. the set for which an element is either an element of $X$ or an element of $Y$. If something is an element of both $X$ and $Y$ then we include both copies, and distinguish between them, in $X\sqcup Y$. See Example \ref{ex:coproduct}

There are two natural inclusion functions $i_1\taking X\to X\sqcup Y$ and $i_2\taking Y\to X\sqcup Y$.\index{inclusion functions}\index{coproduct!inclusion functions}
$$\xymatrix@=15pt{X\ar[ddr]_{i_1}&&Y\ar[ddl]^{i_2}\\\\&X\sqcup Y}$$

\end{definition}

\begin{example}\label{ex:coproduct}

The coproduct of $X:=\{a,b,c,d\}$ and $Y:=\{1,2,3\}$ is $$X\sqcup Y\iso\{a,b,c,d,1,2,3\}.$$ The coproduct of $X$ and itself is $$X\sqcup X\iso\{i_1a,i_1b,i_1c,i_1d,i_2a,i_2b,i_2c,i_2d\}$$ 
The names of the elements in $X\sqcup Y$ are not so important. What's important are the inclusion maps $i_1,i_2$, which ensure that we know where each element of $X\sqcup Y$ came from.

\end{example}

\begin{example}[Airplane seats]\label{ex:airplanes}

\begin{align}\label{dia:airplane}
\xymatrix@=15pt{
\obox{X}{.8in}{an economy-class seat in an airplane}\LAL{ddr}{is}&&\obox{Y}{.7in}{a first-class seat in an airplane}\LA{ddl}{is}\\\\
&\obox{X\sqcup Y}{.7in}{a seat in an airplane}
}
\end{align}

\end{example}

\begin{exercise}
Would you say that \fakebox{a phone} is the coproduct of \fakebox{a cellphone} and \fakebox{a landline phone}? 
\end{exercise}

\begin{example}[Disjoint union of dots]\label{ex:coprod of dots}

\begin{align}
\parbox{2.4in}{\begin{center}\small $X\sqcup Y$\vspace{-.1in}\end{center}\fbox{
\xymatrix@=15pt{
\LMO{\clubsuit}&\LMO{1}&\LMO{2}&\LMO{3}&\LMO{4}&\LMO{5}&\LMO{6}\\\LMO{\diamondsuit}\\\LMO{\heartsuit}\\\LMO{\spadesuit}
}}}
\parbox{.9in}{
\xymatrix{~&&\ar[ll]_{i_2}~}
}
\parbox{.3in}{\begin{center}\small $Y$\vspace{-.1in}\end{center}\fbox{
\xymatrix@=15pt{
\LMO{\clubsuit}\\\LMO{\diamondsuit}\\\LMO{\heartsuit}\\\LMO{\spadesuit}
}}}
\\\nonumber
\parbox{1in}{\hspace{-1.4in}\xymatrix{~\\\\\ar[uu]_{i_1}}}
\\\nonumber
\parbox{2.1in}{\hspace{-1.3in}\fbox{
\xymatrix@=15pt{
\LMO{1}&\LMO{2}&\LMO{3}&\LMO{4}&\LMO{5}&\LMO{6}
}}\begin{center}\hspace{-2.6in}\small$X$\end{center}}
\end{align}

\end{example}


\subsubsection{Universal property for coproducts}\index{coproducts!universal property of}

\begin{lemma}[Universal property for coproduct]\label{lemma:up for coprod}

Let $X$ and $Y$ be sets. For any set $A$ and functions $f\taking X\to A$ and $g\taking Y\to A$, there exists a unique function $X\sqcup Y\to A$ such that the following diagram commutes
$$
\xymatrix@=15pt{&A\\\\X\ar[uur]^{\forall f}\ar[ddr]_{i_1}&&Y\ar[uul]_{\forall g}\ar[ddl]^{i_2}\\\\&X\sqcup Y\ar@{-->}[uuuu]^{\exists!}}
$$
We might write the unique function as 
\footnote{We are about to use a two-line symbol, which is a bit unusual. In what follows a certain function $X\sqcup Y\to A$ is being denoted by the symbol $\coprodmap{f}{g}$.}
$$\coprodmap{f}{g}\taking X\sqcup Y\to A.$$

\end{lemma}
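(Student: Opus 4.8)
The plan is to mirror the proof of the universal property for products (Lemma \ref{lemma:up for prod}), exploiting the dual structure of the coproduct. The key observation is that by Definition \ref{def:coproduct}, every element $z \in X \sqcup Y$ arises in exactly one of two ways: either $z = i_1(x)$ for a unique $x \in X$, or $z = i_2(y)$ for a unique $y \in Y$. This is precisely the ``disjoint'' part of the disjoint union, and it is what lets us specify a function out of $X \sqcup Y$ by independently prescribing its behavior on the two copies.

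For existence, I would define $h \taking X \sqcup Y \to A$ by setting $h(i_1(x)) = f(x)$ for each $x \in X$ and $h(i_2(y)) = g(y)$ for each $y \in Y$. Because each $z \in X \sqcup Y$ falls into exactly one of these two cases, the rule assigns to every element of the domain one and only one element of $A$, so $h$ is a well-defined function. Reading off the construction, $h \circ i_1 = f$ and $h \circ i_2 = g$, so both triangles in the diagram commute and $h$ is the desired $f \sqcup g$.

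For uniqueness, I would suppose $h' \taking X \sqcup Y \to A$ is any function satisfying $h' \circ i_1 = f$ and $h' \circ i_2 = g$, and check that $h'$ agrees with $h$ on every element. Given $z \in X \sqcup Y$, in the case $z = i_1(x)$ we have $h'(z) = (h' \circ i_1)(x) = f(x) = h(z)$, and in the case $z = i_2(y)$ we have $h'(z) = (h' \circ i_2)(y) = g(y) = h(z)$. Since these two cases exhaust $X \sqcup Y$, we conclude $h' = h$.

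The only real content --- the step to state carefully rather than the routine verifications --- is well-definedness: one must know that the inclusions $i_1, i_2$ are jointly surjective and have disjoint images, so that the case split is both exhaustive and unambiguous. This is exactly the defining feature of the coproduct, and it is the dual counterpart of the fact used for products, namely that an element of $X \cross Y$ is determined by its two projections $\pi_1, \pi_2$.
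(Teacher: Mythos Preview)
Your proof is correct and follows essentially the same approach as the paper: both use the fact that every element of $X\sqcup Y$ is of the form $i_1x$ or $i_2y$ (but not both) to define $f\sqcup g$ by cases, and both observe that this case definition is forced by the commutativity requirement, yielding uniqueness. Your version is slightly more explicit in separating the existence and uniqueness arguments, but the content is identical.
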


\begin{proof}

Suppose given $f,g$ as above. To provide a function $\ell\taking X\sqcup Y\to A$ is equivalent to providing an element $f(m)\in A$ is for each $m\in X\sqcup Y$. We need such a function such that $\ell\circ i_1=f$ and $\ell\circ i_2=g$. But each element $m\in X\sqcup Y$ is either of the form $i_1x$ or $i_2y$, and cannot be of both forms. So we assign 
$$\coprodmap{f}{g}(m)=\begin{cases}f(x)&\tn{if } m=i_1x,\\ g(y) &\tn{if }m=i_2y.\end{cases}$$
This assignment is necessary and sufficient to make all relevant diagrams commute.

\end{proof}

\begin{example}[Airplane seats, continued]

The universal property of coproducts says the following. Any time we have a function $X\to A$ and a function $Y\to A$, we get a unique function $X\sqcup Y\to A$. For example, every economy class seat in an airplane and every first class seat in an airplane is actually {\em in a particular airplane}. Every economy class seat has a price, as does every first class seat.
\begin{align}
\xymatrix{
&\obox{A}{.9in}{a dollar figure}&\\
\obox{X}{.8in}{an economy-class seat in an airplane}\LA{ru}{has as price}\LA{r}{is}\LAL{dr}{is in}&\obox{X\sqcup Y}{.7in}{a seat in an airplane}\ar@{-->}[d]_{\exists!}\ar@{-->}[u]^{\exists!}\ar@{}[ur]|(.35){\checkmark}\ar@{}[dl]|(.35){\checkmark}\ar@{}[dr]|(.35){\checkmark}\ar@{}[ul]|(.35){\checkmark}&\obox{Y}{.7in}{a first-class seat in an airplane}\LAL{l}{is}\LAL{lu}{has as price}\LA{dl}{is in}\\
&\obox{B}{.7in}{an airplane}&
}
\end{align}
The universal property of coproducts formalizes the following intuitively obvious fact:
\begin{quote}
If we know how economy class seats are priced and we know how first class seats are priced, and if we know that every seat is either economy class or first class, then we automatically know how all seats are priced.
\end{quote}
To say it another way (and using the other induced map):
\begin{quote}
If we keep track of which airplane every economy class seat is in and we keep track of which airplane every first class seat is in, and if we know that every seat is either economy class or first class, then we require no additional tracking for any airplane seat whatsoever.
\end{quote}

\end{example}

\begin{application}[Piecewise defined curves]

In science, curves are often defined or considered piecewise. For example in testing the mechanical properties of a material, we might be interested in various regions of \href{http://en.wikipedia.org/wiki/Deformation_(engineering)}{deformation}, such as elastic, plastic, or post-fracture. These are three intervals on which the material displays different kinds of properties. 

For real numbers $a<b\in\RR$, let $[a,b]:=\{x\in\RR\|a\leq x\leq b\}$ denote the closed interval. Given a function $[a,b]\to\RR$ and a function $[c,d]\to\RR$, the universal property of coproducts implies that they extend uniquely to a function $[a,b]\sqcup[c,d]\to\RR$, which will appear as a piecewise defined curve.

Often we are given a curve on $[a,b]$ and another on $[b,c]$, where the two curves agree at the point $b$. This situation is described by pushouts, which are mild generalizations of coproducts; see Section \ref{sec:pushouts}.

\end{application}

\begin{exercise}\label{exc:coprod}

Write the universal property for coproduct in terms of a relationship between the following three sets: $$\Hom_{\Set}(X,A), \hsp \Hom_\Set(Y,A), \hsp \text{and} \hsp\Hom_\Set(X\sqcup Y,A).$$ 
\end{exercise}

\begin{example}\label{ex:coproduct1}

In the following olog the types $A$ and $B$ are disjoint, so the coproduct $C=A\sqcup B$ is just the union. $$\fbox{\xymatrix{\smbox{A}{a person}\LA{r}{is}&\smbox{C=A\sqcup B}{a person or a cat}&\smbox{B}{a cat}\LAL{l}{is}}}$$

\end{example}

\begin{example}\label{ex:coproduct2}

In the following olog, $A$ and $B$ are not disjoint, so care must be taken to differentiate common elements. $$\fbox{\xymatrixnocompile{\obox{A}{.7in}{\rr an animal that can fly}\LA{rr}{labeled ``A" is}&&\obox{C=A\sqcup B}{1.3in}{an animal that can fly (labeled ``A") or an animal that can swim (labeled ``B")}&&\obox{B}{.9in}{\rr an animal that can swim}\LAL{ll}{labeled ``B" is}}}$$  Since ducks can both swim and fly, each duck is found twice in $C$, once labeled as a flyer and once labeled as a swimmer.  The types $A$ and $B$ are kept disjoint in $C$, which justifies the name ``disjoint union."

\end{example}

\begin{exercise}

Understand Example \ref{ex:coproduct2} and see if a similar idea would make sense for particles and waves. Make an olog, and choose your wording in accordance with Rules \ref{rules:types}. How do photons, which exhibit properties of both waves and particles, fit into the coproduct in your olog?

\end{exercise}

\begin{exercise}
Following the section above, ``Ologging products" page \pageref{sec:ologging products}, come up with a naming system for coproducts, the inclusions, and the universal maps. Try it out by making an olog (involving coproducts) discussing the idea that both a .wav file and a .mp3 file can be played on a modern computer. Be careful that your arrows are valid in the sense of Section \ref{sec:invalid aspect}.
\end{exercise}


\section{Finite limits in $\Set$}\label{sec:finite limits}

In this section we discuss what are called {\em limits} of variously-shaped diagrams of sets. We will make all this much more precise when we discuss limits in arbitrary categories in Section \ref{sec:lims and colims in a cat}.


\subsection{Pullbacks}

\begin{definition}[Pullback]\label{def:pullback}\index{pullback!of sets}

Suppose given the diagram of sets and functions below.
\begin{align}\label{dia:fp sets}
\xymatrix{&Y\ar[d]^g\\
X\ar[r]_f&Z}
\end{align}
Its {\em fiber product}\index{fiber product} is the set 
$$X\times_ZY:=\{(x,w,y)\|f(x)=w=g(y)\}.$$ There are obvious projections $\pi_1\taking X\times_ZY\to X$ and $\pi_2\taking X\times_ZY\to Y$ (e.g. $\pi_2(x,w,y)=y$). Note that if $W=X\times_ZY$ then the diagram 
\begin{align}\label{dia:pullback sets}
\xymatrix{W\ullimit\ar[r]^-{\pi_2}\ar[d]_{\pi_1}&Y\ar[d]^g\\
X\ar[r]_f&Z}
\end{align}
commutes. Given the setup of Diagram \ref{dia:fp sets} we define the {\em pullback of $X$ and $Y$ over $Z$} to be any set $W$ for which we have an isomorphism $W\To{\iso}X\times_ZY$. The corner symbol $\lrcorner$ in Diagram \ref{dia:pullback sets} indicates that $W$ is the pullback.\index{a symbol!$\lrcorner$}

\end{definition}

\begin{exercise}
Let $X,Y,Z$ be as drawn and $f\taking X\to Z$ and $g\taking Y\to Z$ the indicated functions. 
\begin{center}
\includegraphics[height=2in]{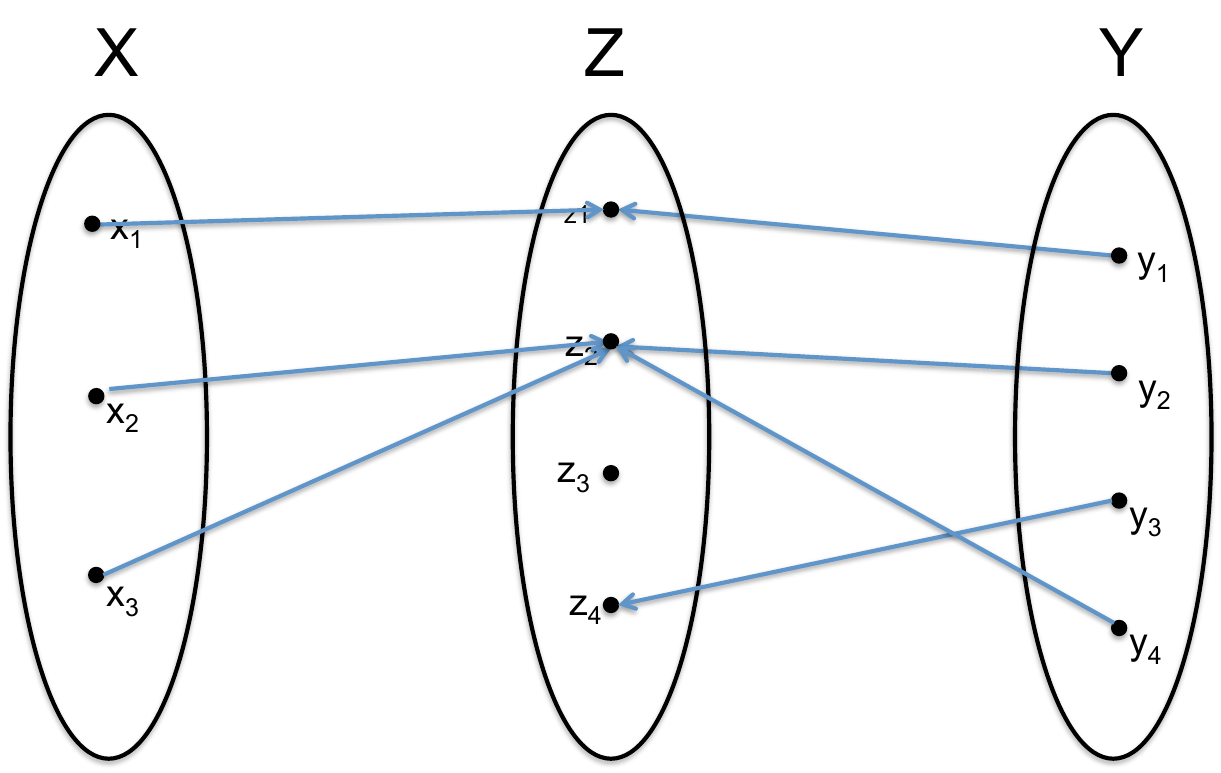}
\end{center}
What is the pullback of the diagram $X\Too{f}Z\Fromm{g}Y$?
\end{exercise}

\begin{exercise}~
\sexc Draw a set $X$ with five elements and a set $Y$ with three elements. Color each element of $X$ and each element of $Y$ either red, blue, or yellow,
\footnote{You can use shadings rather than coloring, if coloring would be annoying.}
and do so in a ``random-looking" way. Considering your coloring of $X$ as a function $X\to C$, where $C=\{\tn{red, blue, yellow}\}$, and similarly obtaining a function $Y\to C$, draw the fiber product $X\times_CY$. Make sure it is colored appropriately.
\next The universal property for products guarantees a function $X\times_CY\to X\times Y$, which I can tell you will be an injection. This means that the drawing you made of the fiber product can be imbedded into the $5\times 3$ grid; please draw the grid and indicate this subset.
\endsexc
\end{exercise}

\begin{remark}

Some may prefer to denote this fiber product by $f\times_Zg$ rather than $X\times_ZY$. The former is  mathematically better notation, but human-readability is often enhanced by the latter, which is also more common in the literature. We use whichever is more convenient.

\end{remark}

\begin{exercise}~
\sexc Suppose that $Y=\emptyset$; what can you say about $X\times_ZY$? 
\next Suppose now that $Y$ is any set but that $Z$ has exactly one element; what can you say about $X\times_ZY$?
\endsexc
\end{exercise}

\begin{exercise}
Let $S=\RR^3, T=\RR$, and think of them as (Aristotelian) space and time, with the origin in $S\times T$ given by the center of mass of MIT at the time of its founding. Let $Y=S\times T$ and let $g_1\taking Y\to S$ be one projection and $g_2\taking Y\to T$ the other projection. Let $X=\singleton$ be a set with one element and let $f_1\taking X\to S$ and $f_2\taking X\to T$ be given by the origin in both cases. 
\sexc What are the fiber products $W_1$ and $W_2$:
$$
\xymatrix{W_1\ar[r]\ar[d]\ullimit&Y\ar[d]^{g_1}\\X\ar[r]_{f_1}&S}\hspace{1in}
\xymatrix{W_2\ar[r]\ar[d]\ullimit&Y\ar[d]^{g_2}\\X\ar[r]_{f_2}&T}
$$
\next Interpret these sets in terms of the center of mass of MIT at the time of its founding.
\endsexc
\end{exercise}


\subsubsection{Using pullbacks to define new ideas from old}

In this section we will see that the fiber product of a diagram can serve to define a new concept. For example, in (\ref{dia:bad battery}) we define what it means for a cellphone to have a bad battery, in terms of the length of time for which it remains charged. By being explicit, we reduce the chance of misunderstandings between different groups of people. This can be useful in situations like audits and those in which one is trying to reuse or understand data gathered by others.

\begin{example}

Consider the following two ologs. The one on the right is the pullback of the one on the left. 
\begin{align}\label{dia:wealthy and loyal}
\fbox{\xymatrixnocompile{&\obox{C}{.7in}{\rr a loyal customer}\LA{d}{is}\\\obox{B}{.7in}{\rr a wealthy customer}\LA{r}{is}&\smbox{D}{a customer}}}\hsp&\fbox{\xymatrix{\obox{A=B\times_DC}{.9in}{\rr a customer that is wealthy and loyal}\LAL{d}{is}\LA{r}{is}&\obox{C}{.7in}{\rr a loyal customer}\LA{d}{is}\\\obox{B}{.7in}{\rr a wealthy customer}\LA{r}{is}&\smbox{D}{a customer}}}
\end{align}
Check from Definition \ref{def:pullback} that the label, ``a customer that is wealthy and loyal", is fair and straightforward as a label for the fiber product $A=B\times_DC$, given the labels on $B,C$, and $D$.

\end{example}

\begin{remark}\label{rem:defining using pullbacks}

Note that in Diagram (\ref{dia:wealthy and loyal}) the top-left box could have been (non-canonically named) \fakebox{a good customer}. If it was taken to be the fiber product, then the author would be effectively {\em defining} a good customer to be one that is wealthy and loyal. 

\end{remark}

\begin{exercise}
For each of the following, an author has proposed that the diagram on the right is a pullback. Do you think their labels are appropriate or misleading; that is, is the label on the upper-left box reasonable given the rest of the olog, or is it suspect in some way?
\sexc\begin{align*}\footnotesize\fbox{\xymatrix{&&\smbox{C}{blue}\LA{d}{is}\\\smbox{B}{a person}\LA{rr}{\parbox{.7in}{\rr has as favorite color}}&&\smbox{D}{a color}}}\hsp&
\footnotesize\fbox{\xymatrixnocompile{\obox{A=B\times_DC}{1.1in}{\rr a person whose favorite color is blue}\LAL{d}{is}\LA{rr}{\parbox{.7in}{\rr has as favorite color}}&&\smbox{C}{blue}\LA{d}{is}\\\smbox{B}{a person}\LA{rr}{\parbox{.7in}{\rr has as favorite color}}&&\smbox{D}{a color}}}
\end{align*}
\next\begin{align*}
\footnotesize\fbox{\xymatrixnocompile{&&\smbox{C}{a woman}\LA{d}{is}\\\smbox{B}{a dog}\LA{rr}{\parbox{.7in}{\rr has as owner}}&&\smbox{D}{a person}}}\hsp&
\footnotesize\fbox{\xymatrixnocompile{\obox{A=B\times_DC}{1in}{\rr a dog whose owner is a woman}\LAL{d}{is}\LA{rr}{\parbox{.7in}{\rr has as owner}}&&\smbox{C}{a woman}\LA{d}{is}\\\smbox{B}{a dog}\LA{rr}{\parbox{.7in}{\rr has as owner}}&&\smbox{D}{a person}}}
\end{align*}
\next\begin{align*}
\footnotesize\fbox{\xymatrixnocompile{&\obox{C}{.5in}{\rr a piece of furniture}\LA{d}{has}\\\obox{B}{.6in}{\rr a space in our house}\LA{r}{has}&\smbox{D}{a width}}}\hsp&
\footnotesize\fbox{\xymatrixnocompile{\obox{A=B\times_DC}{.5in}{\rr a good fit}\LAL{d}{$s$}\LA{r}{$f$}&\obox{C}{.5in}{\rr a piece of furniture}\LA{d}{has}\\\obox{B}{.6in}{\rr a space in our house}\LA{r}{has}&\smbox{D}{a width}}}
\end{align*}
\endsexc
\end{exercise}

\begin{exercise}~
\sexc Consider your olog from Exercise \ref{exc:family olog}. Are any of the commutative squares there actually pullback squares? 
\next Now use ologs with products and pullbacks to define what a brother is and what a sister is (again in a human biological nuclear family), in terms of types such as \fakebox{an offspring of mating pair $(a,b)$}, \fakebox{a person}, \fakebox{a male person}, \fakebox{a female person}, and so on.
\endsexc
\end{exercise}

\begin{definition}[Preimage]\label{def:preimage}

Let $f\taking X\to Y$ be a function and $y\in Y$ an element. The {\em preimage of y under $f$}\index{preimage}, denoted $f^\m1(y)$,\index{a symbol!$f^\m1$} is  the subset $f^\m1(y):=\{x\in X\|f(x)=y\}$. If $Y'\ss Y$ is any subset, the {\em preimage of $Y'$ under $f$}, denoted $f^\m1(Y')$, is the subset $f^\m1(Y')=\{x\in X\|f(x)\in Y'\}$.

\end{definition}

\begin{exercise}
Let $f\taking X\to Y$ be a function and $y\in Y$ an element. Draw a pullback diagram in which the fiber product is isomorphic to the preimage $f^\m1(y)$.
\end{exercise}

\begin{lemma}[Universal property for pullback]\label{lemma:up for fp}

Suppose given the diagram of sets and functions as below.
\begin{align*}
\xymatrix{&Y\ar[d]^u\\
X\ar[r]_t&Z}
\end{align*}
For any set $A$ and commutative solid arrow diagram as below (i.e. functions $f\taking A\to X$ and $g\taking A\to Y$ such that $t\circ f=u\circ g$), 
\begin{align}\label{dia:universal property of fp}
\xymatrix{
&X\times_ZY\ar@/_1pc/[lddd]_{\pi_1}\ar@/^1pc/[rddd]^{\pi_2}\\\\
&A\ar@{-->}[uu]^{\exists!}\ar[dl]_{\forall f}\ar[dr]^{\forall g}&\\
X\ar[rd]_t&&Y\ar[ld]^u\\
&Z&}
\end{align}
there exists a unique arrow $\pb{f}{g}{Z}\taking A\to X\times_ZY$ making everything commute, i.e. 
$$f=\pi_1\circ \pb{f}{g}{Z}\hsp\text{and}\hsp g=\pi_2\circ\pb{f}{g}{Z}.$$

\end{lemma}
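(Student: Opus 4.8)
The plan is to follow the same strategy as the proof of the universal property for products (Lemma \ref{lemma:up for prod}): giving a function $A\to X\cross_ZY$ is the same as specifying, for each $a\in A$, an element of $X\cross_ZY$, and the required equations $f=\pi_1\circ(f\cross_Zg)$ and $g=\pi_2\circ(f\cross_Zg)$ will pin down exactly what that element must be, yielding existence and uniqueness in one stroke.

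First I would recall from Definition \ref{def:pullback} that an element of $X\cross_ZY$ is a triple $(x,w,y)$ with $x\in X$, $y\in Y$, $w\in Z$, and $t(x)=w=u(y)$, and that $\pi_1(x,w,y)=x$, $\pi_2(x,w,y)=y$. For a candidate function $\ell\taking A\to X\cross_ZY$, write $\ell(a)=(x_a,w_a,y_a)$. The conditions $\pi_1\circ\ell=f$ and $\pi_2\circ\ell=g$ force $x_a=f(a)$ and $y_a=g(a)$ for every $a$, and membership in $X\cross_ZY$ then forces $w_a=t(x_a)=t(f(a))$. Thus the only possible definition is $$(f\cross_Zg)(a):=\big(f(a),\,t(f(a)),\,g(a)\big),$$ which already establishes uniqueness.

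For existence I would check that this formula genuinely defines a function into $X\cross_ZY$ with the stated properties. The only nontrivial point is that the triple $\big(f(a),t(f(a)),g(a)\big)$ really lies in the fiber product, i.e. that $t(f(a))=u(g(a))$. This is exactly where the hypothesis $t\circ f=u\circ g$ is used, and it is the one place the argument differs from the product case. Granting it, the triple is a legitimate element of $X\cross_ZY$, and reading off the two projections gives $\pi_1\circ(f\cross_Zg)=f$ and $\pi_2\circ(f\cross_Zg)=g$ immediately.

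There is no serious obstacle here; the only subtlety is recognizing that the commutativity assumption $t\circ f=u\circ g$ is precisely what guarantees the induced map lands in the fiber product rather than merely in the full product $X\cross Y$. One could alternatively phrase the proof as a reduction: the map $f\cross g\taking A\to X\cross Y$ from Lemma \ref{lemma:up for prod} takes values in the subset $X\cross_ZY\ss X\cross Y$ exactly when $t\circ f=u\circ g$, with uniqueness inherited from the product. I would nonetheless present the direct computation above, since it is self-contained and makes the role of the hypothesis transparent.
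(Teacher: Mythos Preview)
Your proof is correct. The paper actually states this lemma without proof, so there is nothing to compare against directly. Your argument faithfully mirrors the paper's proof of Lemma~\ref{lemma:up for prod} (the product case), adapting it in exactly the natural way: the commutativity hypothesis $t\circ f=u\circ g$ is precisely what is needed to ensure the triple $\big(f(a),t(f(a)),g(a)\big)$ lands in $X\cross_ZY$, and uniqueness follows because the projections pin down all three coordinates. This is the proof one would expect the author to have in mind.
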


\begin{exercise}
Create an olog whose underlying shape is a commutative square. Now add the fiber product so that the shape is the same as that of Diagram (\ref{dia:universal property of fp}). Assign English labels to the projections $\pi_1,\pi_2$ and to the dotted map $A\To{\pb{f}{g}{Z}}X\times_ZY$, such that these labels are as canonical as possible.
\end{exercise}


\subsubsection{Pasting diagrams for pullback}

Consider the diagram drawn below, which includes a left-hand square, a right-hand square, and a big rectangle.
$$
\xymatrix{
A'\ar[r]^{f'}\ar[d]_i\ullimit&B'\ar[r]^{g'}\ar[d]_j\ullimit&C'\ar[d]^k\\
A\ar[r]_f&B\ar[r]_g&C}
$$
The right-hand square has a corner symbol indicating that $B'\iso B\times_CC'$ is a pullback. But the corner symbol on the left is ambiguous; it might be indicating that the left-hand square is a pullback, or it might be indicating that the big rectangle is a pullback. It turns out that if $B'\iso B\times_CC'$ then it is not ambiguous because the left-hand square is a pullback if and only if the big rectangle is.

\begin{proposition}\label{prop:pasting}

Consider the diagram drawn below
$$
\xymatrix{
&B'\ar[r]^{g'}\ar[d]_j\ullimit&C'\ar[d]^k\\
A\ar[r]_f&B\ar[r]_g&C}
$$
where $B'\iso B\times_CC'$ is a pullback. Then there is an isomorphism $A\times_BB'\iso A\times_CC'$. Said another way, $$A\times_B(B\times_CC')\iso A\times_CC'.$$

\end{proposition}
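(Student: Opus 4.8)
The plan is to work entirely at the level of elements, using the explicit description of fiber products from Definition \ref{def:pullback}, and to exhibit a pair of mutually inverse functions between the two sets. Writing out the defining data, the given pullback is $B' \iso B\cross_C C' = \{(b,c,c')\|g(b)=c=k(c')\}$, and the map $j\taking B'\to B$ is the projection $(b,c,c')\mapsto b$. The two sets I must compare are then
$$A\cross_B B' = \{(a,b,\beta)\|f(a)=b=j(\beta)\}\hsp\text{and}\hsp A\cross_C C' = \{(a,c,c')\|g(f(a))=c=k(c')\},$$
where in the right-hand set the map $A\to C$ is understood to be the composite $g\circ f$.

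First I would unwind the nested element $\beta\in B'$ in a typical element of $A\cross_B B'$: writing $\beta=(b_0,c,c')$ with $g(b_0)=c=k(c')$, the defining condition $f(a)=b=j(\beta)$ becomes $f(a)=b=b_0$. So a point of $A\cross_B B'$ is really the data of $a\in A$, $c\in C$, $c'\in C'$ with $g(f(a))=c=k(c')$, together with the redundant coordinates $b=b_0=f(a)$. Then I would define the forward map by forgetting the redundant coordinates, $(a,b,(b_0,c,c'))\mapsto (a,c,c')$, and check that it lands in $A\cross_C C'$ (the condition $g(f(a))=c=k(c')$ is exactly what we just extracted). Conversely I would define the backward map $(a,c,c')\mapsto (a,f(a),(f(a),c,c'))$, checking that $(f(a),c,c')$ genuinely lies in $B'$ because $g(f(a))=c=k(c')$, and that the outer triple satisfies $f(a)=j(f(a),c,c')$.

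Finally I would verify the two assignments are mutually inverse, which is immediate once the redundant middle coordinates are pinned down, since they are forced to equal $f(a)$ on both sides. This produces the desired isomorphism $A\cross_B B'\To{\iso}A\cross_C C'$.

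The only real obstacle is notational bookkeeping: the paper's convention records the common image $w$ in each triple, so a point of the iterated pullback carries several coordinates that are all forced to coincide, and one must keep straight which equalities are definitional. A cleaner and more robust alternative---which also foreshadows the general categorical setting---is to bypass elements entirely and verify that $A\cross_B B'$, equipped with its evident maps to $A$ and to $C'$, satisfies the universal property characterizing $A\cross_C C'$ (Lemma \ref{lemma:up for fp}): given any set $T$ with compatible maps to $A$ and $C'$ over $C$, the map $T\to A$ together with the induced map $T\to B'=B\cross_C C'$ assemble (by the universal property of $B'$) into a unique map $T\to A\cross_B B'$, and comparing uniqueness clauses on both sides yields the isomorphism. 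I would present the element-chasing argument as the main proof and mention the universal-property argument as the conceptual reason the result must hold.
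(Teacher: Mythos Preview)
Your proof is correct and follows essentially the same approach as the paper: both construct explicit mutually inverse maps by element-chasing, sending $(a,b,(b,c,c'))\mapsto(a,c,c')$ forward and $(a,c,c')\mapsto(a,f(a),(f(a),c,c'))$ backward. Your additional remark about the universal-property argument is a nice bonus that the paper does not include.
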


\begin{proof}

We first provide a map $\phi\taking A\times_B(B\times_CC')\to A\times_CC'$. An element of $A\times_B(B\times_CC')$ is of the form $(a,b,(b,c,c'))$ such that $f(a)=b, g(b)=c$ and $k(c')=c$. But this implies that $g\circ f(a)=c=k(c')$ so we put $\phi(a,b,(b,c,c')):=(a,c,c')\in A\times_CC'$. Now we provide a proposed inverse, $\psi\taking A\times_CC'\to A\times_B(B\times_CC')$. Given $(a,c,c')$ with $g\circ f(a)=c=k(c')$, let $b=f(a)$ and note that $(b,c,c')$ is an element of $B\times_CC'$. So we can define $\psi(a,c,c')=(a,b,(b,c,c'))$. It is easy to see that $\phi$ and $\psi$ are inverse.
 
\end{proof}

Proposition \ref{prop:pasting} can be useful in authoring ologs. For example, the type \fakebox{a cellphone that has a bad battery} is vague, but we can lay out precisely what it means using pullbacks:
\small
\begin{align}\label{dia:bad battery}
\fbox{\xymatrixnocompile{\obox{A\iso B\times_DC}{1in}{a cellphone that has a bad battery}\ar[r]\ar[d]&\smbox{C\iso D\times_FE}{a bad battery}\ar[r]\ar[d]&\obox{E\iso F\times_HG}{.5in}{less than 1 hour}\ar[r]\ar[d]&\obox{G}{.5in}{between 0 and 1}\ar[d]\\\smbox{B}{a cellphone}\LA{r}{has}&\smbox{D}{a battery}\LA{r}{\parbox{.4in}{\rr remains charged for}}&\obox{F}{.6in}{a duration of time}\LA{r}{\hspace{.07in}\parbox{.4in}{\rr in hours yields}}&\obox{H}{.6in}{a range of numbers}}}
\end{align}\normalsize

The category-theoretic fact described above says that since $A\iso B\times_DC$ and $C\iso D\times_FE$, it follows that $A\iso B\times_FE$.  That is, we can deduce the definition ``a cellphone that has a bad battery is defined as a cellphone that has a battery which remains charged for less than one hour."  

\begin{exercise}~
\sexc Create an olog that defines two people to be ``of approximately the same height" if and only if their height difference is less than half an inch, using a pullback. Your olog can include the box \fakebox{a real number $x$ such that $-.5<x<.5$}. 
\next In the same olog, make a box for those people whose height is approximately the same as a person named ``The Virgin Mary". You may need to use images, as in Section \ref{sec:images}.
\endsexc
\end{exercise}

\begin{exercise}\label{exc:pointwise map of fp}
Consider the diagram on the left below, where both squares commute. 
$$
\xymatrix@=15pt{
&&&Y'\ar[dd]\\
&&Y\ar[ru]\ar[dd]\\
&X'\ar'[r][rr]&&Z'\\
X\ar[rr]\ar[ru]&&Z\ar[ru]
}
\hspace{1in}
\xymatrix@=15pt{
&W'\ar[rr]\ar'[d][dd]\ullimit&&Y'\ar[dd]\\
W\ar[rr]\ar[dd]\ullimit&&Y\ar[ru]\ar[dd]\\
&X'\ar'[r][rr]&&Z'\\
X\ar[rr]\ar[ru]&&Z\ar[ru]
}
$$
Let $W=X\times_ZY$ and $W'=X'\times_{Z'}Y'$, and form the diagram to the right. Use the universal property of fiber products to construct a map $W\to W'$ such that all squares commute.
\end{exercise}


\subsection{Spans, experiments, and matrices}

\begin{definition}\label{def:span}\index{span}

Given sets $A$ and $B$, a {\em span on $A$ and $B$} is a set $R$ together with functions $f\taking R\to A$ and $g\taking R\to B$. 
$$\xymatrix@=15pt{&R\ar[ddl]_f\ar[ddr]^g\\\\A&&B}$$

\end{definition}

\begin{application}\label{app:exp temp press}

Think of $A$ and $B$ as observables and $R$ as a set of experiments performed on these two variables. For example, let's say $T$ is the set of possible temperatures of a \href{http://en.wikipedia.org/wiki/Ideal_gas_law}{\text gas} in a fixed container and let's say $P$ is the set of possible pressures of the gas. We perform 1000 experiments in which we change and record the temperature and we simultaneously also record the pressure; this is a span $T\From{f}E\To{g}P$. The results might look like this:
$$
\begin{tabular}{| l || l | l |}
\bhline
\multicolumn{3}{| c |}{Experiment}\\\bhline
{\bf ID}&{\bf Temperature}&{\bf Pressure}\\\bbhline
1&100& 72\\\hline
2&100&73\\\hline
3&100&72\\\hline
4&200&140\\\hline
5&200&138\\\hline
6&200&141\\\hline
\vdots&\vdots&\vdots\\\bhline
\end{tabular}
$$

\end{application}

\begin{definition}\label{def:composite span}

Let $A,B,$ and $C$ be sets, and let $A\From{f}R\To{g}B$ and $B\From{f'}R'\To{g'}C$ be spans. Their {\em composite span}\index{span!composite} is given by the fiber product $R\times_BR'$ as in the diagram below:
$$
\xymatrix@=10pt{&&R\times_BR'\ar[ldd]\ar[rdd]\\\\&R\ar[ddl]_f\ar[ddr]^g&&R'\ar[ddl]_{f'}\ar[ddr]^{g'}\\\\A&&B&&C
}$$

\end{definition}

\begin{application}\label{app:exp temp press 2}

Let's look back at our lab's experiment from Application \ref{app:exp temp press}, which resulted in a span $T\From{f}E\To{g}P$. Suppose we notice that something looks a little wrong. The pressure should be linear in the temperature but it doesn't appear to be. We hypothesize that the volume of the container is increasing under pressure. We look up this container online and see that experiments have been done to measure the volume as the interior pressure changes. The data has generously been made available online, which gives us a span $P\From{f'}E'\To{g'}V$. 

The composite of our lab's span with the online data span yields a span $T\from E''\to V$, where $E'':=E\times_PE'$. What information does this span give us? In explaining it, one might say ``whenever an experiment in our lab yielded the same pressure as one they recorded, let's call that a data point. Every data point has an associated temperature (from our lab) and an associated volume (from their experiment). This is the best we can do." 

The information we get this way might be seen by some as unscientific, but it certainly is the kind of information people use in business and in every day life calculation---we get our data from multiple sources and put it together. Moreover, it is scientific in the sense that it is reproducible. The way we obtained our $T$-$V$ data is completely transparent.

\end{application}

We can relate spans to matrices of natural numbers, and see a natural ``categorification" of matrix addition and matrix multiplication. If our spans come from experiments as in Applications \ref{app:exp temp press} and \ref{app:exp temp press 2} the matrices involved will look like huge but sparse matrices. Let's go through that.

Let $A$ and $B$ be sets and let $A\from R\to B$ be a span. By the universal property of products, we have a unique map $R\To{p}A\times B$. 

We make a matrix of natural numbers out of this data as follows. The set of rows is $A$, the set of columns is $B$. For elements $a\in A$ and $b\in B$, the $(a,b)$-entry is the cardinality of its preimage, $|p^\m1(a,b)|$, i.e. the number of elements in $R$ that are sent by $p$ to $(a,b)$. 

Suppose we are given two $(A,B)$-spans, i.e. $A\from R\to B$ and $A\from R'\to B$; we might think of these has having the same {\em dimensions}, i.e. they are both $|A|\times|B|$-matrices. We can take the disjoint union $R\sqcup R'$ and by the universal property of coproducts we have a unique span $A\from R\sqcup R'\to B$ making the requisite diagram commute.
\footnote{
$$\xymatrix{
&R\ar[dl]\ar[dr]\ar[d]\\
A&R\sqcup R'\ar[l]\ar[r]&B\\
&R'\ar[ur]\ar[ul]\ar[u]}
$$
}
The matrix corresponding to this new span will be the sum of the matrices corresponding to the two previous spans out of which it was made.

Given a span $A\from R\to B$ and a span $B\from S\to C$, the composite span can be formed as in Definition \ref{def:composite span}. It will correspond to the usual multiplication of matrices.

\begin{construction}\label{const:bipartite}\index{graph!bipartite}

Given a span $A\From{f} R\To{g} B$, one can draw a {\em bipartite graph} with each element of $A$ drawn as a dot on the left, each element of $B$ drawn as a dot on the right, and each element $r\in R$ drawn as an arrow connecting vertex $f(r)$ on the left to vertex $g(r)$ on the right.

\end{construction}

\begin{exercise}~
\sexc Draw the bipartite graph (as in Construction \ref{const:bipartite}) corresponding to the span $T\From{f}E\To{g}P$ in Application \ref{app:exp temp press}.
\next Now make up your own span $P\From{f'}E'\To{g'}V$ and draw it. Finally, draw the composite span below. 
\next Can you say how the composite span graph relates to the graphs of its factors?
\endsexc
\end{exercise}


\subsection{Equalizers and terminal objects}

\begin{definition}\label{def:equalizer}\index{equalizer}

Suppose given two parallel arrows 
\begin{align}\label{dia:equalizer}
\xymatrix{X\ar@<.5ex>[r]^f\ar@<-.5ex>[r]_g&Y.}\hspace{1in}\xymatrix{Eq(f,g)\ar[r]^-p&X\ar@<.5ex>[r]^f\ar@<-.5ex>[r]_g&Y}
\end{align}
The {\em equalizer of $f$ and $g$} is the commutative diagram as to the right in (\ref{dia:equalizer}), where we define $$Eq(f,g):=\{x\in X\|f(x)=g(x)\}$$ and where $p$ is the canonical inclusion.

\end{definition}

\begin{example}

Suppose one has designed an experiment to test a theoretical prediction. The question becomes, ``when does the theory match the experiment?" The answer is given by the equalizer of the following diagram:
$$\xymatrix{
\obox{}{.5in}{an input}\ar@<1ex>[rr]^{\tn{should, according to theory, yield}}\ar@<-1ex>[rr]_{\tn{according to experiment yields}}&\hspace{1in}&\obox{}{.6in}{an output}
}$$
The equalizer is the set of all inputs for which the theory and the experiment yield the same output.

\end{example}

\begin{exercise}
Come up with an olog that uses equalizers in a reasonably interesting way. Alternatively, use an equalizer to specify those published authors who have published exactly one paper. Hint: find a function from authors to papers; then find another.
\end{exercise}

\begin{exercise}
Find a universal property enjoyed by the equalizer of two arrows, and present it in the style of Lemmas \ref{lemma:up for prod}, \ref{lemma:up for coprod}, and \ref{lemma:up for fp}.
\end{exercise}

\begin{exercise}\index{terminal object!in $\Set$}~
\sexc A terminal set is a set $S$ such that for every set $X$, there exists a unique function $X\to S$. Find a terminal set. 
\next Do you think that the notion {\em terminal set} belongs in this section (Section \ref{sec:finite limits})? How so? If products, pullbacks, and equalizers are all limits, what do limits have in common?
\endsexc
\end{exercise}


\section{Finite colimits in $\Set$}\label{sec:finite colimits}

This section will parallel Section \ref{sec:finite limits}---I will introduce several types of finite colimits and hope that this gives the reader some intuition about them, without formally defining them yet. Before doing so, I must define equivalence relations and quotients.


\subsection{Background: equivalence relations}\index{equivalence relation}\index{relation!equivalence}

\begin{definition}[Equivalence relations and equivalence classes]

Let $X$ be a set. An {\em equivalence relation on $X$} is a subset $R\ss X\times X$ satisfying the following properties for all $x,y,z\in X$:
\begin{description}
\item[Reflexivity:] $(x,x)\in R$;
\item[Symmetry:] $(x,y)\in R$ if and only if $(y,x)\in R$; and
\item[Transitivity:] if $(x,y)\in R$ and $(y,z)\in R$ then $(x,z)\in R$.
\end{description}
If $R$ is an equivalence relation, we often write $x\sim_R y$, or simply $x\sim y$, to mean $(x,y)\in R$. For convenience we may refer to the equivalence relation by the symbol $\sim$, saying that $\sim$ is an equivalence relation on $X$.\index{a symbol!$\sim$}

An {\em equivalence class of $\sim$}\index{equivalence relation!equivalence classes} is a subset $A\ss X$ such that
\begin{itemize}
\item $A$ is nonempty, $A\neq\emptyset$;
\item if $x\in A$ and $x'\in A$, then $x\sim x'$; and 
\item if $x\in A$ and $x\sim y$, then $y\in A$.
\end{itemize}
Suppose that $\sim$ is an equivalence relation on $X$. The {\em quotient of $X$ by $\sim$}\index{equivalence relation!quotient by}, denoted $X/\sim$\index{a symbol!$X/\sim$} is the set of equivalence classes of $\sim$.

\end{definition}

\begin{example}

Let $\ZZ$ denote the set of integers. Define a relation $R\ss\ZZ\times\ZZ$ by $$R=\{(x,y)\|\exists n\in\ZZ \tn{ such that } x+7n=y\}.$$ Then $R$ is an equivalence relation because $x+7*0=x$ (reflexivity); $x+7*n=y$ if and only if $y+7*(-n)= x$ (symmetry); and $x+7n=y$ and $y+7m=z$ together imply that $x+7(m+n)=z$ (transitivity).

\end{example}

\begin{exercise}
Let $X$ be the set of people on earth; define a binary relation $R\ss X\times X$ on $X$ as follows. For a pair $(x,y)$ of people, say $(x,y)\in R$ if $x$ spends a lot of time thinking about $y$. 
\sexc Is this relation reflexive? 
\next Is it symmetric? 
\next Is it transitive?
\endsexc
\end{exercise}

\begin{example}[Partitions]\label{ex:partition}

An equivalence relation on a set $X$ can be thought of as a way of partitioning $X$. A {\em partition of $X$}\index{equivalence relation!as partition} consists of a set $I$, called {\em the set of parts}, and for every element $i\in I$ a subset $X_i\ss X$ such that two properties hold:
\begin{itemize}
\item every element $x\in X$ is in some part (i.e. for all $x\in X$ there exists $i\in I$ such that $x\in X_i$); and
\item no element can be found in two different parts (i.e. if $x\in X_i$ and $x\in X_j$ then $i=j$).
\end{itemize}

Given a partition of $X$, we define an equivalence relation $\sim$ on $X$ by saying $x\sim x'$ if $x$ and $x'$ are in the same part (i.e. if there exists $i\in I$ such that $x,x'\in X_i$). The parts become the equivalence classes of this relation. Conversely, given an equivalence relation, one makes a partition on $X$ by taking $I$ to be the set of equivalence classes and for each $i\in I$ letting $X_i$ be the elements in that equivalence class.

\end{example}

\begin{exercise}
Let $X$ and $B$ be sets and let $f\taking X\to B$ be a function. Define a subset $R\ss X\times X$ by $$R=\{(x,y)\|f(x)=f(y)\}.$$ 
\sexc Is $R$ an equivalence relation? 
\next Are all equivalence relations on $X$ obtainable in this way (as the fibers of some function having domain $X$)?
\next Does this viewpoint on equivalence classes relate to that of Example \ref{ex:partition}?
\endsexc
\end{exercise}

\begin{exercise}
Take a set $I$ of sets; i.e. suppose that for each element $i\in I$ you are given a set $X_i$. For every two elements $i,j\in I$ say that $i\sim j$ if $X_i$ and $X_j$ are isomorphic. Is this relation an equivalence relation on $I$?  
\end{exercise}

\begin{lemma}[Generating equivalence relations]\label{lemma:generating ERs}

Let $X$ be a set and $R\ss X\times X$ a subset. There exists a relation $S\ss X\times X$ such that
\begin{itemize}
\item $S$ is an equivalence relation,
\item $R\ss S$, and
\item for any equivalence relation $S'$ such that $R\ss S'$, we have $S\ss S'$.
\end{itemize}
The relation $S'$ will be called {\em the equivalence relation generated by $R$}.\index{equivalence relation!generated}

\end{lemma}

\begin{proof}

Let $L_R$ be the set of all equivalence relations on $X$ that contain $R$; in other words, each element $\ell\in L_R$ is an equivalence relation, $\ell\in X\times X$. The set $L_R$ is non-empty because $X\times X\ss X\times X$ is an equivalence relation. Let $S$ denote the set of pairs $(x_1,x_2)\in X\times X$ that appear in every element of $L_R$. Note that $R\ss S$ by definition. We need only show that $S$ is an equivalence relation.

It is clearly reflexive, because $R$ is. If $(x,y)\in S$ then $(x,y)\in\ell$ for all $\ell\in L_R$. But since each $\ell$ is an equivalence relation, $(y,x)\in\ell$ too, so $(y,x)\in S$. This shows that $S$ is symmetric. The proof that it is transitive is similar: if $(x,y)\in S$ and $(y,z)\in S$ then they are both in each $\ell$ which puts $(x,z)$ in each $\ell$, which puts it in $S$.

\end{proof}

\begin{remark}

Let $X$ be a set and $R\ss X\times X$ a relation. The proof of Lemma \ref{lemma:generating ERs} has the benefit of working even if $|X|\geq\infty$, but it has the cost that it is not very intuitive, nor useful in practice when $X$ is finite. The intuitive way to think about the idea of equivalence relation generated by $R$ is as follows.
\begin{enumerate}
\item First add to $R$ what is demanded by reflexivity, $R_1:=R\cup\{(x,x)\|x\in X\}$.
\item Then add to $R$ what is demanded by symmetry, $R_2:=R_1\cup\{(x,y)\|(y,x)\in R_1\}.$
\item Finally, add to $R$ what is demanded by transitivity, $$S=R_2\cup\{(x,z)\|(x,y)\in R_2, \tn{ and } (y,z)\in R_2\}.$$
\end{enumerate}

\end{remark}

\begin{exercise}
Consider the set $\RR$ of real numbers. Draw the coordinate plane $\RR\times\RR$, give it coordinates $x$ and $y$. A binary relation on $\RR$ is a subset $S\ss\RR\times\RR$, which can be drawn as a set of points in the plane. 
\sexc Draw the relation $\{(x,y)\|y=x^2\}$. 
\next Draw the relation $\{(x,y)\|y\geq x^2\}.$
\next Let $S_0$ be the equivalence relation on $\RR$ generated (in the sense of Lemma \ref{lemma:generating ERs}) by the empty set. Draw $S$ as a subset of the plane.
\next Consider the equivalence relation $S_1$ generated by $\{(1,2),(1,3)\}$. Draw $S_1$ in the plane. Highlight the equivalence class containing $(1,2)$.
\next The reflexivity property and the symmetry property have pleasing visualizations in $\RR\times\RR$; what are they? 
\next Is there a nice heuristic for visualizing the transitivity property?
\endsexc
\end{exercise}

\begin{exercise}
Consider the binary relation $R=\{(n,n+1)\|n\in\ZZ\}\ss\ZZ\times\ZZ$. 
\sexc What is the equivalence relation generated by $R$? 
\next How many equivalence classes are there?
\endsexc
\end{exercise}

\begin{exercise}
Suppose $N$ is a network (or graph). Let $X$ be the nodes of the network, and let $R\ss X\times X$ denote the relation such that $(x,y)\in R$ iff there exists an arrow connecting $x$ to $y$.
\footnote{The word {\em iff} means ``if and only if". In this case we are saying that the pair $(x,y)$ is in $R$ if and only if there exists an arrow connecting $x$ and $y$.\index{iff}}
\sexc What is the equivalence relation $\sim$ generated by $R$? 
\next What is the quotient $X/\sim$?
\endsexc
\end{exercise}


\subsection{Pushouts}\label{sec:pushouts}

\begin{definition}[Pushout]\label{def:pushout}

Suppose given the diagram of sets and functions below:
\begin{align}\label{dia:pushout}
\xymatrix{W\ar[r]^f\ar[d]_g&X\\Y}
\end{align}
Its {\em fiber sum},\index{fiber sum} denoted $X\sqcup_WY$, is defined as the quotient of $X\sqcup W\sqcup Y$ by the equivalence relation $\sim$ generated by $w\sim f(w)$ and $w\sim g(w)$ for all $w\in W$.
$$X\sqcup_WY:=(X\sqcup W\sqcup Y)/\sim \hsp\tn{where } \forall w\in W,\;\;  w\sim f(w)\;\;\tn{ and }\;\; w\sim g(w).$$ 
There are obvious inclusions $i_1\taking X\to X\sqcup_WY$ and $i_2\taking Y\to X\sqcup_WY$.
\footnote{Note that our term inclusions is not too good, because it seems to suggest that $i_1$ and $i_2$ are injective (see Definition \ref{def:inj,surj,bij}) and this is not always the case.}
Note that if $Z=X\sqcup_WY$ then the diagram
\begin{align}\label{dia:pushout sets}
\xymatrix{W\ar[r]^g\ar[d]_f&Y\ar[d]^{i_2}\\X\ar[r]_-{i_1}&Z\lrlimit}
\end{align} 
commutes. Given the setup of Diagram \ref{dia:pushout} we define the {\em pushout of $X$ and $Y$ over $W$} to be any set $Z$ for which we have an isomorphism $Z\To{\iso}X\sqcup_WY$. The corner symbol $\ulcorner$ in Diagram \ref{dia:pushout sets} indicates that $Z$ is the pushout.\index{a symbol!$\ulcorner$}

\end{definition}

\begin{example}

Let $X=\{x\in\RR\|0\leq x\leq1\}$ be the set of numbers between 0 and 1, inclusive, let $Y=\{y\in\RR\|1\leq y\leq 2\}$ by the set of numbers between 1 and 2, inclusive, and let $W=\{1\}$. Then the pushout $X\From{f} W\To{g} Y$, where $f$ and $g$ are the ``obvious" functions ($1\mapsto 1$) is $X\sqcup_WY\iso\{z\in\RR\|0\leq z\leq 2\}$, as expected. When we eventually get to general colimits, one can check that the whole real line can be made by patching together intervals in this way.

\end{example}

\begin{example}[Pushout]\label{ex:pushout}

In each example below, the diagram to the right is intended to be a pushout of the diagram to the left.  The new object, $D$, is the union of $B$ and $C$, but instances of $A$ are equated to their $B$ and $C$ aspects.  This will be discussed after the two diagrams.

\begin{align}
\label{dia:po1}\fbox{\xymatrixnocompile{\obox{A}{.7in}{a cell in the shoulder}\LA{r}{is}\LAL{d}{is}&\obox{C}{.6in}{a cell in the arm}\\\obox{B}{.7in}{a cell in the torso}}}\hsp&\fbox{\xymatrix{\obox{A}{.7in}{a cell in the shoulder}\LA{r}{is}\LAL{d}{is}&\obox{C}{.6in}{a cell in the arm}\LA{d}{}\\\obox{B}{.7in}{a cell in the torso}\LA{r}{}&\obox{D=B\sqcup_AC}{.8in}{a cell in the torso or arm}}}
\end{align}
In the left-hand olog (\ref{dia:po1}, the two arrows are inclusions: the author considers every cell in the shoulder to be both in the arm and in the torso. The pushout is then just the union, where cells in the shoulder are not double-counted.

\begin{align}\label{dia:po2}\fbox{\xymatrixnocompile@=18pt{\obox{A}{.8in}{\rr a college mathematics course}\LA{r}{yields}\LAL{d}{is}&\obox{C}{.8in}{an utterance of the phrase ``too hard"}\\\obox{B}{.6in}{\rr a college course}}}\hsp&\fbox{\xymatrixnocompile@=18pt{\obox{A}{.8in}{\rr a college mathematics course}\LA{r}{yields}\LAL{d}{is}&\obox{C}{.8in}{an utterance of the phrase ``too hard"}\LA{d}{}\\\obox{B}{.6in}{\rr a college course}\LA{r}{}&\obox{\parbox{.6in}{\vspace{.1in}\tiny$D=B\!\sqcup_A\!C$}}{1in}{\rr a college course, where every mathematics course is replaced by an utterance of the phrase ``too hard"}}}
\end{align}

In Olog (\ref{dia:po1}), the shoulder is seen as part of the arm and part of the torso.  When taking the union of these two parts, we do not want to ``double-count" the shoulder (as would be done in the coproduct $B\sqcup C$, see Example \ref{ex:coproduct2}).  Thus we create a new type $A$ for cells in the shoulder, which are considered the same whether viewed as cells in the arm or cells in the torso.  In general, if one wishes to take two things and glue them together, with $A$ as the glue and with $B$ and $C$ as the two things to be glued, the union is the pushout $B\sqcup_AC$. (A nice image of this can be seen in the setting of topological spaces, see Example \ref{ex:pushout in Top}.)

In Olog (\ref{dia:po2}), if every mathematics course is simply ``too hard," then when reading off a list of courses, each math course will not be read aloud but simply read as ``too hard."  To form $D$ we begin by taking the union of $B$ and $C$, and then we consider everything in $A$ to be the same whether one looks at it as a course or as the phrase ``too hard."  The math courses are all blurred together as one thing.  Thus we see that the power to equate different things can be exercised with pushouts.

\end{example}

\begin{exercise}
Let $W,X,Y$ be as drawn and $f\taking W\to X$ and $g\taking W\to Y$ the indicated functions. 
\begin{center}
\includegraphics[height=2in]{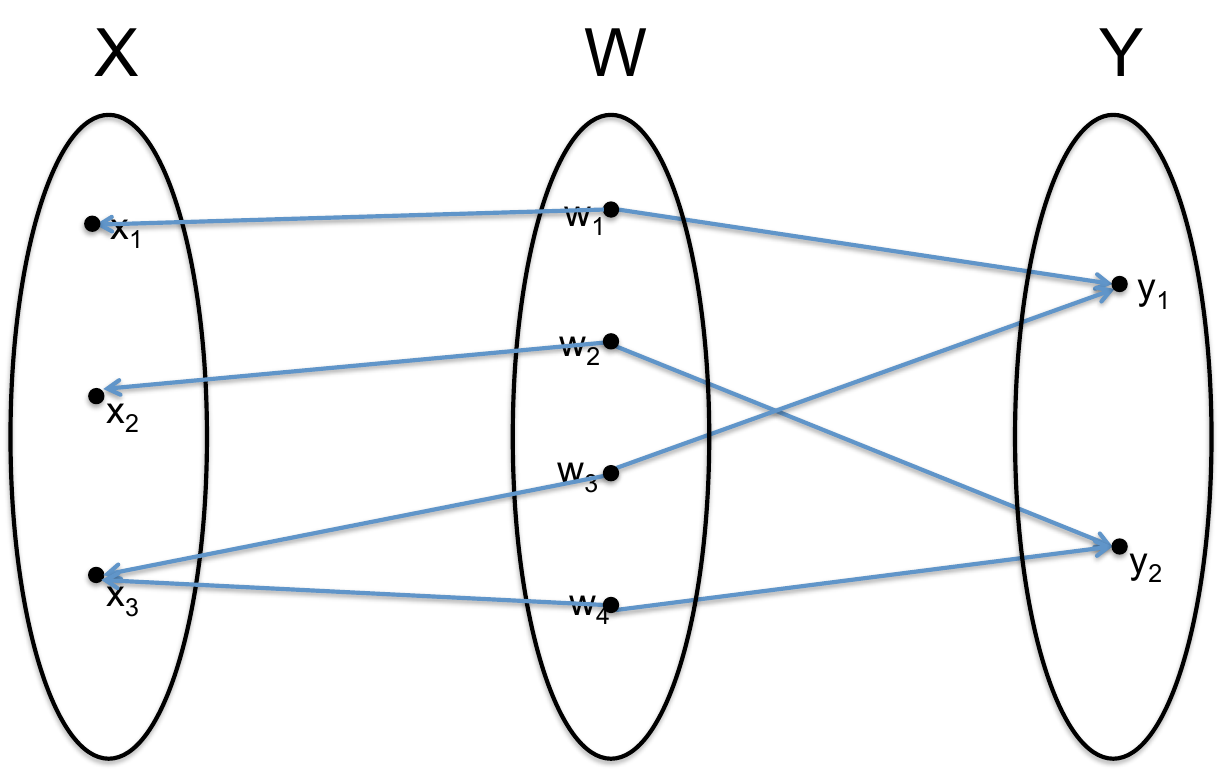}
\end{center}
The pushout of the diagram $X\Fromm{f}W\Too{g}Y$ is a set $P$. Write down the cardinality of $P\iso\ul{n}$ as a natural number $n\in\NN$.  
\end{exercise}

\begin{exercise}
Suppose that $W=\emptyset$; what can you say about $X\sqcup_WZ$? 
\end{exercise}

\begin{exercise}
Let $W:=\NN=\{0,1,2,\ldots\}$ denote the set of natural numbers, let $X=\ZZ$ denote the set of integers, and let $Y=\singleton$ denote a one-element set. Define $f\taking W\to X$ by $f(w)= -(w+1)$, and define $g\taking W\to Y$ to be the unique map. Describe the set $X\sqcup_WY$.
\end{exercise}

\begin{exercise}
Let $i\taking R\ss X\times X$ be an equivalence relation (see Example \ref{ex:subset as function} for notation). Composing with the projections $\pi_1,\pi_2\taking X\times X\to X$, we have two maps $\pi_1\circ i,\taking R\to X$ and $\pi_2\circ i\taking R\to X$. 
\sexc What is the pushout $$X\From{\pi_1\circ i}R\To{\pi_2\circ i}X?$$ 
\next If $i\taking R\ss X\times X$ is not assumed to be an equivalence relation, we can still define the pushout above. Is there a relationship between the pushout $X\From{\pi_1\circ i}R\To{\pi_2\circ i}X$ and the equivalence relation generated by $R\ss X\times X$?
\endsexc
\end{exercise}

\begin{lemma}[Universal property for pushout]\label{lemma:up for po}

Suppose given the diagram of sets and functions as below.
\begin{align*}
\xymatrix{W\ar[r]^u\ar[d]_t&Y\\
X}
\end{align*}
For any set $A$ and commutative solid arrow diagram as below (i.e. functions $f\taking X\to A$ and $g\taking Y\to A$ such that $f\circ t=g\circ u$), 
\begin{align}\label{dia:universal property of po}
\xymatrix{
&W\ar[dr]^u\ar[dl]_t\\
X\ar@/_1pc/[dddr]_{i_1}\ar[rd]_f&&Y\ar@/^1pc/[dddl]^{i_2}\ar[dl]^g\\
&A\\\\
&X\sqcup_WY\ar@{-->}[uu]^{\exists!}
}
\end{align}
there exists a unique arrow $\po{f}{g}{W}\taking X\sqcup_WY\to A$ making everything commute, $$f=\po{f}{g}{W}\circ i_1\hsp\text{and}\hsp g=\po{f}{g}{W}\circ i_2.$$

\end{lemma}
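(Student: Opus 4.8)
The plan is to construct the map $f\sqcup_Wg$ by first defining a function on the disjoint union $X\sqcup W\sqcup Y$ and then showing it descends to the quotient. First I would invoke the universal property of coproducts (Lemma \ref{lemma:up for coprod}, applied iteratively to handle the threefold coproduct) to assemble $f$, $g$, and the common composite $f\circ t=g\circ u\taking W\to A$ into a single function $k\taking X\sqcup W\sqcup Y\to A$. Explicitly, $k$ restricts to $f$ on $X$, to $g$ on $Y$, and to $f\circ t=g\circ u$ on $W$; the hypothesis $f\circ t=g\circ u$ is exactly what makes this choice on $W$ unambiguous.

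The key step is to show that $k$ is constant on $\sim$-equivalence classes, so that it factors through $X\sqcup_WY=(X\sqcup W\sqcup Y)/\sim$. For this I would consider the relation $R_k\ss(X\sqcup W\sqcup Y)\cross(X\sqcup W\sqcup Y)$ consisting of all pairs $(p,q)$ with $k(p)=k(q)$. Since equality in $A$ is reflexive, symmetric, and transitive, $R_k$ is an equivalence relation. It then suffices to check that $R_k$ contains the generating pairs: for $w\in W$ we have $k(w)=f(t(w))=k(t(w))$ because $t(w)\in X$, and $k(w)=g(u(w))=k(u(w))$ because $u(w)\in Y$. By the minimality in Definition \ref{def:generating ERs}, the generated relation $\sim$ is contained in $R_k$, which says precisely that $p\sim q$ implies $k(p)=k(q)$. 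Hence $k$ descends to a well-defined function $h\taking X\sqcup_WY\to A$ given by $h([p])=k(p)$, and I will name this $f\sqcup_Wg$.

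Next I would verify the two required identities and uniqueness. Since $i_1$ sends $x$ to its class $[x]$, we get $(h\circ i_1)(x)=k(x)=f(x)$, so $h\circ i_1=f$, and symmetrically $h\circ i_2=g$. For uniqueness, suppose $h'\taking X\sqcup_WY\to A$ also satisfies $h'\circ i_1=f$ and $h'\circ i_2=g$. The point is that every class in $X\sqcup_WY$ contains a representative lying in $X$ or $Y$, since each $w\in W$ is already identified with $t(w)\in X$; thus every element has the form $i_1(x)$ or $i_2(y)$, on which $h'$ is forced to equal $f$ or $g$ and therefore agrees with $h$. Hence $h'=h$.

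The step I expect to be the main obstacle is the descent in the second paragraph: one must resist the temptation to ``check only the generators'' without justification, and instead argue properly that the set of pairs on which $k$ agrees forms an equivalence relation, so that minimality of the generated relation (Definition \ref{def:generating ERs}) forces compatibility with all of $\sim$. The remainder is routine bookkeeping, dual to the proof of the universal property for pullbacks (Lemma \ref{lemma:up for fp}).
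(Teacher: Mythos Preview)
The paper states this lemma without proof; there is no argument to compare against. Your proposal is a correct and complete proof: you assemble $k\taking X\sqcup W\sqcup Y\to A$ via the universal property of coproducts, show it respects the generated equivalence relation by the clean observation that $\{(p,q)\mid k(p)=k(q)\}$ is already an equivalence relation containing the generators (invoking Definition~\ref{def:generating ERs}), and then verify the commutativity and uniqueness directly. The uniqueness step, noting that every class has a representative in $X$ or $Y$ because each $w$ is glued to $t(w)$, is exactly the right observation.
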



\subsection{Other finite colimits}

\begin{definition}\label{def:coequalizer}[Coequalizer]\index{coequalizer}

Suppose given two parallel arrows 
\begin{align}\label{dia:coequalizer}
\xymatrix{X\ar@<.5ex>[r]^f\ar@<-.5ex>[r]_g&Y.}\hspace{1in}\xymatrix{X\ar@<.5ex>[r]^f\ar@<-.5ex>[r]_g&Y\ar[r]^-q&Coeq(f,g)}
\end{align}
The {\em coequalizer of $f$ and $g$} is the commutative diagram as to the right in (\ref{dia:coequalizer}), where we define $$Coeq(f,g):=Y\;/\;f(x)\sim g(x)$$ i.e. the coequalizer of $f$ and $g$ is the quotient of $Y$ by the equivalence relation generated by $\{(f(x),g(x))\|x\in X\}\ss Y\times Y$

\end{definition}

\begin{exercise}
Let $X=\RR$ be the set of real numbers. What is the coequalizer of the two maps $X\to X$ given by $x\mapsto x$ and $x\mapsto (x+1)$ respectively?
\end{exercise}

\begin{exercise}
Find a universal property enjoyed by the coequalizer of two arrows.
\end{exercise}

\begin{exercise}[Initial object]\label{exc:initial set}
An initial set is a set $S$ such that for every set $A$, there exists a unique function $S\to A$. 
\sexc Find an initial set. 
\next Do you think that the notion {\em initial set} belongs in this section (Section \ref{sec:finite colimits})? How so? If coproducts, pushouts, and coequalizers are all colimits, what do colimits have in common?
\endsexc
\end{exercise}


\section{Other notions in $\Set$}

In this section we discuss some left-over notions in the category of Sets.


\subsection{Retractions}

\begin{definition}

Suppose we have a function $f\taking X\to Y$ and a function $g\taking Y\to X$ such that $g\circ f=\id_X$. In this case we call $f$ a {\em retract section} and we call $g$ a {\em retract projection}. \index{retraction}

\end{definition}

\begin{exercise}
Create an olog that includes sets $X$ and $Y$, and functions $f\taking X\to Y$ and $g\taking Y\to X$ such that $g\circ f=\id_X$ but such that $f\circ g\neq\id_Y$; that is, such that $f$ is a retract section but not an isomorphism.
\end{exercise}


\subsection{Currying}\label{sec:currying}\index{currying}\index{materials!force extension curves}

Currying is the idea that when a function takes many inputs, we can input them one at a time or all at once. For example, consider the function that takes a material $M$ and an extension $E$ and returns the force transmitted through the material when it is pulled to that extension. This is a function $e\taking \fakebox{a material}\times\fakebox{an extension}\to\fakebox{a force}$. This function takes two inputs at once, but it is convenient to ``curry" the second input. Recall that $\Hom_\Set(\fakebox{an extension},\fakebox{a force})$ is the set of theoretical force-extension curves. Currying transforms $e$ into a function $$e'\taking\fakebox{a material}\to\Hom_\Set(\fakebox{an extension},\fakebox{a force}).$$ This is a more convenient way to package the same information. 

In fact, it may be convenient to repackage this information another way. For any extension, we may want the function that takes a material and returns how much force it can transmit at that extension. This is a function $$e''\taking\fakebox{an extension}\to\Hom_\Set(\fakebox{a material},\fakebox{a force}).$$ 

\begin{notation}\index{exponentials ! in $\Set$}
Let $A$ and $B$ be sets. We sometimes denote the set of functions from $A$ to $B$ by 
\begin{align}\label{dia:exponential sets}
B^A:=\Hom_\Set(A,B).
\end{align}
\end{notation}

\begin{exercise}
For a finite set $A$, let $|A|\in\NN$ denote the cardinality of (number of elements in) $A$. If $A$ and $B$ are both finite (including the possibility that one or both are empty), is it always true that $|B^A|=|B|^{|A|}$?
\end{exercise}

\begin{proposition}[Currying]\label{prop:curry}

Let $A$ denote a set. For any sets $X,Y$ there is a bijection 
\begin{align}\label{dia:curry bijection}
\phi\taking\Hom_\Set(X\times A,Y)\To{\iso}\Hom_\Set(X,Y^A).
\end{align}

\end{proposition}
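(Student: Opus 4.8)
The plan is to exhibit the map $\phi$ and a candidate two-sided inverse $\psi$ explicitly, and then check that they are mutually inverse by unwinding the definitions. Producing such a $\psi$ is exactly what Definition \ref{def:iso in set} requires in order to conclude that $\phi$ is an isomorphism of sets, i.e. a bijection.

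First I would define $\phi$. Given a function $h\taking X\cross A\to Y$, I must produce a function $\phi(h)\taking X\to Y^A$. For each element $x\in X$, the assignment $a\mapsto h(x,a)$ is a function $A\to Y$, hence an element of $Y^A=\Hom_\Set(A,Y)$; so I set $\phi(h)(x)$ to be this function. In symbols, $\phi(h)(x)(a)=h(x,a)$. Next I would define the candidate inverse $\psi\taking\Hom_\Set(X,Y^A)\to\Hom_\Set(X\cross A,Y)$. Given $k\taking X\to Y^A$, each value $k(x)$ is itself a function $A\to Y$, so I may evaluate it at $a$; I set $\psi(k)(x,a)=k(x)(a)\in Y$, which makes $\psi(k)$ a genuine function $X\cross A\to Y$.

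Finally I would verify $\psi\circ\phi=\id$ and $\phi\circ\psi=\id$. For the first, given $h$ I compute $\psi(\phi(h))(x,a)=\phi(h)(x)(a)=h(x,a)$ for every $(x,a)\in X\cross A$, so $\psi(\phi(h))=h$ by the criterion (recorded earlier in Section \ref{sec:functions}) that two functions are equal exactly when they agree on every input. For the second, given $k$ I compute $\phi(\psi(k))(x)(a)=\psi(k)(x,a)=k(x)(a)$.

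The one point requiring care—and where I expect the only real (if minor) obstacle to lie—is bookkeeping of the levels of function application: $\phi(h)$ is a function whose \emph{values are themselves functions}. Consequently, verifying $\phi(\psi(k))=k$ means checking an equality of function-valued functions, which forces me to descend two levels and invoke the elementwise equality criterion twice: once at the level of $X$ (to see $\phi(\psi(k))(x)=k(x)$ as elements of $Y^A$) and, inside that, once at the level of $A$ (to see the two functions $A\to Y$ agree at each $a$). Keeping these two layers of evaluation distinct is the whole content of the argument; once the notation is set up cleanly, each verification is a one-line substitution.
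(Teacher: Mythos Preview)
Your proof is correct and is essentially identical to the paper's own proof: both define $\phi$ by $\phi(h)(x)(a)=h(x,a)$, define the inverse $\psi$ by $\psi(k)(x,a)=k(x)(a)$, and verify the two round-trip equalities by direct substitution. Your added paragraph about the two layers of function application is a helpful gloss but not a different argument.
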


\begin{proof}

Suppose given $f\taking X\times A\to Y$. Define $\phi(f)\taking X\to Y^A$ as follows: for any $x\in X$ let $\phi(f)(x)\taking A\to Y$ be defined as follows: for any $a\in A$, let $\phi(f)(x)(a):=f(x,a)$. 

We now construct the inverse, $\psi\taking\Hom_\Set(X,Y^A)\to\Hom_\Set(X\times A,Y)$. Suppose given $g\taking X\to Y^A$. Define $\psi(g)\taking X\times A\to Y$ as follows: for any pair $(x,a)\in X\times A$ let $\psi(g)(x,a):=g(x)(a)$. 

Then for any $f\in\Hom_\Set(X\times A,Y)$ we have $\psi\circ\phi(f)(x,a)=\phi(f)(x)(a)=f(x,a)$, and for any $g\in\Hom_\Set(X,Y^A)$ we have $\phi\circ\psi(g)(x)(a)=\psi(g)(x,a)=g(x)(a)$, Thus we see that $\phi$ is an isomorphism as desired.

\end{proof}

\begin{exercise}
Let $X=\{1,2\}, A=\{a,b\}$, and $Y=\{x,y\}$. 
\sexc\label{part:three distinct} Write down three distinct elements of $L:=\Hom_\Set(X\times A,Y)$. 
\next Write down all the elements of $M:=\Hom_\Set(A,Y)$. 
\next For each of the three elements $\ell\in L$ you chose in part (\ref{part:three distinct}), write down the corresponding function $\phi(\ell)\taking X\to M$ guaranteed by Proposition \ref{prop:curry}.
\endsexc
\end{exercise}

\begin{exercise}\label{exc:evaluation}\index{exponentials!evaluation of}
Let $A$ and $B$ be sets. We know that $\Hom_\Set(A,B)=B^A$, so we have a function $\id_{B^A}\taking\Hom_\Set(A,B)\to B^A$. Look at Proposition \ref{prop:curry}, making the substitutions $X=\Hom_\Set(A,B)$, $Y=B$, and  $A=A$. Consider the function $$\phi^\m1\taking\Hom_\Set(\Hom_\Set(A,B),B^A)\to\Hom_\Set(\Hom_\Set(A,B)\times A,B)$$ obtained as the inverse of (\ref{dia:curry bijection}). We have a canonical element $\id_{B^A}$ in the domain of $\phi^\m1$. We can apply the function $\phi^\m1$ and obtain an element $ev=\phi^\m1(\id_{B^A})\in\Hom_\Set(\Hom_\Set(A,B)\times A,B)$, which is itself a function, $$ev\taking\Hom_\Set(A,B)\times A\to B.$$ 
\sexc Describe the function $ev$ in terms of how it operates on elements in its domain. 
\next Why might one be tempted to denote this function by $ev$?
\endsexc
\end{exercise}

If $n\in\NN$ is a natural number, recall from (\ref{dia:underline n}) that there is a nice set $\ul{n}=\{1,2,\ldots,n\}$. If $A$ is a set, we often make the abbreviation 
\begin{align}\label{dia:exponential abbrev}
A^n:=A^{\ul{n}}.
\end{align}

\begin{exercise}\label{exc:two R2s}

In Example \ref{ex:R2} we said that $\RR^2$ is an abbreviation for $\RR\times\RR$, but in (\ref{dia:exponential abbrev}) we say that $\RR^2$ is an abbreviation for $\RR^{\ul{2}}$. Use Exercise \ref{exc:generator for set}, Proposition \ref{prop:curry}, Exercise \ref{exc:coprod}, and the fact that 1+1=2, to prove that these are isomorphic, $\RR^{\ul{2}}\iso\RR\times\RR$.

(The answer to Exercise \ref{exc:generator for set} was $A=\singleton$: i.e. $\Hom_\Set(\singleton,X)\iso X$ for all $X$.)
\end{exercise}


\subsection{Arithmetic of sets}\label{sec:arithmetic of sets}\index{set!arithmetic of}

Proposition \ref{prop:arithmetic of sets} summarizes the properties of products, coproducts, and exponentials, and shows them all in a familiar light, namely that of arithmetic. In fact, one can think of the natural numbers as literally being the isomorphism classes of finite sets---that's what they are used for in counting. Consider the standard procedure for counting the elements of a set $S$, say cows in a field: one points to an element in $S$ and simultaneously says ``1", points to another element in $S$ and simultaneously says ``2", and so on until finished. This procedure amounts to nothing more than creating an isomorphism (one-to-one mapping) between $S$ and some set $\ul{n}$. 

Again, the natural numbers are the isomorphism classes of finite sets. Their behavior, i.e. the arithmetic of natural numbers, reflects the behavior of sets. For example the fact that multiplication distributes over addition is a fact about grids of dots as in Example \ref{ex:grid1}. The following proposition lays out such arithmetic properties of sets.

In this proposition, we denote the coproduct of two sets $A$ and $B$ by the notation $A+B$ rather than $A\sqcup B$. It is a reasonable notation in general, and one that is often used. 

\begin{proposition}\label{prop:arithmetic of sets}

The following isomorphisms exist for any sets $A,B,$ and $C$ (except for one caveat, see Exercise \ref{exc:0 to the 0}). 
\begin{itemize}
\item $A+\ul{0}\iso A$
\item $A + B\iso B + A$
\item $(A + B) + C \iso A + (B + C)$
\item $A\times\ul{0}\iso\ul{0}$
\item $A\times\ul{1}\iso A$
\item $A\times B\iso B\times A$
\item $(A\times B)\times C \iso A\times (B\times C)$
\item $A\times(B+C)\iso (A\times B)+(A\times C)$
\item $A^{\ul{0}}\iso \ul{1}$
\item $A^{\ul{1}}\iso A$
\item $\ul{0}^A\iso\ul{0}$
\item $\ul{1}^A\iso\ul{1}$
\item $A^{B+C}\iso A^B\times A^C$
\item $(A^B)^C\iso A^{B\times C}$
\end{itemize}

\end{proposition}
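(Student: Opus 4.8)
The plan is to verify each of the fourteen isomorphisms by the same basic move---either exhibit an explicit pair of mutually inverse functions, or invoke a universal property already proved in this chapter---and to organize the list into three families so that each family is handled uniformly.

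First I would dispatch the additive and multiplicative laws (the first eight bullets) by writing down explicit bijections and checking that both composites are identities. For example $A\cross B\iso B\cross A$ is witnessed by $(a,b)\mapsto(b,a)$; associativity $(A\cross B)\cross C\iso A\cross(B\cross C)$ by $((a,b),c)\mapsto(a,(b,c))$; and $A+B\iso B+A$ together with associativity of $+$ by re-routing the inclusion tags. The unit and zero laws $A+\ul{0}\iso A$, $A\cross\ul{1}\iso A$, and $A\cross\ul{0}\iso\ul{0}$ are immediate from the definitions of coproduct and product (there is nothing to adjoin from $\ul{0}$, and pairing with the single element of $\ul{1}$ changes nothing). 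For distributivity $A\cross(B+C)\iso(A\cross B)+(A\cross C)$, an element of the left-hand side is a pair $(a,m)$ where $m$ lies in $B$ or in $C$; I send it into the first or second summand accordingly and check this is invertible. These could all be derived more slickly from Lemmas \ref{lemma:up for prod} and \ref{lemma:up for coprod}, but explicit maps are the most transparent at this stage.

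Second, I would handle the exponential identities having a numeral exponent or base. Here $A^{\ul{0}}\iso\ul{1}$ holds because there is exactly one (empty) function $\emptyset\to A$; $A^{\ul{1}}\iso A$ because a function $\ul{1}\to A$ is the same datum as a chosen element of $A$ (this is Exercise \ref{exc:generator for set}); $\ul{1}^A\iso\ul{1}$ because there is exactly one function $A\to\ul{1}$; and $\ul{0}^A\iso\ul{0}$ because there is no function $A\to\emptyset$ \emph{provided} $A\neq\ul{0}$. This proviso is exactly the caveat flagged in the statement: when $A=\ul{0}$ the empty function $\emptyset\to\emptyset$ does exist, so $\ul{0}^{\ul{0}}\iso\ul{1}$ rather than $\ul{0}$, and I would state this exception explicitly, pointing to Exercise \ref{exc:0 to the 0}.

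Third, for the two structural exponential laws---the conceptual heart of the proposition---I would avoid element-chasing and reuse earlier results. The law $A^{B+C}\iso A^B\cross A^C$ unwinds to $\Hom_\Set(B+C,A)\iso\Hom_\Set(B,A)\cross\Hom_\Set(C,A)$, which is precisely the universal property of the coproduct from Exercise \ref{exc:coprod}. For $(A^B)^C\iso A^{B\cross C}$ I would specialize the Currying bijection of Proposition \ref{prop:curry}, taking its set $X$ to be $C$, its fixed currying set to be $B$, and its codomain to be $A$; this gives $\Hom_\Set(C\cross B,A)\iso\Hom_\Set(C,A^B)=(A^B)^C$, and composing with the commutativity isomorphism $B\cross C\iso C\cross B$ already proved above yields $(A^B)^C\iso A^{B\cross C}$. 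The main obstacle is not any single identity---each is short---but bookkeeping and edge cases: I must keep the inclusion tags on elements of $A+B$ honest so the swap and associativity maps are genuinely well-defined and invertible, and I must treat $\emptyset$ carefully throughout, since that is exactly where the arithmetic analogy breaks. For that reason I expect to prove commutativity and associativity of $\cross$ first, since the final exponential law depends on commutativity of $\cross$ through the currying step.
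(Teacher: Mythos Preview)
Your proposal is correct, and in fact it supplies considerably more than the paper does: the paper states this proposition without proof, leaving the fourteen isomorphisms as evident facts and relegating only the $\ul{0}^{\ul{0}}$ edge case to Exercise~\ref{exc:0 to the 0}. Your organization into additive/multiplicative laws, numeral-exponent laws, and structural exponential laws is sound, and your appeals to Exercise~\ref{exc:coprod} for $A^{B+C}\iso A^B\cross A^C$ and to Proposition~\ref{prop:curry} for $(A^B)^C\iso A^{B\cross C}$ are exactly the right internal references.
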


\begin{exercise}\label{exc:0 to the 0}
Everything in Proposition \ref{prop:arithmetic of sets} is true except in one case, namely that of $$\ul{0}^{\ul{0}}.$$ In this case, we get conflicting answers, because for any set $A$, including $A=\emptyset=\ul{0}$, we have claimed both that $A^{\ul{0}}\iso\ul{1}$ and that $\ul{0}^A\iso\ul{0}.$ 

What is the correct answer for $\ul{0}^{\ul{0}}$, based on the definitions of $\ul{0}$ and $\ul{1}$, given in (\ref{dia:underline n}), and of $A^B$, given in (\ref{dia:exponential sets})?
\end{exercise}

\begin{exercise}
It is also true of natural numbers that if $a,b\in\NN$ and $ab=0$ then either $a=0$ or $b=0$. Is the analogous statement true of all sets?
\end{exercise}

Proposition \ref{prop:arithmetic of sets} is in some sense about isomorphisms. It says that understanding isomorphisms of sets reduces to understanding natural numbers. But note that there is much more going on in $\Set$ than isomorphisms; in particular there are functions that are not invertible. 

In grade school you probably never saw anything that looked like this:
$$5^3\times 3\too 5$$
And yet in Exercise \ref{exc:evaluation} we found a function $ev\taking B^A\times A\to B$ that exists for any sets $A,B$. This function $ev$ is not an isomorphism so it somehow does not show up as an equation of natural numbers. But it still has important meaning.
\footnote{Roughly, the existence of $ev\taking\ul{5}^{\ul{3}}\times\ul{3}\too \ul{5}$ says that given a dot in a $5\times 5\times 5$ grid of dots, and given one of the three axes, you can tell me the coordinate of that dot along that axis.} In terms of mere number, it looks like we are being told of an important function $\ul{575}\to\ul{5}$, which is bizarre. The issue here is precisely the one you confronted in Exercise \ref{exc:functions are not iso invariant}.

\begin{exercise}
Explain why there is a canonical function $\ul{5}^{\ul{3}}\times\ul{3}\too \ul{5}$ but not a canonical function $\ul{575}\to\ul{5}$.
\end{exercise}

\begin{slogan}
It is true that a set is isomorphic to any other set with the same number of elements, but don't be fooled into thinking that the study of sets reduces to the study of numbers. Functions that are not isomorphisms cannot be captured within the framework of numbers. 
\end{slogan}


\subsection{Subobjects and characteristic functions}

\begin{definition}\label{def:power set}

For any set $B$, define the {\em power set of $B$}\index{power set}, denoted $\PP(B)$,\index{a symbol!$\PP$} to be the set of subsets of $B$.

\end{definition}

\begin{exercise}\label{exc:size of power sets}~
\sexc How many elements does $\PP(\emptyset)$ have? 
\next How many elements does $\PP(\singleton)$ have? 
\next How many elements does $\PP(\{1,2,3,4,5,6\})$ have? 
\next Any idea why they may have named it ``power set"?
\endsexc
\end{exercise}


\subsubsection{Simplicial complexes}\label{sec:simplicial complex}

\begin{definition}\label{def:simplicial complex}\index{simplicial complex}

Let $V$ be a set and let $\PP(V)$ be its powerset. A subset $X\ss\PP(V)$ is called {\em downward-closed} if, for every $u\in X$ and every $u'\ss u$, we have $u'\in X$. We say that $X$ {\em contains all atoms} if for every $v\in V$ the singleton set $\{v\}$ is an element of $X$. 

A {\em simplicial complex} is a pair $(V,X)$ where $V$ is a set and $X\ss\PP(V)$ is a downward-closed subset that contains all atoms. The elements of $X$ are called {\em simplices} (singular: {\em simplex}).\index{simplex} Any subset $u\ss V$ has a cardinality $|u|$, so we have a function $X\to\NN$ sending each simplex to its cardinality. The set of simplices with cardinality $n+1$ is denoted $X_n$ and each element $x\in X_n$ is called an {\em $n$-simplex}.
\footnote{It is annoying at first that the set of subsets with cardinality 1 is denoted $X_0$, etc. But this is standard convention because as we will see, $X_n$ will be $n$-dimensional.}
Since $X$ contains all atoms (subsets of cardinality 1), we have $X_0\iso V$, and we may also call the 0-simplices {\em vertices}. We sometimes call the 1-simplices {\em edges}.
\footnote{The reason we wrote $X_0\iso V$ rather than $X_0=V$ is that $X_0$ is the set of 1-element subsets of $V$. So if $V=\{a,b,c\}$ then $X_0=\{\{a\},\{b\},\{c\}\}$. This is really just pedantry.}

Since $X_0\iso V$, we may denote a simplicial complex $(V,X)$ simply by $X$.

\end{definition}

\begin{example}

Let $n\in\NN$ be a natural number and let $V=\ul{n+1}$. Define {\em the $n$-simplex}, denoted $\Delta^n$, to be the simplicial complex $\PP(V)\ss\PP(V)$, i.e. the whole power set, which indeed is downward-closed and contains all atoms. 

\end{example}

We can draw a simplicial complex $X$ by first putting all the vertices on the page as dots. Then for every $x\in X_1$, we see that $x=\{v,v'\}$ consists of 2 vertices, so we draw an edge connecting $v$ and $v'$. For every $y\in X_2$ we see that $y=\{w,w',w''\}$ consists of 3 vertices, so we draw a (filled-in) triangle connecting them. All three edges will be drawn too because $X$ is assumed to be downward closed.

Thus, the 0-simplex $\Delta^0$, the 1-simplex $\Delta^1$, the 2-simplex $\Delta^2$, and the 3-simplex $\Delta^3$ are drawn here:
\begin{center}
\includegraphics[height=1.1in]{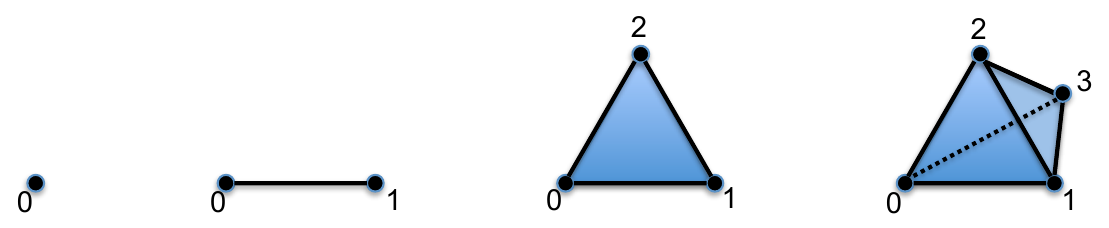}
\end{center} 

The $n$-simplices for various $n$'s are in no way all of the simplicial complexes. In general a simplicial complex is a union or ``gluing together" of simplices in a prescribed manner. For example, consider the simplicial complex $X$ with vertices $X_0=\{1,2,3,4\},$ edges $X_1=\{\{1,2\},\{2,3\},\{2,4\}\},$ and no higher simplices $X_2=X_3=\cdots=\emptyset$. We might draw $X$ as follows:
$$\xymatrix{\LMO{1}\ar@{-}[r]&\LMO{2}\ar@{-}[r]\ar@{-}[d]&\LMO{3}\\&\LMO{4}}$$

\begin{exercise}
Let $X$ be the following simplicial complex, so that $X_0=\{A,B,\ldots,M\}$. 
\begin{center}
\includegraphics[height=3in]{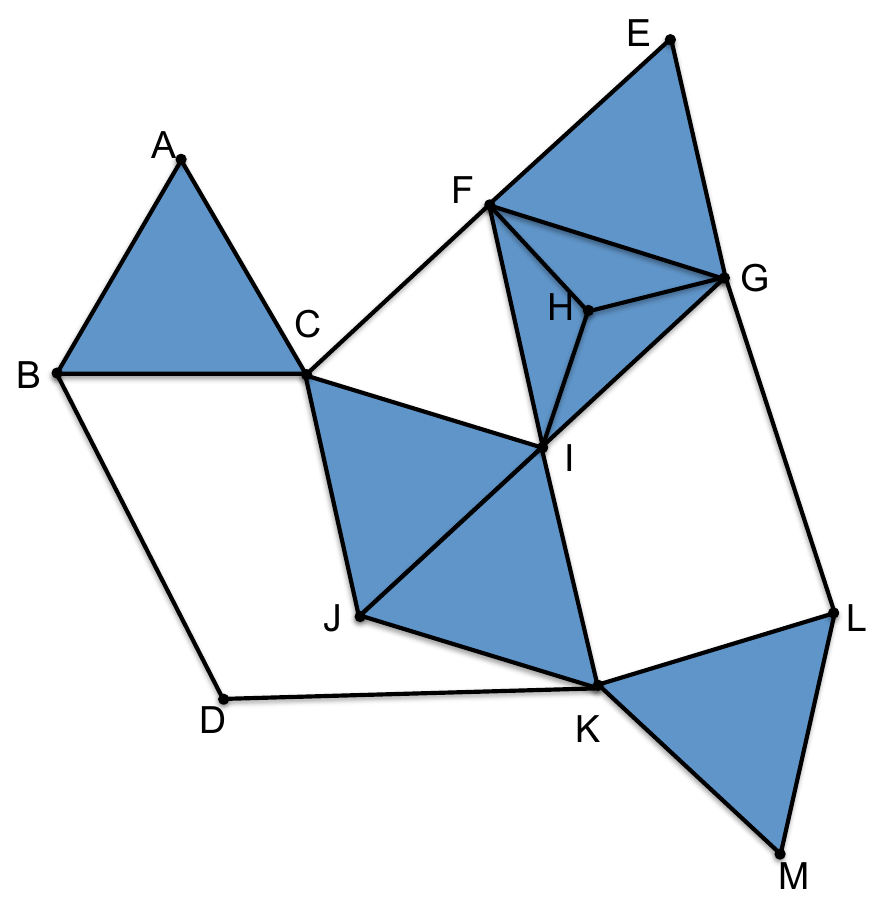}
\end{center} 
In this case $X_1$ consists of elements like $\{A,B\}$ and $\{D,K\}$ but not $\{D,J\}$. 

Write out $X_2$ and $X_3$ (hint: the drawing of $X$ indicates that $X_3$ should have one element).
\end{exercise}

\begin{exercise}
The 2-simplex $\Delta^2$ is drawn as a filled-in triangle with vertices $V=\{1,2,3\}$. There is a simplicial complex $X=\partial\Delta^2$ that would be drawn as an empty triangle with the same set of vertices. 
\sexc Draw $\Delta^2$ and $X$ side by side and make clear the difference.
\next Write down the data for $X$ as a simplicial complex. In other words what are the sets $X_0, X_1, X_2, X_3,\ldots$?
\endsexc
\end{exercise}


\subsubsection{Subobject classifier}

\begin{definition}\label{def:subobject classifier}\index{subobject classifier!in $\Set$}

Define the {\em subobject classifier} for $\Set$, denoted $\Omega$\index{a symbol!$\Omega$}, to be the set $\Omega:=\{True,False\}$, together with the function $\singleton\to\Omega$ sending the unique element to $True$.

\end{definition}

\begin{proposition}\label{prop:characteristic function}

Let $B$ be a set. There is an isomorphism $$\phi\taking\Hom_\Set(B,\Omega)\To{\iso}\PP(B).$$

\end{proposition}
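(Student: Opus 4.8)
The plan is to realize the familiar fact that a function into the two-element set $\Omega=\{True,False\}$ carries exactly the same data as a subset, namely the subset on which the function takes the value $True$. So I would exhibit $\phi$ together with an explicit inverse and then check that the two round-trip composites are identities, using the criterion (stated in Section \ref{sec:functions}) that two functions are equal exactly when they agree on every element of the common domain.

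First I would pin down $\phi$. Given $f\taking B\to\Omega$, set $\phi(f):=f^\m1(True)=\{b\in B\|f(b)=True\}$, which is a subset of $B$ and hence an element of $\PP(B)$; here $f^\m1(True)$ is just the preimage of Definition \ref{def:preimage}. Next I would construct a candidate inverse $\psi\taking\PP(B)\to\Hom_\Set(B,\Omega)$: to a subset $S\ss B$ assign its \emph{characteristic function} $\chi_S\taking B\to\Omega$, defined by $\chi_S(b)=True$ when $b\in S$ and $\chi_S(b)=False$ when $b\notin S$. This is a well-defined function precisely because for each $b\in B$ exactly one of $b\in S$ or $b\notin S$ holds, so $\chi_S$ assigns one and only one element of $\Omega$ to each input.

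It then remains to verify $\phi\circ\psi=\id_{\PP(B)}$ and $\psi\circ\phi=\id_{\Hom_\Set(B,\Omega)}$. For the first, given $S\ss B$ I compute $\phi(\psi(S))=\chi_S^\m1(True)=\{b\in B\|\chi_S(b)=True\}=\{b\in B\|b\in S\}=S$. For the second, given $f\taking B\to\Omega$ I must show $\psi(\phi(f))=f$ as functions, i.e. that they agree at every $b\in B$. Unwinding definitions, $\psi(\phi(f))=\chi_{f^\m1(True)}$, and $\chi_{f^\m1(True)}(b)=True$ iff $b\in f^\m1(True)$ iff $f(b)=True$; since $\Omega$ has only these two values, the remaining case forces $\chi_{f^\m1(True)}(b)=False=f(b)$ as well. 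Hence the two functions agree pointwise and are therefore equal, which gives the desired isomorphism.

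The argument is entirely routine; there is no real obstacle beyond being disciplined about the definition of function equality and about the fact that $\Omega$ has \emph{exactly} two elements, so that ``not $True$'' means ``$False$''. This two-valuedness is exactly what makes the characteristic-function assignment a genuine two-sided inverse, and it is the one place where I would take care rather than wave hands.
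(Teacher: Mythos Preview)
Your proposal is correct and follows essentially the same approach as the paper: define $\phi(f)=\{b\in B\mid f(b)=True\}$, define an inverse $\psi$ via the characteristic function of a subset, and check that the two composites are identities. The paper's proof is terser---it simply says ``one checks easily that $\phi$ and $\psi$ are mutually inverse''---whereas you carry out that check explicitly, but the content is the same.
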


\begin{proof}

Given a function $f\taking B\to\Omega$, let $\phi(f)=\{b\in B\|f(b)=True\}\ss B$. We now construct a function $\psi\taking\PP(B)\to\Hom_\Set(B,\Omega)$ to serve as the inverse of $\phi$. Given a subset $B'\ss B$, define $\psi(B')\taking B\to\Omega$ as follows: 
$$\psi(i)(b)=\begin{cases}
True&\tn{ if } b\in B',\\
False&\tn{ if } b\not\in B'.
\end{cases}
$$
One checks easily that $\phi$ and $\psi$ are mutually inverse.

\end{proof}

\begin{definition}[Characteristic function]\index{characteristic function}\index{subset!characteristic function of}

Given a subset $B'\ss B$, we call the corresponding function $B\to\Omega$ the {\em characteristic function of $B'$ in $B$.}

\end{definition}

Let $B$ be any set and let $\PP(B)$ be its power set. By Proposition \ref{prop:characteristic function} there is a bijection between $\PP(B)$ and $\Omega^B$. Since $\Omega$ has cardinality 2, the cardinality of $\PP(B)$ is $2^{|B|}$, which explains the correct answer to Exercise \ref{exc:size of power sets}.

\begin{exercise}
Let $f\taking A\to\Omega$ denote the characteristic function of some $A'\ss A$, and define $A''\ss A$ to be its complement, $A'':=A-A'$ (i.e. $a\in A''$ if and only if $a\not\in A'$). 
\sexc What is the characteristic function of $A''\ss A$? 
\next Can you phrase it in terms of some function $\Omega\to\Omega$?
\endsexc
\end{exercise}


\subsection{Surjections, injections}

The classical definition of injections and surjections involves elements, which we give now. But a more robust notion involves all maps and will be given in Proposition \ref{prop:inj and surj}.

\begin{definition}\label{def:inj,surj,bij}\index{function!injection}\index{function!surjection}\index{function!bijection}

Let $f\taking X\to Y$ be a function. We say that $f$ is {\em surjective} if, for all $y\in Y$ there exists some $x\in X$ such that $f(x)=y$. We say that $f$ is {\em injective} if, for all $x\in X$ and all $x'\in X$ with $f(x)=f(x')$ we have $x=x'$.

A function that is both injective and surjective is called {\em bijective}.

\end{definition}

\begin{remark}

It turns out that a function that is bijective is always an isomorphism and that all isomorphisms are bijective. We will not show that here, but it is not too hard; see for example \cite[Theorem 5.4]{Big}.

\end{remark}

\begin{definition}[Monomorphisms, epimorphisms]\label{def:mono, epi in set}\index{epimorphism!in $\Set$}\index{monomorphism!in $\Set$}

Let $f\taking X\to Y$ be a function. 

We say that $f$ is a {\em monomorphism} if for all sets $A$ and pairs of functions $g,g'\taking A\to X$,
$$
\xymatrix{A\ar@/^1pc/[r]^g\ar@/_1pc/[r]_{g'}&X\ar[r]^f&Y}
$$
if $f\circ g=f\circ g'$ then $g=g'$.

We say that $f$ is an {\em epimorphism} if for all sets $B$ and pairs of functions $h,h'\taking Y\to B$, 
$$
\xymatrix{X\ar[r]^f&Y\ar@/^1pc/[r]^h\ar@/_1pc/[r]_{h'}&B}
$$
if $h\circ f=h'\circ f$ then $h=h'$.

\end{definition}

\begin{proposition}\label{prop:inj and surj}

Let $f\taking X\to Y$ be a function. Then $f$ is injective if and only if it is a monomorphism; $f$ is surjective if and only if it is an epimorphism.

\end{proposition}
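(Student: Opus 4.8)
The plan is to establish the two biconditionals by proving four implications: injective $\Rightarrow$ monomorphism, monomorphism $\Rightarrow$ injective, surjective $\Rightarrow$ epimorphism, and epimorphism $\Rightarrow$ surjective. The two ``easy'' directions (injective $\Rightarrow$ mono and surjective $\Rightarrow$ epi) are direct pointwise arguments; the two converses each require choosing a cleverly-shaped test object to probe $f$, and the genuinely interesting work is in the last one.

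For injective $\Rightarrow$ monomorphism, I would take any $h,h'\taking A\to X$ with $f\circ h=f\circ h'$, evaluate at an arbitrary $a\in A$ to get $f(h(a))=f(h'(a))$, and apply injectivity to conclude $h(a)=h'(a)$; since $a$ was arbitrary, $h=h'$. For monomorphism $\Rightarrow$ injective, I would use the one-element set $\singleton$ as a test object (recall from Exercise \ref{exc:generator for set} that an element of $X$ is the same as a function $\singleton\to X$). Given $x,x'\in X$ with $f(x)=f(x')$, view them as functions $h,h'\taking\singleton\to X$ picking out $x$ and $x'$; then $f\circ h=f\circ h'$, so the monomorphism property forces $h=h'$, which is exactly $x=x'$.

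For surjective $\Rightarrow$ epimorphism, I would argue symmetrically on the other side: given $g,g'\taking Y\to B$ with $g\circ f=g'\circ f$, pick any $y\in Y$, use surjectivity to find $x\in X$ with $f(x)=y$, and compute $g(y)=g(f(x))=g'(f(x))=g'(y)$; as $y$ was arbitrary, $g=g'$. The final direction, epimorphism $\Rightarrow$ surjective, is the one I expect to be the main obstacle, since it is not a routine pointwise check: I would prove it using the subobject classifier $\Omega=\{True,False\}$ of Definition \ref{def:subobject classifier}. Let $g\taking Y\to\Omega$ be the characteristic function of $\im(f)$ and let $g'\taking Y\to\Omega$ be the constant function at $True$. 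Every value $f(x)$ lies in $\im(f)$, so $g\circ f=g'\circ f$; the epimorphism property then forces $g=g'$, and $g=g'$ says precisely that $\im(f)=Y$, i.e. $f$ is surjective.

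The crux is recognizing that a two-element codomain exactly detects whether the image fills all of $Y$: the characteristic function of the image differs from the constant $True$ map if and only if something is missed, yet the two maps always agree after precomposition with $f$. This is the only step where a mere pointwise manipulation does not suffice and a specific test target must be engineered, so it is where I would concentrate care; the other three implications should fall out as short direct verifications.
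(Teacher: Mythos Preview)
Your proof is correct and follows the same four-implication decomposition as the paper, using $\singleton$ to test monomorphisms and $\Omega$ to test epimorphisms. The one notable difference is in the epimorphism $\Rightarrow$ surjective direction: you compare the characteristic function of $\im(f)$ against the constant-$True$ map and invoke the epimorphism property directly, whereas the paper fixes an arbitrary $y_0\in Y$, compares the characteristic function of $\{y_0\}$ against that of $Y$, and applies the contrapositive of the epimorphism property to produce a preimage of $y_0$. Both arguments use $\Omega$ as the probe codomain, but yours handles all of $Y$ at once and sidesteps the need to treat $Y=\emptyset$ (or $|Y|=1$) as a separate edge case; the paper's pointwise version is perhaps closer in spirit to the definition of surjectivity but requires that bit of extra bookkeeping.
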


\begin{proof}

If $f$ is a monomorphism it is clearly injective by putting $A=\singleton$. Suppose that $f$ is injective and let $g,g'\taking A\to X$ be functions such that $f\circ g=f\circ g'$, but suppose for contradiction that $g\neq g'$. Then there is some element $a\in A$ such $g(a)\neq g'(a)\in X$. But by injectivity $f(g(a))\neq f(g'(a))$, contradicting $f\circ g=f\circ g'$.

Suppose that $f\taking X\to Y$ is an epimorphism and choose some $y_0\in Y$ (noting that if $Y$ is empty then the claim is vacuously true). Let $h\taking Y\to\Omega$ denote the characteristic function of the subset $\{y_0\}\ss Y$ and let $h'\taking Y\to\Omega$ denote the characteristic function of $\emptyset\ss Y$; note that $h(y)=h'(y)$ for all $y\neq y_0$. Then since $f$ is an epimorphism and $h\neq h'$, we must have $h\circ f\neq h'\circ f$, so there exists $x\in X$ with $h(f(x))\neq h'(f(x))$, which implies that $f(x)=y_0$. This proves that $f$ is surjective.

Finally, suppose that $f$ is surjective, and let $h,h'\taking Y\to B$ be functions with $h\circ f=h'\circ f$. For any $y\in Y$, there exists some $x\in X$ with $f(x)=y$, so $h(y)=h(f(x))=h'(f(x))=h'(y)$. This proves that $f$ is an epimorphism.

\end{proof}

\begin{proposition}\label{prop:pb preserve mono}

Let $f\taking X\to Y$ be a monomorphism. Then for any function $g\taking A\to Y$, the top map $f'\taking X\times_YA\to A$ in the diagram
$$
\xymatrix{X\times_YA\ar[r]^-{f'}\ar[d]_{g'}\ullimit&A\ar[d]^g\\X\ar[r]_f&Y}
$$
is a monomorphism.

\end{proposition}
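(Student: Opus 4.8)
The plan is to reduce the statement to a fact about injective functions and then run a short element chase, invoking Proposition \ref{prop:inj and surj} at both ends. Since $f$ is a monomorphism, that proposition tells us $f$ is injective; and since monomorphism and injectivity coincide in $\Set$, it suffices to prove that $f'\taking X\cross_YA\to A$ is injective. So the real content is the implication: \emph{if $f$ is injective, then so is its pullback $f'$.}

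First I would unwind the definition of the fiber product (Definition \ref{def:pullback}): an element of $X\cross_YA$ is a triple $(x,w,a)$ with $f(x)=w=g(a)$, and reading off the diagram, the map $f'$ is the projection $(x,w,a)\mapsto a$ while $g'$ is the projection $(x,w,a)\mapsto x$. In particular the square commutes because $g(f'(x,w,a))=g(a)=w=f(x)=f(g'(x,w,a))$.

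Next, to check injectivity of $f'$, I would take two triples $(x_1,w_1,a_1)$ and $(x_2,w_2,a_2)$ in $X\cross_YA$ with $f'(x_1,w_1,a_1)=f'(x_2,w_2,a_2)$, i.e. $a_1=a_2$. Applying $g$ gives $w_1=g(a_1)=g(a_2)=w_2$, and hence $f(x_1)=w_1=w_2=f(x_2)$. This is the only place the hypothesis is used: because $f$ is injective, $f(x_1)=f(x_2)$ forces $x_1=x_2$, so the two triples agree in all three coordinates and are therefore equal. Thus $f'$ is injective, and by Proposition \ref{prop:inj and surj} again it is a monomorphism.

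There is no serious obstacle here; the whole argument is bookkeeping, and the single load-bearing step is that injectivity of $f$ transfers a collision in the $A$-coordinate back to a collision in the $X$-coordinate. If one preferred an argument that never mentions elements (and so would generalize verbatim to other categories), I would instead work directly from the universal property: given $h,h'\taking B\to X\cross_YA$ with $f'\circ h=f'\circ h'$, commutativity of the pullback square gives $f\circ(g'\circ h)=g\circ f'\circ h=g\circ f'\circ h'=f\circ(g'\circ h')$, so the monomorphism $f$ yields $g'\circ h=g'\circ h'$; since a map into $X\cross_YA$ is determined by its two projections (the uniqueness clause of Lemma \ref{lemma:up for fp}), this forces $h=h'$.
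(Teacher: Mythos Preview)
Your proof is correct. Your primary argument is an element chase, converting monomorphism to injectivity via Proposition~\ref{prop:inj and surj} and then checking injectivity of the projection $f'$ directly on triples. The paper instead argues purely from the universal property: it takes $m,n\taking B\to X\cross_YA$ with $f'\circ m=f'\circ n$, uses commutativity of the pullback square to deduce $f\circ(g'\circ m)=f\circ(g'\circ n)$, cancels $f$, and then invokes the uniqueness clause of Lemma~\ref{lemma:up for fp} to conclude $m=n$. This is exactly the alternative you sketch in your final paragraph. Your element-based route is shorter and more transparent in $\Set$, while the paper's diagrammatic route (and your sketch of it) generalizes verbatim to pullbacks in any category---which matters later in the book when the same fact is used outside $\Set$.
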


\begin{proof}

To show that $f'$ is a monomorphism, we take an arbitrary set $B$ and two maps $m,n\taking B\to X\times_YA$ such that $f'\circ m=f'\circ n$, denote that function by $p:=f'\circ m\taking B\to A$. Now let $q=g'\circ m$ and $r=g'\circ n$. The diagram looks like this:
$$
\xymatrix{B\ar@<.5ex>[rr]^(.4)m\ar@<-.5ex>[rr]_(.4)n\ar@/^2pc/[rrr]^p\ar@<.5ex>[drr]^(.6)q\ar@<-.5ex>[drr]_(.6)r&&X\times_YA\ar[r]^{f'}\ar[d]_{g'}\ullimit&A\ar[d]^g\\&&X\ar[r]_f&Y}
$$
We have that 
\begin{align*}f\circ q=f\circ g'\circ m=g\circ f'\circ m=g\circ f'\circ n=f\circ g'\circ n=f\circ r\end{align*} 
But we assumed that $f$ is a monomorphism so this implies that $q=r$. By the universal property of pullbacks, Lemma \ref{lemma:up for fp}, we have $m=n$.

\end{proof}

\begin{exercise}
Show, in analogy to Proposition \ref{prop:pb preserve mono}, that pushouts preserve epimorphisms.
\end{exercise}

\begin{example}\label{exc:olog pullbacks}

Suppose an olog has a fiber product square
$$\xymatrix{X\times_ZY\ar[r]^-{g'}\ar[d]_{f'}&Y\ar[d]^f\\X\ar[r]_g&Z}$$ such that $f$ is intended to be an injection and $g$ is any map.
\footnote{Of course, this diagram is symmetrical, so the same ideas hold if $g$ is an injection and $f$ is any map.} 
In this case, there are nice labeling systems for $f', g'$, and $X\times_ZY$. Namely:
\begin{itemize}
\item ``is" is an appropriate label for $f'$, 
\item the label for $g$ is an appropriate label for $g'$,
\item (the label for $X$, then ``which", then the label for $g$, then the label for $Y$) is an appropriate label for $X\times_ZY$.
\end{itemize}

To give an explicit example, 
$$\xymatrix{
\obox{X\times_ZY}{.9in}{a rib which is made by a cow}\LA{rr}{is made by}\LAL{d}{is}&&\obox{Y}{.4in}{a cow}\LA{d}{is}\\
\obox{X}{.3in}{a rib}\LAL{rr}{is made by}&&\obox{Z}{.6in}{an animal}
}
$$

\end{example}

\begin{corollary}\label{cor:monos are pullbacks of true}

Let $i\taking A\to X$ be a monomorphism. Then there is a fiber product square of the form 
\begin{align}\label{dia:monos are pbs of true}
\xymatrix{A\ar[r]^{f'}\ar[d]_i\ullimit&\singleton\ar[d]^{True}\\X\ar[r]_f&\Omega.}
\end{align}

\end{corollary}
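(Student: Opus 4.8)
The plan is to build the bottom map $f\taking X\to\Omega$ as the characteristic function of the image of $i$, and then to recognize the prescribed square as literally computing a fiber product that recovers $A$. First I would invoke Proposition \ref{prop:inj and surj} to conclude that the monomorphism $i\taking A\to X$ is injective. Injectivity means that $i$ restricts to an isomorphism $A\To{\iso}\im(i)$ onto its image, the subset $\im(i)\ss X$. This is the key fact that lets us treat $A$ as (isomorphic to) an honest subset of $X$.

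Next I would define the two unlabeled maps in the diagram. Let $f\taking X\to\Omega$ be the characteristic function of the subset $\im(i)\ss X$, in the sense of the Characteristic function definition following Proposition \ref{prop:characteristic function}; explicitly $f(x)=True$ if $x\in\im(i)$ and $f(x)=False$ otherwise. Let $f'\taking A\to\singleton$ be the unique map to the terminal set. I would then check the square commutes: for every $a\in A$ we have $i(a)\in\im(i)$, so $f(i(a))=True$, while $(True\circ f')(a)=True$ as well; hence $f\circ i=True\circ f'$.

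The remaining step is to verify the pullback (universal) property, and the cleanest route is to compute the fiber product directly from Definition \ref{def:pullback} and show $A$ realizes it. Writing $s$ for the unique element of $\singleton$, we have
$$
X\cross_\Omega\singleton=\{(x,w,s)\mid f(x)=w=True(s)\}\iso\{x\in X\mid f(x)=True\}=\im(i),
$$
where the last equality is exactly the defining property of the characteristic function $f$. Composing the isomorphism $A\To{\iso}\im(i)$ from the first paragraph with this identification exhibits $A$ as a set equipped with an isomorphism $A\To{\iso}X\cross_\Omega\singleton$, and one checks that under this isomorphism the two projections become $i$ and $f'$. By the definition of pullback (any set admitting such an isomorphism to the fiber product is a pullback), the square is a fiber product square.

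I do not expect any genuine obstacle here; the one point requiring care is bookkeeping rather than difficulty. Specifically, I must make sure that the subset used to define $f$ is the image $\im(i)$ and that I explicitly use injectivity to pass between $A$ and $\im(i)$, since without the monomorphism hypothesis the induced map $A\to X\cross_\Omega\singleton$ need not be an isomorphism. Everything else is a routine unwinding of Definition \ref{def:pullback}, Proposition \ref{prop:characteristic function}, and the universal property of the terminal set.
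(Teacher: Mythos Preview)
Your proposal is correct and follows essentially the same approach as the paper: define $f$ to be the characteristic function of the image $\im(i)\ss X$ and then verify the square is a pullback. The paper's proof is a two-line sketch that leaves ``it is easy to check'' to the reader, whereas you actually carry out that check by computing the fiber product explicitly and using injectivity to identify $A$ with $\im(i)$.
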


\begin{proof}

Let $X'\ss X$ denote the image of $i$ and let $f\taking X\to\Omega$ denote the characteristic function of $X'\ss X$. Then it is easy to check that Diagram \ref{dia:monos are pbs of true} is a pullback.

\end{proof}

\begin{exercise}
Consider the subobject classifier $\Omega$, the singleton $\singleton$ and the map $\singleton\To{True}\Omega$ from Definition \ref{def:subobject classifier}. Look at diagram \ref{dia:monos are pbs of true} and in the spirit of Exercise \ref{exc:olog pullbacks}, come up with a label for $\Omega$, a label for $\singleton$, and a label for $True$. Given a label for $X$ and a label for $f$, come up with a label for $A$, a label for $i$ and a label for $f'$, such that the English smoothly fits the mathematics.
\end{exercise}


\subsection{Multisets, relative sets, and set-indexed sets}

In this section we prepare ourselves for considering categories other than $\Set$, by looking at some categories related to $\Set$. 


\subsubsection{Multisets}\index{multiset}

Consider the set $X$ of words in a given document. If $WC(X)$ is the wordcount of the document, we will not generally have $WC(X)=|X|$. The reason is that a set cannot contain the same element more than once, so words like ``the" might be undercounted in $|X|$. A {\em multiset} is a set in which elements can be assigned a multiplicity, i.e. a number of times they are to be counted. 

But if $X$ and $Y$ are multisets, what is the appropriate type of mapping from $X$ to $Y$? Since every set is a multiset (in which each element has multiplicity 1), let's restrict ourselves to notions of mapping that agree with the usual one on sets. That is, if multisets $X$ and $Y$ happen to be sets then our mappings $X\to Y$ should just be functions.

\begin{exercise}\label{exc:multiset 1}~
\sexc Come up with some notion of mapping for multisets that generalizes functions when the notion is restricted to sets. 
\next Suppose that $X=(1,1,2,3)$ and $Y=(a,b,b,b)$, i.e. $X=\{1,2,3\}$ with $1$ having multiplicity 2, and $Y=\{a,b\}$ with $b$ having multiplicity 3. What are all the maps $X\to Y$ in your notion?
\endsexc
\end{exercise}

In Chapter \ref{chap:categories} we will be getting to the definition of category, and you can test whether your notion of mapping in fact defines a category. Here is my definition of mapping for multisets.

\begin{definition}\label{def:multiset}

A {\em multiset} is a sequence $X:=(E,B,\pi)$ where $E$ and $B$ are sets and $\pi\taking E\to B$ is a surjective function. We refer to $E$ as the set of {\em element instances of $X$}, we refer to $B$ as the set of {\em element names of $X$}, and we refer to $\pi$ as the {\em naming function for $X$}. Given an element name $x\in B$, let $\pi^\m1(x)\ss E$ be the preimage; the number of elements in $\pi^\m1(x)$ is called the {\em multiplicity of $x$}.

Suppose that $X=(E,B,\pi)$ and $X'=(E',B',\pi')$ are multisets. A {\em mapping from $X$ to $Y$}, denoted $f\taking X\to Y$, consists of a pair $(f_1,f_0)$ such that $f_1\taking E\to E'$ and $f_0\taking B\to B'$ are functions and such that the following diagram commutes:
\begin{align}\label{dia:multiset map}
\xymatrix{E\ar[r]^{f_1}\ar[d]_{\pi}&E'\ar[d]^{\pi'}\\B\ar[r]_{f_0}&B'.}
\end{align}

\end{definition}

\begin{exercise}
Suppose that a pseudo-multiset is defined to be almost the same as a multiset, except that $\pi$ is not required to be surjective. 
\sexc Write down a pseudo-multiset that is not a multi-set. 
\next Describe the difference between the two notions in terms of multiplicities. 
\next Complexity of names aside, which do you think is a more useful notion: multiset or pseudo-multisets? 
\endsexc
\end{exercise}

\begin{exercise}
Consider the multisets described in Exercise \ref{exc:multiset 1}. 
\sexc Write each of them in the form $(E,B,\pi)$, as in Definition \ref{def:multiset}. 
\next In terms of the same definition, what are the mappings $X\to Y$? 
\next If we remove the restriction that diagram \ref{dia:multiset map} must commute, how many mappings $X\to Y$ are there?
\endsexc
\end{exercise}

\subsubsection{Relative sets}\label{sec:relative sets}\index{relative set}

Let's continue with our ideas from multisets, but now suppose that we have a fixed set $B$ of names that we want to keep once and for all. Whenever someone discusses a set, each element must have a name in $B$. And whenever someone discusses a mapping, it must preserve the names. For example, if $B$ is the set of English words, then every document consists of an ordered set mapping to $B$ (e.g. $1\mapsto \tn{Suppose}, 2\mapsto\tn{that}, 3\mapsto\tn{we},$ etc.) A mapping from document $A$ to document $B$ would send each word found somewhere in $A$ to the same word found somewhere in $B$. This notion is defined carefully below.

\begin{definition}[Relative set]\label{def:relative sets}\index{relative set}

Let $B$ be a set. A {\em relative set over $B$}, or simply a {\em set over $B$}, is a pair $(E,\pi)$ such that $E$ is a set and $\pi\taking E\to B$ is a function. A {\em mapping of relative sets over $B$}, denoted $f\taking (E,\pi)\to(E',\pi')$, is a function $f\taking E\to E'$ such that the triangle below commutes, i.e. $\pi=\pi'\circ f$,
$$
\xymatrix@=10pt{E\ar[rr]^f\ar[rdd]_{\pi}&&E'\ar[ldd]^{\pi'}\\\\&B
}
$$

\end{definition}

\begin{exercise}
Given sets $X,Y,Z$ and functions $f\taking X\to Y$ and $g\taking Y\to Z$, we can compose them to get a function $X\to Z$. If $B$ is a set, if $(X,p), (Y,q),$ and $(Z,r)$ are relative sets over $B$, and if $f\taking (X,p)\to (Y,q)$ and $g\taking (Y,q)\to (Z,r)$ are mappings, is there a reasonable notion of composition such that we get a mapping of relative sets $(X,p)\to (Z,r)$? Hint: draw diagrams.
\end{exercise}

\begin{exercise}~
\sexc Let $\singleton$ denote a set with one element. What is the difference between sets over $\singleton$ and simply sets?
\next Describe the sets relative to $\emptyset$. How many are there?
\endsexc
\end{exercise}


\subsubsection{Indexed sets}\label{sec:indexed sets}\index{indexed set}

Let $A$ be a set. Suppose we want to assign to each element $a\in A$ a set $S_a$. This is called an $A$-indexed set. In category theory we are always interested in the legal mappings between two different structures of the same sort, so we need a notion of $A$-indexed mappings; we do the ``obvious thing".

\begin{example}\label{ex:classroom seats}

Let $C$ be a set of classrooms. For each $c\in C$ let $P_c$ denote the set of people in classroom $c$, and let $S_c$ denote the set of seats (chairs) in classroom $c$. Then $P$ and $S$ are $C$-indexed sets. The appropriate kind of mapping between them respects the indexes. That is, a mapping of multi-sets $P\to S$ should, for each classroom $c\in C$, be a function $P_c\to S_c$.\footnote{If we wanted to allow people from any classroom to choose a chair from just any classroom, category theory would tell us to reconsider $P$ and $S$ as sets, forgetting their indices. See Section \ref{sec:left push}.}

\end{example}

\begin{definition}\label{def:indexed sets}\index{indexed set}

Let $A$ be a set. An {\em $A$-indexed set} is a collection of sets $S_a$, one for each element $a\in A$; for now we denote this by $(S_a)_{a\in A}$. If $(S'_a)_{a\in A}$ is another $A$-indexed set, a {\em mapping of $A$-indexed sets from $(S_a)_{a\in A}$ to $(S'_a)_{a\in A}$}, denoted $$(f_a)_{a\in A}\taking(S_a)_{a\in A}\to (S'_a)_{a\in A}$$ is a collection of functions $f_a\taking S_a\to S'_a$, one for each element $a\in A$.

\end{definition}

\begin{exercise}
Let $\singleton$ denote a one element set. What are $\singleton$-indexed sets and mappings between them?
\end{exercise}

\begin{exercise}
There is a strong relationship between $A$-indexed sets and relative sets over $A$. What is it? 
\end{exercise}


\chapter{Categories and functors, without admitting it}\label{chap:categories and functors without admitting it}

In this chapter we begin to use our understanding of sets to build more interesting mathematical devices, each of which organizes our understanding of a certain kind of domain. For example, monoids organize our thoughts about agents acting on objects; groups are monoids except restricted to only allow agents to act reversibly. We will then study graphs, which are systems of nodes and arrows that can capture ideas like information flow through a network or model connections between building blocks in a material. We will discuss orders, which can be used to study taxonomies or hierarchies. Finally we take a mathematical look at databases, which actually subsume everything else in the chapter. Databases are connection patterns for structuring information.

We will see in Chapter \ref{chap:categories} that everything we study in the present chapter is an example of a category. So is $\Set$, the category of sets studied in Chapter \ref{chap:sets}. One way to think of a category is as a set of objects and a connection pattern between them; sets are objects (ovals full of dots if you wish) connected by functions. But each set is itself a category: the objects inside it are just disconnected! Just like a set has an interior view and an exterior view, so will all the categories in this chapter. Each monoid {\em is} a category, but there is also a category {\em of} monoids. 

However, we will not really say the word ``category" much if at all in this chapter. It seems preferable to let the ideas rise on their own accord as interesting structures in their own right before explaining that everything in site fits into a single framework. That will be the pleasant reward to come in Chapter \ref{chap:categories}.


\section{Monoids}\label{sec:monoids}\index{monoid}

A common way to interpret phenomena we see around us is to say that agents are acting on objects. For example, in a computer drawing program, the user {\em acts on} the canvas in certain prescribed ways. Choices of actions from an available list can be performed in sequence to transform one image into another. As another example, one might investigate the notion that time {\em acts on} the position of hands on a clock in a prescribed way. A first rule for actions is this: the performance of a sequence of several actions is itself the performance of an action---a more complex action, but an action nonetheless.

Mathematical objects called {\em monoids} and {\em groups} are tasked with encoding the agent's perspective in all this, i.e. what the agent can do, and what happens when different actions are done in succession. A monoid can be construed as a set of actions, together with a formula that encodes how a sequence of actions is itself considered an action. A group is the same as a monoid, except that every action is required to be reversible. In this section we concentrate on monoids; we will get to groups in Section \ref{sec:groups}.


\subsection{Definition and examples}

\begin{definition}[Monoid]\label{def:monoid}

A {\em monoid} is a sequence $(M,e,\star)$, where $M$ is a set, $e\in M$ is an element, and $\star\taking M\times M\to M$ is a function, such that the following conditions hold for all $m,n,p\in M$:
\begin{itemize}
\item $m\star e=m$,
\item $e\star m=m$, and
\item $(m\star n)\star p=m\star(n\star p)$.
\end{itemize}
We refer to $e$ as the {\em identity element}\index{monoid!identity element of} and to $\star$ as the {\em multiplication formula} for the monoid.\index{monoid!multiplication formula}
\footnote{Although the function $\star\taking M\times M\to M$ is called the multiplication formula, it may have nothing to do with multiplication. It is nothing more than a formula for taking two inputs and returning an output; calling it ``multiplication" is suggestive of its origins, rather than prescriptive of its behavior.} 
We call the first two rules {\em identity laws} and the third rule the {\em associativity law} for monoids. 

\end{definition}

\begin{remark}

To be pedantic, the conditions from Definition \ref{def:monoid} should be stated 
\begin{itemize}
\item $\star(m,e)=m$,
\item $\star(e,m)=m$, and 
\item $\star(\star(m,n),p)=\star(m,(\star(n,p))$.
\end{itemize} The way they are written in Definition \ref{def:monoid} is called {\em infix notation},\index{infix notation} and we often use infix notation without mentioning it. That is, given a function $\cdot\taking A\times B\to C$, we may write $a\cdot b$ rather than $\cdot(a,b)$.

\end{remark}

\begin{example}[Additive monoid of natural numbers]\label{ex:monoid 0}\index{monoid!additive natural numbers}

Let $M=\NN$ be the set of natural numbers. Let $e=0$ and let $\star\taking M\times M\to M$ denote addition, so that $\star(4,18)=22$. Then the equations $m\star 0=m$ and $0\star m=m$ hold, and $(m\star n)\star p=m\star (n\star p)$. By assigning $e$ and $\star$ in this way, we have ``given $\NN$ the structure of a monoid".

\end{example}

\begin{remark}

Sometimes we are working with a monoid $(M,e,\star)$, and the identity $e$ and multiplication $\star$ are somehow clear from context. In this case we might refer to the set $M$ as though it were the whole monoid. For example, if we were discussing the monoid from Example \ref{ex:monoid 0}, we might refer to it as $\NN$. The danger comes because sets may have multiple monoid structures, as we see below in Exercise \ref{exc:monoid 1}. 

\end{remark}

\begin{example}[Non-monoid]

If $M$ is a set, we might call a function $f\taking M\times M\to M$ an {\em operation on $M$}. For example, if $M=\NN$ is the set of natural numbers, we can consider the operation $f\taking\NN\to\NN$ called exponentiation. For example $f(2,5)=2*2*2*2*2=32$ and $f(7,2)=49.$ This is indeed an operation, but it is not part of any monoid. For one thing there is no possible unit. Trying the obvious choice of $e=1$, we see that $a^1=a$ (good), but that $1^a=1$ (bad: we need it to be $a$). For another thing, this operation is not associative because in general $a^{b^c}\neq (a^b)^c$. For example, $2^{1^2}=2$ but $(2^1)^2=4$. 

One might also attempt to consider an operation $f\taking M\times M\to M$ that, upon closer inspection, aren't even operations. For example, if $M=\ZZ$ then exponentiation is not even an operation. Indeed, $f(2,-1)=2^{-1}=\frac{1}{2}$, and this is not an integer. To have a function $f\taking M\times M\to M$, we need that every element of the domain, in this case every pair of integers, has an output under $f$. So there is no such function $f$. 

\end{example}

\begin{exercise}\label{exc:monoid 1}
Let $M=\NN$ be the set of natural numbers. Taking $e=1$, come up with a formula for $\star$ that gives $\NN$ the structure of a monoid.
\end{exercise}

\begin{exercise}
Come up with an operation on the set $M=\{1,2,3,4\}$, i.e. a legitimate function $f\taking M\times M\to M$, such that $f$ cannot be the multiplication formula for a monoid on $M$. That is, either it is not associative, or no element of $M$ can serve as a unit.
\end{exercise}

\begin{exercise}\label{ex:commutative monoid}
In both Example \ref{ex:monoid 0} and Exercise \ref{exc:monoid 1}, the monoids $(M,e,\star)$ satisfied an additional rule called {\em commutativity},\index{monoid!commutative} namely $m\star n=n\star m$ for every $m,n\in M$. There is a monoid $(M,e,\star)$ lurking in linear algebra textbooks that is not commutative; if you have background in linear algebra try to answer this: what $M, e$, and $\star$ might I be referring to?
\end{exercise}

\begin{exercise}
Recall the notion of commutativity for monoids from Exercise \ref{ex:commutative monoid}. 
\sexc What is the smallest set $M$ that you can give the structure of a non-commutative monoid? 
\next What is the smallest set $M$ that you can give the structure of a monoid?
\endsexc
\end{exercise}

\begin{example}[Trivial monoid]\label{ex:trivial monoid}

There is a monoid with only one element, $M=(\{e\},e,\star)$ where $\star\taking\{e\}\times\{e\}\to\{e\}$ is the unique function. We call this monoid {\em the trivial monoid},\index{monoid!trivial} and sometimes denote it $\ul{1}$.

\end{example}

\begin{example}

Suppose that $(M,e,\star)$ is a monoid. Given elements $m_1,m_2,m_3,m_4$ there are five different ways to parenthesize the product $m_1\star m_2\star m_3\star m_4$, and the associativity law for monoids will show them all to be the same. We have
\begin{align*}
((m_1\star m_2)\star m_3)\star m_4&=(m_1\star m_2)\star (m_3\star m_4)\\
&=(m_1\star(m_2\star m_3))\star m_4\\
&=m_1\star(m_2\star (m_3\star m_4))\\
&=m_1\star((m_2\star m_3)\star m_4)
\end{align*}

In fact, the product of any list of monoid elements is the same, regardless of parenthesization. Therefore, we can unambiguously write $m_1m_2m_3m_4m_5$ rather than any given parenthesization of it. This is known as the \href{http://en.wikipedia.org/wiki/Coherence_theorem}{\text coherence theorem} and can be found in \cite{Mac}.

\end{example}


\subsubsection{Free monoids and finitely presented monoids}\label{sec:free monoid}

\begin{definition}\label{def:list}\index{list}

Let $X$ be a set. A {\em list in $X$} is a pair $(n,f)$ where $n\in\NN$ is a natural number (called the {\em length of the list}) and $f\taking\ul{n}\to X$ is a function, where $\ul{n}=\{1,2,\ldots,n\}$. We may denote such a list by 
$$(n,f)=[f(1),f(2),\ldots,f(n)].$$ 
The {\em empty list} is the unique list in which $n=0$; we may denote it by $[\;]$. Given an element $x\in X$ the {\em singleton list on $x$} is the list $[x]$. Given a list $L=(n,f)$ and a number $i\in\NN$ with $i\leq n$, the {\em $i$th entry of $L$} is the element $f(i)\in X$. \index{entry!in list}

Given two lists $L=(n,f)$ and $L'=(n',f')$, define the {\em concatenation of $L$ and $L'$}\index{list!concatenation}\index{concatenation!of lists}, denoted $L\plpl L'$,\index{a symbol!$\plpl$} to be the list $(n+n',f\plpl f')$, where $f\plpl f'\taking \ul{n+n'}\to X$ is given on $i\leq n+n'$ by
$$(f\plpl f')(i):=
\begin{cases}
f(i)&\tn{ if }i\leq n\\
f'(i-n)&\tn{ if }i\geq n+1
\end{cases}
$$
\end{definition}

\begin{example}

Let $X=\{a,b,c,\ldots,z\}$. The following are elements of $\List(X)$: $$[a,b,c],\;\; [p],\;\; [p,a,a,a,p],\;\; [\;],\;\;\dots$$ The concatenation of $[a,b,c]$ and $[p,a,a,a,p]$ is $[a,b,c,p,a,a,a,p]$. The concatenation of any list $A$ with $[\;]$ is just $A$.

\end{example}

\begin{definition}\label{def:free monoid}\index{monoid!free}

Let $X$ be a set. The {\em free monoid generated by $X$} is the sequence $M:=(\List(X),[\;],\plpl)$, where $\List(X)$ is the set of lists of elements in $X$, where $[\;]\in\List(X)$ is the empty list, and where $\plpl$ is the operation of list concatenation. We refer to $X$ as the set of generators for the monoid $M$.

\end{definition}

\begin{exercise}
Let $\singleton$ denote a one-element set. 
\sexc What is the free monoid generated by $\singleton$? 
\next What is the free monoid generated by $\emptyset$?
\endsexc
\end{exercise}

In the definition below, we will define a monoid $M$  by specifying some generators and some relations. Lists of generators provide us all the possible ways to write elements of $M$. The relations allow us to have two such ways of writing the same element. The following definition is a bit dense, so see Example \ref{ex:presented monoid} for a concrete example.

\begin{definition}[Presented monoid]\label{def:presented monoid}\index{monoid!presented}

Let $G$ be a finite set, let $n\in\NN$ be a natural number,
\footnote{The number $n\in\NN$ is going to stand for the number of relations we declare.} 
and for each $1\leq i\leq n$, let $m_i$ and $m_i'$ be elements of $\List(G)$.
\footnote{Each $m_i$ and $m_i'$ are going to be made equal in the set $M$.} 
The {\em monoid presented by generators $G$ and relations $\{(m_i,m_i')\|1\leq i\leq n\}$} is the monoid $\mcM=(M,e,\star)$ defined as follows. Let $\sim$ denote the equivalence relation on $\List(G)$ generated by $\{(xm_iy\sim xm_i'y)\|x,y\in\List(G), 1\leq i\leq n\}$, and define $M=\List(G)/\sim$. Let $e=[\;]$ and let $a * b$ be obtained by concatenating representing lists. 

\end{definition}

\begin{remark}

Every free monoid is a presented monoid, because we can just take the set of relations to be empty.

\end{remark}

\begin{example}\label{ex:presented monoid}

Let $G=\{a,b,c,d\}$. Think of these as buttons that can be pressed. The free monoid $\List(G)$ is the set of all ways of pressing buttons, e.g. pressing $a$ then $a$ then $c$ then $c$ then $d$ corresponds to the list $[a,a,c,c,d]$. The idea of presented monoids is that you notice that pressing $[a,a,c]$ always gives the same result as pressing $[d,d]$. You also notice that pressing $[c,a,c,a]$ is the same thing as doing nothing. 

In this case, we would have $m_1=[a,a,c]$, $m_1'=[d,d]$, and $m_2=[c,a,c,a], m_2'=[\;]$ and relations $\{(m_1,m_1'), (m_2,m_2')\}$. Really this means that we're equating $m_1$ with $m_1'$ and $m_2$ with $m_2'$, which for convenience we'll write out:
$${\color{blue}{[a,a,c]}}={\color{blue}{[d,d]}}\hsp\tn{and}\hsp{\color{red}{[a,c,a,c]}}={\color{red}{[\;]}}
$$ 

To see how this plays out, we give an example of a calculation in $M=\List(G)/\sim$. Namely, 
\begin{align*}
[b,c,b,{\color{blue}{d,d}},a,c,a,a,c,d] = [b,c,b,a,a,{\color{red}{c,a,c,a}},a,c,d] &= [b,c,b,a,{\color{blue}{a,a,c}},d]\\
&= [b,c,b,a,d,d,d].
\end{align*}

\end{example}

\begin{application}[Buffer]\label{app:buffer}

Let $G=\{a,b,c,\ldots\,z\}$. Suppose we have a \href{http://en.wikipedia.org/wiki/Data_buffer}{\text buffer} of 32 characters and we want to consider the set of lists of length at most 32 to be a monoid. We simply have to decide what happens when someone types a list of length more than 32. 

One option is to say that the last character typed overwrites the 32nd entry, $$[a_1,a_2,\ldots,a_{31},a_{32},b]\sim_1[a_1,a_2,\ldots,a_{31},b].$$ Another option is to say that any character typed after\_32 entries is discarded, $$[a_1,a_2,\ldots,a_{31},a_{32},b]\sim_2[a_1,a_2,\ldots,a_{31},a_{32}].$$ Both of these yield finitely presented monoids, generated by $G$. (In case it's useful, the number of necessary relations in both cases is $26^{33}$.)

\end{application}

\begin{exercise}\label{exc:buffer3}

Let's consider the buffer concept again (see Application \ref{app:buffer}), but this time only having size 3 rather than size 32. Show using Definition \ref{def:presented monoid} that with relations given by $\sim_1$ we indeed have $[a,b,c,d,e,f]=[a,b,f]$ and that with relations given by $\sim_2$ we indeed have $[a,b,c,d,e,f]=[a,b,c]$.
\end{exercise}

\begin{exercise}
Let $K:=\{BS,a,b,c,\ldots,z\}$, a set having 27 elements. Suppose you want to think of $BS\in K$ as the ``backspace key" and the elements $a,b,\ldots z\in K$ as the letter keys on a keyboard. Then the free monoid $\List(K)$ is not quite appropriate as a model because we want $[a,b,d,BS]=[a,b]$. 
\sexc Choose a set of relations for which the monoid presented by generators $K$ and the chosen relations is appropriate to this application. 
\next Under your relations, how does $[BS]$ compare with $[\;]$? Is that suitable?
\endsexc
\end{exercise}


\subsubsection{Cyclic monoids}

\begin{definition}

A monoid is called {\em cyclic}\index{monoid!cyclic} if it has a presentation involving only one generator. 

\end{definition}

\begin{example}\label{ex:cyclic}

Let $Q$ be a symbol; we look at some cyclic monoids generated by $\{Q\}$. With no relations the monoid would be the free monoid on one generator, and would have underlying set $\{[\;],[Q],[Q,Q],[Q,Q,Q],\ldots\}$, with identity element $[\;]$ and multiplication given by concatenation (e.g. $[Q,Q,Q]\plpl[Q,Q]=[Q,Q,Q,Q,Q]$). This is just $\NN$, the additive monoid of natural numbers.

With the really strong relation $[Q]\sim[\;]$ we would get the trivial monoid, a monoid having only one element (see Example \ref{ex:trivial monoid}).

Another possibility is given in the first part of Example \ref{ex:clocks}, where the relation $Q^{12}\sim[\;]$ is used, where $Q^{12}$ is shorthand for $[Q,Q,Q,Q,Q,Q,Q,Q,Q,Q,Q,Q]$.

\end{example}

\begin{example}\label{ex:cyclic monoid (7,4)}

Consider the cyclic monoid with generator $Q$ and relation $Q^7=Q^4$. This monoid has seven elements, $\{e=Q^0,Q=Q^1, Q^2, Q^3, Q^4, Q^5, Q^6\}$, and we know that $Q^6\star Q^5=Q^7*Q^4=Q^4*Q^4=Q^7*Q=Q^5.$ One might depict this monoid as follows
$$\xymatrix@=15pt{
\LMO{e}\ar[rr]&&\LMO{Q}\ar[rr]&&\LMO{Q^2}\ar[rr]&&\LMO{Q^3}\ar[rr]&&\LMO{Q^4}\ar[dr]\\
&&&&&&&\LMO{Q^6}\ar[ur]&&\LMO{Q^5}\ar[ll]
}
$$
To see the mathematical source of this intuitive depiction, see Example \ref{ex:yoneda for cyclic monoid}.

\end{example}

\begin{exercise}[Classify the cyclic monoids]\label{exc:classify cyclic}

Classify all the cyclic monoids up to isomorphism. That is, come up with a naming system such that every cyclic monoid can be given a name in your system, such that no two non-isomorphic cyclic monoids have the same name, and such that no name exists in the system unless it refers to a cyclic monoid. 

Hint: one might see a pattern in which the three monoids in Example \ref{ex:cyclic} correspond respectively to $\infty$, $1$, and $12$, and then think ``Cyclic monoids can be classified by (i.e. systematically named by elements of) the set $\NN\sqcup\{\infty\}$." That idea is on the right track, but is not correct.
\end{exercise}


\subsection{Monoid actions}

\begin{definition}[Monoid action]\label{def:monoid action}\index{monoid!action}\index{action!of a monoid}

Let $(M,e,\star)$ be a monoid and let $S$ be a set. An {\em action of $(M,e,\star)$ on $S$}, or simply an {\em action of $M$ on $S$} or an {\em $M$-action on $S$}, is a function $$\acts\;\;\taking M\times S\to S$$\index{a symbol!$\acts$} such that the following conditions hold for all $m,n\in M$ and all $s\in S$:
\begin{itemize}
\item $e\acts s=s$
\item $m\acts(n\acts s)=(m\star n)\acts s$.
\footnote{
Definition \ref{def:monoid action} actually defines a {\em left action}\index{action!left} of $(M,e,\star)$ on $S$. A {\em right action}\index{action!right} is like a left action except the order of operations is somehow reversed. We will not really use right-actions in this text, but we briefly define it here for completeness. With notation as above, the only difference is in the second condition. We replace it by the condition that for all $m,n\in M$ and all $s\in S$ we have 
$$m\acts(n\acts s)=(n\star m)\acts s
$$}
\end{itemize}

\end{definition}

\begin{remark}\label{rmk:monoid action}

To be pedantic (and because it's sometimes useful), we may rewrite $\acts$ as $\alpha\taking M\times S\to S$ and restate the conditions from Definition \ref{def:monoid action} as
\begin{itemize}
\item $\alpha(e,s)=s$, and
\item $\alpha(m,\alpha(n,s))=\alpha(m\star n,s)$.
\end{itemize}

\end{remark}

\begin{example}\label{ex:clocks}

Let $S=\{0,1,2,\ldots,11\}$ and let $N=(\NN,0,+)$ be the additive monoid of natural numbers (see Example \ref{ex:monoid 0}).  We define a function $\acts\taking\NN\times S\to S$ by taking a pair $(n,s)$ to the remainder that appears when $n+s$ is divided by 12. For example $4\acts 2=6$ and $8\acts 9 = 5$. This function has the structure of a monoid action because the two rules from Definition \ref{def:monoid action} hold.

Similarly, let $T$ denote the set of points on a circle, elements of which are denoted by a real number in the interval $[0,12)$, i.e. $$T=\{x\in\RR\|0\leq x< 12\}$$ and let $R=(\RR,0,+)$ denote the additive monoid of real numbers. Then there is an action $R\times T\to T$, similar to the one above (see Exercise \ref{exc:clock}).

One can think of this as an action of the monoid of time on the clock.

\end{example}

\begin{exercise}\label{exc:clock}~
\sexc Realize the set $T:=[0,12)\ss\RR$ as the coequalizer of a pair of arrows $\RR\tto\RR$. 
\next For any $x\in\RR$, realize the mapping $x\cdot-\taking T\to T$, implied by Example \ref{ex:clocks}, using the universal property of coequalizers. 
\next Prove that it is an action.
\endsexc
\end{exercise}

\begin{exercise}
Let $B$ denote the set of buttons (or positions) of a video game controller (other than, say `start' and `select'), and consider the free monoid $\List(B)$ on $B$. 
\sexc What would it mean for $\List(B)$ to act on the set of states of some game? Imagine a video game $G'$ that uses the controller, but for which $\List(B)$ would not be said to act on the states of $G'$. Now imagine a simple game $G$ for which $\List(B)$ would be said to act.
\next Can you think of a state $s$ of $G$, and two distinct elements $\ell,\ell'\in\List(B)$ such that $\ell\acts s=\ell'\acts s$? In video game parlance, what would you call an element $b\in B$ such that, for every state $s\in G$, one has $b\acts s=s$? 
\next In video game parlance, what would you call a state $s\in S$ such that, for every sequence of buttons $\ell\in\List(B)$, one has $\ell\acts s=s$?
\endsexc
\end{exercise}

\begin{application}

Let $f\taking\RR\to\RR$ be a differentiable function of which we want to find roots (points $x\in\RR$ such that $f(x)=0$). Let $x_0\in\RR$ be a starting point. For any $n\in\NN$ we can apply \href{http://en.wikipedia.org/wiki/Newton's_method}{\text Newton's method} to $x_n$ to get $$x_{n+1}=x_n-\frac{f(x_n)}{f'(x_n)}.$$ 
This is a monoid (namely $\NN$, the free monoid on one generator) acting on a set (namely $\RR$). 

However, Newton's method can get into trouble. For example at a critical point it causes division by 0, and sometimes it can oscillate or overshoot. In these cases we want to perturb a bit to the left or right. To have these actions available to us, we would add ``perturb" elements to our monoid. Now we have more available actions at any point, but at the cost of using a more complicated monoid.

When publishing an experimental finding, there may be some deep methodological questions that are not considered suitably important to mention. For example, one may not publish the kind solution finding method (e.g. Newton's method or Runge-Kutta) that was used, nor the set of available actions, e.g. what kinds of perturbation were used by the researcher. However, these may actually influence the reproducibility of results. By using a language such as that of monoid actions, we can align our data model with our unspoken assumptions about how functions are analyzed.

\end{application}

\begin{remark}

A monoid is useful for understanding how an agent acts on the set of states of an object, but there is only one {\em kind} of action. At any point, all actions are available. In reality it is often the case that contexts can change and different actions are available at different times. For example on a computer, the commands available in one application have no meaning in another. This will get us to categories in the next chapter. 

\end{remark}


\subsubsection{Monoids actions as ologs}

If monoids are understood in terms of how they act on sets, then it is reasonable to think of them in terms of ologs. In fact, the ologs associated to monoids are precisely those ologs that have exactly one type (and possibly many arrows and commutative diagrams). 

\begin{example}\label{ex:monoid as olog}\index{monoid!olog of}

In this example we show how to associate an olog to a monoid action. Consider the monoid $M$ generated by the set $\{u,d,r\}$, standing for ``up, down, right", and subject to the relations $$[u,d]\sim[\;],\hsp[d,u]\sim[\;],\hsp[u,r]=[r,u],\hsp \tn{and}\hsp [d,r]=[r,d].$$
We might imagine that $M$ acts on the set of positions for a character in an old video game. In that case the olog corresponding to this action should look something like the following:
\begin{center}
\includegraphics[width=\textwidth]{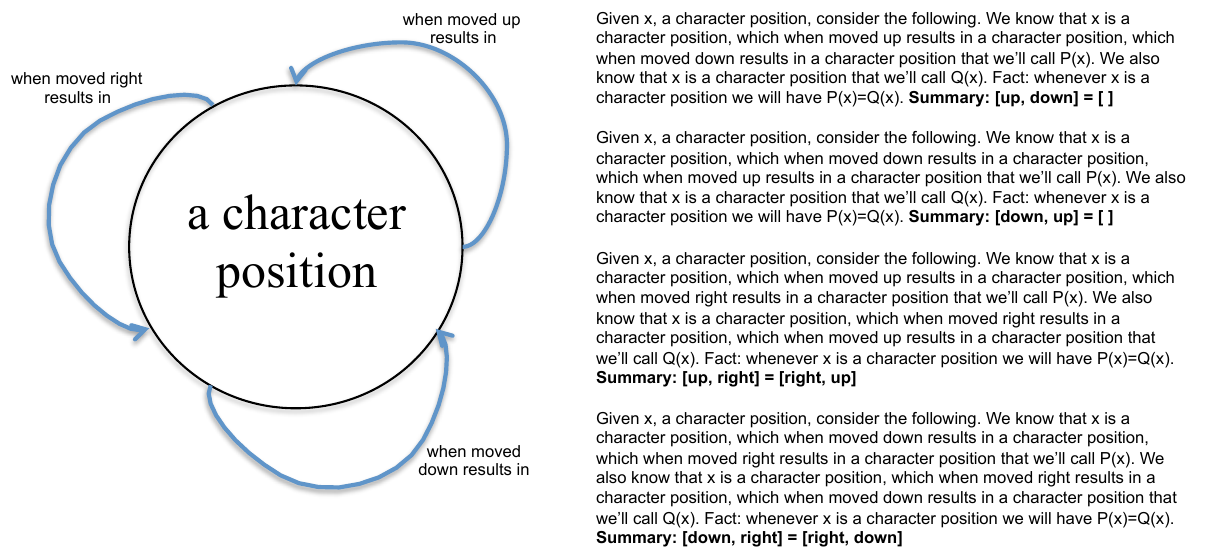}
\end{center}

\end{example}


\subsubsection{Finite state machines}\label{sec:FSMs}\index{finite state machine}

According to Wikipedia, a \href{http://en.wikipedia.org/wiki/Finite_state_machine#Mathematical_model}{\em deterministic finite state machine} is a quintuple $(\Sigma,S,s_0,\delta,F)$, where
\begin{enumerate}
\item $\Sigma$ is a finite non-empty set of symbols, called the {\em input alphabet},
\item $S$ is a finite, non-empty set, called {\em the state set},
\item $\delta\taking \Sigma\times S\to S$ is a function, called the {\em state-transition function}, and
\item $s_0\in S$ is an element, called {\em the initial state},
\item $F\ss S$ is a subset, called the {\em set of final states}.
\end{enumerate}

In this book we will not worry about the initial state and the set of final states, concerning ourselves more with the interaction via $\delta$ of the alphabet $\Sigma$ on the set $S$ of states.

\begin{figure}[h]
\begin{center}
\includegraphics[height=2in]{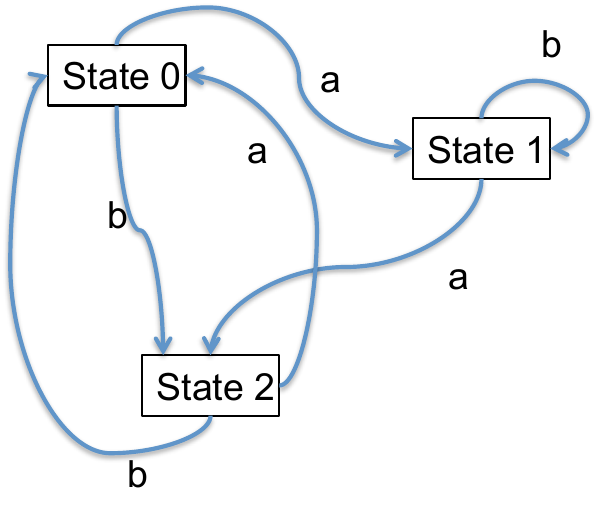}
\end{center}
\caption{A finite state machine with alphabet $\Sigma=\{a,b\}$ and state set $S=\{\tn{State 0, State 1, State 2}\}$. If pressed, we will make State 0 the initial state and \{State 2\} the set of final states.}\label{fig:fsa}
\end{figure}
The following proposition expresses the notion of finite state automata in terms of free monoids and their actions on finite sets.

\begin{proposition}

Let $\Sigma, S$ be finite non-empty sets. Giving a function $\delta\taking\Sigma\times S\to S$ is equivalent to giving an action of the free monoid $\List(\Sigma)$ on $S$. 

\end{proposition}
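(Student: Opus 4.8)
The plan is to exhibit an explicit bijection between the set of functions $\delta\taking\Sigma\cross S\to S$ and the set of $\List(\Sigma)$-actions on $S$, since ``equivalent'' here means that the two kinds of data determine one another. Conceptually this is nothing but the universal property of the free monoid $\List(\Sigma)$, but since we have not yet isolated that property as a usable tool, I would build the correspondence by hand and verify the monoid-action axioms of Definition~\ref{def:monoid action} directly.

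First I would pass from an action to a transition function. Given an action $\acts\taking\List(\Sigma)\cross S\to S$, define $\delta(\sigma,s):=[\sigma]\acts s$, using the singleton list on $\sigma$. This is manifestly a function $\Sigma\cross S\to S$, so no work is required in this direction beyond writing it down.

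Conversely, from a transition function $\delta$ I would manufacture an action by recursion on the length of a list, matching the concatenation convention (in which $\plpl$ places its left argument first). Set $[\;]\acts s:=s$ and, for a list with head $\sigma$ and tail $\ell_0$, set $([\sigma]\plpl\ell_0)\acts s:=\delta(\sigma,\ \ell_0\acts s)$. The identity axiom $\id\acts s=s$ then holds by definition, since $\id=[\;]$. The compatibility axiom $\ell\acts(\ell'\acts s)=(\ell\plpl\ell')\acts s$ I would prove by induction on the length of $\ell$: the base case $\ell=[\;]$ is immediate because $[\;]\plpl\ell'=\ell'$, and the inductive step uses associativity of concatenation together with the recursive definition, so that writing $\ell=[\sigma]\plpl\ell_0$ gives
\[
\ell\acts(\ell'\acts s)=\delta(\sigma,\ \ell_0\acts(\ell'\acts s))=\delta(\sigma,\ (\ell_0\plpl\ell')\acts s)=(\ell\plpl\ell')\acts s,
\]
the middle equality being the induction hypothesis and the last step using $([\sigma]\plpl\ell_0)\plpl\ell'=[\sigma]\plpl(\ell_0\plpl\ell')$.

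Finally I would check that the two constructions are mutually inverse, which is what upgrades a pair of maps into an equivalence. Starting from $\delta$, building the action, and then restricting to singletons returns $[\sigma]\acts s=\delta(\sigma,[\;]\acts s)=\delta(\sigma,s)$, recovering $\delta$. Starting from an action $\acts$, setting $\delta(\sigma,s)=[\sigma]\acts s$, and re-expanding gives back the original action by a second induction on list length, whose inductive step invokes the compatibility axiom of the original $\acts$. The one genuine subtlety---and essentially the only place an error could creep in---is keeping the orientation straight: the recursion must peel off the \emph{head} of the list and apply $\delta$ \emph{outermost}, precisely because $\plpl$ is left-first; the opposite convention would still yield a bijection, but onto right-actions rather than the left-actions of Definition~\ref{def:monoid action}. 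The hypothesis that $\Sigma$ and $S$ are finite plays no role in the argument and can be dropped without affecting the correspondence.
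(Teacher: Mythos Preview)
Your proof is correct and follows essentially the same approach as the paper: both directions of the bijection are constructed the same way (restriction to singleton lists in one direction, recursion on list length in the other), and both arguments check mutual inverse. Your recursion peels from the head while the paper's peels from the tail, but these yield the identical action formula $[\sigma_1,\ldots,\sigma_n]\acts s=\delta(\sigma_1,\delta(\sigma_2,\ldots\delta(\sigma_n,s)\ldots))$; your explicit verification of the action axioms and your careful discussion of the left/right orientation are in fact more thorough than the paper, which relegates the latter point to a terse footnote.
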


\begin{proof}

By Definition \ref{def:monoid action}, we know that function $\epsilon\taking\List(\Sigma)\times S\to S$ constitutes an action of the monoid $\List(\Sigma)$ on the set $S$ if and only if, for all $s\in S$ we have $\epsilon([\;],s)=s$, and for any two elements $m,m'\in\List(\Sigma)$ we have $\epsilon(m,\epsilon(m',s))=\epsilon(m\star m',s)$, where $m\star m'$ is the concatenation of lists. Let $$A=\{\epsilon\taking \List(\Sigma)\times S\to S\|\epsilon\tn{ constitutes an action}\}.$$ We need to prove that there is an isomorphism of sets $$\phi\taking A\To{\iso}\Hom_\Set(\Sigma\times S,S).$$

Given an element $\epsilon\taking\List(\Sigma)\times S\to S$ in $A$, define $\phi(\epsilon)$ on an element $(\sigma,s)\in\Sigma\times S$ by $\phi(\epsilon)(\sigma,s):=\epsilon([\sigma],s)$, where $[\sigma]$ is the one-element list. We now define $\psi\taking\Hom_\Set(\Sigma\times S,S)\to A$.

Given an element $f\in\Hom_\Set(\Sigma\times S,S)$, define $\psi(f)\taking\List(\Sigma)\times S\to S$ on a pair $(L,s)\in\List(\Sigma)\times S$, where $L=[\epsilon_1,\ldots,\epsilon_n]$ as follows. By induction, if $n=0$, put $\psi(f)(L,s)=s$; if $n\geq 1$, let $L'=[\epsilon_1,\ldots,\epsilon_{n-1}]$ and put $\psi(f)(L,s)=\psi(f)(L',f(\epsilon_n,s))$. One checks easily that $\psi(f)$ satisfies the two rules above, making it an action of $\List(\Sigma)$ on $S$. It is also easy to check that $\phi$ and $\psi$ are mutually inverse, completing the proof.

\end{proof}

We sum up the idea of this section as follows:
\begin{slogan}
A finite state machine is an action of a free monoid on a finite set.
\end{slogan}

\begin{exercise}
Consider the functions $\phi$ and $\psi$ above. 
\sexc Show that for any $f\taking\Sigma\times S\to S$, the map $\psi(f)\taking\List(\Sigma)\times S\to S$ constitutes an action. 
\next Show that $\phi$ and $\psi$ are mutually inverse functions (i.e. $\phi\circ\psi=\id_{\Hom(\Sigma\times S,S)}$ and $\psi\circ\phi=\id_{A}.$)
\endsexc
\end{exercise}


\subsection{Monoid action tables}\label{sec:monoid action table}\index{action table}

Let $M$ be a monoid generated by the set $G=\{g_1,\ldots,g_m\}$, and with some relations, and suppose that $\alpha\taking M\times S\to S$ is an action of $M$ on a set $S=\{s_1,\ldots,s_n\}$. We can represent the action $\alpha$ using an {\em action table} whose columns are the elements of $G$ and whose rows are the elements of $S$. In each cell $(row,col)$, where $row\in S$ and $col\in G$, we put the element $\alpha(col,row)\in S$. 

\begin{example}[Action table]\label{ex:action table}

If $\Sigma$ and $S$ are the sets from Figure \ref{fig:fsa}, the displayed action of $\List(\Sigma)$ on $S$ would be given by the action table
\begin{align}\label{dia:action table for FSM}
\begin{tabular}{| l || l | l |}\bhline
\multicolumn{3}{|c|}{Action from \ref{fig:fsa}}\\\bhline
{\bf ID}&{\bf a}&{\bf b}\\\bbhline
State 0&State 1&State 2\\\hline
State 1& State 2& State 1\\\hline
State 2&State 0&State 0\\\bhline
\end{tabular}
\end{align}

\end{example}

\begin{example}[Multiplication action table]\label{ex:multiplication table}

Every monoid acts on itself by its multiplication formula, $M\times M\to M$. If $G$ is a generating set for $M$, we can write the elements of $G$ as the columns and the elements of $M$ as rows, and call this a multiplication table. For example, let $(\NN,1,*)$ denote the multiplicative monoid of natural numbers. The multiplication table is as follows:
\begin{align}
\begin{tabular}{| l || l | l | l | l | l | l | l |}\bhline
\multicolumn{8}{|c|}{Multiplication of natural numbers}\\\bhline
{\bf $\NN$}&{\bf 0}&{\bf 1}&{\bf 2}&{\bf 3}&{\bf 4}&{\bf 5}&{\bf $\cdots$}\\\bbhline
0&0&0&0&0&0&0&$\cdots$\\\hline
1&0&1& 2& 3 & 4&5&$\cdots$\\\hline
2&0&2&4&6&8&10&$\cdots$\\\hline
3&0&3&6&9&12&15&$\cdots$\\\bhline
4&0&4&8&12&16&20&$\cdots$\\\bhline
\vdots&\vdots&\vdots&\vdots&\vdots&\vdots&\vdots&$\ddots$\\\hline
21&0&21&42&63&84&105&$\cdots$\\\hline
\vdots&\vdots&\vdots&\vdots&\vdots&\vdots&\vdots&$\ddots$\\\bhline
\end{tabular}
\end{align}
Try to understand what is meant by this: ``applying column $2$ and then column $2$ returns the same thing as applying column $4$."

In the above table, we were implicitly taking every element of $\NN$ as a generator (since we had a column for every natural number). In fact, there is a smallest generating set for the monoid $(\NN,1,*)$, so that every element of the monoid is a product of some combination of these generators, namely the primes and 0.
\begin{align*}
\begin{tabular}{| l || l | l | l | l | l | l | l |}\bhline
\multicolumn{8}{|c|}{Multiplication of natural numbers}\\\bhline
{\bf $\NN$}&{\bf 0}&{\bf 2}&{\bf 3}&{\bf 5}&{\bf 7}&{\bf 11}&{\bf $\cdots$}\\\bbhline
0&0&0&0&0&0&0&$\cdots$\\\hline
1&0&2& 3& 5 & 7&11&$\cdots$\\\hline
2&0&4&6&10&14&22&$\cdots$\\\hline
3&0&6&9&15&21&33&$\cdots$\\\bhline
4&0&8&12&20&28&44&$\cdots$\\\bhline
\vdots&\vdots&\vdots&\vdots&\vdots&\vdots&\vdots&$\ddots$\\\hline
21&0&42&63&105&147&231&$\cdots$\\\hline
\vdots&\vdots&\vdots&\vdots&\vdots&\vdots&\vdots&$\ddots$\\\bhline
\end{tabular}
\end{align*}

\end{example}

\begin{exercise}
Let $\NN$ be the additive monoid of natural numbers, let $S=\{0,1,2,\ldots,11\}$, and let $\cdot\taking\NN\times S\to S$ be the action given in Example \ref{ex:clocks}. Using a nice small generating set for the monoid, write out the corresponding action table.
\end{exercise}


\subsection{Monoid homomorphisms}

A monoid $(M,e,\star)$ involves a set, an identity element, and a multiplication formula. For two monoids to be comparable, their sets, their identity elements, and their multiplication formulas should be appropriately comparable.\index{appropriate comparison} For example the additive monoids $\NN$ and $\ZZ$ should be comparable because $\NN\ss\ZZ$ is a subset, the identity elements in both cases are the same $e=0$, and the multiplication formulas are both integer addition. 

\begin{definition}\label{def:monoid hom}\index{monoid!homomorphism}

Let $\mcM:=(M,e,\star)$ and $\mcM':=(M',e',\star')$ be monoids. A {\em monoid homomorphism $f$ from $\mcM$ to $\mcM'$}, denoted $f\taking\mcM\to\mcM'$, is a function $f\taking M\to M'$ satisfying two conditions:
\begin{itemize}
\item $f(e)=e'$, and 
\item $f(m_1\star m_2)=f(m_1)\star'f(m_2)$, for all $m_1,m_2\in M$.
\end{itemize}

The set of monoid homomorphisms from $\mcM$ to $\mcM'$ is denoted $\Hom_{\Mon}(\mcM,\mcM')$.

\end{definition}

\begin{example}[From $\NN$ to $\ZZ$]\label{ex:nat to int}

As stated above, the inclusion map $i\taking\NN\to\ZZ$ induces a monoid homomorphism $(\NN,0,+)\to(\ZZ,0,+)$ because $i(0)=0$ and $i(n_1+n_2)=i(n_1)+i(n_2)$. 

Let $i_5\taking\NN\to\ZZ$ denote the function $i_5(n)=5*n$, so $i_5(4)=20$. This is also a monoid homomorphism because $i_5(0)=5*0=0$ and $i_5(n_1+n_2)=5*(n_1+n_2)=5*n_1+5*n_2=i_5(n_1)+i_5(n_2).$

\end{example}

\begin{application}\label{app:RNA reader 1}

Let $R=\{a,c,g,u\}$ and let $T=R^3$, the set of triplets in $R$. Let $\mcR=\List(R)$ be the free monoid on $R$ and let $\mcT=\List(T)$ denote the free monoid on $T$. There is a monoid homomorphism $F\taking\mcT\to\mcR$ given by sending $t=(r_1,r_2,r_3)$ to the list $[r_1,r_2,r_3]$.
\footnote{More precisely, the monoid homomorphism $F$ sends a list $[t_1,t_2,\ldots,t_n]$ to the list $[r_{1,1},r_{1,2},r_{1,3},r_{2,1},r_{2,2},r_{2,3},\ldots,r_{n,1},r_{n,2},r_{n,3}]$, where for each $0\leq i\leq n$ we have $t_i=(r_{i,1},r_{i,2},r_{i,3})$.}

If $A$ be the set of amino acids and $\mcA=\List(A)$ the free monoid on $A$, the process of \href{http://en.wikipedia.org/wiki/Translation_(biology)}{\text translation} gives a monoid homomorphism $G\taking\mcT\to\mcA$, turning a list of RNA triplets into a polypeptide. But how do we go from a list of RNA nucleotides to a polypeptide? The answer is that there is no good way to do this mathematically. So what is going wrong?

The answer is that there should not be a monoid homomorphism $\mcR\to\mcA$ because not all sequences of nucleotides produce a polypeptide; for example if the sequence has only two elements, it does not code for a polypeptide. There are several possible remedies to this problem. One is to take the image of $F$, which is a submonoid $\mcR'\ss\mcR$. It is not hard to see that there is a monoid homomorphism $F'\taking\mcR'\to\mcT$, and we can compose it with $G$ to get our desired monoid homomorphism $G\circ F'\taking\mcR'\to\mcA$. 
\footnote{Adding stop-codons to the mix we can handle more of $\mcR$, e.g. sequences that don't have a multiple-of-three many nucleotides.}

\end{application}

\begin{example}\label{ex:trivial monoid homomorphism}\index{monoid!trivial homomorphism}

Given any monoids $\mcM$ there is a unique monoid homomorphism from $\mcM$ to the trivial monoid $\ul{1}$ (see Example \ref{ex:trivial monoid}). There is also a unique homomorphism $\ul{1}\to\mcM$. These facts together have an upshot: between any two monoids $\mcM$ and $\mcM'$ we can always construct a homomorphism 
$$\mcM\Too{!}\ul{1}\Too{!}\mcM'$$
which we call the {\em trivial homomorphism $\mcM\to\mcM'$}.\index{trivial homomorphism!of monoids} A morphism $\mcM\to\mcM'$ that is not trivial is called a {\em nontrivial homomorphism}.

\end{example}

\begin{proposition}\label{prop:int to nat trivial}

Let $\mcM=(\ZZ,0,+)$ and $\mcM'=(\NN,0,+)$. The only monoid homomorphism $f\taking\mcM\to\mcM'$ sends every element $m\in\ZZ$ to $0\in\NN$.

\end{proposition}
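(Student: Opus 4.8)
The plan is to exploit the single structural difference between the source and target: in $\mcM=(\ZZ,0,+)$ every element is invertible, whereas in $\mcM'=(\NN,0,+)$ the only element with an additive inverse is $0$. A monoid homomorphism must carry inverses to inverses (whenever they exist), so it is forced to collapse everything to the identity.

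First I would record the two defining properties of a monoid homomorphism $f\taking\mcM\to\mcM'$ from Definition \ref{def:monoid hom}: that $f(0)=0$, and that $f(a+b)=f(a)+f(b)$ for all $a,b\in\ZZ$. Next, for an arbitrary element $n\in\ZZ$, I would use the fact that $\ZZ$ is closed under negation, so that $-n\in\ZZ$ and $n+(-n)=0$. Applying $f$ and using both homomorphism properties gives
$$f(n)+f(-n)=f\big(n+(-n)\big)=f(0)=0.$$

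The crux of the argument is then the observation that $f(n)$ and $f(-n)$ are elements of $\NN$, i.e. nonnegative integers, and the only way two natural numbers can sum to $0$ is for both to equal $0$. Hence $f(n)=0$, and since $n\in\ZZ$ was arbitrary, $f$ is identically $0$. Finally I would note that this constant-zero map is indeed a valid monoid homomorphism (it sends $0\mapsto 0$ and respects addition trivially), so the statement is an exact classification rather than a vacuous one.

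I do not expect any real obstacle here; the entire content is the elementary fact that $a+b=0$ forces $a=b=0$ in $\NN$, which is what prevents any nontrivial homomorphism. The only point deserving a word of care is emphasizing \emph{why} this differs from the situation in Example \ref{ex:nat to int}: there the inclusion $\NN\to\ZZ$ works precisely because the source $\NN$ lacks the inverses that, in the present direction, would have to be preserved and thereby annihilated.
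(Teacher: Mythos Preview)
Your proof is correct and uses the same key idea as the paper: applying $f$ to $n+(-n)=0$ yields two natural numbers summing to $0$, forcing both to vanish. The only cosmetic difference is that the paper first establishes $f(1)=f(-1)=0$ and then extends to all of $\ZZ$ via the homomorphism property (writing each integer as a sum of $\pm 1$'s), whereas you apply the inverse argument directly to an arbitrary $n$; your version is slightly more direct but the content is the same.
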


\begin{proof}

Let $f\taking\mcM\to\mcM'$ be a monoid homomorphism, and let $n=f(1)$ and $n'=f(-1)$ in $\NN$. Then we know that since $0=1+(-1)$ in $\ZZ$ we must have $0=f(0)=f(1+(-1))=f(1)+f(-1)=n+n'\in\NN$. But if $n\geq 1$ then this is impossible, so $n=0$. Similarly $n'=0$. Any element $m\in\ZZ$ can be written $m=1+1+\cdots+1$ or as $m=-1+-1+\cdots+-1$, and it is easy to see that $f(1)+f(1)+\cdots+f(1)=0=f(-1)+f(-1)+\cdots+f(-1)$. Therefore, $f(m)=0$ for all $m\in\ZZ$. 

\end{proof}

\begin{exercise}
For any $m\in\NN$ let $i_m\taking\NN\to\ZZ$ be the function $i_m(n)=m*n$. All such functions are monoid homomorphisms $(\NN,0,+)\to(\ZZ,0,+)$. Do any monoid homomorphisms $(\NN,0,+)\to(\ZZ,0,+)$ not come in this way? For example, what about using $n\mapsto 5*n-1$ or $n\mapsto n^2$, or some other function? 
\end{exercise}

\begin{exercise}
Let $\mcM:=(\NN,0,+)$ be the additive monoid of natural numbers, let $\mcN=(\RR_{\geq0},0,+)$ be the additive monoid of nonnegative real numbers, and let $\mcP:=(\RR_{>0},1,*)$ be the multiplicitive monoid of positive real numbers. Can you think of any nontrivial monoid homomorphisms of the following sorts: $$\mcM\to\mcN,\hsp\mcM\to\mcP,\hsp\mcN\to\mcP,\hsp \mcN\to\mcM,\hsp\mcP\to\mcN?$$
\end{exercise}


\subsubsection{Homomorphisms from free monoids}

Recall that $(\NN,0,+)$ is the free monoid on one generator. It turns out that for any other monoid $\mcM=(M,e,\star)$, the set of monoid homomorphisms $\NN\to\mcM$ is in bijection with the set $M$. This is a special case (in which $G$ is a set with one element) of the following proposition.

\begin{proposition}\label{prop:free monoid}

Let $G$ be a set, let $F(G):=(\List(G),[\;],\plpl)$ be the free monoid on $G$, and let $\mcM:=(M,e,\star)$ be any monoid. There is a natural bijection
$$\Hom_\Mon(F(G),\mcM)\To{\iso}\Hom_\Set(G,M).$$

\end{proposition}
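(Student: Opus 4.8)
The plan is to realize this bijection as ``restriction to generators,'' whose inverse is ``homomorphic extension,'' exactly in the spirit of the universal properties proved earlier (Lemmas \ref{lemma:up for prod}, \ref{lemma:up for coprod}). First I would define the forward map $\phi\taking\Hom_\Mon(F(G),\mcM)\to\Hom_\Set(G,M)$ by sending a monoid homomorphism $f$ to the function $\phi(f)\taking G\to M$ given by $\phi(f)(g):=f([g])$, i.e. the value of $f$ on the singleton list $[g]$. This is manifestly a well-defined function of sets, so all of the content lies in producing an inverse.

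For the inverse $\psi\taking\Hom_\Set(G,M)\to\Hom_\Mon(F(G),\mcM)$, given a function $q\taking G\to M$ I would define $\psi(q)\taking\List(G)\to M$ on a list by
$$\psi(q)([g_1,g_2,\ldots,g_n]):=q(g_1)\star q(g_2)\star\cdots\star q(g_n),$$
with the convention that the empty list is sent to $\id$. The expression is unambiguous by the associativity law, and $\psi(q)([\;])=\id$ by definition, so the first homomorphism axiom holds immediately. The key step, and the one I expect to require the most care, is checking that $\psi(q)$ respects concatenation, $\psi(q)(L\plpl L')=\psi(q)(L)\star\psi(q)(L')$; this is cleanest by induction on the length of $L$ (the base case is the empty-list identity law, and the inductive step peels one generator off the front and reassociates), after which $\psi(q)$ is a genuine monoid homomorphism.

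Finally I would verify that $\phi$ and $\psi$ are mutually inverse. In one direction, $\phi(\psi(q))(g)=\psi(q)([g])=q(g)$ for every $g\in G$, so $\phi\circ\psi=\id$. In the other direction the crucial observation is that every list decomposes as a concatenation of singletons, $[g_1,\ldots,g_n]=[g_1]\plpl[g_2]\plpl\cdots\plpl[g_n]$; applying a homomorphism $f$ and using that $f$ preserves $[\;]$ and $\plpl$ gives $f([g_1,\ldots,g_n])=f([g_1])\star\cdots\star f([g_n])=\psi(\phi(f))([g_1,\ldots,g_n])$, so $\psi\circ\phi=\id$. The word ``natural'' in the statement refers to naturality in both $G$ and $\mcM$; I would remark that $\phi$ and $\psi$ are visibly compatible with set maps $G'\to G$ and monoid homomorphisms $\mcM\to\mcM'$, but the honest bijection just described is the main content.
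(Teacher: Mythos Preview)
Your proof is correct and follows essentially the same approach as the paper: define $\phi$ by restriction to singleton lists, define $\psi$ by multiplying out, and check they are mutually inverse using the decomposition of a list as a concatenation of singletons. If anything, you are slightly more careful than the paper (which simply asserts that $\psi(q)$ is a homomorphism and that $\phi\circ\psi=\id$), but the structure and key ideas are identical.
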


\begin{proof}

We provide a function $\phi\taking\Hom_\Mon(F(G),\mcM)\to\Hom_\Set(G,M)$ and a function $\psi\taking\Hom_\Set(G,M)\to\Hom_\Mon(F(G),\mcM)$ and show that they are mutually inverse. Let us first construct $\phi$. Given a monoid homomorphism $f\taking F(G)\to\mcM$, we need to provide $\phi(f)\taking G\to M$. Given any $g\in G$ we define $\phi(f)(g):=f([g]).$ 

Now let us construct $\psi$. Given $p\taking G\to M$, we need to provide $\psi(p)\taking\List(G)\to\mcM$ such that $\psi(p)$ is a monoid homomorphism. For a list $L=[g_1,\ldots,g_n]\in\List(G)$, define $\psi(p)(L):=p(g_1)\star\cdots\star p(g_n)\in M$. In particular, $\psi(p)([\;])=e$. It is not hard to see that this is a monoid homomorphism. It is also easy to see that $\phi\circ\psi(p)=p$ for all $p\in\Hom_\Set(G,M)$. We show that $\psi\circ\phi(f)=f$ for all $f\in\Hom_\Mon(F(G),\mcM)$. Choose $L=[g_1,\ldots,g_n]\in\List(G)$. Then 
$$\psi(\phi f)(L)=(\phi f)(g_1)\star\cdots\star(\phi f)(g_n)=f[g_1]\star\cdots\star f[g_n]=f([g_1,\ldots,g_n])=f(L).$$

\end{proof}

\begin{exercise}
Let $G=\{a,b\}$, let $\mcM:=(M,e,\star)$ be any monoid, and let $f\taking G\to M$ be given by $f(a)=m$ and $f(b)=n$, where $m,n\in M$. If $\psi\taking\Hom_\Set(G,M)\to\Hom_\Mon(F(G),\mcM)$ is the function from the proof of Proposition \ref{prop:free monoid} and $L=[a,a,b,a,b]$, what is $\psi(f)(L)$ ?
\end{exercise}


\subsubsection{Restriction of scalars}

A monoid homomorphism $f\taking M\to M'$ (see Definition \ref{def:monoid hom}) ensures that the elements of $M$ have a reasonable interpretation in $M'$; they act the same way over in $M'$ as they did back home in $M$. If we have such a homomorphism $f$ and we have an action $\alpha\taking M'\times S\to S$ of $M'$ on a set $S$, then we have a method for allowing $M$ to act on $S$ as well. Namely, we take an element of $M$, send it over to $M'$, and act on $S$. In terms of functions, we compose $\alpha$ with the function $f\times\id_S\taking M\times S\to M'\times S$, to get a function we'll denote $$\Delta_f(\alpha)\taking M\times S\to S.$$ After Proposition \ref{prop:restriction of scalars} we will know that $\Delta_f(\alpha)$ is indeed a monoid action, and we say that it is given by {\em restriction of scalars along $f$}.\index{restriction of scalars}

\begin{proposition}\label{prop:restriction of scalars}

Let $\mcM:=(M,e,\star)$ and $\mcM':=(M',e',\star')$ be monoids, $f\taking\mcM\to\mcM'$ a monoid homomorphism, $S$ a set, and suppose that $\alpha\taking M'\times S\to S$ is an action of $\mcM'$ on $S$. Then $\Delta_f(\alpha)\taking M\times S\to S$, defined as above, is a monoid action as well.

\end{proposition}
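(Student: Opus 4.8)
The plan is to unwind the definition of $\Delta_f(\alpha)$ and verify the two axioms of a monoid action directly, feeding the homomorphism properties of $f$ into the fact that $\alpha$ is already an action. Recall that $\Delta_f(\alpha)$ is the composite $\alpha\circ(f,\id_S)$, so concretely $\Delta_f(\alpha)(m,s)=\alpha(f(m),s)$ for all $m\in M$ and $s\in S$. To lighten notation I will write $\beta:=\Delta_f(\alpha)$, and check the identity law and the associativity law from Definition \ref{def:monoid action} in turn.

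For the identity law, I would compute $\beta(\id,s)=\alpha(f(\id),s)$. Since $f$ is a monoid homomorphism (Definition \ref{def:monoid hom}) we have $f(\id)=\id'$, and since $\alpha$ is an action of $\mcM'$ on $S$ we have $\alpha(\id',s)=s$. Chaining these equalities gives $\beta(\id,s)=s$, as required.

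For the associativity law, I would fix $m,n\in M$ and $s\in S$ and evaluate both sides. On one hand, $\beta(m,\beta(n,s))=\alpha(f(m),\alpha(f(n),s))$, which by the second action axiom for $\alpha$ equals $\alpha(f(m)\star' f(n),s)$. On the other hand, because $f$ is a homomorphism we have $f(m)\star' f(n)=f(m\star n)$, and hence $\alpha(f(m)\star' f(n),s)=\alpha(f(m\star n),s)=\beta(m\star n,s)$. Combining the two computations yields $\beta(m,\beta(n,s))=\beta(m\star n,s)$, which is exactly the second action axiom for $\beta$.

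These two verifications together show that $\beta=\Delta_f(\alpha)$ is a monoid action. There is no genuine obstacle here: the only content is that the two facts needed—$f(\id)=\id'$ and $f(m\star n)=f(m)\star' f(n)$—are precisely the two defining conditions of a monoid homomorphism, and they slot exactly into the two action axioms. The one point requiring care is the bookkeeping among the three multiplications ($\star$ in $M$ and $\star'$ in $M'$) and keeping track of where each element lives after applying $f$; but no calculation beyond substitution and no clever idea is involved.
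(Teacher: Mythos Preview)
Your proof is correct and is essentially identical to the paper's own proof: both unwind $\Delta_f(\alpha)(m,s)=\alpha(f(m),s)$ and verify the two action axioms by the same chain of equalities, using $f(\id)=\id'$ for the identity law and $f(m\star n)=f(m)\star' f(n)$ for the associativity law.
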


\begin{proof}

Refer to Remark \ref{rmk:monoid action}; we assume $\alpha$ is a monoid action and want to show that $\Delta_f(\alpha)$ is too. We have $\Delta_f(\alpha)(e,s)=\alpha(f(e),s)=\alpha(e',s)=s$. We also have 
\begin{align*}
\Delta_f(\alpha)(m,\Delta_f(\alpha)(n,s))=\alpha(f(m),\alpha(f(n),s))&=\alpha(f(m)\star' f(n),s)\\
&=\alpha(f(m\star n),s)\\
&=\Delta_f(\alpha)(m\star n,s).
\end{align*}

\end{proof}

\begin{example}

Let $\NN$ and $\ZZ$ denote the additive monoids of natural numbers and integers, respectively, and let $i\taking\NN\to\ZZ$ be the inclusion, which we saw in Example \ref{ex:nat to int} is a monoid homomorphism. There is an action $\alpha\taking\ZZ\times\RR\to\RR$ of the monoid $\ZZ$ on the set $\RR$ of real numbers, given by $\alpha(n,x)=n+x$. Clearly, this action works just as well if we restrict our scalars to $\NN\ss\ZZ$, allowing ourselves only to add natural numbers to reals. The action $\Delta_i\alpha\taking\NN\times\RR\to\RR$ is given on $(n,x)\in\NN\times\RR$ by $\Delta_i\alpha(n,x)=\alpha(i(n),x)=\alpha(n,x)=n+x$, just as expected.

\end{example}

\begin{example}

Suppose that $V$ is a complex vector space. In particular, this means that the monoid $\CC$ of complex numbers (under multiplication) acts on the elements of $V$. If $i\taking\RR\to\CC$ is the inclusion of the real line inside $\CC$, then $i$ is a monoid homomorphism. Restriction of scalars in the above sense turns $V$ into a real vector space, so the name ``restriction of scalars" is apt.

\end{example}

\begin{exercise}
Let $\NN$ be the free monoid on one generator, let $\Sigma=\{a,b\}$, and let $S=\{\tn{State 0, State 1, State 2}\}$. Consider the map of monoids $f\taking\NN\to\List(\Sigma)$ given by sending $1\mapsto [a,b,b]$. The monoid action $\alpha\taking\List(\Sigma)\times S\to S$ given in Example \ref{ex:action table} can be transformed by restriction of scalars along $f$ to an action $\Delta_f(\alpha)$ of $\NN$ on $S$. Write down its action table.
\end{exercise}


\section{Groups}\label{sec:groups}

Groups are monoids in which every element has an inverse. If we think of these structures in terms of how they act on sets, the difference between groups and monoids is that the action of every group element can be undone. One way of thinking about groups is in terms of symmetries. For example, the rotations and reflections of a square form a group. 

Another way to think of the difference between monoids and groups is in terms of time. Monoids are likely useful in thinking about diffusion, in which time plays a role and things cannot be undone. Groups are more likely useful in thinking about mechanics, where actions are time-reversible. 


\subsection{Definition and examples}

\begin{definition}\label{def:group}\index{group}\index{monoid!inverse of an element in}

Let $(M,e,\star)$ be a monoid. An element $m\in M$ is said to {\em have an inverse} if there exists an $m'\in M$ such that $mm'=e$ and $m'm=e$. A {\em group} is a monoid $(M,e,\star)$ in which every element $m\in M$ has an inverse.

\end{definition}

\begin{proposition}

Suppose that $\mcM:=(M,e,\star)$ is a monoid and let $m\in M$ be an element. Then $m$ has at most one inverse.
\footnote{If $\mcM$ is a group then every element $m$ has exactly one inverse.}

\end{proposition}

\begin{proof}

Suppose that both $m'$ and $m''$ are inverses of $m$; we want to show that $m'=m''$. This follows by the associative law for monoids:
$$m'=m'(mm'')=(m'm)m''=m''.$$

\end{proof}

\begin{example}

The additive monoid $(\NN,0,+)$ is not a group because none of its elements are invertible, except for $0$. However, the monoid of integers $(\ZZ,0,+)$ is a group. The monoid of clock positions from Example \ref{ex:cyclic} is also a group. For example the inverse of $Q^5$ is $Q^7$ because $Q^5\star Q^7=e=Q^7\star Q^5$.

\end{example}

\begin{example}

Consider a square centered at the origin in $\RR^2$. It has rotational and mirror symmetries. There are eight of these, which we denote $$\{e,\rho,\rho^2,\rho^3,\phi,\phi\rho,\phi\rho^2,\phi\rho^3\},$$ where $\rho$ stands for $90^\circ$ counterclockwise rotation and $\phi$ stands for horizontal-flip (across the vertical axis). So relations include $\rho^4=e$, $\phi^2=e$, and $\rho^3\phi=\phi\rho$.

\end{example}

\begin{example}\label{ex:important groups}

The set of $3\times 3$ matrices can be given the structure of a monoid, where the identity element is the $3\times 3$ identity matrix, the multiplication is matrix multiplication. The subset of invertible matrices forms a group, called {\em the general linear group of dimension 3}\index{a group!$GL_3$} and denoted $GL_3$. Inside of $GL_3$ is the so-called {\em orthogonal group}, denoted $O_3$, of matrices $M$ such that $M^\m1=M^\top$. These matrices correspond to symmetries of the sphere centered at the origin.

Another interesting group is the Euclidean group\index{a group!$E_3$} $E(3)$ which consists of all {\em isometries} of $\RR^3$, i.e. all functions $\RR^3\to\RR^3$ that preserve distances.  

\end{example}

\begin{application}\label{app:groups for symmetry}\index{symmetry}

In \href{http://en.wikipedia.org/wiki/Crystallography}{\text crystallography} one is often concerned with the symmetries that arise in the arrangement $A$ of atoms in a molecule. To think about symmetries in terms of groups, we first define an {\em atom-arrangement} to be a finite subset $i\taking A\ss\RR^3$. A symmetry in this case is an isometry of $\RR^3$ (see Example \ref{ex:important groups}), say $f\taking\RR^3\to\RR^3$ such that there exists a dotted arrow making the diagram below commute:
$$
\xymatrix{A\ar@{-->}[r]\ar[d]_i&A\ar[d]^i\\\RR^3\ar[r]_f&\RR^3}
$$
That is, it's an isometry of $\RR^3$ such that each atom of $A$ is sent to a position currently occupied by an atom of $A$. It is not hard to show that the set of such isometries forms a group, called the \href{http://en.wikipedia.org/wiki/Space_group}{\em space group}\index{space group} of the crystal.

\end{application}

\begin{exercise}\label{exc:permutation}\index{set!permutation of}
Let $S$ be a finite set. A {\em permutation of $S$}\index{permutation} is an isomorphism $f\taking S\To{\iso}S$. 
\begin{center}
\parbox{2.3in}{
\includegraphics[height=2in]{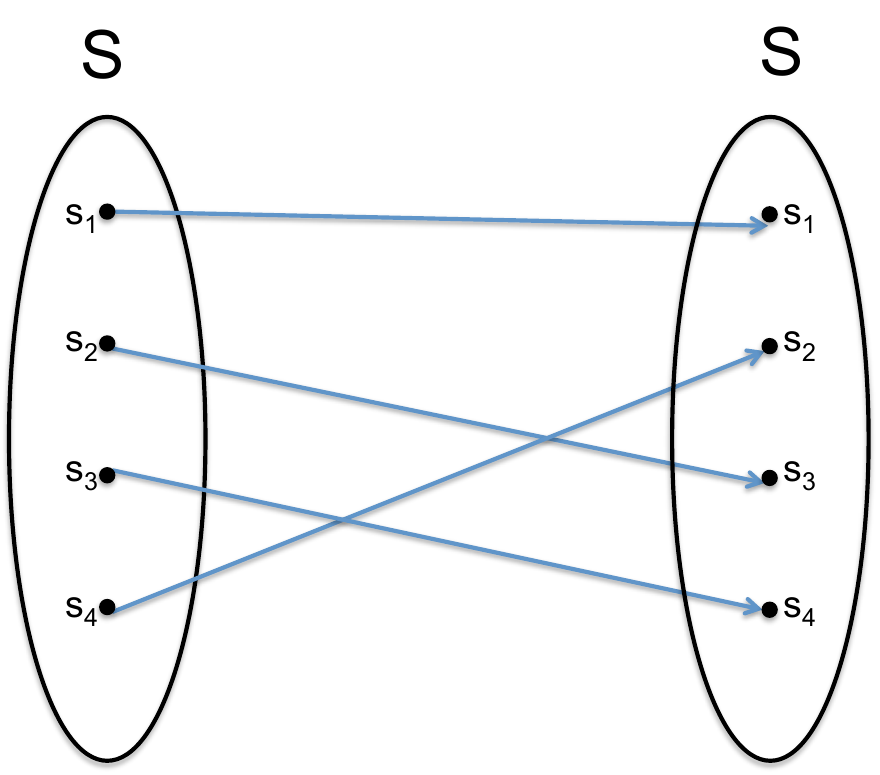}}
\end{center}
\sexc Come up with an identity, and a  multiplication formula, such that the set of permutations of $S$ forms a monoid. 
\next Is it a group?
\endsexc
\end{exercise}

\begin{exercise}
In Exercise \ref{exc:classify cyclic} you classified the cyclic monoids. Which of them are groups? 
\end{exercise}

\begin{definition}[Group action]\label{def:group action}\index{group!action}\index{action!of a group}

Let $(G,e,\star)$ be a group and $S$ a set. An {\em action} of $G$ on $S$ is a function $\acts\taking G\times S\to S$ such that for all $s\in S$ and $g,g'\in G$, we have
\begin{itemize}
\item $e\acts s=s$ and
\item $g\acts(g'\acts s)=(g\star g')\acts s.$
\end{itemize}
In other words, considering $G$ as a monoid, it is an action in the sense of Definition \ref{def:monoid action}.

\end{definition}

\begin{example}\label{ex:U(1)}\index{a group!$U(1)$}

When a group acts on a set, it has the character of \href{http://en.wikipedia.org/wiki/Symmetry}{\text symmetry}. For example, consider the group whose elements are angles $\theta$. This group may be denoted $U(1)$ and is often formalized as the unit circle in $\CC$ of complex numbers $z=a+bi$ such that $|z|=a^2+b^2=1$. The set of such points is given the structure of a group $(U(1),e,\star)$ by defining the identity element to be $e:=1+0i$ and the group law to be complex multiplication. But for those unfamiliar with complex numbers, this is simply angle addition where we understand that $360^\circ=0^\circ$. If $\theta_1=190^\circ$ and $\theta_2=278^\circ$, then $\theta_1\star\theta_2=468^\circ=108^\circ.$ In the language of complex numbers, $z=e^{i\theta}$.

The group $U(1)$ acts on any set that we can picture as having rotational symmetry about a fixed axis, such as the earth around the north-south axis. We will define $S=\{(x,y,z)\in\RR^3\|x^2+y^2+z^2=1\}$, the unit sphere, and understand the rotational action of $U(1)$ on $S$.\index{orbit!rotating earth}

We first show that $U(1)$ acts on $\RR^3$ by $\theta\acts(x,y,z)=(x\cos\theta+y\sin\theta, -x\sin\theta+y\cos\theta,z)$, or with matrix notation as 
$$\theta\acts(x,y,z)
:=(x,y,z)\left(\begin{array}{ccc}
\cos(\theta)&-\sin(\theta)&0\\
\sin(\theta)&\cos(\theta)&0\\
0&0&1\end{array}\right)
$$
\href{http://en.wikipedia.org/wiki/List_of_trigonometric_identities#Matrix_form}{\text Trigonometric identities} ensure that this is indeed an action.

In terms of action tables, we would need infinitely many columns to express this action. Here is a sample
$$
\begin{tabular}{| l || l | l | l |}
\bhline
\multicolumn{4}{|c|}{Action of $U(1)$ on $\RR^3$}\\\bhline
{$\RR^3$}&{$\theta=45^\circ$}&{$\theta=90^\circ$}&{$\theta=100^\circ$}\\\bbhline
(0,0,0)&(0,0,0)&(0,0,0)&(0,0,0)\\\hline
(1,0,0)&(.71,.71,0)&(0,1,0)&(-.17,.98,0)\\\hline
(0,1,-4.2)&(-.71,.71,-4.2)&(-1,0,-4.2)&(-.98,-.17,-4.2)\\\hline
(3,4,2)&(4.95,.71,2)&(-4,3,2)&(3.42,-3.65,2)\\\hline
$\vdots$&$\vdots$&$\vdots$&$\vdots$\\\bhline
\end{tabular}
$$

Finally, we are looking to see that the action preserves length so that if $(x,y,z)\in S$ then $\theta\acts(x,y,z)\in S$; this way we will have confirmed that $U(1)$ indeed acts on $S$. The calculation begins by assuming $x^2+y^2+z^2=1$ and checks 
$$
(x\cos\theta+y\sin\theta)^2+(-x\sin\theta+y\cos\theta)^2+z^2=x^2+y^2+z^2=1.
$$

\end{example}

\begin{exercise}\label{exc:permutation group}
Let $X$ be a set and consider the group of permutations of $X$ (see Exercise \ref{exc:permutation}), which we will denote $\Sigma_X$\index{a group!$\Sigma_X$}. Find a canonical action of $\Sigma_X$ on $X$.
\end{exercise}

\begin{definition}

Let $G$ be a group acting on a set $X$. For any point $x\in X$, the {\em orbit of $x$},\index{orbit}\index{action!orbit of} denoted $Gx$, is the set 
$$Gx:=\{x'\in X\|\exists g\in G \tn{ such that }gx=x'\}.$$

\end{definition}

\begin{application}

Let $S$ be the surface of the earth, understood as a sphere, and let $G=U(1)$ be the group of angles acting on $S$ as in Example \ref{ex:U(1)}. The orbit of any point $p=(x,y,z)\in S$ is the set of points on the same latitude line as $p$.

One may also consider a small band around the earth, i.e. the set $A=\{(x,y,z)\|1.0\leq x^2+y^2+z^2\leq 1.05\}$. The action of $U(1)\acts S$ extends to an action $U(1)\acts A$. The orbits are latitude-lines-at-altitude. A simplifying assumption in \href{http://en.wikipedia.org/wiki/Climatology}{\text climatology} may be given by assuming that $U(1)$ acts on all currents in the atmosphere in an appropriate sense. That way, instead of considering movement within the whole space $A$, we only allow movement that behaves the same way throughout each orbit of the group action.

\end{application}

\begin{exercise}~
\sexc Consider the $U(1)$ action on $\RR^3$ given in Example \ref{ex:U(1)}. Describe the set of orbits of this action.
\next What are the orbits of the action of the permutation group $\Sigma_{\{1,2,3\}}$ on the set $\{1,2,3\}$? (See Exercise \ref{exc:permutation group}.)
\endsexc
\end{exercise}

\begin{exercise}
Let $G$ be a group and $X$ a set on which $G$ acts by $\acts\taking G\times X\to X$. Is ``being in the same orbit" an equivalence relation on $X$? 
\end{exercise}

\begin{definition}\label{def:group homomorphism}\index{group!homomorphism of}

Let $G$ and $G'$ be groups. A {\em group homomorphism} $f\taking G\to G'$ is defined to be a monoid homomorphism $G\to G'$, where $G$ and $G'$ are being regarded as monoids in accordance with Definition \ref{def:group}.

\end{definition}


\section{Graphs}\label{sec:graphs}

In this course, unless otherwise specified, whenever we speak of graphs we are not talking about curves in the plane, such as parabolas, or pictures of functions generally. We are speaking of systems of vertices and arrows.

We will take our graphs to be {\em directed}, meaning that every arrow points {\em from} a vertex {\em to} a vertex; rather than merely connecting vertices, arrows will have direction. If $a$ and $b$ are vertices, there can be many arrows from $a$ to $b$, or none at all. There can be arrows from $a$ to itself. Here is the formal definition in terms of sets and functions.


\subsection{Definition and examples}

\begin{definition}\label{def:graph}\index{graph}

A {\em graph} $G$ consists of a sequence $G:=(V,A,src,tgt)$ where 
\begin{itemize}
\item $V$ is a set, called {\em the set of vertices of $G$} (singular:{\em vertex}),\index{vertex}
\item $A$ is a set, called {\em the set of arrows of $G$},\index{arrow}
\item $src\taking A\to V$ is a function, called {\em the source function for $G$}, and
\item $tgt\taking A\to V$ is a function, called {\em the target function for $G$}.
\end{itemize}
Given an arrow $a\in A$ we refer to $src(a)$ as the {\em source vertex} of $a$ and to $tgt(a)$ as the {\em target vertex} of $a$.

\end{definition}

To draw a graph, first draw a dot for every element of $V$. Then for every element $a\in A$, draw an arrow connecting dot $src(a)$ to dot $tgt(a)$.

\begin{example}[Graph]\label{ex:graph}

Here is a picture of a graph $G=(V,A,src,tgt)$:
\begin{align}\label{dia:graph}
G:=\parbox{2in}{\fbox{\xymatrix{\bullet^v\ar[r]^f&\bullet^w\ar@/_1pc/[r]_h\ar@/^1pc/[r]^g&\bullet^x\\\bullet^y\ar@(l,u)[]^i\ar@/^1pc/[r]^j&\bullet^z\ar@/^1pc/[l]^k}}}
\end{align} 
We have $V=\{v,w,x,y,z\}$ and $A=\{f,g,h,i,j,k\}$. The source and target functions $src,tgt\taking A\to V$ can be captured in the table to the left below:
\begin{align*}
\begin{array}{| l || l | l |}\bhline
{\bf A}&{\bf src}&{\bf tgt}\\\bbhline
f&v&w\\\hline
g&w&x\\\hline
h&w&x\\\hline
i&y&y\\\hline
j&y&z\\\hline
k&z&y\\\bhline
\end{array}
\hspace{1in}
\begin{array}{| l |}\bhline
{\bf V}\\\bbhline
v\\\hline
w\\\hline
x\\\hline
y\\\hline
z\\\bhline
\end{array}
\end{align*}
In fact, all of the data of the graph $G$ is captured in the two tables above---together they tell us the sets $A$ and $V$ and the functions $src$ and $tgt$.
\end{example}

\begin{example}

Every olog has an underlying graph. The additional information in an olog has to do with which pairs of paths are declared equivalent, as well as text that has certain English-readability rules.\index{olog!underlying graph}

\end{example}

\begin{exercise}
\sexc Draw the graph corresponding to the following tables:
\begin{align*}
\begin{array}{| l || l | l |}\bhline
{\bf A}&{\bf src}&{\bf tgt}\\\bbhline
f&v&w\\\hline
g&v&w\\\hline
h&v&w\\\hline
i&x&w\\\hline
j&z&w\\\hline
k&z&z\\\bhline
\end{array}
\hspace{1in}
\begin{array}{| l |}\bhline
{\bf V}\\\bbhline
u\\\hline
v\\\hline
w\\\hline
x\\\hline
y\\\hline
z\\\bhline
\end{array}
\end{align*}
\next Write down two tables, as above, corresponding to the following graph:
$$\fbox{\xymatrix{
\LMO{a}\ar[r]^{1}&\LMO{b}\ar[r]^2\ar@/^1pc/[r]^3&\LMO{c}\ar@/^1pc/[l]^4\ar[r]^5&\LMO{d}\\
\LMO{e}&\LMO{f}\ar[l]^6\ar[r]_7&\LMO{g}\ar[ur]_8}}
$$
\endsexc
\end{exercise}

\begin{exercise}
Let $A=\{1,2,3,4,5\}$ and $B=\{a,b,c\}$. Draw them and choose an arbitrary function $f\taking A\to B$ and draw it. Let $A\sqcup B$ be the coproduct of $A$ and $B$ (Definition \ref{def:coproduct}) and let $A\To{i_1}A\sqcup B\From{i_2}B$ be the two inclusions. Consider the two functions $src,tgt\taking A\to A\sqcup B$, where $src=i_1$ and $tgt$ is the composition $A\To{f}B\To{i_2}A\sqcup B$. Draw the associated graph $(A\sqcup B,A,src,tgt)$.
\end{exercise}

\begin{exercise}~
\sexc Let $V$ be a set. Suppose we just draw the elements of $V$ as vertices and have no arrows between them. Is this a graph?
\next Given $V$, is there any other ``canonical" or somehow automatic non-random procedure for generating a graph with those vertices? 
\endsexc
\end{exercise}

\begin{example}

Recall from Construction \ref{const:bipartite} the notion of bipartite graph, which we defined to be a span (i.e. pair of functions, see Definition \ref{def:span}) $A\From{f}R\To{g}B$. Now that we have a formal definition of graph, we might hope that bipartite graphs fit in, and they do. Let $V=A\sqcup B$ and let $i\taking A\to V$ and $j\taking B\to V$ be the inclusions. Let $src=i\circ f\taking R\to V$ and let $tgt=j\circ g\taking R\to V$ be the composites.
$$
\xymatrix{&A\ar[dr]^i\\R\ar@/^1pc/[rr]_{src}\ar@/_1pc/[rr]^{tgt}\ar[ur]^f\ar[dr]_g&&V\\&B\ar[ur]_j}
$$ 
Then $(V,R,src,tgt)$ is a graph that would be drawn exactly as we specified the drawing of spans in Construction \ref{const:bipartite}.

\end{example}

\begin{example}\label{ex:[n] as graph}

Let $n\in\NN$ be a natural number. The {\em chain graph of length $n$},\index{graph!chain} denoted $[n]$ is the graph depicted here:
$$
\xymatrix{
\LMO{0}\ar[r]&\LMO{1}\ar[r]&\cdots\ar[r]&\LMO{n}
}
$$
In general $[n]$ has $n$ arrows and $n+1$ vertices. In particular, when $n=0$ we have that $[0]$ is the graph consisting of a single vertex and no arrows. 

\end{example}

\begin{example}\label{ex:ZxG}

Let $G=(V,A,src,tgt)$ be a graph; we want to spread it out over discrete time, so that each arrow does not occur within a given time-slice but instead over a quantum unit of time. 

Let $N=(\NN,\NN,n\mapsto n,n\mapsto n+1)$ be the graph depicted 
$$\xymatrix{\LMO{0}\ar[r]^0&\LMO{1}\ar[r]^1&\LMO{2}\ar[r]^2&\cdots}$$
When we get to limits in a category, we will understand that products can be taken in the category of graphs (see  Example \ref{ex:product of graphs}), and $N\times G$ will make sense. For now, we construct it by hand.

Let $T(G)=(V\times \NN,A\times\NN,src',tgt')$ be a new graph, where for $a\in A$ and $n\in\NN$ we have $src'(a,n):=(src(a),n)$ and $tgt'(a,n)=(tgt(a),n+1)$. This may be a bit much to swallow, so try to simply understand what is being done in the following example. 

Let $G$ be the graph drawn below 
$$\xymatrix{\LMO{a}\ar@(ul,ur)[]^f\ar[d]_g\\\LMO{b}}$$
Then $T(G)$ will be the graph 
$$\xymatrix@=30pt{
\LMO{a0}\ar[r]^{f0}\ar[rd]_{g0}&\LMO{a1}\ar[r]^{f1}\ar[rd]_{g1}&\LMO{a2}\ar[r]^{f2}\ar[rd]_{g2}&\cdots\\
\LMO{b0}&\LMO{b1}&\LMO{b2}&\cdots
}
$$
As you can see, $f$-arrows still take $a$'s to $a$'s and $g$-arrows still take $a$'s to $b$'s, but they always march forward in time.

\end{example}

\begin{exercise}\label{exc:secret turing}
Let $G$ be the graph depicted below:
$$
\xymatrix{\LMO{a}\ar@/^1pc/[rr]^w\ar@(lu,ld)[]_x&&\LMO{b}\ar@/^1pc/[ll]^y\ar@(ur,dr)[]^z}
$$
Draw (using ellipses ``$\cdots$" if necessary) the graph $T(G)$ defined in Example \ref{ex:ZxG}.
\end{exercise}

\begin{exercise}\label{exc:lattice}
Consider the infinite graph $G=(V,A,src,tgt)$ depicted below,
$$
\xymatrix{
\vdots&\vdots&\vdots\\
(0,2)\ar[r]\ar[u]&(1,2)\ar[r]\ar[u]&(2,2)\ar[r]\ar[u]&\cdots\\
(0,1)\ar[r]\ar[u]&(1,1)\ar[r]\ar[u]&(2,1)\ar[r]\ar[u]&\cdots\\
(0,0)\ar[r]\ar[u]&(1,0)\ar[r]\ar[u]&(2,0)\ar[r]\ar[u]&\cdots}
$$
\sexc Write down the sets $A$ and $V$. 
\next What are the source and target function $A\to V$?  
\endsexc
\end{exercise}

\begin{exercise}\label{exc:(co)equalizer of graph}
A graph is a pair of functions $A\tto V$. This sets up the notion of equalizer and coequalizer (see Definitions \ref{def:equalizer} and \ref{def:coequalizer}). 
\sexc What feature of a graph is captured by the equalizer of its source and target functions? 
\next What feature of a graph is captured by the coequalizer of its source and target functions?
\endsexc
\end{exercise}


\subsection{Paths in a graph}\label{sec:paths in graph}\index{graph!paths}

We all know what a path in a graph is, especially if we understand that a path must always follow the direction of arrows. The following definition makes this idea precise. In particular, one can have paths of any finite length $n\in\NN$, even length $0$ or $1$. Also, we want to be able to talk about the source vertex and target vertex of a path, as well as concatenation of paths.

\begin{definition}\label{def:paths in graph}

Let $G=(V,A,src,tgt)$ be a graph. A {\em path of length $n$}\index{path} in $G$, denoted $p\in\Path_G^{(n)}$\index{a symbol!$\Path$} is a head-to-tail sequence \begin{align}\label{dia:path}p=(v_0\To{a_1}v_1\To{a_2}v_2\To{a_3}\ldots\To{a_n}v_n)\end{align} of arrows in $G$, which we denote by $v_0 a_1 a_2 \ldots a_n$. In particular we have canonical isomorphisms $\Path_G^{(1)}\iso A$ and $\Path_G^{(0)}\iso V$; we refer to the path of length 0 on vertex $v$ as the {\em trivial path on $v$} and denote it simply by $v$. We denote by $\Path_G$ the set of paths in $G$, $$\Path_G:=\bigcup_{n\in\NN}\Path_G^{(n)}.$$ Every path $p\in\Path_G$ has a source vertex and a target vertex, and we may denote these by $\ol{src},\ol{tgt}\taking\Path_G\to V$. If $p$ is a path with $\ol{src}(p)=v$ and $\ol{tgt}(p)=w$, we may denote it by $p\taking v\to w$. Given two vertices $v,w\in V$, we write $\Path_G(v,w)$ to denote the set of all paths $p\taking v\to w$.

There is a concatenation operation on paths.\index{concatenation!of paths} Given a path $p\taking v\to w$ and $q\taking w\to x$, we define the concatenation, denoted $p q\taking v\to x$ in the obvious way. If $p=va_1,a_2\ldots a_m$ and $q= wb_1b_2\ldots b_n$ then $pq=va_1\ldots a_mb_1\ldots b_n.$ In particular, if $p$ (resp. $r$) is the trivial path on vertex $v$ (resp. vertex $w$) then for any path $q\taking v\to w$, we have $pq=q$ (resp. $qr=q$). 

\end{definition}

\begin{example}

In Diagram (\ref{dia:graph}), page \pageref{dia:graph}, there are no paths from $v$ to $y$, one path ($f$) from $v$ to $w$, two paths ($fg$ and $fh$) from $v$ to $x$, and infinitely many paths $$\{y i^{p_1}(jk)^{q_1}\cdots i^{p_n}(jk)^{q_n}\;|\;n,p_1,q_1,\ldots,p_n,q_n\in\NN\}$$ from $y$ to $y$. There are other paths as well, including the five trivial paths.

\end{example}

\begin{exercise}
How many paths are there in the following graph? 
$$\xymatrix{\LMO{1}\ar[r]^{f}&\LMO{2}\ar[r]^{g}&\LMO{3}}$$
\end{exercise}

\begin{exercise}
Let $G$ be a graph and consider the set $\Path_G$ of paths in $G$. Suppose someone claimed that there is a monoid structure on the set $\Path_G$, where the multiplication formula is given by concatenation of paths. Are they correct? Why or why not? Hint: what should be the identity element?
\end{exercise}


\subsection{Graph homomorphisms}

A graph $(V,A,src,tgt)$ involves two sets and two functions. For two graphs to be comparable, their two sets and their two functions should be appropriately comparable.\index{appropriate comparison}

\begin{definition}\label{def:graph homomorphism}\index{graph!homomorphism}

Let $G=(V,A,src,tgt)$ and $G'=(V',A',src',tgt')$ be graphs. A {\em graph homomorphism $f$ from $G$ to $G'$}, denoted $f\taking G\to G'$, consists of two functions $f_0\taking V\to V'$ and $f_1\taking A\to A'$ such that the two diagrams below commute:
\begin{align}\label{dia:graph hom}
\xymatrix{A\ar[r]^{f_1}\ar[d]_{src}&A'\ar[d]^{src'}\\V\ar[r]_{f_0}&V'
}\hspace{1in}
\xymatrix{A\ar[r]^{f_1}\ar[d]_{tgt}&A'\ar[d]^{tgt'}\\V\ar[r]_{f_0}&V'
}
\end{align}

\end{definition}

\begin{remark}

The above conditions (\ref{dia:graph hom}) may look abstruse at first, but they encode a very important idea, roughly stated ``arrows are bound to their vertices". Under a map of graphs $G\to G'$ , one cannot flippantly send an arrow of $G$ any old arrow of $G'$: it must still connect the vertices it connected before. Below is an example of a mapping that does not respect this condition: $a$ connects $1$ and $2$ before, but not after:
$$
\fbox{\xymatrix{\LMO{\color{red}{1}}\ar[r]^{a}&\LMO{\color{blue}{2}}}}
\xymatrix{~\ar[rr]^{1\mapsto 1',2\mapsto 2', a\mapsto a'}&\hsp&~}
\fbox{\xymatrix{\LMO{\color{red}{1'}}&\LMO{\color{blue}{2'}}\ar[r]^{a'}&\LMO{\color{ForestGreen}{3'}}}}
$$
The commutativity of the diagrams in (\ref{dia:graph hom}) is exactly what is needed to ensure that arrows are handled in the expected way by a proposed graph homomorphism.
 
\end{remark}

\begin{example}[Graph homomorphism]\label{ex:graph hom}

Let $G=(V,A,src,tgt)$ and $G'=(V',A',src',tgt')$ be the graphs drawn to the left and right (respectively) below:
\begin{align}\label{dia:graph hom example}
\parbox{1.5in}{\fbox{\xymatrix{\LMO{\color{red}{1}}\ar[r]^a\ar@/^1pc/[d]^d\ar@/_1pc/[d]_c&\LMO{\color{ForestGreen}{2}}\ar[r]^b&\LMO{\color{red}{3}}\\\LMO{4}&\LMO{\color{blue}{5}}\ar[r]^e&\LMO{\color{blue}{6}}}}}
\parbox{1in}{\xymatrix{~\ar[rr]^{\parbox{.8in}{\vspace{-.2in}\footnotesize$1\mapsto 1', 2\mapsto 2',\\ 3\mapsto 1',4\mapsto 4',\\ 5\mapsto 5',6\mapsto5'$}}&\hsp&~}}
\parbox{.8in}{\fbox{\xymatrix{\LMO{\color{red}{1'}}\ar@<.5ex>[r]^w\ar[d]_y&\LMO{\color{ForestGreen}{2'}}\ar@<.5ex>[l]^x\\\LMO{4'}&\LMO{\color{blue}{5'}}\ar@(r,u)[]_z}}}
\end{align}
The colors indicate our choice of function $f_0\taking V\to V'$. Given that choice, condition (\ref{dia:graph hom}) imposes in this case that there is a unique choice of graph homomorphism $f\taking G\to G'$. 

\end{example}

\begin{exercise}~
\sexc Where are $a,b,c,d,e$ sent under $f_1\taking A\to A'$ in Diagram (\ref{dia:graph hom example})? 
\next Choose a couple elements of $A$ and check that they behave as specified by Diagram (\ref{dia:graph hom}).
\endsexc
\end{exercise}

\begin{exercise}
Let $G$ be a graph, let $n\in\NN$ be a natural number, and let $[n]$ be the chain graph of length $n$, as in Example \ref{ex:[n] as graph}. Is a path of length $n$ in $G$ the same thing as a graph homomorphism $[n]\to G$, or are there subtle differences? More precisely, is there always an isomorphism between the set of graph homomorphisms $[n]\to G$ and the set $\Path_G^{(n)}$ of length-$n$ paths in $G$?
\end{exercise}

\begin{exercise}
Given a morphism of graphs $f\taking G\to G'$, there an induced function $\Path(f)\taking\Path(G)\to\Path(G')$. 
\sexc Is it the case that for every $n\in\NN$, the function $\Path(f)$ carries $\Path^{(n)}(G)$ to $\Path^{(n)}(G')$, or can path lengths change in this process?
\next Suppose that $f_0$ and $f_1$ are injective (meaning no two distinct vertices in $G$ are sent to the same vertex (respectively for arrows) under $f$). Does this imply that $\Path(f)$ is also injective (meaning no two distinct paths are sent to the same path under $f$)?
\next Suppose that $f_0$ and $f_1$ are surjective (meaning every vertex in $G'$ and every arrow in $G'$ is in the image of $f$). Does this imply that $\Path(f)$ is also surjective? Hint: at least one of the answers to these three questions is ``no".
\endsexc
\end{exercise}

\begin{exercise}\label{exc:single condition for graph hom}

Given a graph $(V,A,src,tgt)$, let $i\taking A\to V\times V$ be function guaranteed by the universal property for products, as applied to $src,tgt\taking A\to V$. One might hope to summarize Condition (\ref{dia:graph hom}) for graph homomorphisms by the commutativity of the single square 
\begin{align}\label{dia:equiv graph hom}
\xymatrix{A\ar[r]^{f_1}\ar[d]_{i}&A'\ar[d]^{i'}\\V\times V\ar[r]_{f_0\times f_0}&V'\times V'.}
\end{align}
Is the commutativity of the diagram in (\ref{dia:equiv graph hom}) indeed equivalent to the commutativity of the diagrams in (\ref{dia:graph hom})?
\end{exercise}


\subsubsection{Binary relations and graphs}

\begin{definition}\label{def:binary relation}\index{relation!binary}

Let $X$ be a set. A {\em binary relation on $X$} is a subset $R\ss X\times X$. 

\end{definition}

If $X=\NN$ is the set of integers, then the usual $\leq$ defines a relation on $X$: given $(m,n)\in\NN\times\NN$, we put $(m,n)\in R$ iff $m\leq n$. As a table it might be written as to the left
\begin{align}\label{dia:3 relations}
\begin{tabular}{|p{.7cm}|p{.7cm}|}
\bhline
\multicolumn{2}{|c|}{$m\leq n$}\\\bhline
m&n\\\bbhline
0&0\\\hline
0&1\\\hline
1&1\\\hline
0&2\\\hline
1&2\\\hline
2&2\\\hline
0&3\\\bhline
$\vdots$&$\vdots$\\\hline
\end{tabular}
\hspace{1in}
\begin{tabular}{|p{.7cm}|p{.7cm}|}
\bhline
\multicolumn{2}{|c|}{$n=5m$}\\\bhline
m&n\\\bbhline
0&0\\\hline
1&5\\\hline
2&10\\\hline
3&15\\\hline
4&20\\\hline
5&25\\\hline
6&30\\\bhline
$\vdots$&$\vdots$\\\hline
\end{tabular}
\hspace{1in}
\begin{tabular}{|p{.7cm}|p{.7cm}|}
\bhline
\multicolumn{2}{|c|}{$|n-m|\leq 1$}\\\bhline
m&n\\\bbhline
0&0\\\hline
0&1\\\hline
1&0\\\hline
1&1\\\hline
1&2\\\hline
2&1\\\hline
2&2\\\hline
$\vdots$&$\vdots$\\\hline
\end{tabular}
\end{align}
The middle table is the relation $\{(m,n)\in\NN\times\NN\|n=5m\}\ss\NN\times\NN$ and the right-hand table is the relation $\{(m,n)\in\NN\times\NN\||n-m|\leq 1\}\ss\NN\times\NN$. 

\begin{exercise}
A relation on $\RR$ is a subset of $\RR\times\RR$, and one can indicate such a subset of the plane by shading. Choose an error bound $\epsilon>0$ and draw the relation one might refer to as ``$\epsilon$-approximation". To say it another way, draw the relation ``$x$ is within $\epsilon$ of $y$".
\end{exercise}

\begin{exercise}[Binary relations to graphs]\label{exc:rel to graph}\index{relation!graph of}

\sexc If $R\ss S\times S$ is a binary relation, find a natural way to make a graph out of it, having vertices $S$. 
\next What is the set $A$ of arrows? 
\next What are the source and target functions $src,tgt\taking A\to S$?
\next Take the left-hand table in (\ref{dia:3 relations}) and consider its first $7$ rows (i.e. forget the $\vdots$). Draw the corresponding graph (do you see a tetrahedron?). 
\next Do the same for the right-hand table.
\endsexc
\end{exercise}

\begin{exercise}[Graphs to binary relations]\label{ex:graph to rel}~
\sexc If $(V,A,src,tgt)$ is a graph, find a natural way to make a binary relation $R\ss V\times V$ out of it. 
\next Take the left-hand graph $G$ from (\ref{dia:graph hom example}) and write out the corresponding binary relation in table form.
\endsexc
\end{exercise}

\begin{exercise}[Going around the loops]
\sexc Given a binary relation $R\ss S\times S$, you know from Exercise \ref{exc:rel to graph} how to construct a graph out of it, and from Exercise \ref{ex:graph to rel} how to make a new binary relation out of that. How does the resulting relation compare with the original?
\next Given a graph $(V,A,src,tgt)$, you know from Exercise \ref{ex:graph to rel} how to make a new binary relation out of it, and from Exercise \ref{exc:rel to graph} how to construct a new graph out of that. How does the resulting graph compare with the original? 
\endsexc
\end{exercise}


\section{Orders}\label{sec:orders}

People usually think of certain sets as though they just {\em are} ordered, e.g. that an order is ordained by God. For example the natural numbers just {\em are} ordered. The letters in the alphabet just {\em are} ordered. But in fact we put orders on sets, and some are simply more commonly used in culture. One could order the letters in the alphabet by frequency of use and $e$ would come before $a$. Given different purposes, we can put different orders on the same set. For example in Exercise \ref{exc:divides as po} we will give a different ordering on the natural numbers that is useful in elementary number theory.

In science, we might order the set of materials in two different ways. In the first, we consider material $A$ to be ``before" material $B$ if $A$ is an ingredient or part of $B$, so water would be before concrete. But we could also order materials based on how electrically conductive they are, whereby concrete would be before water. This section is about different kinds of orders.


\subsection{Definitions of preorder, partial order, linear order}

\begin{definition}\label{def:orders}\index{order}

Let $S$ be a set and $R\ss S\times S$ a binary relation on $S$; if $(s,s')\in R$ we will write $s\leq s'$. Then we say that $R$ is a {\em preorder}\index{order!preorder} if, for all $s,s',s''\in S$ we have
\begin{description}
\item[Reflexivity:] $s\leq s$, and
\item[Transitivity:] if $s\leq s'$ and $s'\leq s''$, then $s\leq s''$.
\end{description}
We say that $R$ is a {\em partial order}\index{order!partial order} if it is a preorder and, in addition, for all $s,s'\in S$ we have
\begin{description}
\item[Antisymmetry:] If $s\leq s'$ and $s'\leq s$, then $s=s'$.
\end{description}
We say that $R$ is a {\em linear order}\index{order!linear order} if it is a partial order and, in addition, for all $s,s'\in S$ we have
\begin{description}
\item[Comparability:] Either $s\leq s'$ or $s'\leq s$.
\end{description}
We denote such a preorder (or partial order or linear order) by $(S,\leq)$.
\end{definition}

\begin{exercise}~
\sexc Decide whether the table to the left in Display (\ref{dia:3 relations}) constitutes a linear order. 
\next Show that neither of the other tables are even preorders.
\endsexc
\end{exercise}

\begin{example}[Partial order not linear order]\label{ex:pre not par}

We will draw an olog for playing cards. 
\begin{align}\label{dia:card olog}
\footnotesize
\xymatrixnocompile@=15pt{
\obox{}{.3in}{a diamond}\ar[dr]^{\tn{is}}&&\obox{}{.4in}{a heart}\ar[dl]_{\tn{is}}&&\obox{}{.35in}{a club}\ar[dr]^{\tn{is}}&&\obox{}{.4in}{a spade}\ar[dl]_{\tn{is}}\\
&\obox{}{.25in}{a red card}\ar[drr]^{\tn{is}}&&&&\obox{}{.4in}{a black card}\ar[dll]_{\tn{is}}\\
\obox{}{.45in}{a 4 of diamonds}\ar[d]^{\tn{is}}\ar[uu]_{\tn{is}}&&&\obox{}{.35in}{a card}&&&\obox{}{.4in}{a black queen}\ar[d]^{\tn{is}}\ar[ul]_{\tn{is}}\\
\obox{}{.2in}{a 4}\ar[rr]^{\tn{is}}&&\obox{}{.4in}{a numbered card}\ar[ur]^{\tn{is}}&&\obox{}{.3in}{a face card}\ar[ul]_{\tn{is}}&&\obox{}{.4in}{a queen}\ar[ll]_{\tn{is}}
}
\end{align}
We can put a binary relation on the set of boxes here by saying $A\leq B$ if there is a path $A\to B$. One can see immediately that this is a preorder because length=0 paths give reflexivity and concatenation of paths gives transitivity. To see that it is a partial order we only note that there are no loops. But this partial order is not a linear order because there is no path (in either direction) between, e.g., \fakebox{a 4 of diamonds} and \fakebox{a black queen}, so it violates the comparability condition.

\end{example}

\begin{remark}

Note that olog (\ref{dia:card olog}) in Example \ref{ex:pre not par} is a good olog in the sense that given any collection of cards (e.g. choose 45 cards at random from each of 7 decks and throw them in a pile), they can be classified according to the boxes of (\ref{dia:card olog}) such that every arrow indeed constitutes a function (which happens to be injective). For example the arrow $\fakebox{a heart}\Too{\tn{is}}\fakebox{a red card}$ is a function from the set of chosen hearts to the set of chosen red cards.

\end{remark}

\begin{example}[Preorder not partial order]

Every equivalence relation is a preorder but rarely are they partial orders. For example if $S=\{1,2\}$ and we put $R=S\times S$, then this is an equivalence relation. It is a preorder but not a partial order (because $1\leq 2$ and $2\leq 1$, but $1\neq 2$, so antisymmetry fails).

\end{example}

\begin{application}

Classically, we think of time as linearly ordered. A nice model is $(\RR,\leq)$, the usual linear order on the set of real numbers. But according to the \href{http://en.wikipedia.org/wiki/Relativity_of_simultaneity}{\text theory of relativity}, there is not actually a single order to the events in the universe. Different observers correctly observe different orders on the set of events, and so in some sense on time itself. 

\end{application}

\begin{example}[Finite linear orders]\label{ex:finite lo}\index{linear order!finite}

Let $n\in\NN$ be a natural number. Define a linear order on the set $\{0,1,2,\ldots,n\}$ in the standard way. Pictorially, 
$$
[n]:=\xymatrix{\LMO{0}\ar[r]&\LMO{1}\ar[r]&\LMO{2}\ar[r]&\cdots\ar[r]&\LMO{n}}
$$\index{a symbol!$[n]$}

Every finite linear order, i.e. linear order on a finite set, is of the above form. That is, though the labels might change, the picture would be the same. We can make this precise when we have a notion of morphism of orders (see Definition \ref{def:morphism of orders})

\end{example}

\begin{exercise}
Let $S=\{1,2,3,4\}$. 
\sexc Find a preorder $R\ss S\times S$ such that the set $R$ is as small as possible. Is it a partial order? Is it a linear order?
\next Find a preorder $R'\ss S\times S$ such that the set $R'$ is as large as possible. Is it a partial order? Is it a linear order?
\endsexc
\end{exercise}

\begin{exercise}~
\sexc List all the preorder relations possible on the set $\{1,2\}$.
\next For any $n\in\NN$, how many linear orders exist on the set $\{1,2,3,\ldots,n\}$. 
\next Does your formula work when $n=0$?
\endsexc
\end{exercise}

\begin{remark}\label{rem:preorder to graph}\index{preorder!converting to graph}

We can draw any preorder $(S,\leq)$ as a graph with vertices $S$ and with an arrow $a\to b$ if $a\leq b$. These are precisely the graphs with the following two properties for any vertices $a,b\in S$:
\begin{enumerate}[\hsp 1.]
\item there is at most one arrow $a\to b$, and
\item if there is a path from $a$ to $b$ then there is an arrow $a\to b$.
\end{enumerate}
If $(S,\leq)$ is a partial order then the associated graph has an additional ``no loops" property,
\begin{enumerate}[\hsp 3.]
\item if $n\in\NN$ is an integer with $n\geq 2$ then there are no paths of length $n$ that start at $a$ and end at $a$.
\end{enumerate}
If $(S,\leq)$ is a linear order then there is an additional ``comparability" property,
\begin{enumerate}[\hsp 4.]
\item for any two vertices $a,b$ there is an arrow $a\to b$ or an arrow $b\to a$.
\end{enumerate}

Given a graph $G$, we can create a binary relation $\leq$ on its set $S$ of vertices as follows. Say $a\leq b$ if there is a path in $G$ from $a$ to $b$. This relation will be reflexive and transitive, so it is a preorder. If the graph satisfies Property 3 then the preorder will be a partial order, and if the graph also satisfies Property 4 then the partial order will be a linear order. Thus graphs give us a nice way to visualize orders.\index{graph!converting to a preorder}

\end{remark}

\begin{slogan}
A graph generates a preorder: $v\leq w$ if there is a path $v\to w$. This is a great way to picture a preorder. 
\end{slogan}

\begin{exercise}
Let $G=(V,A,src,tgt)$ be the graph below. 
$$\fbox{\xymatrix{
\LMO{a}\ar[r]&\LMO{b}\ar@/^1pc/[r]&\LMO{c}\ar@/^1pc/[l]\ar[r]&\LMO{d}\\
\LMO{e}&\LMO{f}\ar[l]\ar[r]&\LMO{g}\ar[ur]}}
$$
In the corresponding pre-order which of the following are true: 
\sexc $a\leq b$?
\next $a\leq c$?
\next $c\leq b$?
\next $b=c$?
\next $e\leq f$?
\next $f\leq d$?
\endsexc
\end{exercise}

\begin{exercise}\label{exc:power poset}\index{power set!as poset}~
\sexc Let $S=\{1,2\}$. The subsets of $S$ form a partial order; draw the associated graph. 
\next Repeat this for $Q=\emptyset$, $R=\{1\}$, and $T=\{1,2,3\}$. 
\next Do you see $n$-dimensional cubes?
\endsexc
\end{exercise}

\begin{definition}\label{def:clique}\index{preorder!clique in}

Let $(S,\leq)$ be a preorder. A {\em clique} is a subset $S'\ss S$ such that for each $a,b\in S'$ one has $a\leq b$.

\end{definition}

\begin{exercise}
True or false: a partial order is a preorder that has no cliques. (If false, is there a ``nearby" true statement?)
\end{exercise}

\begin{example}\label{ex:preorder generated}\index{preorder!generated}

Let $X$ be a set and $R\ss X\times X$ a relation. For elements $x,y\in X$ we will say there is an {\em $R$-path} from $x$ to $y$ if there exists a natural number $n\in\NN$ and elements $x_0,x_1,\ldots,x_n$ such that
\begin{enumerate}
\item $x_0=x$,
\item $x_n=y$, and
\item for all $i\in\NN$, if $0\leq i\leq n-1$ then $(x_i,x_{i+1})\in R$.
\end{enumerate}
Let $\ol{R}$ denote the relation where $(x,y)\in\ol{R}$ if there exists an $R$-path from $x$ to $y$. We call $\ol{R}$ the {\em preorder generated by $R$.} We note some facts about $\ol{R}$.
\begin{description}
\item[Containment.] If $(x,y)\in R$ then $(x,y)\in\ol{R}$. That is $R\ss\ol{R}$.
\item[Reflexivity]. For all $x\in X$ we have $(x,x)\in\ol{R}$. 
\item[Transitivity.] For all $x,y,z\in X$, if $(x,y)\in\ol{R}$ and $(y,z)\in\ol{R}$ then $(x,z)\in\ol{R}$.
\end{description}
To check the containment claim, just use $n=1$ so $x_0=x$ and $x_n=y$. To check the reflexivity claim, use $n=0$ so $x_0=x=y$ and condition 3 is vacuously satisfied. To check transitivitiy, suppose given $R$-paths $x=x_0,x_1,\ldots,x_n=y$ and $y=y_0,y_1,\ldots,y_p=z$; then $x=x_0,x_1,\ldots x_n,y_1,\ldots,y_p=z$ will be an $R$-path from $x$ to $z$.

The point is that we can turn any relation into a preorder in a canonical way. Here is a concrete case of the above idea.

Let $X=\{a,b,c,d\}$ and suppose given the relation $\{(a,b),(b,c),(b,d),(d,c),(c,c)\}$. This is neither reflexive nor transitive, so it's not a preorder. To make it a preorder we follow the above prescription. Starting with $R$-paths of length $n=0$ we put  $\{(a,a), (b,b), (c,c), (d,d)\}$ into $\ol{R}$. The $R$-paths of length 1 add our original elements, $\{(a,b),(b,c),(b,d),(d,c),(c,c)\}$. We don't mind redundancy (e.g. $(c,c)$), but from now on in this example we will only write down the new elements. The $R$-paths of length 2 add $\{(a,c),(a,d)\}$ to $\ol{R}$. One can check that $R$-paths of length 3 and above do not add anything new to $\ol{R}$, so we are done. The relation $$\ol{R}=\{(a,a), (b,b), (c,c), (d,d), (a,b), (b,c), (b,d), (d,c), (a,c), (a,d)\}$$ is reflexive and transitive, hence a preorder.

\end{example}

\begin{exercise}

Let $X=\{a,b,c,d,e,f\}$ and let $R=\{(a,b),(b,c),(b,d),(d,e),(f,a)\}$. 
\sexc What is the preorder $\ol{R}$ generated by $R$?
\next Is it a partial order?
\endsexc
\end{exercise}

\begin{exercise}
Let $X$ be the set of people and let $R\ss X\times X$ be the relation with $(x,y)\in R$ if $x$ is the child of $y$. Describe the preorder generated by $R$.
\end{exercise}


\subsection{Meets and joins}\label{sec:meets and joins}

Let $X$ be any set. Recall from Definition \ref{def:subobject classifier} that the powerset of $X$, denoted $\PP(X)$ is the set of subsets of $X$. There is a natural order on $\PP(X)$ given by the subset relationship, as exemplified in Exercise \ref{exc:power poset}. Given two elements $a,b\in\PP(X)$ we can consider them as subsets of $X$ and take their intersection as an element of $\PP(X)$ which we denote $a\wedge b$. We can also consider them as subsets of $X$ and take their union as an element of $\PP(X)$ which we denote $a\vee b$. The intersection and union operations are generalized in the following definition.

\begin{definition}\label{def:meets and joins}\index{preorder!meet}\index{preorder!join}\index{meet}\index{join}

Let $(S,\leq)$ be a preorder and let $s,t\in S$ be elements. A {\em meet of $s$ and $t$} is an element $w\in S$ satisfying the following universal property: 
\begin{itemize}
\item $w\leq s$ and $w\leq t$ and, 
\item for any $x\in S$, if $x\leq s$ and $x\leq t$ then $x\leq w$.
\end{itemize}
If $w$ is a meet of $s$ and $t$, we write $w\iso s\wedge t$.

A {\em join of $s$ and $t$} is an element $w\in S$ satisfying the following universal property: 
\begin{itemize}
\item $s\leq w$ and $t\leq w$ and, 
\item for any $x\in S$, if $s\leq x$ and $t\leq x$ then $w\leq x$.
\end{itemize}
If $w$ is a join of $s$ and $t$, we write $w\iso s\vee t$.

\end{definition}

That is, the meet of $s$ and $t$ is the biggest thing smaller than both, i.e. a {\em greatest lower bound}, and the join of $s$ and $t$ is the smallest thing bigger than both, i.e. a {\em least upper bound}. Note that the meet of $s$ and $t$ might be $s$ or $t$ itself.  Note that $s$ and $t$ may have more than one meet (or more than one join). However, any two meets of $s$ and $t$ must be in the same clique, by the universal property (and the same for joins).

\begin{exercise}
Consider the partial order from Example \ref{ex:pre not par}. 
\sexc What is the join of \fakebox{a diamond} and \fakebox{a heart}? 
\next What is the meet of \fakebox{a black card} and \fakebox{a queen}? 
\next What is the meet of \fakebox{a diamond} and \fakebox{a card}?
\endsexc
\end{exercise}

Not every two elements in a preorder need have a meet, nor need they have a join. 

\begin{exercise}\label{exc:not all meets and joins}~
\sexc If possible, find two elements in the partial order from Example \ref{ex:pre not par} that do not have a meet.
\footnote{Use the displayed preorder, not any kind of ``completion of what's there".} 
\next If possible, find two elements that do not have a join (in that preorder).
\endsexc
\end{exercise}

\begin{exercise}
As mentioned in the introduction to this section, the power set $S:=\PP(X)$ of any set $X$ naturally has the structure of a partial order. Its elements $s\in S$ correspond to subsets $s\ss X$, and we put $s\leq t$ if and only if $s\ss t$ as subsets of $X$. The meet of two elements is their intersection as subsets of $X$, $s\wedge t= s\cap t$, and the join of two elements is their union as subsets of $X$, $s\vee t=s\cup t$.
\sexc Is it possible to put a monoid structure on the set $S$ in which the multiplication formula is given by meets? If so, what would the identity element be?
\next Is it possible to put a monoid structure on the set $S$ in which the multiplication formula is given by joins? If so, what would the identity element be?
\endsexc
\end{exercise}

\begin{example}[Trees]\label{ex:tree}

A {\em tree}\index{tree}\index{order!tree}, i.e. a system of nodes and branches, all of which emanate from a single node called the {\em root}\index{tree!root}, is a partial order, but generally not a linear order. A tree $(T,\leq)$ can either be oriented toward the root (so the root is the largest element) or away from the root (so the root is the smallest element); let's only consider the latter. 

Below is a tree, pictured as a graph. The root is labeled $e$.
\begin{align}\label{dia:tree}
\xymatrix@=10pt{
&&&&&&\LMO{a}\\
&&\LMO{b}\ar[rr]&&\LMO{c}\ar[urr]\ar[rr]\ar[drr]&&\LMO{d}\\
\LMO{e}\ar[urr]\ar[drr]&&&&&&\LMO{f}\\
&&\LMO{g}\ar[rr]\ar[drr]&&\LMO{h}\\
&&&&\LMO{i}
}
\end{align}

In a tree, every pair of elements $s, t\in T$ has a meet $s\wedge t$ (their closest mutual ancestor). On the other hand if $s$ and $t$ have a join $c=s\vee t$ then either $c=s$ or $c=t$. 

\end{example}

\begin{exercise}
Consider the tree drawn in (\ref{dia:tree}).
\sexc What is the meet $i\wedge h$?
\next What is the meet $h\wedge b$?
\next What is the join $b\vee a$?
\next What is the join $b\vee g$?
\endsexc
\end{exercise}

\subsection{Opposite order}

\begin{definition}\label{def:opposite order}\index{order!opposite}

Let $\mcS:=(S,\leq)$ be a preorder. The {\em opposite preorder}, denoted $\mcS\op$ is the preorder $(S,\leq\op)$ having the same set of elements but where $s\leq\op s'$ iff $s'\leq s$.

\end{definition}

\begin{example}

Recall the preorder $\mcN:=(\NN,{\tt divides})$ from Exercise \ref{exc:divides as po}. Then $\mcN\op$ is the set of natural numbers but where $m\leq n$ iff $m$ is a multiple of $n$. So $6\leq 2$ and $6\leq 3$.

\end{example}

\begin{exercise}
Suppose that $\mcS:=(S,\leq)$ is a preorder. 
\sexc If $\mcS$ is a partial order, is $\mcS\op$ also a partial order? 
\next If $\mcS$ is a linear order, is $\mcS\op$ a linear order?
\endsexc
\end{exercise}

\begin{exercise}
Suppose that $\mcS:=(S,\leq)$ is a preorder, and that $s_1,s_2\in S$ have join $t$ in $\mcS$. The preorder $\mcS\op$ has the same elements as $\mcS$. Is $t$ the join of $s_1$ and $s_2$ in $\mcS\op$, or is it their meet, or is it not necessarily their meet nor their join?
\end{exercise}


\subsection{Morphism of orders}

An order $(S,\leq)$, be it a preorder, a partial order, or a linear order, involves a set and a binary relations. For two orders to be comparable, their sets and their relations should be appropriately comparable.\index{appropriate comparison}

\begin{definition}\label{def:morphism of orders}\index{order!morphism}

Let $\mcS:=(S,\leq)$ and $\mcS':=(S',\leq')$ be preorders (respectively partial orders or linear orders). A {\em morphism of preorders} (resp. {\em of partial orders} or {\em of linear orders}) $f$ {\em from $\mcS$ to $\mcS'$}, denoted $f\taking\mcS\to\mcS'$, is a function $f\taking S\to S'$ such that, for every pair of elements $s_1,s_2\in S$, if $s_1\leq s_2$ then $f(s_1)\leq' f(s_2)$.

\end{definition}

\begin{example}

Let $X$ and $Y$ be sets, let $f\taking X\to Y$ be a function. Then for every subset $X'\ss X$, its image $f(X')\ss Y$ is a subset (see Section \ref{sec:functions}). Thus we have a function $F\taking\PP(X)\to\PP(Y)$, given by taking images. This is a morphism of partial orders $(\PP(X),\ss)\to(\PP(Y),\ss)$. Indeed, if $a\ss b$ in $\PP(X)$ then $f(a)\ss f(b)$ in $\PP(Y)$.

\end{example}

\begin{application}

It's often said that ``a team is only as strong as its weakest member". Is this true for materials? The hypothesis that a material is only as strong as its weakest constituent can be understood as follows. 

Recall from the introduction to this section (see \ref{sec:orders}, page \pageref{sec:orders}) that we can put several different orders on the set $M$ of materials. One example there was the order given by constituency ($m\leq_C m'$ if $m$ is an ingredient or constituent of $m'$). Another order is given by strength: $m\leq_S m'$ if $m'$ is stronger than $m$ (in some fixed setting). 

Is it true that if material $m$ is a constituent of material $m'$ then the strength of $m'$ is less than or equal to the strength of $m$? This is the substance of our quote above. Mathematically the question would be posed, ``is there a morphism of preorders $(M,\leq_C)\too(M,\leq_S\op)$?"

\end{application}

\begin{exercise}
Let $X$ and $Y$ be sets, let $f\taking X\to Y$ be a function. Then for every subset $Y'\ss Y$, its preimage $f^\m1(Y')\ss X$ is a subset (see Definition \ref{def:preimage}). Thus we have a function $F\taking\PP(Y)\to\PP(X)$, given by taking preimages. Is it a morphism of partial orders?
\end{exercise}

\begin{example}\label{ex:discrete and indiscrete}

Let $S$ be a set. The smallest preorder structure that can be put on $S$ is to say $a\leq b$ iff $a=b$. This is indeed reflexive and transitive, and it is called the {\em discrete preorder on $S$}.\index{preorder!discrete}

The largest preorder structure that can be put on $S$ is to say $a\leq b$ for all $a,b\in S$. This again is reflexive and transitive, and it is called the {\em indiscrete preorder on $S$}.\index{preorder!indiscrete}

\end{example}

\begin{exercise}
Let $S$ be a set and let $(T,\leq_T)$ be a preorder. Let $\leq_D$ be the discrete preorder on $S$. Given a morphism of preorders $(S,\leq_D)\to (T,\leq_T)$ we get a function $S\to T$. 
\sexc Which functions $S\to T$ arise in this way? 
\next Given a morphism of preorders $(T,\leq_T)\to(S,\leq_D)$, we get a function $T\to S$. In terms of $\leq_T$, which functions $T\to S$ arise in this way?
\endsexc
\end{exercise}

\begin{exercise}
Let $S$ be a set and let $(T,\leq_T)$ be a preorder. Let $\leq_I$ be the indiscrete preorder on $S$. Given a morphism of preorders $(S,\leq_I)\to (T,\leq_T)$ we get a function $S\to T$. 
\sexc In terms of $\leq_T$, which functions $S\to T$ arise in this way? 
\next Given a morphism of preorders $(T,\leq_T)\to(S,\leq_I)$, we get a function $T\to S$. In terms of $\leq_T$, which functions $T\to S$ arise in this way?
\endsexc
\end{exercise}


\subsection{Other applications}


\subsubsection{Biological classification}\index{biological classification}

\href{http://en.wikipedia.org/wiki/Biological_classification}{\text Biological classification} is a method for dividing the set of organisms into distinct classes, called taxa. In fact, it turns out that such a classification, say a phylogenetic tree, can be understood as a partial order $C$ on the set of taxa. The typical {\em ranking} of these taxa, including kingdom, phylum, etc., can be understood as morphism of orders $f\taking C\to [n]$, for some $n\in\NN$. 

For example we may have a tree (see Example \ref{ex:tree}) that looks like this 
$$
\xymatrix@=10pt{
&&\LTO{Archaea}\ar[rr]&&\LTO{Pyrodicticum}\\
&&&&\LTO{Spirochetes}\\
\LTO{Life}\ar[rr]\ar[ddrr]\ar[uurr]&&\LTO{Bacteria}\ar[rr]\ar[rru]&&\LTO{Aquifex}\\
&&&&\LTO{Fungi}\\
&&\LTO{Eukaryota}\ar[rr]\ar[urr]&&\LTO{Animals}\ar[rrr]&&&\LTO{Homo Sapien}}
$$

We also have a linear order that looks like this:
$$
\xymatrix{\LTO{Life}\ar[r]&\LTO{Domain}\ar[r]&\LTO{Kingdom}\ar[r]&\LTO{Phylum}\ar[r]&\cdots\ar[r]&\LTO{Genus}\ar[r]&\LTO{Species}}
$$
and the ranking system that puts Eukaryota at Domain and Hopo Sapien at Species is an order-preserving function from the dots upstairs to the dots downstairs; that is, it is a morphism of preorders.

\begin{exercise}
Since the phylogenetic tree is a tree, it has all meets.
\sexc Determine the meet of dogs and humans. 
\next If we did not require the phylogenetic partial order to be a tree, what would it mean if two taxa (nodes in the phylogenetic partial order), say $a$ and $b$, had join $c$ with $c\neq a$ and $c\neq b$?
\endsexc
\end{exercise}

\begin{exercise}~
\sexc In your favorite scientific realm, are there any interesting classification systems that are actually orders? 
\next Choose one; what would meets and joins mean in that setting?
\endsexc
\end{exercise}


\subsubsection{Security}\index{security}

Security, say of sensitive information, is based on two things: a security clearance and ``need to know." The former, security clearance might have levels like ``confidential", ``secret", ``top secret". But maybe we can throw in ``president" and some others too, like ``plebe". 

\begin{exercise}
Does it appear that security clearance is a preorder, a partial order, or a linear order?
\end{exercise}

Need-to-know is another classification of people. For each bit of information, we do not necessarily want everyone to know about it, even everyone of the specified clearance. It is only disseminated to those that need to know. 

\begin{exercise}
Let $P$ be the set of all people and let $\ol{I}$ be the set of all pieces of information known by the government. For each subset $I\ss\ol{I}$, let $K(I)\ss P$ be the set of people that need to know every piece of information in $I$. Let $S=\{K(I)\|I\ss\ol{I}\}$ be the set of all ``need-to-know groups", with the subset relation denoted $\leq$. 

\sexc Is $(S,\leq)$ a preorder? If not, find a nearby preorder. 
\next If $I_1\ss I_2$ do we always have $K(I_1)\ss K(I_2)$ or $K(I_2)\ss K(I_1)$ or possibly neither? 
\next Should the preorder $(S,\leq)$ have all meets? 
\next Should $(S,\leq)$ have all joins?
\endsexc
\end{exercise}


\subsubsection{Spaces, e.g. geography}\index{space}

Consider closed curves that can be drawn in the plane $\RR^2$, e.g. circles, ellipses, and kidney-bean shaped curves. The interiors of these closed curves (not including the boundary itself) are called {\em basic open sets in $\RR^2$}. The good thing about such an interior $U$ is that any point $p\in U$ is not on the boundary, so no matter how close $p$ is to the boundary of $U$, there will always be a tiny basic open set surrounding $p$ and completely contained in $U$. In fact, the union of any collection of basic open sets still has this property. An {\em open set in $\RR^2$} is any subset $U\ss \RR^2$ that can be formed as the union of a collection of basic open sets.

\begin{example}

Let $U=\{(x,y)\in\RR^2\|x>0\}$. To see that $U$ is open, define the following sets: for any $a,b\in\RR$, let $S(a,b)$ be the square parallel to the axes, with side length 1, where the upper left corner is $(a,b)$. Let $S'(a,b)$ be the interior of $S(a,b)$. Then each $S'(a,b)$ is open, and $U$ is the union of $S'(a,b)$ over the collection of all $a>0$ and all $b$,$$U=\bigcup_{\parbox{.35in}{\tiny$a,b\in\RR,\\~\;\;a>0$}}S'(a,b).$$ 

\end{example}\index{geography}

The idea of open sets extends to spaces beyond $\RR^2$. For example, on the earth one could define a basic open set to be the interior of any region one can ``draw a circle around" (with a metaphorical pen), and define open sets to be unions of basic open sets. 

\begin{exercise}
Let $S$ be the set of open subsets on earth, as defined in the above paragraph. 
\sexc If $\leq$ is the subset relation, is $(S,\leq)$ a preorder or a partial order? 
\next Does it have meets, does it have joins?
\endsexc
\end{exercise}

\begin{exercise}\label{exc:cosheaf of temps}

Let $S$ be the set of open subsets of earth as defined above. To each open subset of earth suppose we know the range of recorded temperature throughout $s$ (i.e. the low and high throughout the region). Thus to each element $s\in S$ we assign an interval $T(s):=\{x\in\RR\|a\leq x\leq b\}$. If we order the set $V$ of intervals of $\RR$ by the subset relation, it gives a partial order on $V$. 
\sexc Does our assignment $T\taking S\to V$ amount to a morphism of orders? 
\next Does it preserve meets or joins? (Hint: it doesn't preserve both.)
\endsexc
\end{exercise}

\begin{exercise}~
\sexc Can you think of a space relevant to your favorite area of science for which it makes sense to assign an interval of real numbers to each open set somehow, analogously to Exercise \ref{exc:cosheaf of temps}? For example for a sample of some material under stress, perhaps the strain on each open set is somehow an interval? 
\next Repeat the questions from Exercise \ref{exc:cosheaf of temps}.
\endsexc
\end{exercise}


\section{Databases: schemas and instances}\label{sec:databases}

The first three sections of this chapter were about classical objects from mathematics. The present section is about databases, which are classical objects from computer science. These are truly ``categories and functors, without admitting it" (see Theorem \ref{thm:equivalence of categories and schemas}).


\subsection{What are databases?}\label{sec:what are dbs}\index{database!tables}

Data, in particular the set of observations made during experiment, plays
\footnote{The word data is generally considered to be the plural form of the word datum. However, individual datum elements are only useful when they are organized into structures (e.g. if one were to shuffle the cells in a spreadsheet, most would consider the data to be destroyed). It is the whole organized structure that really houses the information; the data must be in formation in order to be useful. Thus I will use the word {\em data} as a collective noun (akin to the word ``sand"); it bridges the divide between the {\em individual datum elements} (akin to the grains of sand) and the {\em data set} (akin to a sand pile). In particular, I will often use the word data as a singular noun.\index{data}}
a primary role in science of any kind. To be useful data must be organized, often in a row-and-column display called a table. Columns existing in different tables can refer to the same data.

A database is a collection of tables, each table $T$ of which consists of a set of columns and a set of rows. We roughly explain the role of tables, columns, and rows as follows. The existence of table $T$ suggests the existence of a fixed methodology for observing objects or events of a certain type. Each column $c$ in $T$ prescribes a single kind or method of observation, so that the datum inhabiting any cell in column $c$ refers to an observation of that kind. Each row $r$ in $T$ has a fixed sourcing event or object, which can be observed using the methods prescribed by the columns. The cell $(r,c)$ refers to the observation of kind $c$ made on event $r$. All of the rows in $T$ should refer to uniquely identifiable objects or events of a single type, and the name of the table $T$ should refer to that type.

\begin{example}\label{ex:graphene}

When graphene is strained (lengthened by a factor of $x\geq 1$), it becomes stressed (carries a force in the direction of the lengthening). The following is a made-up set of data.

\begin{align}\label{ex:first tables}\footnotesize
\begin{tabular}{| l || l | l | l |}\bhline
\multicolumn{4}{|c|}{Graphene sample}\\\bhline
{\bf ID}&{\bf Source}&{\bf Stress}&{\bf Strain}\\\bbhline
A118-1&C Smkt&0&0\\\hline
A118-2&C Smkt&0.02&20\\\hline
A118-3&C Smkt&0.05&40\\\hline
A118-4&AC&0.04&37\\\hline
A118-5&AC&0.1&80\\\hline
A118-6&C Plat&0.1&82\\\bhline
\end{tabular}\hsp
\begin{tabular}{| l || l | l |}\bhline
\multicolumn{3}{|c|}{Supplier}\\\bhline
{\bf ID}&{\bf Full name}&{\bf Phone}\\\bbhline
C Smkt&Carbon Supermarket&(541)781-6611\\\hline
AC&Advanced Chemical&(410) 693-0818\\\hline
C Plat&Carbon Platform&(510) 719-2857\\\hline
McD&McDonard's Burgers&(617) 244-4400\\\hline
APP&Acme Pen and Paper&(617) 823-5603\\\bhline
\end{tabular}
\end{align}

In the first table, titled ``Graphene sample", the rows refer to graphene samples, and the table is so named. Each graphene sample can be observed according to the source supplier from which it came, the strain that it was subjected to, and the stress that it carried. These observations are the columns.  In the second table, the rows refer to suppliers of various things, and the table is so named. Each supplier can be observed according to its full name and its phone number; these are the columns.

In the left-hand table it appears either that each graphene sample was used only once, or that the person recording the data did not keep track of which samples were reused. If such details become important later, the lab may want to change the layout of the first table by adding on the appropriate column. This can be accomplished using morphisms of schemas, which will be discussed in Section \ref{sec:sch as category}.

\end{example}


\subsubsection{Primary keys, foreign keys, and data columns}\index{database!primary key}\index{database!foreign key}

There is a bit more structure in the above tables (Example \ref{ex:first tables}) then may first meet the eye. Each table has a {\em primary ID column}, found on the left, as well as some {\em data columns} and some {\em foreign key columns}. The primary key column is tasked with uniquely identifying different rows. Each data column houses elementary data of a certain sort. Perhaps most interesting from a structural point of view are the foreign key columns, because they link one table to another, creating a connection pattern between tables. Each foreign key column houses data that needs to be further unpacked. It thus refers us to another {\em foreign} table, in particular the primary ID column of that table. In Example \ref{ex:first tables} the {\tt Source} column was a foreign key to the {\tt Supplier} table.

Here is another example, lifted from \cite{Sp2}.

\begin{example}\label{ex:department store 1}

Consider the bookkeeping necessary to run a department store. We keep track of a set of employees and a set of departments. For each employee $e$, we keep track of
\begin{enumerate}[\hsp E.1\;]
\item the {\bf first} name of $e$, which is a {\tt FirstNameString},
\item the {\bf last} name of $e$, which is a {\tt LastNameString},
\item the {\bf manager} of $e$, which is an {\tt Employee}, and
\item the department that $e$ {\bf works in}, which is a {\tt Department}.
\end{enumerate}
For each department $d$, we keep track of
\begin{enumerate}[\hsp D.1\;]
\item the {\bf name} of $d$, which is a {\tt DepartmentNameString}, and
\item the {\bf secretary} of $d$, which is an {\tt Employee}.
\end{enumerate}

Above we can suppose that E.1, E.2, and D.1 are data columns (referring to names of various sorts), and E.3, E.4, and D.2 are foreign key columns (referring to managers, secretaries, etc.). 

Display (\ref{dia:instance on maincat}) shows how such a database might look at a particular moment in time. 
\begin{align}\label{dia:instance on maincat}\index{a schema!department store}
&\footnotesize
\begin{tabular}{| l || l | l | l | l |}\bhline
\multicolumn{5}{| c |}{{\tt Employee}}\\\bhline 
{\bf ID}&{\bf first}&{\bf last}&{\bf manager}&{\bf worksIn}\\\bbhline 101&David&Hilbert&103&q10\\\hline 102&Bertrand&Russell&102&x02\\\hline 103&Emmy&Noether&103&q10\\\bhline
\end{tabular}&\hsp\footnotesize
\begin{tabular}{| l || l | l |}\bhline
\multicolumn{3}{| c |}{{\tt Department}}\\
\bhline {\bf ID}&{\bf name}&{\bf secretary}\\\bbhline q10&Sales&101\\\hline x02&Production&102\\\bhline
\end{tabular}
\end{align}\vspace{.1in}

\end{example}


\subsubsection{Business rules}\index{database!business rules}

Looking at the tables from Example \ref{ex:department store 1}, one may notice a few patterns. First, every employee works in the same department as his or manager. Second, every department's secretary works in that department. Perhaps the business counts on these rules for the way it structures itself. In that case the database should enforce those rules, i.e. it should check that whenever the data is updated, it conforms to the rules: 

\begin{enumerate}[\hsp Rule 1\;]
\item For every employee $e$, the {\bf manager} of $e$ {\bf works in} the same department that $e$ {\bf works in}.
\item For every department $d$, the {\bf secretary} of $d$ {\bf works in} department $d$.
\end{enumerate}
\vspace{-.3in}\begin{align}\label{dia:rules}\end{align}\vspace{-.3in}

Together, the statements E.1, E.2, E.3, E.4, D.1, and D.2 from Example \ref{ex:department store 1} and Rule 1 and Rule 2, constitute what we will call the {\em schema} of the database.\index{schema!of a database}\index{database!schema} We will formalize this idea in Section \ref{sec:schemas}.


\subsubsection{Data columns as foreign keys}

To make everything consistent, we could even say that data columns are specific kinds of foreign keys. That is, each data column constitutes a foreign key to some non-branching {\em leaf table}\index{schema!leaf table}, which has no additional data. 

\begin{example}\label{ex:department store 2}

Consider again Example \ref{ex:department store 1}. Note that first names and last names had a particular type, which we all but ignored above. We could cease to ignore them by adding three tables, as follows.

\begin{align}\label{dia:instance on maincat 2}\footnotesize
\begin{tabular}{| l ||}\bhline
\multicolumn{1}{| c |}{{\tt FirstNameString}}\\\bhline
{\bf ID}\\\bbhline Alan\\\hline Alice\\\hline Bertrand\\\hline Carl\\\hline David\\\hline Emmy\\\hline\hspace{.25in}\vdots\\\bhline
\end{tabular}\hspace{.6in}\footnotesize
\begin{tabular}{| l ||}\bhline
\multicolumn{1}{| c |}{{\tt LastNameString}}\\\bhline
{\bf ID}\\\bbhline Arden\\\hline Hilbert\\\hline Jones\\\hline Noether\\\hline Russell\\\hline\hspace{.25in}\vdots\\\bhline
\end{tabular}\hspace{.6in}\footnotesize
\begin{tabular}{| l ||}\bhline
\multicolumn{1}{| c |}{{\tt DepartmentNameString}}\\\bhline
{\bf ID}\\\bbhline Marketing\\\hline Production\\\hline Sales\\\hline\hspace{.25in}\vdots\\\bhline
\end{tabular}
\end{align}

In combination, Displays (\ref{dia:instance on maincat}) and (\ref{dia:instance on maincat 2}) form a collection of tables with the property that every column is either a primary key or a foreign key. The notion of data column is now subsumed under the notion of foreign key column. Everything is either a primary key (one per table, labeled ID) or a foreign key column (everything else).

\end{example}


\subsection{Schemas}\label{sec:schemas}

The above section may all seem intuitive or reasonable in some ways, but also a bit difficult to fully grasp, perhaps. It would be nice to summarize what is happening in a picture. Such a picture, which will basically be a graph, should capture the {\em conceptual layout} to which the data conforms, without yet being concerned with the individual data that may populate the tables in this instant. We proceed at first by example, giving the precise definition in Definition \ref{def:schema}.

\begin{example}\label{ex:department store 3}

In Examples \ref{ex:department store 1} and \ref{ex:department store 2}, the conceptual layout for a department store was given, and some example tables were shown. We were instructed to keep track of employees, departments, and six types of data (E.1, E.2, E.3, E.4, D.1, and D.2), and we were instructed to follow two rules (Rule 1, Rule 2). All of this is summarized in the following picture:
\begin{align}\label{dia:maincat schema}
\MainCatLarge{\mcC:=\tn{ Schema for tables (\ref{dia:instance on maincat}) and (\ref{dia:instance on maincat 2}) conforming to (\ref{dia:rules})}}
\end{align}
The five tables from (\ref{dia:instance on maincat}) and (\ref{dia:instance on maincat 2}) are seen as five vertices; this is also the number of primary ID columns. The six foreign key columns from (\ref{dia:instance on maincat}) and (\ref{dia:instance on maincat 2}) are seen as six arrows; each points from a table to a foreign table. The two rules from (\ref{dia:rules}) are seen as statements at the top of Display (\ref{dia:maincat schema}).We will explain path equivalences in Definition \ref{def:congruence}.

\end{example}

\begin{exercise}\label{exc:schema for first tables}
Come up with a schema (consisting of dots and arrows) describing the conceptual layout of information presented in Example \ref{ex:graphene}. 
\end{exercise}

In order to define schemas, we must first define the notion of schematic equivalence relation, which is to hold on the set of paths of a graph $G$ (see Section \ref{sec:paths in graph}). Such an equivalence relation (in addition to being reflexive, symmetric, and transitive) has two sorts of additional properties: equivalent paths must have the same source and target, and the composition of equivalent paths with other equivalent paths must yield equivalent paths. Formally we have Definition \ref{def:congruence}.

\begin{definition}\label{def:congruence}\index{PED}\index{congruence}\index{schema!Path equivalence declaration (PED)}\index{schema!congruence}\

Let $G=(V,A,src,tgt)$ be a graph, and let $\Path_G$ denote the set of paths in $G$ (see Definition \ref{def:paths in graph}). A {\em path equivalence declaration} (or {\em PED}) is an expression of the form $p\simeq q$ where $p,q\in\Path_G$ have the same source and target, $src(p)=src(q)$ and $tgt(p)=tgt(q)$. 

A {\em congruence} on $G$ is a relation $\simeq$ on $\Path_G$ that has the following properties: 
\begin{enumerate}
\item The relation $\simeq$ is an equivalence relation.
\item If $p\simeq q$ then $src(p)=src(q)$.
\item If $p\simeq q$ then $tgt(p)=tgt(q)$.
\item Suppose $p,q\taking b\to c$ are paths, and $m\taking a\to b$ is an arrow. If $p\simeq q$ then $mp\simeq mq$. 
\item Suppose $p,q\taking a\to b$ are paths, and $n\taking b\to c$ is an arrow. If $p\simeq q$ then $pn\simeq qn$.
\end{enumerate}

\end{definition}

Any set of path equivalence declarations (PEDs) generates a congruence. We tend to elide the difference between a congruence and the set of PEDs that generates it.

\begin{exercise}\label{exc:generating congruence}
Consider the graph shown in (\ref{dia:maincat schema}), and the two declarations shown at the top. They generate a congruence. 
\sexc Is it true that the following PED is an element of this congruence? $$\tn{{\tt Employee} manager manager worksIn $\stackrel{?}{\simeq}$ {\tt Employee} worksIn}$$ \next What about this one? $$\tn{{\tt Employee} worksIn secretary $\stackrel{?}{\simeq}$ {\tt Employee}}$$ 
\next What about this one? $$\tn{{\tt Department} secretary manager worksIn name $\stackrel{?}{\simeq}$ {\tt Department} name}$$
\endsexc
\end{exercise}

\begin{lemma}\label{lemma:composing PEDs}

Suppose that $G$ is a graph and $\simeq$ is a congruence on $G$. Suppose $p\simeq q\taking a\to b$ and $r\simeq s\taking b\to c$. Then $pr\simeq qs$.

\end{lemma}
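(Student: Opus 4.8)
The statement asks us to combine two path equivalences ``horizontally'': given $p\simeq q\taking a\to b$ and $r\simeq s\taking b\to c$, conclude $pr\simeq qs$. The natural strategy is to bridge from $pr$ to $qs$ through the intermediate path $qr$, using the two axioms of a CPER that govern precomposition and postcomposition (Definition \ref{def:CPER}, parts 3 and 4), together with the transitivity of $\simeq$ as an equivalence relation. The only subtlety is that the CPER axioms as stated allow composition with a single \emph{arrow} $m$ or $n$, whereas here $p,q,r,s$ are general \emph{paths}; so I expect the main work to be a short induction extending those axioms from arrows to paths.

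First I would record the auxiliary claim that the CPER axioms extend to paths: if $p\simeq q\taking b\to c$ and $m\taking a\to b$ is any \emph{path}, then $mp\simeq mq$, and symmetrically for postcomposition. This follows by induction on the length of $m$. The base case $m=\id_a$ is immediate since $\id_a p = p$ and $\id_a q = q$. For the inductive step, write $m = m' a_k$ as a path followed by an arrow (or decompose it as an initial arrow followed by the rest); applying axiom~3 for the single arrow and the inductive hypothesis for the shorter path gives the result. I would state this as a one-line remark rather than belabor it, since it is exactly the kind of routine bookkeeping the axioms are designed to support.

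With this in hand the main argument is two steps joined by transitivity. Since $r\simeq s\taking b\to c$ and $q\taking a\to b$ is a path, the extended precomposition property yields $qr\simeq qs$. Since $p\simeq q\taking a\to b$ and $r\taking b\to c$ is a path, the extended postcomposition property yields $pr\simeq qr$. Both $pr$ and $qs$ are paths $a\to c$ (so the source/target conditions of parts 1 and 2 are automatically respected). Transitivity of the equivalence relation $\simeq$ then gives $pr\simeq qr\simeq qs$, hence $pr\simeq qs$, as desired.

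The expected obstacle is minor and purely formal: the axioms are phrased for composition with a single arrow, so one must be careful to justify the ``path'' versions before invoking them. Once that extension is granted, the lemma is just transitivity through $qr$. I would present the extension-to-paths step explicitly (perhaps even folding it into the proof as an observation), since without it the two applications of axioms~3 and~4 are not literally licensed by Definition \ref{def:CPER}.
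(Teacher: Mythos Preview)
Your proposal is correct and follows essentially the same route as the paper: extend axioms (3) and (4) from single arrows to paths by induction, then use transitivity through an intermediate path. The only cosmetic difference is that you pass through $qr$ while the paper passes through $ps$; both choices work identically.
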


\begin{proof}

The picture to have in mind is this: $$\xymatrix@=13pt{&\bullet\ar[r]&\cdots\ar[r]&\bullet\ar[dr]&&\bullet\ar[r]&\cdots\ar[r]&\bullet\ar[dr]\\\LMO{a}\ar@{}[rrrr]|{\simeq}\ar[ur]\ar[dr]\ar@{-->}@/^1.5pc/[rrrr]_p\ar@{-->}@/_1.5pc/[rrrr]^q&&&&\LMO{b}\ar@{}[rrrr]|{\simeq}\ar[ur]\ar[dr]\ar@{-->}@/^1.5pc/[rrrr]_r\ar@{-->}@/_1.5pc/[rrrr]^s&&&&\LMO{c}\\&\bullet\ar[r]&\cdots\ar[r]&\bullet\ar[ur]&&\bullet\ar[r]&\cdots\ar[r]&\bullet\ar[ur]}$$ Applying condition (3) from Definition \ref{def:congruence} to each arrow in path $p$, it follows by induction that $pr\simeq ps$. Applying condition (4) to each arrow in path $s$, it follows similarly that $ps\simeq qs$. Because $\simeq$ is an equivalence relation, it follows that $pr\simeq qs$. 

\end{proof}

\begin{definition}\label{def:schema}\index{schema}\index{database!schema}

A {\em database schema} (or simply {\em schema}) $\mcC$ consists of a pair $\mcC:=(G,\simeq)$ where $G$ is a graph and $\simeq$ is a congruence on $G$. 

\end{definition}

\begin{example}

The picture drawn in (\ref{dia:maincat schema}) has the makings of a schema. Pictured is a graph with two PEDs; these generate a congruence, as discussed in Exercise \ref{exc:generating congruence}.  

\end{example}

A schema can be converted into a system of tables each with a primary key and some number of foreign keys referring to other tables, as discussed in Section \ref{sec:what are dbs}. Definition \ref{def:schema} gives a precise conceptual understanding of what a schema is, and the following rules describe how to convert such a thing into a table layout.

\begin{rules}\label{rules:schema to tables}\index{olog!rules}

Converting a schema $\mcC=(G,\simeq)$ into a table layout should be done as follows:
\begin{enumerate}[(i)]
\item There should be a table for every vertex in $G$ and if the vertex is named, the table should have that name;
\item Each table should have a left-most column called ID, set apart from the other columns by a double vertical line; and
\item To each arrow $a$ in $G$ having source vertex $s:=src(a)$ and target vertex $t:=tgt(a)$, there should be a foreign key column $a$ in table $s$, referring to table $t$; if the arrow $a$ is named, column $a$ should have that name.
\end{enumerate}

\end{rules}

\begin{example}[Discrete dynamical system]\label{ex:dds}\index{dynamical system!discrete}

Consider the schema 
\begin{align}\label{dia:loop}
\Loop:=\LoopSchema
\end{align}
in which the congruence is trivial (i.e. generated by the empty set of PEDs.) This schema is quite interesting. It encodes a set $s$ and a function $f\taking s\to s$. Such a thing is called a {\em discrete dynamical system}. One imagines $s$ as the set of states and, for any state $x\in s$, a notion of ``next state" $f(x)\in s$. For example
\begin{align}\label{dia:dds data}
\begin{tabular}{| l || c |}\bhline
\multicolumn{2}{| c |}{s}\\\bhline 
{\bf ID}&{\bf f}\\\bbhline
A & B\\\hline
B & C\\\hline
C & C\\\hline
D & B\\\hline
E & C\\\hline
F & G\\\hline
G & H\\\hline
H & G\\\hline
\end{tabular}
\hspace{.5in}\text{...pictured...}\hspace{.5in}
\parbox{1.4in}{\fbox{\xymatrix@=7pt{
\LMO{A}\ar[rr]&&\LMO{B}\ar[rr]&&\LMO{C}\ar@(u,r)[]^~\\
\LMO{D\ar[urr]}&&\LMO{E}\ar[urr]\\
\LMO{F}\ar[rr]&&\LMO{G}\ar@/^.5pc/[rr]&&\LMO{H}\ar@/^.5pc/[ll]^~
}}}
\end{align}

\end{example}

\begin{application}

Imagine a \href{http://en.wikipedia.org/wiki/Chronon}{\text quantum-time} universe in which there are discrete time steps. We model it as a discrete dynamical system, i.e. a table of the form (\ref{dia:dds data}). For every possible state of the universe we include a row in the table. The state in the next instant is recorded in the second column.

\end{application}

\begin{example}[Finite hierarchy]\label{ex:finite hierarchy}\index{hierarchy}

The schema $\Loop$ can also be used to encode hierarchies, such as the manager relation from Examples \ref{ex:department store 1} and \ref{ex:department store 3}, 
$$\fbox{\xymatrix{\LTO{E}\ar@(l,u)[]^{\tn{mgr}}}}$$
One problem with this, however, is if a schema has even one loop, then it can have infinitely many paths (corresponding, e.g. to an employees manager's manager's manager's ... manager). 

Sometimes we know that in a given company that process eventually ends, a famous example being that at Ben and Jerry's ice cream, there were only seven levels. In that case we know that an employee's 8th level manager is equal to his or her 7th level manager. This can be encoded by the PED $${\tt E} \tn{ mgr}\tn{ mgr}\tn{ mgr}\tn{ mgr}\tn{ mgr}\tn{ mgr}\tn{ mgr}\tn{ mgr}\simeq{\tt E}\tn{ mgr}\tn{ mgr}\tn{ mgr}\tn{ mgr}\tn{ mgr}\tn{ mgr}\tn{ mgr}$$ or more concisely, $\tn{mgr}^8=\tn{mgr}^7$.

\end{example}

\begin{exercise}
Is there any nontrivial PED on $\Loop$ that holds for the data in Example \ref{ex:dds}? If so, what is it and how many equivalence classes of paths in $\Loop$ are there after you impose that relation?
\end{exercise}

\begin{exercise}
Let $P$ be a chess-playing program. Given any position (including the history of the game and choice of whose turn it is), $P$ will make a move. 
\sexc Is this an example of a discrete dynamical system? 
\next How do the rules for ending the game in a win or draw play out in this model? (Look up online how chess games end if you don't know.)
\endsexc
\end{exercise}


\subsubsection{Ologging schemas}\label{sec:olog as db schema}\index{olog!as database schema}

It should be clear that a database schema is nothing but an olog in disguise. The difference is basically the readability requirements for ologs. There is an important new addition in this section, namely that we can fill out an olog with data. Conversely, we have seen that databases are not any harder to understand than ologs are.

\begin{example}\label{ex:orbits}

Consider the olog 
\begin{align}\label{dia:moon1}\obox{}{.5in}{a moon}\Too{\tn{orbits}}\obox{}{.5in}{a planet}\end{align}
We can document some instances of this relationship using the following tables: 
\begin{align}\label{dia:moon2}
\begin{tabular}{| c || c |}\bhline
\multicolumn{2}{| c |}{\bf orbits}\\\bhline
{\bf a moon}&{\bf a planet}\\\bbhline
The Moon&Earth\\\hline 
Phobos&Mars\\\hline 
Deimos&Mars\\\hline 
Ganymede & Jupiter\\\hline
Titan & Saturn\\\bhline
\end{tabular}
\end{align}  

Clearly, this table of instances can be updated as more moons are discovered by the author (be it by telescope, conversation, or research).

\end{example}

\begin{exercise}
In fact, Example \ref{ex:orbits} did not follow Rules \ref{rules:schema to tables}. Strictly following those rules, copy over the data from (\ref{dia:moon2}) into tables that are in accordance with schema (\ref{dia:moon1}).
\end{exercise}

\begin{exercise}~
\sexc Write down a schema, in terms of the boxes \fakebox{a thing I own} and \fakebox{a place} and one additional arrow, that might help one remember where they decided to put ``random" things. 
\next What is a good label for the arrow? 
\next Fill in some rows of the corresponding set of tables for your own case.
\endsexc
\end{exercise}

\begin{exercise}\label{exc:father and child}
Consider the olog 
$$
\xymatrix{\obox{C}{.4in}{a child}\LA{rr}{has}&&\obox{F}{.5in}{a father}\ar@/^1pc/[ll]^{\tn{has as first}}\ar@/_1pc/[ll]_{\tn{has as tallest}}}
$$
\sexc What path equivalence declarations would be appropriate for this olog? You can use $f\taking F\to C$, $t\taking F\to C$, and $h\taking C\to F$ if you prefer. 
\next How many PEDs are in the congruence?
\endsexc
\end{exercise}


\subsection{Instances}

Given a database schema $(G,\simeq)$, an instance of it is just a bunch of tables whose data conform to the specified layout. These can be seen throughout the previous section, most explicitly in the relationship between schema (\ref{dia:maincat schema}) and tables (\ref{dia:instance on maincat}) and (\ref{dia:instance on maincat 2}), and between schema (\ref{dia:loop}) and table (\ref{dia:dds data}). Below is the mathematical definition.

\begin{definition}\label{def:instance}\index{database!instance}\index{instance}

Let $\mcC=(G,\simeq)$ where $G=(V,A,src,tgt)$. An {\em instance on $\mcC$}, denoted $(\PK,\FK)\taking\mcC\to\Set$, is defined as follows: One announces some constituents (A. primary ID part, B. foreign key part) and asserts that they conform to a law (1. preservation of congruence). Specifically, one announces
\begin{enumerate}[\hsp A.]
\item a function $\PK\taking V\to \Set$; i.e. to each vertex $v\in V$ one provides a set $\PK(v)$;\footnote{The elements of $\PK(v)$ will be listed as the rows of table $v$, or more precisely as the leftmost cells of these rows.} and
\item for every arrow $a\in A$ with $v=src(a)$ and $w=tgt(a)$, a function $\FK(a)\taking\PK(v)\to\PK(w)$.
\footnote{The arrow $a$ will correspond to a column, and to each row $r\in\PK(v)$ the $(r,a)$ cell will contain the datum $\FK(a)(r)$.}
\end{enumerate}
One asserts that the following law holds for any vertices $v, w$ and paths $p=va_1a_2\ldots a_m$ and $q=va_1'a_2'\ldots a_n'$ from $v$ to $w$:
\begin{enumerate}[\hsp 1.]
\item If $p\simeq q$ then for all $x\in\PK(v)$, we have $$\FK(a_m)\circ\cdots\circ\FK(a_2)\circ\FK(a_1)(x)=\FK(a_n')\circ\cdots\circ\FK(a_2')\circ\FK(a_1')(x)$$ in $\PK(w).$
\end{enumerate}

\end{definition}

\begin{exercise}\label{ex:self email}
Consider the olog pictured below: 
$$\mcC:=
\fbox{\parbox{3.1in}{
\xymatrix{
\obox{}{.7in}{a self-email}\LA{r}{is}&\obox{}{.5in}{an email}\ar@/^1pc/[r]^{\tn{is sent by}}\ar@/_1pc/[r]_{\tn{is sent to}}&\obox{}{.5in}{a person}}~\\\\
\parbox{3.1in}{Given $x$, a self-email, consider the following. \\
We know that $x$ is a self-email, which is an email, which is sent by a person that we'll call $P(x)$.\\
We also know that $x$ is a self-email, which is an email, which is sent to a person that we'll call $Q(x)$.\\
Fact: whenever $x$ is a self-email, we will have $P(x)=Q(x)$}
}}
$$
\begin{align}\label{dia:self email}
\begin{tabular}{| l || l |}\bhline
\multicolumn{2}{| c |}{{\tt a self-email}}\\\bhline
{\bf ID}&{\bf is}\\\bbhline 
SEm1207&Em1207\\\hline 
SEm1210&Em1210\\\hline 
SEm1211&Em1211\\\bhline
\end{tabular}&\hsp
\begin{tabular}{| l || l | l |}\bhline
\multicolumn{3}{| c |}{{\tt an email}}\\\bhline 
{\bf ID}&{\bf is sent by}&{\bf is sent to}\\\bbhline 
Em1206&Bob&Sue\\\hline 
Em1207 &Carl&Carl\\\hline 
Em1208&Sue & Martha\\\hline 
Em1209&Chris&Bob\\\hline 
Em1210&Chris&Chris\\\hline 
Em1211&Julia&Julia\\\hline 
Em1212&Martha&Chris\\\bhline
\end{tabular}\hsp
\begin{tabular}{| l ||}\bhline
\multicolumn{1}{| c |}{{\tt a person}}\\\bhline 
{\bf ID}\\\bbhline 
Bob\\\hline 
Carl\\\hline 
Chris\\\hline 
Julia\\\hline 
Martha\\\hline 
Sue\\\bhline
\end{tabular}
\end{align}\normalsize 

\sexc What is the set $\PK(\fakebox{an email})$? 
\next What is the set $\PK(\fakebox{a person})$? 
\next What is the function $\FK(\tn{is sent by})\taking\PK(\fakebox{an email})\to\PK(\fakebox{a person})$?
\next Interpret the sentences at the bottom of $\mcC$ as the Englishification of a simple path equivalence declaration. Is it satisfied by the instance (\ref{dia:self email}); that is, does law 1. from Definition \ref{def:instance} hold?\index{Englishifiication}
\endsexc
\end{exercise}

\begin{example}[Monoid action table]\label{ex:monoid action table}

In Example \ref{ex:monoid as olog}, we saw how a monoid $\mcM$ could be captured as an olog with only one object. As a database schema, this means there is only one table. Every generator of $\mcM$ would be a column of the table. The notion of database instance for such a schema is precisely the notion of action table from Section \ref{sec:monoid action table}. Note that a monoid can act on itself, in which case this action table is the monoid's multiplication table as in Example \ref{ex:multiplication table}, but it can also act on any other set as in Example \ref{ex:action table}. If $\mcM$ acts on a set $S$, then the set of rows in the action table will be $S$.

\end{example}

\begin{exercise}
Draw (as a graph) the schema for which Table \ref{dia:action table for FSM} is an instance.
\end{exercise}

\begin{exercise}
Suppose that $\mcM$ is a monoid and some instance of it is written out in table form. It's possible that $\mcM$ is a group. What evidence in an instance table for $\mcM$ might suggest that $\mcM$ is a group? 
\end{exercise}


\subsubsection{Paths through a database}

Let $\mcC:=(G,\simeq)$ be a schema and let $(\PK,\FK)\taking\mcC\to\Set$ be an instance on $\mcC$. Then for every arrow $a\taking v\to w$ in $G$ we get a function $\FK(a)\taking\PK(v)\to\PK(w)$. Functions can be composed, so in fact for every path through $G$ we get a function. Namely, if $p=v_0a_1,a_2,\ldots,a_n$ is a path from $v_0$ to $v_n$ then the instance provides a function $$\FK(p):=\FK(a_n)\circ\cdots\FK(a_2)\circ\FK(a_1)\taking\PK(v_0)\to\PK(v_n),$$ which first made an appearance as part of Law 1 in Definition \ref{def:instance}.

\begin{example}\label{ex:paths as functions}

Consider the department store schema from Example \ref{ex:department store 3}, and in (\ref{dia:maincat schema}) the path $[\tn{worksIn, secretary, last}]$ which points from {\tt Employee} to {\tt LastNameString}. The instance will let us interpret this path as a function from the set of employees to the set of last names; this could be a useful function to have around. The instance from (\ref{dia:instance on maincat}) would yield the following function 

\begin{align*}
\begin{tabular}{| l || l |}\bhline
\multicolumn{2}{| c |}{{\tt Employee}}\\\bhline 
{\bf ID}&{\bf Secr. name}\\\bbhline 
101&Hilbert\\\hline 
102&Russell\\\hline 
103&Hilbert\\\bhline
\end{tabular}
\end{align*}

\end{example}

\begin{exercise}
Consider the path $p:=[f,f]$ on the $\Loop$ schema from (\ref{dia:loop}). Using the instance from (\ref{dia:dds data}), where $\PK(s)=\{A,B,C,D,E,F,G,H\}$, interpret $p$ as a function $\PK(s)\to\PK(s)$, and write this as a 2-column table, as above in Example \ref{ex:paths as functions}.
\end{exercise}

\begin{exercise}~
\sexc Given an instance $(\PK,\FK)$ on a schema $\mcC$, and given a trivial path $p$ (i.e. $p$ has length 0; it starts at some vertex but doesn't go anywhere), what function does $p$ yield?
\next What are the domain and codomain of $p$?
\endsexc
\end{exercise}


\chapter{Basic category theory}\label{chap:categories}

``{\it ...We know only a very few---and, therefore, very precious---schemes whose unifying powers cross many realms.}" -- Marvin Minsky.\footnote{\cite[Problems of disunity, p. 126]{Min}.}\\\\

Categories, or an equivalent notion, have already been secretly introduced as ologs. One can think of a category as a graph (as in Section \ref{sec:graphs}) in which certain paths have been declared equivalent. (Ologs demand an extra requirement that everything in sight be readable in natural language, and this cannot be part of the mathematical definition of category.) The formal definition of category is given in Definition \ref{def:category}, but it will not be obviously the same as the ``graph+path equivalences" notion; the latter was given in Definition \ref{def:schema} as the definition of a {\em schema}. Once we talk about how different categories can be compared using functors (Definition \ref{def:functor}), and how different schemas can be compared using schema mappings (Definition \ref{def:schema morphism}), we will prove that the two notions are equivalent (Theorem \ref{thm:equivalence of categories and schemas}).


\section{Categories and Functors}

In this section we give the standard definition of categories and functors. These, together with natural transformations (Section \ref{sec:nat trans}), form the backbone of category theory. We also give some examples.


\subsection{Categories}\label{sec:categories}

In everyday speech we think of a category as a kind of thing. A category consists of a collection of things, all of which are related in some way. In mathematics, a category can also be construed as a collection of things and a type of relationship between pairs of such things. For this kind of thing-relationship duo to count as a category, we need to check two rules, which have the following flavor: every thing must be related to itself by simply being itself, and if one thing is related to another and the second is related to a third, then the first is related to the third. In a category, the ``things" are called {\em objects} and the ``relationships" are called {\em morphisms}.

In various places throughout this book so far we have discussed things of various sorts, e.g. sets, monoids, graphs. In each case we discussed how such things should be appropriately compared\index{appropriate comparison}. In each case the ``things" will stand as the objects and the ``appropriate comparisons" will stand as the morphisms in the category. Here is the definition.

\begin{definition}\label{def:category}\index{category}\index{hom-set}\index{morphism}

A {\em category} $\mcC$ is defined as follows: One announces some constituents (A. objects, B. morphisms, C. identities, D. compositions) and asserts that they conform to some laws (1. identity law, 2. associativity law). Specifically, one announces:
\begin{enumerate}[\hsp A.]
\item a collection $\Ob(\mcC)$, elements of which are called {\em objects};\index{a symbol!$\Ob$}
\item for every pair $x,y\in\Ob(\mcC)$, a set $\Hom_\mcC(x,y)\in\Set$.\index{a symbol!$\Hom_\mcC$} It is called the {\em hom-set from $x$ to $y$}; its elements are called {\em morphisms from $x$ to $y$};
\footnote{The reason for the notation $\Hom$ and the word {\em hom-set} is that morphisms are often called {\em homomorphisms}, e.g. in group theory.}
\item for every object $x\in\Ob(\mcC)$, a specified morphism denoted $\id_x\in\Hom_\mcC(x,x)$ called {\em the identity morphism on $x$}; and
\item for every three objects $x,y,z\in\Ob(\mcC)$, a function $$\circ\taking\Hom_\mcC(y,z)\times\Hom_\mcC(x,y)\to\Hom_\mcC(x,z),$$ called {\em the composition formula}.\index{a symbol!$\circ$}\index{composition!of morphisms}
\end{enumerate}
Given objects $x,y\in\Ob(\mcC)$, we can denote a morphism $f\in\Hom_\mcC(x,y)$ by $f\taking x\to y$; we say that $x$ is the {\em domain} of $f$ and that $y$ is the {\em codomain} of $f$. Given also $g\taking y\to z$, the composition formula is written using infix notation, so $g\circ f\taking x\to z$ means $\circ(g,f)\in\Hom_\mcC(x,z)$.

One asserts that the following law holds:
\begin{enumerate}[\hsp 1.]
\item for every $x,y\in\Ob(\mcC)$ and every morphism $f\taking x\to y$, we have
$$f\circ\id_x=f\hsp\tn{and}\hsp\id_y\circ f=f;$$ \hsp{and};
\item if $w,x,y,z\in\Ob(\mcC)$ are any objects and $f\taking w\to x,\;\;g\taking x\to y,\;\;$ and $h\taking y\to z$ are any morphisms, then the two ways to compose are the same:$$(h\circ g)\circ f = h\circ(g\circ f) \in\Hom_\mcC(w,z).$$
\end{enumerate}
\end{definition}

\begin{remark}\label{rmk:small}\index{category!small}

There is perhaps much that is unfamiliar about Definition \ref{def:category} but there is also one thing that is strange about it. The objects $\Ob(\mcC)$ of $\mcC$ are said to be a ``collection" rather than a set. This is because we sometimes want to talk about the category of all sets, in which every possible set is an objects, and if we try to say that the collection of sets is itself, we run into \href{http://en.wikipedia.org/wiki/Russell's_paradox}{\text Russell's paradox}. Modeling this was a sticking point in the foundations of category theory, but it was eventually fixed by Grothendieck's notion of expanding universes.\index{Grothendieck!expanding universes} Roughly the idea is to choose some huge set $\kappa$ (with certain properties making it a {\em universe}), to work entirely inside of it when possible, and to call anything in that world {\em $\kappa$-small} (or just {\em small} if $\kappa$ is clear from context). When we need to look at $\kappa$ itself, we  choose an even bigger universe $\kappa'$ and work entirely within it.

A category in which the collection $\Ob(\mcC)$ is a set (or in the above language, a small set) is called a {\em small category}. From here on out we will not take care of the difference, referring to $\Ob(\mcC)$ as a set. We do not think this will do any harm to scientists using category theory, at least not in the beginning phases of their learning.\index{a warning!``set'' of objects in a category}

\end{remark}

\begin{example}[The category $\Set$ of sets]\index{a category!$\Set$}

Chapter \ref{chap:sets} was all about the category of sets, denoted $\Set$. The objects are the sets and the morphisms are the functions; we even used the current notation, referring to the set of functions $X\to Y$ as $\Hom_\Set(X,Y)$. The composition formula $\circ$ is given by function composition, and for every set $X$, the identity function $\id_X\taking X\to X$ serves as the identity morphism for $X\in\Ob(\Set)$. The two laws clearly hold, so $\Set$ is indeed a category. 

\end{example}

\begin{example}[The category $\Fin$ of finite sets]\index{a category!$\Fin$}\label{ex:Fin}

Inside the category $\Set$ is a {\em subcategory} $\Fin\ss\Set$, called the {\em category of finite sets}. Whereas an object $S\in\Ob(\Set)$ is a set that can have arbitrary cardinality, we define $\Fin$ such that its objects include all (and only) the sets $S$ with finitely many elements, i.e. $|S|=n$ for some natural number $n\in\NN$. Every object of $\Fin$ is an object of $\Set$, but not vice versa.

Although $\Fin$ and $\Set$ have a different collection of objects, their morphisms are in some sense ``the same". For any two finite sets $S,S'\in\Ob(\Fin)$, we can also think of $S,S'\in\Ob(\Set)$, and we have
$$\Hom_\Fin(S,S')=\Hom_\Set(S,S').$$
That is a morphism in $\Fin$ between finite sets $S$ and $S'$ is simply a function $f\taking S\to S'$.

\end{example}

\begin{example}[The category $\Mon$ of monoids]\label{ex:mon is cat}\index{a category!$\Mon$}

We defined monoids in Definition \ref{def:monoid} and monoid homomorphisms in Definition \ref{def:monoid hom}. Every monoid $\mcM:=(M,e,\star_M)$ has an identity homomorphism $\id_\mcM\taking\mcM\to\mcM$, given by the identity function $\id_M\taking M\to M$. To compose two monoid homomorphisms $f\taking\mcM\to\mcM'$ and $g\taking\mcM'\to\mcM''$, we compose their underlying functions $f\taking M\to M'$ and $g\taking M'\to M''$, and check that the result $g\circ f$ is a monoid homomorphism. Indeed,
$$g\circ f(e)=g(e')=e''$$
$$g\circ f(m_1\star_Mm_2)=g(f(m_1)\star_{M'}f(m_2))=g\circ f(m_1)\star_{M''}g\circ f(m_2).$$
It is clear that the two laws hold, so $\Mon$ is a category.

\end{example}

\begin{exercise}[The category $\Grp$ of groups]\index{a category!$\Grp$}
Suppose we set out to define a category $\Grp$, having groups as objects and group homomorphisms as morphisms, see Definition \ref{def:group homomorphism}. Show (to the level of detail of Example \ref{ex:mon is cat}) that the rest of the conditions for $\Grp$ to be a category are satisfied.
\end{exercise}

\begin{exercise}[The category $\PrO$ of preorders]\index{a category!$\PrO$}
Suppose we set out to define a category $\PrO$, having preorders as objects and preorder homomorphisms as morphisms (see Definition \ref{def:morphism of orders}). Show (to the level of detail of Example \ref{ex:mon is cat} that the rest of the conditions for $\PrO$ to be a category are satisfied.
\end{exercise}

\begin{example}[Non-category 1]

So what's not a category? Two things can go wrong: either one fails to specify all the relevant constituents (A, B, C, D from Definition \ref{def:category}, or the constituents do not obey the laws (1, 2).\index{category!non-example}

Let $G$ be the following graph,
$$G=\fbox{\xymatrix{\LMO{a}\ar[r]^f&\LMO{b}\ar[r]^g&\LMO{c}}}.$$
Suppose we try to define a category $\mcG$ by faithfully recording vertices as objects and arrows as morphisms. Will that be a category?

Following that scheme, we put $\Ob(\mcG)=\{a,b,c\}$. For all 9 pairs of objects we need a hom-set.
Say 
\begin{align*}
\begin{array}{lll}
\Hom_\mcG(a,a)=\emptyset&\hsp\Hom_\mcG(a,b)=\{f\}&\hsp\Hom_\mcG(a,c)=\emptyset\\
\Hom_\mcG(b,a)=\emptyset&\hsp\Hom_\mcG(b,b)=\emptyset&\hsp\Hom_\mcG(b,c)=\{g\}\\
\Hom_\mcG(c,a)=\emptyset&\hsp\Hom_\mcG(c,b)=\emptyset&\hsp\Hom_\mcG(c,c)=\emptyset
\end{array}
\end{align*}
If we say we are done, the listener should object that we have given neither identities nor a composition formula. In fact, it is impossible to give identities under our scheme, because e.g. $\Hom_\mcG(a,a)=\emptyset$.

Suppose we fix that problem, adding an element to each of our ``diagonals" so that 
$$\Hom_\mcG(a,a)=\{\id_a\},\hsp\Hom_\mcG(b,b)=\{\id_b\},\hsp\tn{and}\hsp\Hom_\mcG(c,c)=\{\id_c\}.$$ What about a composition formula? We need a function $\Hom_\mcG(a,b)\times\Hom_\mcG(b,c)\to\Hom_\mcG(a,c)$, but the domain is nonempty and the codomain is empty; there is no such function. 

Again, we must make a change, adding an element to make $$\Hom_\mcG(a,c)=\{h\}.$$ We would now say $g\circ f=h$. Finally, this does the trick and we have a category. A computer could check this quickly, as can someone with good intuition for categories; for everyone else, it may be a painstaking process involving determining whether there is a unique composition formula for each of the 27 pairs of hom-sets and whether the associative law holds in the 81 necessary cases. Luckily this computation is ``sparse" (lots of $\emptyset$'s), so it's not as bad as it first seems.

Redrawing all the morphisms as arrows, our graph has become:
$$G=\fbox{\xymatrix{\LMO{a}\ar@(ul,dl)[]_{\id_a}\ar[r]^f\ar@/_1pc/[rr]_h&\LMO{b}\ar@(ur,ul)[]_{\id_b}\ar[r]^g&\LMO{c}\ar@(ur,dr)[]^{\id_c}}}$$

\end{example}

\begin{example}[Non-category 2]\index{category!non-example}

In this example, we will make a faux-category $\mcF$ with one object and many morphisms. The problem here will be our composition formula. 

Define $\mcF$ to have one object $\Ob(\mcF)=\singleton$, and $\Hom_\mcF(\smiley,\smiley)=\NN$. Define $\id_{\smiley}=1\in\NN$. Define the composition formula $\circ\taking\NN\times\NN\to\NN$ by $m\circ n=m^n$. This is a perfectly cromulent function, but it does not work right as a composition formula. Indeed, for the identity law to hold, we would need $m^1=m=1^m$, and one side of this is false. For the associativity law to hold, we would need $(m^n)^p=m^{(n^p)}$, but this is also not the case.

To fix this problem we have to completely revamp our composition formula. It would work to use multiplication, $m\circ n=m*n$. Then the identity law would read $1*m=m=m*1$, and that holds; and the associativity law would read $(m*n)*p=m*(n*p)$, and that holds.

\end{example}

\begin{example}[The category of preorders with joins]\label{ex:preorders with joins}

Suppose that we are only interested in preorders $(X,\leq)$ for which every pair of elements has a join. We saw in Exercise \ref{exc:not all meets and joins} that not all preorders have this property. However we can create a category $\mcC$ in which every object does have this property. To begin we put $\Ob(\mcC)=\{(X,\leq)\in\Ob(\PrO)\| (X,\leq)\tn{ has all joins}\}.$ But what about morphisms?

One option would be to put in no morphisms (other than identities), and to just consider this collection of objects as having no structure other than a set.

Another option would be to put in exactly the same morphisms as in $\PrO$: for any objects $a,b\in\Ob(\mcC)$ we consider $a$ and $b$ as regular old preorders, and put $\Hom_\mcC(a,b):=\Hom_{\PrO}(a,b)$. The resulting category of preorders with joins is called the {\em full subcategory of $\PrO$ spanned by the preorders with joins}.\index{subcategory!full}\footnote{The definition of full subcategories will be given as Definition \ref{def:full subcategory}.}

A third option, and the one perhaps that would jump out to a category theorist, is to take the choice about how we define our objects as a clue to how we should define our morphisms. Namely, if we are so interested in joins, perhaps we want joins to be preserved under morphisms. That is, if $f\taking (X,\leq_X)\to (Y,\leq_Y)$ is a morphism of preorders then for any join $w=x\vee x'$ in $X$ we might want to enforce that $f(w)=f(x)\vee f(x')$ in $Y$. Thus a third possibility for the morphisms of $\mcC$ would be $$\Hom_\mcC(a,b):=\{f\in\Hom_{\PrO}(a,b)\|f \tn{ preserves joins}\}.$$ One can check easily that the identity morphisms preserve joins and that compositions of join-preserving morphisms are join-preserving, so this version of homomorphisms makes for a well-defined category.

\end{example}

\begin{example}[Category $\FLin$ of finite linear orders]\label{ex:FLin}\index{a category!$\FLin$}

We have a category $\PrO$ of preorders, and some of its objects are finite (nonempty) linear orders. Let $\FLin$ be the full subcategory of $\PrO$ spanned by the linear orders. That is, following Definition \ref{def:morphism of orders}, given linear orders $X,Y$, every morphism of preorders $X\to Y$ counts as a morphism in $\FLin$: $$\Hom_\FLin(X,Y)=\Hom_\PrO(X,Y).$$ 

\end{example}

\begin{exercise}
Let $\FLin$ be the category of finite linear orders, defined in Example \ref{ex:FLin}. For $n\in\NN$, let $[n]$ be the linear order defined in Example \ref{ex:finite lo}. What are the cardinalities of the following sets: 
\sexc $\Hom_{\FLin}([0],[3])$; 
\next $\Hom_\FLin([3],[0])$;
\next $\Hom_\FLin([2],[3])$;
\next $\Hom_\FLin([1],[n])$?
\next (Challenge) $\Hom_\FLin([m],[n])$?
\endsexc

It turns out that the category $\FLin$ of linear orders is sufficiently rich that much of algebraic topology (the study of arbitrary spaces, such as Mobius strips and $7$-dimensional spheres) can be understood in its terms. See Example \ref{ex:simplicial set}.
\end{exercise}

\begin{example}[Category of graphs]\index{a category!$\Grph$}

We defined graphs in Definition \ref{def:graph} and graph homomorphisms in Definition \ref{def:graph homomorphism}. To see that these are sufficient to form a category is considered routine to a seasoned category-theorist, so let's see why. 

Since a morphism from $\mcG=(V,A,src,tgt)$ to $\mcG'=(V',A',src',tgt')$ involves two functions $f_0\taking V\to V'$ and $f_1\taking A\to A'$, the identity and composition formulas will simply arise from the identity and composition formulas for sets. Associativity will follow similarly. The only thing that needs to be checked, really, is that the composition of two such things, each satisfying (\ref{dia:graph hom}), will itself satisfy (\ref{dia:graph hom}). Just for completeness, we check that now.

Suppose that $f=(f_0,f_1)\taking\mcG\to\mcG'$ and $g=(g_0,g_1)\taking\mcG'\to\mcG''$ are graph homomorphisms, where $\mcG''=(V'',A'',src'',tgt'')$. Then in each diagram below
\begin{align}\label{dia:graph hom comp}
\xymatrix{A\ar[r]^{f_1}\ar[d]^{src}&A'\ar[r]^{g_1}\ar[d]^{src'}&A''\ar[d]^{src''}\\V\ar[r]_{f_0}&V'\ar[r]_{g_0}&V''
}\hspace{1in}
\xymatrix{A\ar[r]^{f_1}\ar[d]^{tgt}&A'\ar[d]^{tgt'}\ar[r]^{g_1}&A''\ar[d]^{tgt''}\\V\ar[r]_{f_0}&V'\ar[r]_{g_0}&V''
}
\end{align}
the left-hand square commutes because $f$ is a graph homomorphism and the right-hand square commutes because $g$ is a graph homomorphism. Thus the whole rectangle commutes, meaning that $g\circ f$ is a graph homomorphism, as desired. 

We denote the category of graphs and graph homomorphisms by $\Grph$.

\end{example}

\begin{remark}

When one is struggling to understand basic definitions, notation, and style, a phase which naturally occurs when learning new mathematics (or any new language), the above example will probably appear long and tiring. I'd say you've mastered the basics when the above example really does feel straightforward. Around this time, I imagine you'll begin to get a sense of the remarkable organisational potential of the categorical way of thinking.

\end{remark} 

\begin{exercise}\label{exc:vector field 1}\index{vector field}
Let $F$ be a vector field on $\RR^2$. \href{http://en.wikipedia.org/wiki/Line_integral#Line_integral_of_a_vector_field}{Recall} that for two points $x,x'\in\RR^2$, any curve $C$ with endpoints $x$ and $x'$, and any parameterization $r\taking [a,b]\to C$, the line integral $\int_CF(r)\cdot dr$ returns a real number. It does not depend on $r$, except its orientation (direction). Therefore, if we think of $C$ has having an orientation, say going from $x$ to $x'$, then $\int_CF$ is a well-defined real number. If $C$ goes from $x$ to $x'$, let's suggestively write $C\taking x\to x'$. Define an equivalence relation $\sim$ on the set of oriented curves in $\RR^2$ by saying $C\sim C'$ if
\begin{itemize}
\item $C$ and $C'$ start at the same point,
\item $C$ and $C'$ end at the same point, and
\item $\int_CF=\int_{C'}F$.
\end{itemize}

Suppose we try to make a category $\mcC_F$ as follows. Put $\Ob(\mcC_F)=\RR^2$, and for every pair of points $x,x'\in\RR^2$, let $\Hom_{\mcC_F}(x,x')=\{C\taking x\to x'\}/\sim$, where $C\taking x\to x'$ is an oriented curve and $\sim$ means ``same line integral", as explained above. 

Is there an identity morphism and a composition formula that will make $\mcC_F$ into a category? 
\end{exercise}


\subsubsection{Isomorphisms}

In any category we have a notion of isomorphism between objects.

\begin{definition}\index{isomorphism}\index{morphism!inverse}

Let $\mcC$ be a category and let $X,Y\in\Ob(\mcC)$ be objects. An {\em isomorphism $f$ from $X$ to $Y$} is a morphism $f\taking X\to Y$ in $\mcC$, such that there exists a morphism $g\taking Y\to X$ in $\mcC$ such that $$g\circ f=\id_X\hsp\tn{and}\hsp f\circ g=\id_Y.$$ In this case we say that the morphism $f$ is {\em invertible} and that $g$ is the {\em inverse} of $f$. We may also say that the objects $X$ and $Y$ are {\em isomorphic}.

\end{definition}

\begin{example}

If $\mcC=\Set$ is the category of sets, then the above definition coincides precisely with the one given in Definition \ref{def:iso in set}.

\end{example}

\begin{exercise}
Suppose that $G=(V,A,src,tgt)$ and $G'=(V',A',src',tgt')$ are graphs and that $f=(f_0,f_1)\taking G\to G'$ is a graph homomorphism (as in Definition \ref{def:graph homomorphism}). 
\sexc If $f$ is an isomorphism in $\Grph$, does this imply that $f_0\taking V\to V'$ and $f_1\taking A\to A'$ are isomorphisms in $\Set$?
\next  If so, why; and if not, show a counterexample (where $f$ is an isomorphism but either $f_0$ or $f_1$ is not).
\endsexc
\end{exercise}

\begin{exercise}
Suppose that $G=(V,A,src,tgt)$ and $G'=(V',A',src',tgt')$ are graphs and that $f=(f_0,f_1)\taking G\to G'$ is a graph homomorphism (as in Definition \ref{def:graph homomorphism}). 
\sexc If $f_0\taking V\to V'$ and $f_1\taking A\to A'$ are isomorphisms in $\Set$, does this imply that $f$ is an isomorphism in $\Grph$?
\next If so, why; and if not, show a counterexample (where $f_0$ and $f_1$ are isomorphisms but $f$ is not).
\endsexc
\end{exercise}

\begin{lemma}\label{lemma:isomorphic ER}

Let $\mcC$ be a category and let $\sim$ be the relation on $\Ob(\mcC)$ given by saying $X\sim Y$ iff $X$ and $Y$ are isomorphic. Then $\sim$ is an equivalence relation.

\end{lemma}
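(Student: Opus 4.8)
The plan is to show that isomorphism is reflexive, symmetric, and transitive, exactly mirroring the proof of Lemma \ref{lemma:isomorphic ER in Set} (which handled the special case $\mcC=\Set$). Indeed, the earlier proof never used anything specific about sets and functions; it used only the identity laws, the associativity law, and the definition of inverse. So the strategy is to transport that argument verbatim into the general categorical setting, replacing ``function'' with ``morphism'' and invoking the category axioms from Definition \ref{def:category} in place of the concrete facts about composition of functions.

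First I would verify reflexivity: for any object $A\in\Ob(\mcC)$, the identity morphism $\id_A\taking A\to A$ is its own inverse, since $\id_A\circ\id_A=\id_A$ by the identity law. Hence $A$ is isomorphic to itself, so $A\sim A$. Next, for symmetry, suppose $X\sim Y$, witnessed by an isomorphism $f\taking X\to Y$ with inverse $g\taking Y\to X$ satisfying $g\circ f=\id_X$ and $f\circ g=\id_Y$. These two equations are exactly the conditions that exhibit $g$ as an isomorphism $Y\to X$ with inverse $f$, so $Y\sim X$.

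The transitivity step is where the one genuine computation lives, though it is short. Suppose $X\sim Y$ and $Y\sim Z$, with isomorphisms $f\taking X\to Y$ (inverse $g\taking Y\to X$) and $f'\taking Y\to Z$ (inverse $g'\taking Z\to Y$). I would propose $f'\circ f\taking X\to Z$ as the isomorphism and $g\circ g'\taking Z\to X$ as its inverse, and then check the two required equations using associativity and the identity laws, just as in Lemma \ref{lemma:isomorphic ER in Set}:
\begin{align*}
(f'\circ f)\circ(g\circ g')&=f'\circ(f\circ g)\circ g'=f'\circ\id_Y\circ g'=f'\circ g'=\id_Z,\\
(g\circ g')\circ(f'\circ f)&=g\circ(g'\circ f')\circ f=g\circ\id_Y\circ f=g\circ f=\id_X.
\end{align*}
This establishes $X\sim Z$.

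The main obstacle, such as it is, is not mathematical difficulty but a foundational subtlety flagged in Remark \ref{rmk:small}: $\Ob(\mcC)$ may be a proper collection rather than a set, so ``equivalence relation on $\Ob(\mcC)$'' is mildly abusive since Definition of equivalence relation was given for a subset $R\ss X\cross X$ of an honest set. Following the convention adopted in Remark \ref{rmk:small}, I would simply treat $\Ob(\mcC)$ as a set and not belabor this point; the reflexive, symmetric, and transitive conditions are meaningful verbatim. Everything else is a direct and routine transcription of the set-theoretic proof, so I expect the write-up to be brief.
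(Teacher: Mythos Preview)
Your proposal is correct and is exactly what the paper does: its entire proof reads ``The proof of Lemma \ref{lemma:isomorphic ER in Set} can be mimicked in this more general setting,'' and you have faithfully carried out that mimicking. Your additional remark about the size of $\Ob(\mcC)$ is a reasonable aside but not something the paper dwells on.
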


\begin{proof}

The proof of Lemma \ref{lemma:isomorphic ER in Set} can be mimicked in this more general setting.

\end{proof}


\subsubsection{Another viewpoint on categories}

Here is an alternate definition of category, using the work we did in Chapter \ref{chap:sets}.

\begin{exercise}\label{exc:cat in set}

Suppose we begin our definition of category as follows. 

A {\em category}, $\mcC$ consists of a sequence $(\Ob(\mcC),\Hom_\mcC,dom,cod,\ids,\circ)$, where 
\begin{enumerate}
\item $\Ob(\mcC)$ is a set,\footnote{See Remark \ref{rmk:small}.}
\item $\Hom_\mcC$ is a set, and $dom,cod\taking\Hom_\mcC\to\Ob(\mcC)$ are functions, 
\item $\ids\taking\Ob(\mcC)\to\Hom_\mcC$ is a function, and 
\item $\circ$ is a function as depicted in the commutative diagram below
\begin{align}\label{dia:pullback version of cat}
\xymatrix{
\Hom_\mcC\ar@/^1pc/[drrr]^{cod}\ar@/_1pc/[dddr]_{dom}&&\\
&\Hom_\mcC\times_{\Ob(\mcC)}\Hom_\mcC\ar@{}[ur]|{\checkmark}\ar@{}[ddl]|(.4){\checkmark}\ar[ul]_\circ\ar[r]\ar[d]\ullimit&\Hom_\mcC\ar[r]_{cod}\ar[d]^{dom}&\Ob(\mcC)\\
&\Hom_\mcC\ar[r]_{cod}\ar[d]^{dom}&\Ob(\mcC)\\
&\Ob(\mcC)
}
\end{align}
\end{enumerate}

\sexc Express the fact that for any $x\in\Ob(\mcC)$ the morphism $\id_x$ points from $x$ to $x$ in terms of the functions $\id,dom,cod$. 
\next Express the condition that composing a morphism $f$ with an appropriate identity morphism yields $f$.
\next Express the associativity law in these terms (Hint: Proposition \ref{prop:pasting} may be useful).
\endsexc
\end{exercise}

\begin{example}[Partial olog for a category]

Below is an olog that captures some of the essential structures of a category.

\begin{align}\label{dia:olog for cats}
\xymatrixnocompile@=30pt{
\obox{}{.72in}{a morphism in $\mcC$}\ar@/^1pc/[drrr]^{\hsp\tn{has as codomain}}\ar@/_1pc/[dddr]_{\tn{has as domain}}&&&\\
&\obox{}{.85in}{a pair $(g,f)$ of composable morphisms}\ar@{}[l]|{\checkmark}\ar@{}[ur]|{\checkmark}\ar[ul]_{\;\;\tn{has as composition}}\ar[r]^{\parbox{.3in}{\scriptsize\rr\tn{yields as $g$}}}\ar[d]_{\tn{yields as $f$}}\ar@{}[rd]|(.35)*+{\lrcorner}&\obox{}{.72in}{a morphism in $\mcC$}\ar[r]_{\parbox{.4in}{\scriptsize\rr\tn{has as codomain}}}\ar[d]^{\tn{has as domain}}&\obox{}{.8in}{an object of $\mcC$}\\
&\obox{}{.72in}{a morphism in $\mcC$}\ar[r]_{\parbox{.4in}{\scriptsize\rr\tn{has as codomain}}}\ar[d]^{\tn{has as domain}}&\obox{}{.8in}{an object of $\mcC$}\\
&\obox{}{.8in}{an object of $\mcC$}
}
\end{align}

Missing from (\ref{dia:olog for cats}) is the notion of identity morphism (as an arrow from \fakebox{an object of $\mcC$} to \fakebox{a morphism in $\mcC$}) and the associated path equivalences, as well as the identity and associativity laws. All of these can be added to the olog, at the expense of some clutter.

\end{example}

\begin{remark}

Perhaps it is already clear that category theory is very interconnected. It may feel like everything relates to everything, and this feeling may intensify as you go on. However, the relationships between different notions are rigorously defined, and not random. Moreover, almost everything presented in this book can be formalized in a proof system like \href{http://en.wikipedia.org/wiki/Coq}{\text Coq} (the most obvious exceptions being things like the readability requirement of ologs and the modeling of scientific applications).

Whenever you feel cognitive vertigo, look to formal definitions as the ground of your understanding. It is good practice to make sure that the intuition you've developed actually ``touches down" on that ground, i.e. that your way of thinking can be built up solidly from the foundational definitions.

\end{remark}


\subsection{Functors}

A category $\mcC=(\Ob(\mcC),\Hom_\mcC,dom,cod,\ids,\circ)$, involves a set of objects, a set of morphisms, a notion of domains and codomains, a notion of identity morphisms, and a composition formula. For two categories to be comparable, these various components should be appropriately comparable.\index{appropriate comparison}

\begin{definition}\label{def:functor}\index{functor}

Let $\mcC$ and $\mcC'$ be categories. A {\em functor $F$ from $\mcC$ to $\mcC'$}, denoted $F\taking\mcC\to\mcC'$, is defined as follows: One announces some constituents (A. on-objects part, B. on-morphisms part) and asserts that they conform to some laws (1. preservation of identities, 2. preservation of composition). Specifically, one announces
\begin{enumerate}[\hsp A.]
\item a function $\Ob(F)\taking\Ob(\mcC)\to\Ob(\mcC')$, which we sometimes denote simply by $F\taking\Ob(\mcC)\to\Ob(\mcC')$; and
\item for every pair of objects $c,d\in\Ob(\mcC)$, a function $$\Hom_F(c,d)\taking\Hom_\mcC(c,d)\to\Hom_{\mcC'}(F(c),F(d)),$$ which we sometimes denote simply by $F\taking\Hom_\mcC(c,d)\to\Hom_{\mcC'}(F(c),F(d))$.
\end{enumerate}
One asserts that the following laws hold:
\begin{enumerate}[\hsp 1.]
\item Identities are preserved by $F$. That is, for any object $c\in\Ob(\mcC)$, we have $F(\id_c)=\id_{F(c)}$; and
\item Composition is preserved by $F$. That is, for any objects $b,c,d\in\Ob(\mcC)$ and morphisms $g\taking b\to c$ and $h\taking c\to d$, we have $F(h\circ g)=F(h)\circ F(g)$.
\end{enumerate}
\end{definition}

\begin{example}[Monoids have underlying sets]

Recall from Definition \ref{def:monoid} that if $\mcM=(M,e,\star)$ is a monoid, then $M$ is a set. And recall from Definition \ref{def:monoid hom} that if $f\taking\mcM\to\mcM'$ is a monoid homomorphism then $f\taking M\to M'$ is a function. Thus we have a functor $$U\taking\Mon\to\Set$$\index{a functor!$\Mon\to\Set$} that takes every monoid to its underlying set and every monoid homomorphism to its underlying function. 

Given two monoids $\mcM=(M,e,\star)$ and $\mcM'=(M',e',\star')$, there may be many functions from $M$ to $M'$ that do not arise from monoid homomorphisms. It is often useful to speak of such functions. For example, one could assign to every command in one video game $V$ a command in another video game $V'$, but this may not work in the ``monoidy way" when performing a sequence of commands. By being able to speak of $M$ as a set, or as $\mcM$ as a monoid, and understanding the relationship $U$ between them, we can be clear about where we stand at all times in our discussion.

\end{example}

\begin{example}[Groups have underlying monoids]\label{ex:grp to monoid}

Recall that a group is just a monoid $(M,e,\star)$ with the extra property that every element $m\in M$ has an inverse $m'\star m=e=m\star m'$. Thus to every group we can assign its {\em underlying monoid}. Similarly, a group homomorphism is just a monoid homomorphism of its underlying monoids. This means that there is a functor $$U\taking\Grp\to\Mon$$\index{a functor!$\Grp\to\Mon$} that sends every group or group homomorphism to its underlying monoid or monoid homomorphism. That identity and composition are preserved is obvious.

\end{example}

\begin{slogan}
Out of all our available actions, some are reversable. 
\end{slogan}

\begin{application}

Suppose you're a scientist working with symmetries. But then suppose that the symmetry breaks somewhere, or you add some extra observable which is not reversible under the symmetry. You want to seamlessly relax the requirement that every action be reversible without changing anything else. You want to know where you can go, or what's allowed. The answer is to simply pass from the category of groups (or group actions) to the category of monoids (or monoid actions). 

We can also reverse this change of perspective. Recall that in Example \ref{ex:monoid as olog} we discussed a monoid $M$ controlling the actions of a video game character. The character position ($P$) could be moved up ($u$), moved down ($d$), or moved right ($r$). The path equivalences $P.u.d=P$ and $P.d.u=P$ imply that these two actions are mutually inverse, whereas moving right has no inverse. This, plus equivalences $P.r.u=P.u.r$ and $P.r.d=P.d.r$, defined a monoid $M$. 

Inside $M$ is a submonoid $G$, which includes just upward and downward movement. It has one object, just like $M$, i.e. $\Ob(M)=\{P\}=\Ob(G)$. But it has fewer morphisms. In fact there is a monoid isomorphism $G\iso\ZZ$ because we can assign to any movement in $G$ the number of ups, e.g. $P.u.u.u.u.u$ is assigned the integer $5$, $P.d.d.d$ is assigned the integer $-3$, and $P.d.u.u.d.d.u$ is assigned the integer $0\in\ZZ$. But $\ZZ$ is a group, because every integer has an inverse.

Thus we can consider $G$ as a group $G_1\in\Ob(\Grp)$ or as a monoid $G_2\in\Ob(\Mon)$. It is better to consider $G$ as a group, because groups are more structured than monoids. It's as though putting $G$ in $\Grp$ gives it more ``potential energy" than putting it in $\Mon$ --- we can always ``drop it down" from $\Grp$ to $\Mon$, but not vice versa. The way to make this precise is that we can make use of the functor $U\taking\Grp\to\Mon$ from Example \ref{ex:grp to monoid} and find that $U(G_1)=G_2$. But to find a functor $F\taking\Mon\to\Grp$ such that $F(G_2)=G_1$ would be much more ad hoc. 

The upshot is that we can use functors to compare groups and monoids.

\end{application}

\begin{example}

Recall that we have a category $\Set$ of sets and a category $\Fin$ of finite sets. We said that $\Fin$ was a subcategory of $\Set$. In fact we can think of this ``subcategory" relationship in terms of functors, just like we thought of the ``subset" relationship in terms of functions in Example \ref{ex:subset as function}. That is, if we have a subset $S\ss S'$, then every element $s\in S$ is an element of $S'$, so we make a function $f\taking S\to S'$ such that $f(s)=s\in S'$. 

To give a functor $i\taking\Fin\to\Set$, we have to announce how it will work on objects and how it will work on morphisms. We begin by announcing a function $i\taking\Ob(\Fin)\to\Ob(\Set)$. But that's easy because $\Ob(\Fin)\ss\Ob(\Set)$, so we proceed as above: $i(S)=S$ for any $S\in\Ob(\Fin)$. We also have announce, for each pair of objects $S,S'\in\Ob(\Fin)$, a function $$i\taking\Hom_\Fin(S,S')\to\Hom_\Set(S,S').$$ But again, that's easy because we know by definition (see Example \ref{ex:Fin}) that these two sets are equal, $\Hom_\Fin(S,S')=\Hom_\Set(S,S')$. Hence we can simply take $i$ to be the identity function on morphisms. It is easy to see that identites and compositions are preserved by $i$. Therefore, we have defined a functor $i$.

\end{example}

\begin{exercise}[Forgetful functors between types of orders]
A partial order is just a preorder with a special property. A linear order is just a partial order with a special property.
\sexc Is there an ``obvious" functor $\FLin\to\PrO$\index{a functor!$\FLin\to\PrO$}?
\next Is there an ``obvious" functor $\PrO\to\FLin$?
\endsexc
\end{exercise}

\begin{proposition}[Preorders to graphs]\label{prop:pro to grph}

Let $\PrO$ be the category of preorders and $\Grph$ be the category of graphs. There is a functor $P\taking\PrO\to\Grph$\index{a functor!$\PrO\to\Grph$} such that for any preorder $\mcX=(X,\leq)$, the graph $P(\mcX)$ has vertices $X$.

\end{proposition}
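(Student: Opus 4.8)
The plan is to define $U$ on objects by turning the order relation into the arrow set of a graph. Given a preorder $\mcX = (X, \leq)$, let $R \ss X \cross X$ be its underlying relation, so that $(x, x') \in R$ iff $x \leq x'$. I would set $U(\mcX) := (X, R, src, tgt)$, where $src, tgt \taking R \to X$ are the two projections, $src(x,x') = x$ and $tgt(x,x') = x'$. This immediately produces a graph whose vertex set is $X$, as the statement demands.

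For morphisms, recall from Definition \ref{def:morphism of orders} that a morphism of preorders $f \taking (X, \leq_X) \to (Y, \leq_Y)$ is a function $f \taking X \to Y$ with the monotonicity property that $x_1 \leq_X x_2$ implies $f(x_1) \leq_Y f(x_2)$. I would define $U(f) = (f_0, f_1)$ by taking the on-vertices part to be $f_0 := f \taking X \to Y$ and the on-arrows part $f_1 \taking R_X \to R_Y$ to be $f_1(x_1, x_2) := (f(x_1), f(x_2))$. The one point that genuinely uses the hypothesis is well-definedness of $f_1$: an element $(x_1, x_2) \in R_X$ satisfies $x_1 \leq_X x_2$, so monotonicity of $f$ guarantees $f(x_1) \leq_Y f(x_2)$, and hence $(f(x_1), f(x_2))$ really does lie in $R_Y$. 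This is the crux; everything else is bookkeeping.

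It then remains to verify the defining conditions. First, that $U(f)$ is a graph homomorphism: I would check that the two squares of Diagram (\ref{dia:graph hom}) commute, i.e. $src_Y \circ f_1 = f_0 \circ src_X$ and $tgt_Y \circ f_1 = f_0 \circ tgt_X$. Both follow by evaluating on an arrow $(x_1,x_2)$, since $src_Y(f_1(x_1,x_2)) = f(x_1) = f_0(src_X(x_1,x_2))$, and similarly for targets. Second, functoriality: $U(\id_\mcX)$ has vertex part $\id_X$ and arrow part $(x_1,x_2) \mapsto (x_1,x_2)$, hence equals $\id_{U(\mcX)}$; and for composable preorder morphisms $f$ and $g$, the on-vertices and on-arrows parts of $U(g \circ f)$ and $U(g) \circ U(f)$ agree because $(g \circ f)(x_i) = g(f(x_i))$ componentwise.

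I anticipate no serious obstacle. The construction is essentially forced once one decides to use the relation $R$ itself as the arrow set, and the single substantive verification is the well-definedness of $f_1$, which rests precisely on the monotonicity clause in the definition of a preorder morphism. It is worth noting that reflexivity and transitivity are never invoked, so the same recipe would in fact produce a functor on arbitrary binary relations; the preorder axioms play no role in making $U$ a functor.
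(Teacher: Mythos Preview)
Your proof is correct and follows essentially the same route as the paper's: both take the relation $R\ss X\cross X$ as the arrow set with the two projections as source and target, and both extract the arrow-part of $U(f)$ from monotonicity. The only cosmetic difference is that the paper packages the verification via the single commutative square of Exercise~\ref{exc:single condition for graph hom} (using $f\cross f$ and the inclusion $R_\mcY\hookrightarrow Y\cross Y$), whereas you check the two squares of Diagram~(\ref{dia:graph hom}) directly; these are equivalent formulations.
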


\begin{proof}

Given a preorder $\mcX=(X,\leq_X)$, we can make a graph $F(\mcX)$ with vertices $X$ and an arrow $x\to x'$ whenever $x\leq_X x'$, as in Remark \ref{rem:preorder to graph}. More precisely, the preorder $\leq_X$ is a relation, i.e. a subset $R_\mcX\ss X\times X$, which we think of as a function $i\taking R_\mcX\to X\times X$. Composing with projections $\pi_1,\pi_2\taking X\times X\to X$ gives us $$src_\mcX:=\pi_1\circ i\taking R_\mcX\to X\hsp\tn{and}\hsp tgt_\mcX:=\pi_2\circ i\taking R_\mcX\to X.$$ Then we put $F(\mcX):=(X,R_\mcX,src_\mcX,tgt_\mcX)$. This gives us a function $F\taking\Ob(\PrO)\to\Ob(\Grph)$.

Suppose now that $f\taking\mcX\to\mcY$ is a preorder morphism (where $\mcY=(Y,\leq_Y)$). This is a function $f\taking X\to Y$ such that for any $(x,x')\in X\times X$, if $x\leq_X x'$ then $f(x)\leq f(x')$. But that's the same as saying that there exists a dotted arrow making the following diagram of sets commute
$$
\xymatrix{R_\mcX\ar[r]\ar@{..>}[d]&X\times X\ar[d]^{f\times f}\\R_\mcY\ar[r]&Y\times Y
}
$$
(Note that there cannot be two different dotted arrows making that diagram commute because $R_\mcY\to Y\times Y$ is a monomorphism.) 
Our commutative square is precisely what's needed for a graph homomorphism, as shown in Exercise \ref{exc:single condition for graph hom}. Thus, we have defined $F$ on objects and on morphisms. It is clear that $F$ preserves identity and composition.

\end{proof}

\begin{exercise}
In Proposition \ref{prop:pro to grph} we gave a functor $P\taking\PrO\to\Grph$.
\sexc  Is every graph $G\in\Ob(\Grph)$ in the image of $P$ (or more precisely, is the function $$\Ob(P)\taking\Ob(\PrO)\to\Ob(\Grph)$$ surjective)?
\next If so, why; if not, name a graph not in the image.
\next Suppose that $G, H\in\Ob(\Grph)$ are two graphs that are in the image of $P$. Is every graph homomorphism $f\taking G\to H$ in the image of $\Hom_P$? In other words, does every graph homomorphism between $G$ and $H$ come from a preorder homomorphism?
\endsexc
\end{exercise}

\begin{remark}

There is a functor $W\taking\PrO\to\Set$\index{a functor!$\PrO\to\Set$} sending $(X,\leq)$ to $X$. There is a functor $T\taking\Grph\to\Set$\index{a functor!$\Grph\to\Set$} sending $(V,A,src,tgt)$ to $V$. When we understand the category of categories (Section \ref{sec:cat of cats}), it will be clear that Proposition \ref{prop:pro to grph} can be summarized as a commutative triangle in $\Cat$, 
$$
\xymatrix@=15pt{\PrO\ar[rr]^P\ar[ddr]_W&&\Grph\ar[ddl]^T\\\\&\Set}
$$

\end{remark}

\begin{exercise}[Graphs to preorders]\label{exc:grph to pro}
Recall from (\ref{dia:image}) that every function $f\taking A\to B$ has an image, $\im_f(A)\ss B$. Use this idea and Example \ref{ex:preorder generated} to construct a functor $Im\taking\Grph\to\PrO$\index{a functor!$\Grph\to\PrO$} such that for any graph $G=(V,A,src,tgt)$, the preorder has elements given by the vertices of $G$ (i.e. we have $Im(G)=(V,\leq_G)$, for some ordering $\leq_G$).
\end{exercise}

\begin{exercise}
What is the preorder $Im(G)$ when $G\in\Ob(\Grph)$ is the following graph?
$$
G:=\parbox{2in}{\fbox{\xymatrix{\LMO{v}\ar[r]^f&\LMO{w}\ar@/_1pc/[r]_h\ar@/^1pc/[r]^g&\LMO{x}\\\LMO{y}\ar@(l,u)[]^i\ar@/^1pc/[r]^j&\LMO{z}\ar@/^1pc/[l]^k}}}
$$
\end{exercise}

\begin{exercise}
Consider the functor $Im\taking\Grph\to\PrO$ constructed in Exercise \ref{exc:grph to pro}.
\sexc Is every preorder $\mcX\in\Ob(\PrO)$ in the image of $Im$ (or more precisely in the image of $\Ob(Im)\taking\Ob(\Grph)\to\Ob(\PrO)$)?
\next If so, why; if not, name a preorder not in the image.
\next Suppose that $\mcX,\mcY\in\Ob(\PrO)$ are two preorders that are in the image of $Im$. Is every preorder morphism $f\taking\mcX\to\mcY$ in the image of $\Hom_{Im}$? In other words, does every preorder homomorphism between $\mcX$ and $\mcY$ come from a graph homomorphism?
\endsexc
\end{exercise}

\begin{exercise}
We have functors $P\taking\PrO\to\Grph$ and $Im\taking\Grph\to\PrO$.
\sexc What can you say about $Im\circ P\taking\PrO\to\PrO$?
\next What can you say about $P\circ Im\taking\Grph\to\Grph$?
\endsexc
\end{exercise}

\begin{exercise}
Consider the functors $P\taking\PrO\to\Grph$ and $Im\taking\Grph\to\PrO$. And consider the chain graph $[n]$ of length $n$ from Example \ref{ex:[n] as graph} and the linear order $[n]$ of length $n$ from Example \ref{ex:finite lo}. To differentiate the two, let's rename them for this exercise as $[n]_{\Grph}\in\Ob(\Grph)$ and $[n]_{\PrO}\in\Ob(\PrO)$. We see a similarity between $[n]_{\Grph}$ and $[n]_{\PrO}$, and we might hope that our functors help us formalize this similarity. That is, we might hope that one of the following hold: 
$$P([n]_{\PrO})\iso^? [n]_{\Grph}\hsp\tn{or}\hsp Im([n]_{\Grph})\iso^? [n]_{\PrO}.$$ 
Do either, both, or neither of these hold?
\end{exercise}

\begin{remark}

In the course announcement for 18-S996, I wrote the following:
\begin{quote}
It is often useful to focus ones study by viewing an individual thing, or a group of things, as though it exists in isolation. However, the ability to rigorously change our point of view, seeing our object of study in a different context, often yields unexpected insights. Moreover this ability to change perspective is indispensable for effectively communicating with and learning from others. It is the relationships between things, rather than the things in and by themselves, that are responsible for generating the rich variety of phenomena we observe in the physical, informational, and mathematical worlds.
\end{quote}
This holds at many different levels. For example, one can study a group (in the sense of Definition \ref{def:group}) in isolation, trying to understand its subgroups or its automorphisms, and this is mathematically interesting. But one can also view it as a quotient of something else, or as a subgroup of something else. One can view the group as a monoid and look at monoid homomorphisms to or from it. One can look at the group in the context of symmetries by seeing how it acts on sets. These changes of viewpoint are all clearly and formally expressible within category theory. We know how the different changes of viewpoint compose and how they fit together in a larger context. 

\end{remark}

\begin{exercise}~
\sexc Is the above quote also true in your scientific discipline of expertise? How so? 
\next Can you imagine a way that category theory can help catalogue the kinds of relationships or changes of viewpoint that exist in your discipline? 
\next What kinds of structures that you use often really deserve to be better formalized?
\endsexc
Keep this kind of question in mind for your final project.
\end{exercise}

\begin{example}[Free monoids]\label{ex:free monoid}\index{monoid!free}

Let $G$ be a set. We saw in \ref{def:free monoid} that $\List(G)$ is a monoid, called the free monoid on $G$. Given a function $f\taking G\to G'$, there is an induced function $\List(f)\taking\List(G)\to\List(G')$, and this preserves the identity element $[\;]$ and concatenation of lists, so $\List(f)$ is a monoid homomorphism. It is easy to check that $\List\taking\Set\to\Mon$\index{a functor!$\Set\to\Mon$} is a functor.

\end{example}

\begin{application}\label{app:polymerase}
In Application \ref{app:DNA RNA} we discussed an isomorphism $\tn{Nuc}_\tn{DNA}\iso\tn{Nuc}_\tn{RNA}$ given by RNA transcription. Applying the functor $\List$ we get a function $$\List(\tn{Nuc}_\tn{DNA})\To{\iso}\List(\tn{Nuc}_\tn{RNA}),$$ which will send sequences of DNA nucleotides to sequences of RNA nucleotides and vice versa. This is performed by polymerases.

\end{application}

\begin{exercise}\label{exc:list as functor}\index{list!as functor}
Let $G=\{1,2,3,4,5\}, G'=\{a,b,c\}$, and let $f\taking G\to G'$ be given by the sequence $(a,c,b,a,c)$.\footnote{See Exercise \ref{exc:sequence} in case there is any confusion with this.} Then if $L=[1,1,3,5,4,5,3,2,4,1]$, what is $\List(f)(L)$?
\end{exercise}

\begin{exercise}\label{exc:rephrase functors}
We can rephrase our notion of functor in terms compatible with Exercise \ref{exc:cat in set}. We would begin by saying that a functor $F\taking\mcC\to\mcC'$ consists of two functions, $$\Ob(F)\taking\Ob(\mcC)\to\Ob(\mcC')\hsp\tn{and}\hsp\Hom_F\taking\Hom_\mcC\to\Hom_{\mcC'},$$ which we call the {\em on-objects part} and the {\em on-morphisms part}, respectively. They must follow some rules, expressed by the commutativity of the following squares in $\Set$:
\begin{align}\label{dia:rephrase functors}
\xymatrix{
\Hom_\mcC\ar[r]^{dom}\ar[d]_{\Hom_F}&\Ob(\mcC)\ar[d]^{\Ob(F)}\\
\Hom_{\mcC'}\ar[r]_{dom}&\Ob(\mcC')
}
\hspace{1in}
\xymatrix{
\Hom_\mcC\ar[r]^{cod}\ar[d]_{\Hom_F}&\Ob(\mcC)\ar[d]^{\Ob(F)}\\
\Hom_{\mcC'}\ar[r]_{cod}&\Ob(\mcC')}
\end{align}
\begin{align}\label{dia:rephrase functors 2}
\xymatrix{
\Ob(\mcC)\ar[d]_{\Ob(F)}\ar[r]^{\id}&\Hom_\mcC\ar[d]^{\Hom_F}\\
\Ob(\mcC')\ar[r]_{\id}&\Hom_{\mcC'}
}
\hspace{1in}
\xymatrix{
\Hom_\mcC\times_{\Ob(\mcC)}\Hom_{\mcC}\ar[r]^-{\circ}\ar[d]_{}&\Hom_\mcC\ar[d]^{\Hom_F}\\
\Hom_{\mcC'}\times_{\Ob(\mcC')}\Hom_{\mcC'}\ar[r]_-{\circ}&\Hom_{\mcC'}}
\end{align}
Where does the (unlabeled) left-hand function in the bottom right diagram come from? Hint: use Exercise \ref{exc:pointwise map of fp}.

Consider Diagram (\ref{dia:pullback version of cat}) and imagine it as though contained in a pane of glass. Then imagine a parallel pane of glass involving $\mcC'$ in place of $\mcC$ everywhere. 
\sexc Draw arrows from the $\mcC$ pane to the $\mcC'$ pane, each labeled $\Ob(F)$ or $\Hom_F$ as seems appropriate.
\next If $F$ is a functor (i.e. satisfies (\ref{dia:rephrase functors}) and (\ref{dia:rephrase functors 2})), do all the squares in your drawing commute?
\next  Does the definition of functor involve anything not captured in this setup?
\endsexc
\end{exercise}

\begin{example}[Paths-graph]\label{ex:paths-graph}\index{graph!paths-graph}

Let $G=(V,A,src,tgt)$ be a graph. Then for any pair of vertices $v,w\in G$, there is a set $\Path_G(v,w)$ of paths from $v$ to $w$; see Definition \ref{def:paths in graph}. In fact there is a set $\Path_G$ and functions $\ol{src},\ol{tgt}\taking\Path_G\to V$. That information is enough to define a new graph, $$\Paths(G):=(V,\Path_G,\ol{src},\ol{tgt}).$$

Moreover, given a graph homomorphism $f\taking G\to G'$, every path in $G$ is sent under $f$ to a path in $G'$. So $\Paths\taking\Grph\to\Grph$\index{a functor!$\Paths\taking\Grph\to\Grph$} is a functor.

\end{example}

\begin{exercise}\label{exc:morphisms on paths-graphs}~
\sexc Consider the graph $G$ from Example \ref{ex:graph hom}. Draw the paths-graph $\Paths(G)$ for $G$. 
\next Repeating the above exercise for $G'$ from the same example would be hard, because the path graph $\Paths(G')$ has infinitely many arrows. However, the graph homomorphism $f\taking G\to G'$ does induce a morphism of paths-graphs $\Paths(f)\taking\Paths(G)\to\Paths(G')$, and it is possible to say how that acts on the vertices and arrows of $\Paths(G)$. Please do so.
\next Given a graph homomorphism $f\taking G\to G'$ and two paths $p\taking v\to w$ and $q\taking w\to x$ in $G$, is it true that $\Paths(f)$ preserves the concatenation? What does that even mean?
\endsexc
\end{exercise}

\begin{exercise}\label{exc:functors preserve isos}
Suppose that $\mcC$ and $\mcD$ are categories, $c,c'\in\Ob(\mcC)$ are objects, and $F\taking\mcC\to\mcD$ is a functor. Suppose that $c$ and $c'$ are isomorphic in $\mcC$. Show that this implies that $F(c)$ and $F(c')$ are isomorphic in $\mcD$.
\end{exercise}

\begin{example}\label{ex:non-isomorphism of graphs via functors}

For any graph $G$, we can assign its set of loops $Eq(G)$ as in Exercise \ref{exc:(co)equalizer of graph}. This assignment is functorial in that given a graph homomorphism $G\to G'$ there is an induced function $Eq(G)\to Eq(G')$. Similarly, we can functorially assign the set of connected components of the graph, $Coeq(G)$. In other words $Eq\taking\Grph\to\Set$ and $Coeq\taking\Grph\to\Set$ are functors. The assignment of vertex set and arrow set are two more functors $\Grph\to\Set$.

Suppose you want to decide whether two graphs $G$ and $G'$ are isomorphic. Supposing that the graphs have thousands of vertices and thousands of arrows, this could take a long time. However, the functors above, in combination with Exercise \ref{exc:functors preserve isos} give us some things to try.

The first thing to do is to count the number of loops of each, because these numbers are generally small. If the number of loops in $G$ is different than the number of loops in $G'$ then because functors preserve isomorphisms, $G$ and $G'$ cannot be isomorphic. Similarly one can count the number of connected components, again generally a small number; if the number of components in $G$ is different than the number of components in $G'$ then $G\not\iso G'$. Similarly, one can simply count the number of vertices or the number of arrows in $G$ and $G'$. These are all isomorphism invariants.  

All this is a bit like trying to decide if a number is prime by checking if it's even, if its digits add up to a multiple of 3, or it ends in a 5; these tests do not determine the answer, but they offer some level of discernment.

\end{example}

\begin{remark}

In the introduction I said that functors allow ideas in one domain to be rigorously imported to another. Example \ref{ex:non-isomorphism of graphs via functors} is a first taste. Because functors preserve isomorphisms, we can tell graphs apart by looking at them in a simpler category, $\Set$. There is relatively simple theorem in $\Set$ that says that for different natural numbers $m,n$ the sets $\ul{m}$ and $\ul{n}$ are never isomorphic. This theorem is transported via our four functors to four different theorems about telling graphs apart.

\end{remark}


\subsubsection{The category of categories}\label{sec:cat of cats}

Recall from Remark \ref{rmk:small} that a small category $\mcC$ is one in which $\Ob(\mcC)$ is a set. We have not really been paying attention to this issue, and everything we have said so far works whether $\mcC$ is small or not. In the following definition we really ought to be a little more careful, so we are. 

\begin{proposition}\index{a category!$\Cat$}

There exists a category, called {\em the category of small categories} and denoted $\Cat$, in which the objects are the small categories and the morphisms are the functors, $$\Hom_\Cat(\mcC,\mcD)=\{F\taking\mcC\to\mcD\| F \tn{ is a functor}\}.$$ That is, there are identity functors, functors can be composed, and the identity and associativity laws hold.
 
\end{proposition}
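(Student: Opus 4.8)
The plan is to verify the four constituents and two laws of Definition \ref{def:category}, taking the objects of $\Cat$ to be the small categories and the morphisms to be the functors of Definition \ref{def:functor}. The guiding observation is that a functor $F\taking\mcC\to\mcD$ is, at bottom, a pair of ordinary functions $\Ob(F)\taking\Ob(\mcC)\to\Ob(\mcD)$ and $\Hom_F\taking\Hom_\mcC\to\Hom_\mcD$ subject to equational conditions, so almost every step will reduce to facts about composition of functions in $\Set$ already available from Chapter \ref{chap:sets}.

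First I would construct the identities and the composition formula. For a small category $\mcC$, define $\id_\mcC$ by letting both its on-objects and its on-morphisms parts be the identity functions on $\Ob(\mcC)$ and $\Hom_\mcC$; this sends each $\id_c$ to itself and each composite to itself, so the two functor laws hold trivially. Given $F\taking\mcC\to\mcD$ and $G\taking\mcD\to\mcE$, define $\Ob(G\circ F):=\Ob(G)\circ\Ob(F)$ and let the on-morphisms part be the composite $\Hom_\mcC(c,d)\To{\Hom_F}\Hom_\mcD(F(c),F(d))\To{\Hom_G}\Hom_\mcE(GF(c),GF(d))$. The only real work is confirming $G\circ F$ is again a functor, and this chains the two given preservation facts: $G(F(\id_c))=G(\id_{F(c)})=\id_{GF(c)}$ and $G(F(h\circ g))=G(F(h)\circ F(g))=G(F(h))\circ G(F(g))$. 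I expect this to be routine.

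Next I would check the identity and associativity laws. Since a functor is completely determined by its on-objects and on-morphisms functions, two functors are equal exactly when those underlying functions agree. Hence $F\circ\id_\mcC=F$, $\id_\mcD\circ F=F$, and $(H\circ G)\circ F=H\circ(G\circ F)$ each reduce componentwise to the unit and associativity laws for composition of functions in $\Set$, which we may assume.

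The main obstacle — and the reason smallness was introduced in Remark \ref{rmk:small} — is the set-theoretic bookkeeping needed to ensure that $\Hom_\Cat(\mcC,\mcD)$ is genuinely a set. The careful part will be arguing that when $\mcC$ and $\mcD$ are small, a functor $\mcC\to\mcD$ is specifiable as a single element of the set $\Hom_\Set(\Ob(\mcC),\Ob(\mcD))\cross\Hom_\Set(\Hom_\mcC,\Hom_\mcD)$; the functors then form the subset of this product cut out by the functor laws, and a subset of a set is a set. This is exactly where we use that $\Ob(\mcC)$ and $\Hom_\mcC$ are small sets, so that the relevant hom-sets and their product exist; without smallness the proposition would fail at the level of foundations rather than at the level of the straightforward verifications above.
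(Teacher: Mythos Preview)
Your proposal is correct and follows essentially the same approach as the paper: define $\id_\mcC$ and $G\circ F$ componentwise via the on-objects and on-morphisms functions, then observe that the functor and category laws reduce to the corresponding facts about function composition in $\Set$. You are more thorough than the paper in verifying that $G\circ F$ preserves identities and composites, and in addressing why $\Hom_\Cat(\mcC,\mcD)$ is a set---the paper deliberately elides the latter point per Remark \ref{rmk:small}.
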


\begin{proof}

We follow Definition \ref{def:category}. We have specified $\Ob(\Cat)$ and $\Hom_\Cat$ already. Given a small category $\mcC$, there is an identity functor $\id_\mcC\taking\mcC\to\mcC$ that is identity on the set of objects and the set of morphisms. And given a functor $F\taking\mcC\to\mcD$ and a functor $G\taking\mcD\to\mcE$, it is easy to check that $G\circ F\taking \mcC\to\mcE$, defined by composition of functions $\Ob(G)\circ\Ob(F)\taking\Ob(\mcC)\to\Ob(\mcE)$ and $\Hom_G\circ\Hom_F\taking\Hom_\mcC\to\Hom_\mcE$ (see Exercise \ref{exc:rephrase functors}), is a functor. For the same reasons, it is easy to show that functors obey the identity law and the composition formula. Therefore this specification of $\Cat$ satisfies the definition of being a category. 

\end{proof}

\begin{example}[Categories have underlying graphs]\label{ex:underlying graph}\index{category!underlying graph of}

Let $\mcC=(\Ob(\mcC),\Hom_\mcC,dom,cod,\ids,\circ)$ be a category (see Exercise \ref{exc:cat in set}). Then $(\Ob(\mcC),\Hom_\mcC,dom,cod)$ is a graph, which we will call the {\em graph underlying $\mcC$} and denote by $U(\mcC)\in\Ob(\Grph)$. A functor $F\taking\mcC\to\mcD$ induces a graph morphism $U(F)\taking U(\mcC)\to U(\mcD)$, as seen in (\ref{dia:rephrase functors}). So we have a functor, $$U\taking\Cat\to\Grph.$$

\end{example}

\begin{example}[Free category on a graph]\label{ex:free category}\index{category!free category}\index{graph!free category on}

In Example \ref{ex:paths-graph}, we discussed a functor $\Paths\taking\Grph\to\Grph$\index{a functor!$\Paths\taking\Grph\to\Grph$} that considered all the paths in a graph $G$ as the arrows of a new graph $\Paths(G)$. In fact, $\Paths(G)$ could be construed as a category, which we will denote $F(G)\in\Ob(\Cat)$ and call {\em the free category generated by $G$}. 

Here, the objects of the category $F(G)$ are the vertices of $G$. For any two vertices $v,v'$ the hom-set $\Hom_{F(G)}(v,v')$ is the set of paths in $G$ from $v$ to $v'$. The identity elements are given by the trivial paths, and the composition formula is given by concatenation of paths. 

To see that $F$ is a functor, we need to see that a graph homomorphism $f\taking G\to G'$ induces a functor $F(f)\taking F(G)\to F(G')$. But this was shown in Exercise \ref{exc:morphisms on paths-graphs}. Thus we have a functor $$F\taking\Grph\to\Cat$$\index{a functor!$\Grph\to\Cat$} called {\em the free category} functor.

\end{example}

\begin{exercise}\label{exc:[1]}
Let $G$ be the graph depicted $$\LMO{v_0}\Too{\;\;e\;\;}\LMO{v_1},$$ and let $[1]\in\Ob(\Cat)$ denote the free category on $G$ (see Example \ref{ex:free category}). We call $[1]$ the {\em free arrow category}.
\sexc What are its objects?
\next For every pair of objects in $[1]$, write down the hom-set.
\endsexc
\end{exercise}

\begin{exercise}
Let $G$ be the graph whose vertices are all cities in the US and whose arrows are airplane flights connecting cities. What idea is captured by the free category on $G$?
\end{exercise}

\begin{exercise}\label{exc:free underlying cat grph}
Let $F\taking\Grph\to\Cat$ denote the free category functor from Example \ref{ex:free category}, and let $U\taking\Cat\to\Grph$\index{a functor!$\Cat\to\Grph$} denote the underlying graph functor from Example \ref{ex:underlying graph}. We have seen the composition $U\circ F\taking\Grph\to\Grph$ before; what was it called?
\end{exercise}

\begin{exercise}
Recall the graph $G$ from Example \ref{ex:graph}. Let $\mcC=F(G)$ be the free category on $G$.
\sexc What is $\Hom_\mcC(v,x)$?
\next What is $\Hom_\mcC(x,v)$?
\endsexc
\end{exercise}

\begin{example}[Discrete graphs, discrete categories]\label{ex:discrete graph discrete cat}\index{category!discrete}

There is a functor $Disc\taking\Set\to\Grph$\index{a functor!$Disc\taking\Set\to\Grph$} that sends a set $S$ to the graph $$Disc(S):=(S,\emptyset,!,!),$$ where $!\taking\emptyset\to S$ is the unique function. We call $Disc(S)$ the {\em discrete graph on the set $S$}. It is clear that a function $S\to S'$ induces a morphism of discrete graphs. Now applying the free category functor $F\taking\Grph\to\Cat$, we get the so-called {\em discrete category on the set $S$}, which we also might call $Disc\taking\Set\to\Cat$.\index{a functor!$Disc\taking\Set\to\Cat$} 

\end{example}

\begin{exercise}
Recall from (\ref{dia:underline n}) the definition of the set $\ul{n}$ for any natural number $n\in\NN$, and let $D_n:=Disc(\ul{n})\in\Ob(\Cat)$.
\sexc List all the morphisms in $D_4$. 
\next List all the functors $D_3\to D_2.$
\endsexc
\end{exercise}

\begin{exercise}[Terminal category]\label{exc:term cat}\index{a category!terminal}
Let $\mcC$ be a category. How many functors are there $\mcC\to D_1$, where $D_1:=Disc(\ul{1})$ is the discrete category on one element?
\end{exercise}

We sometimes refer to $Disc(\ul{1})$ as the {\em terminal category} (for reasons that will be made clear in Section \ref{sec:lims and colims in a cat}), and for simplicity denote it by $\ul{1}$.

\begin{exercise}\label{exc:Ob is a functor}
If someone said ``$\Ob$ is a functor from $\Cat$ to $\Set$," what might they mean? \index{a functor!$\Ob\taking\Cat\to\Set$}
\end{exercise}


\section{Categories and functors commonly arising in mathematics}


\subsection{Monoids, groups, preorders, and graphs}\label{sec:mon grp pro as cat}

We saw in Section \ref{sec:categories} that there is a category $\Mon$ of monoids, a category $\Grp$ of groups, a category $\PrO$ of preorders, and a category $\Grph$ of graphs. In this section we show that each monoid $\mcM$, each group $\mcG$, and each preorder $\mcP$ can be considered as its own category. If each object in $\Mon$ is a category, we might hope that each morphism in $\Mon$ is just a functor, and this is true. The same holds for $\Grp$ and $\PrO$. We will deal with graphs in Section \ref{sec:graphs as functors}.


\subsubsection{Monoids as categories}\label{sec:monoids as cats}\index{monoid!as category}

In Example \ref{ex:monoid as olog} we said that to olog a monoid, we should use only one box. And again in Example \ref{ex:monoid action table} we said that a monoid action could be captured by only one table. These ideas emanated from the understanding that a monoid is perfectly modeled as a category with one object. 

\paragraph{Each monoid as a category with one object}

Let $(M,e,\star)$ be a monoid. We consider it as a category $\mcM$ with one object, $\Ob(\mcM)=\{\monOb\}$, and $$\Hom_\mcM(\monOb,\monOb):=M.$$ The identity morphism $\id_\monOb$ serves as the monoid identity $e$, and the composition formula $$\circ\taking\Hom_\mcM(\monOb,\monOb)\times\Hom_\mcM(\monOb,\monOb)\to\Hom_\mcM(\monOb,\monOb)$$ is given by $\star\taking M\times M\to M$. The associativity and identity laws for the monoid match precisely with the associativity and identity laws for categories.

If monoids are categories with one object, is there any categorical way of phrasing the notion of monoid homomorphism? Suppose that $\mcM=(M,e,\star)$ and $\mcM'=(M',e',\star')$. We know that a monoid homomorphism is a function $f\taking M\to M'$ such that $f(e)=e'$ and such that for every pair $m_0,m_1\in M$ we have $f(m_0\star m_1)=f(m_0)\star' f(m_1)$. What is a functor $\mcM\to\mcM'$? 

\paragraph{Each monoid homomorphism as a functor between one-object categories}

Say that $\Ob(\mcM)=\{\monOb\}$ and $\Ob(\mcM')=\{\monOb'\}$; and we know that $\Hom_\mcM(\monOb,\monOb)=M$ and $\Hom_{\mcM'}(\monOb',\monOb')=M'$. A functor $F\taking\mcM\to\mcM'$ consists first of a function $\Ob(\mcM)\to\Ob(\mcM')$, but these sets have only one element each, so there is nothing to say on that front. It also consists of a function $\Hom_\mcM\to\hom_{\mcM'}$ but that is just a function $M\to M'$. The identity and composition formulas for functors match precisely with the identity and composition formula for monoid homomorphisms, as discussed above. Thus a monoid homomorphism is nothing more than a functor between one-object categories. 
\begin{slogan}
A monoid is a category $\mcG$ with one object. A monoid homomorphism is just a functor between one-object categories.
\end{slogan}

We formalize this as the following theorem.

\begin{theorem}\label{thm:mon to cat}

There is a functor $i\taking\Mon\to\Cat$\index{a functor!$\Mon\to\Cat$} with the following properties:
\begin{itemize}
\item for every monoid $\mcM\in\Ob(\Mon)$, the category $i(\mcM)\in\Ob(\Cat)$ itself has exactly one object, $$|\Ob(i(\mcM))|=1$$ 
\item for every pair of monoids $\mcM,\mcM'\in\Ob(\Mon)$ the function $$\Hom_\Mon(\mcM,\mcM')\To{\iso}\Hom_\Cat(i(\mcM),i(\mcM')),$$ induced by the functor $i$, is a bijection.
\end{itemize}

\end{theorem}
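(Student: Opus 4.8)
The plan is to take the construction sketched just above the theorem as the \emph{definition} of $i$ and then verify the two bulleted properties, both of which reduce to a dictionary translation between the monoid axioms and the category axioms.

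First I would pin down $i$ on objects and on morphisms. On objects, send a monoid $\mcM = (M,\id_M,\star_M)$ to the one-object category $i(\mcM)$ with $\Ob(i(\mcM)) = \{\monOb\}$, with $\Hom_{i(\mcM)}(\monOb,\monOb) = M$, with identity morphism $\id_M$, and with composition $\star_M$. The verification that this really is a small category is direct: the two identity laws for the monoid are exactly the identity law for categories, monoid associativity is exactly categorical associativity, and $M$ is a set so $i(\mcM)$ is indeed small. On morphisms, send a monoid homomorphism $f\taking\mcM\to\mcM'$ to the functor $i(f)$ whose on-objects part is the unique function $\{\monOb\}\to\{\monOb'\}$ and whose on-morphisms part is the underlying function $f\taking M\to M'$. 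That $i(f)$ obeys the functor laws is precisely the content of the two defining equations of a monoid homomorphism: $f(\id_M)=\id_{M'}$ gives preservation of identities, and $f(m_0\star_M m_1)=f(m_0)\star_{M'}f(m_1)$ gives preservation of composition.

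Next I would check that $i$ is itself a functor $\Mon\to\Cat$. Preservation of identities and of composition both reduce to the corresponding statements about the underlying functions in $\Set$: $i(\id_\mcM)$ acts as the identity function on $M$, hence is the identity functor on $i(\mcM)$; and for composable $f,g$ the on-morphisms part of $i(g\circ f)$ is $g\circ f$ as a function, which equals the on-morphisms part of $i(g)\circ i(f)$, while the on-objects parts agree trivially. This establishes that $i$ is well defined.

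Property 1 is immediate from the construction, since $\Ob(i(\mcM))=\{\monOb\}$ is a singleton. For Property 2, I would produce an explicit inverse to the hom-set map induced by $i$. Given any functor $G\taking i(\mcM)\to i(\mcM')$, its on-objects part is forced (both categories have exactly one object), and its on-morphisms part is a function $\Hom_G\taking M\to M'$ which, by the functor laws, sends $\id_M\mapsto\id_{M'}$ and respects composition, i.e.\ is a monoid homomorphism $\mcM\to\mcM'$. Assigning to $G$ this homomorphism defines a map $\Hom_\Cat(i(\mcM),i(\mcM'))\to\Hom_\Mon(\mcM,\mcM')$, and unwinding definitions shows it is a two-sided inverse of the map induced by $i$: one composite recovers a homomorphism from its own underlying function, and the other recovers a functor from its on-morphisms part (using that the on-objects part was forced). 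Hence the induced map is a bijection. The only point that deserves care, rather than a glib wave, is this surjectivity direction: one must confirm that the on-objects data of a functor between one-object categories genuinely carries no information, so that such a functor amounts to nothing more than an identity- and composition-preserving function on morphisms. Everything else is a routine transcription.
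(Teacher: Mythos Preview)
Your proposal is correct and follows essentially the same approach as the paper: define $i$ via the ``monoid as one-object category'' dictionary laid out in the paragraphs preceding the theorem, then check functoriality and the two bulleted properties by direct translation between the monoid and category axioms. The paper's own proof is in fact much terser than yours---it simply points back to those preceding paragraphs and asserts that checking $i$ preserves identities and compositions is easy---so you have supplied more detail (in particular the explicit inverse for Property~2) than the paper does, but the strategy is identical.
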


\begin{proof}

This is basically the content of the preceding paragraphs. The functor $i$ sends a monoid to the corresponding category with one object and $i$ sends a monoid homomorphism to the corresponding functor; it is not hard to check that $i$ preserves identities and compositions.

\end{proof}

Theorem \ref{thm:mon to cat} situates the theory of monoids very nicely within the world of categories. But we have other ways of thinking about monoids, namely their actions on sets. As such it would greatly strengthen the story if we could subsume monoid actions within category theory also, and we can.

\paragraph{Each monoid action as a set-valued functor}

Recall from Definition \ref{def:monoid action} that if $(M,e,\star)$ is a monoid, an action consists of a set $S$ and a function $\acts\taking M\times S\to S$ such that $e\acts s=s$ and $m_0\acts (m_1\acts s)=(m_0\star m_1)\acts s$ for all $s\in S$. How might we relate the notion of monoid actions to the notion of functors? One idea is to try asking what a functor $F\taking\mcM\to\Set$ is; this idea will work.

Since $\mcM$ has only one object, we obtain one set, $S:=F(\monOb)\in\Ob(\Set)$. We also obtain a function $\Hom_F\taking\Hom_\mcM(\monOb,\monOb)\to\Hom_\Set(F(\monOb),F(\monOb))$, or more concisely, a function $$H_F\taking M\to\Hom_\Set(S,S).$$ By currying (see Proposition \ref{prop:curry}), this is the same as a function $\acts\taking M\times S\to S$. The rule that $e\acts s=s$ becomes the rule that functors preserve identities, $\Hom_F(\id_\monOb)=\id_S$. The other rule is equivalent to the composition formula for functors. 

\comment{

For the curious, we proceed to sketch a proof of this fact; everyone else can skip to Exercise \ref{}. To begin we need a lemma.

\begin{lemma}\label{lemma:evaluating composition}

Let $S$ be a set and let $H_S:=\Hom_\Set(S,S)$. Let $c\taking H_S\times H_S\to\Hom_\Set(S,S)$ be given by the composition of functions $c(g,f)=g\circ f$. The following diagram, the top function of which is obtained by currying and the other two maps of which are obtained by evaluation, commutes: 
$$
\xymatrix{H_S\times H_S\times S\ar[r]^-{\phi(c)}\ar[d]_{\id_{H_S}\times ev}&S\\H_S\times S\ar[ur]_{ev}}
$$

\end{lemma}

\begin{proof}

This is merely saying that for any $g,f\taking S\to S$ and any $s\in S$ we have 
$$
\xymatrix{(g,f,s)\ar@{|->}[r]^-{\phi(c)}\ar@{|->}[d]_{(\id_g,ev)}&(g\circ f)(s)\ar@{=}[d]\\(g,f(s))\ar@{|->}[r]_{ev}&g(f(s))}
$$
which holds by definition.

\end{proof}

Above we defined $H_F\taking M\to H_S$ such that the diagram 
$$
\xymatrix{M\times S\ar[rr]^{H_F}\ar@/_1pc/[rrr]_{\cdot}&&H_S\times S\ar[r]^-{ev}&S}
$$
commutes. The composition formula for monoid actions can be phrased as the left-hand commutative diagram of sets
$$
\xymatrix{M\times M\times S\ar[r]^-{\id_M\times\cdot}\ar[d]_{\star\times\id_S}&M\times S\ar[d]^\cdot\\M\times S\ar[r]_\cdot&S}
\hspace{.7in}
\xymatrix@=25pt{
M\times M\times S\ar[rd]\ar[r]^-{H_F}\ar[dd]_{\star}&M\times H_S\times S\ar[d]\ar[r]^-{ev}&M\times S\ar[d]^{H_F}\\
&H_S\times H_S\times S\ar[r]\ar[rd]^\circ&H_S\times S\ar[d]^{ev}\\
M\times S\ar[r]_{H_F}&H_S\times S\ar[r]_{ev}&S}
$$
This amounts to the same thing as the outer part of the right-hand diagram. By Lemma \ref{lemma:evaluating composition} the inner part commutes too. Inside this diagram we see a curried version of the diagram that comes to us from the composition formula for functors,
$$
\xymatrix{M\times M\ar[r]^{H_F\times H_F}\ar[d]_\star&H_S\times H_S\ar[d]^\circ\\M\ar[r]_{H_F}&H_S}
$$

}


\subsubsection{Groups as categories}\index{group!as category}

A group is just a monoid $(M,e,\star)$ in which every element $m\in M$ is invertible, meaning there exists some $m'\in M$ with $m\star m'=e=m'\star m.$ If a monoid is the same thing as a category $\mcM$ with one object, then a group must be a category with one object and with an additional property having to do with invertibility. The elements of $M$ are the morphisms of the category $\mcM$, so we need a notion of invertibility for morphisms. Luckily we have such a notion already, namely isomorphism. We have the following:
\begin{slogan}
A group is a category $\mcG$ with one object, such that every morphism in $\mcG$ is an isomorphism. A group homomorphism is just a functor between such categories.
\end{slogan}

\begin{theorem}\label{thm:grp to cat}

There is a functor $i\taking\Grp\to\Cat$\index{a functor!$\Grp\to\Cat$} with the following properties:
\begin{itemize}
\item for every group $\mcG\in\Ob(\Grp)$, the category $i(\mcG)\in\Ob(\Cat)$ itself has exactly one object, and every morphism $m$ in $i(\mcG)$ is an isomorphism; and 
\item for every pair of groups $\mcG,\mcG'\in\Ob(\Grp)$ the function $$\Hom_\Grp(\mcG,\mcG')\To{\iso}\Hom_\Cat(i(\mcG),i(\mcG')),$$ induced by the functor $i$, is a bijection.
\end{itemize}

\end{theorem}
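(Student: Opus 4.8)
The plan is to realize $i\taking\Grp\to\Cat$ as a composite, exploiting the fact that a group is nothing but a monoid with an extra property and that we have already built a functor $\Mon\to\Cat$ in Theorem \ref{thm:mon to cat}. First I would recall from Example \ref{ex:grp to monoid} the forgetful functor $U\taking\Grp\to\Mon$ sending each group to its underlying monoid and each group homomorphism to its underlying monoid homomorphism. Writing $i_\Mon\taking\Mon\to\Cat$ for the functor of Theorem \ref{thm:mon to cat}, I would then define $i:=i_\Mon\circ U$. Since $U$ and $i_\Mon$ each preserve identities and composition, so does the composite; thus $i$ is automatically a functor and nothing needs to be checked on that front.

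Next I would verify the two listed properties. The one-object claim is inherited directly from Theorem \ref{thm:mon to cat}: for any group $\mcG$, the underlying monoid $U(\mcG)$ is carried by $i_\Mon$ to a category with exactly one object, so $|\Ob(i(\mcG))|=1$. The genuinely new ingredient---and the only place the group axiom is used---is showing that every morphism of $i(\mcG)$ is an isomorphism. Here I would translate the group axiom into categorical language. The hom-set $\Hom_{i(\mcG)}(\monOb,\monOb)$ is the underlying set of $\mcG$, with composition given by the group multiplication $\star$ and identity morphism given by $\id$. Given any morphism $m$, Definition \ref{def:group} supplies an element $m'$ with $m\star m'=\id=m'\star m$; read inside the category, $m'$ is exactly a two-sided inverse of $m$, so $m$ is an isomorphism.

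For the bijection on hom-sets, I would observe that essentially no work remains. By Definition \ref{def:group homomorphism}, a group homomorphism $\mcG\to\mcG'$ \emph{is} a monoid homomorphism of the underlying monoids, so $\Hom_\Grp(\mcG,\mcG')=\Hom_\Mon(U\mcG,U\mcG')$. Likewise $i(\mcG)=i_\Mon(U\mcG)$ and $i(\mcG')=i_\Mon(U\mcG')$ by definition of the composite. Theorem \ref{thm:mon to cat} already furnishes a bijection $\Hom_\Mon(U\mcG,U\mcG')\To{\iso}\Hom_\Cat(i_\Mon(U\mcG),i_\Mon(U\mcG'))$, and this is precisely the map induced by $i$. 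Hence it too is a bijection.

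The main obstacle, if there is one, is conceptual rather than computational: I must be careful that a functor $i(\mcG)\to i(\mcG')$ carries no data beyond that of a functor between the underlying one-object categories, since ``every morphism is an isomorphism'' is a \emph{property} of the categories rather than extra structure imposed on the functor. Once that is recognized, the monoid-level bijection transports without modification, and the theorem reduces to Theorem \ref{thm:mon to cat} together with the single observation that inverses in the monoid sense are inverses in the categorical (isomorphism) sense.
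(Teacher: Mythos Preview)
Your proposal is correct. The paper does not actually supply a proof environment for this theorem; it relies on the preceding slogan and paragraphs, which amount to the same idea you have written out: reuse Theorem~\ref{thm:mon to cat} via the forgetful functor $U\taking\Grp\to\Mon$ of Example~\ref{ex:grp to monoid}, and add the single observation that group inverses are categorical inverses. Your explicit factorization $i=i_\Mon\circ U$ and your remark that ``every morphism is an isomorphism'' is a property rather than structure (so the hom-set bijection from the monoid case transports unchanged) make precise exactly what the paper leaves to the reader.
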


Just as with monoids, an action of some group $(G,e,\star)$ on a set $S\in\Ob(\Set)$ is the same thing as a functor $\mcG\to\Set$ sending the unique object of $\mcG$ to the set $S$. 


\subsubsection{Monoid and group stationed at each object in a category}

If a monoid is just a category with one object, we can locate monoids in any category $\mcC$ by narrowing our gaze to one object in $\mcC$. Similarly for groups.

\begin{example}[Endomorphism monoid]\index{monoid!of endomorphisms}

Let $\mcC$ be a category and $x\in\Ob(\mcC)$ an object. Let $M=\Hom_\mcC(x,x)$. Note that for any two elements $f,g\in M$ we have $f\circ g\taking x\to x$ in $M$. Let $\mcM=(M,\id_x,\circ)$. It is easy to check that $\mcM$ is a monoid; it is called the {\em endomorphism monoid of $x$ in $\mcC$}.

\end{example}

\begin{example}[Automorphism group]\index{group!of automorphisms}

Let $\mcC$ be a category and $x\in\Ob(\mcC)$ an object. Let $G=\{f\taking x\to x\|f\tn{ is an isomorphism}\}.$ Let $\mcG=(G,\id_x,\circ)$. It is easy to check that $\mcG$ is a group; it is called the {\em automorphism group of $x$ in $\mcC$}.

\end{example}

\begin{exercise}
Let $S=\{1,2,3,4\}\in\Ob(\Set)$.
\sexc What is the automorphism group of $S$ in $\Set$, and how many elements does this group have?
\next What is the endomorphism monoid of $S$ in $\Set$, and how many elements does this monoid have? 
\next Recall from Example \ref{ex:grp to monoid} that every group has an underlying monoid $U(G)$; is the endomorphism monoid of $S$ the underlying monoid of the automorphism group of $S$?
\endsexc
\end{exercise}

\begin{exercise}\label{exc:symmetric square}
Consider the graph $G$ depicted below. 
$$
\xymatrix@=40pt{
\LMO{1}\ar@/^.3pc/[r]^{12}\ar@/^.3pc/[d]^{13}&\LMO{2}\ar@/^.3pc/[d]^{24}\ar@/^.3pc/[l]^{21}\\
\LMO{3}\ar@/^.3pc/[r]^{34}\ar@/^.3pc/[u]^{31}&\LMO{4}\ar@/^.3pc/[u]^{42}\ar@/^.3pc/[l]^{43}
}
$$
What is its group of automorphisms? Hint: every automorphism of $G$ will induce an automorphism of the set $\{1,2,3,4\}$; which ones will preserve the arrows?
\end{exercise}


\subsubsection{Preorders as categories}\label{sec:preorder as cat}\index{preorder!as category}

A preorder $(X,\leq)$ consists of a set $X$ and a binary relation $\leq$ that is reflexive and transitive. We can make from $(X,\leq)\in\Ob(\PrO)$ a category $\mcX\in\Ob(\Cat)$ as follows. Define $\Ob(\mcX)=X$ and for every two objects $x,y\in X$ define 
$$\Hom_\mcX(x,y)=\begin{cases}\{``x\leq y"\}&\tn{ if } x\leq y\\\emptyset&\tn{ if } x\not\leq y\end{cases}$$
To clarify: if $x\leq y$, we assign $\Hom_\mcX(x,y)$ to be the set containing only one element, namely the string ``$x\leq y$".\footnote{The name of this morphism is completely unimportant. What matters is that $\Hom_\mcX(x,y)$ has exactly one element iff $x\leq y$.} If $(x,y)$ is not in relation $\leq$, then we assign $\Hom_\mcX(x,y)$ to be the empty set. The composition formula 
\begin{align}\label{dia:comp in preorder}
\circ\taking\Hom_\mcX(x,y)\times\Hom_\mcX(y,z)\to\Hom_\mcX(x,z)
\end{align}
is completely determined because either one of two possibilities occurs. One possibility is that the left-hand side is empty (if either $x\not\leq y$ or $y\not\leq z$; in this case there is a unique function $\circ$ as in (\ref{dia:comp in preorder}). The other possibility is that the left-hand side is not empty in case $x\leq y$ and $y\leq$, which implies $x\leq z$, so the right-hand side has exactly one element $``x\leq z"$ in which case again there is a unique function $\circ$ as in (\ref{dia:comp in preorder}).

On the other hand, if $\mcC$ is a category having the property that for every pair of objects $x,y\in\Ob(\mcC)$, the set $\Hom_\mcC(x,y)$ is either empty or has one element, then we can form a preorder out of $\mcC$. Namely, take $X=\Ob(\mcC)$ and say $x\leq y$ if there exists a morphism $x\to y$ in $\mcC$. 

\begin{exercise}
We have seen that a preorder can be considered as a category $\mcP$. Recall from Definition \ref{def:orders} that a partial order is a preorder with an additional property. Phrase the defining property for partial orders in terms of isomorphisms in the category $\mcP$.
\end{exercise}

\begin{exercise}
Suppose that $\mcC$ is a preorder (considered as a category). Let $x,y\in\Ob(\mcC)$ be objects such that $x\leq y$ and $y\leq x$. Prove that there is an isomorphism $x\to y$ in $\mcC$.
\end{exercise}

\begin{example}

The olog from Example \ref{ex:pre not par} depicted a partial order, say $\mcP$. In it we have $$\Hom_\mcP(\fakebox{a diamond},\fakebox{a red card})=\{\tn{is}\}$$ and we have $$\Hom_\mcP(\fakebox{a black queen},\fakebox{a card})\iso\{\tn{is}\circ\tn{is}\};$$ Both of these sets contain exactly one element, the name is not important. The set $\Hom_\mcP(\fakebox{a 4},\fakebox{a 4 of diamonds})=\emptyset$. 

\end{example}

\begin{exercise}
Every linear order is a partial order with a special property. Can you phrase this property in terms of hom-sets?
\end{exercise}

\begin{proposition}\label{prop:preorders to cats}

There is a functor $i\taking\PrO\to\Cat$\index{a functor!$\PrO\to\Cat$} with the following properties for every preorder $(X,\leq)$:
\begin{enumerate}
\item the category $\mcX:=i(X,\leq)$ has objects $\Ob(\mcX)=X$; and
\item \label{cond:hom set in preorder} for each pair of elements $x,x'\in\Ob(\mcX)$ the set $\Hom_\mcX(x,x')$ has at most one element.
\end{enumerate}
Moreover, any category with property \ref{cond:hom set in preorder} is in the image of the functor $i$.

\end{proposition}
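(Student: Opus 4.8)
The plan is to build $i$ directly on the recipe given in the paragraphs preceding this proposition, and then check the two bookkeeping laws for functors. On objects I would set $i(X,\leq)=\mcX$ to be the category already constructed above: $\Ob(\mcX)=X$, with $\Hom_\mcX(x,y)$ a chosen one-element set when $x\leq y$ and $\emptyset$ otherwise, and with the (forced) unique identity morphisms and composition. Properties 1 and 2 then hold by construction, since reflexivity $x\leq x$ guarantees an identity morphism and each hom-set is by definition either empty or a singleton.

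For the on-morphisms part, given a morphism of preorders $f\taking(X,\leq_X)\to(Y,\leq_Y)$ — that is, a function $f\taking X\to Y$ with $x\leq_X x'\Rightarrow f(x)\leq_Y f(x')$ — I would define the functor $i(f)\taking\mcX\to\mcY$ (where $\mcX=i(X,\leq_X)$ and $\mcY=i(Y,\leq_Y)$) to be $f$ on objects, and on a morphism $x\to x'$ (which exists precisely when $x\leq_X x'$) to be the unique morphism $f(x)\to f(x')$ of $\mcY$. The key point making this well defined is exactly the defining condition on $f$: whenever $\Hom_\mcX(x,x')\neq\emptyset$ we have $f(x)\leq_Y f(x')$, so the target hom-set is nonempty and, being a singleton, supplies the required morphism with no choice to make. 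Functoriality of $i(f)$ (preservation of identities and composition) is then automatic, because any two morphisms of $\mcY$ with the same source and target are equal when hom-sets have at most one element. For the same reason $i$ itself preserves identities and composition, so $i\taking\PrO\to\Cat$ is a functor.

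It remains to prove the ``moreover'' clause. Given any category $\mcC$ with property 2, I would define a relation $\leq$ on $X:=\Ob(\mcC)$ by declaring $x\leq x'$ iff $\Hom_\mcC(x,x')\neq\emptyset$. This is reflexive because each $\id_x$ witnesses $x\leq x$, and transitive because composition turns morphisms $x\to x'$ and $x'\to x''$ into a morphism $x\to x''$; hence $(X,\leq)$ is a preorder. Comparing $i(X,\leq)$ with $\mcC$, both have object set $X$, and for each pair $x,x'$ the hom-set is a singleton exactly when $\Hom_\mcC(x,x')$ is nonempty and empty otherwise; sending the unique morphism of $i(X,\leq)$ to the unique morphism of $\mcC$ (when one exists) gives an isomorphism $i(X,\leq)\iso\mcC$ in $\Cat$, exhibiting $\mcC$ in the image of $i$.

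The main obstacle — really the only non-mechanical point — is getting the ``moreover'' stated at the right level of strictness: on the nose $i(X,\leq)$ and $\mcC$ need not be literally equal, since the construction names each morphism of $i(X,\leq)$ by the string ``$x\leq x'$'' whereas $\mcC$'s morphisms are arbitrary. I would therefore phrase membership in the image up to the canonical isomorphism above, which is forced and unique precisely because all the hom-sets involved are singletons. Everything else is driven by the slogan that in a preorder-category there is never more than one morphism between two objects, which collapses every coherence check to a triviality.
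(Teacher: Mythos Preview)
Your proposal is correct and follows essentially the same approach as the paper: the paper's proof is extremely terse, simply pointing back to the preceding discussion that already constructed $i$ on objects and explained how to extract a preorder from any category with property~2, and then remarking that functoriality on morphisms ``follows directly from definitions.'' You have unpacked exactly these steps in detail, and your observation about the ``moreover'' clause holding only up to the canonical isomorphism (rather than literal equality of categories) is a valid nuance that the paper glosses over.
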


\begin{proof}

To specify a functor $i\taking\PrO\to\Cat$, we need to say what it does on objects and on morphisms. To an object $(X,\leq)$ in $\PrO$, we assign the category $\mcX$ with objects $X$ and a unique morphism from $x\to x'$ if $x\leq x'$; this was discussed at the top of Section \ref{sec:preorder as cat}. To a morphism $f\taking(X,\leq_X)\to(Y,\leq_Y)$ of preorders, we must assign a functor $i(f)\taking\mcX\to\mcY$. Again, to specify a functor we need to say what it does on objects and morphisms of $\mcX$. To an object $x\in\Ob(\mcX)=X$, we assign the object $f(x)\in Y=\Ob(\mcY)$. Given a morphism $f\taking x\to x'$ in $\mcX$, we know that $x\leq x'$ so by Definition \ref{def:morphism of orders} we have that $f(x)\leq f(x')$, and we assign to $f$ the unique morphism $f(x)\to f(x')$ in $\mcY$. To check that the rules of functors (preservation of identities and composition) are obeyed is routine.

\end{proof}

\begin{slogan}
A preorder is a category in which every hom-set has either 0 elements or 1 element. A preorder morphism is just a functor between such categories.
\end{slogan}

\begin{exercise}
Recall the functor $P\taking\PrO\to\Grph$\index{a functor!$\PrO\to\Grph$} from Proposition \ref{prop:pro to grph}, the functors $F\taking\Grph\to\Cat$ and $U\taking\Cat\to\Grph$ from Example \ref{exc:free underlying cat grph}, and the functor $i\taking\PrO\to\Cat$\index{a functor!$\PrO\to\Cat$} from Proposition \ref{prop:preorders to cats}.
\sexc Do either of the following diagrams of categories commute?
$$
\xymatrix@=15pt{\PrO\ar[rr]^P\ar[rdd]_i&\ar@{}[dd]|(.4)?&\Grph\ar[ldd]^F\\\\&\Cat}
\hspace{.5in}
\xymatrix@=15pt{\PrO\ar[rr]^P\ar[rdd]_i&\ar@{}[dd]|(.4)?&\Grph\\\\&\Cat\ar[uur]_U}
$$
\next We also had a functor $\Grph\to\PrO$. Does the following diagram of categories commute?
$$
\xymatrix@=15pt{\Grph\ar[rr]\ar[rdd]_F&\ar@{}[dd]|(.4)?&\PrO\ar[ldd]^i\\\\&\Cat}
$$
\endsexc
\end{exercise}


\subsubsection{Graphs as functors}\label{sec:graphs as functors}\index{graph!as functor}

Let $\mcC$ denote the category depicted below 
\begin{align}\label{dia:graph index}
\GrIn:=\fbox{\GrInSchema}\index{a category!$\GrIn$}
\end{align}
Then a functor $G\taking\GrIn\to\Set$ is the same thing as two sets $G(Ar),G(V\!e)$ and two functions $G(src)\taking G(Ar)\to G(V\!e)$ and $G(tgt)\taking G(Ar)\to G(V\!e)$. This is precisely what is needed for a graph; see Definition \ref{def:graph}. We call $\GrIn$ the {\em graph indexing category}.

\begin{exercise}
Consider the terminal category, $\ul{1}$, also known as the discrete category on one element (see Exercise \ref{exc:term cat}). Let $\GrIn$ be as in (\ref{dia:graph index}) and consider the functor $i_0\taking\ul{1}\to\GrIn$ sending the object of $\ul{1}$ to the object $V\in\Ob(\GrIn)$. If $G\taking\GrIn\to\Set$ is a graph, what is the composite $G\circ i_0$? It consists of only one set; what set is it? For example, what set is it when $G$ is the graph from Example \ref{ex:graph hom}.
\end{exercise}

If a graph is a functor $\GrIn\to\Set$, what is a graph homomorphism? We will see later in Example \ref{ex:graph hom as NT} that graph homomorphisms are homomorphisms between functors, which are called natural transformations. (Natural transformations are the highest-``level" structure that occurs in ordinary category theory.)

\begin{example}\index{graph!symmetric}

Let $\mcD$ be the category depicted below
\begin{align}\label{dia:symmetric graph index}
\mcD:=\fbox{\xymatrix{\LMO{A}\ar@(lu,ld)[]_\rho\ar@<.5ex>[r]^{src}\ar@<-.5ex>[r]_{tgt}&\LMO{V}}}
\end{align}
with the following composition formula: 
$$\rho\circ\rho=\id_A;\hsp src\circ\rho=tgt;\hsp\tn{and}\hsp tgt\circ\rho=src.$$

The idea here is that the morphism $\rho\taking A\to A$ reverses arrows. The PED $\rho\circ\rho=\id_A$ forces the fact that the reverse of the reverse of an arrow yields the original arrow. The PEDs $src\circ\rho=tgt$ and $tgt\circ\rho=src$ force the fact that when we reverse an arrow, its source and target switch roles. 

This category $\mcD$ is the {\em symmetric graph indexing category}. Just like any graph can be understood as a functor $\GrIn\to\Set$, where $\GrIn$ is the graph indexing category displayed in (\ref{dia:graph index}), any symmetric graph can be understood as a functor $\mcD\to\Set$, where $\mcD$ is the category drawn above. Given a functor $G\taking\mcD\to\Set$, we will have a set of arrows, a set of vertices, a source operation, a target operation, and a ``reverse direction" operation that all behave as expected.

It is customary to draw the connections in a symmetric graph as line segments rather than arrows between vertices. However, a better heuristic is to think that each connection between vertices consists of two arrows, one pointing in each direction. 

\begin{slogan}
In a symmetric graph, every arrow has an equal and opposite arrow.
\end{slogan}

\end{example}

\begin{exercise}
Which of the following graphs are symmetric:
\sexc The graph $G$ from (\ref{dia:graph})?
\next The graph $G$ from Exercise \ref{exc:secret turing}?
\next The graph $G'$ from (\ref{dia:graph hom example})?
\next The graph $\Loop$ from (\ref{dia:loop}), i.e. the graph having exactly one vertex and one arrow?
\next The graph $G$ from Exercise \ref{exc:symmetric square}?
\endsexc
\end{exercise}

\begin{exercise}
Let $\GrIn$ be the graph indexing category shown in (\ref{dia:graph index}) and let $\mcD$ be the symmetric graph indexing category displayed in (\ref{dia:symmetric graph index}).
\sexc How many functors are there of the form $\GrIn\to\mcD$?
\next Is one more ``reasonable" than the others? 
\next Choose the one that seems most reasonable and call it $i\taking\GrIn\to\mcD$. If a symmetric graph is a functor $S\taking\mcD\to\Set$, you can compose with $i$ to get a functor $S\circ i\taking\GrIn\to\Set$. This is a graph; what graph is it? What has changed?
\endsexc
\end{exercise}


\subsection{Database schemas present categories}\label{sec:schemas and cats intro}\index{schema!as category presentation}

Recall from Definition \ref{def:schema} that a database schema (or schema, for short) consists of a graph together with a certain kind of equivalence relation on its paths. In Section \ref{sec:sch as category} we will define a category $\Sch$ that has schemas as objects and appropriately modified graph homomorphisms as morphisms. In Section \ref{sec:proof of cat=sch} we prove that the category of schemas is equivalent (in the sense of Definition \ref{def:equiv of cats}) to the category of categories, $$\Sch\simeq\Cat.$$

The difference between schemas and categories is like the difference between monoid presentations, given by generators and relations as in Definition \ref{def:presented monoid}, and the monoids themselves. The same monoid has (infinitely) many different presentations, and so it is for categories: many different schemas can {\em present} the same category. Computer scientists may think of the schema as {\em syntax}\index{schema!as syntax} and the category it presents as the corresponding {\em semantics}. A schema is a compact form, and can be specified in finite space and time while generating something infinite. 

\begin{slogan}
A database schema is a category presentation.
\end{slogan}

We will formally show in Section \ref{sec:proof of cat=sch} how to turn a schema into a category (the category it {\em presents}). For now, it seems pedagogically better not to be so formal, because the idea is fairly straightforward. Suppose given a schema $\mcS$, which consists of a graph $G=(V,A,src,tgt)$ equipped with a congruence $\sim$ (see Definition \ref{def:congruence}). It presents a category $\mcC$ defined as follows. The set of objects in $\mcC$ is defined to be the vertices $V$; the set of morphisms in $\mcC$ is defined to be the quotient $\Paths(G)/\sim$; and the composition law is concatenation of paths. The path equivalences making up $\sim$ become commutative diagrams in $\mcC$.\index{category!presentation}\index{schema!as category presentation}

\begin{example}

The schema $\Loop$, depicted below, has no path equivalence declarations. As a graph it has one vertex and one arrow.
$$\Loop:=\LoopSchema$$ 
The category it generates, however, is the free monoid on one generator, $\NN$. It has one object $\monOb$ but a morphism $f^n\taking\monOb\to\monOb$ for every natural number $n\in\NN$, thought of as ``how many times to go around the loop $f$". Clearly, the schema is more compact that the infinite category it generates.

\end{example}

\begin{exercise}
Consider the olog from Exercise \ref{exc:father and child}, which says that for any father $x$, his first child's father is $x$. It is redrawn below as a schema $\mcS$, and we include the desired path equivalence declaration, $F\;c\;f=F$,
$$
\xymatrix{\LMO{F}\ar[r]^c&\LMO{C}\ar@/^1pc/[l]^f}
$$ 
How many morphisms are there (total) in the category generated by $\mcS$?
\end{exercise}

\begin{exercise}
Suppose that $G$ is a graph and that $\mcG$ is the schema generated by $G$ with no PEDs. What is the relationship between the category generated by $\mcG$ and the free category $F(G)\in\Ob(\Cat)$ as defined in Example \ref{ex:free category}?
\end{exercise}


\subsubsection{Instances on a schema $\mcC$}\label{sec:instances}\index{database!instance}\index{instance}

If schemas are like categories, what are instances? Recall that an instance $I$ on a schema $\mcS=(G,\simeq)$ assigns to each vertex $v$ in $G$ a set of rows say $I(v)\in\Ob(\Set)$. And to every arrow $a\taking v\to v'$ in $G$ the instance assigns a function $I(a)\taking I(v)\to I(v')$. The rule is that given two equivalent paths, their compositions must give the same function. Concisely, an instance is a functor $I\taking\mcS\to\Set$. 

\begin{example}

We have now seen that a monoid is just a category $\mcM$ with one object and that a monoid action is a functor $\mcM\to\Set$. Under our understanding of database schemas as categories, $\mcM$ is a schema and so an action becomes an instance of that schema. The monoid action table from Example {ex:action table} was simply a manifestation of the database instance according to the Rules \ref{rules:schema to tables}.

\end{example}

\begin{exercise}
In Section \ref{sec:graphs as functors} we discuss how each graph is a functor $\GrIn\to\Set$ for the graph indexing category depicted below:
$$\GrIn:=\fbox{\GrInSchema}$$
But now we know that if a graph is a set-valued functor then we can consider $\GrIn$ as a database schema.
\sexc How many tables, and how many columns of each should there be (if unsure, consult Rules \ref{rules:schema to tables})?
\next Write out the table view of graph $G$ from Example \ref{ex:graph hom}. 
\endsexc
\end{exercise}


\subsection{Spaces}\index{space}

Category theory was invented for use in algebraic topology, and in particular to discuss natural transformations between certain functors. We will get to natural transformations more formally in Section \ref{sec:nat trans}. For now, they are ways of relating functors. In the original use, Eilenberg and Mac Lane were interested in functors that connect topological spaces (shapes like spheres, etc.) to algebraic systems (groups, etc.) 

For example, there is a functor that assigns to each space $X$ its group $\pi_1(X)$ of round-trip voyages (starting and ending at some chosen point $x\in X$), modulo some equivalence relation. There is another functor that assigns to every space its group $H_1(X,\ZZ)$ of ways to drop some (positive or negative) number of circles on $X$. These two functors are related, but they are not equal. 

There is a relationship between the functor $\pi_1$ and the functor $H_1$. For example when $X$ is the figure-$8$ space (two circles joined at a point) the group $\pi_1(X)$ is much bigger than the group $H_1(X)$. Indeed $\pi_1(X)$ includes information about the order and direction of loops traveled; whereas the group $H_1(X,\ZZ)$ includes only information about how many times one goes around each loop. However, there is a natural transformation of functors $\pi_1(-)\to H_1(-,\ZZ)$, called the Hurewicz transformation, which ``forgets" the extra information and thus yields a simplification. 

\begin{example}\label{ex:topological space}\index{space!topological}\index{topology}
Given a set $X$, recall that $\PP(X)$ denotes the set of subsets of $X$. A {\em topology} on $X$ is a choice of which subsets $U\in\PP(X)$ will be called {\em open sets}. The union of any number of open sets must be considered to be an open set, and the intersection of any finite number of open sets must be considered open. One could say succinctly that a topology on $X$ is a sub-order $\Op(X)\ss\PP(X)$ that is closed under taking finite meets and infinite joins.

A {\em topological space}\index{topological space} is a pair $(X,\Op(X))$, where $X$ is a set and $\Op(X)$ is a topology on $X$. The elements of the set $X$ are called {\em points}. A {\em morphism of topological spaces} (also called a {\em continuous map}) is a function $f\taking X\to Y$ such that for every $V\in\Op(Y)$ the preimage $f^\m1(V)\in\PP(X)$ is actually in $\Op(X)$. That is, such that there exists a dashed arrow making the diagram below commute:
$$\xymatrix{\Op(Y)\ar@{-->}[r]\ar[d]&\Op(X)\ar[d]\\\PP(Y)\ar[r]_{f^\m1}&\PP(X).}$$
The {\em category of topological spaces}, denoted $\Top$, is the category having objects and morphisms as above.\index{a category!$\Top$}

\end{example}

\begin{exercise}\label{exc:points and opens in Top}~
\sexc Explain how ``looking at points" gives a functor $\Top\to\Set$.\index{a functor!$\Top\to\Set$}
\next Does ``looking at open sets" give a functor $\Top\to\PrO$?\index{a functor!$\Top\to\PrO\op$}
\endsexc
\end{exercise}

\begin{example}[Continuous dynamical systems]\label{ex:continuous dynamical systems}\index{dynamical system!continuous}

The set $\RR$ can be given a topology in a standard way.\footnote{The topology is given by saying that $U\ss\RR$ is open iff for every $x\in U$ there exists $\epsilon>0$ such that $\{y\in \RR\| |y-x|<\epsilon\}\ss U\}$. One says, ``$U\ss\RR$ is open if every point in $U$ has an epsilon-neighborhood fully contained in $U$".} But $(\RR,0,+)$ is also a monoid. Moreover, for every $x\in\RR$ the monoid operation $+\taking\RR\times\RR\to\RR$ is continuous.
\footnote{The topology on $\RR\times\RR$ is similar; a subset $U\ss\RR\times\RR$ is open if every point $x\in U$ has an epsilon-neighborhood (a disk around $x$ of some positive radius) fully contained in $U$.}
So we say that $\mcR:=(\RR,0,+)$ is a {\em topological monoid}.

Recall from Section \ref{sec:monoids as cats} that a monoid action is a functor $\mcM\to\Set$, where $\mcM$ is a monoid. Instead imagine a functor $a\taking\mcR\to\Top$? Since $\mcR$ is a category with one object, this amounts to an object $X\in\Ob(\Top)$, a space. And to every real number $t\in\RR$ we obtain a continuous map $a(t)\taking X\to X$. If we consider $X$ as the set of states of some system and $\RR$ as the time line, we have captured what is called a {\em continuous dynamical system}.

\end{example}

\begin{example}\index{a category!$\Vect$}\index{vector space}

Recall (see \cite{Axl}) that a {\em real vector space} is a set $X$, elements of which are called {\em vectors}, which is closed under addition and scalar multiplication. For example $\RR^3$ is a vector space. A {\em linear transformation from $X$ to $Y$} is a function $f\taking X\to Y$ that appropriately preserves addition and scalar multiplication. The {\em category of real vector spaces}, denoted $\Vect_\RR$, has as objects the real vector spaces and as morphisms the linear transformations.

There is a functor $\Vect_\RR\to\Grp$\index{a functor!$\Vect_\RR\to\Grp$} sending a vector space to its underlying group of vectors, where the group operation is addition of vectors and the group identity is the 0-vector. 

\end{example}

\begin{exercise}
Every vector space has vector subspaces, ordered by inclusion (the origin is inside of any line which is inside of certain planes, etc., and all are inside of the whole space $V$). If you know about this topic, answer the following questions.
\sexc Does a linear transformation $V\to V'$ induce a morphism of these orders? In other words, is there a functor $\Vect_\RR\to\PrO$?\index{a functor!$\Vect_\RR\to\PrO$}
\next Would you guess that there is a nice functor $\Vect_\RR\to\Top$?\index{a functor!$\Vect_\RR\to\Top$} By a ``nice functor" I mean one that doesn't make people roll their eyes (for example, there is a functor $\Vect_\RR\to\Top$ that sends every vector space to the empty space, and that's not really a ``nice" one. If someone asked for a functor $\Vect_\RR\to\Top$ for their birthday, this functor would make them sad. We're looking for a functor $\Vect_\RR\to\Top$ that would make them happy.)
\endsexc
\end{exercise}


\subsubsection{Groupoids}\label{sec:groupoid}\index{groupoid}

Groupoids are like groups except a groupoid can have more than one object. 

\begin{definition}

A {\em groupoid} is a category $\mcC$ such that every morphism is an isomorphism. If $\mcC$ and $\mcD$ are groupoids, a {\em morphism of groupoids}, denoted $F\taking\mcC\to\mcD$, is simply a functor. The category of groupoids is denoted $\Grpd$.\index{a category!$\Grpd$}

\end{definition}

\begin{example}

There is a functor $\Grpd\to\Cat$\index{a functor!$\Grpd\to\Cat$}, sending a groupoid to its underlying category. There is also a functor $\Grp\to\Grpd$\index{a functor!$\Grp\to\Grpd$} sending a group to ``itself as a groupoid with one object." 

\end{example}

\begin{application}\index{groupoid!of material states}

Let $M$ be a material in some original state $s_0$.\footnote{This example may be a bit crude, in accordance with the crudeness of my understanding of materials science.} Construct a category $\mcS_M$ whose objects are the states of $M$, e.g. by pulling on $M$ in different ways, or by heating it up, etc. we obtain such states. Include a morphism from state $s$ to state $s'$ if there exists a physical transformation from $s$ to $s'$. Physical transformations can be performed one after another, so we can compose morphisms, and perhaps we can agree this composition is associative. Note that there exists a morphism $i_s\taking s_0\to s$ for any $s$. Note also that this category is a preorder because there either exists a physical transformation or there does not. 
\footnote{Someone may choose to beef this category up to include the set of physical processes between states as the hom-set. This gives a category that is not a preorder. But there would be a functor from their category to ours.}

The \href{http://en.wikipedia.org/wiki/Elastic_modulus}{\text elastic deformation region} of the material is the set of states $s$ such that there exists a morphism $s\to s_0$, because any such morphism will be the inverse of $i_s\taking s_0\to s$. A transformation is irreversible if there is no transformation back. If $s_1$ is not in the elastic deformation region, we can (inventing a term) still talk about the region that is ``elastically-equivalent" to $s_1$. It is all the objects in $\mcS_M$ that are isomorphic to $s_1$. If we consider only elastic equivalences, we are looking at a groupoid sitting inside the larger category $\mcS_M$.

\end{application}

\begin{example}

Alan Weinstein \href{http://www.ams.org/notices/199607/weinstein.pdf}{\text explains} groupoids in terms of tiling patterns on a bathroom floor, see \cite{WeA}.

\end{example}

\begin{example}\label{ex:fundamental groupoid}\index{groupoid!fundamental}

Let $I=\{x\in\RR\|0\leq x\leq 1\}$ denote the unit interval. It can be given a topology in a standard way, as a subset of $\RR$ (see Example \ref{ex:continuous dynamical systems})

For any space $X$, a {\em path in $X$} is a continuous map $I\to X$. Two paths are called {\em homotopic} if one can be continuously deformed to the other, where the deformation occurs completely within $X$.
\footnote{
Let $I^2=\{(x,y)\in\RR^2\|0\leq x\leq 1 \tn{ and } 0\leq y\leq 1\}$ denote the square. There are two inclusions $i_0,i_1\taking I\to S$ that put the interval inside the square at the left and right sides. Two paths $f_0,f_1\taking I\to X$ are homotopic if there exists a continuous map $f\taking I\times I\to X$ such that $f_0=f\circ i_0$ and $f_1=f\circ i_1$, 
$$\xymatrix{I\ar@<-.5ex>[r]_{i_1}\ar@<.5ex>[r]^{i_0}&I\times I\ar[r]^f&X}$$
} 
One can prove that being homotopic is an equivalence relation on paths. 

Paths in $X$ can be composed, one after the other, and the composition is associative (up to homotopy). Moreover, for any point $x\in X$ there is a trivial path (that stays at $x$). Finally every path is invertible (by traversing it backwards) up to homotopy. 

This all means that to any space $X\in\Ob(\Top)$ we can associate a groupoid, called the {\em fundamental groupoid of $X$} and denoted $\Pi_1(X)\in\Ob(\Grpd)$. The objects of $\Pi_1(X)$ are the points of $X$; the morphisms in $\Pi_1(X)$ are the paths in $X$ (up to homotopy). A continuous map $f\taking X\to Y$ can be composed with any path $I\to X$ to give a path $I\to Y$ and this preserves homotopy. So in fact $\Pi_1\taking\Top\to\Grpd$\index{a functor!$\Pi_1\taking\Top\to\Grpd$} is a functor.

\end{example}

\begin{exercise}
Let $T$ denote the surface of a donut, i.e. a torus. Choose two points $p,q\in T$. Since $\Pi_1(T)$ is a groupoid, it is also a category. What would the hom-set $\Hom_{\Pi_1(T)}(p,q)$ represent?
\end{exercise}

\begin{exercise}\index{vector field}
Let $U\ss\RR^2$ be an open subset of the plane, and let $F$ be an \href{http://en.wikipedia.org/wiki/Conservative_vector_field#Irrotational_vector_fields}{\text irrotational vector field} on $U$ (i.e. one with $\tn{curl}(F)=0$). Following Exercise \ref{exc:vector field 1}, we have a category $\mcC_F$. If two curves $C,C'$ in $U$ are homotopic then they have the same line integral, $\int_CF=\int_{C'}F$.

We also have a category $\Pi_1U$, given by the fundamental groupoid, as in Example \ref{ex:fundamental groupoid}. Both categories have the same objects, $\Ob(\mcC_F)=|U|=\Ob(\Pi_1U)$, the set of points in $U$.
\sexc Is there a functor $\mcC_F\to\Pi_1U$ or a functor $\Pi_1U\to\mcC_F$ that is identity on the underlying objects? 
\next What is $\mcC_F$ if $F$ is a conservative vector field?\index{vector field!conservative}
\endsexc
\end{exercise}

\begin{exercise}
Consider the set $A$ of all (well-formed) arithmetic expressions in the symbols $\{0,\ldots,9,+,-,*,(,)\}$. For example, here are some elements of $A$: $$52,\hsp 52-7,\hsp 50+3*(6-2).$$ We can say that an equivalence between two arithmetic expressions is a justification that they give the same ``final answer", e.g. $52+60$ is equivalent to $10*(5+6)+(2+0)$, which is equivalent to $10*11+2$. I've basically described a groupoid. What are its objects and what are its morphisms?
\end{exercise}


\subsection{Logic, set theory, and computer science}


\subsubsection{The category of propositions}\label{sec:propositions}\index{a category!$\Prop$}

Given a domain of discourse, a logical proposition is a statement that is evalued in any model of that domain as either true or ``not always true". For example, in the domain of real numbers we might have the proposition 
$$\tn{For all real numbers }x\in\RR\tn{ there exists a real number } y\in\RR\tn{ such that }y>3x.$$
We say that one logical proposition $P$ {\em implies} another proposition $Q$, denoted $P\Rightarrow Q$ if, for every model in which $P$ is true, so is $Q$. There is a category $\Prop$ whose objects are logical propositions and whose morphisms are proofs that one statement implies another. Crudely, one might say that {\em $B$ holds at least as often as $A$} if there is a morphism $A\to B$ (meaning whenever $A$ holds, so does $B$). So the proposition ``$x\neq x$" holds very seldom and ``$x=x$" always very often.

\begin{example}
We can repeat this idea for non-mathematical statements. Take all possible statements that are verifiable by experiment as objects of a category. Given two such statements, it may be that one implies the other (e.g. ``if the speed of light is fixed then there are relativistic effects"). Every statement implies itself (identity) and implication is transitive, so we have a category. 
\end{example}

Let's consider differences in proofs to be irrelevant, so the category $\Prop$ becomes a preorder: either $A$ implies $B$ or it does not. Then it makes sense to discuss meets and joins. It turns out that meets are ``and's" and joins are ``or's". That is, given propositions $A,B$ the meet $A\wedge B$ is defined to be a proposition that holds as often as possible subject to the constraint that it implies both $A$ and $B$; the proposition ``$A$ holds and $B$ holds" fits the bill. Similarly, the join $A\vee B$ is given by ``$A$ holds or $B$ holds".

\begin{exercise}\label{exc:juris 1}
Consider the set of possible laws (most likely an infinite set) that can be dictated to hold throughout a jurisdiction. Consider each law as a proposition (``such and such is (dictated to be) the case"), i.e as an object of our preorder $\Prop$. Given a jurisdiction $V$, and a set of laws $\{\ell_1,\ell_2,\ldots,\ell_n\}$ that are dictated to hold throughout $V$, we take their meet $L(V):=\ell_1\wedge\ell_2\wedge\cdots\wedge\ell_n$ and consider it to be the single law of the land $V$. Suppose that $V$ is a jurisdiction and $U$ is a sub-jurisdiction (e.g. $U$ is a county and $V$ is a state); write $U\leq V$. Then clearly any law dictated by the large jurisdiction (the state) must also hold throughout the small jurisdiction (the county).
\sexc What is the relation in $\Prop$ between $L(U)$ and $L(V)$?
\next Consider the preorder $J$ on jurisdictions given by $\leq$ as above. Is ``the law of the land" a morphism of preorders $J\to\Prop$? To be a bit more high-brow, considering both $J$ and $\Prop$ to be categories (by Proposition \ref{prop:preorders to cats}), we have a function $L\taking\Ob(J)\to\Ob(\Prop)$; this question is asking whether $L$ extends to a functor $J\to\Prop$.\footnote{Hint: Exercises \ref{exc:juris 1} and \ref{exc:juris 2} will ask similar yes/no questions and at least one of these is correctly answered ``no".}
\endsexc
\end{exercise}

\begin{exercise}\label{exc:juris 2}
Take again the preorder $J$ of jurisdictions from Exercise \ref{exc:juris 1} and the idea that laws are propositions. But this time, let $R(V)$ be the set of all possible laws (not just those dictated to hold) that are in actuality being respected, i.e. followed, by all people in $V$. This assigns to each jurisdiction a set.
\sexc Since preorders can be considered categories, does our ``the set of respected laws" function $R\taking\Ob(J)\to\Ob(\Set)$ extend to a functor $J\to\Set$? 
\next What about if instead we take the meet of all these laws and assign to each jurisdiction the maximal law respected throughout. Does this assignment $\Ob(J)\to\Ob(\Prop)$ extend to a functor $J\to\Prop$?$~^{\arabic{footnote}}$
\endsexc
\end{exercise}


\subsubsection{A categorical characterization of $\Set$}\index{set!Lawvere's description of}
The category $\Set$ of sets is fundamental in mathematics, but instead of thinking of it as something given or somehow special, it can be shown to merely be a category with certain properties, each of which can be phrased purely categorically. This was shown by Lawvere \cite{Law}. A very readable account is given in \cite{Le2}.


\subsubsection{Categories in computer science}\index{CCCs}\index{category!cartesian closed}

Computer science makes heavy use of trees, graphs, orders, lists, and monoids. We have seen that all of these are naturally viewed in the context of category theory, though it seems that such facts are rarely mentioned explicitly in computer science textbooks. However, categories are also used explicitly in the theory of programming languages (PL). Researchers in that field attempt to understand the connection between what programs are supposed to do (their denotation) and what they actually cause to occur (their operation). Category theory provides a useful mathematical formalism in which to study this.

The kind of category most often considered by a PL researcher is what is known as a {\em Cartesian closed category} or {\em CCC}, which means a category $\mcT$ that has products (like $A\times B$ in $\Set$) and exponential objects (like $B^A$ in $\Set$). $\Set$ is an example of a CCC, but there are others that are more appropriate for actual computation. The objects in a PL person's CCC represent the {\em types} of the language, types such as {\tt integers, strings, floats}. The morphisms represent computable functions, e.g. {\tt length: strings}$\too${\tt integers}. The products allow one to discuss pairs $(a,b)$ where $a$ is of one type and $b$ is of another type. Exponential objects allow one to consider computable functions as things that can be input to a function (e.g. given any computable function {\tt floats}$\to${\tt integers} one can consistently multiply its results by 2 and get a new computable function {\tt floats}$\to${\tt integers}. We will be getting to products in Section \ref{def:products in a cat} and exponential objects in Section \ref{sec:vert and hor}. 

But category theory did not only offer a language for thinking about programs, it offered an unexpected tool called monads. The above CCC model for types allows researchers only to discuss functions, leading to the notion of functional programming languages; however, not all things that a computer does are functions. For example, reading input and output, changing internal state, etc. are operations that can be performed that ruin the functional-ness of programs. Monads were found in 19?? by Moggi \cite{Mog} to provide a powerful abstraction that opens the doors to such non-functional operations without forcing the developer to leave the category-theoretic garden of eden. We will discuss monads in Section \ref{sec:monads}.

We have also seen in Section \ref{sec:schemas and cats intro} that databases are well captured by the language of categories. We will formalize this in Section \ref{sec:cat equiv sch}. Throughout the remainder of this book we will continue to use databases to bring clarity to concepts within standard category theory. 
 

\subsection{Categories applied in science} 

Categories are being used throughout mathematics to relate various subjects, as well as to draw out the essential structures within these subjects. For example, there is an active research for ``categorifying" classical theories like that of knots, links, and braids \cite{Kho}. It is similarly applied in science, to clarify complex subjects. Here are some very brief descriptions of scientific disciplines to which category theory is applied.

Quantum field theory is was categorified by Atiyah \cite{Ati} in the late 1980's, with much success (at least in producing interesting mathematics). In this domain, one takes a category in which an object is a reasonable space, called a manifold, and a morphism is a manifold connecting two manifolds, like a cylinder connects two circles. Such connecting manifolds are called cobordisms, and as such people refer to the category as $\Cob$. Topological quantum field theory is the study of functors $\Cob\to\Vect$ that assign a vector space to each manifold and a linear transformation of vector spaces to each cobordism. 

Information theory \index{information theory}
\footnote{To me, the subject of ``information theory" is badly named. That discipline is devoted to finding ideal compression schemes for messages to be sent quickly and accurately across a noisy channel. It deliberately does not pay any attention to what the messages mean. To my mind this should be called compression theory or redundancy theory. Information is inherently meaningful---that is its purpose---any theory that is unconcerned with the meaning is not really studying information per se. The people who decide on speed limits for roads and highways may care about human health, but a study limited to deciding ideal speed limits should not be called ``human health theory".} 
is the study of how to ideally compress messages so that they can be sent quickly and accurately across a noisy channel.\footnote{Despite what was said above, Information theory has been extremely important in a diverse array of fields, including computer science \cite{MacK}, but also in neuroscience \cite{Bar}, \cite{Lin} and physics \cite{Eve}. I'm not trying to denigrate the field; I am only frustrated with its name.} Invented in 1948 by Claude Shannon, its main quantity of interest is the number of bits necessary to encode a piece of information. For example, the amount of information in an English sentence can be greatly reduced. The fact that {\tt t}'s are often followed by {\tt h}'s, or that {\tt e}'s are much more common than {\tt z}'s, implies that letters are not being used as efficiently as possible. The amount of bits necessary to encode a message is called its {\em entropy} and has been linked to the commonly used notion of the same name in physics. 

In \cite{BFL}, Baez, Fritz, and Leinster show that entropy can be captured quite cleanly using category theory. They make a category {\tt FinProb} whose objects are finite sets equipped with a probability measure, and whose morphisms are probability preserving functions. They characterize {\em information loss} as a way to assign numbers to such morphisms, subject to certain explicit constraints. They then show that the entropy of an object in {\tt FinProb} is the amount of information lost under the unique map to the singleton set $\singleton$. This approach explicates (by way of the explicit constraints for information loss functions) the essential idea of Shannon's information theory, allowing it to be generalized to categories other than {\tt FinProb}. Thus Baez and Leinster effectively {\em categorified} information theory.

Robert Rosen proposed in the 1970s that category theory could play a major role in biology. That story is only now starting to be fleshed out. There is a categorical account of evolution and memory, called {\em Memory Evolutive Systems} \cite{EV}. There is also a paper \cite{BP2} by Brown and Porter with applications to neuroscience.


\section{Natural transformations}\label{sec:nat trans}

In this section we conclude our discussion of the {\bf Big 3}, by defining natural transformations. Category theory was originally invented to discuss natural transformations. These were sufficiently conceptually challenging that they required formalization and thus the invention of category theory. If we think of categories as domains (of discourse, interaction, comparability, etc.) and of functors as transformations between different domains, the natural transformations compare different transformations.

Natural transformations can seem a bit abstruse at first, but hopefully some examples and exercises will help.


\subsection{Definition and examples}

Let's begin with an example. There is a functor $\List\taking\Set\to\Set$, which sends a set $X$ to the set $\List(X)$ consisting of all lists whose entries are elements of $X$. Given a morphism $f\taking X\to Y$, we can transform a list with entries in $X$ into a list with entries in $Y$ by applying $f$ to each (this was worked out in Exercise \ref{exc:list as functor}).\index{a functor!$\List\taking\Set\to\Set$}. 

It may seem a strange thing to contemplate, but there is also a functor $\List\circ\List\taking\Set\to\Set$ that sends a set $X$ to the set of lists of lists in $X$. If $X=\{a,b,c\}$ then $\List\circ\List(X)$ contains elements like $\big[[a,b],[a,c,a,b,c],[c]\big]$ and $\big[[\;]\big]$ and $\big[[a],[\;],[a,a,a]\big]$. We can {\em naturally transform} a list of lists into a list by concatenation. In other words, for any set $X$ there is a function $\mu_X\taking\List\circ\List(X)\to\List(X)$ which sends our lists above to $[a,b,a,c,a,b,c,c]$ and $[\;]$ and $[a,a,a,a]$, respectively. In fact, even if we use a function $f\taking X\to Y$ to convert a list of $X$'s into a list of $Y$'s (or a list of lists of $X$'s into a list of lists of $Y$'s), the concatenation ``works right". Take a deep breath for the precise statement couched as a slogan.

\begin{slogan}
Naturality works like this: Using a function $f\taking X\to Y$ to convert a list of lists of $X$'s into a list of list of $Y$'s and then concatenating to get a simple list of $Y$'s {\bf does the same thing as} first concatenating our list of lists of $X$'s into a simple list of $X$'s and then using our function $f$ to convert it into a list of $Y$'s.
\end{slogan}

Let's make this concrete. Let $X=\{a,b,c\}$, let $Y=\{1,2,3\}$, and let $f\taking X\to Y$ assign $f(a)=1, f(b)=1, f(c)=2$. Our naturality condition says the following for any list of lists of $X$'s, in particular for $\big[[a,b],[a,c,a,b,c],[c]\big]$:
$$\xymatrix@=40pt{
\big[[a,b],[a,c,a,b,c],[c]\big]\ar@{|->}[r]^-{\mu_X}\ar@{|->}[d]_{\List\circ\List(f)}&[a,b,a,c,a,b,c,c]\ar@{|->}[d]^{\List(f)}\\
\big[[1,1],[1,2,1,1,2],[2]\big]\ar@{|->}[r]_-{\mu_Y}&[1,1,1,2,1,1,2,2]
}
$$

Keep these $\mu_X$ in mind in the following definition---they serve as the ``components" of a natural transformation $\List\circ\List\to\List$ of functors $\mcC\to\mcD$, where $\mcC=\mcD=\Set$.

\begin{definition}\label{def:natural transformation}\index{natural transformation}

Let $\mcC$ and $\mcD$ be categories and let $F\taking\mcC\to\mcD$ and $G\taking\mcC\to\mcD$ be functors. A {\em natural transformation $\alpha$ from $F$ to $G$}, denoted $\alpha\taking F\to G$, is defined as follows: one announces some constituents (A. components) and asserts that they conform to some laws (1. naturality squares). Specifically, one announces
\begin{enumerate}[\hsp A.]
\item for each object $c\in\Ob(\mcC)$ a morphism $\alpha_c\taking F(c)\to G(c)$ in $\mcD$, called {\em the $c$-component of $\alpha$}.\index{component}
\end{enumerate}
One asserts that the following law holds:
\begin{enumerate}[\hsp 1.]
\item For every morphism $h\taking c\to c'$ in $\mcC$, the following square, called the {\em naturality square for $h$}, must commute:
\begin{align}\label{dia:naturality square}
\xymatrix{F(c)\ar@{}[dr]|{\checkmark}\ar[d]_{F(h)}\ar[r]^{\alpha_c}&G(c)\ar[d]^{G(h)}\\F(c')\ar[r]_{\alpha_{c'}}&G(c')}
\end{align}
\end{enumerate}

\end{definition}

\begin{example}

Consider the categories $\mcC\iso[1]$ and $\mcD\iso[2]$ drawn below:
$$\mcC:=\fbox{\xymatrix{\LMO{0}\ar[r]^p&\LMO{1}
}}
\hspace{.5in}
\mcD:=\fbox{\xymatrix{\LMO{A}\ar[r]^f&\LMO{B}\ar[r]^g&\LMO{C}.
}}
$$
Consider the functors $F,G\taking[1]\to[2]$ where $F(0)=A$, $F(1)=B$, $G(0)=A$, and $G(1)=C$. The orange dots and arrows in the picture below represent the image of $\mcC$ under $F$ and $G$.

\begin{center}
\includegraphics[height=4in]{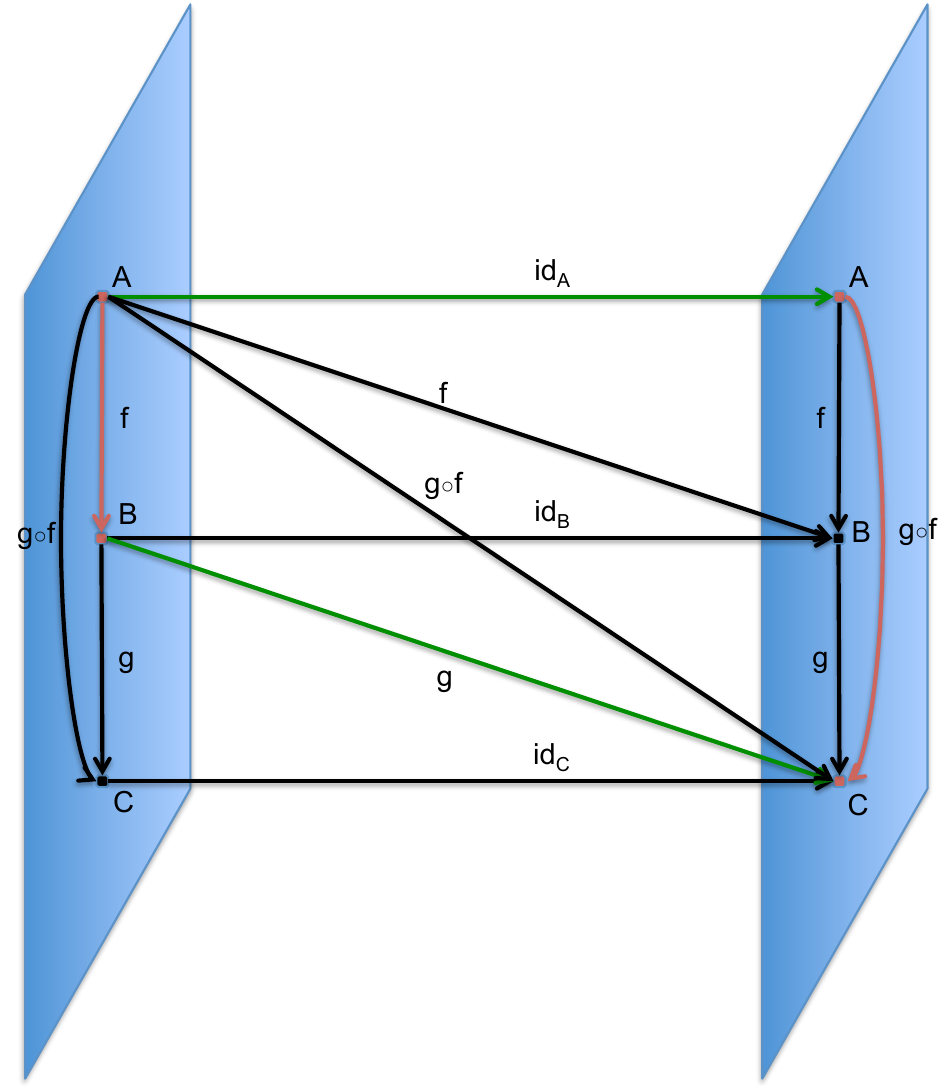}
\end{center}

It turns out that there is only one possible natural transformation $F\to G$; we call it $\alpha$ and explore its naturality square. We have drawn the components of $\alpha\taking F\to G$ in green. These components are $\alpha_0=\id_A\taking F(0)\to G(0)$ and $\alpha_1=g\taking F(1)\to G(1)$. The naturality square for $p\taking 0\to 1$ is written twice below, once with notation following that in (\ref{dia:naturality square}) and once in local notation.
$$
\xymatrix{
F(0)\ar[r]^{\alpha_0}\ar[d]_{F(p)}&G(0)\ar[d]^{G(p)}\\
F(1)\ar[r]_{\alpha_1}&G(1)}
\hspace{.6in}
\xymatrix{
A\ar[r]^{\id_A}\ar[d]_{f}&A\ar[d]^{g\circ f}\\
B\ar[r]_{g}&C
}
$$
It is clear that this diagram commutes, so our components $\alpha_0$ and $\alpha_1$ satisfy the law of Definition \ref{def:natural transformation}, making $\alpha$ a natural transformation.

\end{example}

\begin{lemma}\label{lemma:generators for nattrans}

Let $\mcC$ and $\mcD$ be categories, let $F,G\taking\mcC\to\mcD$ be functors, and for every object $c\in\Ob(\mcC)$, let $\alpha_c\taking F(c)\to G(c)$ be a morphism in $\mcD$. Suppose given a path $c_0\To{f_1}c_1\To{f_2}\cdots\To{f_n} c_n$ such that the naturality square 
$$
\xymatrix{F(c_{i-1})\ar[d]_{F(f_i)}\ar[r]^{\alpha_{c_{i-1}}}&G(c_{i-1})\ar[d]^{G(f_i)}\\F(c_i)\ar[r]_{\alpha_{c_i}}&G(c_i)
}
$$
commutes for each $1\leq i\leq n$. Then the naturality square for the composite $p:=f_n\circ\cdots\circ f_2\circ f_1\taking c_0\to c_n$ 
$$\xymatrix{F(c_0)\ar[r]^{\alpha_{c_0}}\ar[d]_{F(p)}&G(c_0)\ar[d]^{G(p)}\\F(c_n)\ar[r]_{\alpha_{c_n}}&G(c_n)}
$$
also commutes. In particular, the naturality square commutes for every identity morphism $\id_c$.

\end{lemma}

\begin{proof}

When $n=0$ we have a path of length 0 starting at each $c\in\Ob(\mcC)$. It vacuously satisfies the condition, so we need to see that its naturality square 
$$\xymatrix{F(c)\ar[r]^{\alpha_c}\ar[d]_{F(\id_c)}&G(c)\ar[d]^{G(\id_c)}\\F(c)\ar[r]_{\alpha_c}&G(c)}
$$
commutes. But this is clear because functors preserve identities. 

The rest of the proof follows by induction on $n$. Suppose $q=f_{n-1}\circ\cdots\circ f_2\circ f_1\taking c_0\to c_{n-1}$ and $p=f_n\circ q$ and that the naturality squares for $q$ and for $f_n$ commute; we need only show that the naturality square for $p$ commutes. That is, we assume the two small squares commute below; but it follows that the large rectangle does too, completing the proof.

$$
\xymatrix{
F(c_0)\ar[r]^{\alpha_{c_0}}\ar[d]_{F(q)}&G(c_0)\ar[d]^{G(q)}\\
F(c_{n-1})\ar[r]^{\alpha_{c_{n-1}}}\ar[d]_{F(f_n)}&G(c_{n-1})\ar[d]^{G(f_n)}\\
F(c_n)\ar[r]^{\alpha_{c_n}}&G(c_n)
}
$$

\end{proof}

\begin{example}\label{ex:nattrans [1]}

Let $\color{red}{\mcC}=\color{blue}{\mcD}=\color{black}{[1]}$ be the linear order of length 1, thought of as a category (by Proposition \ref{prop:preorders to cats}). There are three functors $\mcC\to\mcD$, which we can write as $(0,0), (0,1),$ and $(1,1)$; these are depicted left to right below.
$$\xymatrix{
\LMO{\color{red}{0}}\ar@{|->}[r]\ar@[red][d]_f&\LMO{\color{blue}{0}}\ar@[blue][d]^f&&\LMO{\color{red}{0}}\ar@{|->}[r]\ar@[red][d]_f&\LMO{\color{blue}{0}}\ar@[blue][d]^f&&\LMO{\color{red}{0}}\ar@{|->}[dr]\ar@[red][d]_f&\LMO{\color{blue}{0}}\ar@[blue][d]^f\\
\LMO{\color{red}{1}}\ar@{|->}[ur]&\LMO{\color{blue}{1}}&&\LMO{\color{red}{1}}\ar@{|->}[r]&\LMO{\color{blue}{1}}&&\LMO{\color{red}{1}}\ar@{|->}[r]&\LMO{\color{blue}{1}}
}
$$
These are just functors so far. What are the natural transformations say $\alpha\taking (0,0)\to(0,1)$? To specify a natural transformation, we must specify a component for each object in $\mcC$. In our case $\alpha_0\taking 0\to 0$ and $\alpha_1\taking 0\to 1$. There is only one possible choice: $\alpha_0=\id_0$ and $\alpha_1=f$. Now that we have chosen components we need to check the naturality squares. 

There are three morphisms in $\mcC$, namely $\id_0, f, \id_1$. By Lemma \ref{lemma:generators for nattrans}, we need only check the naturality square for $f$. We write it twice below, once in the abstract notation and once in concrete notation:
$$
\xymatrix{
F(0)\ar[r]^{\alpha_0}\ar[d]_{F(f)}&G(0)\ar[d]^{G(f)}\\
F(1)\ar[r]_{\alpha_1}&G(1)
}\hspace{.5in}
\xymatrix{
0\ar[r]^{\id_0}\ar[d]_{\id_0}&0\ar[d]^{f}\\
0\ar[r]_{f}&1
}
$$
This commutes, so $\alpha$ is indeed a natural transformation.

\end{example}

\begin{exercise}
With notation as in Example \ref{ex:nattrans [1]},
\sexc how many natural transformations are there $(0,0)\to (1,1)$?
\next how many natural transformations are there $(0,0)\to (0,0)$?
\next how many natural transformations are there $(0,1)\to (0,0)$?
\next how many natural transformations are there $(0,1)\to (1,1)$?
\endsexc
\end{exercise}

\begin{exercise}
Let $\List\taking\Set\to\Set$ be the functor sending a set $X$ to the set $\List(X)$ of lists with entries in $X$. We saw above that there is a natural transformation $\List\circ\List\to\List$ given by concatenation.
\sexc If someone said ``singleton lists give a natural transformation $\sigma$ from $\id_\Set$ to $\List$", what might they mean? That is, for a set $X$, what component $\sigma_X$ might they be suggesting?
\next Do these components satisfy the necessary naturality squares for functions $f\taking X\to Y$?
\endsexc
\end{exercise}

\begin{exercise}
Let $\mcC$ and $\mcD$ be categories, and suppose that $d\in\Ob(\mcD)$ is a terminal object. Consider the functor $\{d\}^\mcC\taking\mcC\to\mcD$ that sends each object $c\in\Ob(\mcC)$ to $d$ and each morphism in $\mcC$ to the identity morphism $\id_d$ on $d$. 
\sexc For any other functor $F\taking\mcC\to\mcD$, how many natural transformations are there $F\to\{d\}^\mcC$? 
\next Let $\mcD=\Set$ and let $d=\singleton$. If $\mcC=[1]$ is the linear order of length 1, and $F\taking\mcC\to\Set$ is any functor, what does it mean to give a natural transformation $\{d\}^\mcC\to F$?
\endsexc
\end{exercise}

\begin{application}\label{app:change of fsm}\index{natural transformation!as refinement of model}

In Figure \ref{fig:fsa} we drew a \href{http://en.wikipedia.org/wiki/Finite-state_machine}{finite state machine} on alphabet $\Sigma=\{a,b\}$, and in Example \ref{ex:action table} we showed the associated action table. It will be reproduced below. Imagine this was your model for understanding the behavior of some system when acted on by commands $a$ and $b$. And suppose that a collaborator tells you that she has a more refined notion that fits with the same data. Her notion has 6 states rather than 3, but it's ``compatible". What might that mean? 

Let's call the original state machine $X$ and the new model $Y$.

\begin{center}
\parbox{1.9in}{\boxtitle{$X$:=}\fbox{
\includegraphics[height=1.5in]{FSM1}
}}
\hsp
\parbox{2.6in}{\boxtitle{$Y$:=}\fbox{
\includegraphics[height=1.7in]{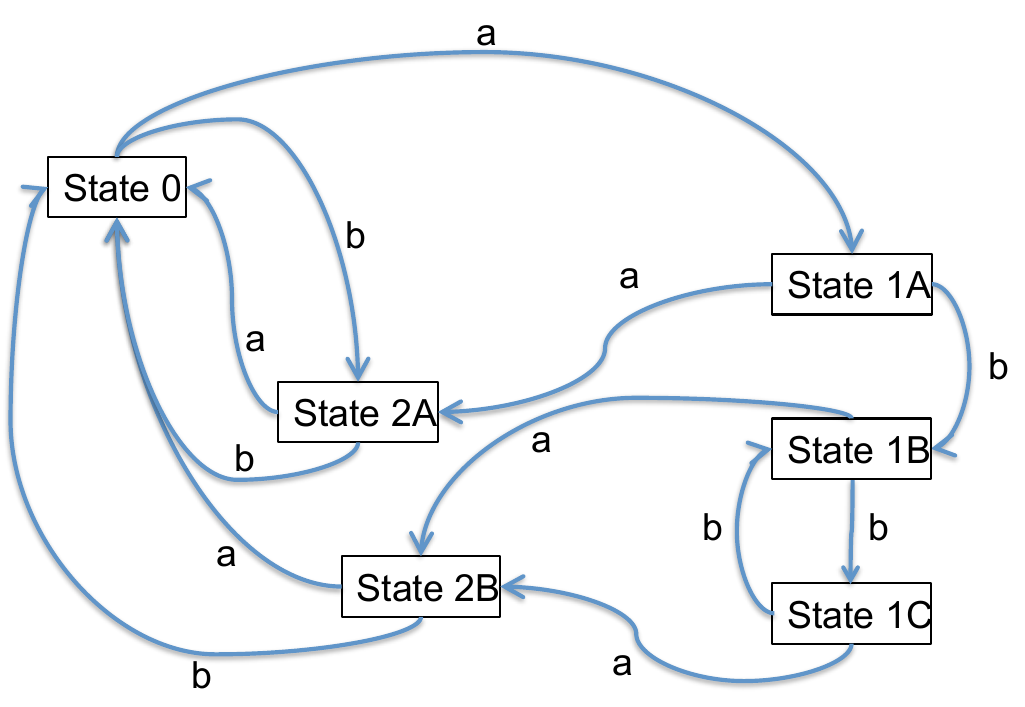}
}}
\end{center}

The action tables for these two machines are:
\begin{center}
\begin{tabular}{| l || l | l |}\bhline
\multicolumn{3}{|c|}{Original model $X$}\\\bhline
{\bf ID}&{\bf a}&{\bf b}\\\bbhline
State 0&State 1&State 2\\\hline
State 1& State 2& State 1\\\hline
State 2&State 0&State 0\\\bhline
\end{tabular}
\hspace{.5in}
\begin{tabular}{| l || l | l |}\bhline
\multicolumn{3}{|c|}{Proposed model $Y$}\\\bhline
{\bf ID}&{\bf a}&{\bf b}\\\bbhline
State 0&State 1A&State 2A\\\hline
State 1A& State 2A& State 1B\\\hline
State 1B& State 2B& State 1C\\\hline
State 1C&State 2B&State 1B\\\hline
State 2A&State 0&State 0\\\hline
State 2B&State 0&State 0\\\bhline
\end{tabular}
\end{center}

How are these models compatible? Looking at the table for $Y$, if one removes the distinction between States 1A, 1B, 1C and between States 2A and 2B, then one returns with the table for $X$. The table for $Y$ is more specific, but it is fully compatible with table $X$. The sense in which it is compatible is precisely the sense defined by there being a natural transformation.

Recall that $\mcM=(\List(\Sigma),[\;],\plpl)$ is a monoid, and that a monoid is simply a category with one object, say $\Ob(\mcM)=\{\monOb\}$ (see Section \ref{sec:mon grp pro as cat}). With $\Sigma=\{a,b\}$, the monoid $\mcM$ can be visualized as follows:
$$
\mcM=\fbox{\xymatrix{\LMO{\monOb}\ar@(ul,dl)[]_a\ar@(ur,dr)[]^b}}
$$
Recall also that a state machine on $\mcM$ is simply a functor $\mcM\to\Set$. We thus have two such functors, $X$ and $Y$. A natural transformation $\alpha\taking Y\to X$ would consist of a component $\alpha_m$ for every object $m\in\Ob(\mcM)$, such that certain diagrams commute. But $\mcM$ having only one object, we need only one function $\alpha_\monOb\taking Y(\monOb)\to X(\monOb)$, where $Y(\monOb)$ is the set of (6) states of $Y$ and $X(\monOb)$ is the set of (3) states of $X$.

The states of $Y$ have been named so as to make the function $\alpha_\monOb$ particularly easy to guess.\footnote{The function $\alpha_\monOb\taking Y(\monOb)\to X(\monOb)$ makes the following assignments: $\tn{State 0}\mapsto \tn{State 0}, \tn{State 1A}\mapsto \tn{State 1}, \tn{State 1B}\mapsto \tn{State 1}, \tn{State 1C}\mapsto \tn{State 1}, \tn{State 2A}\mapsto \tn{State 2}, \tn{State 2B}\mapsto \tn{State 2}.$} We need to check that two squares commute:
\begin{align}\label{dia:naturality squares for fsm}
\xymatrix{Y(\monOb)\ar[r]^{\alpha_\monOb}\ar[d]_{Y(a)}&X(\monOb)\ar[d]^{X(a)}\\Y(\monOb)\ar[r]_{\alpha_\monOb}&X(\monOb)
}
\hspace{.5in}
\xymatrix{Y(\monOb)\ar[r]^{\alpha_\monOb}\ar[d]_{Y(b)}&X(\monOb)\ar[d]^{X(b)}\\Y(\monOb)\ar[r]_{\alpha_\monOb}&X(\monOb)
}
\end{align}
This can only be checked by going through and making sure certain things match, as specified by (\ref{dia:naturality squares for fsm}); we spell it out in gory detail. The columns that should match are those whose entries are written in blue.

\begin{align}\label{dia:naturality for a}
\begin{tabular}{| l || l | l | l | l |}
\bhline
\multicolumn{5}{|c|}{Naturality square for $a\taking \monOb\to\monOb$}\\\bhline
{\bf $Y(\monOb)$\; [ID]}&{\bf $Y(a)$}&{\bf $\alpha_\monOb\circ Y(a)$}&{\bf $\alpha_\monOb$}&{\bf $X(a)\circ\alpha_\monOb$}\\\bbhline
State 0&State 1A&\color{blue}{State 1}&State 0&\color{blue}{State 1}\\\hline
State 1A& State 2A&\color{blue}{State 2}&State 1&\color{blue}{State 2}\\\hline
State 1B& State 2B&\color{blue}{State 2}&State 1&\color{blue}{State 2}\\\hline
State 1C&State 2B&\color{blue}{State 2}&State 1&\color{blue}{State 2}\\\hline
State 2A&State 0&\color{blue}{State 0}&State 2&\color{blue}{State 0}\\\hline
State 2B&State 0&\color{blue}{State 0}&State 2&\color{blue}{State 0}\\\bhline
\end{tabular}
\end{align}
\begin{align}\label{dia:naturality for b}
\begin{tabular}{| l || l | l | l | l |}
\bhline
\multicolumn{5}{|c|}{Naturality square for $b\taking\monOb\to\monOb$}\\\bhline
{\bf $Y(\monOb)$\; [ID]}&{\bf $Y(b)$}&{\bf $\alpha_\monOb\circ Y(b)$}&{\bf $\alpha_\monOb$}&{\bf $X(b)\circ\alpha_\monOb$}\\\bbhline
State 0&State 2A&\color{blue}{State 2}&State 0&\color{blue}{State 2}\\\hline
State 1A& State 1B&\color{blue}{State 1}&State 1&\color{blue}{State 1}\\\hline
State 1B& State 1C&\color{blue}{State 1}&State 1&\color{blue}{State 1}\\\hline
State 1C&State 1B&\color{blue}{State 1}&State 1&\color{blue}{State 1}\\\hline
State 2A&State 0&\color{blue}{State 0}&State 2&\color{blue}{State 0}\\\hline
State 2B&State 0&\color{blue}{State 0}&State 2&\color{blue}{State 0}\\\bhline
\end{tabular}
\end{align}

In reality we need to check that for {\em every} morphism in $\mcM$, such as $[a,a,b]$, a similar diagram commutes. But this holds automatically. For example (flipping the naturality square sideways for typographical reasons)
$$
\xymatrix{
Y(\monOb)\ar[r]^{Y(a)}\ar[d]^{\alpha_\monOb}&Y(\monOb)\ar[r]^{Y(a)}\ar[d]^{\alpha_\monOb}&Y(\monOb)\ar[r]^{Y(b)}\ar[d]^{\alpha_\monOb}&Y(\monOb)\ar[d]^{\alpha_\monOb}\\
X(\monOb)\ar[r]_{X(a)}&X(\monOb)\ar[r]_{X(a)}&X(\monOb)\ar[r]_{X(b)}&X(\monOb)
}
$$
Since each small square above commutes (as checked by tables \ref{dia:naturality for a} and \ref{dia:naturality for b}), the big outer rectangle commutes too.

To recap, the notion of compatibility between $Y$ and $X$ is one that can be checked and agreed upon by humans, but doing so it is left implicit, and it may be difficult to explain to an outsider what exactly was agreed to, especially in more complex situations. It is quite convenient to simply claim ``there is a natural transformation from $Y$ to $X$."

\end{application}

\begin{exercise}\label{exc:id nat trans}
Let $F\taking\mcC\to\mcD$ be a functor. Suppose someone said ``the identity on $F$ is a natural transformation from $F$ to itself." \sexc What might they mean?
\next If it is somehow true, what are the components of this natural transformation?
\endsexc
\end{exercise}

\begin{example}

Let $[1]\in\Ob(\Cat)$ be the free arrow category described in Exercise \ref{exc:[1]} and let $\mcD$ be any category. To specify a functor $F\taking[1]\to\mcD$ requires the specification of two objects, $F(v_1), F(v_2)\in\Ob(\mcD)$ and a morphism $F(e)\taking F(v_1)\to F(v_2)$ in $\mcD$. The identity and composition formulas are taken care of once that much is specified. To recap, a functor $F\taking[1]\to\mcD$ is the same thing as a morphism in $\mcD$.

Thus, choosing two functors $F,G\taking[1]\to\mcD$ is precisely the same thing as choosing two morphisms in $\mcD$. Let us call them $f\taking a_0\to a_1$ and $g\taking b_0\to b_1$, where to be clear we have $f=F(e), a_0=F(v_0), a_1=F(v_1)$ and $g=G(e), b_0=G(v_0), b_1=G(v_1)$. 

A natural transformation $\alpha\taking F\to G$ consists of two components, $h_0:=\alpha_{v_0}\taking a_0\to b_0$ and $h_1:=\alpha_{v_1}\taking a_1\to b_1$, drawn as dashed lines below:
$$\xymatrix{a_0\ar@{-->}[r]^{h_0}\ar[d]_f&b_0\ar[d]^{g}\\a_1\ar@{-->}[r]_{h_1}&b_1}$$
The condition for $\alpha$ to be a natural transformation is that the above square commutes. 

In other words, a functor $[1]\to\mcD$ is an arrow in $\mcD$ and a natural transformation between two such functors is just a commutative square in $\mcD$.

\end{example}

\begin{example}\label{ex:graph to paths}

Recall that to any graph $G$ we can associate the so-called paths-graph $\Paths(G)$, as described in Example \ref{ex:paths-graph}. This is a functor $\Paths\taking\Grph\to\Grph$.\index{a functor!$\Paths\taking\Grph\to\Grph$} There is also an identity functor $\id_{\Grph}\taking\Grph\to\Grph$. A natural transformation $\eta\taking\id_\Grph\to\Paths$ would consist of a graph homomorphism $\eta_G\taking \id_\Grph(G)\to\Paths(G)$ for every graph $G$. But $\id_\Grph(G)=G$ by definition, so we need $\eta_G\taking G\to\Paths(G)$. Recall that $\Paths(G)$ has the same vertices as $G$ and every arrow in $G$ counts as a path (of length 1). So there is an obvious graph homomorphism from $G$ to $\Paths(G)$. It is not hard to see that the necessary naturality squares commute.

\end{example}

\begin{example}\label{ex:concat paths of paths}

For any graph $G$ we can associate the paths-graph $\Paths(G)$, and nothing stops us from doing that twice to yield a new graph $\Paths(\Paths(G))$. Let's think through what a path of paths in $G$ is. It's a head-to-tail sequence of arrows in $\Paths(G)$, meaning a head-to-tail sequence of paths in $G$. These composable sequences of paths (or ``paths of paths") are the individual arrows in $\Paths(\Paths(G))$. (The vertices in $\Paths(G)$ and $\Paths(\Paths(G))$ are the same as those in $G$, and all source and target functions are as expected.)

Clearly, given such a sequence of paths in $G$, we could compose them to one big path in $G$ with the same endpoints. In other words, there is graph morphism $\mu_G\taking\Paths(\Paths(G))\to\Paths(G)$, that one might call ``concatenation". In fact, this concatenation extends to a natural transformation $$\mu\taking\Paths\circ\Paths\to\Paths$$ between functors $\Grph\to\Grph$. In Example \ref{ex:graph to paths}, we compared a graph to its paths-graph using a natural transformation $\id_{\Grph}\to\Paths$; here we are making a similar kind of comparison.

\end{example}

\begin{remark}

In Example \ref{ex:graph to paths} we saw that there is a natural transformation sending each graph into its paths-graph. There is a formal sense in which a category is nothing more than a kind of reverse mapping. That is, to specify a category is the same thing as to specify a graph $G$ together with a graph homomorphism $\Paths(G)\to G$. The formalities involve monads, which we will discuss in Section \ref{sec:monads}.

\end{remark}

\begin{exercise}
Let $X$ and $Y$ be sets, and let $f\taking X\to Y$. There is a functor $C_X\taking\Grph\to\Set$ that sends every graph to the set $X$ and sends every morphism of graphs to the identity morphism $\id_X\taking X\to X$. This functor is called {\em the constant functor at $X$}. Similarly there is a constant functor $C_Y\taking\Grph\to\Set$.
\sexc Use $f$ to construct a natural transformation $C_X\to C_Y$.
\next What are its components?
\endsexc
\end{exercise}

\begin{exercise}
For any graph $(V,A,src,tgt)$ we can extract the set of arrows or the set of vertices. Since each morphism of graphs includes a function between their arrow sets and a function between their vertex sets, we actually have functors $Ar\taking\Grph\to\Set$\index{a functor!$\Grph\to\Set$} and $V\!e\taking\Grph\to\Set$.
\sexc If someone said ``taking source vertices gives a natural transformation from $Ar$ to $V\!e$", what natural transfromation might they be referring to?
\next What are its components? 
\next If a different person, say from a totally different country, were to say ``taking target vertices also gives a natural transformation from $Ar$ to $V\!e$," would they also be correct?
\endsexc
\end{exercise}

\begin{example}[Graph homomorphisms are natural transformations]\label{ex:graph hom as NT}

As discussed above (see Diagram \ref{dia:graph index}), there is a category $\GrIn$ for which a functor $G\taking\GrIn\to\Set$ is the same thing as a graph. Namely, we have 
\begin{align*}
\GrIn:=\fbox{\GrInSchema}
\end{align*}
A natural transformation of two such functors $\alpha\taking G\to G'$ involves two components, $\alpha_{Ar}\taking G(Ar)\to G'(Ar)$ and $\alpha_{V\!e}\taking G(V\!e)\to G'(V\!e)$, and two naturality squares, one for $src$ and one for $tgt$. This is precisely the same thing as a graph homomorphism, as defined in Definition \ref{def:graph homomorphism}.

\end{example}


\subsection{Vertical and horizontal composition}\label{sec:vert and hor}

In this section we discuss two types of compositions for natural transformations. The terms vertical and horizontal are used to describe them; these terms come from the following pictures:

$$
\parbox{1in}{\xymatrix{&\ar@{}[d]|(.65){\alpha\Down}\\\mcC\ar@/^2pc/[rr]^F\ar[rr]|G\ar@/_2pc/[rr]_H&&\mcD\\&\ar@{}[u]|(.65){\beta\Down}}}
\hspace{1in}
\parbox{2in}{\xymatrix{\mcC\ar@/^1.5pc/[rr]^{F_1}\ar@{}[rr]|{\gamma_1\Down}\ar@/_1.5pc/[rr]_{G_1}&&\mcD\ar@/^1.5pc/[rr]^{F_2}\ar@{}[rr]|{\gamma_2\Down}\ar@/_1.5pc/[rr]_{G_2}&&\mcE}}
$$
We generally use $\circ$ to denote both kinds of composition, but if we want to be very clear we will differentiate as follows: $\beta\circ\alpha\taking F\to H$ for vertical composition, and $\gamma_2\diamond\gamma_1\taking F_2\circ F_1\too G_2\circ G_1$ for horizontal composition. Of course, the actual arrangement of things on a page of text does not correlate with verticality or horizontality---these are just names. We will define them more carefully below.


\subsubsection{Vertical composition of natural transformations}\index{natural transformation!vertical composition of}

The following proposition proves that functors and natural transformations (using vertical composition) form a category.

\begin{proposition}\label{prop:Fun(C,D)}

Let $\mcC$ and $\mcD$ be categories. There exists a category, called {\em the category of functors from $\mcC$ to $\mcD$} and denoted $\Fun(\mcC,\mcD)$\index{a symbol!$\Fun$}, whose objects are the functors $\mcC\to\mcD$ and whose morphisms are the natural transformations,
$$\Hom_{\Fun(\mcC,\mcD)}(F,G)=\{\alpha\taking F\to G\|\alpha\tn{ is a natural transformation}\}.$$That is, there are identity natural transformations, natural transformations can be composed, and the identity and associativity laws hold.

\end{proposition}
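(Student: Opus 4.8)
The plan is to verify the category axioms of Definition~\ref{def:category} one by one, taking $\Ob(\Fun(\mcC,\mcD))$ to be the functors $\mcC\to\mcD$ and $\Hom_{\Fun(\mcC,\mcD)}(F,G)$ to be the set of natural transformations $F\to G$. The constituents to supply are the identity natural transformations and the composition formula (vertical composition), and the laws to check are the identity and associativity laws. None of these steps is deep; the work is entirely in confirming that the proposed data really are natural transformations, i.e.\ that the relevant naturality squares commute.

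First I would construct the identity morphism. For a functor $F\taking\mcC\to\mcD$, define $\id_F\taking F\to F$ by taking, for each object $c\in\Ob(\mcC)$, the component $(\id_F)_c:=\id_{F(c)}\taking F(c)\to F(c)$ in $\mcD$; this is exactly the content hinted at in Exercise~\ref{exc:id nat trans}. I would check the naturality square~(\ref{dia:naturality square}) for an arbitrary $h\taking c\to c'$: both composites reduce to $F(h)$ using the identity law in $\mcD$, so the square commutes and $\id_F$ is a genuine natural transformation. Next I would define vertical composition: given $\alpha\taking F\to G$ and $\beta\taking G\to H$, define $(\beta\circ\alpha)_c:=\beta_c\circ\alpha_c\taking F(c)\to H(c)$ for each $c$. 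The key step is showing $\beta\circ\alpha$ is natural, which amounts to pasting the two given naturality squares: for $h\taking c\to c'$ the left square (for $\alpha$) and the right square (for $\beta$) share the common edge $\alpha_{c'}$, so
$$
\xymatrix{
F(c)\ar[r]^{\alpha_c}\ar[d]_{F(h)}&G(c)\ar[r]^{\beta_c}\ar[d]|{G(h)}&H(c)\ar[d]^{H(h)}\\
F(c')\ar[r]_{\alpha_{c'}}&G(c')\ar[r]_{\beta_{c'}}&H(c')
}
$$
and since each square commutes by hypothesis, the outer rectangle commutes, giving naturality of $\beta\circ\alpha$. This pasting argument is the heart of the proof, and it is essentially the same move used in Application~\ref{app:change of fsm}.

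Finally I would verify the two laws. The identity law asks that $\alpha\circ\id_F=\alpha$ and $\id_G\circ\alpha=\alpha$ for any $\alpha\taking F\to G$; checking componentwise, $(\alpha\circ\id_F)_c=\alpha_c\circ\id_{F(c)}=\alpha_c$ and similarly on the other side, using the identity law in $\mcD$. The associativity law asks that $(\gamma\circ\beta)\circ\alpha=\gamma\circ(\beta\circ\alpha)$ for composable $\alpha,\beta,\gamma$; again checking componentwise, both sides have $c$-component $\gamma_c\circ\beta_c\circ\alpha_c$, which are equal by the associativity law in $\mcD$. Throughout, the crucial observation is that two natural transformations are equal precisely when all their components are equal, so every law in $\Fun(\mcC,\mcD)$ follows immediately from the corresponding law in $\mcD$ applied componentwise. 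The only step requiring genuine verification rather than routine component-chasing is the naturality of the vertical composite, so I expect that pasting step to be the main (though still mild) obstacle.
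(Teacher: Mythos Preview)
Your proposal is correct and follows essentially the same approach as the paper's proof: identity transformations via $(\id_F)_c=\id_{F(c)}$, vertical composition via $(\beta\circ\alpha)_c=\beta_c\circ\alpha_c$ with naturality checked by pasting the two commuting squares, and the identity and associativity laws reduced componentwise to those of $\mcD$. You have simply spelled out in full detail what the paper summarizes in a sentence or two.
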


\begin{proof}

We showed in Exercise \ref{exc:id nat trans} that there for any functor $F\taking\mcC\to\mcD$, there is an identity natural transformation $\id_F\taking F\to F$ (its component at $c\in\Ob(\mcC)$ is $\id_{F(c)}\taking F(c)\to F(c)$). 

Given a natural transformation $\alpha\taking F\to G$ and a natural transformation $\beta\taking G\to H$, we propose for the composite $\beta\circ\alpha$ the transformation $\gamma\taking F\to H$ having components $\beta_c\circ\alpha_c$ for every $c\in\Ob(\mcC)$. To see that $\gamma$ is indeed a natural transformation, one simply puts together naturality squares for $\alpha$ and $\beta$ to get naturality squares for $\beta\circ\alpha$. 

The associativity and identity laws for $\Fun(\mcC,\mcD)$ follow from those holding for morphisms in $\mcD$.

\end{proof}

\begin{notation}
We sometimes denote the category $\Fun(\mcC,\mcD)$ by $\mcD^\mcC$. 
\end{notation}

\begin{example}

Recall from Exercise \ref{exc:Ob is a functor} that there is a functor $\Ob\taking\Cat\to\Set$\index{a functor!$\Ob\taking\Cat\to\Set$} sending a category to its set of objects. And recall from Example \ref{ex:discrete graph discrete cat} that there is a functor $Disc\taking\Set\to\Cat$\index{a functor!$Disc\taking\Set\to\Cat$} sending a set to the discrete category with that set of objects (all morphisms in $Disc(S)$ are identity morphisms). Let $P\taking\Cat\to\Cat$ be the composition $P=Disc\circ\Ob$. Then $P$ takes a category and makes a new category with the same objects but no morphisms. It's like crystal meth for categories.

Let $\id_\Cat\taking\Cat\to\Cat$ be the identity functor. There is a natural transformation $i\taking P\to\id_\Cat$. For any category $\mcC$, the component $i_\mcC\taking P(\mcC)\to\mcC$ is pretty easily understood. It is a morphism of categories, i.e. a functor. The two categories $P(\mcC)$ and $\mcC$ have the same set of objects, namely $\Ob(\mcC)$, so our functor is identity on objects; and $P(\mcC)$ has no non-identity morphisms, so nothing else needs be specified.

\end{example}

\begin{exercise}
Let $\mcC=\fbox{$\LMO{A}$}$ be the category with $\Ob(\mcC)=\{A\}$, and $\Hom_\mcC(A,A)=\{\id_A\}$. What is $\Fun(\mcC,\Set)$? In particular, characterize the objects and the morphisms.
\end{exercise}

\begin{exercise}
Let $n\in\NN$ and let $\ul{n}$ be the set with $n$ elements, considered as a discrete category.
\footnote{When we have a functor, such as $Disc\taking\Set\to\Cat$, we may sometimes say things like ``Let $S$ be a set, considered as a category" (or in general, given a functor $F\taking\mcC\to\mcD$, we may say ``consider $c\in\Ob(\mcC)$, taken as an object in $\mcD$"). What this means is that we want to take ideas and methods available in $\Cat$ and use them on our set $S$. Having our functor $Disc$ lying around, we use it to move $S$ into $\Cat$, as $Disc(S)\in\Ob(\Cat)$, upon which we can use our intended methods. However, our human minds get bogged down seeing $Disc(S)$ because it is bulky (e.g. $\Fun(Disc(\ul{3}),Disc(\ul{2}))$ is harder to read than $\Fun(\ul{3},\ul{2})$). So we abuse notation and write $S$ in place of  $Disc(S)$. To add insult to injury, we talk about $S$ as though it was still a set, e.g. discussing its elements rather than its objects. This kind of conceptual abbreviation is standard practice in mathematical discussion because it eases the mental burden for experts, but when one says ``Let $S$ be an $X$ considered as a $Y$" the other may always ask, ``How again are you considering $X$'s to be $Y$'s?" and expect a functor .}
In other words, we write $\ul{n}$ to mean what should really be called $Disc(\ul{n})$. Describe the category $\Fun(\ul{3},\ul{2})$.
\end{exercise}

\begin{exercise}
Let $\ul{1}$ denote the discrete category with one object, and let $\mcC$ be any category.
\sexc What are the objects of $\Fun(\ul{1},\mcC)$?
\next What are the morphisms of $\Fun(\ul{1},\mcC)$?
\endsexc
\end{exercise}

\begin{example}

Let $\ul{1}$ denote the discrete category with one object (also known as the trivial monoid). For any category $\mcC$, we investigate the category $\mcD:=\Fun(\mcC,\ul{1})$. Its objects are functors $\mcC\to\ul{1}$. Such a functor $F$ assigns to each object in $\mcC$ an object in $\ul{1}$ of which there is one; so there is no choice in what $F$ does on objects. And there is only one morphism in $\ul{1}$ so there is no choice in what $F$ does on morphisms. The upshot is that there is only one object in $\mcD$, let's call it $F$, in $\mcD$, so $\mcD$ is a monoid. What are its morphisms? 

A morphism $\alpha\taking F\to F$ in $\mcD$ is a natural transformation of functors. For every $c\in\Ob(\mcC)$ we need a component $\alpha_c\taking F(c)\to F(c)$, which is a morphism $1\to 1$ in $\ul{1}$. But there is only one morphism in $\ul{1}$, namely $\id_1$, so there is no choice about what these components should be: they are all $\id_1$. The necessary naturality squares commute, so $\alpha$ is indeed a natural transformation. Thus the monoid $\mcD$ is the trivial monoid; that is, $\Fun(\mcC,\ul{1})\iso\ul{1}$ for any category $\mcC$.

\end{example}

\begin{exercise}
Let $\ul{0}$ represent the discrete category on 0 objects; it has no objects and no morphisms. Let $\mcC$ be any category. What is $\Fun(\ul{0},\mcC)$?
\end{exercise}

\begin{exercise}
Let $[1]$ denote the free arrow category as in Exercise \ref{exc:[1]}, and let $\mcC$ be the graph indexing category from (\ref{dia:graph index}). Draw the underlying graph of the category $\Fun([1],\mcC)$, and then specify which pairs of paths in that graph correspond to commutative diagrams in $\Fun([1],\mcC)$.
\end{exercise}


\subsubsection{Natural isomorphisms}\index{natural isomorphism}

Let $\mcC$ and $\mcD$ be categories. We have defined a category $\Fun(\mcC,\mcD)$ whose objects are functors $\mcC\to\mcD$ and whose morphisms are natural transformations. What are the isomorphisms in this category? 

\begin{lemma}\label{lemma:natural iso}

Let $\mcC$ and $\mcD$ be categories and let $F,G\taking\mcC\to\mcD$ be functors. A natural transformation $\alpha\taking F\to G$ is an isomorphism in $\Fun(\mcC,\mcD)$ if and only if the component $\alpha_c\taking F(c)\to G(c)$ is an isomorphism for each object $c\in\Ob(\mcC)$. In this case $\alpha$ is called a {\em natural isomorphism}.

\end{lemma}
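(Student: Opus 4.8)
The plan is to prove both directions of the biconditional. The statement characterizes isomorphisms in the functor category $\Fun(\mcC,\mcD)$ in terms of their components, so I would first recall what an isomorphism in $\Fun(\mcC,\mcD)$ means: by Proposition \ref{prop:Fun(C,D)}, a natural transformation $\alpha\taking F\to G$ is an isomorphism iff there exists a natural transformation $\beta\taking G\to F$ with $\beta\circ\alpha=\id_F$ and $\alpha\circ\beta=\id_G$, where composition and identities are the vertical ones described in that proposition.

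The forward direction is the easy one. First I would assume $\alpha$ is an isomorphism with inverse $\beta$, and fix $c\in\Ob(\mcC)$. Since vertical composition is computed componentwise and the identity natural transformation $\id_F$ has components $\id_{F(c)}$, the equations $\beta\circ\alpha=\id_F$ and $\alpha\circ\beta=\id_G$ yield, at the object $c$, the equations $\beta_c\circ\alpha_c=\id_{F(c)}$ and $\alpha_c\circ\beta_c=\id_{G(c)}$ in $\mcD$. Hence $\alpha_c$ is an isomorphism in $\mcD$ with inverse $\beta_c$, which is exactly what is required.

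For the converse, I would assume each component $\alpha_c\taking F(c)\to G(c)$ is an isomorphism in $\mcD$ and construct a candidate inverse $\beta$ by setting $\beta_c:=(\alpha_c)^{-1}$ for every $c$. The main obstacle, and really the only nontrivial content of the lemma, is to verify that this componentwise collection $\beta$ is itself a natural transformation $G\to F$; once that is done, the two defining equations hold componentwise by construction, so $\beta$ is a genuine inverse and $\alpha$ is an isomorphism. To check naturality of $\beta$, I would take an arbitrary morphism $h\taking c\to c'$ in $\mcC$ and show that the square with sides $G(h)$, $\beta_c$, $\beta_{c'}$, $F(h)$ commutes. The key step is to start from the naturality square for $\alpha$, namely $G(h)\circ\alpha_c=\alpha_{c'}\circ F(h)$, and pre-compose with $\beta_c=(\alpha_c)^{-1}$ and post-compose with $\beta_{c'}=(\alpha_{c'})^{-1}$. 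Concretely, I would compute
\[
F(h)\circ\beta_c=\beta_{c'}\circ\alpha_{c'}\circ F(h)\circ\beta_c=\beta_{c'}\circ G(h)\circ\alpha_c\circ\beta_c=\beta_{c'}\circ G(h),
\]
using the naturality of $\alpha$ in the middle equality and $\alpha_{c'}\circ\beta_{c'}=\id$ and $\alpha_c\circ\beta_c=\id$ at the ends. This establishes the required naturality square for $\beta$. Since $c$ and $c'$ were arbitrary, $\beta$ is a natural transformation, completing the proof.
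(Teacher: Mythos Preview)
Your proof is correct and follows essentially the same approach as the paper's own argument: the forward direction reads off componentwise inverses from the vertical composition and identity in $\Fun(\mcC,\mcD)$, and the backward direction defines $\beta_c:=\alpha_c^{-1}$ and verifies naturality via the same chain $F(h)\circ\beta_c=\beta_{c'}\circ\alpha_{c'}\circ F(h)\circ\beta_c=\beta_{c'}\circ G(h)\circ\alpha_c\circ\beta_c=\beta_{c'}\circ G(h)$. One tiny slip: in your explanation of the first equality you cite $\alpha_{c'}\circ\beta_{c'}=\id$, but what you actually use is $\beta_{c'}\circ\alpha_{c'}=\id_{F(c')}$; since $\beta_{c'}$ is a two-sided inverse this is harmless.
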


\begin{proof}

First suppose that $\alpha$ is an isomorphism with inverse $\beta\taking G\to F$, and let $\beta_c\taking G(c)\to F(c)$ denote its $c$ component. We know that $\alpha\circ\beta=\id_G$ and $\beta\circ\alpha=\id_F$. Using the definitions of composition and identity given in Proposition \ref{prop:Fun(C,D)}, this means that for every $c\in\Ob(\mcC)$ we have $\alpha_c\circ\beta_c=\id_{G(c)}$ and $\beta_c\circ\alpha_c=\id_{F(c)}$; in other words $\alpha_c$ is an isomorphism.

Second suppose that each $\alpha_c$ is an isomorphism with inverse $\beta_c\taking G(c)\to F(c)$. We need to see that these components assemble into a natural transformation; i.e. for every morphism $h\taking c\to c'$ in $\mcC$ the right-hand square 
$$
\xymatrix{F(c)\ar@{}[dr]|{\checkmark}\ar[d]_{F(h)}\ar[r]^{\alpha_c}&G(c)\ar[d]^{G(h)}\\F(c')\ar[r]_{\alpha_{c'}}&G(c')}\hspace{.5in}
\xymatrix{G(c)\ar@{}[dr]|{?}\ar[d]_{G(h)}\ar[r]^{\beta_c}&F(c)\ar[d]^{F(h)}\\G(c')\ar[r]_{\beta_{c'}}&F(c')}
$$
commutes. We know that the left-hand square commutes because $\alpha$ is a natural transformation; we have labeled each square with a ? or a $\checkmark$ accordingly. In the following diagram we want to show that the left-hand square commutes. We know that the middle square commutes.
$$
\xymatrix@=40pt{G(c)\ar@/^2pc/[rr]^{\id_{G(c)}}\ar@{}[dr]|{?}\ar[d]_{G(h)}\ar[r]^{\beta_c}&F(c)\ar@{}[dr]|{\checkmark}\ar[d]^{F(h)}\ar[r]^{\alpha_c}&G(c)\ar@{}[dr]|{?}\ar[d]^{G(h)}\ar[r]^{\beta_c}&F(c)\ar[d]^{F(h)}\\
G(c')\ar[r]_{\beta_{c'}}&F(c')\ar[r]_{\alpha_{c'}}\ar@/_2pc/[rr]_{\id_{F(c')}}&G(c')\ar[r]_{\beta_{c'}}&F(c')}
$$
To complete the proof we need only to show that $F(h)\circ\beta_c=\beta_{c'}\circ G(h)$. This can be shown by a ``diagram chase." We go through it symbolically, for demonstration.
\begin{align*}
F(h)\circ\beta_c=\beta_{c'}\circ\alpha_{c'}\circ F(h)\circ\beta_c=\beta_{c'}\circ G(h)\circ\alpha_c\circ\beta_c=\beta_{c'}\circ G(h).
\end{align*}

\end{proof}

\begin{exercise}
Recall from Application \ref{app:change of fsm} that a finite state machine on alphabet $\Sigma$ can be understood as a functor $\mcM\to\Set$, where $\mcM=\List(\Sigma)$ is the free monoid generated by $\Sigma$. In that example we also discussed how natural transformations provide a nice language for changing state machines. Describe what kinds of changes are made by natural isomorphisms.
\end{exercise}


\subsubsection{Horizontal composition of natural transformations}

\begin{example}[Whiskering]\label{ex:whiskering}\index{natural transformation!for adding functionality}

Suppose that $\mcM=\List(a,b)$ and $\mcM'=\List(m,n,p)$ are free monoids, and let $F\taking\mcM'\to\mcM$ be given by sending $[m]\mapsto[a], [n]\mapsto[b]$, and $[p]\mapsto[b,a,a]$. An application of this might be if the sequence $[b,a,a]$ was commonly used in practice and one wanted to add a new button just for that sequence.

Recall Application \ref{app:change of fsm}. Let $X\taking\mcM\to\Set$ and $Y\taking\mcM\to\Set$ be the functors, and let $\alpha\taking Y\to X$ be the natural transformation found there. We reproduce them here: 
\begin{align*}
\fbox{\includegraphics[height=.9in]{FSM1}}\hspace{.5in}
&\fbox{\includegraphics[height=1in]{FSM2}}\\\\
\tiny
\begin{tabular}{| l || l | l |}\bhline
\multicolumn{3}{|c|}{Original model $X\taking\mcM\to\Set$}\\\bhline
{\bf ID}&{\bf a}&{\bf b}\\\bbhline
State 0&State 1&State 2\\\hline
State 1& State 2& State 1\\\hline
State 2&State 0&State 0\\\bhline
\end{tabular}
\hspace{.5in}
&\tiny\begin{tabular}{| l || l | l |}\bhline
\multicolumn{3}{|c|}{Proposed model $Y\taking\mcM\to\Set$}\\\bhline
{\bf ID}&{\bf a}&{\bf b}\\\bbhline
State 0&State 1A&State 2A\\\hline
State 1A& State 2A& State 1B\\\hline
State 1B& State 2B& State 1C\\\hline
State 1C&State 2B&State 1B\\\hline
State 2A&State 0&State 0\\\hline
State 2B&State 0&State 0\\\bhline
\end{tabular}
\end{align*}

We can compose $X$ and $Y$ with $F$ as in the diagram below
$$
\xymatrix{\mcM'\ar[r]^F&\mcM\ar@/^1pc/[rr]^Y\ar@/_1pc/[rr]_X\ar@{}[rr]|{\alpha\Down}&&\Set}
$$
to get functors $Y\circ F$ and $X\circ F$, both of type $\mcM'\to\Set$. What would these be?
\footnote{The $p$-column comes from applying $b$ then $a$ then $a$, as specified above by $F$.}
\begin{center}\footnotesize
\begin{tabular}{| l || l | l | l |}\bhline
\multicolumn{4}{|c|}{$X\circ F$}\\\bhline
{\bf ID}&{\bf m}&{\bf n}&{\bf p}\\\bbhline
State 0&State 1&State 2&State 1\\\hline
State 1& State 2& State 1&State 0\\\hline
State 2&State 0&State 0&State 2\\\bhline
\end{tabular}
\hspace{.5in}
\begin{tabular}{| l || l | l | l |}\bhline
\multicolumn{4}{|c|}{$Y\circ F$}\\\bhline
{\bf ID}&{\bf m}&{\bf n}&{\bf p}\\\bbhline
State 0&State 1A&State 2A&State 1A\\\hline
State 1A& State 2A& State 1B&State 0\\\hline
State 1B& State 2B& State 1C&State 0\\\hline
State 1C&State 2B&State 1B&State 0\\\hline
State 2A&State 0&State 0&State 2A\\\hline
State 2B&State 0&State 0&State 2A\\\bhline
\end{tabular}
\end{center}

The map $\alpha$ is what sent both State 1A and State 1B in $Y$ to State 1 in $X$, and so on. We can see that ``the same $\alpha$ works now:" the $p$ column of the table respects that mapping. But $\alpha$ was a natural transformation $Y\to X$ where as we need a natural transformation $Y\circ F\to X\circ F$. This is called {\em whiskering}. It is a kind of horizontal composition of natural transformation.

\end{example}

\begin{definition}[Whiskering]\index{natural transformation!whiskering of}\label{def:whiskering}

Let $\mcB,\mcC,\mcD,$ and $\mcE$ be categories, let $G_1,G_2\taking\mcC\to\mcD$ be functors, and let $\alpha\taking G_1\to G_2$ a natural transformation. Suppose that $F\taking\mcB\to\mcC$ (respectively $H\taking\mcD\to\mcE$) is a functor, depicted below:
$$
\xymatrix{\mcB\ar[r]^F&\mcC\ar@{}[r]|{\alpha\Down}\ar@/^1pc/[r]^{G_1}\ar@/_1pc/[r]_{G_2}&\mcD}
\hspace{.7in}
\left(\tn{respectively,}\hsp\xymatrix{\mcC\ar@{}[r]|{\alpha\Down}\ar@/^1pc/[r]^{G_1}\ar@/_1pc/[r]_{G_2}&\mcD\ar[r]^H&\mcE}\right),
$$
Then the {\em pre-whiskering of $\alpha$ by $F$}, denoted $\alpha\diamond F\taking G_1\circ F\to G_2\circ F$ (respectively, the {\em post-whiskering of $\alpha$ by $H$}, denoted $H\diamond\alpha\taking H\circ G_1\to H\circ G_2$) is defined as follows.\index{a symbol!$\diamond$}

For each $b\in\Ob(\mcB)$ the component $(\alpha\diamond F)_b\taking G_1\circ F(b)\to G_2\circ F(b)$ is defined to be $\alpha_{F(b)}$. (Respectively, for each $c\in\Ob(\mcC)$ the component $(H\diamond\alpha)_c\taking H\circ G_1(c)\to H\circ G_2(c)$ is defined to be $H(\alpha_c)$.) Checking that the naturality squares (in each case) is straightforward.

\end{definition}

The rest of this section can safely be skipped; I include it only for my own sense of completeness.

\begin{definition}[Horizontal composition of natural transformations]\index{natural transformation!horizontal composition of}\label{def:horizontal comp of nt}

Let $\mcB,\mcC,$ and $\mcD$ be categories, let $F_1,F_2\taking\mcB\to\mcC$ and $G_1,G_2\taking\mcC\to\mcD$ be functors, and let $\alpha\taking F_1\to F_2$ and $\beta\taking G_1\to G_2$ be natural transformations, as depicted below:
$$
\xymatrix{\mcB\ar@{}[r]|{\alpha\Down}\ar@/^1pc/[r]^{F_1}\ar@/_1pc/[r]_{F_2}&\mcC\ar@{}[r]|{\beta\Down}\ar@/^1pc/[r]^{G_1}\ar@/_1pc/[r]_{G_2}&\mcD}
$$
By pre- and post-whiskering in one order or the other we get the following diagram
$$
\xymatrix{
G_1\circ F_1\ar[r]^{G_1\diamond\alpha}\ar[d]_{\beta\diamond F_1}&G_1\circ F_2\ar[d]^{\beta\diamond F_2}\\
G_2\circ F_1\ar[r]_{G_2\diamond\alpha}&G_2\circ F_2}
$$
It is straightforward to show that this diagram commutes, so we can take the composition to be our definition of the horizontal composition 
$$\beta\diamond\alpha\taking G_1\circ F_1\to G_2\circ F_2.$$

\end{definition}

\begin{remark}

Whiskering a natural transformation $\alpha$ with a functor $F$ is the same thing as horizontally composing $\alpha$ with the identity natural transformation $\id_F$. This is true for both pre- and post- whiskering. For example in the notation of Definition \ref{def:whiskering} we have 
$$\alpha\diamond F = \alpha\diamond\id_F\hsp\tn{and}\hsp H\diamond\alpha=\id_H\diamond\alpha.$$

\end{remark}

\begin{remark}

All of the above is somehow similar to the world of paths inside a database schema $\mcS$, as seen in Definition \ref{def:congruence}. Indeed, a congruence on the paths of $\mcS$ is an equivalence relation that is closed under composition. The equivalence relation part is analogous to the fact that natural transformations can be composed vertically. The closure under composition part (Properties (3) and (4) in Definition \ref{def:congruence}) is analogous to pre- and post whiskering. See also Lemma \ref{lemma:composing PEDs}. 

This is being mentioned only as a curiosity and a way for the reader to draw connections, not with any additional purpose at this time.

\end{remark}

\begin{theorem}\index{natural transformation!interchange}
$$
\xymatrix{
&\ar@{}[d]|(.65){\alpha_1\Down}&&\ar@{}[d]|(.65){\beta_1\Down}\\
\mcC\ar@/^2pc/[rr]^{F_1}\ar[rr]|{F_2}\ar@/_2pc/[rr]_{F_3}&&\mcD\ar@/^2pc/[rr]^{G_1}\ar[rr]|{G_2}\ar@/_2pc/[rr]_{G_3}&&\mcE\\
&\ar@{}[u]|(.65){\alpha_2\Down}&&\ar@{}[u]|(.65){\beta_2\Down}}
$$
Given a setup of categories, functors, and natural transformations as above, we have
$$(\beta_2\circ\beta_1)\diamond(\alpha_2\circ\alpha_1)\;=\;(\beta_2\diamond\alpha_2)\circ(\beta_1\diamond\alpha_1).$$

\end{theorem}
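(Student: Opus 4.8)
The plan is to show that the two natural transformations $(\beta_2\circ\beta_1)\diamond(\alpha_2\circ\alpha_1)$ and $(\beta_2\diamond\alpha_2)\circ(\beta_1\diamond\alpha_1)$ are equal by checking that their components agree at every object $c\in\Ob(\mcC)$. Before doing any computation I would record that both composites are type-correct: by the definitions of $\diamond$ and $\circ$ each one is a natural transformation $G_1\circ F_1\to G_3\circ F_3$, so comparing them is legitimate. Then, by Proposition \ref{prop:Fun(C,D)}, two natural transformations between the same pair of functors are equal precisely when all of their components (which are morphisms in $\mcE$) coincide, so the componentwise check suffices. Throughout I would use only three things: the componentwise formula for vertical composition, $(\beta\circ\alpha)_c=\beta_c\circ\alpha_c$; the componentwise formula for horizontal composition, $(\beta\diamond\alpha)_c=\beta_{F_2(c)}\circ G_1(\alpha_c)$ coming from the commuting square that defines $\diamond$; and the fact that $G_1$ and $G_2$ are functors.

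First I would expand the left-hand side at $c$. Applying the vertical-composition formulas to $\beta_2\circ\beta_1$ and to $\alpha_2\circ\alpha_1$, then the horizontal-composition formula, and finally functoriality of $G_1$, the component becomes
$$((\beta_2\circ\beta_1)\diamond(\alpha_2\circ\alpha_1))_c=(\beta_2)_{F_3(c)}\circ(\beta_1)_{F_3(c)}\circ G_1((\alpha_2)_c)\circ G_1((\alpha_1)_c).$$
Next I would expand the right-hand side at $c$, applying the vertical-composition formula and then the horizontal-composition formula to each factor $\beta_1\diamond\alpha_1$ and $\beta_2\diamond\alpha_2$ (noting that $\alpha_1$ has target functor $F_2$, that $\alpha_2$ has target functor $F_3$, and that the outer functor for $\beta_2\diamond\alpha_2$ is $G_2$):
$$((\beta_2\diamond\alpha_2)\circ(\beta_1\diamond\alpha_1))_c=(\beta_2)_{F_3(c)}\circ G_2((\alpha_2)_c)\circ(\beta_1)_{F_2(c)}\circ G_1((\alpha_1)_c).$$

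Comparing the two expressions, the outermost factor $(\beta_2)_{F_3(c)}$ and the innermost factor $G_1((\alpha_1)_c)$ already agree, so it remains only to prove equality of the two middle pairs,
$$(\beta_1)_{F_3(c)}\circ G_1((\alpha_2)_c)=G_2((\alpha_2)_c)\circ(\beta_1)_{F_2(c)}.$$
This is exactly the naturality square from Definition \ref{def:natural transformation} for $\beta_1\taking G_1\to G_2$, instantiated at the morphism $(\alpha_2)_c\taking F_2(c)\to F_3(c)$ of $\mcD$; invoking that square finishes the argument. The computation is mechanical once the indexing is set up, so the step I expect to be the main obstacle is purely bookkeeping: recognizing that the sole discrepancy between the two sides is one naturality square of $\beta_1$, and keeping straight which object of $\mcD$ each $\beta_1$-component is indexed by ($F_2(c)$ versus $F_3(c)$) and which functor ($G_1$ versus $G_2$) is applied to $(\alpha_2)_c$. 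Everything else follows from the definitions alone.
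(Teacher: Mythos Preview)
Your proof is correct. Both sides are natural transformations $G_1F_1\to G_3F_3$, your componentwise expansions are accurate given the paper's convention $(\beta\diamond\alpha)_c=\beta_{F_2(c)}\circ G_1(\alpha_c)$, and the remaining middle identity is precisely the naturality square of $\beta_1$ at the morphism $(\alpha_2)_c$.

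The paper takes a somewhat different, more diagrammatic route. Rather than unfolding at an object $c\in\Ob(\mcC)$, it stays at the level of natural transformations and draws the $3\times 3$ grid of functors $G_iF_j$ with horizontal arrows $G_i\diamond\alpha_j$ and vertical arrows $\beta_i\diamond F_j$. Each of the four small squares commutes because it is exactly an instance of the commuting square that \emph{defines} horizontal composition; hence the outer path (which computes $(\beta_2\circ\beta_1)\diamond(\alpha_2\circ\alpha_1)$, using that whiskering respects vertical composition) equals the diagonal path through $G_2F_2$ (which is $(\beta_2\diamond\alpha_2)\circ(\beta_1\diamond\alpha_1)$ by definition). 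Your argument trades this picture for an explicit equation in $\mcE$ and isolates a single naturality square of $\beta_1$; the paper's argument hides that naturality inside the already-proved commutativity of the $\diamond$-defining squares. Both are fine: yours is more hands-on and self-contained, the paper's is shorter once the whiskering machinery is in place.
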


\begin{proof}

One need only observe that each square in the following diagram commutes, so following the outer path $(\beta_2\circ\beta_1)\diamond(\alpha_2\circ\alpha_1)$ yields the same morphism as following the diagonal path $;(\beta_2\diamond\alpha_2)\circ(\beta_1\diamond\alpha_1)$:
$$
\xymatrix{
G_1F_1\ar[r]^{G_1\diamond\alpha_1}\ar[d]_{\beta_1\diamond F_1}&G_1F_2\ar[r]^{G_1\diamond\alpha_2}\ar[d]_{\beta_1\diamond F_2}&G_1F_3\ar[d]^{\beta_1\diamond F_3}\\
G_2F_1\ar[r]^{G_2\diamond\alpha_1}\ar[d]_{\beta_2\diamond F_1}&G_2F_2\ar[r]^{G_2\diamond\alpha_2}\ar[d]_{\beta_2\diamond F_2}&G_2F_3\ar[d]^{\beta_2\diamond F_3}\\
G_3F_1\ar[r]_{G_3\diamond\alpha_1}&G_3F_2\ar[r]_{G_3\diamond\alpha_2}&G_3F_3
}
$$

\end{proof}


\subsection{The category of instances on a database schema}\index{a category!$\mcC\set$}\index{database!category of instances on}

In Section \ref{sec:schemas and cats intro} we showed that schemas are presentations of categories, and we will show in Section \ref{sec:cat equiv sch} that in fact the category of schemas is equivalent to the category of categories. In this section we therefore take license to blur the distinction between schemas and categories.

If $\mcC$ is a schema, i.e. a category, then as we discussed in Section \ref{sec:instances}, an instance on $\mcC$ is a functor $I\taking\mcC\to\Set$. But now we have a notion beyond categories and functors, namely that of natural transformations. So we make the following definition.

\begin{definition}\label{def:mcC-set}

Let $\mcC$ be a schema (or category). The {\em category of instances on $\mcC$}, denoted $\mcC\set$, is $\Fun(\mcC,\Set)$. Its objects are $\mcC$-instances (i.e. functors $\mcC\to\Set)$ and its morphisms are natural transformations.

\end{definition}

\begin{remark}

One might object to Definition \ref{def:mcC-set} on the grounds that database instances should not be infinite. This is a reasonable perspective, so it is a pleasant fact that the above definition can be modified easily to accomodate it. The subcategory $\Fin$ (see Example \ref{ex:Fin}) of finite sets can be substituted for $\Set$ in Definition \ref{def:mcC-set}. One could define the {\em category of finite instances on $\mcC$} as $\mcC-\Fin=\Fun(\mcC,\Fin)$. Almost all of the ideas in this book will make perfect sense in $\mcC-\Fin$.

\end{remark}

Natural transformations should serve as some kind of morphism between instances on the same schema. How are we to interpret a natural transformation $\alpha\taking I\to J$ between database instances $I,J\taking\mcC\to\Set$? 

Our first clue comes from Application \ref{app:change of fsm}. There we considered the case of a monoid $\mcM$, and we thought about a natural transformation between two functors $X,Y\taking\mcM\to\Set$, considered as different finite state machines. The notion of natural transformation captured the idea of one model being a refinement of another. This same kind of idea works for databases with more than one table (categories with more than one object), but the whole thing is a bit opaque. Let's work it through slowly.

\begin{example}\label{ex:nts on term}
Let us consider the terminal schema, $\ul{1}\iso\fbox{$\bullet^{\tn{Grapes}}$}$. An instance is a functor $\ul{1}\to\Set$ and it is easy to see that this is the same thing as just a set. A natural transformation $\alpha\taking I\to J$ is a function from set $I$ to set $J$. In the standard table view, we might have $I$ and $J$ as below:
\begin{center}
\begin{tabular}{| l ||}\bhline
\multicolumn{1}{| c |}{Grapes $(I)$}\\\bhline
{\bf ID}\\\bbhline
Grape 1\\\hline
Grape 3\\\hline
Grape 4\\\bhline
\end{tabular}
\hspace{1in}
\begin{tabular}{| l ||}\bhline
\multicolumn{1}{| c |}{Grapes $(J)$}\\\bhline
{\bf ID}\\\bbhline
Jan1-01\\\hline
Jan1-02\\\hline
Jan1-03\\\hline
Jan1-04\\\hline
Jan3-01\\\hline
Jan4-01\\\hline
Jan4-02\\\bhline
\end{tabular}
\end{center}

There are 343 natural transformations $I\to J$. Perhaps some of them make more sense than others; e.g. we could hope that the numbers in $I$ corresponded to the numbers after the dash in $J$, or perhaps to what seems to be the date in January. But it could be that the rows in $J$ correspond to batches, and all three grapes in $I$ are part of the first batch on Jan-1. The notion of natural transformation is a mathematical one.
\end{example}

\begin{exercise}\label{exc:indexed sets as functors}\index{indexed set!as functor}
Recall the notion of set-indexed sets from Definition \ref{def:indexed sets}. Let $A$ be a set, and come up with a schema $\mcA$ such that instances on $\mcA$ are $A$-indexed sets. Is our current notion of morphism between instances (i.e. natural transformations) well-aligned with the above definition of ``mapping of $A$-indexed sets"?
\end{exercise}

For a general schema (or category) $\mcC$, let us think through what a morphism $\alpha\taking I\to J$ between instances $I,J\taking\mcC\to\Set$ is. For each object $c\in\Ob(\mcC)$ there is a component $\alpha_c\taking I(c)\to J(c)$. This means that just like in Example \ref{ex:nts on term}, there is for each table $c$ a function from the rows in $I$'s manifestation of $c$ to the rows in $J$'s manifestation of $c$. So to make a natural transformation, such a function has to be specified table by table. But then we have to contend with naturality squares, one for every arrow in $\mcC$. Arrows in $\mcC$ correspond to foreign key columns in the database. The naturality requirement was already covered in Application \ref{app:change of fsm} (and see especially how (\ref{dia:naturality squares for fsm}) is checked in (\ref{dia:naturality for a}) and (\ref{dia:naturality for b})).

\begin{example}\label{ex:graph hom as NT done out}\index{graph homomorphism!as functor}

We saw in Section \ref{sec:graphs as functors} that graphs can be regarded as functors $\mcG\to\Set$, where $\mcG\iso\GrIn$ is the ``schema for graphs" shown here: 
$$\mcG:=\fbox{\xymatrix{\LTO{Arrow}\ar@<.5ex>[r]^{src}\ar@<-.5ex>[r]_{tgt}&\LTO{Vertex}}}$$\index{a schema!indexing graphs}

A database instance $I\taking\mcG\to\Set$ on $\mcG$ consists of two tables. Here is an example instance:
\begin{align*}
I:=\parbox{2in}{\fbox{\xymatrix{\bullet^v\ar[r]^f&\bullet^w\ar@/_1pc/[r]_h\ar@/^1pc/[r]^g&\bullet^x}}}
\hspace{.5in}
\begin{array}{| l || l | l |}\bhline
\multicolumn{3}{|c|}{{\tt Arrow}\;\; (I)}\\\bhline
{\bf ID}&{\bf src}&{\bf tgt}\\\bbhline
f&v&w\\\hline
g&w&x\\\hline
h&w&x\\\bhline
\end{array}
\hspace{.5in}
\begin{array}{| l ||}\bhline
\multicolumn{1}{|c|}{{\tt Vertex}\;\; (I)}\\\bhline
{\bf ID}\\\bbhline
v\\\hline
w\\\hline
x\\\bhline
\end{array}
\end{align*}
To discuss natural transformations, we need two instances. Here is another, $J\taking\mcG\to\Set$,
\begin{align*}
J:=\parbox{2in}{\fbox{\xymatrix{
\LMO{q}\ar[r]^i&\LMO{r}\ar@/^1pc/[r]^j&\LMO{s}\ar@/^1pc/[l]^k\ar[r]^\ell&\LMO{t}\\&&&\LMO{u}}}}
\hspace{.5in}
\begin{array}{| l || l | l |}\bhline
\multicolumn{3}{|c|}{{\tt Arrow}\;\; (J)}\\\bhline
{\bf ID}&{\bf src}&{\bf tgt}\\\bbhline
i&q&r\\\hline
j&r&s\\\hline
k&s&r\\\hline
\ell&s&t\\\bhline
\end{array}
\hspace{.5in}
\begin{array}{| l ||}\bhline
\multicolumn{1}{|c|}{{\tt Vertex}\;\; (J)}\\\bhline
{\bf ID}\\\bbhline
q\\\hline
r\\\hline
s\\\hline
t\\\hline
u\\\bhline
\end{array}
\end{align*}
To give a natural transformation $\alpha\taking I\to J$, we give two components: one for arrows and one for vertices. We need to say where each vertex in $I$ goes in $J$ and we need to say where each arrow in $I$ goes in $J$. The naturality squares insist that if we specify that $g\mapsto j$, for example, then we better specify that$w\mapsto r$ and that $x\mapsto s$. What a computer is very good at, but a human is fairly slow at, is checking that a given pair of components (arrows and vertices) really is natural. 

There are 8000 ways to come up with component functions $\alpha_{{\tt Arrow}}$ and $\alpha_{{\tt Vertex}}$, but precisely four natural transformations, i.e. four graph homomorphisms, $I\to J$; the other 7996 are haphazard flingings of arrows to arrows and vertices to vertices without any regard to sources and targets. We briefly describe the four now. 

First off, nothing can be sent to $u$ because arrows must go to arrows and $u$ touches no arrows. If we send $v\mapsto q$ then $f$ must map to $i$, and $w$ must map to $r$, and both $g$ and $h$ must map to $j$, and $x$ must map to $s$. If we send $v\mapsto r$ then there are two choices for $g$ and $h$. If we send $v\mapsto s$ then there's one way to obtain a graph morphism. If we try to send $v\mapsto^?t$, we fail. All of this can be seen by staring at the tables rather than at the pictorial representations of the graphs; the human eye understands these pictures better, but the computer understands the tables better.

\end{example}

\begin{exercise}
If $I,J\taking\mcG\to\Set$ are as in Example \ref{ex:graph hom as NT done out}, how many natural transformations are there $J\to I$?
\end{exercise}

\begin{exercise}
Let $Y_A\taking\mcG\to\Set$ denote the instance below:
\begin{align*}
\begin{array}{| l || l | l |}\bhline
\multicolumn{3}{|c|}{{\tt Arrow}\;\; (Y_A)}\\\bhline
{\bf ID}&{\bf src}&{\bf tgt}\\\bbhline
a&v_0&v_1\\\bhline
\end{array}
\hspace{.5in}
\begin{array}{| l ||}\bhline
\multicolumn{1}{|c|}{{\tt Vertex}\;\; (Y_A)}\\\bhline
{\bf ID}\\\bbhline
v_0\\\hline
v_1\\\bhline
\end{array}
\end{align*}
Let $I\taking\mcG\to\Set$ be as in Example \ref{ex:graph hom as NT done out}.
\sexc How many natural transformations are there $Y_A\to I$?
\next With $J$ as above, how many natural transformations are there $Y_A\to J$?
\next Do you have any conjecture about the way natural transformations $Y_A\to X$ behave for arbitrary graphs $X\taking\mcG\to\Set$?
\endsexc
\end{exercise}

In terms of databases, this notion of instance morphism $I\to J$ is fairly benign. For every table its a mapping from the set of rows in $I$'s version of the table to $J$'s version of the table, such that all the foreign keys are respected. We will see that this notion of morphism has excellent formal properties, so that projections, unions, and joins of tables (the typical database operations) would be predicted to be ``obviously interesting" by a category theorist who had no idea what a database was.
\footnote{More precisely, given a functor between schemas $F\taking\mcC\to\mcD$, the pullback $\Delta_F\taking\mcD\set\to\mcC\set$, its left $\Sigma_F$ and its right adjoint $\Pi_F$ constitute these important queries. See Section \ref{sec:data migration}.}

However, something is also missing from the natural transformation picture. A very important occurrence in the world of databases is the update. Everyone can understand this: a person makes a change in one of the tables, like changing your address from Cambridge, MA to Hereford, UK. Most such arbitrary changes of database instance are not ``natural", in that the new linking pattern is incompatible with the old.

It is interesting to consider how updates of $\mcC$-instances should be understood category theoretically. We might want a category $Upd_\mcC$ whose objects are $\mcC$-instances and whose morphisms are updates. But then what is the composition formula? Is there a unique morphism $I\to J$ whenever $J$ can be obtained as an update on $I$? Because in that case, we would be defining $Upd_\mcC$ to be the indiscrete category on the set of $\mcC$-instances (see Example \ref{ex:indiscrete cat equiv to terminal}).

\begin{exercise}
Research project: Can you come up with a satisfactory way to model database updates category-theoretically? Let $\NN$ be the category
$$[\NN]:=\xymatrix{\LMO{0}\ar[r]&\LMO{1}\ar[r]&\LMO{2}\ar[r]&\cdots}$$ 
representing a discrete timeline. A place to start might be to use something like the slice category $\Cat_{/[\NN]}$ where the fiber over each object in $\NN$ is a snapshot of the database in time. Can you make this work?
\end{exercise}


\subsection{Equivalence of categories}\label{sec:equivalence of cats}

We have a category $\Cat$ of categories, and in every category there is a notion of isomorphism between objects: one morphism each way, such that each round-trip composition is the identity. An isomorphism in $\Cat$, therefore, takes place between two categories, say $\mcC$ and $\mcD$: it is a functor $F\taking\mcC\to\mcD$ and a functor $G\taking\mcD\to\mcC$ such that $G\circ F=\id_\mcC$ and $F\circ G=\id_\mcD$. 

It turns out that categories are often similar enough to be considered equivalent without being isomorphic. For this reason, the notion of isomorphism is considered ``too strong" to be useful for categories. The feeling to a category theorist might be akin to saying that two material samples are the same if there is an atom-by-atom matching, or that two words are the same if they are written in the same font, of the same size, by the same person, in the same state of mind. 

As reasonable as isomorphism is as a notion {\em in} most categories, it fails to be the ``right notion" {\em about} categories. The reason is that {\em in} categories there are objects and morphisms, whereas when we talk {\em about} categories, we have categories and functors, plus natural transformations. These serve as mappings between mappings, and this is not part of the structure of an ordinary category. In cases where a category $\mcC$ does have such mappings between mappings, it is often a ``better notion" if we take that extra structure into account, like we will for categories. This whole subject leads us to the study of 2-categories (or $n$-categories, or $\infty$-categories), which we do not discuss in this book. See, for example, \cite{Le1} for an introduction.

Regardless, our purpose now is to explain this ``good notion" of sameness for categories, namely {\em equivalences of categories}, which appropriately take natural transformations into account. Instead of ``functors going both ways with round trips equal to identity", which is required in order to be an isomorphism of categories, equivalence of categories demands ``functors going both ways with round trips {\em isomorphic} to identity".

\begin{definition}[Equivalence of categories]\label{def:equiv of cats}\index{category!equivalence of}

Let $\mcC$ and $\mcC'$ be categories. A functor $F\taking\mcC\to\mcC'$ is called {\em an equivalence of categories}, and denoted $F\taking\mcC\To{\simeq}\mcC'$,\index{a symbol!$\simeq$}
\footnote{The notation $\simeq$ has already been used for equivalences of paths in a schema. We do not mean to equate these ideas; we are just reusing the symbol. Hopefully no confusion will arise.}
 if there exists a functor $F'\taking\mcC'\to\mcC$ and natural isomorphisms $\alpha\taking \id_\mcC\To{\iso}F'\circ F$ and $\alpha'\taking\id_{\mcC'}\To{\iso}F\circ F'$. In this case we say that $F$ and $F'$ are {\em mutually inverse equivalences}.

\end{definition}

Unpacking a bit, suppose we are given functors $F\taking\mcC\to\mcC'$ and $F'\taking\mcC'\to\mcC$. We want to know something about the roundtrips on $\mcC$ and on $\mcC'$; we want to know the same kind of information about each roundtrip, so let's concentrate on the $\mcC$ side. We want to know something about $F'\circ F\taking\mcC\to\mcC$, so let's name it $i\taking\mcC\to\mcC$; we want to know that $i$ is a natural isomorphism. That is, for every $c\in\Ob(\mcC)$ we want an isomorphism $\alpha_c\taking c\To{\iso} i(c)$, and we want to know that these isomorphisms are picked carefully enough that given $g\taking c\to c'$ in $\mcC$, the choice of isomorphisms for $c$ and $c'$ are compatible,
$$\xymatrix{c\ar[r]^{\alpha_c}\ar[d]_g&i(c)\ar[d]^{i(g)}\\c'\ar[r]_{\alpha_{c'}}&i(c').}$$
To be an equivalence, the same has to hold for the other roundtrip, $i'=F\circ F'\taking\mcC'\to\mcC'.$

\begin{exercise}
Let $\mcC$ and $\mcC'$ be categories. Suppose that $F\taking\mcC\to\mcC'$ is an isomorphism of categories.
\sexc Is it an equivalence of categories?
\next What are the components of $\alpha$ and $\alpha'$ (with notation as in Definition \ref{def:equiv of cats})?
\endsexc
\end{exercise}

\begin{example}\label{ex:indiscrete cat equiv to terminal}

Let $S$ be a set and let $S\times S\ss S\times S$ be the complete relation on $S$, which is a preorder $K_S$. Recall from Proposition \ref{prop:preorders to cats} that we have a functor $i\taking\PrO\to\Cat$,\index{a functor!$\PrO\to\Cat$} and the resulting category $i(K_S)$ is called the {\em indiscrete category on $S$}; it has objects $S$ and a single morphism between every pair of objects. Here is a picture of $K_{\{1,2,3\}}$:
$$\xymatrix@=15pt{\LMO{1}\ar@(l,u)[]\ar@/^.5pc/[rr]\ar@/^.5pc/[rdd]&&\LMO{2}\ar@(u,r)[]\ar@/^.5pc/[ll]\ar@/^.5pc/[ldd]\\\\&\LMO{3}\ar@(dr,dl)[]_~\ar@/^.5pc/[uur]\ar@/^.5pc/[uul]}$$

It is easy check that $K_{\ul{1}}$, the indiscrete category on one element, is isomorphic to $\ul{1}$, the discrete category on one object, also known as the terminal category (see Exercise \ref{exc:term cat}). The category $\ul{1}$ consists of one object, its identity morphism, and nothing else. 

The only way that $K_S$ can be isomorphic to $\ul{1}$ is if $S$ has one element.
\footnote{One way to see this is that by Exercise \ref{exc:Ob is a functor}, we have a functor $\Ob\taking\Cat\to\Set$, and we know by Exercise \ref{exc:functors preserve isos} that functors preserve isomorphisms, so an isomorphism between categories must restrict to an isomorphism between their sets of objects. The only sets that are isomorphic to $\ul{1}$ have one element.} 
On the other hand, there is an equivalence of categories $$K_S\simeq\ul{1}$$ for every set $S\neq\emptyset$. 

In fact, there are many such equivalences, one for each element of $S$. To see this, let $S$ be a nonempty set and choose an element $s_0\in S$. For every $s\in S$, there is a unique isomorphism $k_s\taking s\To{\iso}s_0$ in $K_S$. Let $F\taking K_S\to\ul{1}$ be the only possible functor (see Exercise \ref{exc:term cat}), and let $F'\taking\ul{1}\to K_S$ send the unique object in $\ul{1}$ to the object $s_0$. 

Note that $F'\circ F=\id_{\ul{1}}\taking\ul{1}\to\ul{1}$ is the identity, but that $F\circ F'\taking K_S\to K_S$ sends everything to $s_0$. Let $\alpha=\id_{\ul{1}}$ and define $\alpha'\taking\id_{K_S}\to F\circ F'$ by $\alpha'_s=k_s$. Note that $\alpha'_s$ is an isomorphism for each $s\in\Ob(K_S)$, and note that $\alpha'$ is a natural transformation (hence natural isomorphism) because every possible square commutes in $K_S$. This completes the proof, initiated in the paragraph above, that the category $K_S$ is equivalent to $\ul{1}$ for every nonempty set $S$, and that this fact can be witnessed by any element $s_0\in S$.

\end{example}

\begin{example}\label{ex:finite linear orders}

Consider the category $\FLin$, described in Example \ref{ex:FLin}, of finite nonempty linear orders. For every natural number $n\in\NN$, let $[n]\in\Ob(\FLin)$ denote the linear order shown in Example \ref{ex:finite lo}. Define a category $\bD$\index{a category!$\bD$} whose objects are given by $\Ob(\bD)=\{[n]\|n\in\NN\}$ and with $\Hom_{\bD}([m],[n])=\Hom_{\FLin}([m],[n])$. The difference between $\FLin$ and $\bD$ is only that objects in $\FLin$ may have ``funny labels", e.g. 
$$\xymatrix{\LMO{5}\ar[r]&\LMO{x}\ar[r]&\LMO{``Sam"}}$$ 
whereas objects in $\bD$ all have standard labels, e.g.
$$\xymatrix{\LMO{0}\ar[r]&\LMO{1}\ar[r]&\LMO{2}}$$
Clearly $\FLin$ is a much larger category, and yet feels like it is ``pretty much the same as" $\bD$. Justly, they are equivalent, $\FLin\simeq\bD$. 

The functor $F'\taking\bD\to\FLin$\index{a functor!$\bD\to\FLin$} is the inclusion; the functor $F\taking\FLin\to\bD$ sends every finite nonempty linear order $X\in\Ob(\FLin)$ to the object $F(X):=[n]\in\bD$, where $\Ob(X)\iso\{0,1,\ldots,n\}$. For each such $X$ there is a unique isomorphism $\alpha_X\taking X\To{\iso}[n]$, and these fit together into
\footnote{The phrase ``these fit together into" is suggestive shorthand for, and thus can be replaced with, the phrase ``the naturality squares commute for these components, so together they constitute".}
the required natural isomorphism $\id_{\FLin}\to F'\circ F$. The other natural isomorphism $\alpha'\taking\id_{\bD}\to F\circ F'$ is the identity.

\end{example}

\begin{exercise}
Recall from Definition \ref{def:cardinality} that a set $X$ is called finite if there exists a natural number $n\in\NN$ and an isomorphism of sets $X\to\ul{n}$. Let $\Fin$\index{a category!$\Fin$} denote the category whose objects are the finite sets and whose morphisms are the functions. Let $\mcS$ denote the category whose objects are the sets $\ul{n}$ and whose morphisms are again the functions. For every object $X\in\Ob(\Fin)$ there exists an isomorphism $p_X\taking X\to\ul{n}$ for some unique object $\ul{n}\in\Ob(\mcS)$. Find an equivalence of categories $\Fin\To{\simeq}\mcS$. 
\end{exercise}

\begin{exercise}
We say that two categories $\mcC$ and $\mcD$ are equivalent if there exists an equivalence of categories between them. Show that the relation of ``being equivalent" is an equivalence relation on $\Ob(\Cat)$.
\end{exercise}

\begin{example}\label{ex:Z1 not equiv Z2}

Consider the group $\ZZ_2:=(\{0,1\},0,+)$, where $1+1=0$. As a category, $\ZZ_2$ has one object $\monOb$ and two morphisms, namely $0,1$, such that $0$ is the identity. Since $\ZZ_2$ is a group, the morphism $1\taking\monOb\to\monOb$ must have an inverse $x$, meaning $1+x=0$, and $x=1$ is the only solution.

The point is that the morphism $1$ in $\ZZ_2$ is an isomorphism. Let $\mcC=\ul{1}$ be the terminal category as in Exercise \ref{exc:term cat}. One might accidentally believe that $\mcC$ is equivalent to $\ZZ_2$, but this is not the case! The argument in favor of the accidental belief is that we have unique functors $F\taking\ZZ_2\to\mcC$ and $F'\taking\mcC\to\ZZ_2$ (and this is true); the roundtrip $F\circ F'\taking\mcC\to\mcC$ is the identity (and this is true); and for the roundtrip $F'\circ F\taking\ZZ_2\to\ZZ_2$ both morphisms in $\ZZ_2$ are isomorphisms, so any choice of morphism $\alpha_\monOb\taking\monOb\to F'\circ F(\monOb)$ will be an isomorphism (and this is true). The problem is that no such $\alpha_\monOb$ will be a natural transformation.

When we roundtrip $F'\circ F\taking\ZZ_2\to\ZZ_2$, the image of $1\taking\monOb\to\monOb$ is $F'\circ F(1)=0=\id_{\monOb}$. So the naturality square for the morphism $1$ looks like this:
$$
\xymatrix{\monOb\ar[r]^{\alpha_\monOb}\ar[d]_{1}&\monOb\ar[d]^{0=F'\circ F(1)}\\\monOb\ar[r]_{\alpha_\monOb}&\monOb}
$$
where we still haven't decided whether we want $\alpha_\monOb$ to be $0$ or $1$. Unfortunately, neither choice works (i.e. for neither choice will the diagram commute) because $x+1\neq x+0$ in $\ZZ_2$.

\end{example}

\begin{definition}[Skeleton]\index{skeleton}

Let $\mcC$ be a category. We saw in Lemma \ref{lemma:isomorphic ER} that the relation of ``being isomorphic" is an equivalence relation $\cong$ on $\Ob(\mcC)$. An {\em election in $\mcC$} is a choice $E$ of the following sort:
\begin{itemize}
\item for each $\cong$-equivalence class $S\ss\Ob(\mcC)$ a choice of object $s_E\in S$, called the {\em elected object for $S$}, and
\item for each object $c\in\Ob(\mcC)$ a choice of isomorphism $i_c\taking s_E\to c$ and $j_c\taking c\to s_E$ with $i_c\circ j_c=\id_c$ and $j_c\circ i_c=\id_{s_E}$, where $s_E$ is an elected object (depending on $c$).
\end{itemize}
Given an election $E$ in $\mcC$, there is a category called the {\em $E$-elected skeleton of $\mcC$}, denoted $\Skel_E(\mcC)$, whose objects are the elected objects and whose morphisms $s\to t$ for any elected objects $s,t\in\Ob(\mcC)$ are given by $\Hom_{\Skel_E(\mcC)}(s,t)=\Hom_\mcC(s,t)$. Any object $c\in\Ob(\mcC)$ is isomorphic to a unique elected object $s_E$; we refer to $s_E$ as the {\em elected representative} of $c$; we refer to the isomorphisms $i_c$ and $j_c$ as the {\em representing isomorphisms} for $c$.

\end{definition}

\begin{proposition}

Let $\mcC$ be a category and let $E$ be an election in $\mcC$. There is an equivalence of categories $$\Skel_E(\mcC)\simeq\mcC.$$

\end{proposition}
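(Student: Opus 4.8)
The plan is to build mutually inverse equivalences directly out of the election data. For one direction I would take $F'\taking\Skel_E(\mcC)\to\mcC$ to be the inclusion: it is the identity on elected objects, and since $\Hom_{\Skel_E(\mcC)}(s,t)=\Hom_\mcC(s,t)$ by definition it is the identity on every hom-set, so it is trivially a functor. The content is in the other direction, $F\taking\mcC\to\Skel_E(\mcC)$. On objects I would send each $c\in\Ob(\mcC)$ to its elected representative, which I will write as $\bar c$; recall the election supplies representing isomorphisms $i_c\taking\bar c\to c$ and $j_c\taking c\to\bar c$ satisfying $i_c\circ j_c=\id_c$ and $j_c\circ i_c=\id_{\bar c}$. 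On a morphism $f\taking c\to c'$ I would set $F(f):=j_{c'}\circ f\circ i_c\taking\bar c\to\bar c'$, which is a legitimate morphism of $\Skel_E(\mcC)$ precisely because $\bar c$ and $\bar c'$ are elected objects.

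Next I would check that $F$ is a functor. Identities are preserved because $F(\id_c)=j_c\circ\id_c\circ i_c=j_c\circ i_c=\id_{\bar c}$, using $j_c\circ i_c=\id_{\bar c}$. Composition is preserved by the following computation, where $f\taking c\to c'$ and $g\taking c'\to c''$:
$$F(g)\circ F(f)=(j_{c''}\circ g\circ i_{c'})\circ(j_{c'}\circ f\circ i_c)=j_{c''}\circ g\circ(i_{c'}\circ j_{c'})\circ f\circ i_c=j_{c''}\circ g\circ f\circ i_c=F(g\circ f),$$
the key cancellation being $i_{c'}\circ j_{c'}=\id_{c'}$.

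Then I would exhibit the two natural isomorphisms, both given by the $j$ maps. For the roundtrip on $\mcC$, note $F'\circ F$ sends $c$ to $\bar c$ and $f$ to $j_{c'}\circ f\circ i_c$; I would define $\alpha\taking\id_\mcC\To{\iso}F'\circ F$ by $\alpha_c:=j_c$, which is an isomorphism for each $c$. Naturality for $f\taking c\to c'$ holds since $(j_{c'}\circ f\circ i_c)\circ j_c=j_{c'}\circ f\circ(i_c\circ j_c)=j_{c'}\circ f=\alpha_{c'}\circ f$. For the roundtrip on $\Skel_E(\mcC)$, observe $F\circ F'$ is the identity on objects (the elected representative of an elected object is itself) and sends $f\taking s\to t$ to $j_t\circ f\circ i_s$; I would define $\alpha'\taking\id_{\Skel_E(\mcC)}\To{\iso}F\circ F'$ by $\alpha'_s:=j_s$, which lies in $\Hom_\mcC(s,s)=\Hom_{\Skel_E(\mcC)}(s,s)$ and is an isomorphism, with the same one-line computation giving naturality.

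The main thing to be careful about, rather than any hard computation, is a tempting false simplification: for an elected object $s$ the representing isomorphisms $i_s,j_s$ are mutually inverse automorphisms but need \emph{not} be identities, so $F\circ F'$ is generally not literally equal to $\id_{\Skel_E(\mcC)}$. One must therefore resist asserting equality of functors and instead produce the genuine natural isomorphism $\alpha'$ above. Once this is recognized, the entire proof is bookkeeping with the two election identities $i_c\circ j_c=\id_c$ and $j_c\circ i_c=\id_{\bar c}$, and the conclusion $\Skel_E(\mcC)\simeq\mcC$ follows directly from Definition \ref{def:equiv of cats}.
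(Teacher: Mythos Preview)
Your proof is correct and follows essentially the same approach as the paper: the same inclusion functor $F'$, the same ``conjugate by the representing isomorphisms'' functor $F$, and the same component maps $j_c$ for the natural isomorphism $\id_\mcC\To{\iso}F'\circ F$. In fact you are \emph{more} careful than the paper on one point: the paper simply asserts that $F\circ F'$ is the identity on $\Skel_E(\mcC)$, which tacitly assumes $i_s=j_s=\id_s$ for every elected object $s$---a natural convention, but not part of the stated definition of an election. Your observation that $i_s,j_s$ are only guaranteed to be mutually inverse automorphisms, together with your explicit natural isomorphism $\alpha'_s:=j_s$, closes that gap without any extra hypothesis.
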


\begin{proof}

The functor $F'\taking\Skel_E(\mcC)\to\mcC$ is the inclusion. The functor $F\taking\mcC\to\Skel_E(\mcC)$ sends each object in $\mcC$ to its elected representative. Given objects $c,c'\in\Ob(\mcC)$ with elected representatives $s,t$ respectively, and given a morphism $g\taking c\to c'$ in $\mcC$, let $i_c,j_c,i_{c'},$ and $j_{c'}$ be the representing isomorphisms, and define $F(g)\taking s\to t$ to be the composite 
$$\xymatrix{s\ar[r]^{i_c}&c\ar[r]^g&c'\ar[r]^{j_{c'}}&t.}$$
This is functorial because it sends the identity to the identity and $F(g\circ g')=F(g)\circ F(g')$.

The composite $F\circ F'\taking\Skel_E(\mcC)\to\Skel_E(\mcC)$ is the identity. For each $c\in\Ob(\mcC)$ define $\alpha_c\taking c\To{\iso} F'\circ F(c)$ by  $\alpha_c:=j_c$. Given $g\taking c\to c'$ the required naturality square is shown to the left below:
$$
\xymatrix@=40pt{
c\ar[r]^{j_c}\ar[d]_{g}\ar@{}[dr]|{?}&s\ar[r]^{i_c}\ar[d]^{F'\circ F(g)}&c\ar[d]^g\\
c'\ar[r]_{j_c'}&t&c'\ar[l]^{j_c'}}
$$
The right-hand part commutes by definition of $F$ and $F'$; i.e. $j'\circ g\circ i_c=F'\circ F(g)$. The left-hand square commutes because $i_c\circ j_c=\id_c$.

\end{proof}

\begin{definition}

A {\em skeleton of $\mcC$} is a category $\mcS$, equivalent to $\mcC$, such that for any two objects $s,s'\in\Ob(\mcS)$, if $s\iso s'$ then $s=s'$. 

\end{definition}

\begin{exercise}
Let $\mcP$ be a preorder (considered as a category).
\sexc If $\mcP'$ is a skeleton of $\mcP$, is it a partial order?
\next Is every partial order the skeleton of some preorder?
\endsexc
\end{exercise}

\begin{definition}[Full and faithful functors]\label{def:full faithful}\index{functor!full}\index{functor!faithful}

Let $\mcC$ and $\mcD$ be categories, and let $F\taking\mcC\to\mcD$ be a functor. For any two objects $c,c'\in\Ob(\mcC)$, we have a function $\Hom_F(c,c')\taking\Hom_\mcC(c,c')\to\Hom_\mcD(F(c),F(c'))$ guaranteed by the definition of functor.
We say that $F$ is {\em a full functor} if $\Hom_F(c,c')$ is surjective for every $c,c'$.
We say that $F$ is {\em a faithful functor} if $\Hom_F(c,c')$ is injective for every $c,c'$. We say that $F$ is {\em a fully faithful functor} if $\Hom_F(c,c')$ is bijective for every $c,c'$.

\end{definition}

\begin{exercise}
Let $\ul{1}$ and $\ul{2}$ be the discrete categories on one and two objects, respectively. There is only one functor $\ul{2}\to\ul{1}$.
\sexc Is it full?
\next Is it faithful?
\endsexc
\end{exercise}

\begin{exercise}\label{exc:empty fully faithful}
Let $\ul{0}$ denote the empty category, and let $\mcC$ be any category. There is a unique functor $F\taking \ul{0}\to\mcC$.
\sexc For general $\mcC$ will $F$ be full?
\next For general $\mcC$ will $F$ be faithful?
\next For general $\mcC$ will $F$ be an equivalence of categories?
\endsexc
\end{exercise}

\begin{proposition}

Let $\mcC$ and $\mcC'$ be categories and let $F\taking\mcC\to\mcC'$ be an equivalence of categories. Then $F$ is fully faithful.

\end{proposition}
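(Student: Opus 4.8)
The plan is to prove faithfulness and fullness separately, the key device being the naturality squares for $\alpha$ together with the observation that the inverse equivalence $F'$ is itself faithful. Let $F'\taking\mcC'\to\mcC$ and the natural isomorphisms $\alpha\taking\id_\mcC\To{\iso}F'\circ F$ and $\alpha'\taking\id_{\mcC'}\To{\iso}F\circ F'$ be as in Definition \ref{def:equiv of cats}. The single computation I will lean on throughout comes from the naturality square (Diagram \ref{dia:naturality square}) for $\alpha$ at a morphism $g\taking c\to c'$: since $\id_\mcC(g)=g$, commutativity gives $\alpha_{c'}\circ g=(F'\circ F)(g)\circ\alpha_c$, and because $\alpha_c,\alpha_{c'}$ are isomorphisms (their components are invertible as $\alpha$ is a natural \emph{isomorphism}), this rearranges to $(F'\circ F)(g)=\alpha_{c'}\circ g\circ\alpha_c^{-1}$.

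First I would establish faithfulness. Suppose $g,h\taking c\to c'$ satisfy $F(g)=F(h)$; applying $F'$ gives $(F'\circ F)(g)=(F'\circ F)(h)$, and by the identity above this reads $\alpha_{c'}\circ g\circ\alpha_c^{-1}=\alpha_{c'}\circ h\circ\alpha_c^{-1}$. Cancelling the isomorphism $\alpha_{c'}$ on the left and $\alpha_c^{-1}$ on the right forces $g=h$, so $\Hom_F(c,c')$ is injective for all $c,c'$, i.e. $F$ is faithful. Running the identical argument with $F$ and $\alpha'$ in place of $F'$ and $\alpha$ (using the naturality square for $\alpha'\taking\id_{\mcC'}\to F\circ F'$, which yields $(F\circ F')(k)=\alpha'_{d'}\circ k\circ(\alpha'_d)^{-1}$) shows that $F'$ is faithful as well; I will need this fact in the next step.

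The harder step — and the one I expect to be the main obstacle — is fullness, because a priori $F$ and $F'$ are tied together only through the natural isomorphisms, so I cannot directly read off a preimage of a given morphism under $F$. The trick is to produce a candidate using $F'$ and then verify it via faithfulness of $F'$ rather than arguing about $F$ head-on. Given any $k\taking F(c)\to F(c')$ in $\mcC'$, set $g:=\alpha_{c'}^{-1}\circ F'(k)\circ\alpha_c\taking c\to c'$. Applying the rearranged naturality identity to this $g$ yields $(F'\circ F)(g)=\alpha_{c'}\circ g\circ\alpha_c^{-1}=F'(k)$. Since $F'$ is faithful, $F'(F(g))=F'(k)$ implies $F(g)=k$, so $\Hom_F(c,c')$ is surjective. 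Combining the two steps, $\Hom_F(c,c')$ is bijective for every pair $c,c'$, which is precisely the statement that $F$ is fully faithful. The only care required is bookkeeping with the inverses $\alpha_c^{-1}$ and $(\alpha'_d)^{-1}$, which exist exactly because $\alpha$ and $\alpha'$ are natural isomorphisms.
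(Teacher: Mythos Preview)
Your proof is correct and uses essentially the same ingredients as the paper's: both arguments hinge on the identity $(F'\circ F)(g)=\alpha_{c'}\circ g\circ\alpha_c^{-1}$ coming from naturality of $\alpha$, together with its analogue for $\alpha'$. The paper packages these into a single commutative diagram of hom-sets and argues that the map $\Hom_{F'}(Fc,Fd)$ is bijective (surjective from the $\alpha$-triangle, injective from the $\alpha'$-triangle), whence $\Hom_F(c,d)$ is bijective; you instead separate faithfulness and fullness and construct the preimage $g=\alpha_{c'}^{-1}\circ F'(k)\circ\alpha_c$ explicitly, checking it works via faithfulness of $F'$. These are the same proof in different clothing.
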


\begin{proof}

Suppose $F$ is an equivalence, so we can find a functor $F'\taking\mcC'\to\mcC$ and natural isomorphisms $\alpha\taking\id_\mcC\To{\iso}F'\circ F$ and $\alpha'\taking\id_{\mcC'}\To{\iso} F\circ F'$. We need to know that for any objects $c,d\in\Ob(\mcC)$, the map $$\Hom_F(c,d)\taking\Hom_\mcC(c,d)\to\Hom_{\mcC'}(Fc,Fd)$$ is bijective. Consider the following diagram 
$$
\xymatrix{
\Hom_\mcC(c,d)\ar[rr]^{\Hom_F(c,d)}\ar[ddrr]_\alpha&&\Hom_{\mcC'}(Fc,Fd)\ar[ddrr]^{\alpha'}\ar[dd]^{\Hom_{F'}(Fc,Fd)}\\\\
&&\Hom_\mcC(F'Fc,F'Fd)\ar[rr]_{\Hom_{F}(F'Fc,F'Fd)}&&\Hom_{\mcC'}(FF'Fc,FF'Fd)
}
$$
The fact that $\alpha$ is bijective implies that the vertical function is surjective. The fact that $\alpha'$ is bijective implies that the vertical function is injective, so it is bijective. This implies that $\Hom_F(c,d)$ is bijective as well.

\end{proof}

\begin{exercise}
Let $\ZZ_2$ be the group (as category) from Example \ref{ex:Z1 not equiv Z2}. Are there any fully faithful functors $\ZZ_2\to\ul{1}$?
\end{exercise}


\section{Categories and schemas are equivalent, $\Cat\simeq\Sch$}\label{sec:cat equiv sch}

Perhaps it is intuitively clear that schemas are somehow equivalent to categories, and in this section we make that precise. The basic idea was already laid out in Section \ref{sec:schemas and cats intro}.


\subsection{The category $\Sch$ of schemas}\label{sec:sch as category}\index{category!as equivalent to schema}\index{schema!as equivalent to category}

Recall from Definition \ref{def:schema} that a schema consists of a pair $\mcC:=(G,\simeq)$, where $G=(V,A,src,tgt)$ is a graph and $\simeq$ is a congruence, meaning a kind of equivalence relation on the paths in $G$ (see Definition \ref{def:congruence}. If we think of a schema as being analogous to a category, what should fulfill the role of functors? That is, what are to be the morphisms in $\Sch$?

Unfortunately, ones first guess may give the wrong notion if we want an equivalence $\Sch\simeq\Cat$. Since objects in $\Sch$ are graphs with additional structure, one might imagine that a morphism $\mcC\to\mcC'$ in $\Sch$ should be a graph homomorphism (as in Definition \ref{def:graph homomorphism}) that preserves said structure. But graph homomorphisms require that arrows be sent to arrows, whereas we are more interested in paths than in individual arrows---the arrows are merely useful for presentation. 

If instead we define morphisms between schemas to be maps that send paths in $\mcC$ to paths in $\mcC'$, subject to the requirements that path endpoints, path concatenations, and path equivalences are preserved, this will turn out to give the correct notion. And since a path is a concatenation of its arrows, it suffices to give a function $F$ from the arrows of $\mcC$ to the paths of $\mcC'$, which automatically takes care of the first two requirements above; we must only take care that $F$ preserves path equivalences.

Recall from Examples \ref{ex:paths-graph} and \ref{ex:concat paths of paths} the paths-graph functor $\Paths\taking\Grph\to\Grph$,\index{a functor!$\Paths\taking\Grph\to\Grph$} the paths of paths functor $\Paths\circ\Paths\taking\Grph\to\Grph$, and the natural transformations for any graph $G$, 
\begin{align}\label{dia:proto paths monad}
\eta_G\taking G\to\Paths(G)\hsp\tn{and}\hsp\mu_G\taking\Paths(\Paths(G))\to\Paths(G).
\end{align}
The function $\eta_G$ spells out the fact that every arrow in $G$ counts as a path in $G$, and the function $\mu_G$ spells out the fact that a head-to-tail sequence of paths (a path of paths) in $G$ can be concatenated to a single path in $G$.

\begin{exercise}
Let $[2]$ denote the graph $\LMO{0}\to\LMO{1}\to\LMO{2}$, and let $\Loop$ denote the unique graph having one vertex and one arrow (pictured in Diagram (\ref{dia:loop})).
\sexc Find a graph homomorphism $f\taking[2]\to\Paths(\Loop)$ that is injective on arrows (i.e. such that no two arrows in the graph $[2]$ are sent by $f$ to the same arrow in $\Paths(\Loop)$).
\next The graph $[2]$ has 6 paths, so $\Paths([2])$ has 6 arrows. What are the images of these arrows under the graph homomorphism $\Paths(f)\taking\Paths([2])\to\Paths(\Paths(\Loop))$? 
\endsexc
\end{exercise}

We are almost ready to give the definition of schema morphism, but before we do, let's return to our original idea. Given graphs $G,G'$ (underlying schemas $\mcC,\mcC'$) we originally wanted a function from the paths in $G$ to the paths in $G'$, but we realized it was more concise to speak of a function from arrows in $G$ to paths in $G'$. How do we get back what we originally wanted from the concise version? Given a graph homomorphism $f\taking G\to\Paths(G')$, we use (\ref{dia:proto paths monad}) to form the following composition, which we denote simply by $\Paths_f\taking\Paths(G)\to\Paths(G')$:
\begin{align}\label{dia:kleisli comp in graph}
\xymatrix@=35pt{\Paths(G)\ar[r]^-{\Paths(f)}&\Paths(\Paths(G'))\ar[r]^-{\mu_{G'}}&\Paths(G')}
\end{align}
This says that given a function from arrows in $G$ to paths in $G'$, a path in $G$ becomes a path of paths in $G'$, which can be concatenated to a path in $G'$. This simply and precisely spells out our intuition.

\begin{definition}[Schema morphism]\label{def:schema morphism}\index{schema!morphism}

Let $G=(V,A,src,tgt)$ and $G'=(V',A',src',tgt')$ be graphs, and let $\mcC=(G,\simeq_G)$ and $\mcC'=(G',\simeq_{G'})$ be schemas. A {\em schema morphism $F$ from $\mcC$ to $\mcD$}, denoted $F\taking\mcC\to\mcD$ is a graph homomorphism 
\footnote{By Definition \ref{def:graph homomorphism}, a graph homomorphism $F\taking G\to\Paths(G')$ will consist of a vertex part $F_0\taking V\to V'$ and an arrows part $F_1\taking E\to\Path(G')$. See also Definition \ref{def:paths in graph}.}
$$F\taking G\to\Paths(G')$$ that satisfies the following condition for any paths $p$ and $q$ in $G$: 
\begin{align}\label{dia:path cond for schema morphism}
\tn{if \;\;$p\simeq_G q$ \;\;then\;\; $\Paths_F(p)\simeq_{G'}\Paths_F(q)$}.
\end{align}

Two schema morphisms $E,F\taking\mcC\to\mcC'$ are considered identical if they agree on vertices (i.e. $E_0=F_0$) and if, for every arrow $f$ in $G$, there is a path equivalence in $G'$ $$E_1(f)\simeq_{G'}F_1(f).$$

We now define the {\em category of schemas}, denoted $\Sch$,\index{a category!$\Sch$} to be the category whose objects are schemas as in Definition \ref{def:schema} and whose morphisms are schema morphisms defined as above. The identity morphism on schema $\mcC=(G,\simeq_G)$ is the schema morphism $\id_\mcC:=\eta_G\taking G\to\Paths(G)$ as defined in Equation (\ref{dia:proto paths monad}). We need only understand how to compose schema morphisms $F\taking\mcC\to\mcC'$ and $F'\taking\mcC'\to\mcC''$. On objects their composition is obvious. Given an arrow in $\mcC$, it is sent to a path in $\mcC'$; each arrow in that path is sent to a path in $\mcC''$. We then have a path of paths which we can concatenate (via $\mu_{G''}\taking\Paths(\Paths(G''))\to\Paths(G'')$ as in \ref{dia:proto paths monad}) to get a path in $\mcC''$ as desired.

\end{definition}

\begin{slogan}
A schema morphism sends vertices to vertices, arrows to paths, and path equivalences to path equivalences.
\end{slogan}

\begin{example}

Let $[2]$ be the linear order graph of length 2, pictured to the left, and let $\mcC$ denote the schema pictured to the right below:
$$
[2]:=\fbox{\xymatrix{\LMO{0}\ar[r]^{f_1}&\LMO{1}\ar[r]^{f_2}&\LMO{2}}}
\hspace{1in}
\mcC:=\parbox{.9in}{\fbox{
\xymatrix{\LMO{a}\ar[r]^g\ar[rd]_i&\LMO{b}\ar[d]^h\\&\LMO{c}
}}}
$$
We impose on $\mcC$ the path equivalence declaration $[g,h]\simeq[i]$ and show that in this case $\mcC$ and $[2]$ are isomorphic in $\Sch$. We have a schema morphism $F\taking[2]\to\mcC$ sending $0\mapsto a, 1\mapsto b, 2\mapsto c$, and sending each arrow in $[2]$ to an arrow in $\mcC$. And we have a schema morphism $F'\taking\mcC\to[2]$ which reverses this mapping on vertices; note that $F'$ must send the arrow $i$ in $\mcC$ to the path $[f_1,f_2]$ in $[2]$, which is ok! The roundtrip $F'\circ F\taking [2]\to[2]$ is identity. The roundtrip $F\circ F'\taking\mcC\to\mcC$ may look like it's not the identity; indeed it sends vertices to themselves but it sends $i$ to the path $[g,h]$. But according to Definition \ref{def:schema morphism}, this schema morphism is considered identical to $\id_\mcC$ because there is a path equivalence $\id_\mcC(i)=[i]\simeq[g,h]=F\circ F'(i).$

\end{example}

\begin{exercise}
Consider the schema $[2]$ and the schema $\mcC$ pictured above, except where this time we {\em do not} impose any path equivalence declarations on $\mcC$, so $[g,h]\not\simeq[i]$ in our current version of $\mcC$.
\sexc How many schema morphisms are there $[2]\to\mcC$ that send 0 to $a$?
\next How many schema morphisms are there $\mcC\to[2]$ that send $a$ to $0$?
\endsexc
\end{exercise}

\begin{exercise}\label{exc:finite hierarchies 1}
Consider the graph $\Loop$ pictured below $$\Loop:=\LoopSchema$$ and for any natural number $n$, let $\mcL_n$ denote the schema $(\Loop,\simeq_n)$ where $\simeq_n$ is the PED $f^{n+1}\simeq f^n$. This is the ``finite hierarchy" schema of Example \ref{ex:finite hierarchy}. Let $\ul{1}$ denote the graph with one vertex and no arrows; consider it as a schema.
\sexc Is $\ul{1}$ isomorphic to $\mcL_1$ in $\Sch$?
\next Is it isomorphic to any (other) $\mcL_n$?
\endsexc
\end{exercise}

\begin{exercise}
Let $\Loop$ and $\mcL_n$ be the schemas defined in Exercise \ref{exc:finite hierarchies 1}.
\sexc What is the cardinality of the set $\Hom_\Sch(\mcL_3,\mcL_5)$?
\next What is the cardinality of the set $\Hom_\Sch(\mcL_5,\mcL_3)$? Hint: the cardinality of the set $\Hom_\Sch(\mcL_4,\mcL_9)$ is 8.
\endsexc
\end{exercise}


\subsection{Proving the equivalence}\label{sec:proof of cat=sch}

\begin{construction}[From schema to category]

We will define a functor $L\taking\Sch\to\Cat$\index{a functor!$\Sch\to\Cat$}. Let $\mcC=(G,\simeq)$ be a categorical schema, where $G=(V,A,src,tgt)$. Define $L(\mcC)$ to be the category with $\Ob(L(\mcC))=V$, and with $\Hom_{L(\mcC)}(v_1,v_2):=\Path_G(v,w)/\simeq$, i.e. the set of paths in $G$, modulo the path equivalence relation for $\mcC$. The composition of morphisms is defined by concatenation of paths, and Lemma \ref{lemma:composing PEDs} ensures that such composition is well-defined. We have thus defined $L$ on objects of $\Sch$.

Given a schema morphism $F\taking\mcC\to\mcC'$, where $\mcC'=(G',\simeq')$, we need to produce a functor $L(F)\taking L(\mcC)\to L(\mcC')$. The objects of $L(\mcC)$ and $L(\mcC')$ are the vertices of $G$ and $G'$ respectively, and $F$ provides the necessary function on objects. Diagram (\ref{dia:kleisli comp in graph}) provides a function $\Paths_F\taking\Paths(G)\to\Paths(G')$ will provide the requisite function for morphisms. 

A morphism in $L(\mcC)$ is an equivalence class of paths in $\mcC$. For any representative path $p\in\Paths(G)$, we have $\Paths_F(p)\in\Paths(G')$, and if $p\simeq q$ then $\Paths_F(p)\simeq'\Paths_F(q)$ by condition \ref{dia:path cond for schema morphism}. Thus $\Paths_F$ indeed provides us with a function $\Hom_{L(\mcC)}\to\Hom_{L(\mcC')}$. This defines $L$ on morphisms in $\Sch$. It is clear that $L$ preserves composition and identities, so it is a functor.

\end{construction}

\begin{construction}[From category to schema]

We will define a functor $R\taking\Cat\to\Sch$.\index{a functor!$\Cat\to\Sch$} Let $\mcC=(\Ob(\mcC),\Hom_\mcC,dom,cod,\ids,\circ)$ be a category (see Exercise \ref{exc:cat in set}). Let $R(\mcC)=(G,\simeq)$ where $G$ is the graph $$G=(\Ob(\mcC),\Hom_\mcC,dom,cod),$$ and with $\simeq$ defined as the congruence generated by the following path equivalence declarations: for any composable sequence of morphisms $f_1,f_2,\ldots,f_n$ (with $dom(f_{i+1})=cod(f_i)$ for each $1\leq i\leq n-1$) we put 
\begin{align}\label{dia:composition formula}
[f_1,f_2,\ldots,f_n]\simeq [f_n\circ\cdots\circ f_2\circ f_1].
\end{align} 
This defines $R$ on objects of $\Cat$. 

A functor $F\taking\mcC\to\mcD$ induces a schema morphism $R(F)\taking R(\mcC)\to R(\mcD)$, because vertices are sent to vertices, arrows are sent to arrows (as paths of length 1), and path equivalence is preserved by (\ref{dia:composition formula}) and the fact that $F$ preserves the composition formula. This defines $R$ on morphisms in $\Cat$. It is clear that $R$ preserves compositions, so it is a functor.

\end{construction}

\begin{theorem}\label{thm:equivalence of categories and schemas}

The functors $$\xymatrix{L\taking\Sch\ar@<.5ex>[r]&\Cat\!:R\ar@<.5ex>[l]}$$ are mutually inverse equivalences of categories.

\end{theorem}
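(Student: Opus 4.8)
The plan is to verify Definition~\ref{def:equiv of cats} directly, by exhibiting natural isomorphisms $\id_\Cat\To{\iso}L\circ R$ and $\id_\Sch\To{\iso}R\circ L$. Both functors are essentially identity on objects (up to the evident bijection between vertex sets and object sets), so all of the content lies in comparing hom-sets of the categories, respectively the arrow-to-path data of the schemas; and the two directions have quite different flavors.

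First I would handle $L\circ R\iso\id_\Cat$, which is the easy one. Given a small category $\mcC$, the schema $R(\mcC)$ carries the underlying graph of $\mcC$ together with the CPER generated by the composition declarations (\ref{dia:composition formula}). Applying $L$ forms paths in this graph modulo that CPER. The key observation is that every path $[f_1,\dots,f_n]$ is $\simeq$-equivalent to the length-one path $[f_n\circ\cdots\circ f_1]$, so each class contains a single-morphism representative; moreover the ``take the composite'' assignment $\Path\to\Hom_\mcC$ respects every generating relation and hence descends to a function $\Hom_{L(R(\mcC))}(x,y)\to\Hom_\mcC(x,y)$ which is inverse to $f\mapsto[f]$. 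This exhibits the descended map as a bijection on each hom-set, and it plainly preserves identities and composition, giving an isomorphism of categories $L(R(\mcC))\To{\iso}\mcC$. Naturality in $\mcC$ is immediate, since everything is defined purely by composition in $\mcC$.

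The harder direction is $R\circ L\iso\id_\Sch$. Here $R(L(\mcC))$ genuinely enlarges the graph of $\mcC=(G,\simeq)$: its arrows form the entire set $\Path_G/\simeq$ of path-classes, not merely the original arrows $A$. I would define the comparison $\eta_\mcC\taking\mcC\to R(L(\mcC))$ as the graph homomorphism that is the identity on vertices and sends each arrow $a\in A$ to the length-one path on the class $[a]$, with proposed inverse $\epsilon_\mcC\taking R(L(\mcC))\to\mcC$ sending an arrow $[p]$ of the enlarged graph to a representing path $p$ in $G$. The crucial point is that $\epsilon_\mcC$ is well defined \emph{only} up to the identification of schema morphisms in Definition~\ref{def:schema morphism}: distinct representatives of $[p]$ are $\simeq$-equivalent, so the resulting schema morphisms are declared identical. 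I would then check that both maps preserve path equivalence --- for $\eta_\mcC$ using the composition CPER on $R(L(\mcC))$, which makes the image $[[a_1],\dots,[a_n]]$ of a path $p=[a_1,\dots,a_n]$ equivalent to the length-one path on $[p]$; for $\epsilon_\mcC$ using that the generating relations (\ref{dia:composition formula}) are carried to genuine path equivalences in $G$ (here Lemma~\ref{lemma:composing PEDs} guarantees the concatenations make sense on $\simeq$-classes).

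Finally I would verify the round trips: $\epsilon_\mcC\circ\eta_\mcC$ sends each arrow $a$ to a representative of $[a]$, which is $\simeq$-equivalent to $a$, so it equals $\id_\mcC$ as a schema morphism; and $\eta_\mcC\circ\epsilon_\mcC$ sends a class-arrow with representative $p=[a_1,\dots,a_n]$ to $[[a_1],\dots,[a_n]]$, which the composition CPER renders equivalent to the length-one path on $[p]$, so it equals the identity. Naturality of $\eta$ and of the first isomorphism $L(R(\mcC))\To{\iso}\mcC$ then reduces to routine chases confirming that every assignment is built from composition and concatenation. The main obstacle --- the only genuinely subtle point rather than bookkeeping --- is precisely this hard direction: one must track that the two round-trip morphisms are equal \emph{as morphisms of} $\Sch$, which holds only because of the deliberately loose identification of schema morphisms in Definition~\ref{def:schema morphism} (arrows may be sent to path-equivalent paths). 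Pinning those identifications down exactly, rather than merely up to some further equivalence, is where the care is required.
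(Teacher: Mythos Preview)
Your proposal is correct and follows essentially the same approach as the paper's sketch: both dispatch $L\circ R\iso\id_\Cat$ quickly via ``take the composite,'' and for $R\circ L\iso\id_\Sch$ both define the forward map by $a\mapsto[a]$ (as a length-one path), build the inverse by choosing representatives of path-classes, and rely on the identification clause in Definition~\ref{def:schema morphism} to make that choice irrelevant and the round trips equal to identities. You have simply fleshed out the details the paper leaves implicit, and you correctly flag the one genuinely delicate point.
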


\begin{proof}[Sketch of proof]

It is clear that there is a natural isomorphism $\alpha\taking\id_\Cat\To{\iso} L\circ R$; i.e. for any category $\mcC$, there is an isomorphism $\mcC\iso L(R(\mcC))$. 

Before giving an isomorphism $\beta\taking\id_\Sch\To{\iso}R\circ L$, we briefly describe $R(L(\mcS))=:(G',\simeq')$ for a schema $\mcS=(G,\simeq)$. Write $G=(V,A,src,tgt)$ and $G'=(V',A',src',tgt')$. On vertices we have $V=V'$. On arrows we have $A'=\Path_G/\simeq$. The congruence $\simeq'$ for $R(L(\mcS))$ is imposed in (\ref{dia:composition formula}). Under $\simeq'$, every path of paths in $G$ is made equivalent to its concatenation, considered as a path of length 1 in $G'$. 

There is a natural transformation $\beta\taking\id_\Sch\to R\circ L$ whose $\mcS$-component sends each arrow in $G$ to a certain path of length 1 in $G'$. We need to see that $\beta_\mcS$ has an inverse. But this is straightforward: every arrow $f$ in $R\circ L(\mcS)$ is an  equivalence class of paths in $\mcS$; choose any one and send $f$ there; by Definition \ref{def:schema morphism} any other choice will give the identical morphism of schemas. It is easy to show that the roundtrips are identities (again up to the notion of identity given in Definition \ref{def:schema morphism}).

\end{proof}


\section{Limits and colimits}

Limits and colimits are universal constructions, meaning they represent certain ideals of behavior in a category. When it comes to sets that map to $A$ and $B$, the $(A\times B)$-grid is ideal---it projects on to both $A$ and $B$ as straightforwardly as possible. When it comes to sets that can interpret the elements of both $A$ and $B$, the disjoint union $A\sqcup B$ is ideal---it includes both $A$ and $B$ without confusion or superfluity. These are limits and colimits in $\Set$. Limits and colimits exist in other categories as well.

Limits in a preorder are meets, colimits in a preorder are joins. Limits and colimits also exist for database instances and monoid actions, allowing us to discuss for example the product or union of different state machines. Limits and colimits exist for spaces, giving rise to products and unions, as well as quotients.

Limits and colimits do not exist in every category; when $\mcC$ is complete with respect to limits (or colimits), these limits always seem to mean something valuable to human intuition. For example, when a subject has already been studied for a long time before category theory came around, it often turns out that classically interesting constructions in the subject correspond to limits and colimits in its categorification $\mcC$. For example products, unions, equivalence relations, etc. are classical ideas in set theory that are naturally captured by limits and colimits in $\Set$. 


\subsection{Products and coproducts in a category}

In Sections \ref{sec:prods and coprods in set}, we discussed products and coproducts in the category $\Set$ of sets. Now we discuss the same notions in an arbitrary category. For both products and coproducts we will begin with examples and then write down the general concept, but we'll work on products first.


\subsubsection{Products}\index{products}

The product of two sets is a grid, which projects down onto each of the two sets. This is good intuition for products in general.

\begin{example}\label{ex:product of preorders}

Given two preorders, $\mcX_1:=(X_1,\leq_1)$ and $\mcX_2:=(X_2,\leq_2)$, we can take their product and get a new preorder $\mcX_1\times\mcX_2$. Both $\mcX_1$ and $\mcX_2$ have underlying sets (namely $X_1$ and $X_2$), so we might hope that the underlying set of $\mcX_1\times\mcX_2$ is the set $X_1\times X_2$ of ordered pairs, and this turns out to be true. We have a notion of less-than on $\mcX_1$ and we have a notion of less-than on $\mcX_2$; we need to construct a notion of less-than on $\mcX_1\times\mcX_2$. So, given two ordered pairs $(x_1,x_2)$ and $(x_1',x_2')$, when should we say that $(x_1,x_2)\leq_{1,2}(x_1',x_2')$ holds? The obvious guess is to say that it holds iff both $x_1\leq_1x_1'$ and $x_2\leq_2x_2'$ hold, and this works:
$$\mcX_1\times\mcX_2:=(X_1\times X_2,\leq_{1,2})$$

Note that the projection functions $X_1\times X_2\to X_1$ and $X_1\times X_2\to X_2$ induce morphisms of preorders. That is, if $(x_1,x_2)\leq_{1,2}(x_1',x_2')$ then in particular $x_1\leq x_1'$. So we have preorder morphisms
$$\xymatrix@=15pt{&\mcX_1\times\mcX_2\ar[ldd]\ar[rdd]\\\\\mcX_1&&\mcX_2}$$

\end{example}

\begin{exercise}
Suppose that you have a partial order $(S,\leq_S)$ on songs (so you know some songs are preferable to others but sometimes you can't compare). And suppose you have a partial order $(A,\leq_A)$ on pieces of art. You're about to be given a pair $(s,a)$ including a song and a piece of art. Does the product partial order $\mcS\times\mcA$ provide a reasonable guess for your preferences on pairs?  
\end{exercise}

\begin{exercise}\label{exc:divides as po}
Consider the partial order $\leq$ on $\NN$ given by standard ``less-than-or-equal-to", so $5\leq 9$ etc. And consider another partial order, {\tt divides} on $\NN$, where $a\;{\tt divides}\;b$ if ``$a$ goes into $b$ evenly", i.e. if there exists $n\in\NN$ such that $a*n=b$, so $5\;{\tt divides}\;35$. If we call the product order $(X,\preceq):=(\NN,\leq)\times(\NN,{\tt divides})$, which of the following are true: 
$$(2,4)\preceq(3,4)? \hsp (2,4)\preceq(3,5)?\hsp (2,4)\preceq (8,0)?\hsp (2,4)\preceq(0,0)?$$
\end{exercise}

\begin{example}\label{ex:product of graphs}
Given two graphs $G_1=(V_1,A_1,src_1,tgt_1)$ and $G_2=(V_2,A_2,src_2,tgt_2)$, we can take their product and get a new graph $G_1\times G_2$. The vertices will be the grid of vertices $V_1\times V_2$, so each vertex in $G_1\times G_2$ is labeled by a pair of vertices, one from $G_1$ and one from $G_2$. When should an arrow connect $(v_1,v_2)$ to $(v_1',v_2')$? Whenever we can find an arrow in $G_1$ connecting $v_1$ to $v_1'$ and we can find an arrow in $G_2$ connecting $v_2$ to $v_2'$. It turns out there is a simple formula for the set of arrows in $G_1\times G_2$, namely $A_1\times A_2$.

Let's write $G:=G_1\times G_2$ and say $G=(V,A,src,tgt)$. We now know that $V=V_1\times V_2$ and $A=A_1\times A_2$. What should the source and target functions $A\to V$ be? Given a function $src_1\taking A_1\to V_1$ and a function $src_2\taking A_2\to V_2$, the universal property of products in $\Set$ (Lemma \ref{lemma:up for prod} or better Example \ref{ex:product to product}) provides a unique function 
$$src:=src_1\times src_2\taking A_1\times A_2\to V_1\times V_2$$ 
Namely the source of arrow $(a_1,a_2)$ will be the vertex $(src_1(a_1),src_2(a_2))$. Similarly we have a ready-made choice of target function $tgt=tgt_1\times tgt_2$. We have now defined the product graph.

Here's a concrete example. Let $I$ and $J$ be as drawn below:
\begin{align*}
&I:=\parbox{.8in}{\fbox{\xymatrix{\LMO{v}\ar[d]_f\\\LMO{w}\ar@/_1pc/[d]_g\ar@/^1pc/[d]^h\\\LMO{x}}}}\hspace{.6in}
&J:=\parbox{1.8in}{\fbox{\xymatrix{\LMO{q}\ar[r]^i&\LMO{r}\ar@/^1pc/[r]^j&\LMO{s}\ar@/^1pc/[l]^k\ar[r]^\ell&\LMO{t}}}}\\
&\small
\begin{array}{| l || l | l |}\bhline
\multicolumn{3}{|c|}{{\tt Arrow}\;\; (I)}\\\bhline
{\bf ID}&{\bf src}&{\bf tgt}\\\bbhline
f&v&w\\\hline
g&w&x\\\hline
h&w&x\\\bhline
\end{array}
\hsp
\begin{array}{| l ||}\bhline
\multicolumn{1}{|c|}{{\tt Vertex}\;\; (I)}\\\bhline
{\bf ID}\\\bbhline
v\\\hline
w\\\hline
x\\\bhline
\end{array}\hsp
&\small
\begin{array}{| l || l | l |}\bhline
\multicolumn{3}{|c|}{{\tt Arrow}\;\; (J)}\\\bhline
{\bf ID}&{\bf src}&{\bf tgt}\\\bbhline
i&q&r\\\hline
j&r&s\\\hline
k&s&r\\\hline
\ell&s&t\\\bhline
\end{array}
\hsp
\begin{array}{| l ||}\bhline
\multicolumn{1}{|c|}{{\tt Vertex}\;\; (J)}\\\bhline
{\bf ID}\\\bbhline
q\\\hline
r\\\hline
s\\\hline
t\\\bhline
\end{array}
\end{align*}
The product $I\times J$ drawn below has, as expected $3*4=12$ vertices and $3*4=12$ arrows: 
$$\parbox{2.4in}{\boxtitle{$I\times J:=$}\fbox{\xymatrix{
\LMO{(v,q)}\ar[rd]^{(f,i)}&\LMO{(v,r)}\ar[rd]&\LMO{(v,s)}\ar[ld]\ar[rd]&\LMO{(v,t)}\\
\LMO{(w,q)}\ar@/^1ex/[rd]\ar@/_1ex/[rd]&\LMO{(w,r)}\ar@/^1ex/[rd]\ar@/_1ex/[rd]&\LMO{(w,s)}\ar@/^1ex/[ld]\ar@/_1ex/[ld]\ar@/^1ex/[rd]\ar@/_1ex/[rd]&\LMO{(w,t)}\\
\LMO{(x,q)}&\LMO{(x,r)}&\LMO{(x,s)}&\LMO{(x,t)}
}}}
\hspace{.5in}\tiny
\begin{array}{| l || l | l |}\bhline
\multicolumn{3}{|c|}{{\tt Arrow}\;\; (I\times J)}\\\bhline
{\bf ID}&{\bf src}&{\bf tgt}\\\bbhline
(f,i)&(v,q)&(w,r)\\\hline
(f,j)&(v,r)&(w,s)\\\hline
(f,k)&(v,s)&(w,r)\\\hline
(f,\ell)&(v,s)&(w,t)\\\hline
(g,i)&(w,q)&(x,r)\\\hline
(g,j)&(w,r)&(x,s)\\\hline
(g,k)&(w,s)&(x,r)\\\hline
(g,\ell)&(w,s)&(x,t)\\\hline
(h,i)&(w,q)&(x,r)\\\hline
(h,j)&(w,r)&(x,s)\\\hline
(h,k)&(w,s)&(x,r)\\\hline
(h,\ell)&(w,s)&(x,t)\\\bhline
\end{array}
\hsp
\begin{array}{| l ||}\bhline
\multicolumn{1}{|c|}{{\tt Vertex}\;\; (I\times J)}\\\bhline
{\bf ID}\\\bbhline
(v,q)\\\hline
(v,r)\\\hline
(v,s)\\\hline
(v,t)\\\hline
(w,q)\\\hline
(w,r)\\\hline
(w,s)\\\hline
(w,t)\\\hline
(x,q)\\\hline
(x,r)\\\hline
(x,s)\\\hline
(x,t)\\\bhline
\end{array}
$$

Here is the most important thing to notice. Look at the {\tt Arrow} table for $I\times J$, and for each ordered pair, look only at the second entry in all three columns; you will see something that matches with the {\tt Arrow} table for $J$. Do the same for $I$, and again you'll see a perfect match. These ``matchings" are readily-visible graph homomorphisms $I\times J\to I$ and $I\times J\to J$ in $\Grph$. 

\end{example}

\begin{exercise}
Let $[1]=\fbox{$\LMO{0}\To{\;f\;}\LMO{1}$}$ be the linear order graph of length 1 and let $P=\Paths([1])$ be its paths-graph, as in Example \ref{ex:paths-graph} (so $P$ should have three arrows and two vertices). Draw the graph $P\times P$. 
\end{exercise}

\begin{exercise}
Recall from Example \ref{ex:dds} that a discrete dynamical system (DDS) is a set $s$ together with a function $f\taking s\to s$. By now it should be clear that if 
$$\Loop:=\LoopSchema$$\index{a schema!$\Loop$}
is the loop schema, then a DDS is simply an instance (a functor) $I\taking\Loop\to\Set$. We have not yet discussed products of DDS's, but perhaps you can guess how they should work.  For example, consider the instances $I,J\taking\Loop\to\Set$ tabulated below:
\begin{align*}
\begin{tabular}{| l || c |}\bhline
\multicolumn{2}{| c |}{s\;\; (I)}\\\bhline 
{\bf ID}&{\bf f}\\\bbhline
A & C\\\hline
B & C\\\hline
C & C\\\hline
\end{tabular}
\hspace{.7in}
\begin{tabular}{| l || c |}\bhline
\multicolumn{2}{| c |}{s\;\; (J)}\\\bhline 
{\bf ID}&{\bf f}\\\bbhline
x & y\\\hline
y & x\\\hline
z & z\\\hline
\end{tabular}
\end{align*}~
\sexc Make a guess and tabulate $I\times J$. Then draw it.\footnote{The result is not necessarily inspiring, but at least computing it is straightforward.}
\next Recall the notion of natural transformations between functors (see Example \ref{ex:graph hom as NT done out}), which in the case of functors $\Loop\to\Set$ are the morphisms of instances. Do you see clearly that there is a morphism of instances $I\times J\to I$ and $I\times J\to J$? Just check that if you look only at the left-hand coordinates in your $I\times J$, you see something compatible with $I$. 
\endsexc
\end{exercise}

In every case above, what's most important to recognize is that there are projection maps $I\times J\to I$ and $I\times J\to J$, and that the construction of $I\times J$ seems as straightforward as possible, subject to having these projections. It is time to give the definition.

\begin{definition}\label{def:products in a cat}\index{products}

Let $\mcC$ be a category and let $X,Y\in\Ob(\mcC)$ be objects. A {\em span on $X$ and $Y$} consists of three constituents $(Z,p,q)$, where $Z\in\Ob(\mcC)$ is an object, and where $p\taking Z\to X$ and $q\taking Z\to Y$ are morphisms in $\mcC$. 
$$\xymatrix@=15pt{&Z\ar[ldd]_p\ar[rdd]^q\\\\X&&Y}$$   

A {\em product of $X$ and $Y$} is a span $X\From{\pi_1}X\times Y\To{\pi_2}Y$, \footnote{The names $X\times Y$ and $\pi_1,\pi_2$ are not mathematically important, they are pedagogically suggestive.} such that for any other span $X\From{p}Z\To{q}Y$ there {\em exists a unique} morphism $t_{p,q}\taking Z\to X\times Y$ such that the diagram below commutes:
$$
\xymatrix@=15pt{&X\times Y\ar[ldd]_{\pi_1}\ar[rdd]^{\pi_2}\\\\X&&Y\\\\&Z\ar[uul]^{p}\ar[uur]_q\ar@{-->}[uuuu]^{t_{p,q}}}
$$
\end{definition}

\begin{remark}\label{rem:gateway}\index{gateway}\index{universal property}

Definition \ref{def:products in a cat} endows the product of two objects with something known as a {\em universal property}. It says that a product of two objects $X$ and $Y$ maps to those two objects, and serves as a gateway for all who do the same. ``None shall map to $X$ and $Y$ except through me!" This grandiose property is held by  products in all the various categories we have discussed so far. It is what I meant when I said things like ``$X\times Y$ maps to both $X$ and $Y$ and does so as straightforwardly as possible".  The grid of dots obtained as the product of two sets has such a property, as was shown in Example \ref{ex:grid2}.

\end{remark}

\begin{example}

In Example \ref{ex:product of preorders} we discussed products of preorders. In this example we will discuss products in an individual preorder. That is, by Proposition \ref{prop:preorders to cats}, there is a functor $\PrO\to\Cat$\index{a functor!$\PrO\to\Cat$} that realizes every preorder as a category. If $\mcP=(P,\leq)$ is a preorder, what are products in $\mcP$? Given two objects $a,b\in\Ob(\mcP)$ we first consider spans on $a$ and $b$, i.e. $a\from z\to b$. That would be some $z$ such that $z\leq a$ and $z\leq b$. The product will be such a span $a\geq a\times b\leq b$, but such that every other spanning object $z$ is less than or equal to $a\times b$. In other words $a\times b$ is as big as possible subject to the condition of being less than $a$ and less than $b$. This is precisely the meet of $a$ and $b$ (see Definition \ref{def:meets and joins}). 

\end{example}

\begin{example}\label{ex:products dont exist}\index{products!as not always existing}

Note that the product of two objects in a category $\mcC$ may not exist. Let's return to preorders to see this phenomenon.

Consider the set $\RR^2$, and say that $(x_1,y_1)\leq (x_2,y_2)$ if there exists $\ell\geq 1$ such that $x_1\ell=x_2$ and $y_1\ell=y_2$; in other words, point $p$ is less than point $q$ if, in order to travel from $q$ to the origin along a straight line, one must pass through $p$ along the way. 
\footnote{Note that $(0,0)$ is not related to anything else.} 
We have given a perfectly good partial order, but $p:=(1,0)$ and $q:=(0,1)$ do not have a product. Indeed, it would have to be a non-zero point that was on the same line-through-the origin as $p$ and the same line-through-the-origin as $q$, of which there are none.

\end{example}

\begin{example}

Note that there can be more than one product of two objects in a category $\mcC$, but that any two choices will be canonically isomorphic. Let's return once more to preorders to see this phenomenon.

Consider the set $\RR^2$ and say that $(x_1,y_1)\leq (x_2,y_2)$ if $x_1^2+y_1^2\leq x_2^2+y_2^2$, in other words if the former is on a smaller 0-circle (by which I mean ``circle centered at the origin") than the latter is. 

For any two points $p,q$ there will be lots of points that serve as products: anything on the smaller of their two 0-circles will suffice. Given any two points $a,b$ on this smaller circle, we will have a unique isomorphism $a\iso b$ because $a\leq b$ and $b\leq a$ and all morphisms are unique in a preorder.

\end{example}

\begin{exercise}
Consider the preorder $\mcP$ of cards in a deck, shown in Example \ref{ex:pre not par}; it is not the entire story of cards in a deck, but take it to be so. In other words, be like a computer and take what's there at face value. Consider the preorder $\mcP$ as a category (by way of the functor $\PrO\to\Cat$\index{a functor!$\PrO\to\Cat$}).
\sexc For each of the following pairs, what is their product in $\mcP$ (if it exists)?
\begin{align*}
&\fakebox{a diamond}\times\fakebox{a heart}\;? \hsp &\fakebox{a queen}\times\fakebox{a black card}\;?\\
& \fakebox{a card}\times\fakebox{a red card}\;?\hsp&\fakebox{a face card}\times\fakebox{a black card}\;?
\end{align*}
\next How would these answers differ if $\mcP$ was completed to the ``whole story" partial order classifying cards in a deck?
\endsexc
\end{exercise}

\begin{exercise}
Let $X$ be a set, and consider it as a discrete category. Given two objects $x,y\in\Ob(X)$, under what conditions will there exist a product $x\times y$?
\end{exercise}

\begin{exercise}
Let $f\taking\RR\to\RR$ be a function, like you would see in 6th grade (maybe $f(x)=x+7$). A typical thing to do is to graph $f$ as a curve running through the plane $\RR^2:=\RR\times\RR$. This curve can be understood as a function $F\taking\RR\to\RR^2$.
\sexc Given some $x\in\RR$, what are the coordinates of $F(x)\in\RR^2$? 
\next Obtain $F\taking\RR\to\RR^2$ using the universal property given in Definition \ref{def:products in a cat}. 
\endsexc
\end{exercise}

\begin{exercise}
Consider the preorder $(\NN,{\tt divides})$, discussed in Exercise \ref{exc:divides as po}, where e.g. $5\leq 15$ but $5\not\leq 6$. \sexc What is the product of $9$ and $12$ in this category?
\next Is there a standard name for products in this category?
\endsexc
\end{exercise}

\begin{example}\label{ex:[1]x[1]}

All products exist in the category $\Cat$. Given two categories $\mcC$ and $\mcD$, there is a product category $\mcC\times\mcD$. We have $\Ob(\mcC\times\mcD)=\Ob(\mcC)\times\Ob(\mcD)$ and for any two objects $(c,d)$ and $(c',d')$, we have $$\Hom_{\mcC\times\mcD}((c,d),(c',d'))=\Hom_\mcC(c,c')\times\Hom_\mcC(d,d').$$ The composition formula is ``obvious".

Let $[1]\in\Ob(\Cat)$ denote the linear order category of length 1, drawn $$[1]:=\fbox{\xymatrix{\LMO{0}\ar[r]^f&\LMO{1}}}$$ As a schema it has one arrow, but as a category it has three morphisms. So we expect $[1]\times[1]$ to have 9 morphisms, and that's true. In fact, $[1]\times[1]$ looks like a commutative square:
\begin{align}\label{dia:comm square}
\xymatrix@=40pt{
\LMO{(0,0)}\ar[r]^{\id_0\times f}\ar[d]_{f\times\id_0}&\LMO{(0,1)}\ar[d]^{f\times\id_1}\\
\LMO{(1,0)}\ar[r]_{\id_1\times f}&\LMO{(1,1)}}
\end{align}
We see only four morphisms here, but there are also four identities and one morphism $(0,0)\to(1,1)$ given by composition of either direction. It is a minor miracle that the categorical product somehow ``knows" that this square should commute; however, this is not the mere preference of man but instead the dictate of God! By which I mean, this follows rigorously from the definitions we already gave of $\Cat$ and products.

\end{example}


\subsubsection{Coproducts}\index{coproducts}

The coproduct of two sets is their disjoint union, which includes non-overlapping copies of each of the two sets. This is good intuition for coproducts in general.

\begin{example}

Given two preorders, $\mcX_1:=(X_1,\leq_1)$ and $\mcX_2:=(X_2,\leq_2)$, we can take their coproduct and get a new preorder $\mcX_1\sqcup\mcX_2$. Both $\mcX_1$ and $\mcX_2$ have underlying sets (namely $X_1$ and $X_2$), so we might hope that the underlying set of $\mcX_1\times\mcX_2$ is the disjoint union $X_1\sqcup X_2$, and that turns out to be true. We have a notion of less-than on $\mcX_1$ and we have a notion of less-than on $\mcX_2$. 

Given an element $x\in X_1\sqcup X_2$ and an element $x'\in X_1\sqcup X_2$, how can we use $\leq_1$ and $\leq_2$ to compare $x_1$ and $x_2$? The relation $\leq_1$ only knows how to compare elements of $X_1$ and the relation $\leq_2$ only knows how to compare elements of $X_2$. But $x$ and $x'$ may come from different homes; e.g. $x\in X_1$ and $x'\in X_2$, in which case neither $\leq_1$ nor $\leq_2$ gives any clue about which should be bigger. 

So when should we say that $x\leq_{1\sqcup 2} x'$ holds? The obvious guess is to say that $x$ is less than $x'$ iff somebody says it is; that is, if both $x$ and $x'$ are from the same home and the local ordering has $x\leq x'$. To be precise, we say $x\leq_{1\sqcup 2}x'$ if and only if either one of the following conditions hold:
\begin{itemize}
\item $x\in X_1$ and $x'\in X_1$ and $x\leq_1x'$, or
\item $x\in X_2$ and $x'\in X_2$ and $x\leq_2x'$.
\end{itemize}
With $\leq_{1\sqcup 2}$ so defined, one checks that it is not only a preorder, but that it serves as a coproduct of $\mcX_1$ and $\mcX_2$, 
$$\mcX_1\sqcup\mcX_2:=(X_1\sqcup X_2,\leq_{1\sqcup 2}).$$

Note that the inclusion functions $X_1\to X_1\sqcup X_2$ and $X_2\to X_1\sqcup X_2$ induce morphisms of preorders. That is, if $x,x'\in X_1$ are elements such that $x\leq_1x'$ in $\mcX_1$ then the same will hold in $\mcX_1\sqcup\mcX_2$. So we have preorder morphisms
$$\xymatrix@=15pt{&\mcX_1\sqcup\mcX_2\\\\\mcX_1\ar[ruu]&&\mcX_2\ar[luu]}$$

\end{example}

\begin{exercise}
Suppose that you have a partial order $\mcA:=(A,\leq_A)$ on apples (so you know some apples are preferable to others but sometimes you can't compare). And suppose you have a partial order $\mcO:=(O,\leq_O)$ on oranges. You're about to be given two pieces of fruit from a basket of apples and oranges. Is the coproduct partial order $\mcA\sqcup\mcO$ a reasonable guess for your preferences, or does it seem biased?
\end{exercise}

\begin{example}\label{ex:coproduct of graphs}
Given two graphs $G_1=(V_1,A_1,src_1,tgt_1)$ and $G_2=(V_2,A_2,src_2,tgt_2)$, we can take their coproduct and get a new graph $G_1\sqcup G_2$. The vertices will be the disjoint union of vertices $V_1\sqcup V_2$, so each vertex in $G_1\sqcup G_2$ is labeled either by a vertex in $G_1$ or by one in $G_2$ (and if any labels are shared, then something must be done to differentiate them). When should an arrow connect $v$ to $v'$? Whenever both are from the same component (i.e. either $v,v'\in V_1$ or $v,v'\in V_2$) and we can find an arrow connecting them in that component. It turns out there is a simple formula for the set of arrows in $G_1\sqcup G_2$, namely $A_1\sqcup A_2$.

Let's write $G:=G_1\sqcup G_2$ and say $G=(V,A,src,tgt)$. We now know that $V=V_1\sqcup V_2$ and $A=A_1\sqcup A_2$. What should the source and target functions $A\to V$ be? Given a function $src_1\taking A_1\to V_1$ and a function $src_2\taking A_2\to V_2$, the universal property of coproducts in $\Set$ can be used to specify a unique function 
$$src:=src_1\sqcup src_2\taking A_1\sqcup A_2\to V_1\sqcup V_2.$$ 
Namely for any arrow $a\in A$, we know either $a\in A_1$ or $a\in A_2$ (and not both), so the source of $a$ will be the vertex $src_1(a)$ if $a\in A_1$ and $src_2(a)$ if $a\in A_2$. Similarly we have a ready-made choice of target function $tgt=tgt_1\sqcup tgt_2$. We have now defined the coproduct graph.

Here's a real example. Let $I$ and $J$ be as in Example \ref{ex:graph hom as NT done out}, drawn below:
\begin{align*}
&I:=\parbox{.8in}{\fbox{\xymatrix{\LMO{v}\ar[d]_f\\\LMO{w}\ar@/_1pc/[d]_g\ar@/^1pc/[d]^h\\\LMO{x}}}}\hspace{.6in}
&J:=\parbox{1.8in}{\fbox{\xymatrix{\LMO{q}\ar[r]^i&\LMO{r}\ar@/^1pc/[r]^j&\LMO{s}\ar@/^1pc/[l]^k\ar[r]^\ell&\LMO{t}\\&&&\LMO{u}}}}\\
&\small
\begin{array}{| l || l | l |}\bhline
\multicolumn{3}{|c|}{{\tt Arrow}\;\; (I)}\\\bhline
{\bf ID}&{\bf src}&{\bf tgt}\\\bbhline
f&v&w\\\hline
g&w&x\\\hline
h&w&x\\\bhline
\end{array}
\hsp
\begin{array}{| l ||}\bhline
\multicolumn{1}{|c|}{{\tt Vertex}\;\; (I)}\\\bhline
{\bf ID}\\\bbhline
v\\\hline
w\\\hline
x\\\bhline
\end{array}\hsp
&\small
\begin{array}{| l || l | l |}\bhline
\multicolumn{3}{|c|}{{\tt Arrow}\;\; (J)}\\\bhline
{\bf ID}&{\bf src}&{\bf tgt}\\\bbhline
i&q&r\\\hline
j&r&s\\\hline
k&s&r\\\hline
\ell&s&t\\\bhline
\end{array}
\hsp
\begin{array}{| l ||}\bhline
\multicolumn{1}{|c|}{{\tt Vertex}\;\; (J)}\\\bhline
{\bf ID}\\\bbhline
q\\\hline
r\\\hline
s\\\hline
t\\\hline
u\\\bhline
\end{array}
\end{align*}
The coproduct $I\sqcup J$ drawn below has, as expected $3+5=8$ vertices and $3+4=7$ arrows: 
$$\parbox{2.4in}{\boxtitle{$I\sqcup J:=$}\fbox{\xymatrix{
\LMO{v}\ar[d]_f\\
\LMO{w}\ar@/_1pc/[d]_g\ar@/^1pc/[d]^h&\LMO{q}\ar[r]^i&\LMO{r}\ar@/^1pc/[r]^j&\LMO{s}\ar@/^1pc/[l]^k\ar[r]^\ell&\LMO{t}\\
\LMO{x}&&&&\LMO{u}
}}}
\hspace{.5in}\small
\begin{array}{| l || l | l |}\bhline
\multicolumn{3}{|c|}{{\tt Arrow}\;\; (I\sqcup J)}\\\bhline
{\bf ID}&{\bf src}&{\bf tgt}\\\bbhline
f&v&w\\\hline
g&w&x\\\hline
h&w&x\\\hline
i&q&r\\\hline
j&r&s\\\hline
k&s&r\\\hline
\ell&s&t\\\bhline
\end{array}
\hsp
\begin{array}{| l ||}\bhline
\multicolumn{1}{|c|}{{\tt Vertex}\;\; (I\sqcup J)}\\\bhline
{\bf ID}\\\bbhline
v\\\hline
w\\\hline
x\\\hline
q\\\hline
r\\\hline
s\\\hline
t\\\hline
u\\\bhline
\end{array}
$$

Here is the most important thing to notice. Look at the {\tt Arrow} table $I$ and notice that there is a way to send each row to a row in $I\sqcup J$, such that all the foreign keys match. Similarly in the arrow table and the two vertex tables for $J$. These ``matchings" are readily-visible graph homomorphisms $I\to I\sqcup J$ and $J\to I\sqcup J$ in $\Grph$. 

\end{example}

\begin{exercise}
Recall from Example \ref{ex:dds} that a discrete dynamical system (DDS) is a set $s$ together with a function $f\taking s\to s$; if 
$$\Loop:=\LoopSchema$$
is the loop schema, then a DDS is simply an instance (a functor) $I\taking\Loop\to\Set$. We have not yet discussed coproducts of DDS's, but perhaps you can guess how they should work.  For example, consider the instances $I,J\taking\Loop\to\Set$ tabulated below:
\begin{align*}
\begin{tabular}{| l || c |}\bhline
\multicolumn{2}{| c |}{s\;\; (I)}\\\bhline 
{\bf ID}&{\bf f}\\\bbhline
A & C\\\hline
B & C\\\hline
C & C\\\hline
\end{tabular}
\hspace{.7in}
\begin{tabular}{| l || c |}\bhline
\multicolumn{2}{| c |}{s\;\; (J)}\\\bhline 
{\bf ID}&{\bf f}\\\bbhline
x & y\\\hline
y & x\\\hline
z & z\\\hline
\end{tabular}
\end{align*}
Make a guess and tabulate $I\sqcup J$. Then draw it.
\end{exercise}

In every case above (preorders, graphs, DDSs), what's most important to recognize is that there are inclusion maps $I\to I\sqcup J$ and $J\to I\sqcup J$, and that the construction of $I\sqcup J$ seems as straightforward as possible, subject to having these inclusions. It is time to give the definition.

\begin{definition}\label{def:coproducts in a cat}

Let $\mcC$ be a category and let $X,Y\in\Ob(\mcC)$ be objects. A {\em cospan on $X$ and $Y$}\index{cospan} consists of three constituents $(Z,i,j)$, where $Z\in\Ob(\mcC)$ is an object, and where $i\taking X\to Z$ and $j\taking Y\to Z$ are morphisms in $\mcC$. 
$$\xymatrix@=15pt{&Z\\\\X\ar[ruu]^i&&Y\ar[luu]_j}$$   

A {\em coproduct of $X$ and $Y$} is a cospan $X\To{\iota_1}X\sqcup Y\From{\iota_2}Y$, \footnote{The names $X\sqcup Y$ and $\iota_1,\iota_2$ are not mathematically important, they are pedagogically suggestive.} such that for any other cospan $X\To{i}Z\From{j}Y$ there {\em exists a unique} morphism $s_{i,j}\taking X\sqcup Y\to Z$ such that the diagram below commutes:
$$
\xymatrix@=15pt{&X\sqcup Y\ar@{-->}[dddd]_{s_{i,j}}\\\\X\ar[uur]^{\iota_1}\ar[ddr]_i&&Y\ar[uul]_{\iota_2}\ar[ddl]^j\\\\&Z}
$$
\end{definition}

\begin{remark}

Definition \ref{def:products in a cat} endows the coproduct of two objects with a {\em universal property}. It says that a coproduct of two objects $X$ and $Y$ receives maps from those two objects, and serves as a gateway for all who do the same. ``None shall receive maps from $X$ and $Y$ except through me!" This grandiose property is held by all the coproducts we have discussed so far. It is what I meant when I said things like ``$X\sqcup Y$ receives maps from both $X$ and $Y$ and does so as straightforwardly as possible".  The disjoint union of dots obtained as the coproduct of two sets has such a property, as can be seen by thinking about Example \ref{ex:coprod of dots}.

\end{remark}

\begin{example}

By Proposition \ref{prop:preorders to cats}, there is a functor $\PrO\to\Cat$\index{a functor!$\PrO\to\Cat$} that realizes every preorder as a category. If $\mcP=(P,\leq)$ is a preorder, what are coproducts in $\mcP$? Given two objects $a,b\in\Ob(\mcP)$ we first consider cospans on $a$ and $b$, i.e. $a\to z\from b$. A cospan of $a$ and $b$ is any $z$ such that $a\leq z$ and $b\leq z$. The coproduct will be such a cospan $a\leq a\sqcup b\geq b$, but such that every other cospanning object $z$ is greater than or equal to $a\sqcup b$. In other words $a\sqcup b$ is as small as possible subject to the condition of being bigger than $a$ and bigger than $b$. This is precisely the join of $a$ and $b$ (see Definition \ref{def:meets and joins}).

\end{example}

Just as for products, the coproduct of two objects in a category $\mcC$ may not exist, or it may not be unique. The non-uniqueness is much less ``bad" because given two candidate coproducts, they will be canonically isomorphic. They may not be equal, but they are isomorphic. But coproducts might not exist at all in certain categories. We will explore that a bit below.

\begin{example}

Consider the set $\RR^2$ and partial order from Example \ref{ex:products dont exist} where $(x_1,y_1)\leq (x_2,y_2)$ if there exists $\ell\geq 1$ such that $x_1\ell=x_2$ and $y_1\ell=y_2$. Again the points $p:=(1,0)$ and $q:=(0,1)$ do not have a coproduct. Indeed, it would have to be a non-zero point that was on the same line-through-the origin as $p$ and the same line-through-the-origin as $q$, of which there are none.

\end{example}

\begin{exercise}
Consider the preorder $\mcP$ of cards in a deck, shown in Example \ref{ex:pre not par}; it is not the entire story of cards in a deck, but take it to be so. In other words, be like a computer and take what's there at face value. Consider the preorder $\mcP$ as a category (by way of the functor $\PrO\to\Cat$). For each of the following pairs, what is their coproduct in $\mcP$ (if it exists)?
\sexc 
\begin{tabbing}
\hspace{.5in}\= \fakebox{a diamond}$\sqcup$\fakebox{a heart}\;?\hspace{.5in} \=\fakebox{a queen}$\sqcup$\fakebox{a black card}\;?\\\\
\> \fakebox{a card}$\sqcup$\fakebox{a red card}\;?\>\fakebox{a face card}$\sqcup$\fakebox{a black card}\;?
\end{tabbing}
\next How would these answers differ if $\mcP$ was completed to the ``whole story" partial order classifying cards in a deck?
\endsexc
\end{exercise}

\begin{exercise}
Let $X$ be a set, and consider it as a discrete category. Given two objects $x,y\in\Ob(X)$, under what conditions will there exist a coproduct $x\sqcup y$?
\end{exercise}

\begin{exercise}
Consider the preorder $(\NN,{\tt divides})$, discussed in Exercise \ref{exc:divides as po}, where e.g. $5\leq 15$ but $5\not\leq 6$. \sexc What is the coproduct of $9$ and $12$ in that category?
\next Is there a standard name for coproducts in that category?
\endsexc
\end{exercise}


\subsection{Diagrams in a category}\label{sec:diagrams in a category}\index{diagram}

We have been drawing diagrams since the beginning of the book. What is it that we have been drawing pictures {\em of}? The answer is that we have been drawing functors.

\begin{definition}\index{diagram}\index{indexing category}

Let $\mcC$ and $I$ be categories.
\footnote{In fact, the indexing category $I$ is usually assumed to be small in the sense of Remark \ref{rmk:small}, meaning that its collection of objects is a set.}
An {\em $I$-shaped diagram in $\mcC$} is simply a functor $d\taking I\to\mcC$. In this case $I$ is called the {\em indexing category} for the diagram.

\end{definition}

Suppose given an indexing category $I$ and an $I$-shaped diagram $X\taking I\to\mcC$. One draws this as follows. For each object in $q\in I$, draw a dot labeled by $X(q)$; if several objects in $I$ point to the same object in $\mcC$, then several dots will be labeled the same way. Draw the images of morphisms $f\taking q\to q'$ in $I$ by drawing arrows between dots $X(q)$ and $X(q')$, and label each arrow by the image morphism $X(f)$ in $\mcC$. Again, if several morphisms in $I$ are sent to the same morphism in $\mcC$, then several arrows will be labeled the same way. One can abbreviate this process by not drawing {\em every} morphism in $I$, so long as every morphism in $I$ is represented by a unique path in $\mcC$, i.e. as long as the drawing is sufficiently unambiguous as a depiction of $X\taking I\to\mcC$.

\begin{example}\label{ex:comm vs noncomm diags}

Consider the commutative diagram in $\Set$ drawn below:
\begin{align}\label{dia:comm diag of nats in set}
\xymatrix{\NN\ar[r]^{+1}\ar[d]_{*2}&\NN\ar[d]^{*2}\\\NN\ar[r]_{+2}&\ZZ}
\end{align}
This is the drawing of a functor $d\taking[1]\times[1]\to\Set$ (see Example \ref{ex:[1]x[1]}). With notation for the objects and morphisms of $[1]\times[1]$ as shown in Diagram (\ref{dia:comm square}), we have $d(0,0)=d(0,1)=d(1,0)=\NN$ and $d(1,1)=\ZZ$ (for some reason..) and $d(\id_0,f)\taking\NN\to\NN$ given by $n\mapsto n+1$, etc. 

The fact that $d$ is a functor means it must respect composition formulas, which implies that Diagram (\ref{dia:comm diag of nats in set}) commutes. Recall from Section \ref{sec:comm diag} that not all diagrams one can draw will commute; one must specify that a given diagram commutes if he or she wishes to communicate this fact. But then how is a {\em non-commuting diagram} to be understood as a functor?

Let $G\in\Ob(\Grph)$ denote the following graph 
$$\xymatrix{\LMO{(0,0)}\ar[r]^f\ar[d]_h&\LMO{(0,1)}\ar[d]^{g}\\\LMO{(1,0)}\ar[r]_{i}&\LMO{(1,1)}}$$
Recall the free category functor $F\taking\Grph\to\Cat$ from Example \ref{ex:free category}. The free category $F(G)\in\Ob(\Cat)$ on $G$ looks almost like $[1]\times[1]$ except that since $[f,g]$ is a different path in $G$ than is $[h,i]$, they become different morphisms in $F(G)$. A functor $F(G)\to\Set$ might be drawn the same way that (\ref{dia:comm diag of nats in set}) is, but it would be a diagram that would {\em not} be said to commute.

We call $[1]\times [1]$ the {\em commutative square indexing category}. 
\footnote{We might call what is here denoted by $F(G)$ the {\em noncommutative square indexing category}.}

\end{example}

\begin{exercise}
Consider $[2]$, the linear order category of length 2.
\sexc Is $[2]$ the appropriate indexing category for commutative triangles?
\next If not, what is?
\endsexc
\end{exercise}

\begin{example}

Recall that an equalizer in $\Set$ was a diagram of sets that looked like this:
\begin{align}\label{dia:equalizer diag}
\xymatrix{\LMO{E}\ar[r]^f&\LMO{A}\ar@<.5ex>[r]^{g_1}\ar@<-.5ex>[r]_{g_2}&\LMO{B}}
\end{align}
where $g_1\circ f=g_2\circ f$. What is the indexing category for such a diagram? It is the schema (\ref{dia:equalizer diag}) with the PED $[f,g_1]\simeq[f,g_2]$. That is, in some sense you're seeing the indexing category, but the PED needs to be declared.

\end{example}

\begin{exercise}\label{exc:coincidence}
Let $\mcC$ be a category, $A\in\Ob(\mcC)$ an object, and $f\taking A\to A$ a morphism in $\mcC$. Consider the two diagrams in $\mcC$ drawn below:
$$\fbox{\xymatrix{\LMO{A}\ar[r]^f&\LMO{A}\ar[r]^f&\LMO{A}\ar[r]^f&\cdots}}\hspace{1in}\fbox{\xymatrix{\LMO{A}\ar@(ul,dl)[]_f}}$$
\sexc Should these two diagrams have the same indexing category?
\next If they should have the same indexing category, what is causing or allowing the pictures to appear different?
\next If they should not have the same indexing category, what coincidence makes the two pictures have so much in common?
\endsexc
\end{exercise}

\begin{definition}\label{def:lcone}\index{cone!left}

Let $I\in\Ob(\Cat)$ be a category. The {\em left cone on $I$}, denoted $I\lcone$\index{a symbol!$\lcone$}, is the category defined as follows. On objects we put $\Ob(I\lcone)=\{-\infty\}\sqcup\Ob(I)$, and we call the new object $-\infty$ the {\em cone point of $I\lcone$}. On morphisms we add a single new morphism $s_b\taking-\infty\to b$ for every object $b\in\Ob(I)$; more precisely,
$$\Hom_{I\lcone}(a,b)=
\begin{cases}
\Hom_I(a,b)&\tn{ if }a,b\in\Ob(I)\\
\{s_b\}&\tn{ if }a=-\infty, b\in\Ob(I)\\
\{\id_{-\infty}\}&\tn{ if } a=b=-\infty\\
\emptyset&\tn{ if } a\in\Ob(I), b=-\infty.
\end{cases}$$
The composition formula is in some sense obvious. To compose two morphisms both in $I$, compose as dictated by $I$; if one has $-\infty$ as source then there will be a unique choice of composite.

There is an obvious inclusion of categories,
\begin{align}\label{dia:inclusion into cone}
I\to I\lcone.
\end{align}

\end{definition}

\begin{remark}\label{rem:schemas are cats!}

Note that the specification of $I\lcone$ given in Definition \ref{def:lcone} works just as well if $I$ is considered a schema and we are constructing a schema $I\lcone$: add the new object $-\infty$ and the new arrows $s_b\taking-\infty\to b$ for each $b\in\Ob(I)$, and for every morphism $f\taking b\to b'$ in $I$ add a PED $[s_{b'}]\simeq[s_b,f]$. We generally will not distinguish between categories and schemas, since they are equivalent.

\end{remark}

\begin{example}\label{ex:stars}\index{a category!$\Star_n$}

For a natural number $n\in\NN$, we define the {\em $n$-leaf star schema}, denoted $\Star_n$, to be the category (or schema, see Remark \ref{rem:schemas are cats!}) $\ul{n}\lcone$, where $\ul{n}$ is the discrete category on $n$ objects. Below we draw $\Star_0, \Star_1, \Star_2$, and $\Star_3$.
$$
\parbox{.3in}{\boxtitle{$\Star_0$}\fbox{$\LMO{-\infty}$}}
\hspace{.5in}
\parbox{.4in}{\boxtitle{$\Star_1$}\fbox{\xymatrix@=15pt{\LMO{-\infty}\ar[dd]_{s_1}\\\\\LMO{1}}}}
\hspace{.5in}
\parbox{1.1in}{\boxtitle{$\Star_2$}\fbox{\xymatrix@=15pt{&\LMO{-\infty}\ar[ddl]_{s_1}\ar[ddr]^{s_2}\\\\\LMO{1}&&\LMO{2}}}}
\hspace{.5in}
\parbox{1.1in}{\boxtitle{$\Star_3$}\fbox{\xymatrix@=15pt{&\LMO{-\infty}\ar[ddl]_{s_1}\ar[dd]_{s_2}\ar[ddr]^{s_3}\\\\\LMO{1}&\LMO{2}&\LMO{3}}}}
$$

\end{example}

\begin{exercise}
Let $\mcC_0:=\ul{0}$ denote the empty category and for any natural number $n\in\NN$, let $\mcC_{n+1}=(\mcC_n)\lcone.$ Draw $\mcC_4$.  
\end{exercise}

\begin{exercise}
Let $\mcC$ be the graph indexing schema as in (\ref{dia:graph index}). What is $\mcC\lcone$ and how does it compare to (\ref{dia:equalizer diag})? 
\end{exercise}

\begin{definition}\label{def:rcone}\index{cone!right}

Let $I\in\Ob(\Cat)$ be a category. The {\em right cone on $I$}, denoted $I\rcone$,\index{a symbol!$\rcone$} is the category defined as follows. On objects we put $\Ob(I\rcone)=\Ob(I)\sqcup\{\infty\}$, and we call the new object $\infty$ the {\em cone point of $I\rcone$}. On morphisms we add a single new morphism $t_b\taking b\to\infty$ for every object $b\in \Ob(I)$; more precisely,
$$\Hom_{I\rcone}(a,b)=
\begin{cases}
\Hom_I(a,b)&\tn{ if }a,b\in\Ob(I)\\
\{t_b\}&\tn{ if }a\in\Ob(I), b=\infty\\
\{\id_{\infty}\}&\tn{ if } a=b=\infty\\
\emptyset&\tn{ if } a=\infty,b\in\Ob(I).
\end{cases}$$
The composition formula is in some sense obvious. To compose two morphisms both in $I$, compose as dictated by $I$; if one has $\infty$ as target then there will be a unique choice of composite.

There is an obvious inclusion of categories $I\to I\rcone$.

\end{definition}

\begin{exercise}
Let $\mcC$ be the category $(\ul{2}\lcone)\rcone$, where $\ul{2}$ is the discrete category on two objects. Then $\mcC$ is somehow square-shaped, but what category is it exactly? Looking at Example \ref{ex:comm vs noncomm diags}, is $\mcC$ the commutative diagram indexing category $[1]\times[1]$, is it the non-commutative diagram indexing category $F(G)$, or is it something else?
\end{exercise}


\subsection{Limits and colimits in a category}\label{sec:lims and colims in a cat}

Let $\mcC$ be a category, let $I$ be an indexing category (which just means that $I$ is a category that we're about to use as the indexing category for a diagram), and let $D\taking I\to\mcC$ an $I$-shaped diagram (which just means a functor). It is in relation to this setup that we can discuss the limit or colimit. In general the limit of a diagram $D\taking I\to\mcC$ will be a $I\lcone$ shaped diagram $\lim D\taking I\lcone\to\mcC$. In the case of products $I=\ul{2}$ and $I\lcone=\Star_2$ looks like a span (see Example \ref{ex:stars}). But out of all the $I\lcone$-shaped diagrams, which is the limit of $D$? Answer: the one with the universal ``gateway" property, see Remark \ref{rem:gateway}.


\subsubsection{Universal objects}

\begin{definition}\index{initial object}\index{terminal object}

Let $\mcC$ be a category. An object $a\in\Ob(\mcC)$ is called {\em initial} if, for all objects $c\in\Ob(\mcC)$ there exists a unique morphism $a\to c$, i.e. $|\Hom_\mcC(a,c)|=1$. An object $z\in\Ob(\mcC)$ is called {\em terminal} if, for all objects $c\in\Ob(\mcC)$ there is exists a unique morphism $c\to z$, i.e. $|\Hom_\mcC(c,z)|=1$. 

\end{definition}

An object in a category is called {\em universal} if it is either initial or terminal, but we rarely use that term in practice, preferring to be specific about whether the object is initial or terminal. The word {\em final} is synonymous with the word terminal, but we'll try to constantly use terminal. 

Colimits will end up being defined as initial things of a certain sort, and limits will end up being defined as terminal things of a certain sort. But we will get to that in Section \ref{sec:examples of limits}.

\begin{warning}\index{a warning!misuse of {\em the}}

A category $\mcC$ may have more than one initial object; similarly a category $\mcC$ may have more than one terminal object. We will see in Example \ref{ex:universal obs in set} that any set with one element, e.g. $\{*\}$ or $\singleton$, is a terminal object in $\Set$. These terminal sets have the same number of elements, but they are not the exact-same set; two sets having the same cardinality means precisely that there exists an isomorphism between them.

In fact, Proposition \ref{prop:initials are isomorphic} below shows that in any category $\mcC$, any two terminal objects in $\mcC$ are isomorphic (similarly, any two initial objects in $\mcC$ are isomorphic). While there are many isomorphisms in $\Set$ between $\{1,2,3\}$ and $\{a,b,c\}$, there is only one isomorphism between $\{*\}$ and $\smiley$. This is always the case for universal objects: there is a unique isomorphism between any two terminal (respectively initial) objects in any category.

As a result, people often speak of {\em the} initial object in $\mcC$ or {\em the} terminal object in $\mcC$, as though there was only one. ``It's unique up to unique ismorphism!" is the justification for this use of the so-called definite article {\em the} rather than the indefinite article {\em a}. This is not a very misleading way of speaking, because just like the president today does not contain exactly the same atoms as the president yesterday, the difference is unimportant. But we still mention this as a warning: if $\mcC$ has a terminal object, we may speak of it as though it were unique, calling it {\em the terminal object}, and similarly for initial objects.

We will use the definite article throughout this document, e.g. in Example \ref{ex:universal obs in set} we will discuss the initial object in $\Set$ and the terminal object in $\Set$. This is common throughout mathematical literature as well.

\end{warning}

\begin{proposition}\label{prop:initials are isomorphic}

Let $\mcC$ be a category and let $a_1,a_2\in\Ob(\mcC)$ both be initial objects. Then there is a unique isomorphism $a_1\To{\iso}a_2$. (Similarly, for any two terminal objects in $\mcC$ there is a unique isomorphism between them.) 

\end{proposition}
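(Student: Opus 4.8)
The plan is to handle the initial case in full and then observe that the terminal case is formally dual. I would begin by extracting the two canonical morphisms that initiality hands us for free: since $a_1$ is initial, applying the defining property with $c=a_2$ gives a unique morphism $f\taking a_1\to a_2$; symmetrically, since $a_2$ is initial, there is a unique morphism $g\taking a_2\to a_1$. The whole proof will hinge on feeding composites of these two maps back into the uniqueness clause of initiality.

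Next I would show $f$ and $g$ are mutually inverse. Consider the composite $g\circ f\taking a_1\to a_1$. By initiality of $a_1$, the hom-set $\Hom_\mcC(a_1,a_1)$ has exactly one element; but $\id_{a_1}$ already lives there, so $g\circ f=\id_{a_1}$ by forced uniqueness. Running the identical argument on the other side with the composite $f\circ g\taking a_2\to a_2$, and using initiality of $a_2$ to conclude $\Hom_\mcC(a_2,a_2)=\{\id_{a_2}\}$, gives $f\circ g=\id_{a_2}$. By the definition of isomorphism, this exhibits $f$ as an isomorphism $a_1\To{\iso}a_2$ with inverse $g$, establishing existence.

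For uniqueness I would note that any isomorphism $a_1\to a_2$ is in particular a morphism in $\Hom_\mcC(a_1,a_2)$, and initiality of $a_1$ says that set is a singleton; hence there is at most one morphism $a_1\to a_2$ at all, isomorphism or not, so the isomorphism we produced is the unique one. Finally, the parenthetical claim about terminal objects is obtained by the same argument read in the opposite category: reversing every arrow turns "initial" into "terminal" and leaves the logic untouched, so one may either dualize verbatim or simply remark that the statement for terminal objects is the statement for initial objects in $\mcC\op$.

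I do not expect any serious obstacle here; the only point requiring care is keeping the two uses of uniqueness straight—each identity is forced by the initiality of the object that is both the source \emph{and} the target of the relevant endomorphism, so I would be explicit that $g\circ f$ is pinned down using $a_1$ while $f\circ g$ is pinned down using $a_2$. This mirrors exactly the reasoning in Lemma \ref{lemma:isomorphic ER in Set}, specialized to the situation where hom-sets are singletons.
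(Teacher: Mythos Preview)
Your proposal is correct and matches the paper's proof essentially line for line: both extract the unique maps $f\taking a_1\to a_2$ and $g\taking a_2\to a_1$ from initiality, force $g\circ f=\id_{a_1}$ and $f\circ g=\id_{a_2}$ via the singleton hom-sets $\Hom_\mcC(a_i,a_i)$, and dispatch the terminal case by duality. You are somewhat more explicit about the uniqueness clause than the paper, but the argument is the same.
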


\begin{proof}

Suppose $a_1$ and $a_2$ are initial. Since $a_1$ is initial there is a unique morphism $f\taking a_1\to a_2$; there is also a unique morphism $a_1\to a_1$, which must be $\id_{a_1}$. Since $a_2$ is initial there is a unique morphism $g\taking a_2\to a_1$; there is also a unique morphism $a_2\to a_2$, which must be $\id_{a_2}$. So $g\circ f=\id_{a_1}$ and $f\circ g=\id_{a_2}$, which means that $f$ is the desired (unique) isomorphism.

The proof for terminal objects is appropriately ``dual".

\end{proof}

\begin{example}\label{ex:universal obs in set}

The initial object in $\Set$ is the set $a$ for which there is always one way to map from $a$ to anything else. Given $c\in\Ob(\Set)$ there is exactly one function $\emptyset\to c$, because there are no choices to be made, so the empty set $\emptyset$ is the initial object in $\Set$.

The terminal object in $\Set$ is the set $z$ for which there is always one way to map to $z$ from anything else. Given $c\in\Ob(\Set)$ there is exactly one function $c\to\singleton$, where $\singleton$ is any set with one element, because there are no choices to be made: everything in $c$ must be sent to the single element in $\singleton$. There are lots of terminal objects in $\Set$, and they are all isomorphic to $\ul{1}$.

\end{example}

\begin{example}

The initial object in $\Grph$ is the graph $a$ for which there is always one way to map from $a$ to anything else. Given $c\in\Ob(\Grph)$, there is exactly one function $\emptyset\to c$, where $\emptyset\in\Grph$ is the empty graph; so $\emptyset$ is the initial object.

The terminal object in $\Grph$ is more interesting. It is $\Loop$, the graph with one vertex and one arrow. In fact there are infinitely many terminal objects in $\Grph$, but all of them are isomorphic to $\Loop$. 

\end{example}

\begin{exercise}
Let $X$ be a set, let $\PP(X)$ be the set of subsets of $X$ (see Definition \ref{def:subobject classifier}). We can regard $\PP(X)$ as a preorder under inclusion of subsets (see for example Section \ref{sec:meets and joins}). And we can regard preorders as categories using a functor $\PrO\to\Cat$ (see Proposition \ref{prop:preorders to cats}).
\sexc What is the initial object in $\PP(X)$?
\next What is the terminal object in $\PP(X)$? 
\endsexc
\end{exercise}

\begin{example}\label{ex:initial monoid terminal monoid}\index{monoid!initial}\index{monoid!terminal}

The initial object in the category $\Mon$ of monoids is the trivial monoid, $\ul{1}$. For any monoid $M$, a morphism of monoids $\ul{1}\to M$ is a functor between 1-object categories and these are determined by where they send morphisms. Since $\ul{1}$ has only the identity morphism and functors must preserve identities, there is no choice involved in finding a monoid morphism $\ul{1}\to M$.

Similarly, the terminal object in $\Mon$ is also the trivial monoid, $\ul{1}$. For any monoid $M$, a morphism of monoids $M\to\ul{1}$ sends everything to the identity; there is no choice.

\end{example}

\begin{exercise}~
\sexc What is the initial object in $\Grp$, the category of groups?
\next What is the terminal object in $\Grp$?
\endsexc
\end{exercise}
\begin{example}

Recall the preorder $\Prop$ of logical propositions from Section \ref{sec:propositions}. The initial object is a proposition that implies all others. It turns out that ``FALSE" is such a proposition. The proposition ``FALSE" is like ``$1\neq1$"; in logical formalism it can be shown that if ``FALSE" is true then everything is true.

The terminal object in $\Prop$ is a proposition that is implied by all others. It turns out that ``TRUE" is such a proposition. In logical formalism, everything implies that ``TRUE" is true.

\end{example}

\begin{example}

The discrete category $\ul{2}$ has no initial object and no terminal object. The reason is that it has two objects $1,2$, but no maps from one to the other, so $\Hom_{\ul{2}}(1,2)=\Hom_{\ul{2}}(2,1)=\emptyset$.

\end{example}

\begin{exercise}
Recall the {\tt divides} preorder from Exercise \ref{exc:divides as po}, where $5\;{\tt divides}\;15$.
\sexc Considering this preorder as a category, does it have an initial object?
\next Does it have a terminal object?
\endsexc
\end{exercise}

\begin{exercise}
Let $\mcM=(\List(\{a,b\}),[\;],\plpl)$ denote the free monoid on $\{a,b\}$ (see Definition \ref{def:free monoid}), considered as a category (via Theorem \ref{thm:mon to cat}).
\sexc Does it have an initial object?
\next Does it have a terminal object?
\next Which monoids have initial (respectively terminal) objects?
\endsexc
\end{exercise}

\begin{exercise}
Let $S$ be a set and consider the indiscrete category $K_S\in\Ob(\Cat)$ on objects $S$ (see Example \ref{ex:indiscrete cat equiv to terminal}).
\sexc For what $S$ does $K_S$ have an initial object?
\next For what $S$ does $K_S$ have a terminal object?
\endsexc
\end{exercise}


\subsubsection{Examples of limits}\label{sec:examples of limits}

Let $\mcC$ be a category and let $X,Y\in\Ob(\mcC)$ be objects. Definition \ref{def:products in a cat} defines a product  of $X$ and $Y$ to be a span $X\From{\pi_1}X\times Y\To{\pi_2}Y$ such that for every other span $X\From{p}Z\To{q}Y$ there exists a unique morphism $Z\to X\times Y$ making the triangles commute. It turns out that we can enunciate this in our newly formed language of universal objects by saying that the span $X\From{\pi_1}X\times Y\To{\pi_2}Y$ is itself a terminal object in the category of spans on $X$ and $Y$. Phrasing the definition of products in this way will be generalizable to defining arbitrary limits.

\begin{construction}[Products]\index{products}

Let $\mcC$ be a category and let $X_1,X_2$ be objects. We can consider this setup as a diagram $X\taking\ul{2}\to\mcC$, where $X(1)=X_1$ and $X(2)=X_2$. Consider the category $\ul{2}\lcone=\Star_2$, which is drawn in Example \ref{ex:stars}; the inclusion $i\taking\ul{2}\to\ul{2}\lcone$, as in (\ref{dia:inclusion into cone}); and the category of functors $\Fun(\ul{2}\lcone,\mcC)$. The objects in $\Fun(\ul{2}\lcone,\mcC)$ are spans in $\mcC$ and the morphisms are natural transformations between them. Given a functor $S\taking\ul{2}\lcone\to\mcC$ we can compose with $i\taking\ul{2}\to\ul{2}\lcone$ to get a functor $\ul{2}\to\mcC$. We want that to be $X$.
$$\xymatrix@=30pt{\ul{2}\ar[r]^{X}\ar[d]_i&\mcC\\\ul{2}\lcone\ar[ur]_S}$$
So we are ready to define the category of spans on $X_1$ and $X_2$.

Define the {\em category of spans on $X$}, denoted $\mcC_{/X}$, to be the category whose objects and morphisms are as follows:
\begin{align}\label{dia:slice for products}
\Ob(\mcC_{/X})&=\{S\taking\ul{2}\lcone\to\mcC\|S\circ i=X\}\\
\nonumber\Hom_{\mcC_{/X}}(S,S')&=\{\alpha\taking S\to S'\|\alpha\circ i=\id_X\}.
\end{align}
The product of $X_1$ and $X_2$ was defined in Definition \ref{def:products in a cat}; we can now recast $X_1\times X_2$ as the terminal object in $\mcC_{/X}$.

To bring this down to earth, an object in $\mcC_{/X}$ can be pictured as a diagram in $\mcC$ of the following form:
$$\xymatrix@=15pt{&Z\ar[ldd]_p\ar[rdd]^q\\\\X_1&&X_2}$$   
In other words, the objects of $\mcC_{/X}$ are spans, each of which we might write in-line as $X_1\From{p}Z\To{q}X_2$. A morphism in $\mcC_{/X}$ from object $X_1\From{p}Z\To{q}X_2$ to object $X_1\From{p'}Z'\To{q'}X_2$ consists of a morphism $\ell\taking Z\to Z'$, such that $p'\circ\ell=p$ and $q'\circ\ell=q$. So the set of such morphisms in $\mcC_{/X}$ are all the $\ell$'s that make the right-hand diagram commute:
\footnote{To be completely pedantic, according to (\ref{dia:slice for products}), the morphisms in $\mcC_{/X}$ should be drawn like this:
\begin{align*}
\Hom_{\mcC_{/X}}\normalsize\left(\parbox{1in}{\xymatrix@=8pt{&Z\ar[ldd]_p\ar[rdd]^q\\\\X_1&&X_2}}\hspace{.2in},\hspace{.2in}\parbox{1in}{\xymatrix@=8pt{&Z'\ar[ldd]_{p'}\ar[rdd]^{q'}\\\\X_1&&X_2}}\right)\hspace{.2in}=\hspace{.2in}\left\{\;\;\parbox{1in}{\xymatrix@=15pt{&Z\ar[ldd]_{p}\ar[rdd]^{q}\ar@{-->}[ddddd]^{\alpha^{}_{-\infty}}\\\\X_1\ar@{=}[d]_{\alpha_1}&&X_2\ar@{=}[d]^{\alpha_2}\\X_1&&X_2\\\\&Z'\ar[uul]^{p'}\ar[uur]_{q'}}}\;\;\right\}
\end{align*}
But this is going a bit overboard. The point is, the set $\Hom_{\mcC_{/X}}$ is the set of morphisms serving the role of $\alpha_{-\infty}\taking Z\to Z'$.}
\begin{align}\label{dia:morphism of spans}
\Hom_{\mcC_{/X}}\normalsize\left(\parbox{1in}{\xymatrix@=8pt{&Z\ar[ldd]_p\ar[rdd]^q\\\\X_1&&X_2}}\hspace{.2in},\hspace{.2in}\parbox{1in}{\xymatrix@=8pt{&Z'\ar[ldd]_{p'}\ar[rdd]^{q'}\\\\X_1&&X_2}}\right)\hspace{.2in}=\hspace{.2in}\left\{\;\;\parbox{1in}{\xymatrix@=15pt{&Z\ar[ldd]_{p}\ar[rdd]^{q}\ar@{-->}[dddd]^\ell\\\\X_1&&X_2\\\\&Z'\ar[uul]^{p'}\ar[uur]_{q'}}}\;\;\right\}
\end{align}

Each object in $\mcC_{/X}$ is a span on $X_1$ and $X_2$, and each morphism in $\mcC_{/X}$ is a ``morphism of cone points in $\mcC$ making everything in sight commute". The terminal object in $\mcC_{/X}$ is the product of $X_1$ and $X_2$; see Definition \ref{def:products in a cat}.

\end{construction}

It may be strange to have a category in which the objects are spans in another category. But once you admit this possibility, the notion of morphism between spans is totally sensible. Or if it isn't, then stare at (\ref{dia:morphism of spans}) for 30 seconds and say to yourself ``When in Rome..!" These are the aqueducts of category theory, and they work wonders.

\begin{example}\label{ex:category of spans}

Consider the arbitrary 6-object category $\mcC$ drawn below, in which the three diagrams that can commute do:
$$\mcC:=\parbox{3in}{\fbox{\xymatrix@=39pt{&&\LMO{X_1}&\\\LMO{A}\ar@/^1pc/[urr]^a&\LMO{B}\ar[l]_f\ar@{}[u]|(.4){\checkmark}\ar[ur]_{b_1}\ar[dr]^{b_2}&&\LMO{C}\ar@{}[u]|(.4){\checkmark}\ar@{}[d]|(.4){\checkmark}\ar[ul]^{c_1}\ar[dl]_{c_2}\ar[r]^g&\LMO{D}\ar@/_1pc/[llu]_{d_1}\ar@/^1pc/[lld]^{d_2}\\&&\LMO{X_2}&}}}$$
Let $X\taking\ul{2}\to\mcC$ be given by $X(1)=X_1$ and $X(2)=X_2$. Then the category of spans on $X$ might be drawn
$$\mcC_{/X}\iso\fbox{\xymatrix{&\LMO{(B,b_1,b_2)}&&\LMO{(C,c_1,c_2)}\ar[r]^g&\LMO{(D,d_1,d_2)}}}$$

\end{example}


\subsubsection{Definition of limit}

\begin{definition}\label{def:slice and limit}\index{category!slice}\index{slice}\index{limit}

Let $\mcC$ be a category, let $I$ be a category; let $I\lcone$ be the left cone on $I$, and let $i\taking I\to I\lcone$ be the inclusion. Suppose that $X\taking I\to\mcC$ is an $I$-shaped diagram in $\mcC$. The {\em slice category of $\mcC$ over $X$} denoted $\mcC_{/X}$\index{a symbol!$\mcC_{/X}$} is the category whose objects and morphisms are as follows:
\begin{align*}
\Ob(\mcC_{/X})&=\{S\taking I\lcone\to\mcC\|S\circ i=X\}\\
\Hom_{\mcC_{/X}}(S,S')&=\{\alpha\taking S\to S'\|\alpha\circ i=\id_X\}.
\end{align*}

A {\em limit of $X$}, denoted $\lim_IX$ or $\lim X$,\index{a symbol!$\lim$} is a terminal object in $\mcC_{/X}$.

\end{definition}

\paragraph{Pullbacks}\index{pullback}\index{universal property!pullback}

The relevant indexing category for pullbacks is the cospan, $I=\ul{2}\rcone$ drawn as to the left below: 
$$
\parbox{1.2in}{\boxtitle{$I$}\fbox{\xymatrix{\LMO{0}\ar[rd]&&\LMO{1}\ar[ld]\\&\LMO{2}}}}
\hspace{1in}
\parbox{1.5in}{\boxtitle{$X\taking I\to\mcC$}\dbox{\xymatrix{\LMO{X_0}\ar[rd]&&\LMO{X_1}\ar[ld]\\&\LMO{X_2}}}}
\;\;\footnote{We use a dash box here because we're not drawing the whole category but merely a diagram existing inside $\mcC$.}
$$
A $I$-shaped diagram in $\mcC$ is a functor $X\taking I\to\mcC$, which we might draw as to the right above (e.g. $X_0\in\Ob(\mcC)$).

An object $S$ in the slice category $\mcC_{/X}$ is a commutative diagram $S\taking I\lcone\to\mcC$ over $X$, which looks like the box to the left below: 
$$
\parbox{1.5in}{\boxtitle{$S\in\Ob(\mcC_{/X})$}\dbox{\xymatrix{&S_{-\infty}\ar[rd]\ar[ld]\\\LMO{X_0}\ar[rd]&&\LMO{X_1}\ar[ld]\\&\LMO{X_2}}}}
\hspace{1in}
\parbox{1.5in}{\boxtitle{$f\taking S\to S'$}\dbox{\xymatrix{&S_{-\infty}\ar@/^1pc/[rdd]\ar@/_1pc/[ldd]\ar[d]^f\\&S'_{-\infty}\ar[rd]\ar[ld]\\\LMO{X_0}\ar[rd]&&\LMO{X_1}\ar[ld]\\&\LMO{X_2}}}}
$$
A morphism in $\mcC_{/X}$ is drawn in the dashbox to the right above. A terminal object in $\mcC_{/X}$ is precisely the ``gateway" we want, i.e. the limit of $X$ is the pullback $X_0\times_{X_2}X_1$.

\begin{exercise}\index{equalizer}
Let $I$ be the graph indexing category (see \ref{dia:graph index}).
\sexc What is $I\lcone$?
\next Now let $G\taking I\to\Set$ be the graph from Example \ref{ex:graph}. Give an example of an object in $\Set_{/G}$. 
\next We have already given a name to the limit of $G\taking I\to\Set$; what is it?
\endsexc
\end{exercise}

\begin{exercise}\label{exc:terminal as limit}
Let $\mcC$ be a category and let $I=\emptyset$ be the empty category. There is a unique functor $X\taking\emptyset\to\mcC$.
\sexc What is the slice category $\mcC_{/X}$?
\next What is the limit of $X$?
\endsexc
\end{exercise}

\begin{example}

Often one wants to take the limit of some strange diagram. We have now constructed the limit for any shape diagram. For example, if we want to take the product of more than two, say $n$, objects, we could use the diagram shape $I=\ul{n}$ whose cone is $\Star_n$ from Example \ref{ex:stars}.

\end{example}

\begin{example}\label{ex:product version of nat trans}\index{natural transformation!as functor}

We have now defined limits in any category, so we have defined limits in $\Cat$. Let $[1]$ denote the category depicted 
$$\xymatrix{\LMO{0}\ar[r]^e&\LMO{1}}$$
and let $\mcC$ be a category. Naming two categories is the same thing as naming a functor $X\taking\ul{2}\to\Cat$, so we now have such a functor. Its limit is denoted $[1]\times\mcC$. It turns out that $[1]\times\mcC$ looks like a ``$\mcC$-shaped prism". It consists of two panes, front and back say, each having the precise shape as $\mcC$ (same objects, same arrows, same composition), and morphisms from the front pane to the back pane making all front-to-back squares commute. For example, if $\mcC$ looked was the category generated by the schema to the left below, then $\mcC\times[1]$ would be the category generated by the schema to the right below:
$$
\xymatrix{
\LMO{A}\ar[rr]^f\ar[dd]_g&&\LMO{B}\ar[dd]^h\\\\\LMO{C}&&\LMO{D}
}
\hspace{1in}
\xymatrix@=20pt{
&\LMO{A1}\ar[rr]^{f1}\ar'[d][dd]_(.6){g1}&&\LMO{B1}\ar[dd]^{h1}\\
\LMO{A0}\ar[ur]^{Ae}\ar[rr]^(.6){f0}\ar[dd]_{g0}&&\LMO{B0}\ar[ur]^{Be}\ar[dd]^{h0}\\
&\LMO{C1}&&\LMO{D1}\\
\LMO{C0}\ar[ur]_{Ce}&&\LMO{D0}\ar[ur]_{De}&
}
$$

It turns out that a natural transformation $\alpha\taking F\to G$ between functors $F,G\taking\mcC\to\mcD$ is the same thing as a functor $\mcC\times[1]\to\mcD$ such that the front pane is sent via $F$ and the back pane is sent via $G$. The components are captured by the front-to-back morphisms, and the naturality is captured by the commutativity of the front-to-back squares in $\mcC\times[1]$.

\end{example}

\begin{remark}\index{relative set!as slice category}

Recall in Section \ref{sec:relative sets} we described relative sets. In fact, Definition \ref{def:relative sets} basically defines a category of relative sets over any fixed set $B$. Let $\ul{1}$ denote the discrete category on one object, and note that providing a functor $\ul{1}\to\Set$ is the same as simply providing a set, so consider $B\taking\ul{1}\to\Set$. Then the slice category $\Set_{/B}$, as defined in Definition \ref{def:slice and limit} is precisely the category of relative sets over $B$: it has the same objects and morphisms as was described in Definition \ref{def:relative sets}.

\end{remark}


\subsubsection{Definition of colimit}

The definition of colimits is appropriately ``dual" to the definition of limits. Instead of looking at left cones, we look at right cones; instead of being interested in terminal objects, we are interested in initial objects.

\begin{definition}\label{def:coslice and colimit}\index{coslice}\index{category!coslice}\index{colimit}

Let $\mcC$ be a category, let $I$ be a category; let $I\rcone$ be the right cone on $I$, and let $i\taking I\to I\rcone$ be the inclusion. Suppose that $X\taking I\to\mcC$ is an $I$-shaped diagram in $\mcC$. The {\em coslice category of $\mcC$ over $X$} denoted $\mcC_{X/}$\index{a symbol!$\mcC_{X/}$} is the category whose objects and morphisms are as follows:
\begin{align*}
\Ob(\mcC_{X/})&=\{S\taking I\rcone\to\mcC\|S\circ i=X\}\\
\Hom_{\mcC_{X/}}(S,S')&=\{\alpha\taking S\to S'\|\alpha\circ i=\id_X\}.
\end{align*}

A {\em colimit of $X$}, denoted $\colim_IX$ or $\colim X$,\index{a symbol!$\colim$} is an initial object in $\mcC_{X/}$.

\end{definition}

\paragraph{Pushouts}\index{pushout}

The relevant indexing category for pushouts is the span, $I=\ul{2}\lcone$ drawn as to the left below: 
$$
\parbox{1.2in}{\boxtitle{$I$}\fbox{\xymatrix{\LMO{1}&&\LMO{2}\\&\LMO{0}\ar[ul]\ar[ur]}}}
\hspace{1in}
\parbox{1.5in}{\boxtitle{$X\taking I\to\mcC$}\dbox{\xymatrix{\LMO{X_1}&&\LMO{X_2}\\&\LMO{X_0}\ar[ul]\ar[ur]}}}
$$
An $I$-shaped diagram in $\mcC$ is a functor $X\taking I\to\mcC$, which we might draw as to the right above (e.g. $X_0\in\Ob(\mcC)$).

An object $S$ in the coslice category $\mcC_{X/}$ is a commutative diagram $S\taking I\rcone\to\mcC$ over $X$, which looks like the box to the left below: 
$$
\parbox{1.5in}{\boxtitle{$S\in\Ob(\mcC_{X/})$}\dbox{\xymatrix{&S_{\infty}\\\LMO{X_1}\ar[ru]&&\LMO{X_2}\ar[lu]\\&\LMO{X_0}\ar[ul]\ar[ur]}}}
\hspace{1in}
\parbox{1.5in}{\boxtitle{$f\taking S\to S'$}\dbox{\xymatrix{&S'_{\infty}\\&S_{\infty}\ar[u]_f\\
\LMO{X_1}\ar[ur]\ar@/^1pc/[ruu]&&\LMO{X_2}\ar[lu]\ar@/_1pc/[luu]\\\
&\LMO{X_0}\ar[ur]\ar[ul]}}}
$$
A morphism in $\mcC_{X/}$ is drawn in the dashbox to the right above. An initial object in $\mcC_{X/}$ is precisely the ``gateway" we want; i.e. the colimit of $X$ is the pushout, $X_1\sqcup_{X_0}X_2$.

\begin{exercise}
Let $I$ be the graph indexing category (see \ref{dia:graph index}).
\sexc What is $I\rcone$?
\next Now let $G\taking I\to\Set$ be the graph from Example \ref{ex:graph}. Give an example of an object in $\Set_{G/}$. 
\next We have already given a name to the colimit of $G\taking I\to\Set$; what is it?
\endsexc
\end{exercise}

\begin{exercise}\label{exc:initial as colimit}
Let $\mcC$ be a category and let $I=\emptyset$ be the empty category. There is a unique functor $X\taking\emptyset\to\mcC$.
\sexc What is the coslice category $\mcC_{X/}$?
\next What is the colimit of $X$ (assuming it exists)?
\endsexc
\end{exercise}

\begin{example}[Cone as colimit]

We have now defined colimits in any category, so we have defined colimits in $\Cat$. Let $\mcC$ be a category and recall from Example \ref{ex:product version of nat trans} the category $\mcC\times[1]$. The inclusion of the front pane is a functor $i_0\taking\mcC\to\mcC\times[1]$ (similarly, the inclusion of the back pane is a functor $i_1\taking\mcC\to\mcC\times[1]$). Finally let $t\taking\mcC\to\ul{1}$ be the unique functor to the terminal category (see Exercise \ref{exc:term cat}). We now have a diagram in $\Cat$ of the form 
$$\xymatrix{\mcC\ar[r]^{i_0}\ar[d]_{t}&\mcC\times[1]\\\ul{1}}$$
The colimit (i.e. the pushout) of this diagram in $\Cat$ slurps down the entire front pane of $\mcC\times[1]$ to a point, and the resulting category is isomorphic to $\mcC\lcone$. Figure \ref{fig:left cone} is a drawing of this phenomenon.
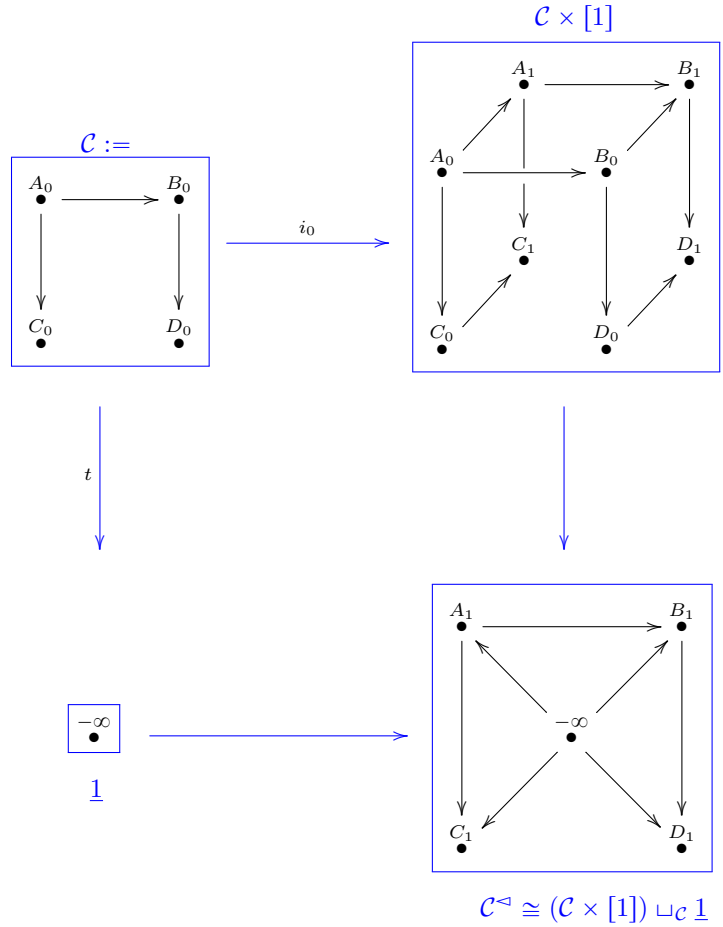
\begin{figure}[H]
$$
\parbox{1in}{~\vspace{.6in}\boxtitle{\color{blue}{$\mcC:=$}}\cfbox{blue}{\parbox{.7in}{\xymatrix@=15pt{
\LMO{A_0}\ar[rr]\ar[dd]&&\LMO{B_0}\ar[dd]\\\\\LMO{C_0}&&\LMO{D_0}
}}}}
\parbox{1.1in}{~\vspace{.6in}\\\xymatrix{~\ar@[blue][rr]^{i_0}&\hspace{.1in}&~}}
\parbox{1.7in}{\boxtitle{\color{blue}{$\mcC\times[1]$}}\cfbox{blue}{\parbox{1.4in}{\xymatrix@=15pt{
&\LMO{A_1}\ar[rr]\ar'[d][dd]&&\LMO{B_1}\ar[dd]\\
\LMO{A_0}\ar[ur]\ar[rr]\ar[dd]&&\LMO{B_0}\ar[ur]\ar[dd]\\
&\LMO{C_1}&&\LMO{D_1}\\
\LMO{C_0}\ar[ur]&&\LMO{D_0}\ar[ur]&
}}}}
$$
$$
\hspace{-.5in}\xymatrix{~\ar@[blue][dd]_t\\\\~}\hspace{2.3in}\xymatrix{~\ar@[blue][dd]\\\\~}
$$
$$
\hspace{.4in}\parbox{.3in}{\cfbox{blue}{$\LMO{-\infty}$}\begin{center}\color{blue}{$\ul{1}$}\end{center}}
\parbox{1.6in}{\xymatrix{~\ar@[blue][rr]&\hspace{.6in}&~}\vspace{.15in}~}
\parbox{1.7in}{\cfbox{blue}{\parbox{1.7in}{
\xymatrix{
\LMO{A_1}\ar[rr]\ar[dd]&&\LMO{B_1}\ar[dd]\\
&\LMO{-\infty}\ar[ur]\ar[dr]\ar[ul]\ar[dl]\\
\LMO{C_1}&&\LMO{D_1}}}}\begin{center}\color{blue}{$\mcC\lcone\iso(\mcC\times[1])\sqcup_{\mcC}\ul{1}$}\end{center}}
$$
\caption{Let $\mcC$ be the category drawn in the upper left corner. The left cone $\mcC\lcone$ on $\mcC$ is obtained as a pushout in $\Cat$. We first make a prism $\mcC\times[1]$, and then identify the front pane with a point.}
\label{fig:left cone}
\end{figure}
(Similarly, the pushout of the analogous diagram for $i_1$ would give $\mcC\rcone$.)

\end{example}

\begin{example}\label{ex:pushout in Top}\index{pushout!of topological spaces}

Consider the category $\Top$ of topological spaces. The (hollow) circle is a topological space which people often denote $S^1$ (for ``1-dimensional sphere"). The filled-in circle, also called a 2-dimensional disk, is denoted $D^2$. The inclusion of the circle into the disk is continuous so we have a morphism in $\Top$ of the form $i\taking S^1\to D^2$. The terminal object in $\Top$ is the one-point space $\singleton$, and so there is a unique morphism $t\taking S^1\to\singleton$. The pushout of the diagram $D^2\From{i}S^1\To{t}\singleton$ is isomorphic to the 2-dimensional sphere (the exterior of a tennis ball), $S^2$. The reason is that we have slurped the entire bounding circle to a point, and the category of topological spaces has the right morphisms to ensure that the resulting space really is a sphere. 

\end{example}

\begin{application}\index{subway}

Consider the symmetric graph $G_n$ consisting of a chain of $n$ vertices, 
$$\xymatrix{\LMO{1}\ar@{-}[r]&\LMO{2}\ar@{-}[r]&\cdots\ar@{-}[r]&\LMO{n}}$$
Think of this as modeling a subway line. There are $n$-many graph homomorphisms $G_1\to G_n$ given by the various vertices. One can create \href{http://en.wikipedia.org/wiki/Transit_map}{\text transit maps} using colimits. For example, the colimit of the diagram to the left is the symmetric graph drawn to the right below.
$$
\colim\left(\parbox{1.2in}{
\xymatrix{
G_1\ar[r]^4\ar[d]_4&\color{orange}{G_7}&G_1\ar[l]_6\ar[d]^1\\
\color{purple}{G_5}&&\color{ForestGreen}{G_3}\\
G_1\ar[u]^2\ar[r]_3&\color{blue}{G_7}&G_1\ar[u]_2\ar[l]^5
}}\right)
\hspace{.3in}\tn{can be drawn}\hspace{-.4in}
\parbox{3in}{\xymatrix@=12pt{
&&&\bullet_{\color{purple}{5}}\ar@{-}[d]\\
\LMO{{\color{orange}{1}}}\ar@{-}[r]&\LMO{{\color{orange}{2}}}\ar@{-}[r]&\LMO{{\color{orange}{3}}}\ar@{-}[r]&\LMO{{\color{orange}{4}}}_{\color{purple}{4}}\ar@{-}[r]\ar@{-}[d]&\LMO{{\color{orange}{5}}}\ar@{-}[r]&\LMO{{\color{orange}{6}}}_{\color{ForestGreen}{1}}\ar@{-}[r]\ar@{-}[dd]&\LMO{{\color{orange}{7}}}\\
&&&\bullet_{\color{purple}{3}}\ar@{-}[d]\\
&\LMO{{\color{blue}{1}}}\ar@{-}[r]&\LMO{{\color{blue}{2}}}\ar@{-}[r]&\LMO{{\color{blue}{3}}}_{\color{purple}{2}}\ar@{-}[r]\ar@{-}[d]&\LMO{{\color{blue}{4}}}\ar@{-}[r]&\LMO{{\color{blue}{5}}}_{\color{ForestGreen}{2}}\ar@{-}[d]\ar@{-}[r]&\LMO{{\color{blue}{6}}}\ar@{-}[r]&\LMO{{\color{blue}{7}}}\\
&&&\bullet_{\color{purple}{1}}&&\bullet_{\color{ForestGreen}{3}}}}
$$ 

\end{application}


\section{Other notions in $\Cat$}

In this section we discuss some leftover notions about categories. For example in Section \ref{sec:opposite} we explain a kind of duality for categories, in which arrows are flipped. For example reversing the order in a preorder is an example of this duality, as is the similarity between limits and colimits. In Section \ref{sec:grothendieck construction} we discuss the so-called Grothendieck construction which in some sense graphs functors, and we show that it is useful for transforming databases into the kind of format (RDF) used in scraping data off webpages. We define a general construction for creating categories in Section \ref{sec:comma}. Finally, in Section \ref{sec:arithmetic of categories} we show that precisely the same arithmetic statements that held for sets in Section \ref{sec:arithmetic of sets} hold for categories. 


\subsection{Opposite categories}\label{sec:opposite}\index{functor!contravariant}\index{functor!covariant}

People used to discuss two different kinds of functors between categories: the so-called {\em covariant functors} and the so-called {\em contravariant functors}. Covariant functors are what we have been calling functors. The reader may have come across the idea of contravariance when considering Exercise \ref{exc:points and opens in Top}.\footnote{Similarly, see Exercise \ref{exc:juris 2}.} There we saw that a continuous mapping of topological spaces $f\taking X\to Y$ does not induce a morphism of orders on their open sets $\Op(X)\to\Op(Y)$; that is not required by the notion of continuity. Instead, a morphism of topological spaces $f\taking X\to Y$ induces a morphism of orders $\Op(Y)\to\Op(X)$, going backwards. So we do not have a functor $\Top\to\PrO$ in this way, but it's quite close. One used to say that $\Op$ is a {\em contravariant functor} $\Top\to\PrO$.

As important and common as contravariance is, people found that keeping track of which functors were covariant and which were contravariant was a big hassle. Luckily, there is a simple work-around, which simplifies everything: the notion of opposite categories.

\begin{definition}\index{category!opposite}

Let $\mcC$ be a category. The {\em opposite category} of $\mcC$, denoted $\mcC\op$,\index{a symbol!$\mcC\op$} has the same objects as $\mcC$, i.e. $\Ob(\mcC\op)=\Ob(\mcC)$, and for any two objects $c,c'$, one defines
$$\Hom_{\mcC\op}(c,c'):=\Hom_\mcC(c',c).$$

\end{definition}

\begin{example}

If $n\in\NN$ is a natural number and $\ul{n}$ the corresponding discrete category, then $\ul{n}\op=\ul{n}$. Recall  the span category $I=\ul{2}\lcone$ from Definition \ref{def:products in a cat}. Its opposite is the cospan category $I\op=\ul{2}\rcone$, from Definition \ref{def:coproducts in a cat}.

\end{example}

\begin{exercise}
Let $\mcC$ be the category from Example \ref{ex:category of spans}. Draw $\mcC\op$.
\end{exercise}

\begin{lemma}

Let $\mcC$ and $\mcD$ be categories. One has $(\mcC\op)\op=\mcC$. Also we have $\Fun(\mcC,\mcD)\iso\Fun(\mcC\op,\mcD\op)$. This implies that a functor $\mcC\op\to\mcD$ can be identified with a functor $\mcC\to\mcD\op$.

\end{lemma}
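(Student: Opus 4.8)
The statement has three parts, and I would prove them in sequence, each building intuition for the next.

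The plan is to start with the easiest part: $(\mcC\op)\op=\mcC$. This is a direct unwinding of the definition. The objects satisfy $\Ob((\mcC\op)\op)=\Ob(\mcC\op)=\Ob(\mcC)$, so the object sets literally agree. For the hom-sets, I would compute for any $c,c'\in\Ob(\mcC)$ that
\[
\Hom_{(\mcC\op)\op}(c,c')=\Hom_{\mcC\op}(c',c)=\Hom_\mcC(c,c'),
\]
applying the definition of opposite category twice. I would also note that identities and the composition formula are recovered unchanged, since reversing arrows twice restores the original direction of composition. So the two categories are identical on the nose, not merely isomorphic.

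Next I would establish the isomorphism $\Fun(\mcC,\mcD)\iso\Fun(\mcC\op,\mcD\op)$ in $\Cat$. The key step is to describe a functor $\Phi\taking\Fun(\mcC,\mcD)\to\Fun(\mcC\op,\mcD\op)$ and an inverse. Given a functor $F\taking\mcC\to\mcD$, I would define $F\op\taking\mcC\op\to\mcD\op$ to agree with $F$ on objects and, on morphisms, to use the identification $\Hom_{\mcC\op}(c,c')=\Hom_\mcC(c',c)$ followed by $F$ followed by $\Hom_\mcD(Fc',Fc)=\Hom_{\mcD\op}(Fc,Fc')$. I would check that $F\op$ preserves identities and composition (composition in both opposite categories is reversed, so the reversals cancel and functoriality is inherited from $F$). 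For morphisms in $\Fun(\mcC,\mcD)$, i.e. natural transformations $\alpha\taking F\to G$, I would define a natural transformation $\alpha\op\taking G\op\to F\op$ with components $(\alpha\op)_c:=\alpha_c$, now viewed as a morphism $G\op(c)\to F\op(c)$ in $\mcD\op$; the naturality square for $\alpha\op$ is exactly the naturality square for $\alpha$ with all four arrows reinterpreted in the opposite categories, so it commutes. I would then observe that applying this construction twice returns the original data (by the first part), which shows $\Phi$ is an isomorphism of categories with inverse given by the same recipe applied to $\mcD\op$ and using $(\mcD\op)\op=\mcD$.

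Finally, the last sentence is a quick corollary: setting $\mcD$ to be an arbitrary category and replacing $\mcD$ by $\mcD\op$ in the isomorphism just proved, I get
\[
\Fun(\mcC\op,\mcD)\iso\Fun((\mcC\op)\op,\mcD\op)=\Fun(\mcC,\mcD\op),
\]
where the equality uses the first part. This exhibits the desired identification between functors $\mcC\op\to\mcD$ and functors $\mcC\to\mcD\op$. The main obstacle, such as it is, will be bookkeeping: carefully tracking which direction each morphism points as it passes through the chain of canonical identifications, and making sure that the contravariance (reversal of composition order) introduced by each $\op$ cancels correctly so that functoriality and naturality are genuinely preserved rather than merely plausible. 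Everything else is formal, and I would present the two round-trip checks cleanly rather than grinding through every component equation.
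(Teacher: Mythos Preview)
Your proposal is correct and is exactly the unwinding of definitions that the paper has in mind; the paper's own proof is the single sentence ``This follows straightforwardly from the definitions,'' and you have simply written out what that entails. One small point worth flagging: you correctly observe that $\alpha\op$ runs from $G\op$ to $F\op$, so your $\Phi$ is contravariant on morphisms, which strictly speaking yields $\Fun(\mcC,\mcD)\op\iso\Fun(\mcC\op,\mcD\op)$ rather than the isomorphism as stated; this does not affect the object-level identification needed for the final sentence, and the paper is being informal on this point as well.
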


\begin{proof}

This follows straightforwardly from the definitions.

\end{proof}

\begin{exercise}
In Exercises \ref{exc:points and opens in Top}, \ref{exc:juris 1}, and \ref{exc:juris 2} there were questions about whether a certain function $\Ob(\mcC)\to\Ob(\mcD)$ extended to a functor $\mcC\to\mcD$. In each case, see if the proposed function would extend to a ``contravariant functor" i.e. to a functor $\mcC\op\to\mcD$.
\end{exercise}

\begin{example}[Simplicial sets]\label{ex:simplicial set}\index{simplicial set}

Recall from Example \ref{ex:finite linear orders} the category $\bD$ of linear orders $[n]$. For example, $[1]$ is the linear order $0\leq 1$ and $[2]$ is the linear order $0\leq 1\leq2$. Both $[1]$ and $[2]$ are objects of $\bD$.\index{a category!$\bD$} There are 6 morphisms from $[1]$ to $[2]$, which we could denote $$\Hom_{\bD}([1],[2])=\{(0,0), (0,1), (0,2), (1,1), (1,2), (2,2)\}.$$

It may seem strange, but the category $\bD\op$ turns out to be quite useful in algebraic topology. It is the indexing category for a combinatorial approach to the homotopy theory of spaces. That is, we can represent something like the category of spaces and continuous maps using the functor category $\sSet:=\Fun(\bD\op,\Set)$,\index{a category!$\sSet$} which is called the {\em category of simplicial sets}. 

This may seem very complicated compared to something we did earlier, namely simplicial complexes. But simplicial sets have excellent formal properties that simplicial complexes do not. We will not go further with this here, but through the work of Dan Kan, Andr\'{e} Joyal, Jacob Lurie, and many others, simplicial sets have allowed category theory to pierce deeply into the realm of topology and vice versa.

\end{example}


\subsection{Grothendieck construction}\label{sec:grothendieck construction}\index{Grothendieck!construction}

Let $\mcC$ be a database schema (or category) and let $J\taking\mcC\to\Set$ be an instance. We have been drawing this in table form, but there is another standard way of laying out the data in $J$, called the \href{http://en.wikipedia.org/wiki/Resource_Description_Framework}{\em resource descriptive framework} or RDF. Developed for the web, RDF is a useful format when one does not have a schema in hand, e.g. when scraping information off of a website, one does not know what schema will be best. In these cases, information is stored in so-called RDF triples, which are of the form $$\la\tn{Subject, Predicate, Object}\ra$$\index{RDF}

For example, one might see something like 
\begin{align}\label{dia:Obama yells at congress}
\begin{tabular}{| lll |}
\bhline
{\bf Subject}&{\bf Predicate}&{\bf Object}\\\bbhline
A01&occurredOn&D13114\\
A01&performedBy&P44\\
A01&actionDescription&Told congress to raise debt ceiling\\
D13114&hasYear&2013\\
D13114&hasMonth&January\\
D13114&hasDay&14\\
P44&FirstName&Barack\\
P44&LastName&Obama\\\bhline
\end{tabular}
\end{align}

Category-theoretically, it is quite simple to convert a database instance $J\taking\mcC\to\Set$ into an RDF triple store. To do so, we use the {\em Grothendieck construction}, which is more aptly named the category of elements construction, defined below.\footnote{Apparently, Alexander Grothendieck\index{Grothendieck} did not invent this construction, it was discussed prior to Grothendieck's use of it, e.g. by Mac Lane. But more to the point, the term Grothendieck construction is not grammatically suited in the sense that both the following are awkward in English: ``the Grothendieck construction of $J$ is ..." (awkward because $J$ is not being constructed but used in a construction) and ``the Grothendieck construct for $J$ is..." (awkward because it just is). The term {\em category of elements} is more descriptive and easier to use grammatically.}

\begin{definition}\label{def:grothendieck}\index{category!of elements}

Let $\mcC$ be a category and let $J\taking\mcC\to\Set$ be a functor. The {\em category of elements of $J$}, denoted $\int_\mcC J$,\index{a symbol!$\int$} is defined as follows:
\begin{align*}
\Ob(\varint_\mcC J):=&\;\;\{(C,x)\|C\in\Ob(\mcC), x\in J(C)\}.\\
\Hom_{\int_\mcC J}((C,x),(C',x')):=&\;\;\{f\taking C\to C'\|J(f)(x)=x'\}.
\end{align*}

There is a natural functor $\pi_J\taking\int_\mcC J\too\mcC$. It sends each object $(C,x)\in\Ob(\int_\mcC J)$ to the object $C\in\Ob(\mcC)$. And it sends each morphism $f\taking (C,x)\to (C',x')$ to the morphism $f\taking C\to C'$. We call $\pi_J$ the {\em projection functor}.

\end{definition}

\begin{example}

Let $A$ be a set, and consider it as a discrete category. We saw in Exercise \ref{exc:indexed sets as functors} that a functor $S\taking A\to\Set$ is the same thing as an $A$-indexed set, as discussed in Section \ref{sec:indexed sets}. We will follow Definition \ref{def:indexed sets} and for each $a\in A$ write $S_a:=S(a)$.

What is the category of elements of a functor $S\taking A\to\Set$? The objects of $\int_AS$ are pairs $(a,s)$ where $a\in A$ and $s\in S(a)$. Since $A$ has nothing but identity morphisms, $\int_AS$ has nothing but identity morphisms; i.e. it is the discrete category on a set. In fact that set is the disjoint union $$\varint_AS=\bigsqcup_{a\in A}S_a.$$ The functor $\pi_S\taking\int_AS\to A$ sends each element in $S_a$ to the element $a\in A$. 

One can see this as a kind of histogram. For example, let $A=\{{\tt BOS, NYC, LA, DC}\}$ and let $S\taking A\to\Set$ assign 
\begin{align*}
S_{{\tt BOS}}&=\{{\tt Abby, Bob, Casandra}\},\\
S_{\tt NYC}&=\emptyset,\\
S_{\tt LA}&=\{{\tt John, Jim}\}, \tn{and}\\
S_{\tt DC}&=\{{\tt Abby,Carla}\}.
\end{align*}
Then the category of elements of $S$ would look like the (discrete) category at the top: 
\begin{align}\label{dia:elements for cities}
\varint_AS=\parbox{2.7in}{\fbox{\xymatrix@=10pt{
\LTO{(BOS,Abby)}\\
\LTO{(BOS,Bob)}&\hspace{.3in}&\LTO{(LA,John)}&\LTO{(DC,Abby)}\\
\LTO{(BOS,Casandra)}&&\LTO{(LA,Jim)}&\LTO{(DC,Carla)}
}}}
\end{align}
$$
\hsp\xymatrix{~\ar[d]_{\pi_S}\\~}
$$
$$
\;\;A=\fbox{\xymatrix@=33pt{\hspace{.25in}\LTO{BOS}&\LTO{NYC}&\LTO{LA}&\LTO{DC}\hspace{.2in}}}
$$

We also see that the category of elements construction has converted an $A$-indexed set into a relative set over $A$, as in Definition \ref{def:relative sets}.

\end{example}

The above example does not show at all how the Grothendieck construction transforms a database instance into an RDF triple store. The reason is that our database schema was $A$, a discrete category that specifies no connections between data (it simply collects the data into bins). 
So lets examine a more interesting database schema and instance. This is taken from \cite{Sp2}.

\begin{application}\index{RDF!as category of elements}

Consider the schema below, which we first encountered in Example \ref{ex:department store 3}:
\begin{align}\label{dia:basic cat}\mcC:=\MainCatLarge{}\end{align}
And consider the instance $J\taking\mcC\to\Set$, which we first encountered in (\ref{dia:instance on maincat}) and (\ref{dia:instance on maincat 2})

\begin{align*}
&\footnotesize
\begin{tabular}{| l || l | l | l | l |}\bhline
\multicolumn{5}{| c |}{{\tt Employee}}\\\bhline 
{\bf ID}&{\bf first}&{\bf last}&{\bf manager}&{\bf worksIn}\\\bbhline 101&David&Hilbert&103&q10\\\hline 102&Bertrand&Russell&102&x02\\\hline 103&Emmy&Noether&103&q10\\\bhline
\end{tabular}&\hsp\footnotesize
\begin{tabular}{| l || l | l |}\bhline
\multicolumn{3}{| c |}{{\tt Department}}\\
\bhline {\bf ID}&{\bf name}&{\bf secretary}\\\bbhline q10&Sales&101\\\hline x02&Production&102\\\bhline
\end{tabular}
\end{align*}\vspace{.1in}

\begin{align*}\footnotesize
\begin{tabular}{| l ||}\bhline
\multicolumn{1}{| c |}{{\tt FirstNameString}}\\\bhline
{\bf ID}\\\bbhline Alan\\\hline Bertrand\\\hline Carl\\\hline David\\\hline Emmy\\\bhline
\end{tabular}\hspace{.6in}\footnotesize
\begin{tabular}{| l ||}\bhline
\multicolumn{1}{| c |}{{\tt LastNameString}}\\\bhline
{\bf ID}\\\bbhline Arden\\\hline Hilbert\\\hline Jones\\\hline Noether\\\hline Russell\\\bhline
\end{tabular}\hspace{.6in}\footnotesize
\begin{tabular}{| l ||}\bhline
\multicolumn{1}{| c |}{{\tt DepartmentNameString}}\\\bhline
{\bf ID}\\\bbhline Marketing\\\hline Production\\\hline Sales\\\bhline
\end{tabular}
\end{align*}  

The category of elements of $J\taking\mcC\to\Set$ looks like this:

\begin{align}\label{dia:Grothendieck}
\hspace{0in}\varint_\mcC J=\parbox{3.9in}{
\fbox{
\xymatrix@=.1pt{
&\LTO{101}\ar@/_2.5pc/[ddddl]+<-3pt,3pt>_{\tin{first}}\ar@/_1.5pc/[ddrrr]+<-4pt,0pt>^{\tin{last}}\ar@/_1pc/[rr]+<-3pt,-3pt>_-{\tin{manager}}\ar@/^1.8pc/[rrrrrr]^{\tin{worksIn}}&\LTO{\;\;102}&\LTO{103}&&&&\LTO{q10}&\LTO{x02}\ar@/^1.6pc/[llllll]+<4pt,-2pt>_{\tin{secretary}}\ar@/^1.2pc/[lddd]+<6pt,2pt>^(.4){\tin{name}}\\\parbox{.1in}{~\\\vspace{.6in}~}\\\LTO{Alan}&&&&\LTO{Hilbert}&\hspace{.4in}&\hspace{.4in}&\LTO{Production}\\\LTO{\;\;Bertrand}&&&&\LTO{Russell}&&&\LTO{\hspace{-.1in}Sales}\\\LTO{\hspace{.2in}David}&&&&\LTO{Noether}&&&\LTO{Marketing}\\\LTO{Emmy}&&&&\LTO{Arden}\\\LTO{Carl}&&&&\LTO{Jones}}}\\
\xymatrix{\hspace{1.6in}&\ar[d]^{\pi_J}\\&~}}\\
\nonumber \mcC=\parbox{3.9in}{\hspace{.1in}\fbox{
			\xymatrix@=9pt{&\LTO{Employee}\ar@<.5ex>[rrrrr]^{\tn{worksIn}}\ar@(l,u)[]+<5pt,10pt>^{\tn{manager}}\ar[dddl]_{\tn{first}}\ar[dddr]^{\tn{last}}&&&\hspace{0in}&&\LTO{Department}\ar@<.5ex>[lllll]^{\tn{secretary}}\ar[ddd]^{\tn{name}}\\\\\\\LTO{\parbox{.3in}{\tt \scriptsize FirstNameString}}&&\LTO{LastNameString}&~&~&~&\LTO{DepartmentNameString}
			}}}
\end{align}~\\

In the above drawing (\ref{dia:Grothendieck}) of $\int_\mcC J$, we left out 10 arrows for ease of readability, for example, we left out an arrow $\LTO{102}\Too{\tt first}\LTO{Bertrand}$.

For the punchline, how do we see the category of elements $\int_\mcC J$ as an RDF triple store? For each arrow in $\int_\mcC J$, we take the triple consisting of the source vertex, the arrow name, and the target vertex. So our triple store would include triples such as $\la{\tt 102\;\; first\;\; Bertrand}\ra$ and $\la{\tt 101\;\; manager\;\; 103}\ra$.

\end{application}

\begin{exercise}
Come up with a schema and instance whose category of elements contains (at least) the data from (\ref{dia:Obama yells at congress}).
\end{exercise}

\begin{slogan}
The Grothendieck construction takes structured, boxed-up data and flattens it by throwing it all into one big space. The projection functor is then tasked with remembering which box each datum originally came from.
\end{slogan}

\begin{exercise}\label{exc:FSM as elements of monoid action}\index{finite state machine}
Recall from Section \ref{sec:FSMs} that a finite state machine is a free monoid $(\List(\Sigma),[\;],\plpl)$ acting on a set $X$. Recall also that we can consider a monoid as a category $\mcM$ with one object and a monoid action as a set-valued functor $F\taking\mcM\to\Set$, (see Section \ref{sec:monoids as cats}). In the case of Figure \ref{fig:fsa} the monoid in question is $\List(a,b)$, which can be drawn as the schema
$$\fbox{\xymatrix{\monOb\ar@(ul,dl)[]_(.3)a\ar@(ul,dl)[]_(.7){~}\ar@(ur,dr)[]^(.3)b\ar@(ur,dr)[]^(.7){~}}}$$
and the functor $F\taking\mcM\to\Set$ is recorded in an action table in Example \ref{ex:action table}. What is $\int_\mcM F$? How does it relate to the picture in Figure \ref{fig:fsa}?
\end{exercise}


\subsection{Full subcategory}\index{subcategory!full}

\begin{definition}\label{def:full subcategory}

Let $\mcC$ be a category and let $X\ss\Ob(\mcC)$ be a set of objects in $\mcC$. The {\em full subcategory of $\mcC$ spanned by $X$} is the category, which we denote by $\mcC_{\Ob=X}$, with objects $\Ob(\mcC_{\Ob=X}):=X$ and with morphisms $\Hom_{\mcC_{\Ob=X}}(x,x'):=\Hom_\mcC(x,x')$.

\end{definition}

\begin{example}

The following are examples of full subcategories. We will name them in the form ``$X$ inside of $Y$", and each time we mean that $X$ and $Y$ are names of categories, the category $X$ can be considered as a subcategory of the category $Y$ in some sense, and it is full. In other words, all morphisms in $Y$ ``count" as morphisms in $X$.
\begin{itemize}
\item Finite sets inside of sets, $\Fin\ss\Set$;
\item Finite sets of the form $\ul{n}$ inside of $\Fin$;
\item Linear orders of the form $[n]$ inside of all finite linear orders, $\bD\ss\FLin$;
\item Groups inside of monoids, $\Grp\ss\Mon$;
\item Monoids inside of categories, $\Mon\ss\Cat$;
\item Sets inside of graphs, $\Set\ss\Grph$;
\item Partial orders (resp. linear orders) inside of $\PrO$;
\item Discrete categories (resp. indiscrete categories) inside of $\Cat$;
\end{itemize}

\end{example}

\begin{remark}

A subcategory $\mcC\ss\mcD$ is (up to isomorphism) just a functor $i\taking\mcC\to\mcD$ that happens to be injective on objects and arrows. The subcategory is full if and only if $i$ is a full functor in the sense of Definition \ref{def:full faithful}.

\end{remark}

%
%

\begin{example}

Let $\mcC$ be a category, let $X\ss\Ob(\mcC)$ be a set of objects, and let $\mcC_{\Ob=X}$ denote the full subcategory of $\mcC$ spanned by $X$. We can realize this as a fiber product of categories. Indeed, recall that for any set, we can form the indiscrete category on that set; see Example \ref{ex:indiscrete cat equiv to terminal}. In fact, we have a functor $Ind\taking\Set\to\Cat$.\index{a functor!$Ind\taking\Set\to\Cat$} Thus our function $X\to\Ob(\mcC)$ can be converted into a functor between indiscrete categories $Ind(X)\to Ind(\Ob(\mcC))$. There is also a functor $\mcC\to Ind(\Ob(\mcC))$ sending each object to itself. Then the full subcategory of $\mcC$ spanned by $X$ is the fiber product of categories,
$$\xymatrix{\mcC_{\Ob=X}\ar[r]\ar[d]&\mcC\ar[d]\\Ind(X)\ar[r]&Ind(\Ob(\mcC))}$$

\end{example}

\begin{exercise}
Including all identities and all compositions, how many morphisms are there in the full subcategory of $\Set$ spanned by the objects $\{\ul{0},\ul{1},\ul{2}\}$? Write them out.
\end{exercise}


\subsection{Comma categories}\label{sec:comma}\index{category!comma}

Category theory includes a highly developed and interoperable catalogue of materials and production techniques. One such is the comma category.

\begin{definition}\label{def:comma category}

Let $\mcA,\mcB,$ and $\mcC$ be categories and let $F\taking\mcA\to\mcC$ and $G\taking\mcB\to\mcC$ be functors. The {\em comma category of $\mcC$ morphisms from $F$ to $G$}, denoted $(F\down_\mcC G)$ or simply $(F\down G)$,\index{a symbol!$(F\down G)$} is the category with objects $$\Ob(F\down G)=\{(a,b,f)\|a\in\Ob(\mcA), b\in\Ob(\mcB), f\taking F(a)\to G(b)\tn{ in }\mcC\}$$ and for any two objects $(a,b,f)$ and $(a',b',f')$ the set $\Hom_{(F\down G)}((a,b,f),(a',b',f'))$ of morphisms $(a,b,f)\too(a',b',f')$ is 
$$\{(q,r)\|q\taking a\to a'\tn{ in }\mcA,\;\; r\taking b\to b'\tn{ in }\mcB,\tn{ such that } f'\circ F(q)=G(r)\circ f\}.$$
In pictures,
$$\Hom_{(F\down G)}((a,b,f),(a',b',f')):=\left\{\parbox{2in}{\xymatrix{
a\ar[d]_q&F(a)\ar@{}[dr]|{\checkmark}\ar[r]^f\ar[d]_{F(q)}&G(b)\ar[d]^{G(r)}&b\ar[d]^r\\
a'&F(a')\ar[r]_{f'}&G(b')&b'
}}\right\}$$
We refer to the diagram $\mcA\To{F}\mcC\From{G}\mcB$ (in $\Cat$) as the {\em setup} for the comma category $(F\down G)$.

There is a canonical functor $(F\down G)\to\mcA$ called {\em left projecton}, sending $(a,b,f)$ to $a$, and a canonical functor $(F\down G)\to\mcB$ called {\em right projection}, sending $(a,b,f)$ to $b$. 

\end{definition}

A setup $\mcA\To{F}\mcC\From{G}\mcB$ is reversable; i.e. we can flip it to obtain $\mcB\To{G}\mcC\From{F}\mcA$. However, note that $(F\down G)$ is different than (i.e. almost never equivalent to) $(G\down F)$, unless every arrow in $\mcC$ is an isomorphism.

\begin{slogan}
When two categories $\mcA,\mcB$ can be interpreted in a common setting $\mcC$, the comma category integrates them by recording how to move from $\mcA$ to $\mcB$ inside $\mcC$.
\end{slogan}

\begin{example}

Let $\mcC$ be a category and $I\taking\mcC\to\Set$ a functor. In this example we show that the comma category construction captures the notion of taking the category of elements $\int_\mcC I$; see Definition \ref{def:grothendieck}. 

Consider the set $\ul{1}$, the category $Disc(\ul{1})$, and the functor $F\taking Disc(\ul{1})\to\Set$ sending the unique object to the set $\ul{1}$. We use the comma category setup $\ul{1}\Too{F}\Set\Fromm{I}\mcC$. There is an isomorphism of categories 
$$\int_\mcC I\iso (F\down I).$$
Indeed, an object in $(F\down I)$ is a triple $(a,b,f)$ where $a\in\Ob(\ul{1}), b\in\Ob(\mcC)$, and $f\taking F(a)\to I(b)$ is a morphism in $\Set$. There is only one object in $\ul{1}$, so this reduces to a pair $(b,f)$ where $b\in\Ob(\mcC)$ and $f\taking \singleton\to I(b)$. The set of functions $\singleton\to I(b)$ is isomorphic to $I(b)$, as we saw in Exercise \ref{exc:generator for set}. So we have reduced $\Ob(F\down I)$ to the set of pairs $(b,x)$ where $b\in\Ob(\mcC)$ and $x\in I(b)$; this is $\Ob(\int_\mcC I)$. Because there is only one function $\ul{1}\to\ul{1}$, a morphism $(b,x)\to(b',x')$ in $(F\down I)$ boils down to a morphism $r\taking b\to b'$ such that the diagram 
$$\xymatrix{\ul{1}\ar[r]^x\ar@{=}[d]&I(b)\ar[d]^{I(r)}\\\ul{1}\ar[r]_{x'}&I(b')}$$
commutes. But such diagrams are in one-to-one correspondence with the diagrams needed for morphisms in $\int_\mcC I$.

\end{example}

\begin{exercise}
Let $\mcC$ be a category and let $c,c'\in\Ob(\mcC)$ be objects. Consider them as functors $c,c'\taking\ul{1}\to\mcC$, and consider the setup $\ul{1}\Too{c}\mcC\Fromm{c'}\ul{1}$. What is the comma category $(c\down c')$?
\end{exercise}


\subsection{Arithmetic of categories}\label{sec:arithmetic of categories}

In Section \ref{sec:arithmetic of sets}, we summarized some of the properties of products, coproducts, and exponentials for sets, attempting to show that they lined up precisely with familiar arithmetic properties of natural numbers. Astoundingly, we can do the same for categories.

In the following proposition, we denote the coproduct of two categories $\mcA$ and $\mcB$ by the notation $\mcA+\mcB$ rather than $\mcA\sqcup\mcB$. We also denote the functor category $\Fun(\mcA,\mcB)$ by $\mcB^\mcA$. Finally, we use $\ul{0}$ and $\ul{1}$ to refer to the discrete category on 0 and on 1 object, respectively.

\begin{proposition}\label{prop:arithmetic of cats}\index{category!arithmetic of}

The following isomorphisms exist for any small categories $\mcA,\mcB,$ and $\mcC$.

\begin{itemize}
\item $\mcA+\ul{0}\iso \mcA$
\item $\mcA + \mcB\iso \mcB + \mcA$
\item $(\mcA + \mcB) + \mcC \iso \mcA + (\mcB + \mcC)$
\item $\mcA\times\ul{0}\iso\ul{0}$
\item $\mcA\times\ul{1}\iso \mcA$
\item $\mcA\times \mcB\iso \mcB\times \mcA$
\item $(\mcA\times\mcB)\times\mcC\iso\mcA\times(\mcB\times\mcC)$
\item $\mcA\times(\mcB+\mcC)\iso (\mcA\times \mcB)+(\mcA\times \mcC)$
\item $\mcA^{\ul{0}}\iso \ul{1}$
\item $\mcA^{\ul{1}}\iso \mcA$
\item $\ul{0}^\mcA\iso\ul{0}$,\;\; if $\mcA\neq\ul{0}$
\item $\ul{1}^\mcA\iso\ul{1}$
\item $\mcA^{\mcB+\mcC}\iso \mcA^\mcB\times \mcA^\mcC$
\item $(\mcA^\mcB)^\mcC\iso \mcA^{\mcB\times \mcC}$
\end{itemize}

\end{proposition}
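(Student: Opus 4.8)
The plan is to verify each of the fourteen isomorphisms separately, exploiting two observations: the eight ``additive and multiplicative'' identities transfer levelwise from the corresponding clauses of Proposition \ref{prop:arithmetic of sets}, while the six ``exponential'' identities are the categorical analogues of the currying results for sets. For the product and coproduct laws (the first eight bullets) I would use the explicit constructions. As in Example \ref{ex:[1]x[1]}, the product $\mcA\cross\mcB$ has $\Ob(\mcA\cross\mcB)=\Ob(\mcA)\cross\Ob(\mcB)$ and hom-sets that are products of hom-sets, while the coproduct $\mcA+\mcB$ is the disjoint union on objects and on morphisms (analogous to Example \ref{ex:coproduct of graphs}). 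Because both constructions act as the set-level product or coproduct separately on the object-set and on each hom-set, the identities $\mcA+\ul 0\iso\mcA$, commutativity and associativity of $+$, $\mcA\cross\ul 0\iso\ul 0$, $\mcA\cross\ul 1\iso\mcA$, commutativity and associativity of $\cross$, and the distributive law all follow by applying the matching clause of Proposition \ref{prop:arithmetic of sets} to the object-sets and to each hom-set, then checking that these set-bijections respect the (componentwise) composition formulas.

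The four degenerate exponential laws are pure bookkeeping. For $\mcA^{\ul 0}\iso\ul 1$, there is exactly one functor $\ul 0\to\mcA$ and only its identity natural transformation, so $\Fun(\ul 0,\mcA)$ has one object and one morphism. For $\ul 1^\mcA\iso\ul 1$, likewise there is a unique functor $\mcA\to\ul 1$ and a unique natural endotransformation of it. For $\ul 0^\mcA\iso\ul 0$ with $\mcA\neq\ul 0$, there are no functors $\mcA\to\ul 0$ at all, since an object of $\mcA$ would have nowhere to go, so $\Fun(\mcA,\ul 0)$ is empty. And $\mcA^{\ul 1}\iso\mcA$ is exactly the earlier observation that the objects of $\Fun(\ul 1,\mcA)$ are the objects of $\mcA$ and its morphisms are the morphisms of $\mcA$. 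For $\mcA^{\mcB+\mcC}\iso\mcA^\mcB\cross\mcA^\mcC$, I would use that a functor out of a coproduct $\mcB+\mcC$ is precisely a pair of functors, one on each summand (every object and morphism of $\mcB+\mcC$ lies in a unique summand), and a natural transformation out of $\mcB+\mcC$ is a pair of natural transformations; this yields mutually inverse functors both ways, paralleling Exercise \ref{exc:coprod}.

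The real work is the last law, $(\mcA^\mcB)^\mcC\iso\mcA^{\mcB\cross\mcC}$, the cartesian-closed ``currying'' identity. I would define a functor $\Phi\taking\mcA^{\mcB\cross\mcC}\to(\mcA^\mcB)^\mcC$ imitating the proof of Proposition \ref{prop:curry}, but now at two levels. Given $F\taking\mcB\cross\mcC\to\mcA$, let $\Phi(F)$ be the functor $\mcC\to\mcA^\mcB$ sending an object $c$ to the functor $F(-,c)\taking\mcB\to\mcA$ and a morphism $h\taking c\to c'$ to the natural transformation whose $b$-component is $F(\id_b,h)$; given a natural transformation $\alpha\taking F\to F'$, let $\Phi(\alpha)$ have, at each $c$, the natural transformation with $b$-component $\alpha_{(b,c)}$. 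The inverse $\Psi$ reads off $\Psi(G)(b,c):=G(c)(b)$ and, for $g\taking b\to b'$ and $h\taking c\to c'$, $\Psi(G)(g,h):=G(c')(g)\circ G(h)_b=G(h)_{b'}\circ G(c)(g)$.

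The main obstacle, where all the content sits, is the web of well-definedness and naturality checks. One must verify that $\Phi(F)$ really lands in functors and natural transformations---this forces the identity $F(g,\id_{c'})\circ F(\id_b,h)=F(g,h)=F(\id_{b'},h)\circ F(g,\id_c)$, which holds because $(g,\id_{c'})\circ(\id_b,h)=(g,h)=(\id_{b'},h)\circ(g,\id_c)$ in $\mcB\cross\mcC$---and that $\Phi(F)$ and $\Psi(G)$ preserve identities and composition; that $\Phi(\alpha)$ and $\Psi(\beta)$ satisfy their naturality squares; and finally that $\Phi$ and $\Psi$ are mutually inverse functors. Once these diagram chases are carried out, each routine but numerous, the isomorphism follows and the proposition is proved.
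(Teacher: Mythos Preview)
Your proposal is correct and far more detailed than what the paper actually does. The paper's entire proof of this proposition is a single sentence: ``These are standard results; see \cite{Mac}.'' You have instead sketched direct constructions for all fourteen isomorphisms, reducing the additive and multiplicative laws to their set-level analogues applied to object-sets and hom-sets, handling the degenerate exponentials by inspection, and spelling out the currying isomorphism $(\mcA^\mcB)^\mcC\iso\mcA^{\mcB\cross\mcC}$ explicitly. Your approach buys a self-contained argument accessible to the book's intended audience; the paper's approach buys brevity by outsourcing the verification to a standard reference. Both are legitimate, and your sketch is sound---the only thing to be aware of is that you have done considerably more than the author asked of himself here.
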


\begin{proof}

These are standard results; see \cite{Mac}.

\end{proof}


\chapter{Categories at work}

We have now set up an understanding of the basic notions of category theory: categories, functors, natural transformations, and universal properties. We have discussed many sources of examples: orders, graphs, monoids, and databases. We begin this chapter with the notion of {\em adjoint functors} (also known as {\em adjunctions}), which are like dictionaries that translate back and forth between different categories. 


\section{Adjoint functors}\index{adjoint functors}

Just above, in the introduction to this chapter, I said that adjoint functors are like dictionaries that translate back and forth between different categories. How far can we take that analogy?

In the common understanding of dictionaries, we assume that the two languages (say French and English) are equally expressive, and that a good dictionary will be an even exchange of ideas. But in category theory we often have two categories that are not on the same conceptual level. This is most clear in the case of so-called {\em free-forgetful adjunctions}. In Section \ref{sec:adjoints discuss and define} we will explore the sense in which each adjunction provides a dictionary between two categories that are not necessarily on an equal footing, so to speak.


\subsection{Discussion and definition}\label{sec:adjoints discuss and define}

Consider the category of monoids and the category of sets. A monoid $(M,e,\star)$ is a set with an identity element and a multiplication formula that is associative. A set is just a set. A dictionary between $\Mon$ and $\Set$ should not be required to set up an even exchange, but instead an exchange that is appropriate to the structures at hand. It will be in the form of two functors, one we'll denote by $L\taking\Set\to\Mon$, and one we'll denote by $R\taking\Mon\to\Set$. But to say what ``appropriate" means requires more work.\index{a functor!$\Set\to\Mon$}\index{a functor!$\Mon\to\Set$}

Let's bring it down to earth with an analogy.\index{adjunction!analogy: babies and adults} A one-year-old can make repeatable noises and an adult can make repeatable noises. One might say ``after all, talking is nothing but making repeatable noises." But the adult's repeatable noises are called words, they form sentences, and these sentences can cause nuclear wars. There is something more in adult language than there is simply in repeatable sounds. In the same vein, a tennis match can be viewed as physics, but you won't see the match. So we have something analogous to two categories here: ((repeated noises)) and ((meaningful words)). We are looking for adjoint functors going back and forth, serving as the appropriate sort of dictionary.

To translate baby talk into adult language we would make every repeated noise a kind of word, thereby granting it meaning. We don't know what a given repeated noise should mean, but we give it a slot in our conceptual space, always pondering ``I wonder what she means by Konnen.." On the other hand, to translate from meaningful words to repeatable noises is easy. We just hear the word as a repeated noise, which is how the baby probably hears it.

Adjoint functors often come in the form of ``free" and ``forgetful". Here we freely add Konnen to our conceptual space without having any idea how it adheres to the rest of the child's noises or feelings. But it doesn't act like a sound to us, it acts like a word; we don't know what it means but we figure it means something. Conversely, the translation going the other way is ``forgetful", forgetting the meaning of our words and just hearing them as sounds. The baby hears our words and accepts them as mere sounds, not knowing that there is anything extra to get.

Back to sets and monoids, the sets are like the babies from our story: they are simple objects full of unconnected dots. The monoids are like adults, forming words and performing actions. In the monoid, each element means something and combines with other elements in some way. There are lots of different sets and lots of different monoids, just as there are many babies and many adults, but there are patterns to the behavior of each kind and we put them in different categories.

Applying free functor $L\taking\Set\to\Mon$ to a set $X$ makes every element $x\in X$ a word, and these words can be strung together to form more complex words. (We discussed the free functor in Section \ref{sec:free monoid}.) Since a set such as $X$ carries no information about the meaning or structure of its various elements, the free monoid $F(X)$ does not relate different words in any way. To apply the forgetful functor $R\taking\Mon\to\Set$ to a monoid, even a structured one, is to simply forget that its elements are anything but mere elements of a set. It sends a monoid $(M,1,\star)$ to the set $M$. 

The analogy is complete. However, this is all just ideas. Let's give a definition, then return to our sets, monoids, sounds, and words.

\begin{definition}\label{def:adjunction}\index{adjunction}\index{functor!adjoint}

Let $\mcB$ and $\mcA$ be categories. \footnote{Throughout this definition, notice that $B$'s come before $A$'s, especially in (\ref{dia:adjunction isomorphism}), which might be confusing. It was a stylistic choice to match with the {\bf B}abies and {\bf A}dults discussion above and below this definition.}
An {\em adjunction between $\mcB$ and $\mcA$} is a pair of functors 
$$L\taking\mcB\to\mcA\hsp\tn{and}\hsp R\taking\mcA\to\mcB$$ 
together with a natural isomorphism
\footnote{The natural isomorphism $\alpha$ (see Lemma \ref{lemma:natural iso}) is between two functors $\mcB\op\times\mcA\to\Set$, namely the functor $(B,A)\mapsto\Hom_\mcA(L(B),A)$ and the functor $(B,A)\mapsto\Hom_\mcB(B,R(A))$.} 
whose component for any objects $A\in\Ob(\mcA)$ and $B\in\Ob(\mcB)$ is: 
\begin{align}\label{dia:adjunction isomorphism}
\alpha_{B,A}\taking\Hom_\mcA(L(B),A)\Too{\iso}\Hom_\mcB(B,R(A)).
\end{align}
This isomorphism is called the {\em adjunction isomorphism} for the $(L,R)$ adjunction, and for any morphism $f\taking L(B)\to A$ in $\mcA$, we refer to $\alpha_{B,A}(f)\taking B\to R(A)$ as {\em the adjunct} of $f$.
\footnote{Conversely, for any $g\taking B\to R(A)$ in $\mcB$ we refer to $\alpha_{B,A}^\m1(g)\taking L(B)\to A$ as {\em the adjunct} of $g$.}\index{adjunct}\index{adjunction!adjunction isomorphism}

The functor $L$ is called the {\em left adjoint} and the functor $R$ is called the {\em right adjoint}. We may say that {\em $L$ is the left adjoint of $R$} or that {\em $R$ is the right adjoint of $L$}. 
\footnote{The left adjoint does not have to be called $L$, nor does the right adjoint have to be called $R$, of course. This is suggestive.}
We often denote this setup by 
$$\adjoint{L}{\mcB}{\mcA}{R}$$

\end{definition}

\begin{proposition}\label{prop:free forgetful monoid}

Let $L\taking\Set\to\Mon$ be the functor sending $X\in\Ob(\Set)$ to the free monoid $L(X):=(\List(X),[\;],\plpl)$, as in Definition \ref{def:free monoid}. Let $R\taking\Mon\to\Set$ be the functor sending each monoid $\mcM:=(M,1,\star)$ to its underlying set $R(\mcM):=M$. Then $L$ is left adjoint to $R$.

\end{proposition}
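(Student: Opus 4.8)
The plan is to recognize that the mate isomorphism demanded by Definition \ref{def:adjunction} has already been constructed, essentially verbatim, in Proposition \ref{prop:free monoid}. First I would note that both halves of the proposed adjunction are honest functors: $R\taking\Mon\to\Set$ is the underlying-set functor (it sends $\mcM=(M,1,\star)$ to $M$ and a homomorphism to its underlying function), and $L\taking\Set\to\Mon$, $X\mapsto(\List(X),[\;],\plpl)$, was shown to be a functor in Example \ref{ex:free monoid}. So the only thing left to supply is, for each $X\in\Ob(\Set)$ and $\mcM\in\Ob(\Mon)$, a natural isomorphism
$$\alpha_{X,\mcM}\taking\Hom_\Mon(L(X),\mcM)\To{\iso}\Hom_\Set(X,R(\mcM)).$$
Since $L(X)$ is the free monoid $F(X)$ and $R(\mcM)=M$, this is exactly the bijection $\phi$ produced (with $G=X$) in Proposition \ref{prop:free monoid}, whose inverse $\psi$ was also exhibited there. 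I would simply set $\alpha_{X,\mcM}:=\phi$, recalling the explicit formulas $\phi(h)(x)=h([x])$ and $\psi(p)([x_1,\ldots,x_n])=p(x_1)\star\cdots\star p(x_n)$.

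The one ingredient Proposition \ref{prop:free monoid} does not spell out, and hence the heart of this proof, is that $\alpha$ is \emph{natural} in both variables; that is, that the family $\{\alpha_{X,\mcM}\}$ constitutes a natural transformation between the two hom-functors $\Hom_\Mon(L(-),=)$ and $\Hom_\Set(-,R(=))$ defined on $\Set\op\cross\Mon$. Concretely, I would check two commuting squares. For naturality in the monoid variable, given a homomorphism $g\taking\mcM\to\mcM'$, I must show that post-composing with $g$ upstairs agrees with post-composing by $R(g)$ downstairs under $\alpha$; for naturality in the set variable, given a function $f\taking X'\to X$, I must show that pre-composing with $L(f)$ upstairs agrees with pre-composing by $f$ downstairs.

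Both verifications are short diagram chases using the explicit formula for $\phi$, and I expect them to be the only genuine (though routine) work. For the monoid variable, starting from $h\taking L(X)\to\mcM$, one route gives $x\mapsto R(g)\bigl(h([x])\bigr)=g(h([x]))$, while the other gives $\alpha_{X,\mcM'}(g\circ h)(x)=(g\circ h)([x])=g(h([x]))$, so the two coincide. For the set variable, starting from $h\taking L(X)\to\mcM$, one route gives $x'\mapsto h([f(x')])$, and the other gives $\alpha_{X',\mcM}(h\circ L(f))(x')=(h\circ L(f))([x'])=h\bigl(L(f)([x'])\bigr)=h([f(x')])$, using that $L(f)=\List(f)$ sends $[x']$ to $[f(x')]$; again they coincide. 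Having exhibited $\alpha$ together with its inverse $\psi$ and checked naturality in each argument separately, I would conclude that $\alpha$ is the required natural isomorphism, and therefore that $L$ is left adjoint to $R$.
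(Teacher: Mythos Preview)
Your proof is correct and uses exactly the same explicit maps as the paper: $\alpha(h)(x)=h([x])$ and its inverse $\psi(p)([x_1,\ldots,x_n])=p(x_1)\star\cdots\star p(x_n)$. The paper simply reconstructs these maps in place (rather than citing Proposition \ref{prop:free monoid} as you do) and then declares the proof complete after checking they are mutually inverse, without verifying naturality; your proposal is in fact more thorough than the paper's own argument on that point.
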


\begin{proof}

If we can find a natural isomorphism of sets 
$$\alpha_{X,\mcM}\taking\Hom_\Mon(L(X),\mcM)\to\Hom_\Set(X,R(\mcM))$$
we will have succeeded in showing that these functors are adjoint.

Suppose given an element $f\in\Hom_\Mon(L(X),\mcM)$, i.e. a monoid homomorphism $f\taking\List(X)\to M$ (sending $[\;]$ to $1$ and list concatenation to $\star$). Then in particular we can apply $f$ to the singleton list $[x]$ for any $x\in X$. This gives a function $X\to M$ by $x\mapsto f([x])$, and this is $\alpha_{X,\mcM}(f)\taking X\to M=R(\mcM)$. We need only to supply an inverse $\beta_{X,\mcM}\taking\Hom_\Set(X,R(\mcM))\to\Hom_\Mon(L(X),\mcM).$

Suppose given an element $g\in\Hom_\Set(X,R(\mcM))$, i.e. a function $g\taking X\to M$. Then to any list $\ell=[x_1,x_2,\ldots,x_n]\in\List(X)$ we can assign $\beta_{X,\mcM}(\ell):=g(x_1)\star g(x_2)\star\cdots\star g(x_n)$ (if $\ell=[\;]$ is the empty list, assign $\beta_{X,\mcM}([\;]):=1$). We now have a function $\List(X)\to M$. It is a monoid homomorphism because it respects identity and composition. It is easy to check that $\beta$ and $\alpha$ are mutually inverse, completing the proof.

\end{proof}

\begin{example}

We need to ground our discussion in some concrete mathematics. In Proposition \ref{prop:free forgetful monoid} we provided our long-awaited adjunction between sets and monoids. A set $X$ gets transformed into a monoid by considering lists in $X$; a monoid $\mcM$ gets transformed into a set by forgetting the multiplication law. So we have a functor going one way and the other, 
$$L\taking\Set\to\Mon,\hspace{1in}R\taking\Mon\to\Set,$$
but an adjunction is more than that: it includes a guarantee about the relationship between these two functors. What is the relationship between $L$ and $R$? Consider an arbitrary monoid $\mcM=(M,1,*)$.

If I want to pick out 3 elements of the set $M$, that's the same thing as giving a function $\{a,b,c\}\to M$. But that function exists in the category of sets; in fact it is an element of $\Hom_\Set(\{a,b,c\},M)$. But since $M=R(\mcM)$ is the underlying set of our monoid, we can view the current paragraph in the light of our adjunction Equation (\ref{dia:adjunction isomorphism}) by saying it has been about the set
$$\Hom_\Set(\{a,b,c\},R(\mcM)).$$
This set classifies all the ways to pick three elements out of the underlying set of our monoid $\mcM$. It was constructed completely from within the category $\Set$.

Now we ask what Equation (\ref{dia:adjunction isomorphism}) means. The equation
$$\Hom_\Mon(L(\{a,b,c\}),\mcM)\iso\Hom_\Set(\{a,b,c\},R(\mcM)).$$
tells us that somehow we can answer the same question completely from within the category of monoids. In fact it tells us how to do so, namely as $\Hom_\Mon(\List(\{1,2,3\},\mcM)$.  Exercise \ref{ex:monoid adjunction} looks at how that should go. The answer is ``hidden" in the proof of Proposition \ref{prop:free forgetful monoid}.

\end{example}

\begin{exercise}\label{ex:monoid adjunction}
Let $X=\{a,b,c\}$ and let $\mcM=(\NN,1,*)$ be the multiplicative monoid of natural numbers (see Example \ref{ex:multiplication table}). Let $f\taking X\to\NN$ be the function given by $f(a)=7, f(b)=2, f(c)=2$, and let $\beta_{X,\mcM}\taking\Hom_\Set(X,R(\mcM))\to\Hom_\Mon(L(X),\mcM)$ be as in the proof of Proposition \ref{prop:free forgetful monoid}. What is $\beta_{X,\mcM}(f)([b,b,a,c])$?
\end{exercise}

Let us look once more at the adjunction between adults and babies. Using the notation of Definition \ref{def:adjunction} $\mcA$ is the ``adult category" of meaningful words and $\mcB$ is the ``baby category" of repeated noises. The left adjoint turns every repeated sound into a meaningful word (having ``free" meaning) and the right adjoint ``forgets" the meaning of any word and considers it merely as a sound. 

At the risk of taking this simple analogy too far, let's have a go at the heart of the issue: how to conceive of the isomorphism (\ref{dia:adjunction isomorphism}) of $\Hom$'s. Once we have freely given a slot to each of baby's repeated sounds, we try to find a mapping from the lexicon $L(B)$ of these new words to our own lexicon $A$ of meaningful words; these are mappings in the adult category $\mcA$ of the form $L(B)\to A.$ And (stretching it) the baby tries to find a mapping (which we might see as emulation) from her set $B$ of repeatable sounds to the set $R(A)$ of the sounds the adult seems to repeat. If there was a global system for making these transformations that would establish  (\ref{dia:adjunction isomorphism}) and hence the adjunction.

Note that the directionality of the adjunction makes a difference. If $L\taking\mcB\to\mcA$ is left adjoint to $R\taking\mcA\to\mcB$ we rarely have an isomorphism $\Hom_\mcA(A,L(B))\iso\Hom_\mcB(R(A),B)$. In the case of babies and adults, we see that it would make little sense to look for a mapping in the category of meaningful words from the adult lexicon to the wordifications of baby-sounds, because there is unlikely to be a good candidate for most of our words. That is, to which of our child's repeated noises would we assign the concept ``weekday"? 

Again, the above is simply an analogy, and almost certainly not formalizable. The next example shows mathematically the point we tried to make in the previous paragraph, that the directionality of an adjunction is not arbitrary.

\begin{example}\label{ex:adjunction monoids and sets}

Let $L\taking\Set\to\Mon$ and $R\taking\Mon\to\Set$ be the free and forgetful functors from Proposition \ref{prop:free forgetful monoid}. We know that $L$ is left adjoint to $R$; however $L$ is {\em not} right adjoint to $R$. In other words, we can show that the necessary natural isomorphism cannot exist.

Let $X=\{a,b\}$ and let $\mcM=(\{1\},1,!)$ be the trivial monoid. Then the necessary natural isomorphism would need to give us a bijection 
$$\Hom_\Mon(\mcM,L(X))\iso^?\Hom_\Set(\{1\},X).$$ 
But the left-hand side has one element, because $\mcM$ is the initial object in $\Mon$ (see Example \ref{ex:initial monoid terminal monoid}), whereas the right-hand side has two elements. Therefore no isomorphism can exist.

\end{example}

\begin{example}

Preorders have underlying sets, giving rise to a functor $U\taking\PrO\to\Set$.\index{a functor!$\PrO\to\Set$} The functor $U$ has both a left adjoint and a right adjoint. The left adjoint of $U$ is $D\taking\Set\to\PrO$,\index{a functor!$\Set\to\PrO$} sending a set $X$ to the discrete preorder on $X$ (the preorder with underlying set $X$, having the fewest possible $\leq$'s). The right adjoint of $U$ is $I\taking\Set\to\PrO$, sending a set $X$ to the indiscrete preorder on $X$ (the preorder with underlying set $X$, having the most possible $\leq$'s). See Example \ref{ex:discrete and indiscrete}. 

\end{example}

\begin{exercise}
Let $U\taking\Grph\to\Set$\index{a functor!$\Grph\to\Set$} denote the functor sending a graph to its underlying set of vertices. This functor has both a left and a right adjoint. 
\sexc What functor $\Set\to\Grph$ is the left adjoint of $U$?
\next What functor $\Set\to\Grph$ is the right adjoint of $U$?
\endsexc
\end{exercise}

\begin{example}\label{ex:other adjunctions}\index{currying!as adjunction}

Here are some other adjunctions:

\begin{itemize}
\item $\Ob\taking\Cat\to\Set$\index{a functor!$\Ob\taking\Cat\to\Set$} has a left adjoint $\Set\to\Cat$ given by the discrete category.
\item $\Ob\taking\Cat\to\Set$ has a right adjoint $\Set\to\Cat$ given by the indiscrete category.
\item The underlying graph functor $\Cat\to\Grph$\index{a functor!$\Cat\to\Grph$}\index{a functor!$\Grph\to\Cat$} has a left adjoint $\Grph\to\Cat$ given by the free category.
\item The functor $\PrO\to\Grph$, \index{a functor!$\PrO\to\Grph$} given by drawing edges for $\leq$'s, has a left adjoint given by existence of paths.
\item The forgetful functor from posets to preorders has a left adjoint given by quotient by isomorphism relation.
\item Given a set $A$, the functor $(-\times A)\taking\Set\to\Set$ has a right adjoint $\Hom(A,-)$ (this was called currying in Section \ref{sec:currying}). 
\end{itemize}

\end{example}

\begin{exercise}
Let $F\taking\mcC\to\mcD$ and $G\taking\mcD\to\mcC$ be mutually inverse equivalences of categories (see Definition \ref{def:equiv of cats}). Are they adjoint in one direction or the other?
\end{exercise}

\begin{exercise}
The discrete category functor $Disc\taking\Set\to\Cat$ has a left adjoint $p\taking\Cat\to\Set$. 
\sexc For an arbitrary object $X\in\Ob(\Set)$ and an arbitrary object $\mcC\in\Ob(\Cat)$, write down the adjunction isomorphism.
\next Let $\mcC$ be the free category on the graph $G$:
$$
G:=\parbox{2in}{\fbox{\xymatrix{\LMO{v}\ar[r]^f&\LMO{w}\ar@/_1pc/[r]_h\ar@/^1pc/[r]^g&\LMO{x}\\\LMO{y}\ar@(l,u)[]^i\ar@/^1pc/[r]^j&\LMO{z}\ar@/^1pc/[l]^k}}}
$$
and let $X=\{1,2,3\}$. How many elements does the set $\Hom_\Set(\mcC,Disc(X))$ have?
\next What can you do to an arbitrary category $\mcC$ to make a set $p(\mcC)$ such that the adjunction isomorphism holds? That is, how does the functor $p$ behave on objects?
\endsexc
\end{exercise}

The following proposition says that all adjoints to a given functor are isomorphic to each other. 

\begin{proposition}\label{prop:unicity of adjoints}

Let $\mcC$ and $\mcD$ be categories, let $F\taking\mcC\to\mcD$ be a functor, and let $G,G'\taking\mcD\to\mcC$ also be functors. If both $G$ and $G'$ are right adjoint (respectively left adjoint) to $F$ then there is a natural isomorphism $\phi\taking G\to G'$.

\end{proposition}

\begin{proof}

Suppose that both $G$ and $G'$ are right adjoint to $F$ (the case of $G$ and $G'$ being left adjoint is similarly proved). We first give a formula for the components of $\phi\taking G\to G'$ and its inverse $\psi\taking G'\to G$. Given an object $d\in\Ob(\mcD)$, we use $c=G(d)$ to obtain two natural isomorphisms, one from each adjunction: 
$$\Hom_\mcC(G(d),G(d))\iso\Hom_\mcD(F(G(d)),d)\iso\Hom_\mcC(G(d),G'(d)).$$
The identity component $\id_{G(d)}$ is then sent to some morphism $G(d)\to G'(d)$, which we take to be $\phi_d$. Similarly, we use $c'=G'(d)$ to obtain two natural isomorphisms, one from each adjunction:
$$\Hom_\mcC(G'(d),G'(d))\iso\Hom_\mcD(F(G'(d)),d)\iso\Hom_\mcC(G'(d),G(d)).$$
Again, the identity component $\id_{G'(d)}$ is sent to some morphism $G'(d)\to G(d)$, which we take to be $\psi_d$. The naturality of the isomorphisms implies that $\phi$ and $\psi$ are natural transformations, and it is straightforward to check that they are mutually inverse.

\end{proof}


\subsubsection{Quantifiers as adjoints}

One of the simplest but neatest places that adjoints show up is between preimages and the logical quantifiers $\exists$ and $\forall$, which we first discussed in Notation \ref{not:basic math notation}. \index{preimage}\index{a symbol!$\forall$}\index{a symbol!$\exists$} The setting in which to discuss this is that of sets and their power preorders. That is, if $X$ is a set then recall from Section \ref{sec:meets and joins} that the power set $\PP(X)$ has a natural ordering by inclusion of subsets. 

Given a function $f\taking X\to Y$ and a subset $V\ss Y$ the preimage is $f^\m1(V):=\{x\in X\|f(x)\in V\}$. If $V'\ss V$ then $f^\m1(V')\ss f^\m1(V)$, so in fact $f^\m1\taking\PP(Y)\to\PP(X)$ can be considered a functor (where of course we are thinking of preorders as categories). The quantifiers appear as adjoints of $f^\m1$.

Let's begin with the left adjoint of $f^\m1\taking\PP(Y)\to\PP(X)$. It is a functor $L_f\taking\PP(X)\to\PP(Y)$. Choose an object $U\ss X$ in $\PP(X)$. It turns out that
$$L_f(U)=\{y\in Y\|\exists x\in f^\m1(y)\tn{ such that }x\in U\}.$$
And the right adjoint $R_f\taking\PP(X)\to\PP(Y)$, when applied to $U$ is 
$$R_f(U)=\{y\in Y\|\forall x\in f^\m1(y), x\in U\}.$$
In fact, the functor $L_f$ is generally denoted $\exists_f\taking\PP(X)\to\PP(Y)$, and $R_f$ is generally denoted $\forall_f\taking\PP(X)\to\PP(Y)$. 
$$
\xymatrix{\PP(X)\ar@/^1.2pc/[rr]^{\exists_f}\ar@/_1.2pc/[rr]^{\forall_f}&&\PP(Y)\ar[ll]_{f^\m1}.}
$$
We will see in the next example why this notation is apt.

\begin{example}

In logic or computer science, the quantifiers $\exists$ and $\forall$ are used to ask whether any or all elements of a set have a certain property. For example, one may have a set of natural numbers and want to know whether any or all are even or odd.
Let $Y=\{{\tt even,odd}\}$, and let $p\taking\NN\to Y$ be the function that takes assigns to each natural number its parity (even or odd). Because the elements of $\PP(\NN)$ and $\PP(Y)$ are ordered by ``inclusion of subsets", we can construe these orders as categories (by Proposition \ref{prop:preorders to cats}). That's all old; what's new is that we have adjunctions between these categories
$$
\xymatrix{\PP(\NN)\ar@/^1.2pc/[rr]^{\exists_p}\ar@/_1.2pc/[rr]^{\forall_p}&&\PP(Y)\ar[ll]_{p^\m1}.}
$$
Given a subset $U\ss\NN$, i.e. an object $U\in\Ob(\PP(\NN))$, we investigate the objects $\exists_p(U),\forall_p(U)$. These are both subsets of $\{{\tt even,odd}\}$. The set $\exists_p(U)$ includes the element {\tt even} if there exists an even number in $U$; it includes the element {\tt odd} if there exists an odd number in $U$. Similarly, the set $\forall_p(U)$ includes the element {\tt even} if every even number is in $U$ and it includes {\tt odd} if every odd number is in $U$.
\footnote{It may not be clear that by this point we have also handled the question, ``is every element of $U$ even?" One simply checks that {\tt odd} is not an element of $\exists_pU$.}

We explain just one of these in terms of the definitions. Let $V=\{{\tt even}\}\ss Y$. Then $f^\m1(V)\ss\NN$ is the set of even numbers, and there is a morphism $f^\m1(V)\to U$ in $\PP(\NN)$ if and only if $U$ contains all the even numbers. Therefore, the adjunction isomorphism $\Hom_{\PP(\NN)}(f^\m1(V),U)\iso\Hom_{\PP(Y)}(V,\forall_pU)$ says that $V\ss\forall_pU$, i.e. $\forall_p(U)$ includes the element {\tt even} if and only if $U$ contains all the even numbers, as we said above.

\end{example}

\begin{exercise}
The national Scout jamboree is a gathering of Boy Scouts from troops across the US. Let $X$ be the set of Boy Scouts in the US, and let $Y$ be the set of Boy Scout troops in the US. Let $t\taking X\to Y$ be the function that assigns to each Boy Scout his troop. Let $U\ss X$ be the set of Boy Scouts in attendance at this years jamboree. What is the meaning of the objects $\exists_tU$ and $\forall_tU$?
\end{exercise}

\begin{exercise}
Let $X$ be a set and $U\ss X$ a subset. Find a set $Y$ and a function $f\taking X\to Y$ such that $\exists_f(U)$ somehow tells you whether $U$ is non-empty, and such that $\forall_f(U)$ somehow tells you whether $U=X$.
\end{exercise}

In fact, ``quantifiers as adjoints" is part of a larger story. Suppose we think of elements of a set $X$ as bins, or storage areas. An element of $\PP(X)$ can be construed as an injection $U\inj X$, i.e. an assignment of a bin to each element of $U$, with at most one element of $U$ in each bin. Relaxing that restriction, we may consider arbitrary sets $U$ and assignments $U\to X$ of a bin to each element $u\in U$. Given a function $f\taking X\to Y$, we can generalize $\exists_f$ and $\forall_f$ to functors denoted $\Sigma_f$ and $\Pi_f$, which will parameterize disjoint unions and products (respectively) over $y\in Y$. This will be discussed in Section \ref{sec:data migration}.


\subsection{Universal concepts in terms of adjoints}\label{sec:universal concepts}

In this section we discuss how universal concepts, i.e. initial objects and terminal objects, colimits and limits, are easily phrased in the language of adjoint functors. We will say that a functor $F\taking\mcC\to\mcD$ {\em is a left adjoint} if there exists a functor $G\taking\mcD\to\mcC$ such that $F$ is a left adjoint of $G$. We showed in Proposition \ref{prop:unicity of adjoints} that if $F$ is a left adjoint of some functor $G$, then it is isomorphic to every other left adjoint of $G$, and $G$ is isomorphic to every other right adjoint of $F$.

\begin{example}

Let $\mcC$ be a category and $t\taking\mcC\to\ul{1}$ the unique functor to the terminal category. Then $t$ is a left adjoint if and only if $\mcC$ has a terminal object, and $t$ is a right adjoint if and only if $\mcC$ has an initial object. The proofs are dual, so let's focus on the first.

The functor $t$ has a right adjoint $R\taking\ul{1}\to\mcC$ if and only if there is an isomorphism $$\Hom_\mcC(c,r)\iso\Hom_{\ul{1}}(t(c),1),$$ where $r=R(1)$. But $\Hom_{\ul{1}}(t(c),1)$ has one element. Thus $t$ has a right adjoint iff there is a unique morphism $c\to r$ in $\mcC$. This is the definition of $r$ being a terminal object.

\end{example}

When we defined colimits and limits in Definitions \ref{def:coslice and colimit} and \ref{def:slice and limit} we did so for individual $I$-shaped diagrams $X\taking I\to\mcC$. Using adjoints we can define the limit of every $I$-shaped diagram in $\mcC$ at once.

Let $t\taking\mcC\to\ul{1}$ denote the unique functor to the terminal category. Given an object $c\in\Ob(\mcC)$, consider it as a functor $c\taking\ul{1}\to\mcC$. Then $c\circ t\taking I\to\mcC$ is the {\em constant functor at $c$}\index{functor!constant}, sending each object in $I$ to the same $\mcC$-object $c$, and every morphism in $I$ to $\id_c$. This induces a functor that we denote by $\Delta_t\taking\mcC\to\Fun(I,\mcC)$.

Suppose we want to take the colimit or limit of $X$. We are given an object $X$ of $\Fun(I,\mcC)$ and we want back an object of $\mcC$. We could hope, and it turns out to be true, that the adjoints of $\Delta_t$ are the limit and colimit. Indeed let $\Sigma_t\taking\Fun(I,\mcC)\to\mcC$ be the left adjoint of $\Delta_t$, and let $\Pi_t\taking\Fun(I,\mcC)\to\mcC$ be the right adjoint of $\Delta_t$. Then $\Sigma_t$ is the functor that takes colimits, and $\Pi_t$ is the functor that takes limits.

We will work with a generalization of colimits and limits in Section \ref{sec:data migration}. But for now, let's bring this down to earth with a concrete example.

\begin{example}

Let $\mcC=\Set$, and let $I=\ul{3}$. The category $\Fun(I,\Set)$ is the category of $\{1,2,3\}$-indexed sets, e.g. $(\ZZ,\NN,\ZZ)\in\Ob(\Fun(I,\Set))$ is an object of it. The functor $\Delta_t\taking\Set\to\Fun(I,\Set)$ acts as follows. Given a set $c\in\Ob(\Set)$, consider it as a functor $c\taking\ul{1}\to\Set$, and the composite $c\circ t\taking I\to\Set$ is the constant functor. That is, $\Delta_t(c)\taking I\to\Set$ is the $\{1,2,3\}$--indexed set $(c,c,c)$.

To say that $\Delta_t$ has a right adjoint called $\Pi_t\taking\Fun(I,\Set)\to\Set$ and that it ``takes limits" should mean that if we look through the definition of right adjoint, we will see that the formula will somehow yield the appropriate limit. Fix a functor $D\taking I\to\Set$, so $D(1),D(2),$ and $D(3)$ are sets. The limit $\lim D$ of $D$ is the product $D(1)\times D(2)\times D(3)$. For example, if $D=(\ZZ,\NN,\ZZ)$ then $\lim D=\ZZ\times\NN\times\ZZ$. How does this fact arise in the definition of adjoint?

The definition of $\Pi_t$ being the right adjoint to $\Delta_t$ says that there is a natural isomorphism of sets, 
\begin{align}\label{dia:limit as adjoint}
\Hom_{\Fun(I,\Set)}(\Delta_t(c),D)\iso\Hom_\Set(c,\Pi_t(D)).
\end{align}
The left-hand side has elements $f\in\Hom_{\Fun(I,\Set)}(\Delta_t(c),D)$ that look like the left below, but having these three maps is equivalent to having the diagram to the right below:
$$\xymatrix{
c\ar[dd]^{f(1)}&c\ar[dd]^{f(2)}&c\ar[dd]^{f(3)}\\\\
D(1)&D(2)&D(3)
}
\hspace{1in}
\xymatrix{
&c\ar[ddl]_{f(1)}\ar[dd]^{f(2)}\ar[ddr]^{f(3)}\\\\
D(1)&D(2)&D(3)
}$$
The isomorphism in (\ref{dia:limit as adjoint}) says that choosing the three maps $f(1),f(2),f(3)$ is the same thing as choosing a function $c\to\Pi_t(D)$. But this is very close to the universal property of limits: there is a unique map $\ell\taking c\to D(1)\times D(2)\times D(3)$, so this product serves well as $\Pi_t$ as we have said. We're not giving a formal proof here, but what is missing at this point is the fact that certain diagrams have to commute. This comes down to the naturality of the isomorphism (\ref{dia:limit as adjoint}). The map $\ell$ induces a naturality square
$$\xymatrix{\Delta_t(c)\ar[r]^{\Delta_t(\ell)}\ar[d]_f&\Delta_t\Pi_tD\ar[d]^\pi\\D\ar@{=}[r]&D}$$
which says that the following diagram commutes:
$$\xymatrix{&c\ar[ddl]_{f(1)}\ar[dd]^{f(2)}\ar[ddr]^{f(3)}\ar@/_2pc/[dddd]_\ell\\\\
D(1)&D(2)&D(3)\\\\
&D(1)\times D(2)\times D(3)\ar[uul]^{\pi_1}\ar[uu]_{\pi_2}\ar[uur]_{\pi_3}
}$$

\end{example}

It is not hard to show that the composition of left adjoints is a left adjoint, and the composition of right adjoints is a right adjoint. In the following example we show how currying (as in Sections \ref{sec:currying} and \ref{ex:other adjunctions}) arises out of a certain combination of data migration functors. 

\begin{example}[Currying via $\Delta,\Sigma,\Pi$]\index{currying!via data migration functors}

Let $A,B,$ and $C$ be sets. Consider the unique functor $a\taking A\to\ul{1}$ and consider $B$ and $C$ as functors $\ul{1}\Too{B}\Set$ and $\ul{1}\Too{C}\Set$ respectively. 
$$\xymatrix{A\ar[r]^a&\ul{1}\ar@/^1pc/[r]^B\ar@/_1pc/[r]_C&\Set}$$
Note that $\ul{1}\set\iso\Set$, and we will elide the difference. Our goal is to see currying arise out of the adjunction between $\Sigma_a\circ\Delta_a$ and $\Pi_a\circ\Delta_a$, which tells us that there is an isomorphism
\begin{align}\label{dia:migration for currying}
\Hom_\Set(\Sigma_a\Delta_a(B),C)\iso\Hom_\Set(B,\Pi_a\Delta_a(C)).
\end{align}

By definition, $\Delta_a(B)\taking A\to\Set$ assigns to each element $a\in A$ the set $B$. Since $\Sigma_A$ takes disjoint unions, we have a bijection
$$\Sigma_a(\Delta_a(B))=\left(\coprod_{a\in A}B\right)\iso A\times B.$$ 
Similarly $\Delta_a(C)\taking A\to\Set$ assigns to each element $a\in A$ the set $C$. Since $\Pi_A$ takes products, we have a bijection
$$\Pi_a(\Delta_a(C))=\left(\prod_{a\in A}C\right)\iso C^A.$$
The currying isomorphism $\Hom_\Set(A\times B,C)\iso\Hom_\Set(B,C^A)$ falls out of (\ref{dia:migration for currying}).

\end{example}


\subsection{Preservation of colimits or limits}

One useful fact about adjunctions is that left adjoints preserve all colimits and right adjoints preserve all limits. 

\begin{proposition}

Let $\adjoint{L}{\mcB}{\mcA}{R}$ be an adjunction. For any indexing category $I$ and functor $D\taking I\to\mcB$, if $D$ has a colimit in $\mcB$ then there is a unique isomorphism 
$$L(\colim D)\iso \colim (L\circ D).$$

Similarly, for any $I\in\Ob(\Cat)$ and functor $D\taking I\to\mcA$, if $D$ has a limit in $\mcA$ then there is a unique isomorphism 
$$R(\lim D)\iso \lim (R\circ D).$$

\end{proposition}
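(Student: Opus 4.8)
The plan is to show that $L(\colim D)$ satisfies the universal property characterizing $\colim(L\circ D)$, using the mate isomorphism of the adjunction as the engine and its naturality as the glue. Recall from Definition \ref{def:coslice and colimit} that a colimit of a diagram $E\taking I\to\mcA$ is an initial object of the coslice category $\mcA_{E/}$; concretely, to give an object of $\mcA_{E/}$ with cone point $A\in\Ob(\mcA)$ is to give a cocone, i.e. a family of morphisms $E(i)\to A$ (one for each $i\in\Ob(I)$) compatible with every morphism of $I$. First I would re-express this as a representability statement: writing $\tn{Cocone}(E,A)$ for the set of cocones from $E$ to $A$, the colimit represents this functor of $A$, in the sense that there is a bijection $\Hom_\mcA(\colim E,A)\iso\tn{Cocone}(E,A)$ natural in $A$. (This is just an unwinding of the coslice definition, since a morphism out of the initial cocone is the same data as an arbitrary cocone.)

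Next I would compute, for an arbitrary object $A\in\Ob(\mcA)$, the chain of bijections
\begin{align*}
\Hom_\mcA(L(\colim D),A) &\iso \Hom_\mcB(\colim D, R(A))\\
&\iso \tn{Cocone}(D, R(A))\\
&\iso \tn{Cocone}(L\circ D, A).
\end{align*}
The first bijection is the mate isomorphism $\alpha_{\colim D,\,A}$ of Definition \ref{def:adjunction}. The second is the representability of $\colim D$ in $\mcB$ from the previous paragraph. The third applies the mate isomorphism once more, but now object-by-object: a cocone $(D(i)\to R(A))_i$ is carried to the family of mates $(L(D(i))\to A)_i$.

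The main obstacle, and the only place real care is needed, is this third step: I must check that taking mates sends compatible families to compatible families. A cocone over $D$ requires that for every morphism $h\taking i\to i'$ in $I$ the triangle built from $D(h)$ commutes, and likewise for $L\circ D$. Because the mate isomorphism $\alpha_{B,A}$ is natural in $B$, the naturality square relating $\alpha_{D(i),A}$ and $\alpha_{D(i'),A}$ along $D(h)$ commutes, and that is exactly what turns the $D$-compatibility condition into the $(L\circ D)$-compatibility condition (and conversely). Granting this, each constituent bijection is natural in $A$, so the composite exhibits $L(\colim D)$ as representing $\tn{Cocone}(L\circ D,-)$, just as $\colim(L\circ D)$ does. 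Both are therefore initial objects of the coslice category $\mcA_{(L\circ D)/}$, and by Proposition \ref{prop:initials are isomorphic} there is a unique isomorphism between them, which is the asserted $L(\colim D)\iso\colim(L\circ D)$.

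Finally, the limit statement is proved by the dual argument. Here $D\taking I\to\mcA$, and for an arbitrary $B\in\Ob(\mcB)$ I would run the chain $\Hom_\mcB(B,R(\lim D))\iso\Hom_\mcA(L(B),\lim D)\iso\tn{Cone}(L(B),D)\iso\tn{Cone}(B,R\circ D)\iso\Hom_\mcB(B,\lim(R\circ D))$, using the mate isomorphism at the first and third steps and the universal property of limits (cones and terminal objects in place of cocones and initial objects) at the second and fourth. Naturality of the mate isomorphism again does the real work, converting cones over $D$ into cones over $R\circ D$, and the uniqueness of terminal objects (the dual half of Proposition \ref{prop:initials are isomorphic}) yields the unique isomorphism $R(\lim D)\iso\lim(R\circ D)$.
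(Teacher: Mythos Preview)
Your argument is correct and is precisely the standard Yoneda-style proof: chain the adjunction isomorphism with the universal property of the colimit, check naturality so that cocones go to cocones, and conclude by uniqueness of initial objects. The paper, however, does not actually prove this proposition; it only remarks that ``the proof is simple if one knows the Yoneda lemma'' and refers the reader elsewhere. So rather than matching the paper's approach, you have supplied the omitted details. One small point worth making explicit: when you pass from ``$L(\colim D)$ represents $\tn{Cocone}(L\circ D,-)$'' to ``$L(\colim D)$ is initial in the coslice,'' you are implicitly equipping $L(\colim D)$ with the cocone corresponding to $\id_{L(\colim D)}$ under your bijection; tracing through, this is exactly $L$ applied to the universal cocone of $D$. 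Saying so would make the argument airtight and also identify the isomorphism concretely.
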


\begin{proof}

The proof is simple if one knows the Yoneda lemma (Section \ref{sec:yoneda}). I have decided to skip it to keep the book shorter. See \cite{Mac}.

\end{proof}

\begin{example}

Since $\Ob\taking\Cat\to\Set$ is both a left adjoint and a right adjoint, it must preserve both limits and colimits. This means that if you want to know the set of objects in the fiber product of some categories, you can simply take the fiber product of the set of objects in those categories, $$\Ob(\mcA\times_\mcC\mcB)\iso\Ob(\mcA)\times_{\Ob(\mcC)}\Ob(\mcB).$$ While the right-hand side might look daunting, it is just a fiber product in $\Set$ which is quite understandable.

This is greatly simplifying. If one thinks through what defines a limit in $\Cat$, one is dragged through notions of slice categories and terminal objects in them. These slice categories are in $\Cat$ so they involve several categories and functors, and it gets hairy or even hopeless to a beginner. Knowing that the objects are given by a simple fiber product makes the search for limits in $\Cat$ much simpler. 

For example, if $[n]$ is the linear order category of length $n$ then $[n]\times[m]$ has $nm+n+m+1$ objects because $[n]$ has $n+1$ objects and $[m]$ has $m+1$ objects. 

\end{example}

\begin{example}

The ``path poset" functor $L\taking\Grph\to\PrO$ given by existence of paths (see Exercise \ref{exc:grph to pro}) is left adjoint to the functor $R\taking\PrO\to\Grph$ given by replacing $\leq$'s by arrows. This means that $L$ preserves colimits. So taking the union of graphs $G$ and $H$ results in a graph whose path poset  $L(G\sqcup H)$ is the union of the path posets of $G$ and $H$. But this is not so for products. 

Let $G=H=\fbox{\xymatrix{\LMO{a}\ar[r]^f&\LMO{b}}}$. Then $L(G)=L(H)=[1]$, the linear order of length 1. But the product $G\times H$ in $\Grph$ looks like the graph 
$$\xymatrix{\LMO{(a,a)}\ar[rd]&\LMO{(a,b)}\\\LMO{(b,a)}&\LMO{(b,b)}}$$
Its preorder $L(G\times H)$ does not have $(a,a)\leq(a,b)$, whereas this is the case in $L(G)\times L(H)$.

\end{example}


\subsection{Data migration}\label{sec:data migration}\index{data migration}

As we saw in Sections \ref{sec:schemas and cats intro} and \ref{sec:instances}, a database schema is a category $\mcC$ and an instance is a functor $I\taking\mcC\to\Set$.  

\begin{notation}

Let $\mcC$ be a category. Throughout this section we denote by $\mcC\set$ the category $\Fun(\mcC,\Set)$ of functors from $\mcC$ to $\Set$, i.e. the category of instances on $\mcC$. 

\end{notation}

In this section we discuss what happens to the resulting instances when different schemas are connected by a functor, say $F\taking\mcC\to\mcD$. It turns out that three adjoint functors emerge: $\Delta_F\taking\mcD\set\to\mcC\set$, $\Sigma_F\taking\mcC\set\to\mcD\set$, and $\Pi_F\taking\mcC\set\to\mcD\set$, where $\Delta_F$ is adjoint to both, 
$$
\adjoint{\Sigma_F}{\mcC\set}{\mcD\set}{\Delta_F}
\hspace{.7in}
\adjoint{\Delta_F}{\mcD\set}{\mcC\set}{\Pi_F.}
$$
It turns out that almost all the basic database operations are captured by these three functors. For example, $\Delta_F$ handles the job of duplicating or deleting tables, as well as duplicating or deleting columns in a single table. The functor $\Sigma_F$ handles taking unions, and the functor $\Pi_F$ handles joining tables together, matching columns, or selecting the rows with certain properties (e.g. everyone whose first name is Mary).


\subsubsection{Pullback: $\Delta$}\index{data migration!pullback $\Delta$}

Given a functor $F\taking\mcC\to\mcD$ and a functor $I\taking\mcD\to\Set$, we can compose them to get a functor $I\circ F\taking\mcC\to\Set$. In other words, the presence of $F$ provides a way to convert $\mcD$-instances into $\mcC$-instances. In fact this conversion is functorial, meaning that morphisms of $\mcD$-instances are sent to morphisms of $\mcC$-instances. We denote the resulting functor by $\Delta_F\taking\mcD\set\to\mcC\set$ and call it {\em pullback along $F$}.

We have seen an example of this before in Example \ref{ex:whiskering}, where we showed how a monoid homomorphism $F\taking\mcM'\to\mcM$ could add functionality to a finite state machine. More generally, we can use pullbacks to reorganize data, copying and deleting tables and columns. 

\begin{remark}

Given a functor $F\taking\mcC\to\mcD$, which we think of as a schema translation, the functor $\Delta_F\taking\mcD\set\to\mcC\set$ ``goes the opposite way". The reasoning is simple to any explain (composition of functors) but something about it is often very strange to people, at first. The rough idea of this ``contravariance" is captured by the role-reversal in the following slogan:

\begin{slogan} 
If I get my information from you, then your information becomes my information. 
\end{slogan}

\end{remark}

Consider the following functor $F\taking\mcC\to\mcD$: 
\footnote{This example was taken from \cite{Sp1}, \url{http://arxiv.org/abs/1009.1166}.}
\begin{align}\label{dia:translation}
\mcC:=\parbox{1.2in}{\fbox{\xymatrix@=10pt{&\LTO{SSN}\\&\LTO{First}\\\color{red}{\LTO{T1}}\ar[uur]\ar[ur]\ar[dr]&&\color{red}{\LTO{T2}}\ar[ul]\ar[dl]\ar[ddl]\\&\LTO{Last}\\&\LTO{Salary}}}}\Too{F}\parbox{.8in}{\fbox{\xymatrix@=10pt{&\LTO{SSN}\\&\LTO{First}\\\color{red}{\LTO{T}}\ar[uur]\ar[ur]\ar[dr]\ar[ddr]\\&\LTO{Last}\\&\LTO{Salary}}}}=:\mcD
\end{align}

Let's spend a moment recalling how to ``read" schemas. In schema $\mcC$ there are leaf tables {\tt SSN, First, Last, Salary}, which represent different kinds of basic data. More interestingly, there are two {\em fact tables}\index{schema!leaf table}\index{schema!fact table}. The first is called {\tt T1} and it relates {\tt SSN, First,} and ${\tt Last}$. The second is called {\tt T2} and it relates {\tt First, Last,} and {\tt Salary}.

The functor $F\taking\mcC\to\mcD$ relates $\mcC$ to a schema with a single fact table relating all four attributes: {\tt SSN, First, Last,} and {\tt Salary}. We are interested in $\Delta_F\taking\mcD\set\to\mcC\set$. Suppose given the following database instance $I\taking\mcD\to\Set$ on $\mcD$:
$$
\begin{tabular}{| l || l | l | l | l |}\bhline\multicolumn{5}{| c |}{{\tt T}}\\\bhline {\bf ID}&{\bf SSN}&{\bf First}&{\bf Last}&{\bf Salary}\\\bbhline XF667&115-234&Bob&Smith&\$250\\\hline XF891&122-988&Sue&Smith&\$300\\\hline XF221&198-877&Alice&Jones&\$100\\\bhline
\end{tabular}
$$
$$
\begin{tabular}{| l ||}\bhline\multicolumn{1}{| c |}{{\tt SSN}}\\\bhline {\bf ID}\\\bbhline 115-234\\\hline 118-334\\\hline 122-988\\\hline 198-877\\\hline 342-164\\\bhline
\end{tabular}\hspace{.5in}
\begin{tabular}{| l ||}\bhline\multicolumn{1}{| c |}{{\tt First}}\\\bhline {\bf ID}\\\bbhline Adam\\\hline Alice\\\hline Bob\\\hline Carl\\\hline Sam\\\hline Sue\\\bhline
\end{tabular}
\hspace{.5in}
\begin{tabular}{| l ||}\bhline\multicolumn{1}{| c |}{{\tt Last}}\\\bhline {\bf ID}\\\bbhline Jones\\\hline Miller\\\hline Pratt\\\hline Richards\\\hline Smith\\\bhline
\end{tabular}
\hspace{.5in}
\begin{tabular}{| l ||}\bhline\multicolumn{1}{| c |}{{\tt Salary}}\\\bhline {\bf ID}\\\bbhline \$100\\\hline \$150\\\hline \$200\\\hline \$250\\\hline \$300\\\bhline
\end{tabular}
$$

How do you get the instance $\Delta_F(I)\taking\mcC\to\Set$? The formula was given above: compose $I$ with $F$. In terms of tables, it feels like duplicating table {\tt T} as {\tt T1} and {\tt T2}, but deleting a column from each in accordance with the definition of $\mcC$ in (\ref{dia:translation}). Here is the result, $\Delta_F(I)$, in table form:

$$\begin{tabular}{| l || l | l | l |}\bhline\multicolumn{4}{| c |}{{\tt T1}}\\\bhline {\bf ID}&{\bf SSN}&{\bf First}&{\bf Last}\\\bbhline XF667&115-234&Bob&Smith\\\hline XF891&122-988&Sue&Smith\\\hline XF221&198-877&Alice&Jones\\\bhline
\end{tabular}
\hspace{.5in}
\begin{tabular}{| l || l | l | l |}\bhline\multicolumn{4}{| c |}{{\tt T2}}\\\bhline {\bf ID}&{\bf First}&{\bf Last}&{\bf Salary}\\\bbhline XF221&Alice&Jones&\$100\\\hline XF667&Bob&Smith&\$250\\\hline XF891&Sue&Smith&\$300\\\hline 
\end{tabular}
$$
$$
\begin{tabular}{| l ||}\bhline\multicolumn{1}{| c |}{{\tt SSN}}\\\bhline {\bf ID}\\\bbhline 115-234\\\hline 118-334\\\hline 122-988\\\hline 198-877\\\hline 342-164\\\bhline
\end{tabular}\hspace{.5in}
\begin{tabular}{| l ||}\bhline\multicolumn{1}{| c |}{{\tt First}}\\\bhline {\bf ID}\\\bbhline Adam\\\hline Alice\\\hline Bob\\\hline Carl\\\hline Sam\\\hline Sue\\\bhline
\end{tabular}
\hspace{.5in}
\begin{tabular}{| l ||}\bhline\multicolumn{1}{| c |}{{\tt Last}}\\\bhline {\bf ID}\\\bbhline Jones\\\hline Miller\\\hline Pratt\\\hline Richards\\\hline Smith\\\bhline
\end{tabular}
\hspace{.5in}
\begin{tabular}{| l ||}\bhline\multicolumn{1}{| c |}{{\tt Salary}}\\\bhline {\bf ID}\\\bbhline \$100\\\hline \$150\\\hline \$200\\\hline \$250\\\hline \$300\\\bhline
\end{tabular}
$$

\begin{exercise}\index{leaf table}
Let $\mcC=(G,\simeq)$ be a schema. A leaf table is an object $c\in\Ob(\mcC)$ with no outgoing arrows.
\sexc Write the condition of being a ``leaf table" mathematically in three different languages: that of graphs (using symbols $V,A,src,tgt$), that of categories (using $\Hom_\mcC$, etc.), and that of tables (in terms of columns, tables, rows, etc.).
\next In the language of categories, is there a difference between a terminal object and a leaf table? Explain.
\endsexc
\end{exercise}

\begin{exercise}
Consider the schemas $$[1]=\fbox{$\LMO{\color{blue}{0}}\Too{f}\LMO{\color{red}{1}}$}\hsp\tn{and}\hsp [2]=\fbox{$\LMO{\color{blue}{0}}\Too{g}\LMO{1}\Too{h}\LMO{\color{red}{2}}$},$$ and the functor $F\taking [1]\to[2]$ given by sending $0\mapsto 0$ and $1\mapsto 2$. 
\sexc How many possibilities are there for $F(f)$?
\next Now suppose $I\taking[2]\to\Set$ is given by the following tables. 
$$
\begin{tabular}{| l || l |}
\bhline
\multicolumn{2}{|c|}{0}\\\bhline
{\bf ID}&{\bf g}\\\bbhline
Am&To be verb\\\hline
Baltimore&Place\\\hline
Carla&Person\\\hline
Develop&Action verb\\\hline
Edward&Person\\\hline
Foolish&Adjective\\\hline
Green&Adjective\\\bhline
\end{tabular}
\hspace{.4in}
\begin{tabular}{| l || l |}
\bhline
\multicolumn{2}{|c|}{1}\\\bhline
{\bf ID}&{\bf h}\\\bbhline
Action verb&Verb\\\hline
Adjective&Adjective\\\hline
Place&Noun\\\hline
Person&Noun\\\hline
To be verb&Verb\\\bhline
\end{tabular}
\hspace{.4in}
\begin{tabular}{| l ||}
\bhline
\multicolumn{1}{| c |}{2}\\\bhline
{\bf ID}\\\bbhline
Adjective\\\hline
Noun\\\hline
Verb\\\bhline
\end{tabular}
$$
Write out the two tables associated to the $[1]$-instance $\Delta_F(I)\taking[1]\to\Set$.
\endsexc
\end{exercise}


\subsubsection{Left pushforward: $\Sigma$}\label{sec:left push}\index{data migration!left pushforward $\Sigma$}

Let $F\taking\mcC\to\mcD$ be a functor. The functor $\Delta_F\taking\mcD\set\to\mcC\set$ has a left adjoint, $\Sigma_F\taking\mcC\set\to\mcD\set$. The rough idea is that $\Sigma_F$ performs parameterized colimits. Given an instance $I\taking\mcC\to\Set$, we get an instance on $\mcD$ that acts as follows. For each object $d\in\Ob(\mcD)$, the set $\Sigma_F(I)(d)$ is the colimit (think, union) of some diagram back home in $\mcC$. 

Left pushforwards (also known as left Kan extensions) are discussed at length in \cite{Sp1}; here we begin with some examples from that paper.\index{Kan extension!left}

\begin{example}\label{ex:left pushforward and skolem}

We again use the functor $F\taking\mcC\to\mcD$ drawn below
\begin{align}\tag{\ref{dia:translation}}\mcC:=\parbox{1.2in}{\fbox{\xymatrix@=10pt{&\LTO{SSN}\\&\LTO{First}\\\color{red}{\LTO{T1}}\ar[uur]\ar[ur]\ar[dr]&&\color{red}{\LTO{T2}}\ar[ul]\ar[dl]\ar[ddl]\\&\LTO{Last}\\&\LTO{Salary}}}}\Too{F}\parbox{.8in}{\fbox{\xymatrix@=10pt{&\LTO{SSN}\\&\LTO{First}\\\color{red}{\LTO{T}}\ar[uur]\ar[ur]\ar[dr]\ar[ddr]\\&\LTO{Last}\\&\LTO{Salary}}}}=:\mcD
\end{align}
We will be applying the left pushforward $\Sigma_F\taking\mcC\set\to\mcD\set$ to the following instance $I\taking\mcC\to\Set$: 
$$
\begin{tabular}{| l || l | l | l |}\bhline\multicolumn{4}{| c |}{{\tt T1}}\\\bhline {\bf ID}&{\bf SSN}&{\bf First}&{\bf Last}\\\bbhline T1-001&115-234&Bob&Smith\\\hline T1-002&122-988&Sue&Smith\\\hline T1-003&198-877&Alice&Jones\\\bhline
\end{tabular}
\hsp
\begin{tabular}{| l || l | l | l |}\bhline\multicolumn{4}{| c |}{{\tt T2}}\\\bhline {\bf ID}&{\bf First}&{\bf Last}&{\bf Salary}\\\bbhline T2-001&Alice&Jones&\$100\\\hline T2-002&Sam&Miller&\$150\\\hline T2-004&Sue&Smith&\$300\\\hline T2-010&Carl&Pratt&\$200 \\\bhline
\end{tabular}
$$
$$
\begin{tabular}{| l ||}\bhline\multicolumn{1}{| c |}{{\tt SSN}}\\\bhline {\bf ID}\\\bbhline 115-234\\\hline 118-334\\\hline 122-988\\\hline 198-877\\\hline 342-164\\\bhline
\end{tabular}
\hsp\hsp
\begin{tabular}{| l ||}\bhline\multicolumn{1}{| c |}{{\tt First}}\\\bhline {\bf ID}\\\bbhline Adam\\\hline Alice\\\hline Bob\\\hline Carl\\\hline Sam\\\hline Sue\\\bhline
\end{tabular}
\hsp\hsp
\begin{tabular}{| l ||}\bhline\multicolumn{1}{| c |}{{\tt Last}}\\\bhline {\bf ID}\\\bbhline Jones\\\hline Miller\\\hline Pratt\\\hline Richards\\\hline Smith\\\bhline
\end{tabular}
\hsp\hsp
\begin{tabular}{| l ||}\bhline\multicolumn{1}{| c |}{{\tt Salary}}\\\bhline {\bf ID}\\\bbhline \$100\\\hline \$150\\\hline \$200\\\hline \$250\\\hline \$300\\\bhline
\end{tabular}
$$

The functor $F\taking\mcC\to\mcD$ sent both tables {\tt T1} and {\tt T2} to table {\tt T}. Applying $\Sigma_F$ will take the what was in {\tt T1} and {\tt T2} and put the union in {\tt T}. The result $\Sigma_FI\taking\mcD\to\Set$ is as follows:
\begin{center}
\begin{tabular}{| l || l | l | l | l |}\bhline\multicolumn{5}{| c |}{{\tt T}}\\\bhline {\bf ID}&{\bf SSN}&{\bf First}&{\bf Last}&{\bf Salary}\\\bbhline  T1-001&115-234&Bob&Smith&T1-001.Salary\\\hline T1-002&122-988&Sue&Smith&T1-002.Salary\\\hline T1-003&198-877&Alice&Jones&T1-003.Salary\\\hline T2-001&T2-A101.SSN&Alice&Jones&\$100\\\hline T2-002&T2-A102.SSN&Sam&Miller&\$150\\\hline T2-004&T2-004.SSN&Sue&Smith&\$300\\\hline T2-010&T2-A110.SSN&Carl&Pratt&\$200 \\\bhline
\end{tabular}
\end{center}
$$
\begin{tabular}{| l ||}\bhline
\multicolumn{1}{| c |}{{\tt SSN}}\\\bhline 
{\bf ID}\\\bbhline 
115-234\\\hline 
118-334\\\hline 
122-988\\\hline 
198-877\\\hline 
342-164\\\hline
T2-001.SSN\\\hline
T2-002.SSN\\\hline
T2-004.SSN\\\hline
T2-010.SSN\\\bhline
\end{tabular}
\hspace{.5in}
\begin{tabular}{| l ||}\bhline
\multicolumn{1}{| c |}{{\tt First}}\\\bhline 
{\bf ID}\\\bbhline 
Adam\\\hline 
Alice\\\hline 
Bob\\\hline 
Carl\\\hline 
Sam\\\hline 
Sue\\\bhline
\end{tabular}
\hspace{.5in}
\begin{tabular}{| l ||}\bhline
\multicolumn{1}{| c |}{{\tt Last}}\\\bhline 
{\bf ID}\\\bbhline 
Jones\\\hline 
Miller\\\hline 
Pratt\\\hline 
Richards\\\hline 
Smith\\\bhline
\end{tabular}
\hspace{.5in}
\begin{tabular}{| l ||}\bhline
\multicolumn{1}{| c |}{{\tt Salary}}\\\bhline 
{\bf ID}\\\bbhline 
\$100\\\hline 
\$150\\\hline 
\$200\\\hline 
\$250\\\hline 
\$300\\\hline
T1-001.Salary\\\hline
T1-002-Salary\\\hline
T1-003-Salary\\\bhline
\end{tabular}
$$

As you can see, there was no set salary information for any data coming from table {\tt T1} nor any set SSN information for any data coming form table {\tt T2}. But the definition of adjoint, given in Definition \ref{def:adjunction}, yielded the universal response: freely add new variables that take the place of missing information. It turns out that this idea already has a name in logic, {\em Skolem variables}, and a name in database theory, {\em labeled nulls}.\index{labeled null}\index{Skolem variable}

\end{example}

\begin{exercise}
Consider the functor $F\taking\ul{3}\to\ul{2}$ sending $1\mapsto 1, 2\mapsto 2, 3\mapsto 2$.
\sexc Write down an instance $I\taking\ul{3}\to\Set$.
\next Given the description that ``$\Sigma_F$ performs a parameterized colimit", make an educated guess about what $\Sigma_F(I)$ will be. Give your answer in the form of two sets that are made up from the three sets you already wrote down.
\endsexc
\end{exercise}

We now briefly give the actual formula for computing left pushforwards. Suppose that $F\taking\mcC\to\mcD$ is a functor and let $I\taking\mcC\to\Set$ be a set-valued functor on $\mcC$. Then $\Sigma_F(I)\taking\mcD\to\Set$ is defined as follows. Given an object $d\in\Ob(\mcD)$ we first form the comma category (see Definition \ref{def:comma category}) for the setup
$$\mcC\To{F}\mcD\From{d}\ul{1}$$
and denote it by $(F\down d)$. There is a canonical projection functor $\pi\taking(F\down d)\to\mcC$, which we can compose with $I\taking\mcC\to\Set$ to obtain a functor $(F\down d)\to\Set$. We are ready to define $\Sigma_F(I)(d)$ to be its colimit,
$$\Sigma_F(I)(d):=\colim_{(F\down d)}I\circ\pi.$$
We have defined $\Sigma_F(I)\taking\mcD\to\Set$ on objects $d\in\Ob(\mcD)$. As for morphisms we will be even more brief, but one can see \cite{Sp1} for details. Given a morphism $g\taking d\to d'$ one notes that there is an induced functor $(F\down g)\taking (F\down d)\to(F\down d')$ and a commutative diagram of categories:
$$
\xymatrix@=15pt{
(F\down d)\ar[rr]^{(F\down g)}\ar[ddr]^{\pi}\ar[ddddr]_{I\circ\pi}&&(F\down d')\ar[ddl]_{\pi'}\ar[ddddl]^{I\circ\pi'}\\\\
&\mcC\ar[dd]^I\\\\
&\Set
}
$$
By the universal property of colimits, this induces the required function $$\colim_{(F\down d)}I\circ\pi\Too{\Sigma_F(I)(g)}\colim_{(F\down d')}I\circ\pi'.$$


\subsubsection{Right pushforward: $\Pi$}\index{data migration!right pushforward $\Pi$}

Let $F\taking\mcC\to\mcD$ be a functor. We heard in Section \ref{sec:left push} that the functor $\Delta_F\taking\mcD\set\to\mcC\set$ has a left adjoint. Here we explain that it has a right adjoint, $\Pi_F\taking\mcC\set\to\mcD\set$ as well. The rough idea is that $\Pi_F$ performs parameterized limits. Given an instance $I\taking\mcC\to\Set$, we get an instance on $\mcD$ that acts as follows. For each object $d\in\Ob(\mcD)$, the set $\Pi_F(I)(d)$ is the limit (think, fiber product) of some diagram back home in $\mcC$. 

Right pushforwards (also known as right Kan extensions) are discussed at length in \cite{Sp1}; here we begin with some examples from that paper.\index{Kan extension!right}

\begin{example}

We once again use the functor $F\taking\mcC\to\mcD$ from Example \ref{ex:left pushforward and skolem}. We will apply the right pushforward $\Pi_F$ to instance $I\taking\mcC\to\Set$ from that example.
\footnote{To repeat for convenience,
\begin{align}\tag{\ref{dia:translation}}\mcC:=\parbox{1.2in}{\fbox{\xymatrix@=10pt{&\LTO{SSN}\\&\LTO{First}\\\color{red}{\LTO{T1}}\ar[uur]\ar[ur]\ar[dr]&&\color{red}{\LTO{T2}}\ar[ul]\ar[dl]\ar[ddl]\\&\LTO{Last}\\&\LTO{Salary}}}}\Too{F}\parbox{.8in}{\fbox{\xymatrix@=10pt{&\LTO{SSN}\\&\LTO{First}\\\color{red}{\LTO{T}}\ar[uur]\ar[ur]\ar[dr]\ar[ddr]\\&\LTO{Last}\\&\LTO{Salary}}}}=:\mcD
\end{align}
$I\taking\mcC\to\Set$ is 
$$
\begin{tabular}{| l || l | l | l |}\bhline\multicolumn{4}{| c |}{{\tt T1}}\\\bhline {\bf ID}&{\bf SSN}&{\bf First}&{\bf Last}\\\bbhline T1-001&115-234&Bob&Smith\\\hline T1-002&122-988&Sue&Smith\\\hline T1-003&198-877&Alice&Jones\\\bhline
\end{tabular}
\hsp
\begin{tabular}{| l || l | l | l |}\bhline\multicolumn{4}{| c |}{{\tt T2}}\\\bhline {\bf ID}&{\bf First}&{\bf Last}&{\bf Salary}\\\bbhline T2-001&Alice&Jones&\$100\\\hline T2-002&Sam&Miller&\$150\\\hline T2-004&Sue&Smith&\$300\\\hline T2-010&Carl&Pratt&\$200 \\\bhline
\end{tabular}
$$
$$
\begin{tabular}{| l ||}\bhline\multicolumn{1}{| c |}{{\tt SSN}}\\\bhline {\bf ID}\\\bbhline 115-234\\\hline 118-334\\\hline 122-988\\\hline 198-877\\\hline 342-164\\\bhline
\end{tabular}
\hsp\hsp
\begin{tabular}{| l ||}\bhline\multicolumn{1}{| c |}{{\tt First}}\\\bhline {\bf ID}\\\bbhline Adam\\\hline Alice\\\hline Bob\\\hline Carl\\\hline Sam\\\hline Sue\\\bhline
\end{tabular}
\hsp\hsp
\begin{tabular}{| l ||}\bhline\multicolumn{1}{| c |}{{\tt Last}}\\\bhline {\bf ID}\\\bbhline Jones\\\hline Miller\\\hline Pratt\\\hline Richards\\\hline Smith\\\bhline
\end{tabular}
\hsp\hsp
\begin{tabular}{| l ||}\bhline\multicolumn{1}{| c |}{{\tt Salary}}\\\bhline {\bf ID}\\\bbhline \$100\\\hline \$150\\\hline \$200\\\hline \$250\\\hline \$300\\\bhline
\end{tabular}
$$
}

The instance $\Pi_F(I)$ will put data in all 5 tables in $\mcD$. In {\tt T} it will put pairs $(t_1,t_2)$ where $t_1$ is a row in {\tt T1} and $t_2$ is a row in {\tt T2} for which the first and last names agree. It will copy the leaf tables exactly, so we do not display them here; the following is the table {\tt T} for $\Pi_F(I)$:
\begin{center}
\begin{tabular}{| l || l | l | l | l |}\bhline\multicolumn{5}{| c |}{{\tt T}}\\\bhline {\bf ID}&{\bf SSN}&{\bf First}&{\bf Last}&{\bf Salary}\\\bbhline  T1-002T2-A104&122-988&Sue&Smith&\$300\\\hline T1-003T2-A101&198-877&Alice&Jones&\$100\\\bhline
\end{tabular}
\end{center}
Looking at {\tt T1} and {\tt T2}, there were only two ways to match first and last names.
\end{example}

\begin{exercise}
Consider the functor $F\taking\ul{3}\to\ul{2}$ sending $1\mapsto 1, 2\mapsto 2, 3\mapsto 2$.
\sexc Write down an instance $I\taking\ul{3}\to\Set$.
\next Given the description that ``$\Pi_F$ performs a parameterized limit", make an educated guess about what $\Pi_F(I)$ will be. Give your answer in the form of two sets that are made up from the three sets you already wrote down.
\endsexc
\end{exercise}

We now briefly give the actual formula for computing right pushforwards. Suppose that $F\taking\mcC\to\mcD$ is a functor and let $I\taking\mcC\to\Set$ be a set-valued functor on $\mcC$. Then $\Pi_F(I)\taking\mcD\to\Set$ is defined as follows. Given an object $d\in\Ob(\mcD)$ we first form the comma category (see Definition \ref{def:comma category}) for the setup
$$\ul{1}\To{d}\mcD\From{F}\mcC$$
and denote it by $(d\down F)$. There is a canonical projection functor $\pi\taking(d\down F)\to\mcC$, which we can compose with $I\taking\mcC\to\Set$ to obtain a functor $(d\down F)\to\Set$. We are ready to define $\Pi_F(I)(d)$ to be its limit,
$$\Pi_F(I)(d):=\lim_{(d\down F)}I\circ\pi.$$
We have defined $\Pi_F(I)\taking\mcD\to\Set$ on objects $d\in\Ob(\mcD)$. As for morphisms we will be even more brief, but one can see \cite{Sp1} for details. Given a morphism $g\taking d\to d'$ one notes that there is an induced functor $(g\down F)\taking (d'\down F)\to(d\down F)$ and a commutative diagram of categories:
$$
\xymatrix@=15pt{
(d'\down F)\ar[rr]^{(g\down F)}\ar[ddr]^{\pi'}\ar[ddddr]_{I\circ\pi'}&&(d\down F)\ar[ddl]_{\pi}\ar[ddddl]^{I\circ\pi}\\\\
&\mcC\ar[dd]^I\\\\
&\Set
}
$$
By the universal property of limits, this induces the required function $$\lim_{(d\down F)}I\circ\pi\Too{\Pi_F(I)(g)}\lim_{(d'\down F)}I\circ\pi'.$$


\section{Categories of functors}

For any two categories $\mcC$ and $\mcD$,
\footnote{Technically $\mcC$ has to be small (see Remark \ref{rmk:small}), but as we said there, we are not worrying about that distinction in this book.}
we discussed the category $\Fun(\mcC,\mcD)$ of functors and natural transformations between them. In this section we discuss functor categories a bit more and give some important applications within mathematics (sheaves) that extend to the real world.


\subsection{Set-valued functors}

Let $\mcC$ be a category. Then we have been writing $\mcC\set$ to denote the functor category $\Fun(\mcC,\Set)$. Here is a nice result about these categories.

\begin{proposition}\label{prop:inst closed under colim lim}

Let $\mcC$ be a category. The category $\mcC\set$ is closed under colimits and limits.

\end{proposition}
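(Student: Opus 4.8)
The plan is to show that both limits and colimits in $\mcC\set=\Fun(\mcC,\Set)$ can be computed \emph{objectwise}, thereby reducing the entire statement to the fact --- established throughout Chapter \ref{chap:sets} --- that $\Set$ itself has all limits and colimits (products, coproducts, pullbacks, pushouts, equalizers, coequalizers, and initial and terminal objects). I will carry out the limit case in detail; the colimit case is dual, swapping terminal objects for initial objects and $\lim$ for $\colim$, in accordance with Definitions \ref{def:slice and limit} and \ref{def:coslice and colimit}.

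First I would fix an indexing category $I$ and a diagram $X\taking I\to\mcC\set$, so that for each $i\in\Ob(I)$ we have a functor $X(i)\taking\mcC\to\Set$, and for each $u\taking i\to i'$ in $I$ a natural transformation $X(u)\taking X(i)\to X(i')$. For each object $c\in\Ob(\mcC)$, composing $X$ with ``evaluation at $c$'' yields a functor $X_c\taking I\to\Set$, namely $i\mapsto X(i)(c)$ and $u\mapsto X(u)_c$. Since $\Set$ has all limits, I can define $L(c):=\lim_I X_c$. The next step is to upgrade $c\mapsto L(c)$ to a functor $L\taking\mcC\to\Set$: given $f\taking c\to c'$ in $\mcC$, the functions $X(i)(f)$ assemble (using the naturality of each $X(u)$) into a morphism of $I$-shaped diagrams $X_c\to X_{c'}$ in $\Set$, which the universal property of $\lim$ converts into a unique map $L(f)\taking L(c)\to L(c')$; that $L$ then preserves identities and composition follows from the uniqueness clause of that universal property. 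The projections of the limit cones give, for each $i$, a family $\pi_i(c)\taking L(c)\to X(i)(c)$ that is natural in $c$, hence a natural transformation $\pi_i\taking L\to X(i)$, and together these exhibit $L$ as an object of the slice category $(\mcC\set)_{/X}$.

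Finally I would verify the universal property, i.e. that $L$ together with these projections is a terminal object in $(\mcC\set)_{/X}$, and hence the limit of $X$. Given any $A\in\mcC\set$ equipped with a cone $\{a_i\taking A\to X(i)\}$, evaluating at each $c$ produces a cone $\{a_i(c)\taking A(c)\to X(i)(c)\}$ in $\Set$, so there is a unique $t(c)\taking A(c)\to L(c)$ compatible with the projections. The one genuinely fiddly point --- and the step I expect to be the main obstacle --- is checking that the family $\{t(c)\}_c$ is \emph{natural} in $c$, so that $t$ is an actual morphism in $\mcC\set$. I would settle this by appealing once more to uniqueness: for each $f\taking c\to c'$, both $L(f)\circ t(c)$ and $t(c')\circ A(f)$ are maps $A(c)\to L(c')$ that agree after composing with every projection $\pi_i(c')$, so by the uniqueness part of the universal property of $L(c')=\lim_I X_{c'}$ in $\Set$ they must coincide, i.e. the naturality square commutes. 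This produces the required unique comparison morphism $A\to L$ in $\mcC\set$, establishing that $L$ (with its projection cone) is the limit of $X$. Running the dual argument with $C(c):=\colim_I X_c$ and initial objects of the coslice category $(\mcC\set)_{X/}$ then completes the colimit case.
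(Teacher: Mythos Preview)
Your proposal is correct and takes essentially the same approach as the paper: compute limits and colimits in $\mcC\set$ objectwise by setting $L(c)=\lim_I X_c$ (respectively $\colim_I X_c$), and then verify the universal property. The paper's version is a brief sketch that omits the naturality check you carefully supply, so your argument is in fact a more complete rendering of the same idea.
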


\begin{proof}[Sketch of proof]

Let $J$ be an indexing category and $D\taking J\to\mcC\set$ a functor. For each object $c\in\Ob(\mcC)$, we have a functor $D_c\taking J\to\Set$ defined by $D_c(j)=D(j)(c)$. Define a functor $L\taking\mcC\to\Set$ by $L(c)=\lim_J D_c$, and note that for each $f\taking c\to c'$ in $\mcC$ there is an induced function $L(f)\taking L(c)\to L(c')$. One can check that $L$ is a limit of $J$, because it satisfies the relevant universal property. 

The dual proof holds for colimits.

\end{proof}

\begin{application}

When taking in data about a scientific subject, one often finds that the way one thinks about the problem changes over time. We understand this phenomenon in the language of databases in terms of a \href{http://en.wikipedia.org/wiki/Schema_evolution}{\text series of schemas} $\mcC_1,\mcC_2,\ldots,\mcC_n$, perhaps indexed chronologically. The problem is that old data is held in old schemas and we want to see it in our current understanding. The first step is to transfer all the old data to our new schema in the freest possible way, that is, making no assumptions about how to fill in the new fields. If one creates functors $F_i\taking\mcC_i\to\mcC_{i+1}$ from each of these schemas to the next, then we can push the data forward using $\Sigma_{F_i}$. 

Doing this we will have $n$ datasets on $\mcD:=\mcC_n$, namely one for each ``epoch of understanding". Since the category $\mcD\set$ has all colimits, we can take the union of these datasets and get one. It will have many Skolem variables (see Example \ref{ex:left pushforward and skolem}), and these need to be handled in a coherent way. However, the universality of left adjoints could be interpreted as saying that any reasonable formula for handling this old data can be applied to our results.

\end{application}

\begin{exercise}\label{exc:universal objects in C-set}\index{initial object!in $\mcC\set$}\index{terminal object!in $\mcC\set$}
By Proposition \ref{prop:inst closed under colim lim}, the category $\mcC\set$ is closed under taking limits. By Exercises \ref{exc:terminal as limit} and \ref{exc:initial as colimit}, this means in particular that $\mcC\set$ has an initial object and a terminal object. 
\sexc Let $A\in\Ob(\mcC\set)$ be the initial object, considered as a functor $A\taking\mcC\to\Set$. For any $c\in\Ob(\mcC)$, what is the set $A(c)$?
\next Let $Z\in\Ob(\mcC\set)$ be the terminal object, considered as a functor $Z\taking\mcC\to\Set$. For any $c\in\Ob(\mcC)$, what is the set $Z(c)$?
\endsexc
\end{exercise}

Proposition \ref{prop:inst closed under colim lim} says that we can add or multiply database states together. In fact, database states on $\mcC$ form what is called a {\em topos} which means that just about every consideration we made for sets holds for instances on any schema. Perhaps the simplest schema is $\mcC=\fbox{$\bullet$}$, on which the relevant topos is indeed $\Set$. But schemas can be arbitrarily complex, and it is impressive that all of these considerations make sense in such generality. Here is a table that makes a comparison between these domains.
\begin{center}
\begin{tabular}{| l | l |}\bhline
\multicolumn{2}{| c |}{Dictionary between $\Set$ and $\mcC\set$}\\\hline
{\bf Concept in $\Set$}&{\bf Concept in $\mcC\set$}\\\bbhline
Set & Object in $\mcC\set$\\\hline
Function & Morphism in $\mcC\set$\\\hline
Element&Representable functor\\\hline
Empty set & Initial object\\\hline
Natural numbers&Natural numbers object\\\hline
Image&Image\\\hline
(Co)limits&(Co)limits\\\hline
Exponential objects&Exponential objects\\\hline
``Familiar" arithmetic&``Familiar" arithmetic\\\hline
Power sets $2^X$&Power objects $\Omega^X$\\\hline
Characteristic functions&Characteristic morphisms\\\hline
Surjections, injections&Epimorphisms, monomorphisms\\\bhline
\end{tabular}
\end{center}

In the above table we said that elements of a set are akin to representable functors in $\mcC\set$, but we have not yet defined those; we do so in Section \ref{sec:representable functors}. First we briefly discuss monomorphisms and epimorphisms in general (Definition \ref{def:mono, epi}) and then in $\mcC\set$ (Proposition \ref{prop:epi mono in c-set}). 

\begin{definition}[Monomorphism, Epimorphism]\label{def:mono, epi}\index{epimorphism}\index{monomorphism}

Let $\mcS$ be a category and let $f\taking X\to Y$ be a morphism. We say that $f$ is a {\em monomorphism} if it has the following property. For all objects $A\in\Ob(\mcS)$ and morphisms $g,g'\taking A\to X$ in $\mcS$, 
$$
\xymatrix{A\ar@/^1pc/[r]^g\ar@/_1pc/[r]_{g'}&X\ar[r]^f&Y}
$$
if $f\circ g=f\circ g'$ then $g=g'$.

We say that $f\taking X\to Y$ is an {\em epimorphism} if it has the following property. For all objects $B\in\Ob(\mcS)$ and morphisms $h,h'\taking Y\to B$ in $\mcS$,
$$
\xymatrix{X\ar[r]^f&Y\ar@/^1pc/[r]^h\ar@/_1pc/[r]_{h'}&B}
$$
if $h\circ f=h'\circ f$ then $h=h'$.

\end{definition}

In the category of sets, monomorphisms are the same as injections and epimorphisms are the same as surjections (see Proposition \ref{prop:inj and surj}). The same is true in $\mcC\set$: one can check ``table by table" that a morphism of instances is mono or epi.

\begin{proposition}\label{prop:epi mono in c-set}

Let $\mcC$ be a category and let $X,Y\taking\mcC\to\Set$ be objects in $\mcC\set$ and let $f\taking X\to Y$ be a morphism in $\mcC\set$. Then $f$ is a monomorphism (respectively an epimorphism) if and only if, for every object $c\in\Ob(\mcC)$, the function $f(c)\taking X(c)\to Y(c)$ is injective (respectively surjective). 

\end{proposition}

\begin{proof}[Sketch of proof]

We first show that if $f$ is mono (respectively epi) then so is $f(c)$ for all $c\in\Ob(\mcC)$. Considering $c$ as a functor $c\taking\ul{1}\to\mcC$, this result follows from the fact that $\Delta_c$ preserves limits and colimits, hence monos and epis. 

We now check that if $f(c)$ is mono for all $c\in\Ob(\mcC)$ then $f$ is mono. Suppose that $g,g'\taking A\to X$ are morphisms in $\mcC\set$ such that $f\circ g=f\circ g'$. Then for every $c$ we have $f\circ g(c)=f\circ g'(c)$ which implies by hypothesis that $g(c)=g'(c)$. But the morphisms in $\mcC\set$ are natural transformations, and if two natural transformations $g,g'$ have the same components then they are the same. 

A similar argument works to show the analogous result for epimorphisms.

\end{proof}


\subsubsection{Representable functors}\label{sec:representable functors}

Given a category $\mcC$, there are certain functors $\mcC\to\Set$ that come with the package, one for every object in $\mcC$. So if $\mcC$ is a database schema, then for every table $c\in\Ob(\mcC)$ there is a certain database instance associated to it. These instances, i.e. set-valued functors, are called representable functors, and they'll be defined in Definition \ref{dec:representable functor}. The idea is that if a database schema represents a conceptual layout of types (e.g. as an olog), then each type $T$ has an instance associated to it, standing for ``the generic thing of type $T$ with all its generic attributes".

\begin{definition}\label{def:representable functor}\index{functor!representable}\index{representable functor}

Let $\mcC$ be a category and let $c\in\Ob(\mcC)$ be an object. The functor $\Hom_\mcC(c,-)\taking\mcC\to\Set$, sending $d\in\Ob(\mcC)$ to the set $\Hom_\mcC(c,d)$ and acting similarly on morphisms $d\to d'$, is said to be {\em represented by $c$}. If a functor $F\taking\mcC\to\Set$ is isomorphic to $\Hom_\mcC(c,-)$, we say that $F$ is {\em a representable functor}. We sometimes write $Y_c:=\Hom_\mcC(c,-)$ for short.

\end{definition}

\begin{example}

Given a category $\mcC$ and an object $c\in\Ob(\mcC)$, we get a representable functor. If we think of $\mcC$ as a database schema and $c$ as a table, then what does the representable functor $Y_c\taking\mcC\to\Set$ look like in terms of databases? It turns out that the following procedure will generate it. 

Begin by writing a new row, say ``$\smiley$", in the ID column of table $c$. For each foreign key column $f\taking c\to c'$, add a row in the ID column of table $c'$ called $``f(\smiley)"$ and record that result (i.e. ``$f(\smiley)$") in the $f$ column of table $c$. Repeat as follows: for each table $d$, identify all rows $r$ that have blank cell in column $g\taking d\to e$. Add a new row called $``g(r)"$ to table $e$ and record that result in the $(r,g)$ cell of table $d$.

Here is a concrete example. Let $\mcC$ be the following schema: 
$$\xymatrix{\LMO{A}\ar[r]^f&\LMO{B}\ar@<.5ex>[r]^{g_1}\ar@<-.5ex>[r]_{g_2}\ar[d]_h&\LMO{C}\ar[r]^i&\LMO{D}\\&\LMO{E}}$$
Then $Y_B\taking\mcC\to\Set$ is the following instance
\begin{center}
\begin{tabular}{| l || l |}\bhline
\multicolumn{2}{|c|}{A}\\\bhline
{\bf ID}&{\bf $f$}\\\bbhline
\end{tabular}
\hsp
\begin{tabular}{| l || l | l | l |}\bhline
\multicolumn{4}{|c|}{B}\\\bhline
{\bf ID}&{\bf $g_1$}&{\bf $g_2$}&{\bf $h$}\\\bbhline
$\smiley$&$g_1(\smiley)$&$g_2(\smiley)$&$h(\smiley)$\\\bhline
\end{tabular}
\hsp
\begin{tabular}{| l || l |}\bhline
\multicolumn{2}{|c|}{C}\\\bhline
{\bf ID}&{\bf $i$}\\\bbhline
$g_1(\smiley)$&$i(g_1(\smiley))$\\\hline
$g_2(\smiley)$&$i(g_2(\smiley))$\\\bhline
\end{tabular}
\end{center}
\begin{center}
\begin{tabular}{| l ||}\bhline
\multicolumn{1}{| c |}{D}\\\bhline
{\bf ID}\\\bbhline
$i(g_1(\smiley))$\\\hline
$i(g_2(\smiley))$\\\bhline
\end{tabular}
\hsp
\begin{tabular}{| l ||}\bhline
\multicolumn{1}{| c |}{E}\\\bhline
{\bf ID}\\\bbhline
$h(\smiley)$\\\bhline
\end{tabular}
\end{center}

We began with a single element in table $B$ and followed the arrows, putting new entries wherever they were required. One might call this the {\em schematically implied reference spread} or {\em SIRS}\index{schematically implied reference spread} of the element $\smiley$ in table $B$. Notice that the table at $A$ is empty, because there are no morphisms $B\to A$.

\end{example}

Representable functors $Y_c$ yield databases states that are as free as possible, subject to having the initial row $\smiley$ in table $c$. We have seen things like this before (by the name of Skolem variables)\index{Skolem} when studying the left pushforward $\Sigma$. Indeed, if $c\in\Ob(\mcC)$ is an object, we can consider it as a functor $c\taking\ul{1}\to\mcC$. A database instance on $\ul{1}$ is the same thing as a set $X$. The left pushforward $\Sigma_c(X)$ has the same kinds of Skolem variables. If $X=\{\smiley\}$ is a one element set, then we get the representable functor $\Sigma_c(\singleton)\iso Y_c$.

\begin{exercise}\label{exc:representables on graph}
Consider the schema for graphs, 
$$\GrIn:=\fbox{\parbox{1in}{\GrInSchema}}$$
\sexc Write down the representable functor $Y_{Ar}\taking\GrIn\to\Set$ as two tables.
\next Write down the representable functor $Y_{V\!e}$ as two tables.
\endsexc
\end{exercise}

\begin{exercise}
Consider the loop schema $$\Loop:=\LoopSchema.$$ What is the representable functor $Y_s\taking\Loop\to\Set$?
\end{exercise}

Let $B$ be a box in an olog, say \fakebox{a person}, and recall that an aspect of $B$ is an outgoing arrow, such as $\fakebox{a person}\To{\tn{has as height in inches}}\fakebox{an integer}$. The following slogan explains representable functors in those terms.

\begin{slogan}
The functor represented by \fakebox{a person} simply leaves a placeholder, like $\la$person's name here$\ra$ or $\la$person's height here$\ra$, for every aspect of \fakebox{a person}. 

In general, there is a representable functor for every type in an olog. The representable functor for type $T$ simply encapsulates the most generic or abstract example of type $T$, by leaving a placeholder for each of its attributes.
\end{slogan}


\subsubsection{Yoneda's lemma}\label{sec:yoneda}

One of the most powerful tools in category theory is Yoneda's lemma. It is often considered by new students to be quite abstract, but grounding it in databases may help.

The idea is this. Suppose that $I\taking\mcC\to\Set$ is a database instance, and let $c\in\Ob(\mcC)$ be an object. Because $I$ is a functor, we know that for every row $r\in I(c)$ in table $c$ a value has been recorded in the $f$-column, where $f\taking c\to c'$ is any outgoing arrow. The value in the $(r,f)$-cell refers to some row in table $c'$. What we're saying is that each row in table $c$ induces SIRS throughout the database. They may not be ``Skolem", or in any sense ``freely generated", but they are there nonetheless. The point is that to each row in $c$ there is a unique mapping $Y_c\to I$. 

\begin{lemma}[Yoneda's lemma, part 1.]\index{Yoneda's lemma}\label{lemma:Yoneda}

Let $\mcC$ be a category, $c\in\Ob(\mcC)$ an object, and $I\taking\mcC\to\Set$ a set-valued functor. There is a natural bijection $$\Hom_{\mcC\set}(Y_c,I)\Too{\iso}I(c).$$

\end{lemma}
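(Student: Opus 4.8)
The plan is to exhibit an explicit map in each direction and check that they are mutually inverse, following the template of the other bijection proofs of this sort (e.g. Proposition \ref{prop:curry} and Proposition \ref{prop:characteristic function}). Throughout, recall that $Y(c)=\Hom_\mcC(c,-)$, so a morphism $\alpha\taking Y(c)\to I$ in $\mcC\set$ is a natural transformation.

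First I would define the forward map $\Phi\taking\Hom_{\mcC\set}(Y(c),I)\to I(c)$. A natural transformation $\alpha\taking Y(c)\to I$ supplies, for each object $d\in\Ob(\mcC)$, a component $\alpha_d\taking\Hom_\mcC(c,d)\to I(d)$. The key observation is that the hom-set $\Hom_\mcC(c,c)$ contains a distinguished element, namely $\id_c$. So I set $\Phi(\alpha):=\alpha_c(\id_c)\in I(c)$. Next I would define the candidate inverse $\Psi\taking I(c)\to\Hom_{\mcC\set}(Y(c),I)$. Given $x\in I(c)$, I build a natural transformation $\Psi(x)$ whose $d$-component sends a morphism $g\taking c\to d$ to $I(g)(x)\in I(d)$; here I am using $I(g)\taking I(c)\to I(d)$. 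I must then check that $\Psi(x)$ really is natural: for any $h\taking d\to d'$, the naturality square compares the assignment $g\mapsto I(h)(I(g)(x))$ with the assignment $g\mapsto I(h\circ g)(x)$, and these agree precisely because $I$ preserves composition, i.e. $I(h\circ g)=I(h)\circ I(g)$.

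Then I would verify the two round trips. For $\Phi\circ\Psi$ this is quick: $\Phi(\Psi(x))=\Psi(x)_c(\id_c)=I(\id_c)(x)=x$, using that $I$ preserves identities. The main obstacle, and the conceptual heart of the lemma, is the other round trip $\Psi\circ\Phi=\id$, which amounts to showing that an arbitrary $\alpha$ is completely determined by the single value $\alpha_c(\id_c)$. Concretely, for every $d$ and every $g\taking c\to d$, I need $\alpha_d(g)=I(g)(\alpha_c(\id_c))$. This is exactly the content of the naturality square for $\alpha$ applied to the morphism $g\taking c\to d$: chasing the element $\id_c\in\Hom_\mcC(c,c)$ around the square, one route gives $\alpha_d(g\circ\id_c)=\alpha_d(g)$ while the other gives $I(g)(\alpha_c(\id_c))$, and commutativity forces them to coincide. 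Recognizing that naturality pins down all components of $\alpha$ from its behavior on $\id_c$ is the crux; everything else is bookkeeping.

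Finally, to justify the word ``natural'' in the statement, I would check compatibility with morphisms $\beta\taking I\to J$ in $\mcC\set$. Post-composition $\alpha\mapsto\beta\circ\alpha$ corresponds under $\Phi$ to applying $\beta_c$, since $(\beta\circ\alpha)_c(\id_c)=\beta_c(\alpha_c(\id_c))$, so the relevant naturality square (with $\Phi_I$ and $\Phi_J$ on the horizontal edges and $\beta_*$, $\beta_c$ on the vertical edges) commutes. This exhibits the family $\Phi_I$ as a natural isomorphism in $I$, and a dual argument handles naturality in $c$.
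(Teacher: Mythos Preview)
Your argument is correct and is the standard proof of Yoneda's lemma: evaluate a natural transformation at $\id_c$ to get the forward map, push an element of $I(c)$ along $I(g)$ to get the inverse, and use functoriality of $I$ and naturality of $\alpha$ to verify the round trips. The paper itself does not actually prove the lemma; it simply cites \cite{Mac}, so you have supplied exactly the proof the reader would find upon following that reference.
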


\begin{proof}

See \cite{Mac}.

\end{proof}

\begin{example}\label{ex:yoneda}

Consider the category $\mcC$ drawn below:
$$
\mcC:=\parbox{2.1in}{\fbox{\parbox{2.1in}{\begin{center}\small mother\;$\circ$\;firstChild\;=\;$\id_{\tn{Mother}}$\normalsize\end{center}$$\xymatrix{\LTO{Child}\ar[rr]^{\tn{mother}}&&\LTO{Mother}\ar@/^1pc/[ll]^{\tn{firstChild}}}$$}}}
$$
There are two representable functors, $Y_{\tt Child}$ and $Y_{\tt Mother}$. The latter, when written as a database instance, will consist of a single row in each table. The former, $Y_{\tt Child}\taking\mcC\to\Set$ is shown here:
\begin{center}
\begin{tabular}{| l || l |}\bhline
\multicolumn{2}{|c|}{Child}\\\bhline
{\bf ID}&{\bf mother}\\\hline
$\smiley$&mother($\smiley$)\\\hline
firstChild(mother($\smiley$))&mother($\smiley$)\\\bbhline
\end{tabular}
\hsp
\begin{tabular}{| l || l |}\bhline
\multicolumn{2}{|c|}{Mother}\\\bhline
{\bf ID}&{\bf firstChild}\\\bbhline
mother($\smiley$)&firstChild(mother($\smiley$))\\\bhline
\end{tabular}
\end{center}
The representable functor $Y_{\tt Child}$ is the freest instance possible, starting with one element in the Child table and satisfying the constraints. 

Here is another instance $I\taking\mcC\to\Set$:
\begin{center}
\begin{tabular}{| l || l |}\bhline
\multicolumn{2}{|c|}{\tt Child}\\\bhline
{\bf ID}&{\bf mother}\\\hline
Amy&Ms. Adams\\\hline
Bob&Ms. Adams\\\hline
Carl&Ms. Jones\\\hline
Deb&Ms. Smith\\\bhline
\end{tabular}
\hsp
\begin{tabular}{| l || l |}\bhline
\multicolumn{2}{|c|}{\tt Mother}\\\bhline
{\bf ID}&{\bf firstChild}\\\bbhline
Ms. Adams&Bob\\\hline
Ms. Jones&Carl\\\hline
Ms. Smith&Deb\\\bhline
\end{tabular}
\end{center}

\end{example}

Yoneda's lemma (\ref{lemma:Yoneda}) is about the set of natural transformations $Y_{\tt Child}\to I$. Recall from Definition \ref{def:natural transformation} that a search for natural transformations can get a bit tedious. Yoneda's lemma makes the calculation quite trivial. In our case there are exactly four such natural transformations, and they are completely determined by where $\smiley$ goes. In some sense the symbol $\smiley$ {\em represents} child-ness in our database. 

\begin{exercise}
Consider the schema $\mcC$ and instance $I\taking\mcC\to\Set$ from Example \ref{ex:yoneda}. Let $Y_{\tt Child}$ be the representable functor as above. 
\sexc Let $\alpha\taking Y_{\tt Child}\to I$ be the natural transformation sending $\smiley$ to Amy. What is $\alpha_{\tn{Child}}(\tn{firstChild(mother}(\smiley)))$?
\footnote{There is a lot of clutter, perhaps. Note that ``firstChild(mother($\smiley$))" is a row in the {\tt Child} table. Assuming that the math follows the meaning, if $\smiley$ points to Amy, where should firstChild(Mother($\smiley$)) point?}
\next Let $\alpha\taking Y_{\tt Child}\to I$ be the natural transformation sending $\smiley$ to Bob. What is $\alpha_{\tt Child}(\tn{firstChild(mother}(\smiley)))$?
\next Let $\alpha\taking Y_{\tt Child}\to I$ be the natural transformation sending $\smiley$ to Carl. What is $\alpha_{\tt Child}(\tn{firstChild(mother}(\smiley)))$?
\next Let $\alpha\taking Y_{\tt Child}\to I$ be the natural transformation sending $\smiley$ to Deb. What is $\alpha_{\tt Child}(\tn{firstChild(mother}(\smiley)))$?
\next Let $\alpha\taking Y_{\tt Child}\to I$ be the natural transformation sending $\smiley$ to Amy. What is $\alpha_{\tt Mother}(\tn{mother}(\smiley))$?
\endsexc
\end{exercise}

We saw in Section \ref{sec:representable functors} that a representable functor is a mathematically-generated database instance for an abstract thing of type $T$. It creates placeholders for every attribute that things of type $T$ are supposed to have.

\begin{slogan}
Yoneda's lemma says the following. Specifying an actual thing of type $T$ is the same as filling in all placeholders found in the generic thing of type $T$.
\end{slogan}

Yoneda's lemma is considered by many category theory lovers to be the most important tool in the subject. While its power is probably unclear to students whose sole background in category theory comes from this book, Yoneda's lemma is indeed extremely useful for reasoning. It allows us to move the notion of functor application into the realm of morphisms between functors (i.e. morphisms in $\mcC\set$, which are natural transformations). This keeps everything in one place --- it's all in the morphisms --- and thus more interoperable.

\begin{example}\label{ex:yoneda for cyclic monoid}

In Example \ref{ex:cyclic monoid (7,4)}, we discussed the cyclic monoid $\mcM$ generated by the symbol $Q$ and subject to the relation $Q^7=Q^4$. We drew a picture like this: 
\begin{align}\label{dia:grothendieck yoneda (4,7)}
\xymatrix@=15pt{
\LMO{Q^0}\ar[rr]&&\LMO{Q^1}\ar[rr]&&\LMO{Q^2}\ar[rr]&&\LMO{Q^3}\ar[rr]&&\LMO{Q^4}\ar[dr]\\
&&&&&&&\LMO{Q^6}\ar[ur]&&\LMO{Q^5}\ar[ll]
}
\end{align}
We are finally ready to give the mathematical foundation for this picture. Since $\mcM$ is a category with one object, $\monOb$, there is a unique representable functor (up to isomorphism) $Y:=Y_\monOb\taking\mcM\to\Set$. A functor $\mcM\to\Set$ can be thought of as a set with an $\mcM$-action, as discussed in Section \ref{sec:monoids as cats}. Here the required set is 
$$Y(\monOb)=\Hom_\mcM(\monOb,\monOb)\iso\{Q^0,Q^1,Q^2,Q^3,Q^4,Q^5,Q^6\}$$ 
and the action is pretty straightforward (it is called the {\em principal action}). We might say that (\ref{dia:grothendieck yoneda (4,7)}) is a picture of this principal action of $\mcM$. 

However, we can go one step further. Given a functor $Y\taking\mcM\to\Set$, we can take its category of elements, $\int_\mcM Y$ as in Section \ref{sec:grothendieck construction}. The category $\int_\mcM Y$ has objects $Y(\monOb)\in\Ob(\Set)$, i.e. the set of dots in (\ref{dia:grothendieck yoneda (4,7)}), and it has a unique morphism $Q^i\to Q^j$ for every path of length $\leq 6$ from $Q^i$ to $Q^j$ in that picture.

\end{example}

\begin{exercise}

Let $c\in\Ob(\mcC)$ be an object and let $I\in\Ob(\mcC\set)$ be another object. Consider $c$ also as a functor $c\taking\ul{1}\to\mcC$ and recall the pullback functor $\Delta_c\taking\mcC\set\to\Set$ and its left adjoint $\Sigma_c\taking\Set\to\mcC\set$ from Section \ref{sec:data migration}.
\sexc What is the set $\Delta_c(I)$?
\next What is $\Hom_\Set(\singleton,\Delta_c(I))$?
\next What is $\Hom_{\mcC\set}(\Sigma_c(\singleton),I)$?
\next How does $\Sigma_c(\singleton)$ compare to $Y_c$, the functor represented by $c$, as objects in $\mcC\set$?
\endsexc
\end{exercise}

\begin{lemma}[Yoneda's lemma, part 2]

Let $\mcC$ be a category. The assignment $c\mapsto Y_c$ from Lemma \ref{lemma:Yoneda} extends to a functor $Y\taking\mcC\op\to\mcC\set$, and this functor is fully faithful. 

In particular, if $c,c'\in\Ob(\mcC)$ are objects and there is an isomorphism $Y_c\iso Y_{c'}$ in $\mcC\set$, then there is an isomorphism $c\iso c'$ in $\mcC$.

\end{lemma}
\begin{proof}
See \cite{Mac}.
\end{proof}

\begin{exercise}
The distributive law for addition of natural numbers says $(a+b)\times c=a\times c+b\times c$. Below we will give a proof of the distributive law, using category-theoretic reasoning. Annotate anything in {\color{red}red} ink  with a justification for why it is true.
\begin{proposition}
For any natural numbers $a,b,c\in\NN$, the distributive law 
$$(a+b)c=ac+bc$$ 
holds.
\end{proposition}
\begin{proof}[Sketch of proof. To finish, justify {\color{red}red stuff}]
~\\
Let $A,B,C$ be finite sets and let $X$ be another finite set.
\begin{align*}
\Hom_\Set((A+B)\times C,X)
&{\color{red}\iso}\Hom_\Set(A+B,X^C)\\
&{\color{red}\iso}\Hom_\Set(A,X^C)\times\Hom_\Set(B,X^C)\\
&{\color{red}\iso}\Hom_\Set(A\times C,X)\times\Hom_\Set(B\times C,X)\\
&{\color{red}\iso}\Hom_\Set((A\times C)+(B\times C),X).
\end{align*}
By {\color{red} the appropriate application} of Yoneda's lemma, we see that there is an isomorphism
$$(A+B)\times C\iso(A\times C)+(B\times C)$$
in $\Fin$. The result about natural numbers {\color{red}follows}.
\end{proof}
\end{exercise}


\subsubsection{The subobject classifier $\Omega\in\Ob(\mcC\set)$}\index{subobject classifier!in $\mcC\set$}

If $\mcC$ is a category then the functor category $\mcC\set$ is a very nice kind of category, called a {\em topos}.\index{topos} Note that when $\mcC=\ul{1}$ is the terminal category, then we have an isomorphism $\mcC\set\iso\Set$, so the category of sets is a special case of a topos. What is so interesting about toposes (or topoi) is that they so nicely generalize many properties of $\Set$. In this short section we investigate only one such property, namely that $\mcC\set$ has a subobject classifier, denoted $\Omega\in\Ob(\mcC\set)$. In the case $\mcC=\ul{1}$, we saw back in Section \ref{def:subobject classifier} that the subobject classifier is $\{True,False\}\in\Ob(\Set)$. 

As usual, we consider the matter of subobject classifiers by grounding the discussion in terms of databases.

\begin{definition}

Let $\mcC$ be a category, let $\mcC\set$ denote its category of instances, and let $1\in\Ob(\mcC\set)$ denote the terminal object. A {\em subobject classifier for $\mcC\set$} is an object $\Omega_\mcC\in\Ob(\mcC\set)$ and a morphism $t\taking 1\to\Omega_\mcC$ with the following property. For any monomorphism $f\taking X\to Y$ in $\mcC\set$, there exists a unique morphism $char(f)\taking Y\to\Omega_\mcC$ such that the following diagram is a pullback in $\mcC\set$:
$$
\xymatrix@=25pt{X\ar[r]^{!}\ar[d]_f\ullimit&1\ar[d]^t\\Y\ar[r]_{char(f)}&\Omega_\mcC
}
$$

\end{definition}

In terms of databases, what this means is that for every schema $\mcC$ there is some special instance $\Omega_\mcC\in\Ob(\mcC\set)$ that somehow classifies sub-instances. When our schema is the terminal category, $\mcC=\ul{1}$, instances are sets and we saw in Definition \ref{def:subobject classifier} that the subobject classifier is $\Omega_{\ul{1}}=\{True, False\}$. One might think that the subobject classifier for $\mcC\set$ should just consist of a two-element set table-by-table, i.e. that for every $c\in\Ob(\mcC)$ we should have $\Omega_{\mcC}=^?\{True,False\}$, but this is not correct. 

In fact, for any object $c\in\Ob(\mcC)$, it is easy to say what $\Omega_\mcC(c)$ should be. We know by Yoneda's lemma (Lemma \ref{lemma:Yoneda}) that $\Omega_\mcC(c)=\Hom_{\mcC\set}(Y_c,\Omega_\mcC)$, where $Y_c$ is the functor represented by $c$. There is a bijection between $\Hom_{\mcC\set}(Y_c,\Omega_\mcC)$ and the set of sub-instances of $Y_c$. Each morphism $f\taking c\to d$ in $\mcC$ induces a morphism $Y_f\taking Y_d\to Y_c$, and the map $\Omega_\mcC(f)\taking\Omega_\mcC(c)\to\Omega_\mcC(d)$ sends a sub-instance $A\ss Y_c$ to the pullback 
$$
\xymatrix{Y_f^\m1(A)\ar[r]\ar[d]\ullimit&A\ar[d]\\Y_d\ar[r]_{Y_f}&Y_c}
$$

But this is all very abstract. We now give an example of a subobject classifier.  

\begin{example}

Consider the category $\mcC\iso[3]$ depicted below
$$\mcC:=\parbox{3in}{\fbox{
\xymatrix{&&&\ar@{}[d]|(.4){\checkmark}&&&\\
\LTO{0}\ar[rr]^{\tn{after\_1}}\ar@/_1.5pc/[rrrr]_{\tn{after\_2}}\ar@/^3pc/[rrrrrr]^{\tn{after\_3}}&&\LTO{1}\ar[rr]^{\tn{after\_1}}\ar@/_1.5pc/[rrrr]_{\tn{after\_2}}&&\LTO{2}\ar[rr]^{\tn{after\_1}}&&\LTO{3}\\
&&&\ar@{}[ull]|(.8){\checkmark}\ar@{}[urr]|(.8){\checkmark}&&
}}}
$$
To write down $\Omega_\mcC$ we need to understand the representable functors $Y_c\in\Ob(\mcC\set)$, for $c={\tt 0},{\tt 1},{\tt 2},{\tt 3}$, as well as their subobjects. Here is $Y_{\tt 0}$ as an instance:

\begin{center}\small
\begin{tabular}{| l || l | l | l |}
\bhline
\multicolumn{4}{|c|}{$Y_{\tt 0}({\tt 0})$}\\\bhline
{\bf ID}&{\bf after\_1}&{\bf after\_2}&{\bf after\_3}\\\bbhline
$\smiley$&after\_1($\smiley$)&after\_2($\smiley$)&after\_3($\smiley$)\\\bhline
\end{tabular}
\hsp
\begin{tabular}{| l || l | l |}
\bhline
\multicolumn{3}{|c|}{$Y_{\tt 0}({\tt 1})$}\\\bhline
{\bf ID}&{\bf after\_1}&{\bf after\_2}\\\bbhline
after\_1$(\smiley)$&after\_2$(\smiley)$&after\_3$(\smiley)$\\\bhline
\end{tabular}\\\vspace{.2in}
\begin{tabular}{| l || l |}
\bhline
\multicolumn{2}{|c|}{$Y_{\tt 0}({\tt 2})$}\\\bhline
{\bf ID}&{\bf after\_1}\\\bbhline
after\_2$(\smiley)$&after\_3$(\smiley)$\\\bhline
\end{tabular}
\hsp
\begin{tabular}{| l ||}
\bhline
\multicolumn{1}{|c|}{$Y_{\tt 0}({\tt 3})$}\\\bhline
{\bf ID}\\\bbhline
after\_3$(\smiley)$\\\bhline
\end{tabular}

\end{center}

What are the sub-instances of this? There is the empty sub-instance $\emptyset\ss Y_{\tt 0}$ and the identity sub-instance $Y_{\tt 0}\ss Y_{\tt 0}$. But there are three more as well. Note that if we want to keep the $\smiley$ row of table {\tt 0} then we have to keep everything. But if we throw away the $\smiley$ row of table {\tt 0} we can still keep the rest and get a sub-instance. If we want to keep the after\_1$(\smiley)$ row of table {\tt 1} then we have to keep its images in tables {\tt 2} and {\tt 3}. But we could throw away both the $\smiley$ row of table {\tt 0} and the after\_1$(\smiley)$ row of table {\tt 1} and still keep the rest. And so on. In other words, the subobjects of $Y_{\tt 0}$ are in bijection with the set $\Omega_\mcC({\tt 0}):=\{\tn{{\it yes}, {\it in 1}, {\it in 2}, {\it in 3}, {\it never}}\}$. 

The same analysis holds for the other tables of $\Omega_\mcC$. It looks like this:
\begin{center}
\begin{tabular}{| l || l | l | l |}
\bhline
\multicolumn{4}{|c|}{$\Omega_\mcC({\tt 0})$}\\\bhline
{\bf ID}&{\bf after\_1}&{\bf after\_2}&{\bf after\_3}\\\bbhline
{\it yes}&{\it yes}&{\it yes}&{\it yes}\\\hline
{\it in 1}&{\it yes}&{\it yes}&{\it yes}\\\hline
{\it in 2}&{\it in 1}&{\it yes}&{\it yes}\\\hline
{\it in 3}&{\it in 2}&{\it in 1}&{\it yes}\\\hline
{\it never}&{\it never}&{\it never}&{\it never}\\\bhline
\end{tabular}
\hsp
\begin{tabular}{| l || l | l |}
\bhline
\multicolumn{3}{|c|}{$\Omega_\mcC({\tt 1})$}\\\bhline
{\bf ID}&{\bf after\_1}&{\bf after\_2}\\\bbhline
{\it yes}&{\it yes}&{\it yes}\\\hline
{\it in 1}&{\it yes}&{\it yes}\\\hline
{\it in 2}&{\it in 1}&{\it yes}\\\hline
{\it never}&{\it never}&{\it never}\\\bhline
\end{tabular}\\\vspace{.2in}
\begin{tabular}{| l || l |}
\bhline
\multicolumn{2}{|c|}{$\Omega_\mcC({\tt 2})$}\\\bhline
{\bf ID}&{\bf after\_1}\\\bbhline
{\it yes}&{\it yes}\\\hline
{\it in 1}&{\it yes}\\\hline
{\it never}&{\it never}\\\bhline
\end{tabular}
\hsp
\begin{tabular}{| l ||}
\bhline
\multicolumn{1}{|c|}{$\Omega_\mcC({\tt 3})$}\\\bhline
{\bf ID}\\\bbhline
{\it yes}\\\hline
{\it never}\\\bhline
\end{tabular}

\end{center}
The morphism $1\to\Omega_\mcC$ picks out the {\it yes} row of every table.

Now that we have constructed $\Omega_\mcC\in\Ob(\mcC\set)$, we are ready to see it in action. What makes $\Omega_\mcC$ special is that for any instance $X\taking\mcC\to\Set$, the subinstances if $X$ are in one-to-one correspondence with the morphisms $X\to\Omega_\mcC$. Consider the following arbitrary instance $X$, where the blue rows denote a sub-instance $A\ss X$.

\begin{align}\label{dia:instance for omega}\footnotesize
\begin{array}{| l || l | l | l |}
\bhline
\multicolumn{4}{|c|}{X({\tt 0})}\\\bhline
{\bf ID}&{\bf after\ 1}&{\bf after\ 2}&{\bf after\ 3}\\\bbhline
a_1&b_1&\color{blue}{c_1}&\color{blue}{d_1}\\\hline
a_2&\color{blue}{b_2}&\color{blue}{c_1}&\color{blue}{d_1}\\\hline
a_3&\color{blue}{b_2}&\color{blue}{c_1}&\color{blue}{d_1}\\\hline
a_4&b_3&c_2&d_2\\\hline
a_5&b_5&c_3&\color{blue}{d_1}\\\bhline
\end{array}
\hspace{.2in}
\begin{array}{| l || l | l |}
\bhline
\multicolumn{3}{|c|}{X({\tt 1})}\\\bhline
{\bf ID}&{\bf after\ 1}&{\bf after\ 2}\\\bbhline
b_1&\color{blue}{c_1}&\color{blue}{d_1}\\\hline
\color{blue}{b_2}&\color{blue}{c_1}&\color{blue}{d_1}\\\hline
b_3&c_2&d_2\\\hline
\color{blue}{b_4}&\color{blue}{c_1}&\color{blue}{d_1}\\\hline
b_5&c_3&\color{blue}{d_1}\\\bhline
\end{array}\hspace{.2in}
\begin{array}{| l || l |}
\bhline
\multicolumn{2}{|c|}{X({\tt 2})}\\\bhline
{\bf ID}&{\bf after\ 1}\\\bbhline
\color{blue}{c_1}&\color{blue}{d_1}\\\hline
c_2&d_2\\\hline
c_3&\color{blue}{d_1}\\\bhline
\end{array}
\hspace{.2in}
\begin{array}{| l ||}
\bhline
\multicolumn{1}{|c|}{X({\tt 3})}\\\bhline
{\bf ID}\\\bbhline
\color{blue}{d_1}\\\hline
d_2\\\bhline
\end{array}
\end{align}

This blue sub-instance $A\ss X$ corresponds to a map $char(A)\taking X\to\Omega_\mcC$. That is for each $c\in\Ob(\mcC)$ the rows in the $c$-table of $X$ are sent to the rows in the $c$-table of $\Omega_\mcC$. The way $char(A)$ works is as follows. For each table $i$ and row $x\in X(i)$, find the first column $f$ in which the entry is blue (i.e. $f(x)\in A$), and send $x$ to the corresponding element of $\Omega_\mcC(i)$. For example, $char(A)({\tt 0})$ sends $a_1$ to {\it in 2} and sends $a_4$ to {\it never}, and $char(A)({\tt 2})$ sends $c_1$ to {\it yes} and sends $c_2$ to {\it never}.

\end{example}

\begin{exercise}
\sexc Write out the blue subinstance $A\ss X$ shown in (\ref{dia:instance for omega}) as an instance of $\mcC$, i.e. as four tables. 
\next This subinstance $A\ss X$ corresponds to a map $\ell:=char(A)\taking X\to\Omega_\mcC$. For all $c\in\Ob(\mcC)$ we have a function $\ell(c)\taking X(c)\to\Omega_\mcC(c)$. With $c={\tt 1}$, write out $\ell({\tt 1})\taking X({\tt 1})\to\Omega_\mcC({\tt 1})$.
\endsexc
\end{exercise}

\begin{exercise}
Let $\Loop$ be the loop schema 
$$\Loop=\LoopSchema.$$ 
\sexc What is the subobject classifier $\Omega_\Loop\in\Ob(\Loop\set)$?
\next How does $\Omega_\Loop$ compare to the representable functor $Y_s$?
\endsexc
\end{exercise}

\begin{exercise}   
Let $\GrIn=\fbox{\GrInSchema}$ be the indexing category for graphs. 
\sexc Write down the subobject classifier $\Omega_\GrIn\in\Ob(\GrIn\set)$ in tabular form, i.e. as two tables.
\next Draw $\Omega_\GrIn$ as a graph.
\next Let $G$ be the graph below and $G'\ss G$ the blue part.
$$\xymatrix{
\LMO{\color{blue}{w}}\ar@/^1pc/[r]^f\ar@[blue][r]_{\color{blue}{g}}\ar[d]_h&\LMO{\color{blue}{x}}\\
\LMO{y}\ar@(l,d)[]_j\ar[r]_i&\LMO{\color{blue}{z}}
}
$$
Write down $G\in\Ob(\GrIn\set)$ in tabular form.
\next Write down the components of the natural transformation $char(G')\taking G\to\Omega_\GrIn$.
\endsexc
\end{exercise}


\subsection{Database instances in other categories}


\subsubsection{Representations of groups}\label{ex:reps of groups}

The classical mathematical subject of {\em representation theory}\index{representation theory} is the study of $\Fun(G,\Vect)$ where $G$ is a group and $\Vect$ is the category of vector spaces (over say $\RR$).\index{vector space}\index{a category!$\Vect$} Every such functor $F\taking G\to\Vect$ is called a {\em representation of $G$}. Since $G$ is a category with one object $\monOb$, $F$ consists of a single vector space $V=F(\monOb)$ together with an action of $G$ on it. 

We can think of this in terms of databases if we have a presentation of $G$ in terms of generators and relations. The schema corresponding to $G$ has one table and this table has a column for each generator. Giving a representation $F$ is the same as giving an instance on our schema, with some properties that stem from the fact that our target category is $\Vect$ rather than $\Set$. There are many possibilities for expressing
\footnote{We would use the term ``representing" or "presenting", but they are both taken in the context of our narrative!}
such data.

One possibility is if we could somehow draw $V$, say if $V$ is 1-, 2-, or 3-dimensional. If so, let $P$ be our chosen picture of $V$, e.g. $P$ is the standard drawing of a Cartesian coordinate plane. Then every column of our table would consist entirely of the picture $P$ instead of a set of rows. Drawing a point in the ID-column picture would result in a point being drawn in each other column's picture, in accordance with the $G$-action. Each column would of course respect addition and scalar multiplication.

Another possibility is to use the fact that there is a functor $U\taking\Vect\to\Set$, so our instance $F\taking G\to\Vect$ can be converted to an ordinary instance $U\circ F\taking G\to\Set$. We would have an ordinary set of rows. This set would generally be infinite, but it would be structured by addition and scalar multiplication. For example, assuming $V$ is finite dimensional, one could find a few rows that generated the rest. 

A third possibility is to use monads, which allow the table to have only as many rows as $V$ has dimensions. This is a considerable savings of space. See Section \ref{sec:monads}.


\subsubsection{Representations of quivers}

Representation theory also studies representations of quivers. A {\em quiver} is just the free category (see Example \ref{ex:free category}\index{category!free category}\index{graph!free category on}) on a graph. If $P$ is a graph with free category $\mcP$ then a representation of the quiver $\mcP$ is a functor $F\taking\mcP\to\Vect$. Such a representation consists of a vector space at every vertex of $P$ and a linear transformation for every arrow. All of the discussion from Section \ref{ex:reps of groups} works in this setting, except that there is more than one table.


\subsubsection{Other target categories}\label{sec:other targets}

One can imagine the value of using target categories other than $\Set$ or $\Vect$ for databases. 

\begin{application}\index{geography}

\href{http://en.wikipedia.org/wiki/Geographic_data}{\text Geographic data} consists of maps of the earth together with various functions on it. For example for any point on the earth one may want to know the average temperature recorded in the past 10 years, or the precise temperature at this moment. Earth can be considered as a topological space, $E$. Similarly, temperatures on earth reside on a continuum, say the space $T$ of real numbers $[-100,200]$. Thus the temperature record is a function $E\to T$. 

Other records such as precipitation, population density, elevation, etc. can all be considered as continuous functions from $E$ to some space. Agencies like the US Geological Survey hold databases of such information. By modeling them on functors $\mcC\to\Top$, they may be able to employ mathematical tools such as persistent homology \cite{WeS} to find interesting invariants of the data.

\end{application}

\begin{application}

Many other scientific disciplines could use the same kind of tool. For example, in studying the \href{http://en.wikipedia.org/wiki/Strength_of_materials}{\text mechanics of materials}, one may want to consider the material as a topological space $M$ and measure values such as energy as a continuous $M\to E$. Such observations could be modeled by databases with target category $\Top$ or $\Vect$ rather than $\Set$.

\end{application}


\subsection{Sheaves}\label{sec:sheaves}\index{sheaves}

Let $X$ be a topological space (see Example \ref{ex:topological space}), such as a sphere. In Section \ref{sec:other targets} we discussed continuous functions out of $X$, and their use in science (e.g. recording temperatures on the earth as a continuous map $X\to[-100,200]$). Sheaves allow us to consider the local-global nature of such maps, taking into account reparable discrepancies in data gathering tools. 

\begin{application}\label{app:sheaves of temperature}

Suppose that $X$ is the topological space corresponding to the earth; by a {\em region} we mean an open subset $U\ss X$. Suppose that we cover $X$ with 10,000 regions $U_1,U_2,\ldots,U_{10000}$, such that some of the regions overlap in a non-empty subregion (e.g. perhaps $U_5\cap U_9\neq\emptyset)$. For each $i,j$ let $U_{i,j}=U_i\cap U_j$. 

For each region $U_i\ss X$ we have a temperature recording device, which gives a function $T_i\taking U_i\to[-100,200]$. If $U_i\cap U_j\neq\emptyset$ then two different recording devices give us temperature data for the intersection $U_{i,j}$. Suppose we find that they do not give precisely the same data, but that there is a translation formula between their results. For example, $T_i$ might register $3^\circ$ warmer than $T_j$ registers, throughout the region $U_i\cap U_j$.

A consistent system of translation formulas is called a {\em sheaf}. It does not demand a universal ``true" temperature function, but only a consistent translation system between them. 

\end{application}

The following definitions (Definitions \ref{def:presheaf}, \ref{def:sheaf}) make the notion of sheaf precise, but we must go slowly (because it will already feel quick to the novice). For every region $U$, we can record the value of some function (say temperature) throughout $U$; although this record might consist of a mountain of data (a temperature for each point in $U$!), we think of it as one thing. That is, it is one element in the set of value-assignments throughout $U$. A sheaf holds the set of possible values-assignments-throughout-$U$'s for all the different regions $U$, as well as how a value-assignment-throughout-$U$ restricts to a value-assignment-throughout-$V$ for any subset $V\ss U$.

\begin{definition}\label{def:presheaf}\index{presheaf}

Let $X$ be a topological space, let $\Op(X)$ denote its partial order of open sets, and let $\Op(X)\op$ be the opposite category. A {\em presheaf on $X$} is a functor $\mcO\taking\Op(X)\op\to\Set$. For every open set $U\ss X$ we refer to the set $\mcO(U)$ as the {\em set of values-assignments throughout $U$ of $\mcO$}. If $V\ss U$ is an open subset, it corresponds to an arrow in $\Op(X)$ and applying the functor $\mcO$ yields a function called the {\em restriction map from $U$ to $V$} and denoted $\rho_{V,U}\taking\mcO(U)\to\mcO(V)$. Given $a\in\mcO(U)$, we may denote $\rho_{V,U}(a)$ by $a|_V$; it is called {\em the restriction of $a$ to $V$}.

The {\em category of presheaves on $X$} is simply $\Op(X)\op\set$; see Definition \ref{def:mcC-set}.

\end{definition}

\begin{exercise}~
\sexc Come up with $4$ overlapping open subsets that cover the square $X:=[0,3]\times[0,3]\ss\RR^2$. Write down a label for each open set as well as a label for each overlap (2-fold, 3-fold, etc.); you now have labeled $n$ open sets. For each of these open sets, draw a dot with the appropriate label, and then draw an arrow from one dot to another when the first refers to an open subset of the second. This is a preorder; call it $\Op(X)$. Now make up and write down formulas $R_1\taking X\to\RR$ and $R_2\taking X\to\RR$ with $R_1\leq R_2$, expressing a range of temperatures $R_1(p)\leq x\leq R_2(p)$ that an imaginary experiment shows can exist at each point $p$ in the square. 
\next Suppose we now tried to make our presheaf $\mcO\taking\Op(X)\op\to\Set$ as follows. For each of your open sets, say $A$, we could put $$\mcO(A):=\{f\taking A\to\RR\|R_1(a)\leq f(a)\leq R_2(a)\}.$$ What are the restriction maps? Do you like the name ``value-assignment throughout $A$" for elements of $\mcO(A)$? 
\next We can now make another presheaf $\mcO'$ given the same experiment. For each of your open sets, say $A$, we could put $$\mcO'(A):=\{f\taking A\to\RR\|f\tn{ is continuous, and }R_1(a)\leq f(a)\leq R_2(a)\}.$$ Are you comfortable with the idea that there is a morphism of presheaves $\mcO'\to\mcO$?
\endsexc
\end{exercise}

Before we define sheaves, we need to clarify the notion of covering. Suppose that $U$ is a region and that $V_1,\ldots,V_n$ are subregions (i.e. for each $1\leq i\leq n$ we have $V_i\ss U$). Then we say that the $V_i$ {\em cover} $U$ if every point in $U$ is in $V_i$ for some $i$. Another way to say this is that the natural function $\sqcup_iV_i\to U$ is surjective.

\begin{example}\label{ex:open cover}\index{open cover}

Let $X=\RR$ be the space of real numbers, and define the following open subsets: $U=(5,10), V_1=(5,7), V_2=(6,9), V_3=(7,10)$.
\footnote{We use parentheses to denote open intervals of real numbers. For example $(6,9)$ denotes the set $\{x\in\RR\|6<x<9\}$.} 
Then $V_1, V_2, V_3$ is a cover of $U$. It has overlaps $V_{12}=V_1\cap V_2=(6,7)$, $V_{13}=V_1\cap V_3=\emptyset$, $V_{23}=V_2\cap V_3=(7,9)$. 

Given a presheaf $\mcO\taking\Op(X)\op\to\Set$, we have sets and functions as in the following (incomplete) diagram
$$
\xymatrix@=15pt{
&&\mcO(V_1)\ar[drr]\\
&&&&\mcO(V_1\cap V_2)\\
\mcO(U)\ar[uurr]\ar[rr]\ar[ddrr]&&\mcO(V_2)\ar[urr]\ar[drr]\\
&&&&\mcO(V_2\cap V_3)\\
&&\mcO(V_3)\ar[urr]
}
$$

\end{example}

A presheaf $\mcO$ on $X$ tells us what value-assignments throughout $U$ can exist for each $U$. Suppose we have a value-assignment $a\in\mcO(U)$ throughout $U$ and another value-assignment $a'\in\mcO(U')$ throughout $U'$, and suppose that they agree as value-assignments throughout $U\cap U'$, i.e. $a|_{U\cap U'}=a'|_{U\cap U'}$. In this case we should have a unique value-assignment $b\in\mcO(U\cup U')$ throughout $U\cup U'$ that agrees on the $U$-part with $a$ and agrees on the $U'$-part with $a'$; i.e. $b|_U=a$ and $b|_{U'}=a'$. This is the sheaf condition. 

\begin{definition}\label{def:sheaf}

Let $X$ be a topological space, let $\Op(X)$ be its partial order of open sets, and let $\mcO\taking\Op(X)\op\to\Set$ be a presheaf. Given an open set $U\ss X$ and a cover $V_1,\ldots, V_n$ of $U$, the following condition is called the {\em sheaf condition}\index{sheaf!condition} for that cover. 
\begin{description}
\item [Sheaf condition] Given a sequence $a_1,\ldots,a_n$ where each is a value-assignment $a_i\in\mcO(V_i)$ throughout $V_i$, suppose that for all $i,j$ we have $a_i|_{V_i\cap V_j}=a_j|_{V_i\cap V_j}$; then there is a unique value-assignment $b\in\mcO(U)$ such that $b|_{V_i}=a_i$.
\end{description}
The presheaf $\mcO$ is called a {\em sheaf} if it satisfies the sheaf condition for every cover.

\end{definition}

\begin{example}

Let $X=\RR$ and let $U, V_1,V_2,V_3$ be the open cover given in Example \ref{ex:open cover}. Given a measurement taken throughout $V_1$, a measurement taken throughout $V_2$, and a measurement taken throughout $V_3$, we have elements $a_1\in\mcO(V_1), a_2\in\mcO(V_2),$ and $a_3\in\mcO(V_3)$. If they are in agreement on the overlap intervals, we can {\em glue} \index{sheaf!glueing} them to give a measurement throughout $U$.

\end{example}

\begin{remark}

In Application \ref{app:sheaves of temperature}, we said that sheaves would help us patch together information from different sources. Even if different temperature-recording devices $T_i$ and $T_j$ registered different temperatures on an overlapping region $U_i\cap U_j$, we said they could be patched together if there was a consistent translation system between their results. What is actually needed is a set of isomorphisms 
$$p_{i,j}\taking T_i|_{U_{i,j}}\To{\iso} T_j|_{U_{i,j}}$$ 
that translate between them, and that these $p_{i,j}$'s act in concert with one another. This (when precisely defined,) is called \href{http://en.wikipedia.org/wiki/Descent_theory}{\em descent data}.\index{descent data}\index{sheaf!descent data}. The way it interacts with our definition of sheaf given in Definitions \ref{def:presheaf} and \ref{def:sheaf} is buried in the restriction maps $\rho$ for the overlaps as subsets $U_{i,j}\ss U_i$ and $U_{i,j}\ss U_j$. We will not explain further here. One can see \cite{Gro}.

\end{remark}

\begin{application}

Consider outer space as a topological space $X$. Different astronomers record observations. Let $C=[390,700]$ denote the set of wavelengths in the visible light spectrum (written in nanometers). Given an open subset $U\ss X$ let $\mcO(U)$ denote the set of functions $U\to C$. The presheaf $\mcO$ satisfies the sheaf condition; this is the taken-for-granted fact that we can patch together different \href{http://en.wikipedia.org/wiki/Astrophotography}{\text observations of space}.

Below are three views of the night sky. Given a telescope position to obtain the first view, one moves the telescope right and a little down to obtain the second and one moves it down and left to obtain the third.
\footnote{Image credit: NASA, ESA, Digitized Sky Survey Consortium.}
\begin{center}\parbox{5.5in}{\begin{center}
\includegraphics[height=6cm]{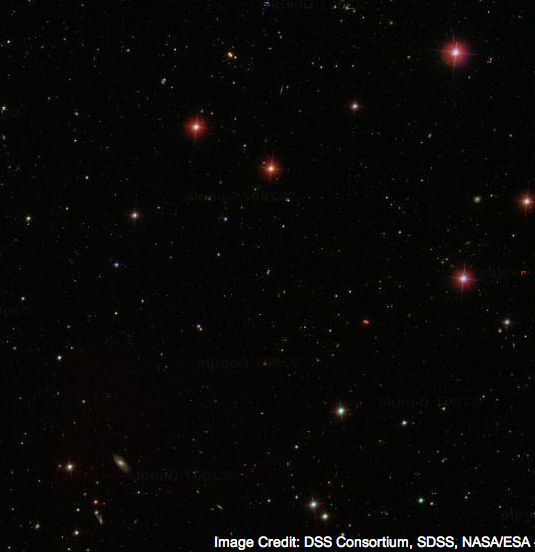}\hsp
\includegraphics[height=6cm]{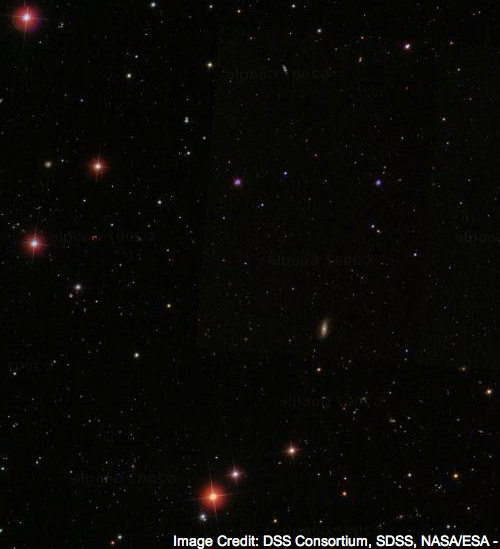}\end{center}
\hspace{1.8in}
\includegraphics[height=6cm]{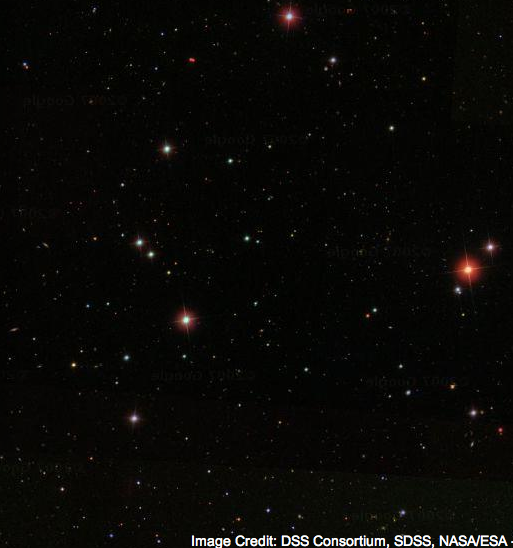}}\end{center}
These are value-assignments $a_1\in\mcO(V_1), a_2\in\mcO(V_2),$ and $a_3\in\mcO(V_3)$ throughout subsets $V_1,V_2,V_3\ss X$ (respectively). These subsets $V_1,V_2,V_3$ cover some (strangely-shaped) subset $U\ss X$. The sheaf condition says that these three value-assignments glue together to form a single value-assignment throughout $U$:
\begin{center}
\includegraphics[height=7.5cm]{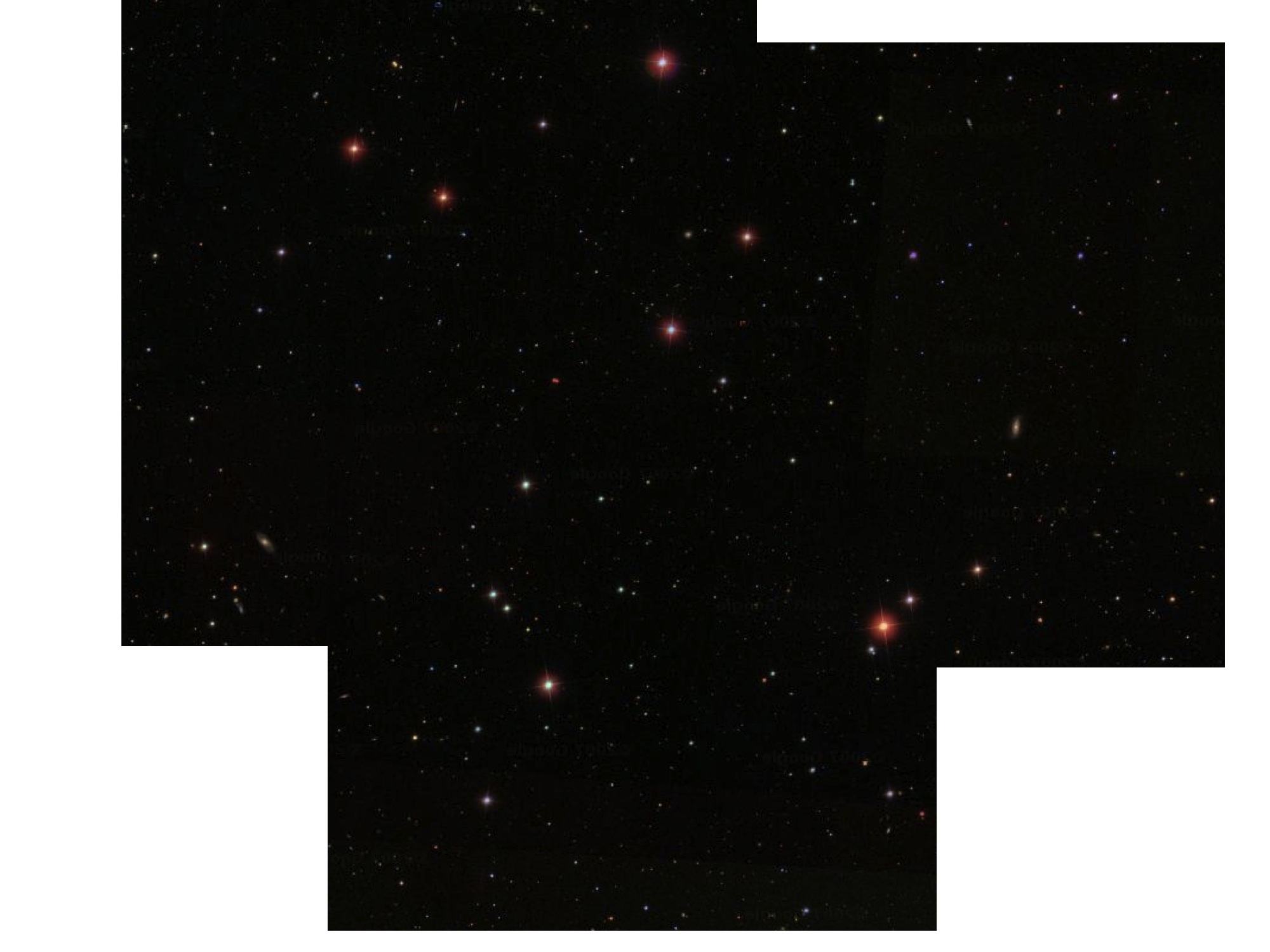}
\end{center}

\end{application}

\begin{exercise}
Find an application of sheaves in your own domain of expertise.
\end{exercise}

\begin{application}
Suppose we have a sheaf for temperatures on earth. For every region $U$ we have a set of theoretically possible temperature-assignments throughout $U$. For example we may know that if it is warm in Texas, warm in Arkansas, and warm in Kansas, then it cannot be cold in Oklahoma. With such a sheaf $\mcO$ in hand, one can use facts about the temperature in one region $U$ to predict the temperature in another region $V$. 

The mathematics is as follows. Suppose given regions $U,V\ss X$ and a subset $A\ss\mcO(U)$ corresponding to what we know about the temperature assignment throughout $U$. We take the following fiber product
$$
\xymatrix{(\rho_{U,X})^{\m1}(A)\ullimit\ar[r]\ar[d]&\mcO(X)\ar[d]^{\rho_{U,X}}\ar[r]^{\rho_{V,X}}&\mcO(V)\\
A\ar[r]&\mcO(U)}
$$
The image of the top map is a subset of $\mcO(V)$ telling us which temperature-assignments are possible throughout $V$ given our knowledge $A$ about the temperature throughout $U$.

We can imagine the same type of prediction systems for other domains as well, such as the energy of various parts of a material.
\end{application}

\begin{example}

In Exercises \ref{exc:juris 1} and \ref{exc:juris 2} we discussed the idea of laws being dictated or respected throughout a jurisdiction. If $X$ is earth, to every jurisdiction $U\ss X$ we assign the set $\mcO(U)$ of laws that are dictated to hold throughout $U$. Given a law on $U$ and a law on $V$, we can see if they amount to the same law on $U\cap V$. For example, on $U$ a law  might say ``no hunting near rivers" and on $V$ a law might say ``no hunting in public areas". It just so happens that on $U\cap V$ all public areas are near rivers and vice versa, so the laws agree there. These laws patch together to form a single rule about hunting that is enforced throughout the union $U\cup V$, respected by all jurisdictions within it.

\end{example}


\subsubsection{Sheaf of ologged concepts}\index{olog!sheaf of}

Definition \ref{def:sheaf} defines what should be called a sheaf of sets. We can discuss sheaves of groups or even sheaves of categories. Here is an application of the latter.

Recall the notion of simplicial complexes discussed in Section \ref{sec:simplicial complex}\index{simplicial complex}. They look like this: 
\begin{align}\label{dia:olog network}
\includegraphics[height=3in]{OlogNetwork5}
\end{align} 
Given such a simplicial complex $X$, we can imagine each vertex $v\in X_0$ as an entity with a worldview (e.g. a person) and each simplex as the common worldview shared by its vertices. To model this, we will assign to each vertex $v\in X$ an olog $\mcO(v)$, corresponding to the worldview held by that entity, and to each simplex $u\in X_n$, we assign an olog $\mcO(u)$ corresponding to a {\em common ground} worldview.\index{common ground}. Recall that $X$ is a subset of $\PP(X_0)$; it is a preorder and its elements (the simplices) are ordered by inclusion. If $u,v$ are simplices with $u\ss v$ then we want a map of ologs (i.e. a schema morphism) $\mcO(v)\to\mcO(u)$ corresponding to how any idea that is shared among the people in $v$ is shared among the people in $u$. Thus we have a functor $\mcO\taking X\to\Sch$ (where we are forgetting the distinction between ologs and databases for notational convenience).

To every simplicial complex (indeed every ordered set) one can associate a topological space; in fact we have a functor $Alx\taking\PrO\to\Top$,\index{a functor!$\PrO\to\Top$} called the \href{http://en.wikipedia.org/wiki/Alexandrov_topology}{\text Alexandrov} functor. Applying $Alx(X\op)$ we have a space which we denote by $\mcX$. One can visualize $\mcX$ as $X$, but the open sets include unions of simplices. There is a unique sheaf of categories on $\mcX$ that behaves like $X$ on simplices.

How does this work in the case of our sheaf $\mcO$ of worldviews? For simplices such as $(A)$ or $(CI)$, the sheaf returns the olog corresponding to that person or shared worldview. But for open sets like the union of $(CIJ)$ and $(IJK)$, what we get is the olog consisting of the types shared by $C, I$, and $J$ for which $I$ and $J$ affirm agreement with types shared by $I, J$, and $K$.

\begin{example}

Imagine two groups of people $G_1$ and $G_2$ each making observations about the world. Suppose that there is some overlap $H=G_1\cap G_2$. Then it may happen that there is a conversation including $G_1$ and $G_2$ and both groups are talking about something and, although using different words, $H$ says ``you guys are talking about the same things, you just use different words." In this case there is an object-assignment throughout $G_1\cup G_2$ that agrees with both those on $G_1$ and those on $G_2$.

\end{example}


\subsubsection{Time}

One can use sheaves to model objects in time; Goguen gave an approach to this in \cite{Gog}. For another approach, let $\mcC$ be a database schema. The lifespan of information about the world is generally finite; that is, what was true yesterday is not always the case today. Thus we can associate to each interval $U$ of time the information that we deem to hold throughout $U$. This is sometimes called the {\em valid time}\index{data!valid time} of the data.

If something is the case throughout $U$ and we have a subset $V\ss U$ then of course it is the case throughout $V$. And the sheaf condition holds too: if some information holds throughout $U$ and some other information holds throughout $U'$, and if these two things restrict to the same information on the overlap $U\cap V$, then they can be glued to information that holds throughout the union $U\cup V$.

So we can model information-change over time by using a sheaf of $\mcC$-sets on the topological space $\RR$. One way to think of this is simply as an instance on the schema $\mcC\times\Op(\RR)\op$. The sheaf condition is just an added property that our instances have to obey.

\begin{example}

Consider a hospital in which babies are born. In our scenario, mothers enter the hospital, babies are born, mothers and babies leave the hospital. Let $\mcC$ be the schema 
$$\fbox{\xymatrix{\obox{c}{.5in}{a baby}\LA{rr}{was birthed by}&\hspace{.2in}&\obox{m}{.6in}{a mother}}}$$
Consider the 8-hour intervals 
\begin{align*}
\tn{Shift}_1&:=(\tn{Jan }1 - 00:00,\tn{ Jan }1 - 08:00),\\
\tn{Shift}_2&:=(\tn{Jan }1 - 04:00,\tn{ Jan }1 - 12:00),\\
\tn{Shift}_3&:=(\tn{Jan }1 - 8:00,\tn{ Jan }1 - 16:00).
\end{align*}
The nurses take shifts of 8 hours, overlapping with their predecessors by 4 hours, and they record in the database only patients that were there throughout their shift or throughout any overlapping shift. A mother might be in the hospital throughout shift 1, arriving before the new year. A baby is born at 05:00 on Jan 1, and thus does not make it into the $\tn{Shift}_1$-table, but does make it into the $(\tn{Shift}_1\cap\tn{Shift}_2)$-table. The two are there until 17:00 on Jan 1, and so they are recorded in the $\tn{Shift}_2$ and $\tn{Shift}_3$ tables. 

\end{example}

Whether or not this implementation of the sheaf semantics is most useful in practice is certainly debatable. But something like this could easily be useful as a semantics, i.e. a way of thinking about, the temporal nature of data.


\section{Monads}\label{sec:monads}\index{monad}

Monads would probably not have been invented without category theory, but they have been quite useful in formalizing algebra, calculating invariants of topological spaces, and imbedding non-functional operations into functional programming languages. We will mainly discuss monads in terms of how they can help us make modeling contexts explicit, and in so doing allow us to simplify the language we use in the model.

Much of the following material on monads is taken from \cite{Sp3}.


\subsection{Monads formalize context}\index{monad!formalizing context}\index{context}

Monads can formalize assumptions about the way one will do business throughout a domain. For example, suppose that we want to consider functions that do not have to return a value for all inputs. Such {\em partial functions}\index{partial functions} can be composed. Indeed, given a partial function $f\taking A\to B$ and a partial function $g\taking B\to C$, one gets a partial function $g\circ f\taking A\to C$ in an obvious way.

Here we are drawing arrows as though we are talking about functions, but there is an implicit context in which we are actually talking about partial functions. Monads allow us to write things in the ``functional" way while holding the underlying context. What makes them useful is that the notion of {\em context} we are using here is made formal.

\begin{example}[Partial functions]\label{ex:partial function monad}\index{a monad!partial functions}\index{partial function}\index{a monad!maybe}

Partial functions can be modeled by ordinary functions, if we add a special ``no answer" element to the codomain. That is, the set of partial functions $A\to B$ is in one-to-one correspondence with the set of ordinary functions $A\to B\sqcup\singleton$. For example, suppose we want to model the partial function $f(x):=\frac{1}{x^2-1}\taking\RR\to\RR$ in this way, we would use the function 
$$f(x):=\begin{cases}
\frac{1}{x^2-1}&\tn{if } x\neq -1 \tn{ and } x\neq 1,\\
\smiley&\tn{if } x=-1,\\
\smiley&\tn{if } x= 1.
\end{cases}
$$
An ordinary function $f\taking A\to B$ can be considered a partial function because we can compose with the inclusion 
\begin{align}\label{dia:first eta}
B\to B\sqcup\singleton
\end{align}

But how do we compose two partial functions written in this way? Suppose $f\taking A\to B\sqcup\singleton$ and $g\taking B\to C\sqcup\singleton$ are functions. First form a new function 
\begin{align}\label{dia:first monad}
g':=g\sqcup\singleton\taking B\sqcup\singleton\to C\sqcup\singleton\sqcup\singleton
\end{align}
then compose to get $(g'\circ f)\taking A\to C\sqcup\singleton\sqcup\singleton$, and finally send both $\smiley$'s to the same element by composing with 
\begin{align}\label{dia:first mu}
C\sqcup\singleton\sqcup\singleton\to C\sqcup\singleton.
\end{align}

What does this mean? Every element $a\in A$ is sent by $f$ to either an element $b\in B$ or ``no answer". If it has an answer $f(a)\in B$, this is either sent by $g$ to an element $g(f(a))\in C$ or to ``no answer". We get a partial function $A\to C$ by sending $a$ to $g(f(a))$ if possible or to ``no answer" if it gets stopped along the way.

This monad is sometimes called the {\em maybe monad} in computer science, because a partial function $f\taking A\to B$ takes every element of $A$ and either outputs just an element of $B$ or outputs nothing; more succinctly, it outputs a ``maybe $B$".

\end{example}

\begin{application}\label{app:experimenter matters}

\href{http://en.wikipedia.org/wiki/Observer-expectancy_effect}{\text Experiments are supposed to be performed objectively}, but suppose we imagine that changing the person who performs the experiment, say in psychology, may change the outcome. Let $A$ be the set of experimenters, let $X$ be the parameter space for the experimental variables (e.g. $X=\tn{Age}\times\tn{Income}$) and let $Y$ be the observation space (e.g. $Y=\tn{propensity for violence}$). Then whereas we want to think of such an experiment as telling us about a function $f\taking X\to Y$, we may want to make some of the context explicit by including information about who performed the experiment. That is, we are really finding a function $f\taking X\times A\to Y$. 

However, it may be the case that even ascertaining someones age or income, which is done by asking that person, is subject to who in $A$ is doing the asking, and so we again want to consider the experimenter as part of the equation. In this case, we can use a monad to hide the fact that everything in sight is assumed to be influenced by $A$. In other words, we want to announce once and for all our modeling context---that every observable is possibly influenced by the observer---so that it can recede into the background.

We will return to this in Examples \ref{ex:experimenter matters 2} and \ref{ex:experimenter matters 3}.

\end{application}


\subsection{Definition and examples}

What aspects of Example \ref{ex:partial function monad} are really about monads, and what aspects are just about partial functions in particular? It is a functor and a pair of natural transformations that showed up in (\ref{dia:first monad}), (\ref{dia:first eta}), and (\ref{dia:first mu}). In this section we will give the definition and a few examples. We will return to our story about how monads formalize context in Section \ref{sec:kleisli}.

\begin{definition}[Monad]\label{def:monad}\index{monad}\index{monad!on $\Set$}

A {\em monad on $\Set$} is defined as follows: One announces some constituents (A. functor, B. unit map, C. multiplication map) and asserts that they conform to some laws (1. unit laws, 2. associativity law). Specifically, one announces
\begin{enumerate}[A.]
\item a functor $T\taking\Set\to\Set$,
\item a natural transformation $\eta\taking\id_{\Set}\to T$, and 
\item a natural transformation $\mu\taking T\circ T\to T$
\end{enumerate}
We sometimes refer to the functor $T$ as though it were the whole monad; we call $\eta$ the {\em unit map} and we call $\mu$ the {\em multiplication map}. One asserts that the following laws hold:
\begin{enumerate}[1.]
\item The following diagrams of functors $\Set\to\Set$ commute:
\begin{align*}
\xymatrix@=30pt{T\circ\id_\Set\ar[r]^-{\id_T\diamond\eta}\ar[dr]_{=}&T\circ T\ar[d]^\mu\\&T}\hspace{1in}
\xymatrix@=30pt{\id_\Set\circ T\ar[r]^-{\eta\diamond\id_T}\ar[dr]_=&T\circ T\ar[d]^\mu\\&T}
\end{align*}
\item The following diagram of functors $\Set\to\Set$ commutes:
\begin{align*}
\xymatrix@=30pt{T\circ T\circ T\ar[r]^-{\mu\diamond\id_T}\ar[d]_-{\id_T\diamond\mu}&T\circ T\ar[d]^\mu\\T\circ T\ar[r]_\mu&T}\end{align*}
\end{enumerate}

\end{definition}

\begin{example}[List monad]\label{ex:monad}\index{a monad!List}

We now go through Definition \ref{def:monad} using what is called the $\List$ monad.\index{list} The first step is to give a functor $\List\taking\Set\to\Set$, which we did in Example \ref{ex:free monoid}. Recall that if $X=\{p,q,r\}$ then $\List(X)$ includes the empty list $[\;]$, singleton lists, such as $[p]$, and any other list of elements in $X$, such as $[p,p,r,q,p]$. Given a function $f\taking X\to Y$, one obtains a function $\List(f)\taking\List(X)\to\List(Y)$ by entry-wise application of $f$.

As a monad, the functor $\List$ comes with two natural transformations, a unit map $\eta$ and a multiplication map $\mu$. Given a set $X$, the unit map $\eta_X\taking X\to\List(X)$ returns singleton lists as follows
$$\xymatrix@=.5pt{
&X\ar[rr]^{\eta_X}&\hspace{1.4in}&\List(X)\\
\ar@{..}[rrr]+<.3in,0pt>&&&\\\\\\
&p\ar@{|->}[rr]&&[p]\\
&q\ar@{|->}[rr]&&[q]\\
&r\ar@{|->}[rr]&&[r]}$$
Given a set $X$, the multiplication map $\mu_X\taking\List(\List(X))\to\List(X)$ flattens lists of lists as follows.
$$\xymatrix@=.5pt{
&\List(\List(X))\ar[rr]^{\mu_X}&\hspace{.7in}&\List(X)\\
\ar@{..}[rrr]+<.6in,0pt>&&&\\\\\\
&\big[[q, p, r], [], [q, r, p, r], [r]\big]\ar@{|->}[rr]&&[q, p, r, q, r, p, r, r]}$$
The naturality of $\eta$ and $\mu$ just mean that these maps work appropriately well under term-by-term replacement by a function $f\taking X\to Y$. Finally the three monad laws from Definition \ref{def:monad} can be exemplified as follows:
$$\xymatrix@=30pt{[p, q, q]\ar@{|->}[r]^-{\id_\List\circ\eta}\ar@{=}[rd]&\big[[p], [q], [q]\big]\ar@{|->}[d]^\mu\\&[p, q, q]}\hspace{.8in}
\xymatrix@=30pt{[p, q, q]\ar@{|->}[r]^-{\eta\circ\id_\List}\ar@{=}[rd]&\big[[p, q, q]\big]\ar@{|->}[d]^\mu\\&[p,q,q]}$$
\vspace{.1in}
$$\xymatrix@=30pt{\Big[\big[[p, q], [r]\big], \big[[], [r, q, q]\big]\Big]\ar@{|->}[r]^-{\mu\circ\id_\List}\ar@{|->}[d]_{\id_\List\circ\mu}&\big[[p, q], [r], [], [r, q, q]\big]\ar@{|->}[d]^\mu\\\big[[p, q, r], [r, q, q]\big]\ar@{|->}[r]_\mu&[p, q, r, r, q,q]}$$

\end{example}

\begin{exercise}\label{exc:power set monad}
Let $\PP\taking\Set\to\Set$ be the powerset functor, so that given a function $f\taking X\to Y$ the function $\PP(f)\taking\PP(X)\to\PP(Y)$ is given by taking images.
\sexc Make sense of the following statement: ``with $\eta$ defined by singleton subsets and with $\mu$ defined by union, $\top:=(\PP,\eta,\mu)$ is a monad".
\next  With $X=\{a,b\}$, write down the function $\eta_X$ as a 2-row, 2-column table, and write down the function $\mu_X$ as a 16-row, 2-column table (you can stop after 5 rows if you fully get it).
\next Check that you believe the monad laws from Definition \ref{def:monad}.
\endsexc
\end{exercise}

\begin{example}[Partial functions as a monad]\label{ex:partial functions as monad}

Here is the monad for partial functions. The functor $T\taking\Set\to\Set$ sends a set $X$ to the set $X\sqcup\singleton$. Clearly, given a function $f\taking X\to Y$ there is an induced function $f\sqcup\singleton\taking X\sqcup\singleton\to Y\sqcup\singleton$, so this is a functor. The natural transformation $\eta\taking\id\to T$ is given on a set $X$ by the component function $$\eta_X\taking X\to X\sqcup\singleton$$ that includes $X\inj X\sqcup\singleton$. Finally, the natural transformation $\mu\taking T\circ T\to T$ is given on a set $X$ by the component function $$\mu_X\taking X\sqcup\singleton\sqcup\singleton\too X\sqcup\singleton$$ that collapses both copies of $\smiley$.

\end{example}

\begin{exercise}\label{exc:exceptions}
Let $E$ be a set, elements we will refer to as {\em exceptions}\index{exceptions}\index{a monad!exceptions}. We imagine that a function $f\taking X\to Y$ either outputs a value or one of these exceptions, which might be things like ``overflow!" or ``division by zero!", etc. Let $T\taking\Set\to\Set$ be the functor $X\mapsto X\sqcup E$. Follow Example \ref{ex:partial functions as monad} and come up with a unit map $\eta$ and a multiplication map $\mu$ for which $(T,\eta,\mu)$ is a monad.
\end{exercise}

\begin{example}\label{ex:experimenter matters 2}

Fix a set $A$. Let $T\taking\Set\to\Set$ be given by $T(X)=X^A=\Hom_\Set(A,X)$; this is a functor. For a set $X$, let $\eta_X\taking X\to T(X)$ be given by the constant function, $x\mapsto c_x\taking A\to X$ where $c_x(a)=x$ for all $a\in A$. To specify a function
$$\mu_X\taking\Hom_\Set(A,T(X))\to\Hom_\Set(A,X),$$ we curry and need a function $A\times\Hom_\Set(A,T(X))\to X$. We have an evaluation function (see Exercise \ref{exc:evaluation}) $ev\taking A\times\Hom_\Set(A,T(X))\to T(X)$, and we have an identity function $\id_A\taking A\to A$, so we have a function $(\id_A\times ev)\taking A\times\Hom_\Set(A,T(X))\too A\times T(X)$. Composing that with another evaluation function $A\times\Hom_\Set(A,X)\to X$ yields our desired $\mu_X$. Namely, for all $b\in A$ and $f\in\Hom(A,T(X))$ we have
$$\mu_X(f)(b)=f(b)(b).$$

\end{example}

\begin{remark}\index{monad!on $\Grph$}\index{a monad!$\Paths$}

Monads can be defined on categories other than $\Set$. In fact, for any category $\mcC$ one can take Definition \ref{def:monad} and replace every occurrence of $\Set$ with $\mcC$ and obtain the definition for monads on $\mcC$. We have actually seen a monad $(\Paths,\eta,\mu)$ on the category $\Grph$ of graphs before, namely in Examples \ref{ex:graph to paths} and \ref{ex:concat paths of paths}. That is, $\Paths\taking\Grph\to\Grph$, which sends a graph to its paths-graph\index{graph!paths-graph} is the functor part. The unit map $\eta$ includes a graph into its paths-graph using the observation that every arrow is a path of length 1. And the multiplication map $\mu$ concatenates paths of paths. The Kleisli category of this monad (see Definition \ref{def:kleisli}) is used, e.g. in (\ref{dia:kleisli comp in graph}) to define morphisms of database schemas.

\end{remark}


\subsection{Kleisli category of a monad}\label{sec:kleisli}

Given a monad $\top:=(T,\eta,\mu)$, we can form a new category $\Kls(\top)$.

\begin{definition}\label{def:kleisli}\index{category!Kleisli}\index{Kleisli category}\index{monad!Kleisli category of}

Let $\top=(T,\eta,\mu)$ be a monad on $\Set$. Form a new category, called the {\em Kleisli category for $\top$}, denoted $\Kls(\top)$, with sets as objects, $\Ob(\Kls(\top)):=\Ob(\Set)$, and with $$\Hom_{\Kls(\top)}(X,Y):=\Hom_\Set(X,T(Y))$$ for sets $X,Y$. The identity morphism $\id_X\taking X\to X$ in $\Kls(\top)$ is given by $\eta\taking X\to T(X)$ in $\Set$. The composition of morphisms $f\taking X\to Y$ and $g\taking Y\to Z$ in $\Kls(\top)$ is given as follows. Writing them as functions, we have $f\taking X\to T(Y)$ and $g\taking Y\to T(Z)$. The first step is to apply the functor $T$ to $g$, giving $T(g)\taking T(Y)\to T(T(Z))$. Then compose with $f$ to get $T(g)\circ f\taking X\to T(T(Z))$. Finally, compose with $\mu_Z\taking T(T(Z))\to T(Z)$ to get the required function $X\to T(Z)$. The associativity of this composition formula follows from the associativity law for monads.

\end{definition}

\begin{example}

Recall the monad $\top$ for partial functions, $T(X)=X\sqcup\singleton$, from Example \ref{ex:partial functions as monad}. The Kleisli category $\Kls(\top)$ has sets as objects, but a morphism $f\taking X\to Y$ means a function $X\to Y\sqcup\singleton$, i.e a partial function. Given another morphism $g\taking Y\to Z$, the composition formula in $\Kls(\top)$ ensures that $g\circ f\taking X\to Z$ has the appropriate behavior.

Note how this monad allows us to make explicit our assumption that all functions are partial, and then hide it away from our notation.

\end{example}

\begin{remark}\label{rem:ordinary are kleisli}

For any monad $\top=(T,\eta,\mu)$ on $\Set$, there is a functor $i\taking \Set\to\Kls(\top)$ given as follows. On objects we have $\Ob(\Kls(\top))=\Ob(\Set)$, so take $i=\id_{\Ob(\Set)}$. Given a morphism $f\taking X\to Y$ in $\Set$, we need a morphism $i(f)\taking X\to Y$ in $\Kls(\top)$, i.e. a function $i(f)\taking X\to T(Y)$. We assign $i(f)$ to be the composite $X\To{f}Y\To{\eta}T(Y)$. The functoriality of this mapping follows from the unit law for monads.

The point is that any ordinary function (morphism in $\Set$) has an interpretation as a morphism in the Kleisli category of any monad. More categorically, there is a functor $\Set\to\Kls(\top)$.

\end{remark}

\begin{example}\label{ex:experimenter matters 3}

In this example we return to the setting laid out by Application \ref{app:experimenter matters} where we had a set $A$ of experimenters and assumed that the person doing the experiment may affect the outcome. We use the monad $\top=(T,\eta,\mu)$ from Example \ref{ex:experimenter matters 2} and hope that $\Kls(\top)$ will conform to our understanding of how to manage the affect of the experimenter on data.

The objects of $\Kls(\top)$ are ordinary sets, but a map $f\taking X\to Y$ in $\Kls(\top)$ is a function $X\to Y^A$. By currying this is the same as a function $X\times A\to Y$, as desired. To compose $f$ with $g\taking Y\to Z$ in $\Kls(\top)$, we follow the formula. It turns out to be equivalent to the following. We have a function $X\times A\to Y$ and a function $Y\times A\to Z$. Modifying the first slightly, we have a function $X\times A\to Y\times A$, by identity on $A$, and we can now compose to get $X\times A\to Z$.

What does this say in terms of experimenters affecting data gathering? It says that if we work within $\Kls(\top)$ then we will be able to assume that the experimenter is being taken into account; all proposed functions $X\to Y$ are actually functions $A\times X\to Y$. The natural way to compose these experiments is that we only consider the data from one experiment to feed into another if the experimenter is the same in both experiments.
\footnote{This requirement seems a bit stringent, but it can be mitigated in a variety of ways. One such way is to notice that by Remark \ref{rem:ordinary are kleisli} that we have not added any requirement, because any old way of doing business yields a valid new way of doing business (we just say ``every experimenter would get the same result"). Another way would be to hand off the experiment results to another person, who could carry it forward (see Example \ref{ex:preorder monad}).}

\end{example}

\begin{exercise}\label{exc:kleisli powerset relations}
In Exercise \ref{exc:power set monad} we discussed the power set monad $\top=(\PP,\eta,\mu)$.
\sexc Can you find a way to relate the morphisms in $\Kls(\top)$ to relations? That is, given a morphism $f\taking A\to B$ in $\Kls(\top)$, is there a natural way to associate to it a relation $R\ss A\times B$?
\next How does the composition formula in $\Kls(\top)$ relate to the composition of relations given in Definition \ref{def:composite span}?
\footnote{Actually, Definition \ref{def:composite span} is about composing spans, but a relation $R\ss A\times B$ is a kind of span, $R\to A\times B$.}
\endsexc
\end{exercise}

\begin{exercise}
Let $\top=(\PP,\eta,\mu)$ be the power set monad. The category $\Kls(\top)$ is closed under binary products, i.e. every pair of objects $A,B\in\Ob(\Kls(\top))$ have a product in $\Kls(\top)$. What is the product of $A=\{1,2,3\}$ and $B=\{a,b\}$?
\end{exercise}

\begin{exercise}
Let $\top=(\PP,\eta,\mu)$ be the power set monad. The category $\Kls(\top)$ is closed under binary coproducts, i.e. every pair of objects $A,B\in\Ob(\Kls(\top))$ have a coproduct in $\Kls(\top)$. What is the coproduct of $A=\{1,2,3\}$ and $B=\{a,b\}$?
\end{exercise}

\begin{example}\label{ex:preorder monad}

Let $A$ be any preorder. We speak of $A$ throughout this example as though it was the linear order given by time because this is a nice case, however the mathematics works for any $A\in\Ob(\PrO)$. 

There is a monad $\top=(T,\eta,\mu)$ that captures the idea that a function $f\taking X\to Y$ occurs in the context of time in the following sense: The output of $f$ is determined not only by the element $x\in X$ on which it is applied but also by the time at which it was applied to $x$; and the output of $f$ occurs at another time, which is not before the time of input.

The functor part of the monad is given on $X\in\Ob(\Set)$ by
$$T(X)=\{p\taking A\to A\times X\|\tn{ if } p(a)=(a',x)\tn{ then } a'\geq a\}.$$
The unit $\eta_X\taking X\to T(X)$ sends $x$ to the function $a\mapsto (a,x)$. The multiplication map $\mu_X\taking T(T(X))\to T(X)$ is roughly described as follows. If for every $a\in A$ you have a later element $a'\geq a$ and a function $p\taking A\to A\times X$ that takes elements of $A$ to later elements of $A$ and values of $X$, then $p(a')$ is a still later element of $A$ and a value of $X$, as desired.

Morphisms in the Kleisli category $\Kls(\top)$ can be curried to be functions $f\taking A\times X\to A\times Y$ such that if $f(a,x)=(a',y)$ then $a'\geq a$. 

\end{example}

\begin{remark}\label{rem:state monad}

One of the most important monads in computer science is the so-called {\em state monad}. It is used when one wants to allow a program to mutate state variables (e.g. in the program 
\begin{quote}if $x>4$ then $x:=x+1$ else Print ``done")\end{quote}
$x$ is a state variable. The state monad is a special case of the monad discussed in Example \ref{ex:preorder monad}. Given any set $A$, the usual {\em state monad of type $A$} is obtained by giving $A$ the indiscrete preorder (see Example \ref{ex:discrete and indiscrete}). More explicitly it is a monad with functor part $$X\mapsto (A\times X)^X,$$ and it will be briefly discussed in Example \ref{ex:currying gives state}.

\end{remark}

\begin{example}\label{ex:scientific method}

Here we reconsider the image from the front cover of this book, reproduced here.
\begin{center}
\includegraphics[width=.8\textwidth]{ScientificMethod}
\end{center}

It looks like an olog, and all ologs are database schemas (see Section \ref{sec:olog as db schema}). But how is ``analyzed by a person yields" a function from observations to hypotheses? The very name belies the fact that it is an invalid aspect in the sense of Section \ref{sec:invalid aspect}, because given an observation there may be more than one hypothesis yielded, corresponding to which person is doing the observing. In fact, all of the arrows in this diagram correspond to some hidden context involving people: the prediction is dependent on who analyzes the hypothesis, the specification of an experiment is dependent on who is motivated to specify it, and experiments may result in different observations by different observers. 

Without monads, the model of science proposed by this olog would be difficult to believe in. But by choosing a monad we can make explicit (and then hide from discourse) our implicit assumption that ``of course this is all dependent on which human is doing the science". The choice of monad is an additional modeling choice. Do we want to incorporate the partial order of time? Do we want the scientist to be modified by each function (i.e. the person is changed when analyzing an observation to yield a hypothesis)? These are all interesting possibilities. 

One reasonable choice would be to use the state monad of type $A$, where $A$ is the set of scientific models. This implies the following context: every morphism $f\taking X\to Y$ in the Kleisli category of this monad is really a morphism $f\taking X\times A\to Y\times A$; while ostensibly giving a map from $X$ to $Y$, it is influenced by the scientific model under which it is performed, and its outcome yields a new scientific model. 

Reading the olog in this context might look like this:

\begin{quote}
A hypothesis (in the presence of a scientific model) analyzed by a person produces a prediction (in the presence of a scientific model), which motivates the specification of an experiment (in the presence of a scientific model), which when executed results in an observation (in the presence of a scientific model), which analyzed by a person yields a hypothesis (in the presence of a scientific model).
\end{quote}

The parenthetical statements can be removed if we assume them to always be around, which can be done using the monad above.

\end{example}


\subsubsection{Relaxing functionality constraint for ologs}\label{sec:relaxing ologs}

In Section \ref{sec:aspects} we said that every arrow in an olog has to be English-readable as a sentence, and it has to correspond to a function. For example, the arrow 
\begin{align}\label{dia:non-functional but english}
\xymatrix{\fbox{a person}\LA{r}{has}&\fbox{a child}}
\end{align}
comprises an readable sentence, but does not correspond to a function because a person may have no children or more than one child. 
We'll call olog in which every arrow corresponds to a function (the only option proposed so far in the book) a {\em functional olog}. Requiring that ologs be functional as we have been doing, comes with advantages and disadvantages. The main advantage is that creating a functional olog requires more conceptual clarity about the situation, and this has benefits for the olog-creator as well as for anyone to whom he or she tries to explain the situation. The main disadvantage is that creating a functional olog takes more time, and the olog takes up more space on the page.

In the context of the power set monad (see Exercise \ref{exc:power set monad}), a morphism $f\taking X\to Y$ between sets $X$ and $Y$ becomes a binary relation on $X$ and $Y$, rather than a function, as seen in Exercise \ref{exc:kleisli powerset relations}. So in that context, the arrow in (\ref{dia:non-functional but english}) becomes valid. An olog in which arrows correspond to mere binary relations rather than functions might be called a {\em relational olog}.\index{olog!relational}


\subsection{Monads in databases}\label{sec:monads in db}\index{database!Kleisli}\index{instance!Kleisli}

In this section we discuss how to record data in the presence of a monad. The idea is quite simple. Given a schema (category) $\mcC$, an ordinary instance is a functor $I\taking\mcC\to\Set$. But if $\top=(T,\eta,\mu)$ is a monad, then a {\em Kleisli $\top$-instance on $\mcC$} is a functor $J\taking\mcC\to\Kls(\top)$. Such a functor associates to every object $c\in\Ob(\mcC)$ a set $J(c)$, and to every arrow $f\taking c\to c'$ in $\mcC$ a morphism $J(f)\taking J(c)\to J(c')$ in $\Kls(\top)$. How does this look in terms of tables?

Recall that to represent an ordinary database instance $I\taking\mcC\to\Set$, we use a tabular format in which every object $c\in\Ob(\mcC)$ is displayed as a table including one ID column and an additional column for every arrow emanating from $c$. In the ID column of table $c$ were elements of the set $I(c)$ and in the column assigned to some arrow $f\taking c\to c'$ the cells were elements of the set $I(c')$. 

To represent a {\em Kleisli} database instance $J\taking\mcC\to\Kls{\top}$ is similar; we again use a tabular format in which every object $c\in\Ob(\mcC)$ is displayed as a table including one ID column and an additional column for every arrow emanating from $c$. In the ID column of table $c$ are again elements of the set $J(c)$; however in the column assigned to some arrow $f\taking c\to c'$ are not elements of $J(c')$ but $T$-values in $J(c')$, i.e. elements of $T(J(c'))$. 

\begin{example}

Let $\top=(T,\eta,\mu)$ be the monad for partial functions, as discussed in Example \ref{ex:partial function monad}. Given any schema $\mcC$, we can represent a Kleisli $\top$-instance $I\taking\mcC\to\Kls(\top)$ in tabular format. To every object $c\in\Ob(\mcC)$ we'll have a set $I(c)$ of rows, and given a column $c\to c'$ every row will produce either a value in $I(c')$ or fail to produce a value; this is the essence of partial functions. We might denote the absence of a value using $\smiley$.

Consider the schema indexing graphs 
$$\mcC:=\fbox{\xymatrix{\LTO{Arrow}\ar@<.5ex>[r]^{src}\ar@<-.5ex>[r]_{tgt}&\LTO{Vertex}}}$$
As we discussed in Section \ref{sec:graphs as functors}, an ordinary instance on $\mcC$ represents a graph. 
\begin{align*}
I:=\parbox{2in}{\fbox{\xymatrix{\bullet^v\ar[r]^f&\bullet^w\ar@/_1pc/[r]_h\ar@/^1pc/[r]^g&\bullet^x}}}
\hspace{.5in}
\begin{array}{| l || l | l |}\bhline
\multicolumn{3}{|c|}{{\tt Arrow}\;\; (I)}\\\bhline
{\bf ID}&{\bf src}&{\bf tgt}\\\bbhline
f&v&w\\\hline
g&w&x\\\hline
h&w&x\\\bhline
\end{array}
\hspace{.5in}
\begin{array}{| l ||}\bhline
\multicolumn{1}{|c|}{{\tt Vertex}\;\; (I)}\\\bhline
{\bf ID}\\\bbhline
v\\\hline
w\\\hline
x\\\bhline
\end{array}
\end{align*}
A Kleisli $\top$-instance on $\mcC$ represents graphs in which edges can fail to have a source vertex, fail to have a target vertex, or both. 
\begin{align*}
J:=\parbox{2in}{\fbox{\xymatrix{\bullet^v\ar[d]_i\ar[r]^f&\bullet^w\ar@/_1pc/[r]_h\ar@/^1pc/[r]^g&\bullet^x\\&\ar[r]_j&}}}
\hspace{.5in}
\begin{array}{| l || l | l |}\bhline
\multicolumn{3}{|c|}{{\tt Arrow}\;\; (J)}\\\bhline
{\bf ID}&{\bf src}&{\bf tgt}\\\bbhline
f&v&w\\\hline
g&w&x\\\hline
h&w&x\\\hline
i&v&\smiley\\\hline
j&\smiley&\smiley\\\bhline
\end{array}
\hspace{.5in}
\begin{array}{| l ||}\bhline
\multicolumn{1}{|c|}{{\tt Vertex}\;\; (J)}\\\bhline
{\bf ID}\\\bbhline
v\\\hline
w\\\hline
x\\\bhline
\end{array}
\end{align*}
The context of these tables is that of partial functions, so we do not need a reference for $\smiley$ in the vertex table. Mathematically, the morphism $J(src)\taking J({\tt Arrow})\to J({\tt Vertex})$ needs to be a function $J({\tt Arrow})\to J({\tt Vertex})\sqcup\singleton$, and it is.

\end{example}

\subsubsection{Probability distributions}

Let $[0,1]\ss\RR$ denote the set of real numbers between $0$ and $1$. Let $X$ be a set and $p\taking X\to[0,1]$ a function. We say that $p$ is a {\em finitary probability distribution on $X$} if there exists a finite subset $W\ss X$ such that 
\begin{align}\label{dia:sum to 1}
\sum_{w\in W}p(w)=1,
\end{align} and such that $p(x)>0$ if and only if $x\in W$. Note that $W$ is unique if it exists; we call it {\em the support of $p$} and denote it $\Supp(p)$. Note also that if $X$ is a finite set then every function $p$ satisfying (\ref{dia:sum to 1}) is a finitary probability distribution on $X$.

For any set $X$, let $\Dist(X)$ denote the set of finitary probability distributions on $X$. It is easy to check that given a function $f\taking X\to Y$ one obtains a function $\Dist(f)\taking\Dist(X)\to\Dist(Y)$ by $\Dist(f)(y)=\sum_{f(x)=y}p(x)$. Thus we can consider $\Dist\taking\Set\to\Set$ as a functor, and in fact the functor part of a monad. Its unit $\eta\taking X\to\Dist(X)$ is given by the Kronecker delta function $x\mapsto \delta_x$ where $\delta_x(x)=1$ and $\delta_x(x')=0$ for $x'\neq x$. Its multiplication $\mu\taking\Dist(\Dist(X))\to\Dist(X)$ is given by weighted sum: given a finitary probability distribution $w\taking\Dist(X)\to[0,1]$ and $x\in X$, put $\mu(w)(x)=\sum_{p\in\Supp(w)}w(p)p(x).$ 

\begin{example}[Markov chains]\label{ex:markov}\index{Markov chain}\index{a schema!$\Loop$}

Let $\Loop$ be the loop schema, $$\Loop:=\LoopSchema$$ as in Example \ref{ex:dds}. A $\Dist$-instance on $\Loop$ is equivalent to a time-homogeneous Markov chain. To be explicit, a functor $\delta\taking\Loop\to\Kls{\Dist}$ assigns to the unique object $s\in\Ob(\Loop)$ a set $S=\delta(s)$, which we call the state space, and to $f\taking s\to s$ a function $\delta(f)\taking S\to\Dist(S)$, which sends each element $x\in S$ to some probability distribution on elements of $S$. For example, the table $\delta$ on the left corresponds to the Markov matrix $M$ on the right below:
\begin{align}
\delta:=
\begin{tabular}{| l || l |}\bhline
\multicolumn{2}{| c |}{\tt{s}}\\\bhline 
{\bf ID}&{\bf f}\\\bbhline
1 & .5(1)+.5(2)\\\hline
2 & 1(2)\\\hline
3 & .7(1)+.3(3)\\\hline
4 & .4(1)+.3(2)+.3(4)\\\bhline
\end{tabular}
\hspace{.5in}
M:=\left(
\begin{array}{cccc}
0.5 & 0.5 & 0 & 0\\
0 & 1 & 0 & 0\\
0.7 & 0 & 0.3 & 0\\
0.4 & 0.3 & 0 &0.3
\end{array}
\right)
\end{align}

As one might hope, for any natural number $n\in\NN$ the map $f^n\taking S\to\Dist(S)$ corresponds to the matrix $M^n$, which sends an element in $S$ to its probable location after $n$ iterations of the transition map.

\end{example}

\begin{application}

Every star \href{http://cas.sdss.org/dr6/en/proj/basic/color/fromstars.asp}{emits a spectrum of light}, which can be understood as a distribution on the electromagnetic spectrum. Given an object $B$ on earth, different parts of $B$ will \href{http://en.wikipedia.org/wiki/Absorption_spectroscopy}{absorb radiation} at different rates. Thus $B$ produces a function from the electromagnetic spectrum to distributions of energy absorption. In the context of the probability distributions monad, we can record data on the schema 
$$\xymatrix{\LTO{star}\LA{rr}{emits}&&\LTO{wavelengths}\LA{rr}{absorbed by $B$}&\hspace{.2in}&\LTO{energies}}$$
The composition formula for Kleisli categories is the desired one: to each star we associate the weighted sum of energy absorption rates over the set of wavelengths emitted by the star. 

\end{application}


\subsection{Monads and adjunctions}

There is a strong connection between monads and adjunctions: every adjunction creates a monad, and every monad ``comes from" an adjunction. For example, the $\List$ monad (Example \ref{ex:monad}) comes from the free-forgetful adjunction between sets and monoids
$$\Adjoint{F}{\Set}{\Mon}{U}$$
(see Proposition \ref{prop:free forgetful monoid}). That is, for any set $X$, the free monoid on $X$ is $$F(X)=(\List(X),[\;],\plpl),$$ and the underlying set of that monoid is $U(F(X))=\List(X)$. Now it may seem like there was no reason to use monoids at all---the set $\List(X)$ was needed in order to discuss $F(X)$---but it will turn out that the unit $\eta$ and multiplication $\mu$ will come drop out of the adjunction too. First, we discuss the unit and counit of an adjunction.

\begin{definition}\label{def:unit and counit of adjunction}\index{adjunction!unit}\index{adjunction!counit}

Let $\mcC$ and $\mcD$ be categories, and let $L\taking\mcC\to\mcD$ and $R\taking\mcD\to\mcC$ be functors with adjunction isomorphism 
$$\alpha_{c,d}\taking\Hom_\mcD(L(c),d)\Too{\iso}\Hom_\mcC(c,R(d))$$
for any objects $c\in\Ob(\mcC)$ and $d\in\Ob(\mcD)$. The {\em unit} $\eta\taking\id_\mcC\to R\circ L$ (respectively the {\em counit} $\epsilon\taking L\circ R\to\id_\mcD$) are natural transformations defined as follows.

Given an object $c\in\Ob(\mcC)$, we apply $\alpha$ to $\id_{L(c)}\taking L(c)\to L(c)$ to get 
$$\eta_c\taking c\to R\circ L(c);$$ 
similarly given an object $d\in\Ob(\mcD)$ we apply $\alpha^\m1$ to $\id_{R(d)}\taking R(d)\to R(d)$ to get 
$$\epsilon_d\taking L\circ R(d)\to d.$$ 

\end{definition}

Below we will show how to use the unit and counit of any adjunction to make a monad. We first walk through the process in Example \ref{ex:list adjunction makes monad}.

\begin{example}\label{ex:list adjunction makes monad}

Consider the adjunction $\Adjoint{F}{\Set}{\Mon}{U}$ between sets and monoids. Let $T=U\circ F\taking\Set\to\Set$; this will be the functor part of our monad, and we have $T=\List$. Then the unit of the adjunction, $\eta\taking\id_\Set\to U\circ F$ is precisely the unit of the monad: for any set $X\in\Ob(\Set)$ the component $\eta_X\taking X\to\List(X)$ is the function that takes $x\in X$ to the singleton
list $[x]\in\List(X)$. The monad also has a multiplication map $\mu_X\taking T(T(X))\to T(X)$, which amounts to flattening a list of lists. This function comes about using the counit $\epsilon$, as follows 
$$T\circ T=U\circ F\circ U\circ F\Too{\id_U\diamond\epsilon\diamond \id_F}U\circ F=T.$$

\end{example}

The general procedure for extracting a monad from an adjunction is analogous to that shown in Example \ref{ex:list adjunction makes monad}. Given any adjunction 
$$\Adjoint{L}{\mcC}{\mcD}{R}$$
We define $\top=R\circ L\taking\mcC\to\mcC$, we define $\eta\taking\id_\mcC\to \top$ to be the unit of the adjunction (as in Definition \ref{def:unit and counit of adjunction}), and we define $\mu\taking \top\circ \top\to \top$ to be the natural transformation $\id_R\diamond\epsilon\diamond\id_L\taking RLRL\to RL,$ obtained by applying the counit $\epsilon\taking LR\to\id_\mcD$.

The above procedure produces monads on arbitrary categories $\mcC$, whereas our definition of monad (Definition \ref{def:monad}) considers only the case $\mcC=\Set$. However, this definition can be generalized to arbitrary categories $\mcC$ by simply replacing every occurrence of the string $\Set$ with the string $\mcC$.\index{monad!on arbitrary category} Similarly, our definition of Kleisli categories (Definition \ref{def:kleisli}) considers only the case $\mcC=\Set$, but again the generalization to arbitrary categories $\mcC$ is straightforward. In Proposition \ref{prop:monad to adjunction}, it may be helpful to again put $\mcC=\Set$ if one is at all disoriented.

\begin{proposition}\label{prop:monad to adjunction}

Let $\mcC$ be a category, let $(\top,\eta,\mu)$ be a monad on $\mcC$, and let $\mcK:=\Kls_\mcC(\top)$ be the Kleisli category. Then there is an adjunction 
$$\Adjoint{L}{\mcC}{\mcK}{R}$$
such that the monad $(\top,\eta,\mu)$ is obtained (up to isomorphism) by the above procedure.

\end{proposition}

\begin{proof}[Sketch of proof]

The functor $L\taking\mcC\to\mcK$ was discussed in Remark \ref{rem:ordinary are kleisli}. We define it to be identity on objects (recall that $\Ob(\mcK)=\Ob(\mcC)$). Given objects $c,c'\in\Ob(\mcC)$ the function
$$\Hom_\mcC(c,c')\Too{L}\Hom_\mcK(c,c')=\Hom_\mcC(c,\top(c'))$$
is given by $f\mapsto \eta_{c'}\circ f$. The fact that this is a functor (i.e. that it preserves composition) follows from a monad axiom.

The functor $R\taking\mcK\to\mcC$ acts on objects by sending $c\in\Ob(\mcK)=\Ob(\mcC)$ to $\top(c)\in\Ob(\mcC)$. For objects $c,c'\in\Ob(\mcK)$ the function
$$\Hom_\mcC(c,\top(c'))=\Hom_\mcK(c,c')\Too{R}\Hom_\mcC(\top(c),\top(c'))$$
is given by sending the $\mcC$-morphism $f\taking c\to \top(c')$ to the composite 
$$\top(c)\Too{\top(f)}\top\top(c')\Too{\mu_{c'}}\top(c').$$
Again, the functoriality follows from monad axioms.

We will not continue on to show that these are adjoint or that they produce the monad $(\top,\eta,\mu)$, but see \cite[VI.5.1]{Mac} for the remainder of the proof.

\end{proof}

\begin{example}\label{ex:currying gives state}

Let $A\in\Ob(\Set)$ be a set, and recall the currying adjunction 
$$\Adjoint{A\times-}{\Set}{\Set}{-^A}$$
discussed briefly in Example \ref{ex:other adjunctions}. The corresponding monad $St_A$ is typically called the {\em state monad of type $A$} in programming language theory. Given a set $X$, we have $$St_A(X)=(A\times X)^A.$$ In the Kleisli category $\Kls(St_A)$ a morphism from $X$ to $Y$ is a function of the form $X\to (A\times Y)^A$, but this can be curried to a function $A\times X\to A\times Y$. 

This monad is related to holding on to an internal state variable of type $A$. Every morphism ostensibly from $X$ to $Y$ actually takes as input not only an element of $X$ but also the current state $a\in A$, and it produces as output not only an element of $Y$ but an updated state as well.

\end{example}

Computer scientists in programming language theory have found monads to be very useful (\cite{Mog}). In much the same way, monads on $\Set$ can be useful in databases, as discussed in Section \ref{sec:monads in db}. Another, totally different way to use monads in databases is by using a mapping between schemas to produce in each one an internal model of the other. That is, for any functor $F\taking\mcC\to\mcD$, i.e. mapping of database schemas, the adjunction $(\Sigma_F,\Delta_F)$ produces a monad on $\mcC\set$, and the adjunction $(\Delta_F,\Pi_F)$ produces a monad on $\mcD\set$. If one interprets the $\List$ monad as producing in $\Set$ an internal model of the category $\Mon$ of monoids, one can similarly interpret the above monads on $\mcC\set$ and $\mcD\set$ as producing internal models of each within the other.


\section{Operads}\label{sec:operad}

In this section we briefly introduce operads, which are generalizations of categories. They often are useful for speaking about self-similarity of structure. For example, we will use them to model agents made up of smaller agents, or materials made up of smaller materials. This association with self-similarity is not really inherent in the definition, but it tends to emerge in our thinking about many operads used in practice. 

Let me begin with a warning.

\begin{warning}\index{a warning!operads vs. multicategories}

My use of the term operad is not entirely standard and conflicts with widespread usage. The more common term for what I am calling an operad is {\em symmetric colored operad} or a {\em symmetric multicategory}\index{multicategory}\index{operad!colored}. An operad classically is a multicategory with one object, and a colored operad is a multicategory. The analogy is that ``operad is to multicategory as monoid is to category". The term multicategory stems from the fact that the morphisms in a multicategory have many, rather than one, input. But there is nothing really ``multi" about the multicategory itself, only its morphisms. Probably the real reason though is that I find the term multicategory to be clunky and the term operad to be sleek, clocking in at half the syllables. I apologize if my break with standard terminology causes any confusion.  

\end{warning}

This introduction to operads is quite short. One should see \cite{Le1} for an excellent treatment.


\subsection{Definition and classical examples}

An operad is like a category in that it has objects, morphisms, and a composition formula, and it follows an identity law and an associativity law. The difference is that each morphism has many inputs (and one output).
\begin{center}
\includegraphics[height=1in]{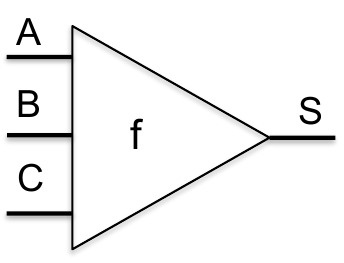}
\end{center}
The description of composition in an operad is a bit heavier than it is in a category, but the idea fairly straightforward. Here is a picture of morphisms being composed.
\begin{center}
\includegraphics[width=\textwidth]{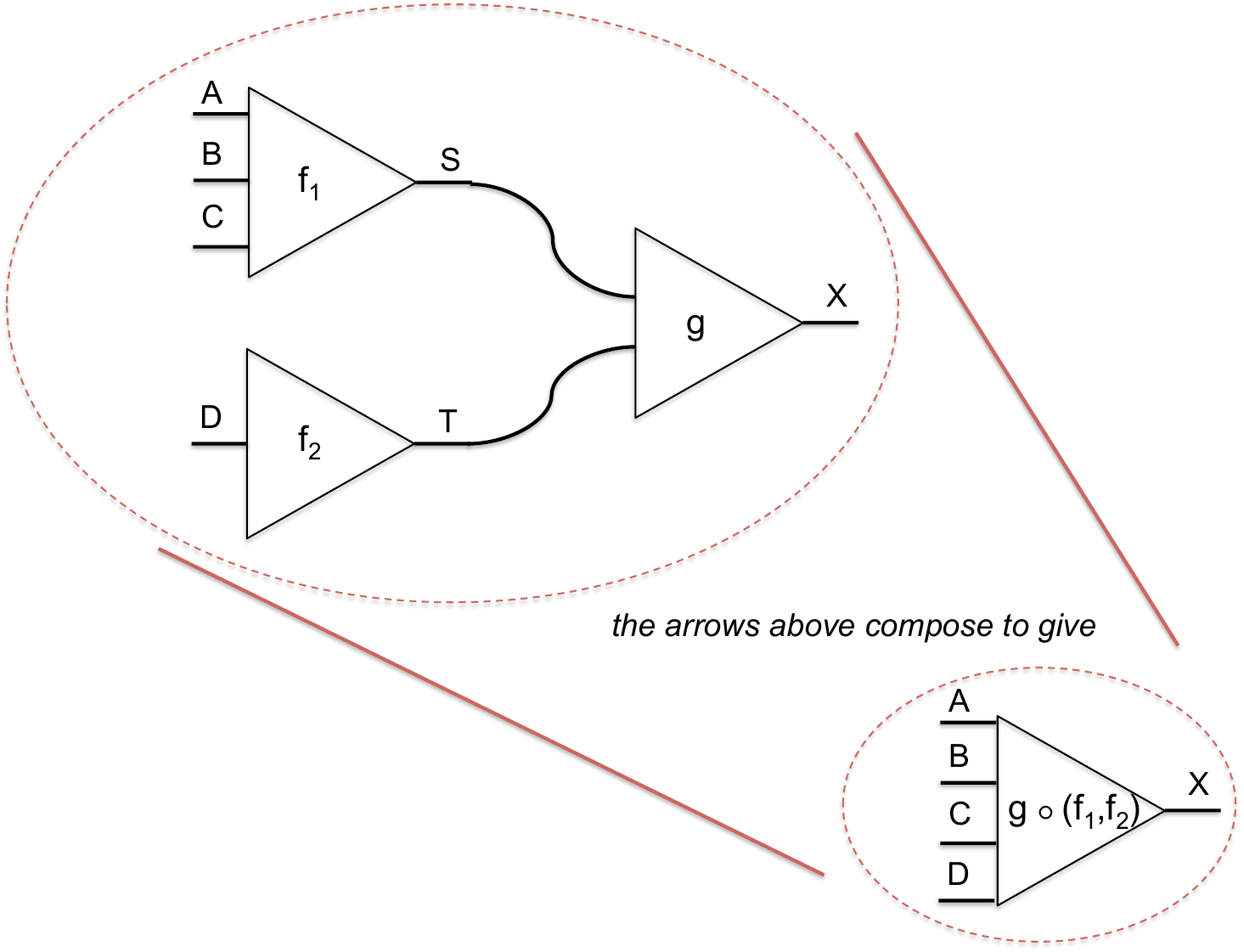}
\end{center}
Note that $S$ and $T$ disappear from the composition, but this is analogous to the way the middle object disappears from the composition of morphisms in a category
$$\color{Maroon}\dashbox{\color{black}$A\Too{f}S\Too{g} X$} \color{black}\hsp\tn{\em the arrows to the left compose to give}\hsp \color{Maroon}\dashbox{\color{black}$A\Too{g\circ f}X$}$$
Here is the definition, which we take directly from \cite{Sp4}.

\begin{definition}\label{def:operad}

An {\em operad} $\mcO$ is defined as follows: One announces some constituents (A. objects, B. morphisms, C. identities, D. compositions) and asserts that they conform to some laws (1. identity law, 2. associativity law). Specifically, 
\begin{enumerate}[\hsp A.]
\item one announces a collection $\Ob(\mcO)$, each element of which is called an {\em object} of $\mcO$.
\item for each object $y\in\Ob(\mcO)$, finite set $n\in\Ob(\Fin)$, and $n$-indexed set of objects $x\taking n\to\Ob(\mcO)$, one announces a set $\mcO_n(x;y)\in\Ob(\Set)$. Its elements are called {\em morphisms from $x$ to $y$} in $\mcO$. 
\item for every object $x\in\Ob(\mcO)$, one announces a specified morphism denoted $\id_x\in\mcO_1(x;x)$ called {\em the identity morphism on $x$}.
\item Let $s\taking m\to n$ be a morphism in $\Fin$. Let $z\in\Ob(\mcO)$ be an object, let $y\taking n\to\Ob(\mcO)$ be an $n$-indexed set of objects, and let $x\taking m\to\Ob(\mcO)$ be an $m$-indexed set of objects. For each element $i\in n$, write $m_i:=s^\m1(i)$ for the pre-image of $s$ under $i$, and write $x_i=x|_{m_i}\taking m_i\to\Ob(\mcO)$ for the restriction of $x$ to $m_i$. Then one announces a function 
\begin{align}\label{dia:composition formula}
\circ\taking\mcO_n(y;z)\times\prod_{i\in n}\mcO_{m_i}(x_i;y(i))\too\mcO_{m}(x;z),
\end{align} 
called {\em the composition formula}.
\end{enumerate}
Given an $n$-indexed set of objects $x\taking n\to\Ob(\mcO)$ and an object $y\in\Ob(\mcO)$, we sometimes abuse notation and denote the set of morphisms from $x$ to $y$ by $\mcO(x_1,\ldots,x_n;y)$.
\footnote{There are three abuses of notation when writing $\mcO(x_1,\ldots,x_n;y)$, which we will fix one by one. First, it confuses the set $n\in\Ob(\Fin)$ with its cardinality $|n|\in\NN$. But rather than writing $\mcO(x_1,\ldots,x_{|n|};y)$, it would be more consistent to write $\mcO(x(1),\ldots,x(|n|);y)$, because we have assigned subscripts another meaning in part D. But even this notation unfoundedly suggests that the set $n$ has been endowed with a linear ordering, which it has not. This may be seen as a more serious abuse, but see Remark \ref{rem:symmetry}.}
We may write $\Hom_\mcO(x_1,\ldots,x_n;y)$, in place of $\mcO(x_1,\ldots,x_n;y)$, when convenient. We can denote a morphism $\phi\in\mcO_n(x;y)$ by $\phi\taking x\to y$ or by $\phi\taking (x_1,\ldots,x_n)\to y$; we say that each $x_i$ is a {\em domain object} of $\phi$ and that $y$ is the {\em codomain object} of $\phi$. We use infix notation for the composition formula, e.g. writing $\psi\circ(\phi_1,\ldots,\phi_n)$.

One asserts that the following laws hold:
\begin{enumerate}[\hsp 1.]
\item for every $x_1,\ldots,x_n,y\in\Ob(\mcO)$ and every morphism $\phi\taking(x_1,\ldots,x_n)\to y$, we have
$$\phi\circ(\id_{x_1},\ldots,\id_{x_n})=\phi\hsp\tn{and}\hsp\id_y\circ\phi=\phi;$$
\item Let $m\To{s}n\To{t}p$ be composable morphisms in $\Fin$. Let $z\in\Ob(\mcO)$ be an object, let $y\taking p\to\Ob(\mcO)$, $x\taking n\to\Ob(\mcO)$, and $w\taking m\to\Ob(\mcO)$ respectively be a $p$-indexed, $n$-indexed, and $m$-indexed set of objects. For each $i\in p$, write $n_i=t^\m1(i)$ for the pre-image and $x_i\taking n_i\to\Ob(\mcO)$ for the restriction. Similarly, for each $k\in n$ write $m_k=s^\m1(k)$ and $w_k\taking m_k\to\Ob(\mcO)$; for each $i\in p$, write $m_{i,-}=(t\circ s)^\m1(i)$ and $w_{i,-}\taking m_{i,-}\to\Ob(\mcO)$; for each $j\in n_i$, write $m_{i,j}:=s^\m1(j)$ and $w_{i,j}\taking m_{i,j}\to\Ob(\mcO)$. Then the diagram below commutes:
$$\hspace{-1.4in}\xymatrix@=18pt{
&
{\hspace{.9in}\color{white}\prod\color{black}}
\save[]+<0cm,0cm>*\txt<30pc>{$
\mcO_p(y;z)\times\prod_{i\in p}\mcO_{n_i}(x_i;y(i))\times\prod_{i\in p,\ j\in n_i}\mcO_{m_{i,j}}(w_{i,j};x_i(j))
$}
\ar[rd]\ar[ld]
\restore\\
{\hspace{1in}\color{white}\prod\color{black}}
\save[]+<.4cm,0cm>*\txt<30pc>{$
\mcO_n(x;z)\times\prod_{k\in n}\mcO_{m_k}(w_k;x(k))
$}
\ar[dr]
\restore&&
{\hspace{1in}\color{white}\prod\color{black}}
\save[]+<-.3cm,0cm>*\txt<30pc>{$
\mcO_p(y;z)\times\prod_{i\in p}\mcO_{m_{i,-}}(w_{i,-};y(i))
$}
\ar[dl]
\restore\\
&\mcO_m(w;z)
}
$$

\end{enumerate}

\end{definition}

\begin{remark}\label{rem:symmetry}

In this remark we will discuss the abuse of notation in Definition \ref{def:operad} and how it relates to an action of a symmetric group on each morphism set in our definition of operad. We follow the notation of Definition \ref{def:operad}, especially following the use of subscripts in the composition formula.

Suppose that $\mcO$ is an operad, $z\in\Ob(\mcO)$ is an object, $y\taking n\to\Ob(\mcO)$ is an $n$-indexed set of objects, and $\phi\taking y\to z$ is a morphism. If we linearly order $n$, enabling us to write $\phi\taking (y(1),\ldots,y(|n|))\to z$, then changing the linear ordering amounts to finding an isomorphism of finite sets $\sigma\taking m\To{\iso} n$, where $|m|=|n|$. Let $x=y\circ\sigma$ and for each $i\in n$, note that $m_i=\sigma^\m1(\{i\})=\{\sigma^\m1(i)\}$, so $x_i=x|_{\sigma^\m1(i)}=y(i)$. Taking $\id_{x_i}\in\mcO_{m_i}(x_i;y(i))$ for each $i\in n$, and using the identity law, we find that the composition formula induces a bijection $\mcO_n(y;z)\To{\iso}\mcO_m(x;z)$, which we might denote by 
$$\sigma\taking\mcO(y(1),y(2),\ldots,y(n);z)\iso\mcO\big(y(\sigma(1)),y(\sigma(2)),\ldots,y(\sigma(n));z\big).$$
In other words, there is an induced group action of $\Aut(n)$ on $\mcO_n(x;z)$, where $\Aut(n)$ is the group of permutations of an $n$-element set.

Throughout this book, we will permit ourselves to abuse notation and speak of morphisms $\phi\taking (x_1,x_2,\ldots,x_n)\to y$ for a natural number $n\in\NN$, without mentioning the abuse inherent in choosing an order, so long as it is clear that permuting the order of indices would not change anything up to canonical isomorphism.

\end{remark}

\begin{example}\index{an operad!$\Sets$}

Let $\Sets$ denote the operad defined as follows. For objects we put $\Ob(\Sets)=\Ob(\Set)$. For a natural number $n\in\NN$ and sets $X_1,\ldots,X_n,Y$, put 
$$\Hom_\Sets(X_1,\ldots,X_n;Y):=\Hom_\Set(X_1\times\cdots\times X_n,Y).$$
Given functions $f_1\taking(X_{1,1}\times\cdots\times X_{1,m_1})\to Y_1$ through $f_n\taking (X_{n,1}\times\cdots\times X_{n,m_n})\to Y_n$ and a function $Y_1\times\cdots\times Y_n\to Z$, the universal property provides us a unique function of the form $(X_{1,1}\times\cdots\times X_{n,m_n})\too Z$, giving rise to our composition formula.

\end{example}

\begin{example}[Little squares operad]\label{ex:little squares}\index{an operad!little squares}

An operad commonly used in mathematics is called the {\em little $n$-cubes operad}\index{an operad!little $n$-cubes}. We'll focus on $n=2$ and talk about the little squares operad $\mcO$. Here the set of objects has only one element, which we denote by a square, $\Ob(\mcO)=\{\square\}$. For a natural number $n\in\NN$, a morphism $f\taking(\square,\square,\ldots,\square)\too\square$ is a positioning of $n$ non-overlapping squares inside of a square. Here is a picture of a morphism $(X_1,X_2,X_3)\to Y$, where $X_1=X_2=X_3=Y=\square$.
\begin{center}
\includegraphics[height=2in]{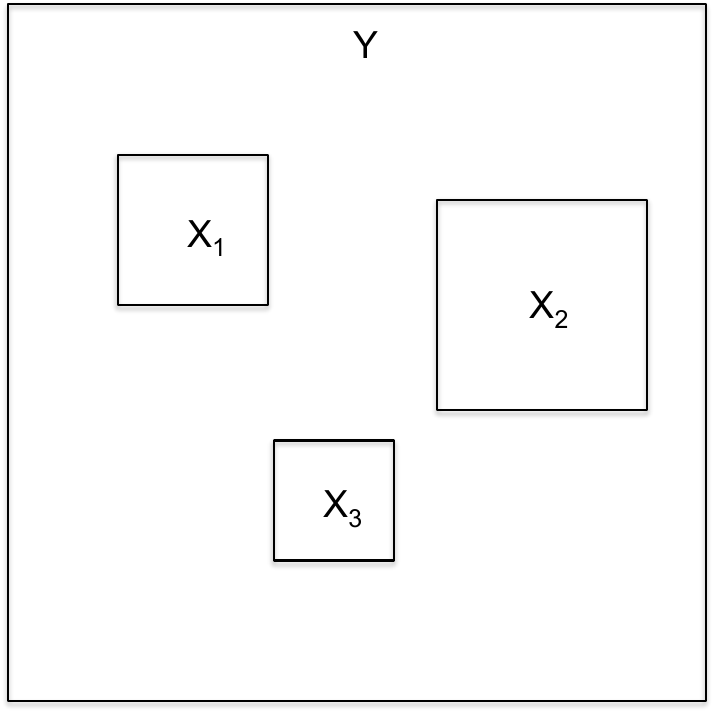}
\end{center}
The composition law says that given a positioning of small squares inside a large square, and given a positioning of tiny squares inside each of those small squares, we get a positioning of tiny squares inside a large square. A picture is shown in Figure \ref{fig:composition law for squares}.
\begin{figure}[H]
\begin{center}
\includegraphics[width=\textwidth]{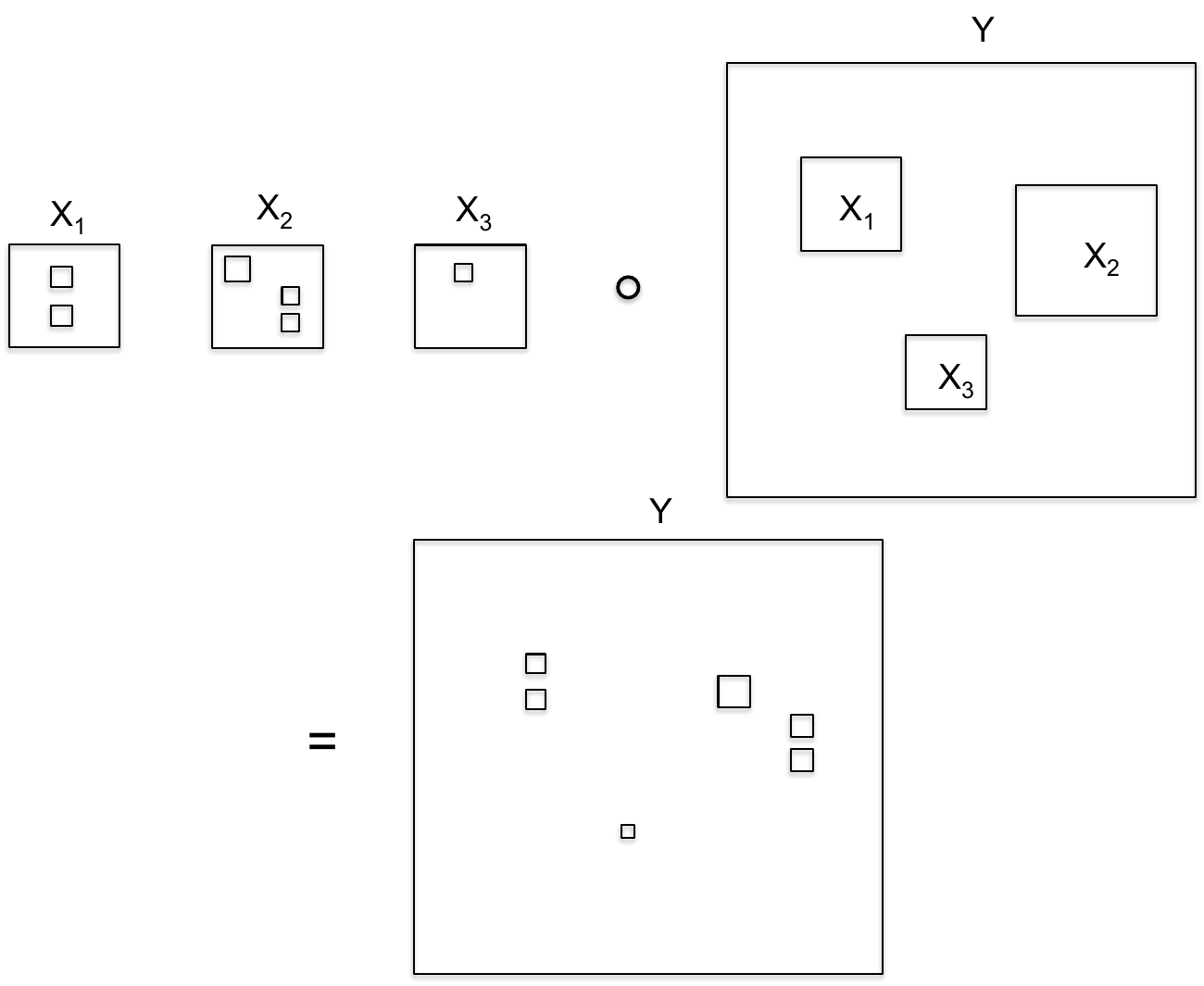}
\end{center}
\caption{Here we show a morphism $(X_1,X_2,X_3)\to Y$ and morphisms $(W_{1,1},W_{1,2})\to X_1$, $(W_{2,1},W_{2,2},W_{2,3})\to X_2$, and $(W_{3,1})\to X_3$, each of which is a positioning of squares inside a square. The composition law scales and positions the squares in the ``obvious" way.}
\label{fig:composition law for squares}
\end{figure}

Hopefully, what we meant by ``self-similarity" in the introduction to this section (see page \pageref{sec:operad}) is becoming clear.

\end{example}

\begin{exercise}\label{exc:little shapes}
Consider an operad $\mcO$ like the little squares operad from Example \ref{ex:little squares}, except with three objects: square, circle, equilateral triangle. A morphism is again a non-overlapping positioning of shapes inside of a shape. 
\sexc Draw an example of a morphism $f$ from two circles and a square to a triangle.
\next Find three other morphisms that compose into $f$, and draw the composite.
\endsexc
\end{exercise}


\subsubsection{Operads: functors and algebras}

If operads are like categories, then we can define things like functors and call them {\em operad functors}. Before giving the definition, we give a warning.

\begin{warning}\index{a warning!operad functors}

What we call operad functors in Definition \ref{def:operad morphism} are usually (if not always) called {\em operad morphisms}. We thought that the terminology clash between morphisms {\em of} operads and morphisms {\em in} an operad was too confusing. It is similar to what would occur in regular category theory (e.g. Chapter \ref{chap:categories}) if we replaced the term ``functor" with the term ``category morphism". 

\end{warning}

\begin{definition}\label{def:operad morphism}\index{operad!morphism of}

Let $\mcO$ and $\mcO'$ be operads. An {\em operad functor from $\mcO$ to $\mcO'$}, denoted $F\taking\mcO\to\mcO'$ consists of some constituents (A. on-objects part, B. on-morphisms part) conforming to some laws (1. preservation of identities, 2. preservation of composition), as follows:
\begin{enumerate}[\hsp A.]
\item There is a function $\Ob(F)\taking\Ob(\mcO)\to\Ob(\mcO')$.
\item For each object $y\in\Ob(\mcO)$, finite set $n\in\Ob(\Fin)$, and $n$-indexed set of objects $x\taking n\to\Ob(\mcO)$, there is a function $$F_n\taking\mcO_n(x;y)\to\mcO'_n(Fx;Fy).$$
\end{enumerate}
As in B. above, we often denote $\Ob(F)$, and also each $F_n$, simply by $F$. The laws that govern these constituents are as follows:
\begin{enumerate}[\hsp 1.]
\item For each object $x\in\Ob(\mcO)$, the equation $F(\id_x)=\id_{Fx}$ holds.
\item Let $s\taking m\to n$ be a morphism in $\Fin$. Let $z\in\Ob(\mcO)$ be an object, let $y\taking n\to\Ob(\mcO)$ be an $n$-indexed set of objects, and let $x\taking m\to\Ob(\mcO)$ be an $m$-indexed set of objects. Then, with notation as in Definition \ref{def:operad}, the following diagram of sets commutes:
\begin{align}\label{dia:operad functor on composition}
\xymatrix{
\mcO_n(y;z)\times\prod_{i\in n}\mcO_{m_i}(x_i;y(i))\ar[r]^-F\ar[d]_\circ&
\mcO'_n(Fy;Fz)\times\prod_{i\in n}\mcO'_{m_i}(Fx_i;Fy(i))\ar[d]^\circ\\
\mcO_m(x;z)\ar[r]_F&\mcO'_m(Fx;Fz)
}
\end{align}
\end{enumerate}

We denote the category of operads and operad functors by $\Oprd$.

\end{definition}

\begin{exercise}
Let $\mcO$ denote the little squares operad from Example \ref{ex:little squares} and let $\mcO'$ denote the operad you constructed in Exercise \ref{exc:little shapes}.
\sexc Can you come up with an operad functor $\mcO\to\mcO'$?
\next Is it possible to find an operad functor $\mcO'\to\mcO$? 
\endsexc
\end{exercise}

\begin{definition}[Operad algebra]\label{def:operad algebra}\index{algebra!operad}\index{operad!algebra of}

Let $\mcO$ be an operad. An {\em algebra on $\mcO$} is an operad functor $A\taking\mcO\to\Sets$.

\end{definition}

\begin{remark}

Every category can be construed as an operad (yes, there is a functor $\Cat\to\Oprd$), by simply not including non-unary morphisms. That is, given a category $\mcC$, one makes an operad $\mcO$ with $\Ob(\mcO):=\Ob(\mcC)$ and with 
$$
\Hom_\mcO(x_1,\ldots,x_n;y)=
\begin{cases}
\Hom_\mcC(x_1,y)&\tn{ if }n=1;\\
\emptyset&\tn{ if }n\neq 1
\end{cases}
$$
Just like a schema is a category presentation, it is possible to discuss operad presentations by generators and relations. Under this analogy, an algebra on an operad corresponds to an instance on a schema.

\end{remark}


\subsection{Applications of operads and their algebras}

Hierarchical structures may be well-modeled by operads. Describing such structures using operads and their algebras allows one to make appropriate distinctions between different types of thinking. For example, the allowable formations are encoded in the operad, whereas the elements that will fit into those formations are encoded in the algebra. Morphisms of algebras are high-level understandings of how elements of very different types (such as materials vs. numbers) can occupy the same place in the structure and be compared. We will give examples below.

\begin{application}

\href{http://en.wikipedia.org/wiki/Composite_material}{\text Every material is composed of constituent materials}, arranged in certain patterns. (In case the material is ``pure", we consider the material to consist of itself as the sole constituent.) Each of these constituent materials each is itself an arrangement of constituent materials. Thus we see a kind of self-similarity which we can model with operads. 

\begin{align}\label{dia:material comp}
\includegraphics[width=\textwidth]{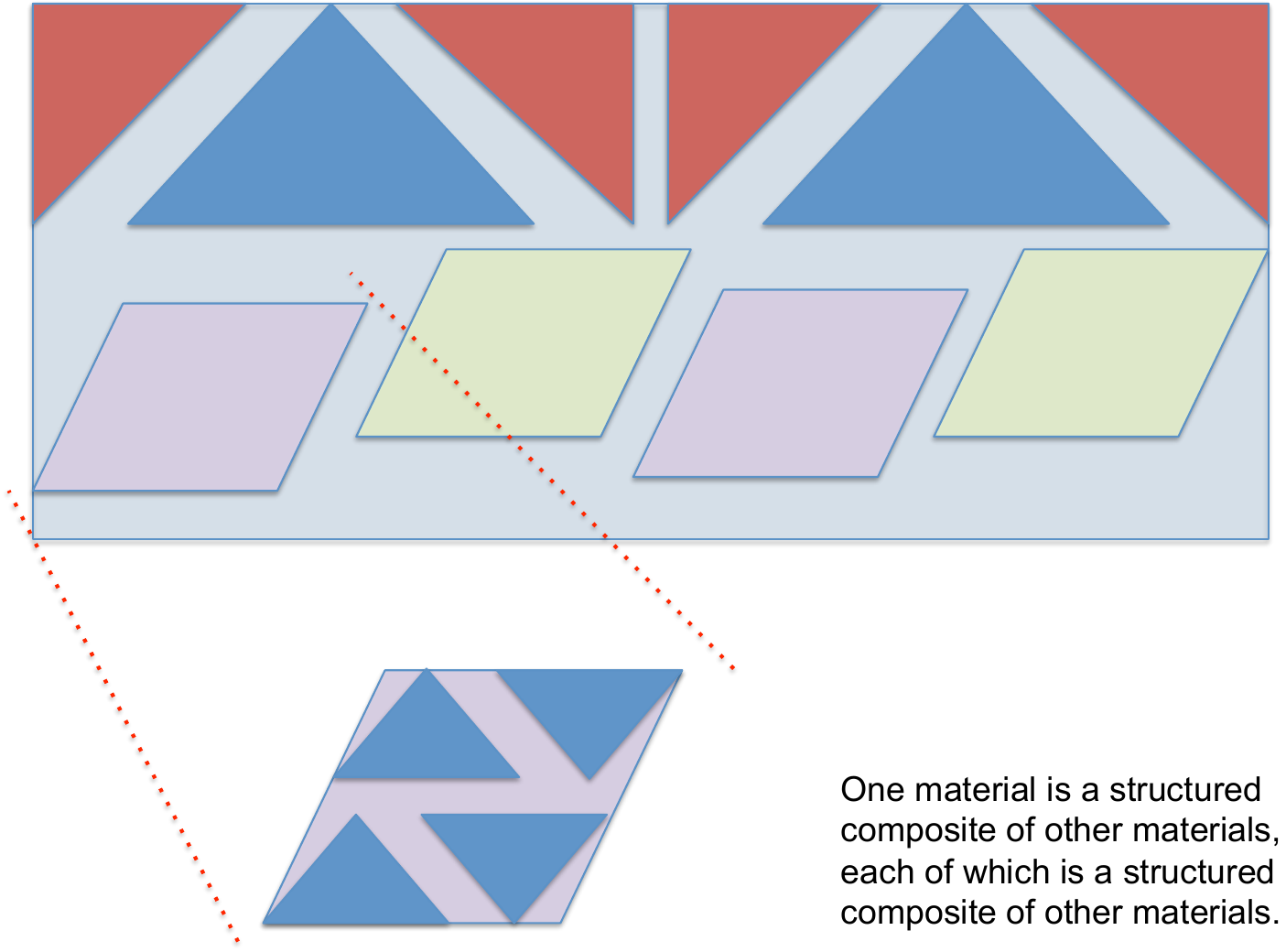}
\end{align}

For example, a tendon is made of collagen fibers that are assembled in series and then in parallel, in a specific way. Each collagen fibre is made of collagen fibrils that are again assembled in series and then in parallel, with slightly different specifications. We can continue down, perhaps indefinitely, though our resolution fails at some point. A collagen fibril is made up of tropocollagen collagen molecules, which are twisted ropes of collagen molecules, etc.\footnote{Thanks to Professor Sandra Shefelbine for explaining the hierarchical nature of collagen to me. Any errors are my own.}

Here is how operads might be employed. We want the same operad to model both actual materials, theoretical materials, and functional properties; that is we want more than one algebra on the same operad. 

The operad $\mcO$ should abstractly model the structure, but not the substance being structured. Imagine that each of the shapes (including the background ``shape") in Diagram (\ref{dia:material comp}) is a place-holder, saying something like ``{\em your material here}". Each morphism (that's what (\ref{dia:material comp}) is a picture of) represents a construction of a material out of parts. In our picture, it appears we are only concerned with the spacial arrangements, but there is far more flexibility than that. Whether we want to allow for additional details beyond spacial arrangements is the kinds of choice we make in a meeting called ``what operad should we use?" 

\end{application}

\begin{application}

Suppose we have chosen an operad $\mcO$ to model the structure of materials. Each object of $\mcO$ might correspond to a certain quality of material, and each morphism corresponds to an arrangement of various qualities to form a new quality. An algebra $A\taking\mcO\to\Sets$ on $\mcO$ forces us to choose what substances will fill in for these qualities. For every object $x\in\Ob(\mcO)$, we want a set $A(x)$ which will be the set of materials with that quality. For every arrangement, i.e. morphism, $f\taking (x_1,\ldots,x_n)\to y$, and every choice $a_1\in A(x_1), \ldots, a_n\in A(x_n)$ of materials, we need to understand what material $a'=A(f)(a_1,\ldots,a_n)\in A(y)$ will emerge when these materials are arranged in accordance with $f$. We are really pinning ourselves down here.

But there may be more than one interesting algebra on $\mcO$. Suppose that $B\taking\mcO\to\Sets$ is an algebra of strengths rather than materials. For each object $x\in\Ob(\mcO)$, which represents some quality, we let $B(x)$ be the set of possible strengths that something of quality $x$ can have. Then for each arrangement, i.e. morphism, $f\taking (x_1,\ldots,x_n)\to y$, and every choice $b_1\in B(x_1), \ldots, b_n\in B(x_n)$ of strengths, we need to understand what strength $b'=B(f)(b_1,\ldots,b_n)\in B(y)$ will emerge when these strengths are arranged in accordance with $f$. Certainly an impressive achievement!

Finally, a morphism of algebras $S\taking A\to B$ would consist of a coherent system for assigning to each material $a\in A(X)$ of a given quality $x$ a specific strength $S(a)\in B(X)$, in such a way that morphisms behaved appropriately. In this language we have stated a very precise goal for the field of material mechanics.

\end{application}

\begin{exercise}
Consider again the little squares operad $\mcO$ from Example \ref{ex:little squares}. Suppose we wanted to use this operad to describe those \href{http://en.wikipedia.org/wiki/Photographic_mosaic}{\text photographic mosaics}. 
\sexc Come up with an algebra $P\taking\mcO\to\Sets$ that sends the square to the set of all photos that can be pasted into that square. What does $P$ do on morphisms in $\mcO$?
\next Come up with an algebra $C\taking\mcO\to\Sets$ that sends each square to the set of all colors (visible frequencies of light). In other words, $C(\square)$ is the set of colors, not the set of ways to color the square. What does $C$ do on morphisms in $\mcO$. Hint: use some kind of averaging scheme for the morphisms.
\next Guess: if someone were to appropriately define morphisms of $\mcO$-algebras (something akin to natural transformations between functors $\mcO\to\Sets$), do you think there would some a morphism of algebras $P\to C$?
\endsexc
\end{exercise}


\subsubsection{Wiring diagrams}

\begin{example}\label{ex:operad of relations}

Here we describe an {\em operad of relations}, which we will denote by $\mcR$. The objects are sets, $\Ob(\mcR)=\Ob(\Set)$. A morphism $f\taking (x_1,x_2,\ldots,x_n)\too x'$ in $\mcR$ is a diagram in $\Set$ of the form 
\begin{align}\label{dia:operad of relations}
\xymatrix@=15pt{&&&R\ar[llldd]_(.7){f_1}\ar[lldd]^{f_2}\ar@{}[ldd]^(.6)\cdots\ar[dd]^(.6){f_n}\ar[rrdd]^(.7){f'}\\\\x_1&x_2&\cdots&x_n&&x'}
\end{align} 
such that the induced function $R\too(x_1\times x_2\times\cdots\times x_n\times x')$ is an injection.

We use a composition formula similar to that in Definition \ref{def:composite span}. Namely, we form a fiber product
$$\xymatrix{&&FP\ar[rd]\ar[ld]\\&\prod_{i\in\ul{n}}R_i\ar[ld]\ar[rd]&&S\ar[ld]\ar[rd]\\\prod_{i\in\ul{n}}\prod_{j\in\ul{m_i}}x_{i,j}&&\prod_{i\in\ul{n}}y_i&&z}$$
One can show that the induces function $FP\too\left(\prod_{i\in\ul{n}}\prod_{j\in\ul{m_i}}x_i\right)\times y$ is an injection, so we have a valid composition formula. Finally, the associativity and identity laws hold.
\footnote{Technically we need to use isomorphism classes of cone points, but we don't worry about this here.}

\end{example}

\begin{application}\label{app:entity by experience}
Suppose we are trying to model \href{http://en.wikipedia.org/wiki/Life}{\text life} in the following way. We define an entity as a set of phenomena, but in order to use colloquial language we say the entity {\em is able to experience} that set of phenomena. We also want to be able to put entities together to form a super-entity, so we have a notion of morphism $f\taking(e_1,\ldots,e_n)\too e'$ defined as a relation as in (\ref{dia:operad of relations}). The idea is that the morphism $f$ is a way of translating between the phenomena that may be experienced by the sub-entities and the phenomena that may be experienced by the super-entity. 

The operad $\mcR$ from Example \ref{ex:operad of relations} becomes useful as a language for discussing issues in this domain.
\end{application}

\begin{example}\label{ex:algebra on operad of rels}

Let $\mcR$ be the operad of relations from Example \ref{ex:operad of relations}. Consider the algebra $S\taking\mcR\to\Sets$ given by $S(x)=\PP(x)$. Given a morphism $\prod_ix_i\from R\to y$ and subsets $x_i'\ss x_i$, we have a subset $\prod_ix_i'\ss\prod_ix_i$. We take the fiber product
$$\xymatrix@=15pt{&FP\ar[rr]\ar[ld]&&R\ar[ld]\ar[rd]\\\prod_ix_i'\ar[rr]&&\prod_ix_i&&y}$$
and the image of $FP\to y$ is a subset of $y$. 

\end{example}

\begin{application}\label{app:desire}

Following Application \ref{app:entity by experience} we can use Example \ref{ex:algebra on operad of rels} as a model of survival. Each entity survives only for a subset of the phenomena that it can experience. Under this interpretation, the algebra from Example \ref{ex:algebra on operad of rels} defines survival as the survival of all parts. That is, suppose that we understand how a super-entity is composed of sub-entities in the sense that we have a translation between the set of phenomena that may be experienced across the sub-entities and the set of phenomena that may be experienced by the super-entity. Then the super-entity will survive exactly those phenomena which translate to phenomena for which each sub-entity desires. 

Perhaps a better term than survival would be ``allowance". A bureaucracy consists of a set of smaller bureaucracies, each of which allows certain phenomena to pass; the whole bureaucracy allows something to pass if and only if, when translated to the perspective of each sub-bureaucracy, it is allowed to pass there.

\end{application}

\begin{example}

In this example we discuss wiring diagrams that look like this:\index{wiring diagram}
\begin{center}
\includegraphics[width=\textwidth]{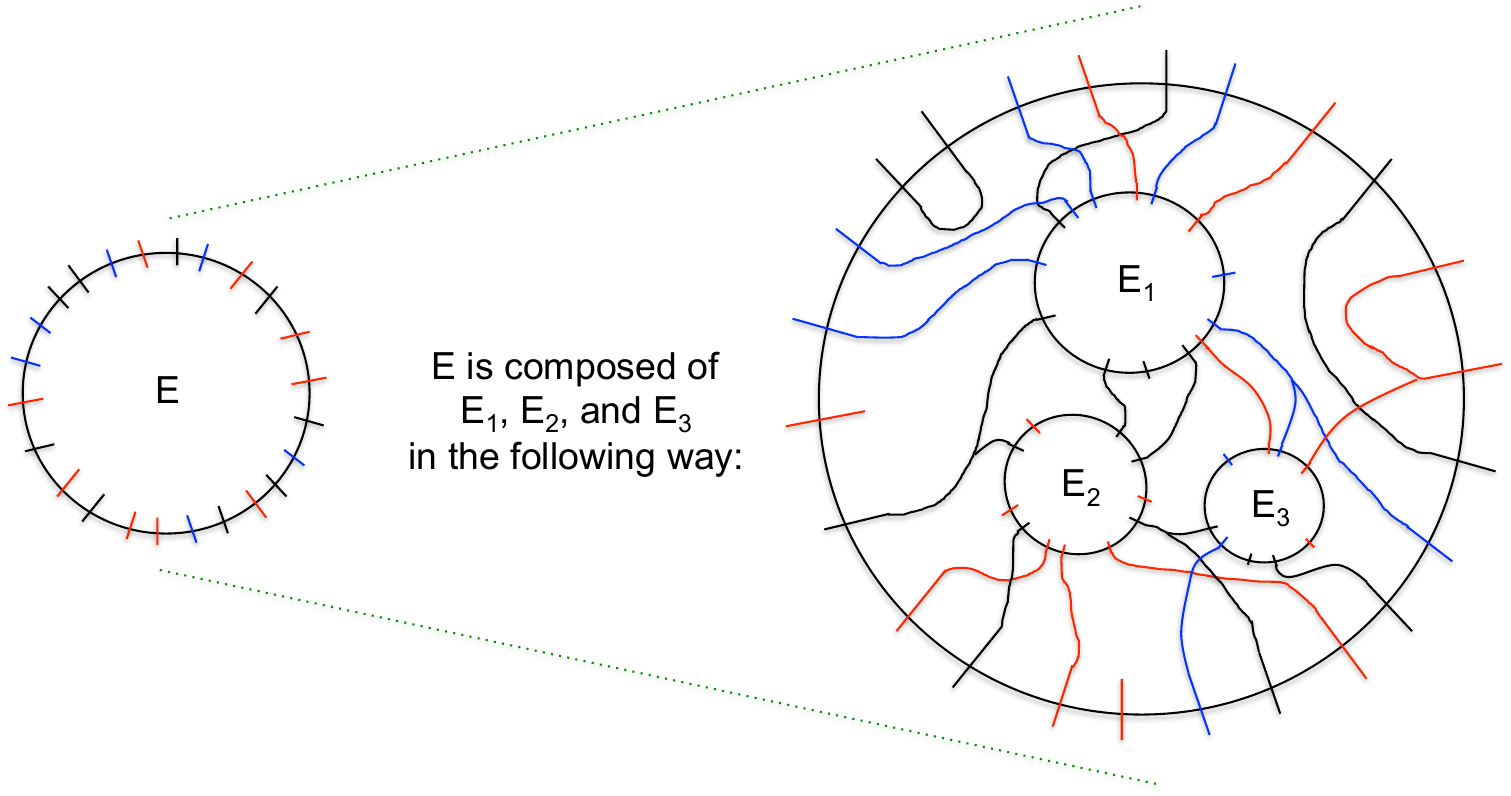}
\end{center}
The operad in question will be denoted $\mcW$; it is discussed in greater detail in \cite{Sp4}. The objects of $\mcW$ are pairs $(C,s)$ where $C$ is a finite set and $v\taking C\to\Ob(\Set)$ is a function. Think of such an object as a circle with $C$-many cables sticking out of it; each cable $c$ is assigned a set $v(c)$ corresponding to the set of values that can be carried on that cable. For example $E_2=(C,v)$ where $|C|=11$ and we consider $v$ to be specified by declaring that black wires carry $\ZZ$ and red wires carry $\{\tn{sweet, sour, salty, bitter, umami}\}$. 

The morphisms in $\mcW$ will be pictures as above, formalized as follows. Given objects $(C_1,v_1),\ldots,(C_n,v_n), (D,w)$, a morphism $F\taking((C_1,v_1),\ldots,(C_n,v_n))\too (D,w)$ is a commutative diagram of sets
\footnote{If one is concerned with cardinality issues, fix a cardinality $\kappa$ and replace $\Ob(\Set)$ everywhere with $\Ob(\Set_{<\kappa})$.} 
$$
\xymatrix{\bigsqcup_{i\in\ul{n}}C_i\ar[rd]_{\sqcup_iv_i}\ar[r]^i&G\ar[d]^x&D\ar[l]_j\ar[ld]^w\\&\Ob(\Set)}
$$
such that $i$ and $j$ are jointly surjective.

Composition of morphisms is easily understood in pictures: given wiring diagrams inside of wiring diagrams, we can throw away the intermediary circles. In terms of sets, we perform a pushout.

There is an operad functor $\mcW\to\mcS$ given by sending $(C,v)$ to $\prod_{c\in C}v(c)$. The idea is that to an entity defined as having a bunch of cables carrying variables, a phenomenon is the same thing as a choice of value on each cable. A wiring diagram translates between values experienced locally and values experienced globally. 

\end{example}

\begin{application}

In cognitive neuroscience or in industrial economics, it may be that we want to understand the behavior of an entity such as a mind, a society, or a business in terms of its structure. Knowing the connection pattern (\href{http://en.wikipedia.org/wiki/Connectome}{connectome}, \href{http://en.wikipedia.org/wiki/Supply_chain}{supply chain}) of sub-entities should help us understand how big changes are generated from small ones.

Under the functor $\mcW\to\mcS$ the algebra $\mcS\to\Sets$ from Application \ref{app:desire} becomes an algebra $\mcW\to\Sets$. To each entity we now associate some subset of the value-assignments it can carry. 
\end{application}

\begin{application}

In \cite{RS}, \href{http://dspace.mit.edu/bitstream/handle/1721.1/44215/MIT-CSAIL-TR-2009-002.pdf?sequence=1}{Radul and Sussman} discuss propagator networks. These can presumably be understood in terms of wiring diagrams and their algebra of relations.

\end{application}
 

\printindex

\bibliographystyle{amsalpha}

\end{document}